\DeclarePairedDelimiter\ceil{\lceil}{\rceil}
\DeclarePairedDelimiter\floor{\lfloor}{\rfloor}
\author{Sybille Rosset}
\date{PhD Thesis defended in July 2020}
\begin{document}
	\newtheorem{proposition}{Proposition}
	\newtheorem*{proposition*}{Proposition}
	\numberwithin{proposition}{subsection}
	\newtheorem{theorem}{Theorem}
	\numberwithin{theorem}{subsection}
	\newtheorem*{theorem*}{Theorem}
	\newtheorem*{theoreme*}{Th\'eor\`eme}
	\newtheorem{corollaire}{Corollary}
	\numberwithin{corollaire}{subsection}
	\newtheorem*{corollaire*}{Corollaire}
	\newtheorem*{corollary*}{Corollary}
	\newtheorem{lemme}{Lemma}
	\numberwithin{lemme}{subsection}
	\newtheorem{Hyp}{Hypothesis}
	\newtheorem{conjecture}{Conjecture}
	\numberwithin{conjecture}{subsection}
	\newtheorem*{conjecture*}{Conjecture}
	\theoremstyle{remark}
	\newtheorem{Rem}{Remark}
	\numberwithin{Rem}{subsection}
	\newtheorem{fact}{Fact}
	\numberwithin{fact}{subsection}
	\newcommand{\Spec}{\mathrm{Spec}}
	\theoremstyle{definition}
	\newtheorem{defn}{Definition}
	\numberwithin{defn}{subsection}
	\newtheorem{d'efinition}{D\'efinition}
	\numberwithin{d'efinition}{subsection}
	\newenvironment{thmbis}
	{\addtocounter{theorem}{-1}%
		\renewcommand{\thetheorem}{\arabic{theorem}bis}%
		\begin{theorem}}
		{\end{theorem}}
	
	 \def\diag{{\rm diag}}
	\def\Im{{\rm Im}}
	\def\dim{{\rm dim}}
	\def\codim{{\rm codim}}
	\def\GL{{\rm GL}}
	\def\SL{{\rm SL}}
	\def\ev{{\rm ev}}
	\def\Gr{{\rm Gr}}
	\def\id{{\rm id}}
	\def\Spec{{\rm Spec}}
	\def\S{{\mathfrak S}}
	\def\d{{\mathbf d}}
	\def\p{{\mathfrak{p}}}
	\def\C{{\mathbb C}}
	\def\P{{\mathbb P}}
	\def\A{{\mathbb A}}
	\def\Z{{\mathbb Z}}
	\def\Q{{\mathbb Q}}
	\def\I{{\mathcal I}}
	\def\G{{\mathcal G}}
	\def\O{{\mathcal O}}
	\def\W{{\mathcal W}}

\title{Quantum $K$-theory for flag varieties}
\maketitle

\chapter*{Summary}
\addcontentsline{toc}{chapter}{Summary}  

We present here an overview of the different results presented in this thesis. Chapters \ref{chap : stabilization correlators} and \ref{chap: Schubert calculus} use some of the results of Chapter \ref{chap : geometry W}, but are otherwise independent and each of them may be read separately. 

\subsection{Chapter \ref{chap : geometry W}: Gromov-Witten varieties of partial flag varieties.}
Let $n \geq 0$, let $J=\{j_{1}, \dots,j_{M}\}$ be a set of $M$ integers satisfying $0 < j_1 < \dots < j_M <n$. Consider the flag variety $X=Fl_J$ parametrizing the flags of vector spaces of type $$V_{j_1} \subset V_{j_2} \subset \dots \subset V_{j_M} \subset  \C^n,$$
where $V_{j_k}$ is a vector subspace of $\C^n$ of dimension $j_k$. For an element $\d$ in the semi-group $E(X) \subset H_2(X, \Z)$ of effective classes of curves, we denote by ${\mathcal{M}_{0,r}}(X, \d)$ the space of maps $(f:\P^1 \to X, \{p_1, \dots, p_r\})$ with $r$ marked points $p_i$ on $\P^1$ satisfying $f_*[\P^1] = \d$. Associating to a map the image of its marked points yields an evaluation morphism $ev_\d : {\mathcal{M}}_{0,r}(X, \d) \rightarrow  X^r$.  
For $r=3$ and $X$ a cominuscule variety, and in particular for $X$ a Grassmannian, Chaput-Perrin proved that the general fiber of the evaluation map is a rational variety \cite{chaput2011rationality}. For $X= \P^2$, Pandharipande computed the genus of the general fiber of the evaluation map $ev_{d} : {{\mathcal{M}}_{0,r}}(\P^2, d) \rightarrow (\P^2)^r$ when this fiber has dimension $1$ \cite{pandharipande1997canonical}, genus which turns out to be positive for $d \geq 3$. For $X=\P^1$, examples of values of $d$ such that the general fiber of $ev_d : {{\mathcal{M}}_{0,r}}(\P^1, d) \rightarrow (\P^1)^r$ is not a rationally connected variety have been obtained using techniques from quantum $K$-theory \cite{iritani2014reconstruction}. One might wonder what happens for a variety of Picard rank greater than one. Chapter \ref{chap : geometry W} provides a step in that direction by studying the geometry of the general fiber of the evaluation map $ev_\d : {\mathcal{M}}_{0,r}(X, \d) \rightarrow  X^r$ for $X$ a flag variety, and more generally the geometry of Gromov-Witten varieties of flag varieties.

  \begin{defn}\label{def: W variety english} Let $X_1$, $\dots$, $X_r$ be Schubert varieties of the flag variety $X$, let $\mathbf{d}$ be an element in $E(X)$. For any element $g=(g_1, \dots, g_r)$ general in $\GL_n^r$, we call \textit{Gromov-Witten variety of degree $\mathbf{d}$ associated with the varieties $X_i$ and with $g$} the subscheme \begin{equation*}
 	\W_{X; X_1, \dots, X_r}^{g, \mathbf{d}} := ev_1^{-1}(g_1 X_1) \cap \dots \cap ev_r^{-1}(g_r X_r)
 	\end{equation*} of $\overline{\mathcal{M}_{0,r}}(X, \mathbf{d})$.
 \end{defn}
Note that the Gromov-Witten variety $\W_{X; X_1, \dots, X_r}^{g, \mathbf{d}}$ parametrizes morphisms $\P^1 \rightarrow X$ representing the class $\d$ whose image has a non empty intersection with the translates $g_i \cdot X_i$ of the Schubert varieties $X_i$.  When these varieties are zero dimensional, their number of points is a Gromov-Witten invariant. More generally, the Euler-Poincar\'e characteristic of their adherence in the space $\overline{\mathcal{M}_{0,r}}(X, \mathbf{d})$ is a correlator in quantum $K$-theory \cite{givental}. 

Let $I=\{i_1, \dots, i_m\}$ be a set of $m$ integers satisfying $0<i_1 < \dots < i_m<n$, such that  $I$ is contained in $J$. We denote by $\pi: X=Fl_J \rightarrow X'=Fl_I$ the forgeftul map. 
Fix a class $\lambda = [l] \in H_2(X, \Z)$ of a curve $l$ whose projection by the forgetful map $\pi$ is a point. We observe in Chapter \ref{chap : geometry W} a relation between the general fiber of the evaluation map $$ev_{[C]+\lambda}: \overline{\mathcal{M}_{0,r}}(X, [C]+\lambda) \rightarrow X^r$$ and the general fiber of the map $ev_{[\pi(C)]}: \overline{\mathcal{M}_{0,r}}(X', [\pi(C)]) \to (X')^r$.  
\newpage
\begin{theorem*}(Theorem \ref{th : RC fibrations bewteen W varieties})
	Suppose the collection $(I,J, [C]+\lambda)$ is a stabilized collection in the sense of Definition \ref{def: stabilized collection}. Then: \begin{itemize}
		\item  For any Schubert varieties $X_1$, $\dots$, $X_r$ of $X$, for $g$ general in $\GL_n^r$, the Gromov-Witten variety $\W_{Fl_J; X_1, \dots, X_r}^{g, [C]+ \lambda}$ of $Fl_J$ is a rationally connected fibration above the Gromov-Witten variety $\W_{Fl_I; \pi(X_1), \dots, \pi(X_r)}^{g, [\pi(C)]}$ of $Fl_I$.
		\item For $x$ general in $(Fl_J)^r$, the fiber $ev_{[C]+ \lambda}^{-1}(x)$ is a tower of unirational fibrations over the fiber $ev_{[\pi(C)]}^{-1}(\pi(x))$.
	\end{itemize} 
\end{theorem*}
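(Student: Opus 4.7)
The key starting point is the post-composition map
\[
\Pi : \overline{\mathcal{M}_{0,r}}(Fl_J,[C]+\lambda) \longrightarrow \overline{\mathcal{M}_{0,r}}(Fl_I,[\pi(C)]),\qquad (f,p_\bullet)\mapsto (\pi\circ f,p_\bullet),
\]
which is well defined since $\pi_*([C]+\lambda)=[\pi(C)]$ by hypothesis on $\lambda$. The first thing I would verify is the compatibility diagram $ev_{[\pi(C)]}\circ\Pi = \pi^{\times r}\circ ev_{[C]+\lambda}$; because the translate $g_i\pi(X_i)$ is, for $g_i$ general, equal to $\pi(g_iX_i)$ (the fibers of $\pi$ are homogeneous under the stabilizer of a point), this compatibility gives a restricted map
\[
\Pi : \W_{Fl_J;X_1,\dots,X_r}^{g,[C]+\lambda} \longrightarrow \W_{Fl_I;\pi(X_1),\dots,\pi(X_r)}^{g,[\pi(C)]}.
\]

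The content of the theorem is then a fiberwise statement. I would fix a general point of the base, i.e.\ a morphism $f':\mathbb{P}^1\to Fl_I$ of class $[\pi(C)]$ with $f'(p_i)\in g_i\pi(X_i)$, together with the choice of marked points, and describe the fiber of $\Pi$ over it. A lift of $f'$ to a morphism $f:\mathbb{P}^1\to Fl_J$ of class $[C]+\lambda$ is exactly a section of the fibration $f'^*\bigl(Fl_J\to Fl_I\bigr)\to\mathbb{P}^1$ of prescribed degree $\lambda$. Since $Fl_J\to Fl_I$ factors as a tower of Grassmannian bundles $Fl_J=Y_0\to Y_1\to\cdots\to Y_N=Fl_I$ (one intermediate $j_k\in J\setminus I$ at a time), writing $\lambda=\sum\lambda_k$ in the corresponding basis of vertical classes, the space of lifts acquires a tower structure: at step $k$ one chooses, given the already constructed lift to $Y_{k+1}$, a section of a Grassmann bundle of degree $\lambda_k$ over $\mathbb{P}^1$.

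The crucial input is the stabilization hypothesis on $(I,J,[C]+\lambda)$. I would use it to ensure that at each step $k$, the intersection of the space of sections of degree $\lambda_k$ with the incidence conditions at the marked points $p_i$ (coming from the $g_iX_i$ pulled back along the previously built lift) is non-empty, irreducible and \emph{unirational}. The space of sections of a Grassmann bundle of sufficiently positive degree is a Quot-type scheme which is well known to be rational, and imposing point conditions at finitely many marked points cuts it down to a Schubert-like subvariety whose unirationality follows from a standard generic transversality / Bertini argument once the degree is large enough — which is precisely what being stabilized is meant to encode. Iterating over $k$ produces a tower of unirational fibrations on the fiber of $\Pi$, proving the second bullet.

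The first bullet is then a formal consequence: a tower of unirational fibrations is rationally connected, so $\Pi$ is a morphism with rationally connected general fiber between varieties, hence a rationally connected fibration in the usual sense. The main obstacle is the fiberwise unirationality at each stage of the tower: one must check that the stabilization condition implies both the expected dimension of the space of degree-$\lambda_k$ sections and the genericity needed so that the incidence conditions at the marked points remain transverse as one descends the tower. I would expect the bulk of the work to lie there, in a careful book-keeping of degrees and codimensions against the definition of a stabilized collection, while the construction of the map $\Pi$ and the passage from fiberwise unirationality to rational connectedness of the total fibration are essentially formal.
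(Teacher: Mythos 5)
Your overall plan is correct in spirit and mirrors the paper's: reduce to forgetting one index at a time, express lifts as sections of Grassmann bundles, and use stabilization to control degrees. But there are two concrete gaps that the paper's actual proof has to work hard to close, and which your sketch does not.

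First, the paper does not study the fibers of $\Pi$ alone but of the morphism $\Pi_{I/J}\times ev : \overline{\mathcal M_{0,r}}(Fl_J,\d)\to\overline{\mathcal M_{0,r}}(Fl_I,\pi_*\d)\times_{(Fl_I)^r}(Fl_J)^r$, i.e.\ the evaluation points of the lift are \emph{fixed exactly}, not merely constrained to lie in translated Schubert varieties. This distinction is essential: with the evaluation points fixed, the space of lifts at each step is parametrized by a subspace $\mathcal Z_{p,x}$ of a vector space $V=\mathrm{Hom}(\bigoplus_i\O_{\P^1}(-a_i),\O_{\P^1}^{\oplus n})$ cut out by \emph{linear} conditions (Lemma~\ref{lem: Z is unirational}), hence is a linear subspace and rational. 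If you only fix the base map and require $f(p_i)\in g_iX_i$, as you propose, the incidence locus is a Schubert-type, not linear, subvariety, and your appeal to ``a standard generic transversality / Bertini argument'' does not yield unirationality --- Bertini gives smoothness or irreducibility, never rationality. The paper then recovers the Schubert-constrained statement in a separate step, Proposition~\ref{prop : W is irreducible}, which composes the unirational fibers of $\Pi\times ev$ with the unirational general fibers of $\pi_{|X(u)}:X(u)\to X'(u)$ (Lemma~\ref{lem: general fiber between Sch var irr}) and invokes Graber--Harris--Starr (Theorem~\ref{th: ex stability under composition}).

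Second, the claim ``rationally connected fibration'' in Definition~\ref{def: RC fibration} is strictly stronger than ``general fiber rationally connected'': it requires a bijection between irreducible components of $\W_{Fl_J;X_1,\dots,X_r}^{g,\d}$ and of $\W_{Fl_I;\pi(X_1),\dots,\pi(X_r)}^{g,\pi_*\d}$, each component dominating its target. This is not a fiberwise statement and your proposal does not address it. The paper establishes it via Kleiman transversality, equidimensionality of images of general translates (Proposition~\ref{prop : image is equidimensional}), normality of $\W_g$, and connectedness of the general fiber of $\Pi\times ev$ (Lemma~\ref{lem: surjection to irr components if fiber connected}). Relatedly, the stabilization hypothesis in part prescribes that $\Pi\times ev$ is \emph{surjective}, which is what makes the image of $\W_g$ equal to the expected target; your sketch silently assumes this. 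Filling these two gaps is where the real content of the proof lies.
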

We called a variety $W$ a \textit{rationally connected fibration} (respectively unirational fibration) above $W'$  if there exists a dominant map $W \rightarrow W'$ sending each irreducible component of $W$ onto a different irreducible component of $W'$, and whose general fiber is a rationally connected variety (respectively unirational). A variety $W$ is a \textit{tower of unirational fibrations} above a variety $W'$ if there exists a sequence of morphisms $$W_1=W \rightarrow W_2 \rightarrow \dots \rightarrow W_l=W'$$  such that for $1 \leq i <l$, the variety $W_i$  is a unirational fibration above $W_{i+1}$.

\subsection{Chapter \ref{chap : stabilization correlators}: A comparison formula between genus $0$ correlators of flag varieties.}
Fix integers $1 \leq n_1 < \dots < n_m<n$. We consider the flag variety $X$ parametrizing flags of vector spaces \[V_{n_1} \subset \dots \subset V_{n_m} \subset \C^.\]
satisfying $\dim V_{n_i} = n_i$. Let $T$ be a torus in $\GL_n$ acting on $X$ by left multiplication. Let $\mathbf{d}=(d_1, \dots, d_m)$ be an effective class of curve in $H_2(X, \Z)$. We denote by $\overline{\mathcal{M}_{0,r}}(X, \d)$ the coarse moduli space parametrizing genus $0$ stable maps representing the class $\d$. Assocating to a stable map with $r$ marked points the image of its $i$-th marked point induces an evaluation map $ev_i : \overline{\mathcal{M}_{0,r}}(X, \d) \rightarrow X$. Let $E_1$, $\dots$, $E_r$ be $T$-equivariant fiber bundles over $X$. 
In quantum $K$-theory, $T$-equivariant correlators are defined by \begin{equation}\label{eq summary: corr X} \langle \phi_1, \dots , \phi_r \rangle_{T, (d_1, \dots, d_m)}^X := \chi_T \left( ev_1^*E_1 \otimes \dots \otimes ev_r^*E_r \otimes \O_{\overline{\mathcal{M}_{0,r}}(X, \d)} \right),  \end{equation}
where we denote by $\chi_T$ the $T$-equivariant Euler-Poincar\'e characteristic \cite{givental, lee2004quantum}.
For $1 \leq k \leq m$, we denote by $X_{\widehat{k}}$ the flag variety obtained from $X$ by forgetting the vector space $V_{n_k}$, and by  $$\pi_{\widehat{k}}: X \rightarrow X_{\widehat{k}}$$ the forgetful map. In order to facilitate notations, we set $d_0=0=d_{m+1}$. Chapter \ref{chap : stabilization correlators} yields the following equality between correlators-cf. Theorem \ref{th : euler char equal}bis.
\begin{theorem*}
	Suppose: \begin{itemize}
		\item $\forall \: p<k, \: d_k \geq n_k \ceil*{ \frac{d_p-d_{p-1}}{n_p-n_{p-1}}}$,
		\item $d_{k-1} \leq \lfloor \frac{d_{k+1}}{n_{k+1}} \rfloor$,
		\item $d_k \geq r(n_k-n_{k-1}) + d_{k-1} + (n_k-n_{k-1})(\lfloor \frac{d_{k+1}-d_{k-1}}{n_{k+1} - n_{k-1}}\rfloor +1)$.
	\end{itemize} 
	Then for any $T$-equivariant fiber bundles $E_1$, $\dots$, $E_r$ over $X_{\widehat{k}}$, the correlators of $X$ and $X_{\widehat{k}}$ associated with the degree $\mathbf{d}$ and the fiber bundles $E_i$ are equal: \begin{equation*}
	\langle (\pi_{\widehat{k}})^*E_1, \dots, (\pi_{\widehat{k}})^*E_r \rangle_{T, (d_1, \dots, d_m)}^{X} = \langle E_1, \dots, E_r \rangle_{T, (d_1, \dots, d_{k-1}, d_{k+1}, \dots, d_m)}^{X_{\widehat{k}}}.
	\end{equation*}
\end{theorem*}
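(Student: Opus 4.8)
I would deduce the identity of correlators from a geometric statement about the forgetful map at the level of moduli spaces, combined with the projection formula. Composing a genus $0$ stable map $f\colon C\to X$ with $\pi_{\widehat{k}}$ and Knudsen‑stabilizing the (possibly unstable) composite produces a $T$-equivariant morphism
\[
\widehat{\pi}\colon \overline{\mathcal{M}_{0,r}}(X,\mathbf{d}) \longrightarrow \overline{\mathcal{M}_{0,r}}(X_{\widehat{k}},\mathbf{d}'),\qquad \mathbf{d}'=(d_1,\dots,d_{k-1},d_{k+1},\dots,d_m),
\]
since $\pi_{\widehat{k}}$ pushes the curve class $\mathbf{d}$ to $\mathbf{d}'$. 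Because stabilization does not move marked points, $ev_i\circ\widehat{\pi}=\pi_{\widehat{k}}\circ ev_i$ for each $i$, so on $\overline{\mathcal{M}_{0,r}}(X,\mathbf{d})$ one has $\bigotimes_i ev_i^{*}(\pi_{\widehat{k}})^{*}E_i=\widehat{\pi}^{*}\mathcal{F}$, where $\mathcal{F}:=\bigotimes_i ev_i^{*}E_i$ is the vector bundle on $\overline{\mathcal{M}_{0,r}}(X_{\widehat{k}},\mathbf{d}')$ whose $T$-equivariant Euler characteristic is the right‑hand correlator. Hence
\[
\langle(\pi_{\widehat{k}})^{*}E_1,\dots,(\pi_{\widehat{k}})^{*}E_r\rangle^{X}_{T,\mathbf{d}}=\chi_T\bigl(\overline{\mathcal{M}_{0,r}}(X,\mathbf{d}),\,\widehat{\pi}^{*}\mathcal{F}\bigr).
\]

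\textbf{Reduction.} Using the Leray spectral sequence $R\Gamma=R\Gamma\circ R\widehat{\pi}_{*}$ (all $T$-equivariantly, so valid in the representation ring) together with the projection formula $R\widehat{\pi}_{*}(\widehat{\pi}^{*}\mathcal{F})=\mathcal{F}\otimes^{L}R\widehat{\pi}_{*}\mathcal{O}_{\overline{\mathcal{M}_{0,r}}(X,\mathbf{d})}$, valid because $\mathcal{F}$ is locally free, the theorem reduces to the single geometric assertion
\[
R\widehat{\pi}_{*}\,\mathcal{O}_{\overline{\mathcal{M}_{0,r}}(X,\mathbf{d})}=\mathcal{O}_{\overline{\mathcal{M}_{0,r}}(X_{\widehat{k}},\mathbf{d}')}.
\]
Granting it, $\chi_T(\widehat{\pi}^{*}\mathcal{F})=\chi_T(\mathcal{F}\otimes\mathcal{O})=\chi_T(\mathcal{F})=\langle E_1,\dots,E_r\rangle^{X_{\widehat{k}}}_{T,\mathbf{d}'}$, which is the claim. (Since $X$ is homogeneous the moduli space is unobstructed, so the structure sheaf appearing in the correlator is the virtual one and no correction is needed.)

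\textbf{The geometric core.} The morphism $\widehat{\pi}$ is exactly the forgetful map studied in Chapter \ref{chap : geometry W}: taking all Schubert varieties equal to the ambient flag variety gives $\W^{g,\mathbf{d}}_{Fl_J;X,\dots,X}=\overline{\mathcal{M}_{0,r}}(X,\mathbf{d})$ and $\W^{g,\mathbf{d}'}_{Fl_I;X_{\widehat{k}},\dots,X_{\widehat{k}}}=\overline{\mathcal{M}_{0,r}}(X_{\widehat{k}},\mathbf{d}')$ with $I=J\setminus\{n_k\}$, and $\mathbf{d}$ splits as $[C]+\lambda$ into its $\pi_{\widehat{k}}$-pullback part and its fiber (Grassmannian) part. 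I would first verify that the three numerical hypotheses force $(I,J,\mathbf{d})$ to be a stabilized collection in the sense of Definition \ref{def: stabilized collection}, the term $r(n_k-n_{k-1})$ in the last inequality accounting precisely for the $r$ incidence conditions, and then invoke Theorem \ref{th : RC fibrations bewteen W varieties} to conclude that $\widehat{\pi}$ is dominant with rationally connected general fiber (a tower of unirational fibrations, in fact). As $\overline{\mathcal{M}_{0,r}}(X,\mathbf{d})$ and $\overline{\mathcal{M}_{0,r}}(X_{\widehat{k}},\mathbf{d}')$ are irreducible, normal and have rational singularities (moduli of genus $0$ stable maps to a homogeneous space), this should upgrade to $\widehat{\pi}_{*}\mathcal{O}=\mathcal{O}$ (proper dominant with connected general fiber over a normal base) and to $R^{j}\widehat{\pi}_{*}\mathcal{O}=0$ for $j>0$ (rational singularities of the source plus rational connectedness of the general fiber).

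\textbf{Main obstacle.} The delicate step is this last implication: rational connectedness of the \emph{general} fiber of $\widehat{\pi}$ does not by itself yield $R\widehat{\pi}_{*}\mathcal{O}=\mathcal{O}$ over the whole base, the difficulty being concentrated over the boundary of $\overline{\mathcal{M}_{0,r}}(X_{\widehat{k}},\mathbf{d}')$, where $\widehat{\pi}$ genuinely contracts components and the fibers become reducible. I expect one must either extract from the proof of Theorem \ref{th : RC fibrations bewteen W varieties} a uniform statement controlling \emph{all} fibers — by stratifying the boundary according to the combinatorial type of the stable map and checking rational connectedness stratum by stratum — or else apply a Kollár-type vanishing theorem for higher direct images of the structure sheaf along a fibration in rationally connected varieties between spaces with rational singularities. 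The numerical hypotheses are exactly what ensure that the relative dimension of $\widehat{\pi}$ stays positive and large enough along every stratum for such an argument to go through.
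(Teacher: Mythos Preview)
Your overall strategy is sound, and in fact simpler than the paper's for the statement as phrased. The paper does not push forward along $\widehat{\pi}=\Pi$ directly; it instead factors through the fiber product and applies Buch--Mihalcea to the refined map
\[
\Pi\times ev:\overline{\mathcal{M}_{0,r}}(X,\d)\longrightarrow \overline{\mathcal{M}_{0,r}}(X_{\widehat{k}},\d')\times_{(X_{\widehat{k}})^r}X^r,
\]
verifies that this target has rational singularities (Lemma \ref{lem: geometry M'xX^r}), and then uses the diagonal decomposition of $K_H(X_{\widehat{k}})$ (Proposition \ref{prop: image fiber product}) to unwind the class $i_*(\Pi\times ev)_*[\O]$. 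The payoff is that this proves the stronger statement of Theorem \ref{th : euler char equal}: equality holds for \emph{arbitrary} $\phi_i\in K_H(X)$, with $(\pi_{\widehat{k}})_*\phi_i$ on the right. Your direct pushforward along $\Pi$ only sees pullback classes, which is exactly what this summary theorem asks for; the diagonal decomposition is then unnecessary. Note also that the three numerical hypotheses are not used to verify the general ``stabilized collection'' condition: the first feeds directly into Proposition \ref{prop : geometry general fiber pr1W} (unirationality of the general fiber of $\Pi\times ev$), and the last two into Proposition \ref{prop : MX in MxX^r surjective} (surjectivity of $\Pi\times ev$). From these, the general fiber of $\Pi$ itself is rationally connected, since it fibers over a product of Grassmannians with rationally connected general fiber.

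Your ``main obstacle'' is not an obstacle. You do not need the derived sheaf identity $R\widehat{\pi}_*\O=\O$, nor any stratum-by-stratum analysis of the boundary, nor a Koll\'ar-type vanishing. You only need the equality $\widehat{\pi}_*[\O]=[\O]$ in $K_T$, and that is precisely the content of Theorem \ref{th: BM f_* if fiber rational} (Buch--Mihalcea): a surjective equivariant morphism between projective varieties with rational singularities whose \emph{general} fiber is rationally connected satisfies $f_*[\O]=[\O]$ in equivariant $K$-theory. Both moduli spaces have rational singularities (Theorem \ref{th: Viehweg X//G has RS}), so once you have surjectivity and rational connectedness of the general fiber of $\Pi$ you are done; the projection formula in $K_T$ then gives $\chi_T(\widehat{\pi}^*\mathcal{F})=\chi_T(\mathcal{F}\cdot\widehat{\pi}_*[\O])=\chi_T(\mathcal{F})$ immediately.
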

Note that for $3$-pointed Gromov-Witten invariants, a result of Peterson/Woodward allows to deduce all invariants with $3$ marked points of generalized flag varieties $G/P$ from $3$-pointed Gromov-Witten invariants of $G/B$ \cite{woodward2005d}.

\subsection{Chapter \ref{chap: Schubert calculus}: Schubert calculus for the incidence variety $Fl_{1,n-1}$.} We study in Chapter \ref{chap: Schubert calculus} different variations of modern Schubert calculus for the incidence variety $X=Fl_{1,n-1}$ parametrizing pairs $(p,H)$ where $p$ is a point in $\P^{n-1}$ and $H$ is a hyperplane in $\P^{n-1}$ containing the point $p$. Note that $X$ can be written as $G/P$,  where $G=\GL_n$, $T$ is the set of diagonal matrices, $B$ is the set of upper triangular matrices and $P$ is the parabolic subgroup of $G$ satisfying $B \subset P$ associated with the roots $\{\alpha_2, \dots, \alpha_{n-1}\}$. The smallest length representative $w$ of an element in $W/W_P = \mathfrak{S}_n / \mathfrak{S}_{n-2}$  is a permutation $w_{i,j}$ satisfying $w(1)=i$, $w(n)=j$, and such that for $1<k<n-1$  $w(k) < w(k+1)$. For $1 \leq i,j \leq n$ and $i \neq j$, we call $X(i,j):=X(w_{i,j})$ the Schubert variety of $Fl_{1,n-1}$ associated with the element $w_{i,j}$ in $W^P \simeq W/W_P$.

\textsc{Littlewood-Richardson coefficients in $K(Fl_{1,n-1})$.} For $1 \leq i,j \leq n$, $i \neq j$, the natural embedding $i:X(i,j) \hookrightarrow X$ defines a sheaf $i_*\O_{X(i,j)}$, which is a coherent sheaf of $\O_{Fl_{1,n-1}}$-module. We denote by $\O_{i,j} := [i_*\O_{X(i,j)}]$ the class of $i_*\O_{X(i,j)}$ in the Grothendieck group $K(Fl_{1,n-1})$ of coherent sheaves of $\O_{Fl_{1,n-1}}$-module. Schubert classes $(\O_{i,j})_{1 \leq i,j \leq n, \: i \neq j}$ form a basis of  $K(Fl_{1,n-1})$, and the product $$\O_{i,j} \cdot \O_{k,p} = \sum_{i=0}^{\dim Fl_{1,n-1}} (-1)^i  [Tor_i^{Fl_{1,n-1}} \left(\O_{X(i,j)}, \O_{X(k,p)}\right)]$$ defines a structure of associative and commutative ring $(K(Fl_{1,n-1}), \cdot)$ with identity element $[\O_{Fl_{1,n-1}}]$. We set $\O_{i,j}=0$ if $i<1$ or $j>n$. \\
\begin{proposition*}[Littlewood-Richardson coefficients in $K(Fl_{1,n-1})$-cf. Proposition \ref{prop : produit dans K(X)}]
	Let $1 \leq i,j, k, p \leq n$, where $i \neq j$ and $k \neq p$. Then
	\begin{equation*}
	\left\{ 
	\begin{aligned}
	\mathcal{O}_{k,p} \cdot \mathcal{O}_{i,j} &= \mathcal{O}_{i+k -n, j+p -1} \: &\mathrm{if} \: i+k-n \geq j+p \: \mathrm{or} \: i<j \: \mathrm{or} \: k<p; \\
	\mathcal{O}_{k,p} \cdot \mathcal{O}_{i,j} &= \mathcal{O}_{i+k-n-1, j+p -1} + \mathcal{O}_{i+k-n, j+p} - \mathcal{O}_{i+k-n-1,j+p} \:  &\mathrm{else}.
	\end{aligned}
	\right.
	\end{equation*}
\end{proposition*}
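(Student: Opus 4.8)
The plan is to compute the product as the structure sheaf of a generic intersection of the two Schubert varieties, to make that intersection completely explicit inside the incidence hypersurface $Fl_{1,n-1}\subset M:=\P^{n-1}\times(\P^{n-1})^*$, and to read off its class using that a proper intersection with a smooth divisor is computed by derived restriction. Fix a complete flag $F_\bullet$, use $F_\bullet$ for $X(i,j)$ and $gF_\bullet$ for $X(k,p)$ with $g$ general in $\GL_n$. By $K$-theoretic transversality of Schubert varieties in $G/P$ (generic intersections have rational singularities), $\O_{k,p}\cdot\O_{i,j}=[\O_Y]$ with $Y:=X(i,j)\cap gX(k,p)$. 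Writing a point of $Fl_{1,n-1}$ as a pair $(q,H)$, one has $X(i,j)=\{q\in\P(F_i),\ F_{j-1}\subset H\}$ (cut further by $q\in H$ when $i>j$), so $Y=\{q\in\P(V),\ U\subset H,\ q\in H\}$ with $V:=F_i\cap gF_k$ of dimension $a:=i+k-n$ and $U:=F_{j-1}+gF_{p-1}$ of dimension $b:=j+p-2$. If $a\le 0$ or $b\ge n$ then $Y=\varnothing$ and both sides vanish, so assume $a\ge1$, $b\le n-1$; then $Y=Fl_{1,n-1}\cap N$ with $N:=\P(V)\times\P(U^{\perp})\cong\P^{a-1}\times\P^{n-b-1}\subset M$.

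The crux is the elementary observation that, for general $g$, $V\subset U$ — equivalently, the incidence condition is automatic on $N$, i.e.\ $N\subset Fl_{1,n-1}$ — precisely when $i<j$ or $k<p$; for this one uses $(F_i\cap gF_k)\cap(F_{j-1}+gF_{p-1})=(F_{j-1}\cap gF_k)+(F_i\cap gF_{p-1})$ and compares dimensions with $a,b$. In that case $Y=N$ is a $\GL_n$-translate of the degenerate Schubert variety of type $(a,b+1)$ (one checks $a\le b$ here), whence $\O_{k,p}\cdot\O_{i,j}=\O_{a,b+1}=\O_{i+k-n,j+p-1}$, which is the first case of the statement except for the sub-case $i+k-n\ge j+p$.

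When $i\ge j$ and $k\ge p$ we have $Fl_{1,n-1}\not\subset N$, and a length count (using $\ell(w_{i,j})=(i-1)+(n-j)-1$ in this range) shows $Y=Fl_{1,n-1}\cap N$ has the expected dimension, i.e.\ is a proper intersection. Since $Fl_{1,n-1}$ is a smooth divisor in the smooth variety $M$, locally cut out by one non–zero-divisor, derived restriction has no higher $\mathrm{Tor}$ and $[\O_Y]$ is the image in $K(Fl_{1,n-1})$ of $[\O_N]\in K(M)$. But $[\O_N]$ depends only on the integers $a-1$ and $n-b-1$ (linear subspaces of fixed dimension in $\P^{n-1}$, resp.\ $(\P^{n-1})^*$, have the same $K$-class), so $[\O_Y]$ depends only on $a,b$ and may be computed from any proper intersection of $Fl_{1,n-1}$ with a $\P^{a-1}\times\P^{n-b-1}$. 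If $i+k-n\ge j+p$ (so $a\ge b+2$), use the generic Schubert variety $X(a,b+1)=Fl_{1,n-1}\cap(\P(L_a)\times\P(N_b^{\perp}))$ itself, obtaining $\O_{k,p}\cdot\O_{i,j}=\O_{a,b+1}=\O_{i+k-n,j+p-1}$ and finishing the first case. If $i+k-n<j+p$ (so $a\le b+1$), use instead the rank-one $(1,1)$-hypersurface $\{q_1\phi_1=0\}$ in $\P^{a-1}\times\P^{n-b-1}$: inside $Fl_{1,n-1}$ it is the union of the degenerate Schubert varieties $A$ and $B$ of types $(a-1,b+1)$ and $(a,b+2)$, meeting transversally along $A\cap B$ of type $(a-1,b+2)$ (out-of-range types read as $0$). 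The Mayer–Vietoris sequence $0\to\O_{A\cup B}\to\O_A\oplus\O_B\to\O_{A\cap B}\to0$ then yields $[\O_Y]=\O_{a-1,b+1}+\O_{a,b+2}-\O_{a-1,b+2}$, the second case.

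The part I expect to be most delicate is the linear-algebra bookkeeping: establishing the dichotomy $V\subset U\iff(i<j\text{ or }k<p)$ for general $g$, checking properness of the relevant intersections in $M$, and verifying the Mayer–Vietoris hypotheses together with the boundary cases $a=1$ and $b=n-1$, where terms drop out on both sides in step with the geometric degeneracy of $Y$. The conceptual skeleton — a trichotomy according to whether the incidence is automatic, is a genuine but "generic" cut, or forces the three-term answer, together with the fact that the class of the generic intersection depends only on $(a,b)$ — is what produces the clean closed formula.
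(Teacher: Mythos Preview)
Your argument is correct and reaches the same destination as the paper, but by a genuinely different middle step. The paper works entirely inside $X$: it identifies $\O_{i,j}\cdot\O_{k,l}$ with the class of the Richardson variety $X(w_{i,j})\cap w_0X(w_{k,l})$, translates this to an auxiliary variety $Y_{r,p}^h$, and then builds explicit one-parameter families $\Phi\subset\P^1\times X$ whose special fibres are either a single Schubert variety or a union of two (Proposition~\ref{prop: degenerating}); Lemma~\ref{lem: equality homotetie} then says the class is constant along such a family, and Mayer--Vietoris (Lemma~\ref{lem: equality cup}) handles the union. You instead pass to the ambient $M=\P^{n-1}\times(\P^{n-1})^*$, observe that the class $[\O_N]\in K(M)$ of a product of linear subspaces depends only on their dimensions, and pull back along the divisorial inclusion $Fl_{1,n-1}\hookrightarrow M$; this lets you replace the generic $N$ by a hand-picked one (either $X(a,b+1)$ itself, or a rank-one $(1,1)$-hypersurface) without ever writing down a family.

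What each buys: the paper's degenerations are self-contained in $X$ and make the limit visible, at the cost of some explicit coordinate work in Proposition~\ref{prop: degenerating}. Your ambient-space argument is shorter and more conceptual --- the ``class depends only on $(a,b)$'' step replaces the entire degeneration --- but it leans on the special feature that $Fl_{1,n-1}$ is a smooth Cartier divisor in a product of projective spaces, so Tor$_{>0}$ vanishes for proper intersections. Both routes end with the same Mayer--Vietoris computation for the three-term case. Your honest flagging of the linear-algebra dichotomy $V\subset U\iff(i<j\text{ or }k<p)$ and the boundary cases is appropriate; the forward direction is immediate from $F_i\subset F_{j-1}$ (resp.\ $gF_k\subset gF_{p-1}$), and the converse for generic $g$ is a dimension count.
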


\textsc{Chevalley formula and positive algorithm in $QK_s(Fl_{1,n-1})$.}  We denote by $l_1 := X(2,1)$ et $l_2:= X(1,2)$ the two one-dimensional Schubert varieties of $Fl_{1,n-1}$. Their classes $[l_1]$ and $[l_2]$ in the singular homology of $Fl_{1,n-1}$ generate $H_2(Fl_{1,n-1}, \Z)$. The \textit{small quantum $K$-ring} $QK_s(Fl_{1,n-1}) = (K(Fl_{1,n-1})) \otimes \mathbb{Q}[Q_1,Q_2], \star)$ of $Fl_{1,n-1}$ is a deformation of the ring $K(Fl_{1,n-1})$ by the Novikov variables $Q_1$ and $Q_2$ associated with the classes $[l_1]$ and $[l_2]$ \cite{lee2004quantum}. This deformation depends on the correlators $\langle \O_{i,j}, \O_{k,p}, \O_{s,t} \rangle_{d_1[l_1] + d_2 [l_2]}$ defined by (\ref{eq summary: corr X}). Considering the limit $Q_1, Q_2 \rightarrow 0$ yields the $K$-ring $K(Fl_{1,n-1})$.

We name $h_1 := X(n-1,1)$ and $h_2:=X(n,2)$ the two Schubert varieties of $Fl_{1,n-1}$ of codimension $1$. 
\begin{proposition*}[Chevalley formula in $QK_s(Fl_{1,n-1})$-cf. Proposition \ref{prop: chevalley formula QK(X)}]
	\begin{align*}
	\O_{h_1} \star \O_{k,p} &=\left \{ \begin{array}{lr} Q_1\O_{n-1,n} +Q_1Q_2 \left( [\O_X] - \O_{h_1} \right) & \mathrm{if} \: k=1, p=n\\
	Q_1 \O_{n,p} & \mathrm{if} \:k=1, \: p<n\\
	\O_{1,2} + Q_1 \left( [\O_X] - \O_{h_1} \right)  & \mathrm{if} \: k=2, p=1\\
	\O_{k-1,p} & \mathrm{if} \: k>1, \: k \neq p+1\\
	\O_{p-1,p} + \O_{p,p+1} - \O_{p-1,p+1} & \mathrm{if} \: 1< p<n-1, \: k=p+1\\
	\end{array}\right.
	\end{align*}
	\begin{align*}
	\O_{h_2} \star \O_{k,p} &=\left \{ \begin{array}{lr} Q_2\O_{1,2} +Q_1Q_2 \left( [\O_X] - \O_{h_2} \right) & \mathrm{if} \: k=1, p=n\\
	Q_2 \O_{k,1} & \mathrm{if} \:k>1, \: p=n\\
	\O_{n-1,n} + Q_2 \left( [\O_X] - \O_{h_2} \right)  & \mathrm{if} \: k=n, p=n-1\\
	\O_{k,p+1} & \mathrm{if} \: p<n, \: k \neq p+1\\
	\O_{p,p+1} + \O_{p-1,p} - \O_{p-1,p+1} & \mathrm{if} \: 1< k<n-1, \: k=p+1\\
	\end{array}\right.
	\end{align*}
\end{proposition*}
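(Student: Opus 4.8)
The plan is to read off each product $\O_{h_i}\star\O_{k,p}$ from the genus-$0$ $K$-theoretic Gromov--Witten invariants $\langle\O_{h_i},\O_{k,p},\O_{s,t}\rangle_{d_1[l_1]+d_2[l_2]}$, using the defining relation of the small quantum $K$-product together with the quantum $K$-metric. Recall that $\O_{h_1}=\O_{n-1,1}$ and $\O_{h_2}=\O_{n,2}$ are the two Schubert divisors. First I would bound the Novikov degrees that can contribute: since $h_1$ and $h_2$ have codimension $1$, a dimension count on $\overline{\mathcal{M}_{0,3}}(Fl_{1,n-1},d)$ against the codimensions of $X(h_i)$, $X(k,p)$, $X(s,t)$, combined with the boundedness of the Novikov support of quantum $K$-structure constants of flag varieties, shows that only $d\in\{(0,0),(1,0),(0,1),(1,1)\}$ occur. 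The coefficient of $Q^{(0,0)}$ is then the classical product $\O_{h_i}\cdot\O_{k,p}$ in $K(Fl_{1,n-1})$, which Proposition \ref{prop : produit dans K(X)} computes with $(i,j)=(n-1,1)$, resp.\ $(n,2)$; unwinding the two cases of that rule, and using the convention $\O_{a,b}=0$ for $a<1$ or $b>n$, produces exactly the classical lines of both formulas and shows that the classical coefficient vanishes precisely in the ``out of range'' cases ($k=1$ for $\O_{h_1}$, $p=n$ for $\O_{h_2}$).

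It remains to compute the quantum corrections. Expanding the small quantum $K$-product recursively in the Novikov degree, the coefficient of $Q^{e}$ in $\O_{h_i}\star\O_{k,p}$ is obtained from the three-point invariant $\langle\O_{h_i},\O_{k,p},\O_{s,t}\rangle_{e}$ after subtracting the contributions of strictly smaller degree coming from the quantum $K$-metric, i.e.\ from the two-point invariants $\langle\O_u,\O_v\rangle_{e'}$ with $e'>0$. All the invariants I would compute geometrically through the Gromov--Witten varieties of Definition \ref{def: W variety english}: by the projection formula $\langle\O_{h_i},\O_w,\O_v\rangle_{e}=\chi_T\big(\O_{\overline{\W}}\big)$ for $\W=ev_1^{-1}(g_1 X(h_i))\cap ev_2^{-1}(g_2 X(w))\cap ev_3^{-1}(g_3 X(v))$ with $g$ general, so that $\chi_T(\O_{\overline{\W}})=1$ whenever $\overline{\W}$ is irreducible with rational singularities and rationally connected, and $=0$ when $\W$ is empty; the two-point invariants are treated the same way with $\overline{\mathcal{M}_{0,2}}$ in place of $\overline{\mathcal{M}_{0,3}}$.

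To evaluate these I would exploit the two forgetful maps $\pi\colon Fl_{1,n-1}\to\P^{n-1}$ (forget the hyperplane) and $\pi'\colon Fl_{1,n-1}\to(\P^{n-1})^{*}$ (forget the point). A degree $d_1[l_1]+d_2[l_2]$ curve projects to a degree $d_1$, resp.\ $d_2$, curve in the target; hence for $d=(1,0)$ (contracted by $\pi'$) and $d=(0,1)$ (contracted by $\pi$) the compatibility of Gromov--Witten varieties with forgetful maps established in Chapter \ref{chap : geometry W} (the lemmas leading to Theorem \ref{th : RC fibrations bewteen W varieties}) reduces the computation to degree-$1$ invariants of a projective space, where the curves are lines through explicit linear subspaces and the Gromov--Witten varieties are classical incidence varieties; this yields the terms $Q_1\O_{n,p}$, $Q_1([\O_X]-\O_{h_1})$ of $\O_{h_1}\star\O_{k,p}$ and their $\O_{h_2}$-analogues, the combinations $[\O_X]-\O_{h_i}$ arising as $K$-theoretic pushforwards of structure sheaves of Schubert-divisor-type loci. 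For $d=(1,1)$, which only enters the case $k=1$, $p=n$, I would instead describe the relevant Gromov--Witten variety directly --- it parametrizes degree-$(1,1)$ maps whose image meets a general translate of the point Schubert variety and general translates of $X(s,t)$ and $X(s',t')$ --- and show it is empty or irreducible and rational using the rationality of fibers of evaluation maps, which produces the $Q_1Q_2([\O_X]-\O_{h_i})$ term.

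The main obstacle is twofold. Because quantum $K$-theory has no divisor axiom, one cannot simply read the Chevalley product off the three-point invariants: the recursive correction by the quantum $K$-metric must be carried out, which forces one to compute the two-point invariants $\langle\O_u,\O_v\rangle_{(1,0)},\langle\O_u,\O_v\rangle_{(0,1)},\langle\O_u,\O_v\rangle_{(1,1)}$ exactly --- again via the forgetful maps and two-pointed Gromov--Witten varieties. The more delicate point, though, is identifying the degree-$(1,1)$ Gromov--Witten varieties sharply enough to certify rationality or emptiness: this is precisely the kind of input the rational-connectedness results of Chapter \ref{chap : geometry W} are built to provide, and carrying the $T$-equivariant Euler characteristics through the Schubert expansion so that the signs come out right (the $-\O_{p-1,p+1}$ and $-\O_{h_i}$ terms) is where most of the bookkeeping lies.
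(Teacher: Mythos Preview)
Your overall strategy---compute the classical part from Proposition \ref{prop : produit dans K(X)}, compute the low-degree three-point correlators geometrically via the forgetful maps to $\P^{n-1}$, and then subtract the contributions of the quantum $K$-metric---is exactly what the paper does, and your identification of the two-point invariants as the source of the correction terms is correct. The paper organizes this bookkeeping by packaging the alternating sum $\sum_k(-1)^k\sum_{\d_0+\cdots+\d_k=\d}\langle\O_{h_i},\O_v,\I_{\alpha_1}\rangle_{\d_0}\langle\O_{\alpha_1},\I_{\alpha_2}\rangle_{\d_1}\cdots\langle\O_{\alpha_k},\I_w\rangle_{\d_k}$ as an Euler characteristic of a ``projected boundary Gromov--Witten variety'' $\Gamma_{\d_0,\ldots,\d_k}(h_i,v)=ev_3(M_{\d_0,\ldots,\d_k}(h_i,v))$, which it shows is always a translate of a Schubert variety; this makes the sign-tracking you worry about essentially automatic.

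The genuine gap in your plan is the degree bound. You write that a dimension count on $\overline{\mathcal{M}_{0,3}}$ together with finiteness forces $d\in\{(0,0),(1,0),(0,1),(1,1)\}$, but in quantum $K$-theory this is false at the level of correlators: the paper computes explicitly (Part \ref{subsec: 3 points cor X}) that $\langle\O_{h_i},\O_{k,p},\I_w\rangle_\d$ is \emph{nonzero} for arbitrarily large $\d$---for $\d\geq 2l_1+2l_2$ it equals $\chi([\O_X]\cdot\I_w)$, and similarly the two-point invariants stabilize rather than vanish. What bounds the product is not vanishing of individual invariants but exact cancellation between the three-point term at $\d_0$ and the chains of two-point corrections at $\d_1,\ldots,\d_k$. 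The paper proves this cancellation in Section \ref{sec: degree}: the projected varieties $\Gamma_{\d_0,\ldots,\d_k}(h_i,v)$ stabilize (Lemma \ref{lem: cancellation G^ab}), so the alternating sum over decompositions $\d=\d_0+\cdots+\d_k$ reduces to a purely combinatorial identity $\sum_r(-1)^r\#\{\text{decompositions}\}=0$ (Lemma \ref{lem : 0 k}), valid precisely when $\d\not\leq l_1+l_2$. Without this cancellation argument your proof cannot get off the ground; the dimension heuristic you invoke is a quantum-cohomology intuition that does not survive to $K$-theory.
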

Note that for $X$ a cominuscule variety, a Chevalley formula in small quantum $K$-theory is given by Buch-Chaput-Mihalcea-Perrin \cite{buch2016chevalley}.

 Since $(\O_w)_{w \in W^P}$ is a basis of $K(Fl_{1,n-1})$, for any elements $u$ and $v$ in $W^P$, there is a unique decomposition \begin{equation*}
 \O_u \star \O_v = \sum_{d_1, \: d_2 \in \mathbb{N}} \sum_{w \in W^P} a_{u, v}^{w; d_1, d_2} Q_1^{d_1} Q_2^{d_2} \O_w.
 \end{equation*} 
 We call the polynomials $\sum_{d_1, \: d_2 \in \mathbb{N}} a_{u, v}^{w; d_1, d_2} Q_1^{d_1} Q_2^{d_2}$ \textit{Littlewood-Richardson coefficients in $QK_s(Fl_{1,n-1})$}.
 We call the ring $QK_s(Fl_{1,n-1})$ \textit{positive} if for any elements $u$ and $v$ in $W^P$, $d_1$, $d_2$ in $\mathbb{N}$ the sign of the coefficient $a_{u, v}^{w; d_1, d_2}$ is given by the following inequality. \begin{equation*}
 (-1)^{\codim X(w) - \codim X(u) - \codim X(v) + \int_{d_1 [l_1] + d_2 [l_2]} c_1(T_X)} a_{u, v}^{w; d_1, d_2} \geq 0.
 \end{equation*}
 Note that for $d_1=0=d_2$, this inequality is satisfied according to M. Brion's result on positivity in the Grothendieck groups of generalized flag varieties \cite{brion2002positivity}.
 Finally, we call an algorithm \textit{positive} if at each iteration the computation performed is a sum of coefficients having the same sign. 
  \begin{proposition*}
  	The algorithm given Subsection \ref{subsec: algo} is a positive algorithm computing Littlewood-Richardson coefficients in $QK_s(Fl_{1,n-1})$.
  \end{proposition*}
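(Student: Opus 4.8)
The statement combines two assertions --- that the algorithm of Subsection \ref{subsec: algo} outputs $\O_u\star\O_v$, and that it is \emph{positive} in the stated sense --- and I would establish both simultaneously by induction along the recursion the algorithm follows.

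Correctness first. The algorithm reduces a product $\O_u\star\O_v$ using only the associativity and commutativity of $\star$ together with the Chevalley formula of Proposition \ref{prop: chevalley formula QK(X)}: unless one factor is already a divisor class $\O_{h_1}$ or $\O_{h_2}$, it rewrites one of them --- say $\O_v$ --- via a Chevalley relation as a $\star$-product $\O_{h_i}\star\O_{v'}$ plus finitely many further Schubert classes, and moves $\O_{h_i}$ across by associativity, leaving $\star$-products with ``simpler'' factors; a short, explicit list of terminal products on which this reduction stalls --- notably those involving the point class $\O_{n,1}$ and the near-diagonal classes $\O_{i,i+1}$, for which multiplication by $\O_{h_1}$ or $\O_{h_2}$ is not free of correction terms --- is evaluated directly from Propositions \ref{prop: chevalley formula QK(X)} and \ref{prop : produit dans K(X)}. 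Hence correctness is automatic once the recursion is known to terminate. Termination I would get from a monovariant: the pair consisting of the bidegree $(d_1,d_2)$ of the Novikov monomial still to be produced and, at fixed bidegree, a nonnegative integer measuring how far the Schubert classes occurring are from the terminal list; the Chevalley formula shows that each reduction step either keeps the bidegree and strictly decreases this integer, or raises a power of $Q_1$ or of $Q_2$, while the finiteness of the product $\O_u\star\O_v$ --- the fact that only finitely many monomials $Q_1^{d_1}Q_2^{d_2}$ occur, which for $Fl_{1,n-1}$ can be read directly off the Chevalley formula --- bounds the first coordinate, so the process halts.

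The substance of the proposition is positivity. I would prove by induction along the recursion that every intermediate expression the algorithm holds is a $\Z$-combination $\sum c^{d_1,d_2}_w\,Q_1^{d_1}Q_2^{d_2}\,\O_w$ all of whose nonzero coefficients satisfy the predicted sign inequality $(-1)^{\codim X(w)-\codim X(u)-\codim X(v)+\int_{d_1[l_1]+d_2[l_2]}c_1(T_X)}\,c^{d_1,d_2}_w\geq 0$, and, at the same time, that each arithmetic operation the algorithm carries out --- each addition of two intermediate coefficients --- combines numbers already lying in the same sign class, which is precisely the positivity of the algorithm. The base case $Q_1=Q_2=0$ is the ordinary $K$-theoretic product, positive by M.\ Brion's theorem \cite{brion2002positivity}, together with the terminal products, whose values one checks by hand to have the predicted signs. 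For the inductive step one multiplies the current, sign-correct expression by $\O_{h_1}$ or $\O_{h_2}$ and substitutes the Chevalley formula; since that formula is itself sign-alternating in exactly the predicted pattern and multiplication by $\O_{h_i}$ shifts the relevant grading by the right amount, a term-by-term bookkeeping shows the resulting expression is again sign-correct and that contributions landing on a common basis element $\O_w$ arrive with a common sign.

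The main obstacle I anticipate is precisely this final sign bookkeeping, which concentrates in two spots: the special cases of the Chevalley formula (the three-term outputs such as $\O_{p-1,p}+\O_{p,p+1}-\O_{p-1,p+1}$ and the mixed corrections $Q_1([\O_X]-\O_{h_1})$, $Q_1Q_2([\O_X]-\O_{h_1})$), and the evaluation of the terminal products attached to the point class --- both of which involve genuine subtractions, and one must rule out that these ever make coefficients of opposite sign cancel. Since $Fl_{1,n-1}$ has only two divisor generators and its Schubert basis is indexed by the explicit pairs $(i,j)$, this is a finite, if somewhat delicate, case analysis; positivity of the ring $QK_s(Fl_{1,n-1})$ then follows as a corollary of the correctness of the algorithm together with the sign control established along it.
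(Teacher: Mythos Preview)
Your approach is essentially the same as the paper's: both prove positivity by induction along the steps of the algorithm, the key lemma being that the Chevalley formula itself obeys the sign rule and that composing with $\O_{h_i}\star(-)$ (or with $Q_j\cdot(-)$) shifts the relevant grading by exactly the amount that preserves the inequality. The paper packages this as Lemma~\ref{lem: Ohi star Ov positive}, whose parts (i)--(ii) check positivity of $\O_{h_i}\star\O_v$ directly from Proposition~\ref{prop: chevalley formula QK(X)}, and whose parts (iii)--(iv) are precisely your ``term-by-term bookkeeping'' that products of sign-correct expressions stay sign-correct.

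Two places where your write-up diverges from the paper and overcomplicates matters. First, the algorithm of Subsection~\ref{subsec: algo} is not a general rewriting system but a fixed, finite sequence of substitutions expressing each $\O_{k,p}$ in terms of earlier ones, with sole base case $\O_{n,1}=[\O_X]$ acting as the identity; there are no ``terminal products'' to evaluate separately, and termination is immediate from the explicit indexing --- your monovariant is unnecessary. (Note also that $\O_{n,1}$ is the fundamental class, not the point class; the point is $\O_{1,n}$.) Second, the ``delicate case analysis'' you anticipate for the correction terms $-Q_1\O_{h_1}$, $-Q_1Q_2\O_{h_1}$ and for the three-term Chevalley outputs is handled in the paper not by brute casework but by the uniform dimension-shift observation: since $\int_{l_i}c_1(T_X)=n-1$ and $\codim h_i=1$, each such term lands in the correct parity class automatically (Lemma~\ref{lem: Ohi star Ov positive} (iv) and the $\O_{p,p+1}$ step of the induction). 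So the ``main obstacle'' you flag dissolves once the dimension bookkeeping is made explicit.
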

\begin{corollary*}(Cf. Proposition \ref{prop: positivity Ou star Ov})
	The small quantum $K$-theory ring $QK_s(Fl_{1,n-1})$ is positive.
\end{corollary*}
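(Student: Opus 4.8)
The plan is to deduce this from the preceding proposition --- which asserts that the algorithm of Subsection~\ref{subsec: algo} computes every coefficient $a_{u,v}^{w;d_1,d_2}$ and that each iteration it performs is a sum of already-computed quantities all carrying the same sign --- together with M.~Brion's positivity theorem \cite{brion2002positivity}. I would argue by induction on the number of iterations: if the data the algorithm starts from obeys the sign rule
\[
(-1)^{\codim X(w) - \codim X(u) - \codim X(v) + \int_{d_1[l_1]+d_2[l_2]} c_1(T_X)}\, a_{u,v}^{w;d_1,d_2} \geq 0,
\]
and if this sign rule is preserved by the elementary operations the algorithm uses, then every output obeys it, which is exactly the positivity of $QK_s(Fl_{1,n-1})$.

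First I would make the exponent explicit. Since $Fl_{1,n-1}$ is a hypersurface of bidegree $(1,1)$ in $\P^{n-1}\times\check{\P}^{n-1}$, adjunction gives $-K_{Fl_{1,n-1}}=\O(n-1,n-1)|_{Fl_{1,n-1}}$, and a short computation with the two projections then gives $\int_{[l_1]}c_1(T_X)=\int_{[l_2]}c_1(T_X)=n-1$; hence the exponent in the sign rule is the parity of $\varepsilon(u,v;w;d_1,d_2):=\codim X(w)-\codim X(u)-\codim X(v)+(n-1)(d_1+d_2)$. The data the algorithm is built on consists of: the classical structure constants (the case $d_1=d_2=0$), which obey the sign rule by Brion's theorem; and the quantum Chevalley coefficients of Proposition~\ref{prop: chevalley formula QK(X)}, for which I would simply check, branch by branch, that the sign of each displayed term matches $(-1)^{\varepsilon}$ --- for instance in $\O_{h_1}\star\O_{1,n}=Q_1\O_{n-1,n}+Q_1Q_2([\O_X]-\O_{h_1})$ one verifies that the three terms carry the signs prescribed by $\varepsilon$, using the codimensions of the Schubert varieties involved and the value of $\int c_1(T_X)$ just computed; the three-term branches are the least automatic.

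It remains to see that $\varepsilon$ is compatible with the operations of the algorithm. The structural point is that $\varepsilon$ is additive under composition of star-products: if $\O_a\star\O_b=\sum_{c,\mathbf e} Q^{\mathbf e}\,n^{c;\mathbf e}_{a,b}\,\O_c$ and $\O_c\star\O_v=\sum_{w,\mathbf e'} Q^{\mathbf e'}\,n^{w;\mathbf e'}_{c,v}\,\O_w$, then in $\O_a\star(\O_b\star\O_v)$ the part proportional to $Q^{\mathbf e+\mathbf e'}\O_w$ coming from this pair of steps has sign $(-1)^{\varepsilon(a,b;c;\mathbf e)+\varepsilon(c,v;w;\mathbf e')}$; the $\codim X(c)$ contributions cancel and the degrees $\mathbf e$, $\mathbf e'$ add, so --- granting the sign rule for $\O_b\star\O_v$ --- this is $(-1)^{\varepsilon}$ for the composite. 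The only other kind of step amounts to rearranging a Chevalley relation so as to express its leading Schubert class through a product by $\O_{h_1}$ or $\O_{h_2}$ minus the remaining terms, and the preceding proposition guarantees this rearrangement is sign-coherent. Unwinding the recursion, every $a_{u,v}^{w;d_1,d_2}$ becomes a sum whose summands all have the single sign $(-1)^{\varepsilon(u,v;w;d_1,d_2)}$, which is the claimed inequality. The main obstacle is the interplay of these last ingredients: one must check that the Novikov-degree bookkeeping in $\varepsilon$ --- the term $(n-1)(d_1+d_2)$ together with the sign changes the Chevalley formula attaches to each occurrence of $Q_1$ and $Q_2$ --- stays consistent along every branch of the recursion, and that the algorithm terminates; both are furnished by Proposition~\ref{prop: chevalley formula QK(X)} and by the algorithm of Subsection~\ref{subsec: algo}.
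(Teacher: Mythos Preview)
Your proposal is correct and follows essentially the same approach as the paper. The paper's proof of Proposition~\ref{prop: positivity Ou star Ov} likewise computes $\int_{d_1l_1+d_2l_2}c_1(T_X)=(n-1)(d_1+d_2)$ via adjunction, verifies the Chevalley products $\O_{h_i}\star\O_v$ are positive branch by branch, and then inducts along the steps of the algorithm using exactly the additivity of the exponent under composition that you isolate (this is the content of Lemma~\ref{lem: Ohi star Ov positive}~$iii)$--$iv)$ in the paper); the only organizational difference is that the paper runs the induction explicitly over the five cases of the algorithm rather than packaging the sign-coherence as a separate ``positive algorithm'' proposition.
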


\begin{conjecture*}
	The closed formula described in Subsection \ref{subsec: conjecture LR} yields Littlewood-Richardson coefficients in $QK_s(Fl_{1,n-1})$.
\end{conjecture*}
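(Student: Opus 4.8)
Since the statement is a conjecture I describe the approach I would take rather than a complete proof. The key external input is that the positive algorithm of Subsection~\ref{subsec: algo} is already proved to compute the Littlewood--Richardson coefficients of $QK_s(Fl_{1,n-1})$, so it is enough to show that the closed formula of Subsection~\ref{subsec: conjecture LR} returns the same values as that algorithm. Writing $\star'$ for the operation on $\bigoplus_{w\in W^P}\Q[Q_1,Q_2]\,\O_w$ defined by the closed formula, and unwinding the algorithm, this reduces to four points: (i) $\star'$ is commutative with unit $[\O_X]$; (ii) $\star'$ specializes at $Q_1=Q_2=0$ to the classical product of Proposition~\ref{prop : produit dans K(X)}; (iii) $\star'$ agrees with the Chevalley formula of Proposition~\ref{prop: chevalley formula QK(X)} whenever one factor is $\O_{h_1}$ or $\O_{h_2}$; and (iv) $\star'$ satisfies the divisor--peeling recursion underlying the algorithm, i.e.\ if $\O_u=\O_{h_i}\star\O_{u'}-(\text{lower Chevalley terms})$ is the classical expression of $\O_u$ obtained by peeling off a divisor, then $\O_u\star'\O_v=\O_{h_i}\star'(\O_{u'}\star'\O_v)-(\text{lower terms})$ for every $v$. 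Points (i)--(iii) are finite, if tedious, verifications: (i) and (ii) are read off from the shape of the closed formula and from Proposition~\ref{prop : produit dans K(X)}, and (iii) is a case analysis specializing the closed formula to $u\in\{h_1,h_2\}$ and matching it with Proposition~\ref{prop: chevalley formula QK(X)}, the quantum corrections $Q_1\O_{n-1,n}$, $Q_2\O_{1,2}$, $Q_1Q_2([\O_X]-\O_{h_i})$ included.

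The conceptual reason that (iv) closes the argument is that $QK_s(Fl_{1,n-1})$ is generated, as a $\Q[Q_1,Q_2]$-algebra (after passing, if needed, to the completion at $(Q_1,Q_2)$), by the two codimension one classes $\O_{h_1}$ and $\O_{h_2}$. I would prove this by combining the classical fact that $K(Fl_{1,n-1})$ is generated by the classes of its two Schubert divisors --- $Fl_{1,n-1}$ is the incidence hypersurface in $\P^{n-1}\times(\P^{n-1})^{\vee}$, and the classical Chevalley formula lets one solve for each $\O_{X(i,j)}$ as a polynomial in those two classes by induction on codimension --- with a Nakayama argument: the subalgebra generated by $\O_{h_1},\O_{h_2}$ together with $(Q_1,Q_2)\cdot QK_s$ exhausts $QK_s$ by the classical statement and the $Q_i\to 0$ limit of $\star$, and $QK_s$ is a free module of finite rank over $\Q[Q_1,Q_2]$. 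Granting generation, the commuting operators $L_i:=\O_{h_i}\star(-)$ determine all $\star$-products: $\O_u=f_u(L_1,L_2)([\O_X])$ for some polynomial $f_u$, whence $\O_u\star\O_v=f_u(L_1,L_2)(\O_v)$ independently of the choice of $f_u$. Since by (i)--(iii) the operation $\star'$ shares these divisor operators and reduces to the classical product at $Q=0$, the same description applies to $\star'$ once $\star'$ is known to be associative; and associativity of $\star'$ is exactly what (iv) delivers, by induction on $\codim X(u)$, yielding simultaneously $\O_u\star'\O_v=\O_u\star\O_v$ for all $u,v$, i.e.\ the conjecture.

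The main obstacle is point (iv). Although it is a single recursive identity for the closed formula, verifying it means propagating the quantum correction terms $Q_1Q_2([\O_X]-\O_{h_i})$ through the induction while tracking every boundary case in Proposition~\ref{prop : produit dans K(X)} (the alternatives $i+k-n\geq j+p$, $i<j$, $k<p$, $k=p+1$, and the vanishing conventions $\O_{i,j}=0$ for $i<1$ or $j>n$). Before attempting the induction I would first pin down the closed formula in the ``stable'' range of degrees using the comparison formula of Chapter~\ref{chap : stabilization correlators}, which expresses a correlator of $Fl_{1,n-1}$ in sufficiently large degree as a correlator of $\P^{n-1}$, where the quantum $K$-theory is explicitly known; matching the formula there first would fix the correct normal form and reduce the induction to finitely many small, genuinely quantum degrees, on top of the classical case already settled by Proposition~\ref{prop : produit dans K(X)}.
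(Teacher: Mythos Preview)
The paper does not prove this statement: it is explicitly presented as a conjecture. In Subsection~\ref{subsec: conjecture LR} the author only says that the closed formula was guessed from computer output produced by the algorithm of Subsection~\ref{subsec: algo}, and that it ``has been verified for small values of~$n$.'' There is no argument beyond that numerical check, so there is nothing in the paper to compare your proposal to.

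Your outline is a sensible strategy and goes well beyond what the paper attempts. The reduction to (i)--(iv) is the right shape: once one knows that $QK_s(Fl_{1,n-1})$ is generated over $\Q[[Q_1,Q_2]]$ by $\O_{h_1},\O_{h_2}$ (which does follow from the classical generation plus Nakayama, as you say), any associative product agreeing with the Chevalley formula and with the classical product at $Q=0$ must coincide with~$\star$. You correctly identify (iv) as the crux, and you are honest that you have not carried it out. One caution: as stated, (iv) mixes $\star$ and $\star'$ in a way that risks circularity --- the hypothesis $\O_u=\O_{h_i}\star\O_{u'}-\cdots$ uses the true product, and the recursion you then want for $\star'$ is essentially the associativity identity $(\O_{h_i}\star'\O_{u'})\star'\O_v=\O_{h_i}\star'(\O_{u'}\star'\O_v)$ specialized to the Chevalley expression of $\O_u$. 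So what really has to be checked is a closed-form identity for the conjectural formula, case by case over the five branches of the algorithm and the boundary conventions of Proposition~\ref{prop : produit dans K(X)}. That is exactly the combinatorial work the paper leaves open; your idea of first pinning down the formula in the stable range via Chapter~\ref{chap : stabilization correlators} is plausible but note that the comparison theorem there only controls correlators with insertions pulled back from $\P^{n-1}$, so it will not by itself determine the full product on $Fl_{1,n-1}$.
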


\chapter*{Definitions and notations}
\addcontentsline{toc}{chapter}{Definitions and notations}  

\section{Space of stable maps $\overline{{\mathcal{M}}_{0,r}}(Y, \gamma)$.} \textsc{Definitions.} Let $Y$ be a complex projective smooth variety. Let $C$ be a connecetd reduced nodal curve of genus zero equipped with $r$ distinct non singular marked points.
A morphism $C \rightarrow Y$ from $C$ to the variety $Y$ is called a \textit{stable map} if each irreducible component of $C$ sent onto a point contains at least three points that are nodal or marked. Let $\gamma$ be an element in $H_2(Y, \Z)$ corresponding to an effective class of curve. We say the morphism $\varphi : C \rightarrow Y$ \textit{represents the class $\gamma$} if $\varphi_*[C]= \gamma \in H_2(Y, \Z)$.
Let $S$ be a complex scheme. A family of quasi-stable maps with $r$ marked points above $S$ is given by \begin{itemize} \item A flat and projective morphism $\pi: \mathcal{C} \rightarrow S$ equipped with $r$ sections $p_i: S \rightarrow \mathcal{C}$, such that each geometrical fiber is a reduced nodal projective connected curve of genus zero $C_s$ and such that the marked points $p_i(s)$ are distinct and non singular.
	\item A morphism $\mathcal{C} \rightarrow Y$. 
\end{itemize} The \textit{space $\overline{{\mathcal{M}}_{0,r}}(Y, \gamma)$ of genus $0$ stable maps to $Y$ representing $\gamma$}  is the coarse moduli space parametrizing families of stable maps with $r$ marked points above $S$ from genus $0$ curves to $Y$ representing the class $\gamma$, such that the geometric fiber $\mathcal{C}_s$ above any point$s$ in $S$ defines a stable map $\mathcal{C}_s \rightarrow Y$ \cite{fulton1996notes}. 
For a homogeneous variety $Y$, the space $\overline{{\mathcal{M}}_{0,r}}(Y, \gamma)$ is an irreducible rational projective variety, ofe dimension the expected dimension and with quotient singularities \cite{fulton1996notes, kim2001connectedness}. Furthermore, the subvariety ${{\mathcal{M}}_{0,r}}(Y, \gamma)$ parametrizing maps $\P^1 \rightarrow Y$ is a dense open subset of $\overline{{\mathcal{M}}_{0,r}}(Y, \gamma)$. A general point in $\overline{{\mathcal{M}}_{0,r}}(Y, \gamma)$ thus corresponds to a map $\P^1 \rightarrow Y$ representing the class $\gamma$.

Note that a  general point in $\overline{{\mathcal{M}}_{0,r}}(Y, \gamma)$ is not necessarily associated with an injective map $\P^1 \to Y$. This is immediate if $Y= \P^1$; indeed, a general point in $\overline{{\mathcal{M}}_{0,r}}(\P^1, d)$ is associated with a degree $d$ map $\P^1 \rightarrow \P^1$. This phenomenon may also be observed in higher dimensions. Consider for example the full flag variety $Y=Fl_{1,2}=\GL_3/B$ parametrizing pairs $(p,l)$ where $p$ is a point in $\P^2$ and $l$ is a line in $\P^2$ taht contains $p$. Fix a line $l$ in $\P^2$. Denote by $\gamma$ the class of the curve $C \simeq \P^1 \subset Fl_{1,2}$ containing all pairs $(p,l)$, where $p \subset l$ and by $\pi : Fl_{1,2} \rightarrow \P^2$ the map forgetting the line $l'$ in a pair $(p,l')$. Then the restriction of $\pi$ to the curve $C$ is an isomorphism onto its image, which is the line $l \subset \P^2$. Denote by $\pi': Fl_{1,2} \rightarrow \Gr(2,3)$ the map induced by forgetting the point  $p$ in a pair $(p,l)$. An irreducible curve $C'$ in $Fl_{1,2}$ of class $2\gamma$ satisfies $\pi_*[C']=2$ in $A_*(\P^2)$. Furthermore its image $\pi(C')$ in $\P^2$ is included in the line in $\P^2$ defined by the image $\pi'(C') \in \Gr(2,3)$. Then the projection $\pi$ induces a degree $2$ morphism between the curve $C'$ and its image $\pi(C')$, which contradicts the fact that the restriction of $\pi$ to $C'$ is an injection. There thus is no irreducible curve of class $2 \gamma$ in $Fl_{1,2}$, and a general point in ${{\mathcal{M}}_{0,r}}(Fl_{1,2}, 2\gamma) \hookrightarrow \overline{{\mathcal{M}}_{0,r}}(Fl_{1,2}, 2\gamma)$ corresponds to a degree $2$ map $\P^1 \rightarrow Fl_{1,2}$. 

\textsc{$G$-action on $\overline{{\mathcal{M}}_{0,r}}(Y, \gamma)$.}  Let $G$ be a complex algebraic group, $Y$ be a smooth projective $G$-variety (in this thesis $G$ will be the group $\GL_n$ of invertible matrices and $Y$ will be a flag variety parametrizing flags of vector spaces included in $\C^n$). Let $\gamma$ be an element of $H_2(Y, \Z)$ corresponding to an effective class of curve. We consider the natural transformation of functors associating to a morphism $\phi: S \rightarrow G$ and to a family of quasi-stable maps $(\pi: \mathcal{C} \rightarrow S, \mu: \mathcal{C} \rightarrow Y)$ above $S$ with $r$ marked points representing the class $\gamma$ the following family: \[\begin{tikzcd} \mathcal{C} \arrow{r}{G \cdot \mu} \arrow{d}{\pi} & Y \\ S & \end{tikzcd}\]
where we denote by $G \cdot \mu : \mathcal{C} \rightarrow Y$ the morphism $\mathcal{C} \ni c \rightarrow \Phi \circ \pi(c) \cdot \mu(c) \in Y$ translating curves $\mu(\mathcal{C}_s)$ by the element in $G$ image of $s$ by $\Phi$.
This natural transformation of functors induces an action $\overline{{\mathcal{M}}_{0,r}}(Y, \gamma) \times G \rightarrow \overline{{M}_{0,r}(Y, \gamma)}$, associating to an element $g$ in $G$ and to a map $\mu: C \rightarrow Y$ the map $$g \cdot \mu: C \ni p \rightarrow g \cdot \mu(p) \in Y.$$

\section{Flag varieties}

We recall here some classical definitions and properties of flag varieties. 
Let $n>0$. We consider a collection $I= \{i_1, \dots, i_m\}$ of $m$ non-negative integers $i_k$ satisfying $i_0:=0 < i_1 < \dots < i_m < i_{m+1} :=n$. We denote by $Fl_I$ the variety parametrizing flags of vector spaces $$V_{i_1} \subset V_{i_2} \subset \dots \subset V_{i_m} \subset  \C^n,$$
where $V_{i_k}$ is a vector subspace of $\C^n$ of dimension $i_k$. Note that the variety $Fl_I$ represents the functor $Sch \to Sets$ associating to a scheme $S$ the following set: $$\{\mathcal{F}_1 \subset \dots \subset \mathcal{F}_m \subset \O_S^{\oplus n}\},$$ where the fiber bundles over $S$ $\mathcal{F}_k$ have rank $i_k$ and the quotient sheaves $\mathcal{F}_k/ \mathcal{F}_{k-1}$ are rank $(i_k-i_{k-1})$ fiber bundles. Furthermore, we consider the complex algebraic group $G= \GL_n$, the maximal torus $T$ of $G$ given by the set of diagonal matrices and the Borel subgroup $B \subset G$  given by the set of upper triangular matrices. We denote by $W \simeq \mathfrak{S}_n$ the Weyl group of $G=\GL_n$, and $\Delta={\alpha_1, \dots, \alpha_n}$ the set of simple roots positive with respect to $B$. We denote by $\Delta(I)=\{\alpha_1, \dots \alpha_{i_1-1}, \dots,  \alpha_{i_m-1}\}$ the subset of $\Delta$ associated with $I$, and by $W_I \simeq \prod_{k=1}^{m+1} \mathfrak{S}_{i_k-i_{k-1}}$ the subgroup of $W$ generated by reflexions $s_\alpha$ associated with elements $\alpha$ in $\Delta(I)$. Finally, we denote by $P_I := B W_I B$ the parabolic subgroup associated with $\Delta(I)$. Note that the block diagonal matrix whose $k$-th block is $GL_{i_k - i_{k-1}}$ is a Levi sugroup $L_I$  associated with $P_I$. We have the following identification: $Fl_I \simeq G/P_I$.

Let $1 \leq k <m$. Composing the map forgetting all vector spaces except from the $k$-th one with the Pl\"ucker embedding $\Gr(i_k,n) \hookrightarrow \P^{\binom{n}{i_k}-1}$ induces a map $\varphi: Fl_I \rightarrow \P^{\binom{n}{ i_k}-1}$. Let $l_i$ be a curve on $Fl_I$ sent onto a line in $\P^{\binom{n}{i_k}-1}$ by $\varphi$, and such that the restriction of $\varphi$ to $l_i$ is an isomorphism. The classes $[l_i]$ generate the free abelian group $H_2(Fl_I, \Z)$, and we may thus identify an effective class of curve $\d=\sum_{k=1}^{m} d_k [l_k]$ in $E(Fl_I) \subset H_2(Fl_I, \Z)$ with $m$ positive integers $(d_1, \dots, d_{m})$. 


\section{Algebraic $K$-theory}

\subsection{Equivariant algebraic $K$-theory.} We recall here some classical definitions and properties of the equivariant $K$-theory of a variety; complete proofs can be found in \cite{chriss2009representation}, chapter 5. Let $H$ be a complex linear algebraic group, let $X$ be an irreducible variety with an $H$-action. Let $E$ be a vector bundle on $X$. We call $E$ an \textit{$H$-equivariant vector bundle} if there exists an $H$-action $\Phi: H \times E \rightarrow E$ satisfying \begin{itemize}[label={--}]
	\item The map $E \rightarrow X$ commutes with $H$-actions.
	\item For any $x$ in $X$ and $g$ in $G$, the map $\Phi(g, \bullet) : E_x \rightarrow E_{g \cdot x}$ is a linear map of vector spaces,
\end{itemize} where we denote by $E_x$ the fiber over a point $x$ in $X$. Let $\mathcal{F}$ be a coherent sheaf on $X$. Denote by $a: H \times X \rightarrow X$ the action of $H$ on $X$, by $p: H \times X \rightarrow X$ the projection map and by $p_{2\:3}: H \times H \times  X\rightarrow H \times  X$ the projection along the first factor $H$. We call $\mathcal{F}$ an \textit{$H$-equivariant coherent sheaf} if there exists an $H$-action $\Phi: H \times \mathcal{F} \rightarrow \mathcal{F}$ satisfying \begin{itemize}[label={--}]
	\item There exists an isomorphism $I : a^* \mathcal{F} \rightarrow p^*\mathcal{F}$.
	\item ${p_{2 \:3}}^* I \circ (\id_H \times a)^* I = (m \times \id_X)^* I$ of sheaves on $H \times X$.
	\item There is an identification $I_{1 \times X}=\id: \mathcal{F} = a^* \mathcal{F}_{| 1 \times X} \rightarrow p^* \mathcal{F}_{| 1 \times X} = \mathcal{F}$.
\end{itemize}Denote by $K_H(X)$ the Grothendieck group of $H$-equivariant coherent sheaves on $X$ and by $K^H(X)$ the Grothendieck group of $H$-equivariant vector bundles on $X$. The tensor product of equivariant vector bundles defines a ring structure on $K^H(X)$. Moreover, the map $K^H(X) \otimes K_H(X) \rightarrow K_H(X)$ defined by the tensor product $$[E] \otimes [\mathcal{F}] \rightarrow [E \otimes_{\mathcal{O}_X} \mathcal{F}]$$ makes $K_H(X)$ a $K^H(X)$-module. When $X$ is non singular, the map $[E] \rightarrow [E \otimes \O_X]$ yields an isomorphism $K_H(X) \simeq K^H(X)$. Indeed, any equivariant sheaf then has a finite resolution by equivariant vector bundles. \\
Furthermore, for any equivariant proper morphism of irreducible varieties with an $H$-action $f : X \rightarrow Y$, there is a pushforward map $f_* : K_H(X) \rightarrow K_H(Y)$ defined by: \begin{equation*}
[\mathcal{F}] \rightarrow \sum_i (-1)^i [R^i f_* (\mathcal{F})].
\end{equation*}
Indeed, if $\mathcal{F}$ is an $H$-equivariant sheaf on $X$, the Godement resolution implies that $R^i f_* (\mathcal{F})$ will also be an $H$-equivariant sheaf on $Y$.\\
In particular, if $X$ is an irreducible projective $H$-variety, the equivariant pushforward $K_H(X) \rightarrow K_H(point)$ is called \textit{equivariant Euler characteristic}, and denoted by $\chi_H$.\\
Moreover, an equivariant morphism $f: X \rightarrow Y$ induces a pullback map $f^*: K^H(Y) \rightarrow K^H(X)$: \begin{equation*}
[E] \rightarrow [f^* (E)].
\end{equation*}
Consider an equivariant proper morphism $f: X \rightarrow Y$ of irreducible varieties with an $H$-action. Let $E$ be an equivariant vector bundle on $Y$, and $\mathcal{F}$ be an equivariant coherent sheaf on $X$. 
Note that the following projection formula is satisfied in $K_H(Y)$: \begin{equation*}
f_*(f^*[E] \otimes [\mathcal{F}]) = [E] \otimes f_*[\mathcal{F}].
\end{equation*}
\subsection{Algebraic K-theory of $G/P$.}\label{sec K theory} We recall here some classical definitions and properties of the Grothendieck ring of coherent sheaves on a variety. 
Denote by $K_\circ (X)$ the Grothendieck group of coherent sheaves on $X$ and by $K^\circ(X)$ the Grothendieck group of vector bundles on $X$. Note that for $H=\id$, the equivariant groups $K_H(X)$ and $K^H(X)$ are equal to $K_\circ(X)$ and $K^\circ(X)$. In particular, the tensor product of vector bundles defines a ring structure on $K^\circ(X)$, and for a non singular variety $X$ we can identify $K(X) = K_\circ(X) \simeq K^\circ(X)$. Furthermore, for any coherent sheaves $\mathcal{F}$ and $\mathcal{G}$ on $X$ we have
\[[\mathcal{F}] \cdot [\mathcal{G}] = \sum_{i=0}^{\dim X} (-1)^i Tor_i^X(\mathcal{F}, \mathcal{G}).\]
Furthermore, note that for an irreducible projective variety $X$, the pushforward $K_\circ(X) \to K_\circ(\Spec \C)$ is given by: \begin{align*} \chi: K_\circ(X) \rightarrow \Z, && [\mathcal{F}] \rightarrow \chi(\mathcal{F}) = \sum_{i} (-1)^i h^i(\mathcal{F}),\end{align*}
where $h^i$ denote the dimension of the $i$-th cohomology group of $\mathcal{F}$; i.e. $\chi$ is the {Euler-Poincar\'e characteristic} of $\mathcal{F}$.
A morphism of schemes $f : X \rightarrow Y$ is called \textit{perfect} if there is an $N$ such that 
\[Tor_i^Y(\O_X, \mathcal{F}) =0\]
for all $i > N$ and for all coherent sheaves $\mathcal{F}$ on $Y$. For a perfect morphism $f : X \rightarrow Y$, one can define the pull-back
\[f^*: K_\circ(Y ) \rightarrow K_\circ(X)\] by
\[f^* [\mathcal{F}] = \sum_{i =0}^N [Tor_i^Y(\O_X, \mathcal{F})].\]
Furthermore suppose $f$ is a morphism locally of finite type. Then if $f$ is flat or $Y$ is regular, $f$ is perfect-cf. for example Stacks Project $37.53$. 
We will also be using the following easy results.
\begin{lemme}\label{lem: equality homotetie}
	Let $Y$ be an irreducible projective variety and $\Phi$ be an irreducible projective subvariety of $\mathbb{P}^1 \times Y$. Denote by $\pi: \Phi \rightarrow \mathbb{P}^1$ the natural projection. Let $p_1$, $p_2$ be two points on $\mathbb{P}^1$. Then we have the following equality in $K_\circ(Y)$: \begin{equation*}
	[\mathcal{O}_{\pi^{-1}(p_1)}] = [\mathcal{O}_{\pi^{-1}(p_2)}] 
	\end{equation*}
\end{lemme}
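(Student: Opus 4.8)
The plan is to observe that the two fibres are cut out of $\Phi$ by linearly equivalent divisors, so that their structure sheaves already agree in $K_\circ(\Phi)$, and then to push this equality forward to $Y$. Write $\rho:\Phi\to Y$ for the second projection; since $\Phi$ is projective it is proper, hence induces $\rho_*:K_\circ(\Phi)\to K_\circ(Y)$. First I would dispose of the degenerate case: if $\pi$ is not dominant its image is a single point $q$, so $\Phi\subset\{q\}\times Y$ and $\pi^{-1}(p_i)$ is empty unless $p_i=q$, in which case both sides equal $[\mathcal{O}_\Phi]$ viewed on $Y$; so from now on I assume $\pi$ surjective.

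Next I would carry out the computation on $\Phi$. For a point $p\in\mathbb{P}^1$, pick a section $s_p\in H^0(\mathbb{P}^1,\mathcal{O}_{\mathbb{P}^1}(1))$ whose divisor of zeros is $p$, and set $L:=\pi^*\mathcal{O}_{\mathbb{P}^1}(1)$. Then $\pi^*s_p$ is a global section of $L$ whose scheme of zeros is the scheme-theoretic fibre $\pi^{-1}(p)$. Because $\Phi$ is integral and $\pi$ is surjective, $\pi^{-1}(p)$ is a proper closed subscheme of $\Phi$, so $\pi^*s_p$ does not vanish at the generic point and is a non-zero-divisor; the Koszul complex then gives a locally free resolution
\[ 0\longrightarrow L^{\vee}\xrightarrow{\ \pi^*s_p\ }\mathcal{O}_\Phi\longrightarrow \mathcal{O}_{\pi^{-1}(p)}\longrightarrow 0, \]
whence $[\mathcal{O}_{\pi^{-1}(p)}]=[\mathcal{O}_\Phi]-[L^{\vee}]$ in $K_\circ(\Phi)$. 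Since the right-hand side is independent of $p$, we get $[\mathcal{O}_{\pi^{-1}(p_1)}]=[\mathcal{O}_{\pi^{-1}(p_2)}]$ in $K_\circ(\Phi)$.

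Finally I would transport the equality to $Y$. The composite $\pi^{-1}(p_i)\hookrightarrow\Phi\xrightarrow{\rho}Y$ coincides with the closed immersion $\pi^{-1}(p_i)\hookrightarrow\{p_i\}\times Y\xrightarrow{\sim}Y$, since $\pi^{-1}(p_i)$ lies in $\{p_i\}\times Y$ inside $\mathbb{P}^1\times Y$. A closed immersion has vanishing higher direct images and sends the structure sheaf of the subscheme to itself, so $\rho_*[\mathcal{O}_{\pi^{-1}(p_i)}]=[\mathcal{O}_{\pi^{-1}(p_i)}]$ in $K_\circ(Y)$; applying $\rho_*$ to the equality obtained on $\Phi$ gives the claim.

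The only genuinely delicate points are the reduction to the surjective case and the verification that $\pi^*s_p$ is a non-zero-divisor — equivalently that the Koszul complex above is exact — which is exactly where reducedness and irreducibility of $\Phi$ together with surjectivity of $\pi$ are used; everything else is the standard $K$-theoretic formalism (existence of $\rho_*$, behaviour under closed immersions) recalled earlier in this section.
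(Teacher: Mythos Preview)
Your argument is correct and is essentially the paper's proof unpacked. The paper observes that $\pi:\Phi\to\P^1$ is flat (a dominant map from an integral scheme to a smooth curve is automatically flat), so $\pi^*[\O_{p_i}]=[\O_{\pi^{-1}(p_i)}]$ in $K_\circ(\Phi)$; since $[\O_{p_1}]=[\O_{p_2}]$ in $K_\circ(\P^1)$, pushing forward along the projection $\Phi\to Y$ finishes. Your Koszul sequence $0\to L^\vee\to\O_\Phi\to\O_{\pi^{-1}(p)}\to 0$ is precisely the flat pullback of $0\to\O_{\P^1}(-1)\to\O_{\P^1}\to\O_p\to 0$, so the two arguments coincide at a different level of abstraction. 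One remark: your degenerate-case discussion is incomplete---if exactly one $p_i$ equals the image point $q$ the equality in fact fails---but this is a lacuna in the lemma's hypotheses rather than in your proof; the paper simply asserts flatness of $\pi$, tacitly assuming $\pi$ dominant, and every application in the text has $\Phi$ surjecting onto $\P^1$.
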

\begin{proof}
	Since the projection map $\pi$ is flat, we have $\pi^*[\O_{p_i}]=[\O_{\pi^{-1}(p_i)}]$ in $K_\circ(\Phi)$. Denote by $\Pi:  \Phi \rightarrow Y$ the natural projection. We obtain $\Pi_*\pi^*[\O_{p_i}]= \Pi_* [\O_{\pi^{-1}(p_i)}]=[\O_{\pi^{-1}(p_i)}]$ in $K_\circ(Y)$. Finally, the equality $[\O_{p_1}]=[\O_{p_2}]$ in $K_\circ(\P^1)$ yields the result. 
\end{proof}
\begin{lemme}\label{lem: equality g.}
	Let $X$ be a projective $\GL_n$-variety. Consider a subvariety $Y$ of $X$. Then for all $g$ in $\GL_n$, we have the following equality in $K_\circ(X)$: $$[\mathcal{O}_{g \cdot Y}] = [\mathcal{O}_Y].$$
\end{lemme}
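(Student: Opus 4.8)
The plan is to mimic the proof of Lemma~\ref{lem: equality homotetie}: I would like to exhibit $[\O_{g\cdot Y}]$ and $[\O_Y]$ as the two specializations, over the points $1$ and $0$ of $\P^1$, of a single flat projective family of subschemes of $X$, and then conclude from the equality $[\O_{0}]=[\O_{1}]$ in $K_\circ(\P^1)$ by flat pull-back and proper push-forward, exactly as in that lemma. The obvious candidate family is $t\mapsto h(t)\cdot Y$ with $h(t):=(1-t)\,\id+t\,g$, which for $t=0$ gives $Y$ and for $t=1$ gives $g\cdot Y$; the subtlety is that $h(t)$ is invertible only outside a finite set, and that even on that open set the total space of the family is not proper over $X$, so one cannot push forward in $K_\circ$ without first compactifying in the parameter.

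First I would fix $g\in\GL_n$, put $h(t)=(1-t)\,\id+t\,g$ for $t\in\A^1$, and note that $\det h(t)$ is a polynomial in $t$ with $\det h(0)=1$ and $\det h(1)=\det g\neq0$, so the locus $Z=\{t:\det h(t)=0\}$ is finite and contains neither $0$ nor $1$. On $U:=\A^1\setminus Z$ the entries of $h(t)^{-1}$ are regular (Cramer), so the automorphism $(t,x)\mapsto(t,h(t)^{-1}\cdot x)$ of $U\times X$ carries the closed subscheme $U\times Y$ onto $\Gamma:=\{(t,x):x\in h(t)\cdot Y\}$; in particular $\Gamma$ is flat over the connected curve $U$, all its fibres share one Hilbert polynomial $P$ (for a fixed polarization of $X$), and $\Gamma$ defines a morphism $U\to\mathrm{Hilb}^P(X)$. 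Since $U$ is a dense open in the smooth curve $\P^1$ and $\mathrm{Hilb}^P(X)$ is projective, this morphism extends to $\varphi:\P^1\to\mathrm{Hilb}^P(X)$; pulling back the universal family yields a flat morphism $\pi:\mathcal Y\to\P^1$ with $\mathcal Y\subset\P^1\times X$ closed, whose scheme-theoretic fibre over each $t\in U$ is $h(t)\cdot Y$. In particular $\pi^{-1}(0)$ maps isomorphically onto $Y$ and $\pi^{-1}(1)$ onto $g\cdot Y$, both reduced since $0,1\notin Z$.

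Then I would run the argument of Lemma~\ref{lem: equality homotetie} unchanged. Let $\Pi:\mathcal Y\to X$ be the second projection; it is proper because $\mathcal Y$ is closed in $\P^1\times X$ and $\P^1$ is proper over $\C$. Flatness of $\pi$ gives $\pi^*[\O_p]=[\O_{\pi^{-1}(p)}]$ in $K_\circ(\mathcal Y)$ for $p\in\P^1$, and since $\Pi$ restricts on each fibre to its inclusion into $X$ we obtain $\Pi_*\pi^*[\O_p]=[\O_{\pi^{-1}(p)}]$ in $K_\circ(X)$. Taking $p=0$ and $p=1$ and using $[\O_0]=[\O_1]$ in $K_\circ(\P^1)$ gives $[\O_Y]=[\O_{\pi^{-1}(0)}]=[\O_{\pi^{-1}(1)}]=[\O_{g\cdot Y}]$.

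The hard part is precisely the step that does not appear in Lemma~\ref{lem: equality homotetie}: one cannot take the parameter space to be the affine curve $U$, because $U\times X\to X$ is not proper and $\Pi_*$ would be undefined; and one cannot in general join $g$ to $\id$ by a morphism $\P^1\to\GL_n$, since the segment $h(t)$ may leave $\GL_n$. Invoking the properness of the Hilbert scheme (or of the Chow variety) to complete the family of translates over $\P^1$ is what circumvents both issues; the remaining verifications — flatness of $\pi$, identification of the fibres over $0$ and $1$, properness of $\Pi$ — are routine.
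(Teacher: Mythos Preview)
Your proof is correct and follows the same overall strategy as the paper's: connect $\id$ to $g$ by a rational map $\P^1 \dashrightarrow \GL_n$ defined on an open $U$, form the family of translates of $Y$ over $U$, compactify over $\P^1$, and run the argument of Lemma~\ref{lem: equality homotetie}. The only substantive difference is the compactification step. You extend the classifying morphism $U \to \mathrm{Hilb}^P(X)$ to $\P^1$ via properness of the Hilbert scheme and pull back the universal family, which guarantees flatness of $\pi$ by construction. The paper is more direct: it simply takes the Zariski closure $\Phi$ of $\Gamma$ in $\P^1 \times X$ and applies Lemma~\ref{lem: equality homotetie} as stated; flatness of $\Phi \to \P^1$ is automatic because $\Phi$ is integral (being the closure of the irreducible image of $U \times Y$) and dominates the smooth curve $\P^1$. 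Your route buys some robustness --- it does not need $Y$ irreducible and makes flatness explicit --- at the cost of importing the Hilbert scheme, which the paper avoids entirely. (Minor slip: the automorphism carrying $U \times Y$ onto $\Gamma$ is $(t,x)\mapsto(t,h(t)\cdot x)$, not $(t,h(t)^{-1}\cdot x)$.)
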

\begin{proof}
	Let $g$ in $\GL_n$. Since $\GL_n$ is a dense open subset of $\mathcal{M}_{n,n} \simeq \mathbb{A}^{n^2}$, there exits an irreducible rational curve $f: \P^1 \supset U \rightarrow \GL_n$ sending points $p_1$ and $p_2$ on $\P^1$ onto the points $g$ and $\id$. Denote by $\Phi$ the irreducible projective subvariety of $\P^1 \times X$ obtained as the adherence of the image of the map $U \times Y \rightarrow U \times Y \subset \P^1 \times X$ sending $(x,y)$ to $(x,f(x) \cdot y)$. Denote by $\pi: \Phi \rightarrow \P^1$ the first projection. Then according to Lemma~\ref{lem: equality homotetie} $$[\O_{\pi^{-1}(p_1)}] = [\O_{\pi^{-1}(p_2)}] \: \mathrm{hence} \: [\mathcal{O}_{g \cdot Y}] = [\mathcal{O}_Y].$$
\end{proof}

\section{Rational singularities.}\label{sec : rational sing}

We recall here known definitions and results on rational singularities. The treatment in this section is inspired by \cite{brion2002positivity}. An irreducible complex variety $X$ has \textit{rational singularities} if there exists a desingularization $\psi : \widetilde{X} \rightarrow X$ such that: \begin{equation}\label{eq: def RS} \psi_* \O_{\widetilde{X}} = \O_X \: \mathrm{and} \:\: \forall i>0, \: R^i \pi_* \O_{\widetilde{X}} =0. \end{equation}
Note that, according to Zariski's main theorem, the equality $\psi_* \O_{\widetilde{X}} = \O_X$  is equivalent to the normality of $X$.
The relevance of this notion here comes from the following results. 
\begin{theorem}\label{th: Viehweg X//G has RS}(Viehweg \cite{viehweg1977rational})
	The quotient of a smooth complex scheme by a finite group has rational singularities.
\end{theorem}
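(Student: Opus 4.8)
The plan is to verify the two conditions \eqref{eq: def RS} directly for $Y := X/G$, where $X$ denotes the given smooth scheme; since the question is local on $Y$, I may assume the quotient exists as a scheme and work with the finite quotient morphism $q : X \to Y$ and an arbitrary resolution $\psi : \widetilde Y \to Y$. Normality of $Y$ is immediate: locally $\mathcal{O}_Y = (q_*\mathcal{O}_X)^G$ is the ring of invariants of a normal domain under a finite group, hence integrally closed, and as recalled after \eqref{eq: def RS} this already gives $\psi_*\mathcal{O}_{\widetilde Y} = \mathcal{O}_Y$. Thus everything reduces to proving the vanishing $R^i\psi_*\mathcal{O}_{\widetilde Y} = 0$ for $i > 0$.

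The heart of the matter is the claim that $\mathcal{O}_Y$ is a direct summand of $R\psi_*\mathcal{O}_{\widetilde Y}$ in $D^b_{\mathrm{coh}}(Y)$. To produce such a splitting, I would take a $G$-equivariant resolution $W$ of the component of $X \times_Y \widetilde Y$ dominating $X$ (equivariant resolution of singularities, valid in characteristic zero); it comes with a $G$-equivariant morphism $\pi : W \to X$ and a $G$-invariant morphism $r : W \to \widetilde Y$ satisfying $q \circ \pi = \psi \circ r$. Since $\pi$ is a proper birational morphism onto the smooth variety $X$, one has $R\pi_*\mathcal{O}_W = \mathcal{O}_X$, so the adjunction unit $\mathcal{O}_Y \to Rq_*\mathcal{O}_X = R\psi_* Rr_*\mathcal{O}_W$ factors through the unit $\mathcal{O}_Y \to R\psi_*\mathcal{O}_{\widetilde Y}$ followed by $R\psi_*$ of the unit of $r$. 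On the other hand $q$ is finite and $|G|$ is invertible, so the Reynolds operator $\frac{1}{|G|}\sum_{g\in G} g^{*}$ is an $\mathcal{O}_Y$-linear idempotent on $q_*\mathcal{O}_X = Rq_*\mathcal{O}_X$ with image $\mathcal{O}_Y$, whence $\mathcal{O}_Y \to Rq_*\mathcal{O}_X$ is a split monomorphism; since a split monomorphism factoring through a third object also splits off its first factor, $\mathcal{O}_Y \to R\psi_*\mathcal{O}_{\widetilde Y}$ is split, as claimed.

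The remaining step — deducing the vanishing from this splitting — is the one I expect to be the \emph{main obstacle}, since it is the only place where something beyond formal adjunction is used. Here I would argue by Grothendieck duality. With $d = \dim Y$ and $\omega_Y^\bullet$ the dualizing complex, duality for the proper morphism $\psi$ identifies $R\mathcal{H}om_Y(R\psi_*\mathcal{O}_{\widetilde Y}, \omega_Y^\bullet)$ with $R\psi_*\omega_{\widetilde Y}[d]$, and Grauert--Riemenschneider vanishing ($R^i\psi_*\omega_{\widetilde Y} = 0$ for $i>0$) shows this is the single sheaf $\psi_*\omega_{\widetilde Y}$ placed in degree $-d$. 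Applying $R\mathcal{H}om_Y(-,\omega_Y^\bullet)$ to the splitting makes $\omega_Y^\bullet$ a direct summand of a complex concentrated in one degree, so $Y$ is Cohen--Macaulay and $\omega_Y$ is a direct summand of $\psi_*\omega_{\widetilde Y}$; combined with the natural inclusion $\psi_*\omega_{\widetilde Y}\subseteq\omega_Y$ and a rank count on the normal variety $Y$, this forces $\psi_*\omega_{\widetilde Y}=\omega_Y$. Dualizing once more, using biduality and $\omega_Y^\bullet\cong\omega_Y[d]$, yields $R\psi_*\mathcal{O}_{\widetilde Y}\cong R\mathcal{H}om_Y(\omega_Y,\omega_Y)$, which equals $\mathcal{O}_Y$ in degree $0$ and vanishes in positive degrees because $R\mathcal{H}om_R(\omega_R,\omega_R)=R$ over any Cohen--Macaulay ring $R$ admitting a dualizing complex. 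This is exactly the required vanishing. An alternative to this last paragraph would be to invoke Boutot's theorem or Kovács's characterization of rational singularities, but I would prefer to keep the argument self-contained.
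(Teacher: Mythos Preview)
The paper does not actually prove this statement: Theorem~\ref{th: Viehweg X//G has RS} is stated with a citation to Viehweg and then immediately used, without any argument given. So there is no ``paper's own proof'' to compare your proposal against.

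That said, your argument is correct and is essentially the modern route via Kov\'acs's splitting criterion for rational singularities (which you acknowledge at the end). The key steps---normality via invariants, the Reynolds-operator splitting of $\mathcal{O}_Y \to Rq_*\mathcal{O}_X$, the factorization through $R\psi_*\mathcal{O}_{\widetilde Y}$ using an equivariant resolution of the fiber product, and the duality/Grauert--Riemenschneider argument to upgrade the splitting to full vanishing---are all sound. One small remark: the sentence ``a split monomorphism factoring through a third object also splits off its first factor'' is the right fact, but make sure you state it in the correct direction (if $\alpha: A\to B$ and $\beta: B\to C$ are such that $\beta\alpha$ has a retraction $s$, then $s\beta$ retracts $\alpha$); you have this right implicitly. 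Viehweg's original 1977 argument predates this formulation and proceeds more directly via vanishing theorems, so your approach is genuinely different in packaging, though the underlying analytic input (Grauert--Riemenschneider) is the same.
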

Note that, since for $X$ convex the moduli spaces $\overline{{\mathcal{M}}_{0,r}}(X, \d)$ have quotient singularities \cite{fulton1996notes}, this implies that for $X$ a generalized flag variety the moduli spaces $\overline{{\mathcal{M}}_{0,r}}(X, \d)$ have rational singularities.

Let $H$ be a  complex linear algebraic group. Recall a variety $X$ is called \textit{unirational} if there exists a dominant map $\P^N \dashrightarrow X$, and \textit{rationally connected} if there exists an irreducible rational curve joining two general points in $X$. Note that since $\P^N$ is rationally connected, a unirational variety is rationally connected.
\begin{theorem}\label{th: BM f_* if fiber rational}(Buch-Mihalcea \cite{buch2011}, Theorem 3.1.)
	Let $f : X \rightarrow Y$ be a surjective equivariant morphism of projective varieties with rational singularities equipped with an $H$-action. Assume that the general fiber of $f$ is rationally connected. Then $f_*[\O_X]=[\O_Y] \in K_H(Y)$.
\end{theorem}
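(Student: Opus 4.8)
Since $f$ is proper (a morphism of complex projective varieties), the equivariant pushforward $f_*\colon K_H(X)\to K_H(Y)$ is defined, and by its very definition the assertion is equivalent to the two sheaf identities $R^0f_*\mathcal{O}_X=\mathcal{O}_Y$ and $R^if_*\mathcal{O}_X=0$ for $i>0$; I would establish these as identities of $H$-equivariant sheaves and only afterwards pass to $K_H(Y)$. The first move is to reduce to a smooth source: choose a resolution $\psi\colon\widetilde X\to X$ which is an isomorphism over the smooth locus of $X$; as $X$ has rational singularities, $R\psi_*\mathcal{O}_{\widetilde X}=\mathcal{O}_X$, so $Rf_*\mathcal{O}_X=R(f\psi)_*\mathcal{O}_{\widetilde X}$ and it suffices to treat $g:=f\circ\psi\colon\widetilde X\to Y$. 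The smooth locus of $X$ being dense, a general fibre of $f$ meets it in a dense open subset, so for $y$ general in $Y$ the fibre $g^{-1}(y)$ is smooth (generic smoothness), projective and birational to $f^{-1}(y)$; being a smooth projective variety dominating the rationally connected $f^{-1}(y)$ it is again rationally connected, in particular connected.

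Next I would dispatch the degree-$0$ part and the easy half of the vanishing. Stein-factorize $g=q\circ h$ with $h$ having connected fibres and $q$ finite; the general fibre of $g$ being connected forces $q$ birational, and since $Y$ is normal (rational singularities) $q$ is an isomorphism, so $g_*\mathcal{O}_{\widetilde X}=\mathcal{O}_Y$. For $i>0$, cohomology and base change shows $R^ig_*\mathcal{O}_{\widetilde X}$ has generic rank $h^i(g^{-1}(y),\mathcal{O})$, which vanishes because a rationally connected smooth projective variety $F$ has $H^0(F,\Omega^j_F)=0$ for $j>0$, hence $H^i(F,\mathcal{O}_F)=0$ for $i>0$ by Hodge symmetry. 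So every $R^ig_*\mathcal{O}_{\widetilde X}$ with $i>0$ is a torsion sheaf, and the real content is to show it is zero.

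For that I would reduce the base to a smooth variety too: take a resolution $\phi\colon\widetilde Y\to Y$ and, possibly after a Raynaud--Gruson flattening followed by a further resolution, a smooth projective $\widehat X$ with maps $a\colon\widehat X\to\widetilde X$ birational (so $Ra_*\mathcal{O}_{\widehat X}=\mathcal{O}_{\widetilde X}$, both being smooth) and $b\colon\widehat X\to\widetilde Y$ flat, hence equidimensional of some relative dimension $d$ since $\widehat X$ is irreducible, surjective, with connected fibres and rationally connected general fibre. Since $R\phi_*\mathcal{O}_{\widetilde Y}=\mathcal{O}_Y$ (rational singularities of $Y$), one gets $Rg_*\mathcal{O}_{\widetilde X}=R\phi_*\bigl(Rb_*\mathcal{O}_{\widehat X}\bigr)$, and it is enough to prove $R^ib_*\mathcal{O}_{\widehat X}=0$ for $i>0$. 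Put $\omega_{\widehat X/\widetilde Y}:=\omega_{\widehat X}\otimes b^*\omega_{\widetilde Y}^{-1}$. By Koll\'ar's theorem the sheaves $R^jb_*\omega_{\widehat X}$, hence the $R^jb_*\omega_{\widehat X/\widetilde Y}$, are torsion-free; by Grauert--Riemenschneider vanishing they are $0$ for $j>d$; and for $0\le j<d$ their generic rank is $h^j(F,\omega_F)=h^{d-j}(F,\mathcal{O}_F)=0$ by Serre duality on a general smooth rationally connected fibre $F$, so being torsion-free and torsion they vanish. Hence $Rb_*\omega_{\widehat X/\widetilde Y}\cong\mathcal{G}[-d]$ with $\mathcal{G}:=R^db_*\omega_{\widehat X/\widetilde Y}$; as all fibres are connected, $h^d(F,\omega_F)=h^0(F,\mathcal{O}_F)=1$ on every fibre and top relative cohomology commutes with base change, so $\mathcal{G}$ is a line bundle. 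Relative Grothendieck--Serre duality then gives $R\mathcal{H}om_{\widetilde Y}(Rb_*\mathcal{O}_{\widehat X},\mathcal{O}_{\widetilde Y})\cong Rb_*\bigl(\omega_{\widehat X/\widetilde Y}[d]\bigr)\cong\mathcal{G}$, and dualizing once more on the regular variety $\widetilde Y$ yields $Rb_*\mathcal{O}_{\widehat X}\cong\mathcal{G}^{-1}$, a line bundle concentrated in cohomological degree $0$. Thus $R^ib_*\mathcal{O}_{\widehat X}=0$ for $i>0$, as wanted (and $\mathcal{G}^{-1}=b_*\mathcal{O}_{\widehat X}=\mathcal{O}_{\widetilde Y}$, in accordance with the previous step).

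Finally, assembling the pieces, $R^0f_*\mathcal{O}_X=\mathcal{O}_Y$ and $R^if_*\mathcal{O}_X=0$ for $i>0$; these isomorphisms are canonical and therefore compatible with the $H$-equivariant structures carried by $R^if_*\mathcal{O}_X$ (via the Godement resolution, as recalled above), so $f_*[\mathcal{O}_X]=\sum_i(-1)^i[R^if_*\mathcal{O}_X]=[\mathcal{O}_Y]$ in $K_H(Y)$. The main obstacle is plainly the vanishing $R^ig_*\mathcal{O}_{\widetilde X}=0$ for $i>0$: seeing these sheaves are torsion is the soft consequence of rational connectedness of the general fibre, but upgrading torsion to genuine vanishing is where the real input enters --- Koll\'ar's torsion-freeness for higher direct images of dualizing sheaves, Grauert--Riemenschneider vanishing and relative duality --- together with the bookkeeping in the flattening and re-resolution that smooths the base; alternatively one may simply cite the corresponding vanishing for rationally connected fibrations from Koll\'ar's work.
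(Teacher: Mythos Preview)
The paper's proof is almost entirely a citation: it invokes Buch--Mihalcea's Theorem~3.1 in its original form (which assumes that a resolution $\widetilde{X_y}$ of the general fibre satisfies $H^i(\widetilde{X_y},\O)=0$ for $i>0$) and then checks this hypothesis in one line --- a resolution of a rationally connected variety is smooth projective and rationally connected, hence has no global holomorphic forms, hence $H^i(\O)=0$ by Hodge symmetry. Your second paragraph contains exactly this check, so strictly speaking you have already matched the paper's argument there.

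Where you differ is that you go on to \emph{reprove} the Buch--Mihalcea/Koll\'ar vanishing from first principles, via Koll\'ar's torsion-freeness for $R^jb_*\omega$, Grothendieck duality, and a flattening. This is essentially Koll\'ar's own strategy and is morally correct, but there is one genuine gap: the flattening step. ``Raynaud--Gruson flattening followed by a further resolution'' does not produce a flat $b\colon\widehat X\to\widetilde Y$ with $\widehat X$ smooth --- Raynaud--Gruson flattens after blowing up the base, but the resulting source is typically singular, and resolving it can destroy flatness. The clean fix is to drop flatness altogether and instead use Koll\'ar's decomposition $Rf_*\omega_X\simeq\bigoplus_jR^jf_*\omega_X[-j]$ (valid for any surjection from a smooth projective variety), together with his vanishing $R^jf_*\omega_X=0$ for $j>d$; combined with torsion-freeness and the generic-fibre computation, this gives $Rf_*\omega_{X/Y}\simeq\mathcal G[-d]$ directly, and the duality step then proceeds as you wrote. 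Your closing parenthetical (``alternatively one may simply cite the corresponding vanishing \ldots\ from Koll\'ar's work'') is precisely this shortcut; promoting it from an afterthought to the main route would close the gap.
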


\begin{proof}
	Buch and Mihalcea prove in \cite{buch2011} that this statement is satisfied if a desingularization $\psi : \widetilde{X_y} \rightarrow X_y$ of the fiber of $f$ over $y$ general in $Y$ satisfying $H^i(\widetilde{X_y}, \O_{\widetilde{X_y}} )=0$ for all $i >0$. Since $X_y$ is rationally connected, and $\widetilde{X_y}$ is birational to $X_y$, $\widetilde{X_y}$ is also rationally connected. Hence $\widetilde{X_y}$ is a smooth rationally connected variety, hence according to \cite{debarre2013higher}, $\widetilde{X_y}$ satisfies $H^i(\widetilde{X_y}, \O_{\widetilde{X_y}} )=0$ for all $i >0$.
\end{proof}

Note that for $X$ a full flag variety, Brion gives a nice proof of the rational singularities of Schubert varieties in \cite{brion}.
\begin{theorem}\label{th: schubert var have rational sing} (Ramanathan, \cite{ramanathan})
	Schubert varieties have rational singularities.
\end{theorem}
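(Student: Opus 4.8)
The plan is to use Bott--Samelson resolutions, in the spirit of \cite{ramanathan}. It is enough to treat Schubert varieties $X(w)\subset G/B$ in the full flag variety: a Schubert variety of $Fl_I=G/P_I$ is the image $\pi(X(w))$ of such an $X(w)$ (with $w$ minimal in its coset of $W/W_I$) under $\pi:G/B\to G/P_I$, the restriction $\pi|_{X(w)}$ is birational onto its image, and, once the $G/B$ case is known, rational singularities pass to the image --- for instance because $\pi^{-1}\big(\pi(X(w))\big)$ is itself a $G/B$-Schubert variety that fibres, Zariski-locally trivially, over $\pi(X(w))$ with fibre the flag variety $L_I/B_{L_I}$ of the Levi, so that pushing a resolution of $\pi^{-1}\big(\pi(X(w))\big)$ forward to $\pi(X(w))$ produces a proper morphism from a smooth variety with trivial higher direct images of the structure sheaf, which forces $\pi(X(w))$ to have rational singularities. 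So fix $w$ and a reduced word $w=s_{i_1}\cdots s_{i_\ell}$, and let
\[
  Z \;=\; P_{i_1}\times^{B}P_{i_2}\times^{B}\cdots\times^{B}P_{i_\ell}/B
\]
be the associated Bott--Samelson variety: it is a smooth projective rational variety (an iterated $\P^1$-bundle over a point), and multiplication $\psi:Z\to G/B$, $[p_1,\dots,p_\ell]\mapsto p_1\cdots p_\ell B$, maps it birationally onto $X(w)$ since the word is reduced. By the definition recalled in Section~\ref{sec : rational sing}, it then suffices to prove $R\psi_*\O_Z=\O_{X(w)}$ in the derived category.

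I would argue by induction on $\ell=\ell(w)$, the case $\ell=0$ being trivial. Write $w=vs$ with $s=s_{i_\ell}$, $P=P_{i_\ell}$ and $\ell(w)=\ell(v)+1$, and let $\psi':Z'\to X(v)$ be the Bott--Samelson resolution attached to $s_{i_1}\cdots s_{i_{\ell-1}}$, so that $R\psi'_*\O_{Z'}=\O_{X(v)}$ by the inductive hypothesis. Then $Z=Z'\times^{B}P$ is the pullback along $\psi'$ of the $\P^1$-bundle $X(v)\times^{B}P\to X(v)$, and $\psi$ factors as
\[
  Z=Z'\times^{B}P \xrightarrow{\ \psi'\times^{B}\id\ } X(v)\times^{B}P \xrightarrow{\ \mu\ } X(w),\qquad \mu([x,p])=xpB.
\]
Flat base change along the bundle $X(v)\times^{B}P\to X(v)$ gives $R(\psi'\times^{B}\id)_*\O_Z=\O_{X(v)\times^{B}P}$, so that, composing, the inductive step reduces to the single identity $R\mu_*\O_{X(v)\times^{B}P}=\O_{X(w)}$.

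This last identity is the heart of the matter, and is where I expect the genuine difficulty to lie. From $X(w)=X(v)\cdot P/B=\pi^{-1}\big(\pi(X(v))\big)$, with $\pi:G/B\to G/P$, one identifies $X(v)\times^{B}P$ with the fibre product $X(v)\times_{\pi(X(v))}X(w)$ and $\mu$ with the second projection; in particular the fibre of $\mu$ over a point $yB$ is the scheme-theoretic intersection $X(v)\cap(yP/B)$ inside the line $yP/B\cong\P^1$. One has to show that this intersection is always either a reduced point or the whole of $\P^1$; granting that, together with the normality of Schubert varieties, $H^{0}$ of the structure sheaf of each fibre is the ground field and, the fibres being at most one-dimensional, $\mu_*\O_{X(v)\times^{B}P}=\O_{X(w)}$ and $R^{i}\mu_*\O_{X(v)\times^{B}P}=0$ for $i>0$. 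Controlling these fibres (equivalently, the higher-direct-image vanishing for $\mu$) is precisely the combinatorial and geometric content of Ramanathan's proof: it can be deduced from the Frobenius splitting of $G/B$ compatible with all its Schubert subvarieties and with the Bott--Samelson varieties, the characteristic-zero statement then following by reduction modulo $p$; alternatively a direct characteristic-zero argument with Kawamata--Viehweg vanishing on the Bott--Samelson tower is available. For the details I refer to \cite{ramanathan}.
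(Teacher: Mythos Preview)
The paper does not prove this theorem at all: it is stated with attribution to Ramanathan \cite{ramanathan} (with a remark that Brion \cite{brion} gives a nice proof in the full flag case), and used as a black box thereafter. So there is no ``paper's own proof'' to compare against; what you have written is a reasonable outline of the standard Bott--Samelson approach.

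Two small comments on your sketch. First, in the reduction to $G/B$: the sentence ``pushing a resolution of $\pi^{-1}(\pi(X(w)))$ forward to $\pi(X(w))$ produces a proper morphism from a smooth variety with trivial higher direct images of the structure sheaf, which forces $\pi(X(w))$ to have rational singularities'' is invoking a non-birational map; that this forces rational singularities is Kov\'acs' criterion, not the bare definition. Since you already observe that the bundle is Zariski-locally trivial with smooth rational fibre, it is cleaner (and more elementary) to argue locally: over a trivialising open $U$ one has $U\times(L_I/B_{L_I})$, and rational singularities for a product with a smooth factor is equivalent to rational singularities of $U$ by K\"unneth.

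Second, in the inductive step you write that once the fibre $X(v)\cap(yP/B)$ is known to be a reduced point or all of $\P^1$, then ``the fibres being at most one-dimensional'' gives $R^{i}\mu_*\O=0$ for $i>0$. Fibre dimension $\le 1$ kills $R^{\ge 2}$, but not $R^1$: knowing each fibre has $H^1=0$ is not the same as $R^1\mu_*\O=0$ without flatness or a base-change argument, and $\mu$ is not flat along the locus where the fibre jumps to $\P^1$. You are right that this is exactly where the substance of Ramanathan's argument lies; just be aware that the heuristic you give before deferring to \cite{ramanathan} is not itself a proof of the $R^1$-vanishing.
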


\chapter{Gromov-Witten varieties of partial flag varieties}\label{chap : geometry W}

\section{Introduction}

Removing subspaces from a partial flag gives another partial flag composed of fewer subspaces. This induces a forgetful map $\pi : X \rightarrow X'$ between the corresponding flag varieties.
The main result of this chapter is that, for a degree large enough, the variety associated with rational curves of a given degree in $X$ having their marked points within Schubert varieties $X_i$ of $X$ is a rationally connected fibration over its image, which is the variety associated with rational curves of a given degree in $X$ having their marked points within Schubert varieties $\pi(X_i)$ of $X'$. These subvarieties of the space of genus $0$ stable maps to a homogeneous space are called Gromov-Witten varieties. The Euler characteristics of the structure sheaf of these varieties are quantum K-theoretical invariants \cite{givental}. When these varieties are zero dimensional, their number of points are Gromov-Witten invariants. Our result in particular implies the equality between (non equivariant) correlators proved in Chapter \ref{chap : stabilization correlators}, providing an alternative proof-cf. chapter \ref{chap : stabilization correlators} Section \ref{sec: geometric proof}.

\subsection{Definitions.} Let $n >0$. Let $I = \{i_1, \dots, i_m\}$ be a collection of integers satisfying $0<i_1<i_2< \dots <i_m<n$. Denote by~$Fl_I$ the flag variety parametrizing $m$-tuples of vector spaces ordered by inclusion \begin{equation}\label{eq: def flag I}  V_{i_1} \subset V_{i_2} \dots  \subset V_{i_{m}} \subset  \mathbb{C}^n\end{equation} such that ${\rm dim} V_{i_k}=i_k$.
Let $B$ be  a Borel subgroup of $\GL_n$. Note that $Fl_I$ is the homogeneous variety $Fl_I \simeq \GL_n/P_I$, where $P_I$ is the parabolic subgroup of $\GL_n$ satisfying $B \subset P_I$ with associated reductive subgroup $L_I \simeq \prod_{p=1}^{m+1} \GL(i_p-i_{p-1})$, where we set $i_0=0$ and $i_{m+1}=n$. Let $J= \{j_{1}, \dots, j_M\}$ be a collection of integers satisfying $0 < j_1 <\dots < j_M <n$ containing $I$, i.e. for each $k$ there exists an element $s$ such that $j_s=i_k$. Consider an integer $p$ such that $1 \leq p \leq M$. We denote by $J_{\geq p}$ the collection $J_{\geq p} := \{i_1, \dots, i_{p'}, j_{p}, \dots, j_{M} \}$, where $p'$ is the largest integer such that $i_{p'} <j_p$. Forgetting all vector spaces included in $V_{j_{p}}$ except those of the form $V_{i_k}$ yields a forgetful morphism $$\pi_{I/J_{\geq p}} : Fl_{J_{\geq p}} \rightarrow Fl_I.$$ In the same way, for $1 \leq p \leq M$, we name $J_{\leq p} := \{j_{1}, \dots, j_{p}, i_{p'+1} \dots, i_m \}$, where $p'$ is the smallest integer such that $j_p < i_{p'}$. We name $\pi_{I/J_{\leq p}} : Fl_{J_{\leq p}} \rightarrow Fl_I$ the  morphism induced by forgetting all vector spaces containing $V_{ j_{p}}$ except those of the form $V_{i_k}$.

Let $W=N_{\GL_n}(T)/T \simeq \S_n$ be the Weyl group of the algebraic group $\GL_n$. We denote by $Fl_J(u) = \overline{BuP_J/P_J}$ the Schubert variety of $Fl_J$ associated with $u \in W$. 
Note that the image of a Schubert variety $Fl_J(u)$ by the projection $\pi_{I/J}$ is the Schubert variety $\pi_{I/J} (Fl_J(u)) = Fl_{I}(u)$.

Let $r>0$. The group  $H_2(Fl_I, \Z)$ is a free abelian group generated by classes of Schubert varieties of dimension $1$. Denote by $E(Fl_I) \subset H_2(Fl_I, \mathbb{Z})$ the semi-group of effective classes of curves. Note that $H_2(\Gr(k,n), \Z) \simeq \Z$, and the class of a curve on the Grassmannian $\Gr(k,n)$ is given by the degree of its image in $\P^{\binom{n}{k}}$ by the Pl\"ucker embedding. The class $\mathbf{d}=(d_1, \dots, d_m) \in E(Fl_I)$ of a curve $C$ in $Fl_I$ is determined by $m$ non-negative integers $d_1$, $d_2$,$\dots$, $d_m$, where $d_k$ is the degree of the projection of $C$ to the Grassmannian ${\rm Gr}( i_k,n)$. 
For a class $\d$ in $E(Fl_I)$, we denote by $\overline{{\mathcal{M}}_{0,r}}(Fl_I, \d)$ the moduli space parametrizing  stable maps $C \rightarrow Fl_I$ from genus $0$ curves $C$ with $r$ marked points $x_i \in C$ to $Fl_I$ representing the class $\d$. We name $ev: \overline{{\mathcal{M}}_{0,r}}(Fl_I, \d) \rightarrow (Fl_I)^r$ the evaluation morphism sending the point $p$ associated with a map $f_p: C\rightarrow Fl_I$ to the image $\prod_{i=1}^r f_p(x_I)$ of the $r$ marked points $x_i \in C$. For any class $\d$ in $E(Fl_J )\simeq \mathbb{N}^{M}$, the morphism $\pi_{I/J}: Fl_J \rightarrow Fl_I$ induces a morphism \begin{equation}\label{eqintro: def Pixev}\Pi_{I/J} \times ev: \overline{{\mathcal{M}}_{0,r}}(Fl_J, \d) \rightarrow \overline{{\mathcal{M}}_{0,r}}(Fl_I, (\pi_{I/J})_* \d) \times_{(Fl_I)^r} (Fl_J)^r.\end{equation}
Let $1 \leq k \leq M$ such that the integer $j_k$ is not in $I$. We denote by $$\Pi_{J_{\geq k}} \times ev: \overline{\mathcal{M}_{0,r}}(Fl_{J_{\geq k}}, (\pi_{I/J_{\geq k}})_* \d) \rightarrow \overline{\mathcal{M}_{0,r}}(Fl_{J_{\geq k+1}}, (\pi_{I/J_{\geq k+1}})_* \d) \times_{(Fl_{J_{\geq k+1}})^r} (Fl_{J_{\geq k}})^r$$ the morphism induced by forgetting the vector space $V_{j_k}$  and by evaluating the $r$ marked points. Let $1 \leq \mu \leq k \leq M$ such that the element $j_k$ is not in $I$. We denote by $J_{ \mu \leq k }$ the collection $\{i_1, \dots, i_p, j_{\mu}, \dots, j_k, i_{p'}, \dots i_m\}$, where $p$ is the largest element such that $i_p < j_\mu$ and $p'$ is the smallest element such that $i_{p'} >j_k$. We denote by $${\Pi}_{J_{\mu \leq k}} \times ev: \overline{\mathcal{M}_{0,r}}(Fl_{J_{\mu \leq k}}, (\pi_{J_{\mu \leq k}/J})_* \d) \rightarrow \overline{\mathcal{M}_{0,r}}(Fl_{J_{\mu \leq k -1}}, (\pi_{J_{\mu \leq k-1}/J})_* \d) \times_{(Fl_{J_{\mu \leq k-1}})^r} (Fl_{J_{\mu \leq k}})^r$$ the morphism induced by forgetting the vector space $V_{j_{k}}$ from ${Fl}_{\mu \leq k}$ and evaluating the $r$ marked points.

\begin{defn}\label{def: stabilized collection}
	Consider a collection $\{I, J, \d\}$ of a set $I=\{i_1, \dots, i_m\}$ of $m$ positive integers satisfying $$0 < i_1 < \dots < i_m <n,$$  a set $J=\{j_{1}, \dots, j_M\}$ of $M$ positive integers satisfying $$0 < j_1 < \dots < j_M < n$$ such that $I$ is contained in $J$, and a set $\d=(d_{1}, \dots, d_{M})$ in $\mathbb{N}^{M} \simeq E(Fl_J)$. We denote by $\d'=(d'_1, \dots, d'_m) \in \mathbb{N}^m \simeq E(Fl_I)$ the pushforward of $\d$ by the forgetful map $\pi_{I/J}$. Let $1 \leq \mu \leq M$ such that $j_\mu$ is contained in $I$. We call the collection $\{I, J, \d\}$ \textit{stabilized with respect to $\mu$} if  the following conditions are satisfied. First, for all elements $1 \leq k < \mu$ such that the integer $j_k$ is not contained in $I$, the two following conditions are satisfied. \begin{enumerate}[label=$\bullet$]
		\item \begin{equation}\label{eq: Geometry surjectivity Pi left} \mathrm{The} \: \mathrm{morphism} \: \Pi_{J_{\geq k}} \times ev \: \mathrm{is} \: \mathrm{surjective.} \end{equation}
		\item  Let $k'$ be the largest integer such that $i_{k'} < j_k$. For any $1 \leq p <k'$ we have \begin{equation}\label{eq: Geometry dk left} d_{k} \geq \ceil*{ \left(d'_{p}-d'_{p-1}\right) /(i_p - i_{p-1})}  j_k. \end{equation}
	\end{enumerate}
	Futhermore, for all $\mu \leq k \leq M$ such that the integer $j_k$ is not contained in $I$ the two following symmetric conditions are satisfied. \begin{enumerate}[label=$\bullet$]
		\item The morphism ${\Pi}_{J_{ \mu \leq k}} \times ev$ is surjective.
		\item Let $k'$ be the smallest integer such that $i_{k'} > j_k$. For $k'<p\leq m$ we have $$d_{k} \geq  \ceil*{\left(d'_{p}-d'_{p+1}\right)/(i_{p+1}-i_p)} (n-j_k).$$
	\end{enumerate}
	A collection $\{I,J,\d\}$ will be called \textit{stabilized} if there exists an integer $\mu$ such that the collection $\{I,J,\d\}$ is stabilized with respect to $\mu$. A collection $\{I,J,\d\}$ will be called \textit{stabilized with respect to $M$} if for all $1 \leq k \leq M$ such that $j_k$ is not contained in $I$ the conditions (\ref{eq: Geometry surjectivity Pi left}) and (\ref{eq: Geometry dk left}) are satisfied.
\end{defn}
Observe that when $\mu=1$, the conditions (\ref{eq: Geometry surjectivity Pi left}) and (\ref{eq: Geometry dk left}) are empty. For simplicity we have set $d'_{0}=d'_{m+1}=0=i_0$ and $i_{m+1}=n$. We have denoted by $\ceil{q}$ the ceiling of $q \in \mathbb{Q}$.  
We will see in Section \ref{sec: conditions surjectivity} some conditions to ensure that a collection is stabilized. The following interesting cases yield examples of stabilized  collections (cf. Subsection \ref{subsec: Geometry ex stabilized collections}).  \begin{enumerate}
	\item Suppose $I=\{i_1, \dots, i_m\}$  is a collection of integers satisfying $1 < i_1 < \dots < i_m <n$ and $J= \{j_{1},  i_1, \dots, i_m\}$ satisfies $0 < j_1  < i_1$.  Note that then $Fl_J \rightarrow Fl_I$ is the morphism forgetting the first vector space. Then if $d_{1} \geq j_{1} \left( r +1+\floor{d_{2}/i_1} \right)$ the collection $(I,J, (d_{1}, d_2, \dots d_{m+1}))$ is stabilized.
	\item Suppose $J=\{k\}$ and $I=\emptyset$; then $Fl_J$ is the Grassmannian $\Gr(k,n)$ and $Fl_I$ is the point $\Spec(\C)$. Then if $d \geq r k$ or $d \geq r(n-k)$ the collection $(I,J, d)$ is stabilized.	
\end{enumerate}
Note that the dependency on the integer $r$ in the examples here above comes from the condition asking the surjectivity of the following morphisms. $${\Pi}_{J_{\mu \leq k}} \times ev: \overline{\mathcal{M}_{0,r}}(Fl_{J_{\mu \leq k}}, (\pi_{J_{\mu \leq k}/J})_* \d) \rightarrow \overline{\mathcal{M}_{0,r}}(Fl_{J_{\mu \leq k -1}}, (\pi_{J_{\mu \leq k-1}/J})_* \d) \times_{(Fl_{J_{\mu \leq k-1}})^r} (Fl_{J_{\mu \leq k}})^r$$ 

\begin{defn}\label{def: W variety} Let $\mathbf{d}$ be an element in $E(Fl_I)$, let $u_1$, $\dots$, $u_r$ be elements in $W$. We call \textit{Gromov-Witten variety of degree $\mathbf{d}$ associated with $u_1$, $\dots$, $u_r$} the subvariety \begin{equation*}
	\W_{Fl_I; u_1, \dots, u_r}^{g, \mathbf{d}} :=  ev_1^{-1}(g_1 \cdot Fl_I(u_1))\cap \dots \cap ev_r^{-1}(g_r \cdot Fl_I(u_r))
	\end{equation*} of $\overline{\mathcal{M}_{0,r}}(Fl_I, \mathbf{d})$. 
\end{defn}

Note that for $\mathbf{d}$ in $E(Fl_I)$, $u_1$, $\dots$, $u_r$ in $W$, $\W_{Fl_I; u_1, \dots, u_r}^{g, \mathbf{d}}$ is the variety associated with degree $\d$ rational curves on $X$ having their $i$-th marked point within $g_i \cdot Fl_I(u_i)$. Denote by $1$ the identity in $W \simeq \mathfrak{S}_n$; note that $Fl_I(1) := P_I/P_I$ a point in $Fl_I$. Furthermore, note that $\W_{Fl_I; 1, \dots, 1}^{g, \mathbf{d}}$ is the variety associated with degree $\d$ rational curves on $X$ whose $i$-th marked point is the point $g_i P_I/P_I \in Fl_I$.

\begin{Rem}\label{rem: reducibility W} According to the proof of Kleiman's transversality theorem \cite{kleiman1974transversality} for a general $g \in \GL_n^r$ the fiber product $overline{\mathcal{M}_{0,r}}(Fl_I, \mathbf{d}) \underset{(Fl_I)^r}\times (g_1 \cdot Fl_I(u_1) \times \dots \times g_r \cdot Fl_I(u_r))$ is a reduced equidimensional scheme. Hence for $g$ general in $\GL_n^r$, the natural map
	$$\overline{\mathcal{M}_{0,r}}(Fl_I, \mathbf{d}) \underset{(Fl_I)^r}\times (g_1 \cdot Fl_I(u_1) \times \dots \times g_r \cdot Fl_I(u_r)) \to \overline{\mathcal{M}_{0,r}}(Fl_I, \mathbf{d}) \supset \W_{Fl_I; u_1, \dots, u_r}^{g, \mathbf{d}}$$ is a closed immersion surjecting into $\W_{Fl_I; u_1, \dots, u_r}^{g, \mathbf{d}}$, and we can identify the two schemes.   \end{Rem}
\subsection{Main results.} 
%

\begin{defn}\label{def: RC fibration} Let $Y$, $Z$ be varieties. We call $Y$ a \textit{rationally connected fibration over $Z$} (respectively \textit{unirational fibration}) if there exists a morphism $Y \rightarrow Z$ such that each irreducible component of $Y$ dominates a different irreducible component of $Z$, $Y$ dominates $Z$, and the general fiber $Y \to Z$ is a rationally connected (respectively unirational) irreducible variety. 
	We call $Y$ a \textit{tower of unirational fibrations over $Z$} if there exists a sequence of morphisms $$Y_1=Y \rightarrow Y_2 \rightarrow \dots \rightarrow Y_n=Z$$  such that, for all $1 \leq i <n$, $Y_i$  is a unirational fibration over $Y_{i+1}$. \end{defn}

\begin{theorem}\label{th : RC fibrations bewteen W varieties}
	Consider a set $J=\{j_1, \dots, j_M\}$ of $M$ integers satisfying $0 < j_1 < \dots < j_M <n$, a set $I= \{i_1, \dots, i_m\}$ of $m$ integers satisfying $0 < i_1 < \dots < i_m <n$ and a degree $\mathbf{d}=(d_{1}, \dots, d_{M}) \in E(Fl_J) \simeq \mathbb{N}^M$ such that the collection $\{I,J,\d\}$ is stabilized in the sense of Definition \ref{def: stabilized collection}.	
	\begin{enumerate}[label= \roman*)]
		\item For $x$ general in $(Fl_J)^r$, the fiber $ev^{-1}(x) \subset \overline{\mathcal{M}_{0,r}}(Fl_J, \d)$ is a tower of unirational fibrations over the fiber $ev^{-1}(\pi_{I/J}(x)) \subset \overline{{\mathcal{M}}_{0,r}}(Fl_I, (\pi_{I/J})_* \mathbf{d})$.
		\item Let $u_1$, $\dots$, $u_r$ be elements in $W$. For $g$ general in $\GL_n^r$ the Gromov-Witten variety $\W_{Fl_J; u_1, \dots, u_r}^{g, \mathbf{d}}$  is a rationally connected fibration over the Gromov-Witten variety $\W_{Fl_I; u_1, \dots, u_r}^{g, (\pi_{I/J})_* \mathbf{d}}$.
	\end{enumerate}
\end{theorem}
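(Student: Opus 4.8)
The plan is to factor $\pi_{I/J} : Fl_J \to Fl_I$ into a chain of elementary forgetful maps, each dropping a single vector space, and to analyse one elementary step at a time. Definition~\ref{def: stabilized collection} is built for exactly this: writing $\mu$ for the index with $j_\mu \in I$, one deletes the spaces $V_{j_k}$ with $j_k \notin I$ first for $k<\mu$, running through the flag varieties $Fl_{J_{\geq k}}$, and then for $k>\mu$, running through the $Fl_{J_{\mu\leq k}}$, in such a way that every elementary step $Fl_{J'}\to Fl_{J''}$ in this chain is one at which the definition posits surjectivity of the relevant $\Pi\times ev$ together with a degree lower bound. Granting that each elementary step is a unirational fibration — including in the presence of Schubert incidence conditions at the marked points — assertion i) follows by composition, a composite of unirational fibrations being by definition a tower of unirational fibrations, and assertion ii) follows because a tower of unirational fibrations over a base has rationally connected general fibre: unirationality implies rational connectedness, and rational connectedness passes up a fibration with rationally connected base and rationally connected general fibre by the Graber--Harris--Starr theorem, which one iterates along the chain. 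The point incidence conditions of i) are the case $u_1=\dots=u_r=1$ of the Schubert conditions appearing in ii).

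Fix an elementary step: $J''=I''$ and $J'=I''\cup\{j\}$ with $i_p<j<i_{p+1}$ consecutive in $I''$, and let $\pi:Fl_{J'}\to Fl_{I''}$ be the forgetful map. For a stable map $h$ with irreducible source $\P^1$ and class $\pi_*\mathbf e$ — the general point of the relevant space over $Fl_{I''}$ avoids the boundary — lifting $h$ to a class-$\mathbf e$ stable map to $Fl_{J'}$ amounts to choosing, inside the rank $(i_{p+1}-i_p)$ bundle $\mathcal E:=h^*\mathcal V_{i_{p+1}}/h^*\mathcal V_{i_p}$ on $\P^1$, a coherent subsheaf $\mathcal F\subset\mathcal E$ of rank $j-i_p$ and of the degree dictated by $\mathbf e$ (the subsheaf being a subbundle exactly when the lift has irreducible source); and the condition that the $i$-th marked point of the lift lie in $g_i\cdot Fl_{J'}(u_i)$ translates, via the fact that $\pi$ maps $Fl_{J'}(u_i)$ onto $Fl_{I''}(u_i)$ as a locally trivial fibration over the big cell with fibre a Schubert variety, into the condition that $\mathcal F|_{x_i}$ lie in a prescribed Schubert subvariety $\Sigma_i$ of $\Gr(j-i_p,\mathcal E|_{x_i})$. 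Hence the fibre of the elementary step over the class of $h$ is a constrained Quot scheme on $\P^1$ — the Quot scheme of rank-$(j-i_p)$, fixed-degree subsheaves of $\mathcal E$, cut out by the incidence conditions $\mathcal F|_{x_i}\in\Sigma_i$ — with dense open locus the subbundles satisfying these conditions. The surjectivity hypothesis of Definition~\ref{def: stabilized collection} says precisely that the fibre-evaluation $\mathcal F\mapsto(\mathcal F|_{x_i})_i$ is surjective onto $\prod_i\Gr(j-i_p,\mathcal E|_{x_i})$ for general $h$, which, together with Kleiman transversality as in Remark~\ref{rem: reducibility W}, gives that the elementary step is dominant and that source and target are reduced and equidimensional; that distinct components of the source dominate distinct components of the target then follows from the irreducibility of the general fibre, treated next.

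The crux is to show that, for $\mathcal E$ arising from a general stable map of the relevant degree and for $\deg\mathcal F$ sufficiently negative — which is exactly what the degree bounds of Definition~\ref{def: stabilized collection} deliver — the constrained Quot scheme above is non-empty, irreducible, unirational, and of the expected dimension. I would prove this by an explicit parametrization of its subbundle locus. The degree bounds force the flag bundles $h^*\mathcal V_{i_k}$ of a general $h$ to be balanced, hence $\mathcal E$ to be positive enough that a general subbundle $\mathcal F\hookrightarrow\mathcal E$ of the prescribed large degree is itself balanced, $\mathcal F\simeq\bigoplus_s\O(-a_s)$ with the $a_s$ determined up to one; the inclusion is then a tuple of twisted sections in $\bigoplus_s H^0(\mathcal E(a_s))$, and expressing the Schubert varieties $\Sigma_i$ in echelon coordinates turns the constraints $\mathcal F|_{x_i}\in\Sigma_i$ into a system that is linear in these sections. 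Consequently the parameter space is, over a dense open, an affine bundle over an affine space — in particular rational — and dominates the constrained Quot scheme, so the latter is unirational; non-emptiness, irreducibility and the expected dimension drop out of the same positivity. Carrying this through — pinning down the splitting type of the flag bundles of a general stable map, checking that the hypotheses of Definition~\ref{def: stabilized collection} put one above the threshold where the generic subbundle is balanced and the marked-point conditions are independent, and deducing irreducibility of the constrained Quot scheme — is where essentially all the work lies, and is the main obstacle; note that the $r$-dependence of the true numerical threshold is what is absorbed into the surjectivity hypotheses of the definition.

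Assembling the chain of elementary steps then proves i); running the same chain with the Schubert incidence conditions present throughout exhibits $\W_{Fl_J;u_1,\dots,u_r}^{g,\mathbf d}$ as a tower of unirational fibrations over $\W_{Fl_I;u_1,\dots,u_r}^{g,\pi_*\mathbf d}$, which collapses to a single rationally connected fibration by the Graber--Harris--Starr stability invoked in the first paragraph, giving ii). One must also observe that forgetting a vector space off a stable map fails to destabilise except along a proper closed subset of the boundary of $\overline{\mathcal{M}_{0,r}}$, so that the maps in the chain are honest morphisms on the loci governing the general-point statements; this is routine.
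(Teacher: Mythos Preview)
Your overall architecture—factor $\pi_{I/J}$ into elementary forgetful steps, describe each fibre via subbundles of a balanced bundle on $\P^1$, and compose via Graber--Harris--Starr—matches the paper; for part~i) the arguments coincide, your space of twisted sections being exactly the paper's auxiliary linear space $\mathcal Z_{p,x}$ of Section~\ref{sec : geometry general fiber}, and the balancedness input being Section~\ref{sec: balanced flags over P^1}.

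For part~ii) the routes diverge. You propose to carry Schubert constraints through every elementary step. The paper instead proves once that the general fibre of $\Pi_{I/J}\times ev:\overline{\mathcal M_{0,r}}(Fl_J,\d)\to\overline{\mathcal M_{0,r}}(Fl_I,\cdot)\times_{(Fl_I)^r}(Fl_J)^r$—which carries only \emph{point} constraints at the marked points—is rationally connected, and then transfers this to Gromov--Witten varieties in a single stroke via Proposition~\ref{prop : W is irreducible}, a Kleiman-transversality argument that handles the Schubert conditions and the projection $\prod_i X(u_i)\to\prod_i X'(u_i)$ separately (Lemma~\ref{lem: general fiber between Sch var irr}). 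The paper's route is cleaner because point constraints are genuinely linear in the sections, making $\mathcal Z_{p,x}$ a linear subspace outright; your claim that a Schubert constraint $\mathcal F|_{x_i}\in\Sigma_i$ ``linearizes in the sections'' is only correct fibrewise over the big Schubert cell after adjoining its echelon coordinates as extra parameters—this works, but you should say so, since $\Sigma_i$ is determinantal and not linear in the ambient Grassmannian. If carried through, your route would yield the stronger conclusion that $\W_{Fl_J}$ is itself a tower of unirational fibrations over $\W_{Fl_I}$, which the paper does not claim.

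There is also a gap you pass over: your balancedness analysis is stated for $h$ general in the full moduli space $\overline{\mathcal M_{0,r}}(Fl_{J''},\cdot)$, but to run Schubert conditions through the chain you need it for $h$ general in the intermediate Gromov--Witten variety $\W_{Fl_{J''};u_1,\dots,u_r}^{g,\cdot}$, a proper closed subset. Transferring ``general'' across this inclusion is not automatic; it is the content of Lemma~\ref{lem : dense intersection with open for homogeneous spaces} and Lemma~\ref{lem: properties general fiber of pi x ev|W}, and must be invoked at every step of your chain. The paper's architecture needs this transfer only once, inside the proof of Proposition~\ref{prop : W is irreducible}.
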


Let $J$ be a subsrt of $I$. Denote by $W_{J} := N_{L_J}(T)/T$ the Weyl group associated with $P_J$. Denote by $$p: W/W_{J} \rightarrow W/W_{I}$$ the natural projection. For $u$ in $W/W_{I}$, the inverse image $p^{-1}(u)$ is the set of elements $u \cdot s$, where $s$ is an element in $W_I / W_{J}$. Note that the Schubert variety $FL_I(u) := \overline{BuP_I/P_I}$ is independent of the choice of a representative in $W$ of the element $u$.

\begin{corollaire}
	Let $\d =(d_1, \dots, d_m)$ be a degree in $E(Fl_I) \simeq \mathbb{N}^{m}$ and $u_1$, $\dots$, $u_r$ be elements in $W$ such that the following Gromov-Witten invariant is non-zero: $$n_{u_1, \dots, u_r}^{{I}, \d} := \int_{\overline{{\mathcal{M}}_{0,r}}(Fl_{I}, \mathbf{d})} ev_1^*[Fl_I(u_1)] \cup \dots \cup ev_r^*[Fl_I(u_r)] \neq 0.$$
	Let $J$ and $I$ be sets of increasing positive integers and $\d$ be a degree in $E(Fl_J)$ such that the collection $\{I,J,\d\}$ is stabilized in the sense of Definition \ref{def: stabilized collection}. Then for any elements $v_i \in p^{-1}(u_i)$, for $g$ general in $GL_n^r$ the Gromov-Witten variety $\mathcal{W}_{Fl_J; v_1, \dots, v_r}^{g, \d}$ has $n_{u_1, \dots, u_r}^{{I}, \d}$ connected components, which are irreducible rationally connected varieties.
\end{corollaire}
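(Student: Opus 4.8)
The plan is to deduce the statement from Theorem \ref{th : RC fibrations bewteen W varieties} (part ii)) by a dimension count on $Fl_I$ together with a short topological argument. Throughout write $\d' := (\pi_{I/J})_*\d \in E(Fl_I)$, so that the hypothesis reads: the Gromov--Witten invariant $n := n_{u_1,\dots,u_r}^{I,\d'}$ is non-zero.

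First I would pin down the target. Since $n_{u_1,\dots,u_r}^{I,\d'}$ is the integral over the irreducible projective variety $\overline{\mathcal{M}_{0,r}}(Fl_I,\d')$ of the cup product $ev_1^*[Fl_I(u_1)]\cup\dots\cup ev_r^*[Fl_I(u_r)]$, which lives in cohomological degree $2\sum_i\codim Fl_I(u_i)$, its non-vanishing forces $\sum_i\codim Fl_I(u_i)=\dim\overline{\mathcal{M}_{0,r}}(Fl_I,\d')$. By Remark \ref{rem: reducibility W} (the proof of Kleiman's transversality theorem), for $g$ general in $\GL_n^r$ the Gromov--Witten variety $Z := \W_{Fl_I;u_1,\dots,u_r}^{g,\d'}$ is a reduced scheme of the expected dimension, here $0$; and since Schubert classes satisfy $[Fl_I(u_i)]=[g_i\cdot Fl_I(u_i)]$ in $H^*(Fl_I,\Z)$ and $ev$ meets $g_1\cdot Fl_I(u_1)\times\dots\times g_r\cdot Fl_I(u_r)$ transversally, the degree of the zero-cycle $[Z]$ equals $n$. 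Hence $Z$ is a disjoint union of exactly $n$ reduced points $z_1,\dots,z_n$.

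Next I would apply Theorem \ref{th : RC fibrations bewteen W varieties} ii) to the elements $v_1,\dots,v_r$, lifted arbitrarily to $W$: as $\{I,J,\d\}$ is stabilized, for $g$ general in $\GL_n^r$ the variety $Y := \W_{Fl_J;v_1,\dots,v_r}^{g,\d}$ is a rationally connected fibration over $\W_{Fl_I;v_1,\dots,v_r}^{g,\d'}$. Because $p(v_i)=u_i$ one has $v_iW_I=u_iW_I$, so $Fl_I(v_i)=Fl_I(u_i)$ and this target is precisely $Z$. Taking $g$ in the intersection of the two dense open subsets of $\GL_n^r$ on which the previous paragraph and Theorem \ref{th : RC fibrations bewteen W varieties} ii) both hold, I obtain a morphism $f:Y\to Z$ that is a rationally connected fibration in the sense of Definition \ref{def: RC fibration}: it is dominant, each irreducible component of $Y$ dominates a distinct irreducible component of $Z$, and its general fibre is an irreducible rationally connected variety.

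It remains to read off the conclusion. The irreducible components of $Z$ are its $n$ points; an irreducible component of $Y$ has dense image in a single $z_j$, this assignment is injective, and dominance of $f$ makes it a bijection, so $Y$ has exactly $n$ irreducible components $Y_1,\dots,Y_n$ with $f(Y_j)=\{z_j\}$. Any point of $f^{-1}(z_j)$ lies on some $Y_k$, which then satisfies $z_k=z_j$, so $f^{-1}(z_j)=Y_j$; since $Z$ is a finite set of reduced points each such fibre is open and closed in $Y$, so the $Y_j$ are the connected components of $Y$ and each is irreducible. Finally, over the discrete reduced scheme $Z$ a dense open subset is all of $Z$, so ``general fibre'' means every fibre, and each $Y_j=f^{-1}(z_j)$ is rationally connected; this gives the claim. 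I do not expect a genuine obstacle: the only delicate points are (a) that a non-zero Gromov--Witten invariant forces $\W_{Fl_I;u_1,\dots,u_r}^{g,\d'}$ to consist of exactly $n$ reduced points, which is standard Kleiman--Bertini plus translation-invariance of Schubert classes, and (b) correctly interpreting ``general fibre'' in Definition \ref{def: RC fibration} when the base is a finite reduced scheme, where it collapses to ``every fibre''. Everything else is formal from Theorem \ref{th : RC fibrations bewteen W varieties}.
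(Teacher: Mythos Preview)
Your proof is correct and follows precisely the route the paper intends: the corollary is stated without an explicit proof, as an immediate consequence of Theorem \ref{th : RC fibrations bewteen W varieties} ii), and you have supplied exactly the expected details --- the dimension count forcing $\W_{Fl_I;u_1,\dots,u_r}^{g,\d'}$ to be $n$ reduced points via Kleiman transversality, the identification $Fl_I(v_i)=Fl_I(u_i)$, and the unpacking of Definition \ref{def: RC fibration} over a finite reduced base. Your observation that the only dense open subset of a finite reduced scheme is the whole scheme, so that ``general fibre'' becomes ``every fibre'', is the one point worth making explicit, and you handle it correctly.
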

\subsection{Previous research.} Gromov-Witten varieties of $\P^1$ and $\P^2$ that are not rationally connected varieties have been found in \cite{iritani2014reconstruction}. For $\P^2$, the locus of $\overline{\mathcal{M}_{0,3d-2}}(\mathbb{P}^2,d)$ corresponding to curves of degree $d$ going through $3d-2$ points is a curve whose genus depends on the degree $d$, of positive genus as soon as $d>2$ \cite{pandharipande1997canonical}. A recursive procedure computing the geometric genus of $1$-dimensional Gromov-Witten varieties of $\mathbb{P}^r$ was also given in \cite{ran2001variety}. High degree Gromov-Witten varieties have been shown to be rational varieties for Grassmannians by Buch and Milhacea \cite{buch2011}, and more generally for cominuscule varieties by Chaput and Perrin \cite{chaput2011rationality}. \\

This chapter is organized as follows.
We first prove in Section \ref{subsec : correspondence irreducible components} that, when the general fiber of the morphism $\Pi_{I/J} \times ev$ defined by (\ref{eqintro: def Pixev}) is rationally connected, Gromov-Witten varieties of $Fl_I$ are rationally connected fibrations over Gromov-Witten varieties of $Fl_J$ obtained by projection. We then recall in Section \ref{sec: balanced flags over P^1} results on flags of vector bundles over $\P^1$. Section \ref{sec : geometry general fiber} is dedicated to the study of the general fiber of the map $\Pi_{I/J} \times ev$. We observe that when the flag variety $Fl_I$ is obtained from $Fl_J$ by forgetting one vector space, the general fiber of this map is a unirational variety. This result is central in the derivation of Theorem \ref{th : RC fibrations bewteen W varieties} presented in Section \ref{sec: proof main theorem}. Finally, we study the surjectivity of the morphism $\Pi_{I/J} \times ev$ in Section \ref{sec: conditions surjectivity}.

\section*{Conventions}
Schemes considered in the subsections \ref{sec : stability composition} and \ref{sec: image general subvariety} will be defined over an algebraically closed field. Schemes considered in all other Sections will be defined over $\mathbb{C}$. 

We say a property (P) holds for a \textit{general} point in a variety $X$ if (P) is true for points belonging to a dense open subset of $X$. We say the general fiber of a morphism $f: X \rightarrow Y$ satisfies $(P)$ if $f^{-1}(x)$ satisfies $(P)$ for points $x$ in a dense open subset of $Y$.
Let $G$ be an integral algebraic group scheme, let $Y$ be an integral scheme endowed with a transitive $G$-action, let $Z$ be a subscheme of $Y$, and $M \rightarrow Y$ be a morphism of integral schemes. We say a property (P) holds for a subscheme $M \times_Y g \cdot Z$ of $M$ in \textit{general position} in $M$ if (P) holds for element $g$ of $G$ belonging to a dense open subset of $G$. 

We consider here that a variety is not necessarily irreducible; i.e. a {variety} is a reduced, finite type scheme defined over an algebraically closed field.

\section{Restriction to a general subvariety} \label{subsec : correspondence irreducible components}

Let $X\simeq G/P$ and $X' \simeq G/P'$ be (generalized) flag varieties associated with parabolic subgroups $P \subset P'$ of a semi-simple linear algebraic group $G$. We observe here that some properties of Gromov-Witten varieties of $X$ and $X'$ can be deduced from properties of the general fiber of the morphism $\Pi \times ev: \overline{{\mathcal{M}}_{0,r}}(X, \d) \rightarrow \overline{{\mathcal{M}}_{0,r}}(X', \pi_*\d) \times_{(X')^r} X^r$-cf. Proposition \ref{prop : W is irreducible}. This is one of the key ingredients in the derivation of Theorem \ref{th: ex stability under composition} we provide in Section \ref{sec: proof main theorem}.

More precisely, the property we prove here is the following. Let $G$ be a semi-simple algebraic group, let $Z'$ be a projective variety with a transitive $G$-action, let $Z$ be a projective variety over $Z'$ such that the morphism $Z \rightarrow Z'$ is $G$-equivariant. Let $\Pi : M \rightarrow M'$ be a morphism of projective varieties over $Z'$. Let $M \to Z$ be a morphism of projective varieties, let $X$ be a subvariety of $Z$. Then for $g$ general in $G$ the irreducibility (respectively rational connectedness) of the general fiber of the restriction map ${\Pi}_{| g X}: M_{g \cdot X} \to M'_{g \cdot X}$induced by $\Pi$ is implied by the irreducibility (respectively rational connectedness) of the general fiber of the morphism $M \rightarrow M' \times_{Z'} Z$- cf. Proposition \ref{prop : image is equidimensional}. 
\subsection{Rationally connected, unirational and rational schemes.} We call a scheme $X$ \textit{rationally connected} if there is an irreducible rational curve joining two general points on $X$. Note that  a rationally connected scheme is irreducible. By contradiction suppose a scheme $Y$ has several irreducible components $Y_i$ and is rationally connected. We can then find two points $y_1$ and $y_2$ that each belongs to an irreducible component $Y_i$ and is not contained in any other irreducible component of $Y$ such that there is an irreducible curve $C$ joining the points $y_1$ and $y_2$. Then the irreducible curve $C$ belongs to both $Y_1$ and $Y_2$ and is not contained in the intersection of the two irreducible components, which is a contradiction.

We call a variety $Y$ \textit{rational} if there exists $N>0$ such that $Y$ is birational to $\P^N$. A variety $Y$ is called \textit{unirational} if there exists a dominant rational map $\P^N \dashrightarrow Y$. Note that since $\P^N$ is rationally connected, rational and unirational varieties are rationally connected. 

\subsection{Composition of rationally connected fibrations.}\label{sec : stability composition} 
We call a  a morphism of schemes $f: Y \to S$ a rationally connected fibration if \begin{itemize}
	\item  Each irreducible component of $Y$ dominates a different irreducible component of $S$;
	\item $Y$ dominates $S$;
	\item The general fiber of $f$ is rationally connected.
\end{itemize}
We call a scheme $Y$ a \textit{rationally connected fibration over a scheme $S$} if there exists a rationally connected fibration $Y \to S$. 

\begin{defn}\label{def: stability composition}
	Let $\mathcal{C}$ be a subcategory of the category of schemes $\mathrm{Sch}$. We call a property $(P)$ \textit{stable under composition in $\mathcal{C}$} if for every morphisms $Y \xrightarrow{f} S$ and $S' \xrightarrow{f'} S$ in $\mathcal{C}$ satisfying $(P)$, the composition $f' \circ f: Y \rightarrow S$ also satisfies $(P)$. 
\end{defn}

Types of morphisms of schemes stable under composition are well known; for example flat, smooth or etale morphisms are stable under composition-cf. for example Stacks Project $62.4$. 
We focus here on composing rationally connected fibrations. Grabber-Harris-Starr's Theorem on rational connectedness implies that the composition of rationally connected fibrations is itself a rationally connected fibration-cf. Theorem \ref{th: ex stability under composition}. This property will be a key ingredient in the derivation of Theorem \ref{th : RC fibrations bewteen W varieties} proposed in Section \ref{sec: proof main theorem}.

\begin{theorem}\label{th: ex stability under composition}
	Let $\mathcal{C}$ be the category of irreducible projective schemes.
	\begin{enumerate}[label={(\arabic*)}]
		\item The property \textsc{"the general fiber is Irreducible"}  is stable under composition in $\mathcal{C}$.
		\item  Let $(P)$ be a property such that, for all morphisms $Y \rightarrow S$ in $\mathcal{C}$, the following statement is verified:
		\begin{center}
			if $S$ and the general fiber of the morphism $Y \rightarrow S$ satisfy $(P)$, then $Y$ satisfies $(P)$.\end{center}
		Then the property \textsc{"the general fiber satisfies $(P)$ and is irreducible"} is stable under composition in $\mathcal{C}$.
		\item The property \textsc{"the general fiber is rationally connected"} is stable under composition in the category of irreducible complex projective schemes.
		\item Let $Y$, $Z$ and $S$ be projective complex varieties such that $Y$ is a rationally connected fibration over $Z$ and $Z$ is a rationally connected fibration over $S$. Then $Y$ is a rationally connected fibration over $S$.
	\end{enumerate}
\end{theorem}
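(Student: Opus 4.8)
The plan is to prove Theorem \ref{th: ex stability under composition} by establishing (1) first as a warm-up, then bootstrapping to (2), then deducing (3) as an instance of (2), and finally handling the possibly-reducible case (4) component by component. For (1), given irreducible projective $Y \xrightarrow{f} S$ and $S \xrightarrow{f'} S''$ whose general fibers are irreducible, I would use the standard fact that a dominant morphism of irreducible varieties has irreducible generic fiber if and only if the function field extension is such that the algebraic closure of the base field in the total field agrees with the base field (i.e. the generic fiber is geometrically irreducible over the residue field at the generic point). Concretely: there is a dense open $U \subseteq S''$ over which $f'$ has irreducible fibers, and for $s'' \in U$ general, $(f')^{-1}(s'')$ is irreducible; since the locus in $S$ where $f$ fails to have irreducible fibers is a proper closed subset, its intersection with the irreducible variety $(f')^{-1}(s'')$ is proper closed there, so a general fiber of $f'\circ f$ over $s''$ is obtained by restricting $f$ to the irreducible variety $(f')^{-1}(s'')$ minus a proper closed subset — and a general fiber of $f$ over a general point of an irreducible base is irreducible. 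The clean way to say this is via generic flatness / the constructibility of the locus of points with geometrically irreducible fibers (EGA IV 9.7.7), applied twice.

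For (2), I would argue directly: let $Y \xrightarrow{f} S \xrightarrow{f'} S''$ be in $\mathcal{C}$ with both $f$ and $f'$ having general fiber satisfying $(P)$ and irreducible. By (1) the general fiber $F''$ of $f' \circ f$ over a general point $s'' \in S''$ is irreducible. This fiber $F''$ maps to the fiber $S_{s''} := (f')^{-1}(s'')$, which is irreducible and, for $s''$ general, satisfies $(P)$ (it is a general fiber of $f'$). The restriction $F'' \to S_{s''}$ has general fiber a general fiber of $f$ (since the bad locus in $S$ is proper closed, hence meets the irreducible $S_{s''}$ in a proper closed subset), which satisfies $(P)$ and is irreducible. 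Now apply the hypothesis on $(P)$ to the morphism $F'' \to S_{s''}$: since $S_{s''}$ satisfies $(P)$ and the general fiber of $F'' \to S_{s''}$ satisfies $(P)$, the variety $F''$ satisfies $(P)$. Thus the general fiber of $f' \circ f$ satisfies $(P)$ and is irreducible, as required. The one subtlety to spell out is the "general fiber of the restriction is a general fiber of $f$" step: I would note that the union of non-$(P)$ loci and non-irreducible loci for $f$ is contained in a proper closed subset $Z \subsetneq S$, and $S_{s''} \not\subseteq Z$ because $S_{s''}$ sweeps out $S$ as $s''$ varies (as $f'$ is dominant), so for general $s''$ we have $S_{s''} \cap Z \subsetneq S_{s''}$.

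Part (3) is then immediate from (2): take $(P)$ to be "rationally connected". The required implication — if $S$ is rationally connected and the general fiber of $Y \to S$ is rationally connected, then $Y$ is rationally connected — is precisely the Graber–Harris–Starr theorem (valid over $\mathbb{C}$, which is why the statement restricts to complex projective schemes). So the general fiber of a composition of two rationally-connected fibrations in the irreducible setting is rationally connected and irreducible. For (4), I would reduce to (3) by working one irreducible component at a time. Write $Y = \bigcup Y_a$, $Z = \bigcup Z_b$, $S = \bigcup S_c$ into irreducible components. By definition of rationally connected fibration, the map $g: Y \to Z$ sends each $Y_a$ dominantly onto a distinct $Z_{b(a)}$, and $h: Z \to S$ sends each $Z_b$ dominantly onto a distinct $S_{c(b)}$; hence $h \circ g$ sends each $Y_a$ dominantly onto the distinct component $S_{c(b(a))}$ (distinctness because $a \mapsto b(a)$ and $b \mapsto c(b)$ are both injective, so their composite is). Restricting $g$ to $Y_a \to Z_{b(a)}$ and $h$ to $Z_{b(a)} \to S_{c(b(a))}$ gives rationally connected fibrations of irreducible projective varieties — for the general fiber, one checks that a general fiber of $Y_a \to Z_{b(a)}$ agrees with a general fiber of $Y \to Z$ restricted to $Y_a$, which is rationally connected, because the components $Y_{a'}$ with $a' \neq a$ meet $Y_a$ in a proper closed subset and map into components of $Z$ other than $Z_{b(a)}$ or into proper subsets; similarly for the general fiber of $Z_{b(a)} \to S_{c(b(a))}$. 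Then (3) applies to each $Y_a \to Z_{b(a)} \to S_{c(b(a))}$, giving that the general fiber is rationally connected, and assembling over all $a$ shows $h \circ g: Y \to S$ satisfies all three bullet points of a rationally connected fibration.

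The main obstacle I anticipate is not any single deep input — the heavy lifting is done by Graber–Harris–Starr — but rather the careful bookkeeping of "general fiber of a restriction equals general fiber of the original map," which must be invoked repeatedly. The precise statement needed is: if $f: Y \to S$ has a property holding on fibers over a dense open $V \subseteq S$, and $W \subseteq S$ is an irreducible subvariety not contained in $S \setminus V$, then $f|_{f^{-1}(W)}: f^{-1}(W) \to W$ has the property on fibers over the dense open $W \cap V$. This is elementary but needs the dominance hypotheses in the definition of rationally connected fibration to guarantee the "not contained in" clause each time a fiber $S_{s''}$ or a component $Z_b$ plays the role of $W$. I would state this once as a small lemma (or inline remark) and then reuse it, to keep the proofs of (1)–(4) short and clean.
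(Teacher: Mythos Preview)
Your proposal is correct and, for parts (2), (3), (4), essentially identical to the paper's argument: the paper also proves (2) by checking that for general $s''$ the fiber $S_{s''}$ satisfies $(P)$, that the restricted map $Y_{s''}\to S_{s''}$ lies in $\mathcal C$ (using (1)), and that its general fiber satisfies $(P)$ because the bad locus in $S$ meets $S_{s''}$ properly; (3) is deduced from (2) via Graber--Harris--Starr; and (4) is done component by component exactly as you describe.

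The one place where your route diverges is (1). Your direct argument (``restrict $f$ to the irreducible fiber $(f')^{-1}(s'')$; the bad locus is proper closed; hence the general fiber of the restriction is irreducible'') actually stops one step short: it shows that $Y_{s''}\to S_{s''}$ has irreducible general fiber, but that alone does not force $Y_{s''}$ to be irreducible (an equidimensional variety fibred over an irreducible base with irreducible general fibers can still have several components, each dominating the base). The paper fills this gap by a small dimension count (its Proposition~\ref{prop: image general fiber}) showing that every irreducible component of $Y_{s''}$ dominates $S_{s''}$, and then arguing that any two components must coincide over the good open of $S_{s''}$. Your alternative via function fields / EGA~IV~9.7.7 is a genuinely different and cleaner shortcut: over an algebraically closed ground field, ``general fiber irreducible'' is equivalent to ``generic fiber geometrically irreducible'', i.e.\ the base function field is algebraically closed in the total function field, and this property is transitive in the tower $K(S'')\subset K(S)\subset K(Y)$. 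That argument avoids the component-by-component dimension bookkeeping entirely, at the cost of invoking the equivalence between geometric irreducibility of the generic fiber and irreducibility of the general fiber.
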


\begin{lemme}\label{lem : Intersection dense open is dense}
	Let $f : Y \rightarrow Z$ be a dominant morphism of schemes, of finite presentation, where $Y$ is an equidimensional noetherian scheme. Let $V$ be a dense open subset of $Y$. Then the intersection of the general fiber of $f$ with $V$ is dense in the fiber.
\end{lemme}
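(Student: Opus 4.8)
The plan is to reduce the statement to the behaviour of a single general fibre by a generic-flatness argument. First I would invoke generic flatness (Grothendieck): since $f\colon Y\to Z$ is of finite presentation and $Y$ is noetherian, there is a dense open $U\subseteq Z$ over which $f$ is flat, hence open; replacing $Z$ by $U$ and $Y$ by $f^{-1}(U)$ I may assume $f$ is flat and dominant, so in particular surjective and open. The content of the lemma is then: for a general point $z\in Z$, the open set $V\cap f^{-1}(z)$ is dense in $f^{-1}(z)$.

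Next I would use that $V$ is dense open in $Y$, so $F:=Y\setminus V$ is a closed subset with $\dim F<\dim Y$ on each component; since $Y$ is equidimensional of dimension $d$ say, and $f$ is flat with irreducible (or at least pure-dimensional) fibres of dimension $d-\dim Z$ — more precisely, by the dimension formula for flat finite-type morphisms, every nonempty fibre $f^{-1}(z)$ is pure of dimension $d-\dim Z$ — it suffices to show that for general $z$ the fibre $F_z=F\cap f^{-1}(z)$ has dimension strictly less than $d-\dim Z$, and hence is nowhere dense in the pure-dimensional fibre $f^{-1}(z)$, so its complement $V\cap f^{-1}(z)$ is dense. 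To get the fibre-dimension bound on $F$, I would apply Chevalley's upper-semicontinuity of fibre dimension to the restriction $f|_F\colon F\to Z$: on each irreducible component $F_i$ of $F$, the general fibre of $F_i\to \overline{f(F_i)}$ has dimension $\dim F_i-\dim\overline{f(F_i)}\le \dim F_i<d$, and a standard argument shifting to a dense open of $Z$ makes this a bound of the form $\dim F_z\le d-\dim Z-1$ for $z$ in a dense open of $Z$. Intersecting the finitely many dense opens of $Z$ produced by the finitely many components $F_i$ (together with $U$ from generic flatness and the dense open where all fibres of $f$ are pure of the expected dimension) gives a single dense open of $Z$ over which the conclusion holds.

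The main obstacle is making the fibre-dimension comparison fully rigorous without assuming irreducibility of $Y$ or of the fibres: one must be careful that ``equidimensional'' plus ``flat of finite presentation'' forces all fibres to be pure of the expected dimension $d-\dim Z$ (this uses that $Z$ can be taken irreducible after shrinking, and the standard dimension theorem for flat morphisms, e.g. \cite{chriss2009representation}-style references or the Stacks Project), and symmetrically that each irreducible component of the bad locus $F$ genuinely drops dimension over the generic point of $Z$. Once these pure-dimensionality and semicontinuity inputs are in place, the density conclusion $\overline{V\cap f^{-1}(z)}=f^{-1}(z)$ is immediate: a proper closed subset cannot contain a dense open of a pure-dimensional scheme of larger dimension, so $V\cap f^{-1}(z)$, being the complement in $f^{-1}(z)$ of the lower-dimensional closed set $F_z$, meets every irreducible component of $f^{-1}(z)$ in a dense open, hence is dense.
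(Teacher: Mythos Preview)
Your proof is correct and follows essentially the same strategy as the paper: decompose the complement $F=Y\setminus V$ into its finitely many irreducible components $F_i$, show that for general $z$ each $F_i\cap f^{-1}(z)$ is either empty (when $F_i$ does not dominate $Z$) or of dimension strictly less than $\dim Y-\dim Z$, and conclude using pure-dimensionality of the general fibre of $f$. The only real difference is that you invoke generic flatness to justify that the general fibre of $f$ is pure of the expected dimension, whereas the paper simply asserts this (after reducing to $Z$ irreducible) and appeals to Chevalley's constructibility theorem for the dimension bound on the fibres of $f|_{F_i}$; both routes are standard and equivalent here.
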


\begin{proof}
	First, since the question is local on $Z$, we can suppose $Z$ irreducible. Since $Y$ is a noetherian scheme, $Y \setminus V$ can be written as the union of a finite number of schemes, i.e. $Y \setminus V = \cup_{i=1}^N F_i$, where the $F_i$ are closed subschemes of $Y$, of dimension smaller than the pure dimension of $Y$. Denote by $f_{|F_i}$ the restriction of $f$ to one of the boundary subschemes $F_i$. If $f_{|F_i}$ does not dominate $Z$, then the preimage under $f_{|F_i}$ of a general point in $Z$ is empty. On the other side, if $f_{|F_i}$ dominates $Z$, then the preimage under $f_{|F_i}$ of a general point in $Z$ is a scheme of pure dimension smaller than $\mathrm{dim} Y - \mathrm{dim} Z$. Indeed, since $f$ is a morphism of finite presentation, Chevalley's constructibility theorem implies $f(F_i)$ contains an open subset of $Z$; hence the subset of points $x$ in $f(F_i)$ such that the fiber $f_{|F_i}^{-1}(x)$ is of expected dimension  is an open subset $V$ of $Z$, verifying $\dim f_{|F_i}^{-1}(x) = \dim F_i - \dim Z$ for all points $x$ in $V$. Thus, since $Y$ is equidimensional, the general fiber $Y_0$ of $f$ is a scheme of pure dimension $\mathrm{dim} Y - \mathrm{dim} Z$, whose intersection with $F_i$ is a scheme of dimension smaller than $\mathrm{dim} Y - \mathrm{dim} Z$; this implies the intersection $Y_0 \cap V$ is dense in $Y_0$.
\end{proof}

Let $h : M \to M'$ and $f: M \to Z$ be morphisms of schemes. We denote by $h \times f: M \rightarrow M' \times Z$ the morphism induced by $h$ and $f$. 
\begin{proposition}\label{prop: image general fiber}
	Let $M \xrightarrow{h} M'$ and $M \xrightarrow{f} Z$ be surjective morphisms of finite presentation of integral noetherian schemes. 
	
	Then, for $z$ general in $Z$:\begin{enumerate}[label=\roman*)]
		\item The image by $h$ of any irreducible component of $f^{-1}(z)$ has dimension equal to $\dim (h\times f)(M)- \dim Z$;
		\item Each irreducible component of $f^{-1}(z)$ dominates one of the irreducible components of $h(f^{-1}(z))$.
	\end{enumerate}
\end{proposition}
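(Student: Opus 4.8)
The plan is to reduce everything to a fibre-dimension count over $Z$, using Chevalley's constructibility theorem to control the loci where the fibre dimensions jump, exactly as in Lemma~\ref{lem : Intersection dense open is dense}. First I would set $M'' := (h\times f)(M) \subset M' \times Z$ and $N := \dim M'' - \dim Z$; note the second projection $q : M'' \to Z$ is dominant since $f$ is surjective, so by generic flatness (or Chevalley applied to $q$) there is a dense open $U_0 \subset Z$ over which every irreducible component of $q^{-1}(z)$ has dimension exactly $N$. For $z \in U_0$ the fibre $f^{-1}(z)$ maps by $h\times f$ into $q^{-1}(z) = (h\times f)(f^{-1}(z))$, so the image by $h$ of any irreducible component of $f^{-1}(z)$ is contained in $q^{-1}(z)$ and hence has dimension at most $N$; the content of i) is the reverse inequality for a \emph{general} $z$.

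For the reverse inequality I would argue on $M$ directly. Decompose $M$ into the locally closed strata where the fibre dimension of $h\times f$ is constant: since $M$ is integral and $h\times f$ dominates $M''$, there is a dense open $V \subset M$ on which $h \times f$ has fibre dimension $e := \dim M - \dim M''$, and the complement $M \setminus V$ is a proper closed subset, a finite union of integral closed subschemes $F_i$ each of dimension $< \dim M$. By Chevalley's theorem applied to $f$ and to each $f|_{F_i}$, there is a dense open $U_1 \subset Z$ over which: $f^{-1}(z)$ is equidimensional of dimension $\dim M - \dim Z$; each $F_i$ that dominates $Z$ meets $f^{-1}(z)$ in dimension $\le \dim F_i - \dim Z < \dim M - \dim Z$; and each $F_i$ not dominating $Z$ misses $f^{-1}(z)$ entirely. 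Hence for $z \in U_0 \cap U_1$, every irreducible component $C$ of $f^{-1}(z)$ meets $V$ in a dense open of $C$, so a general point $x \in C$ lies in $V$, where the fibre of $h\times f$ through $x$ has dimension exactly $e$. Since that fibre is contained in $C$ (being contained in both $f^{-1}(z)$ and $(h\times f)^{-1}(h\times f)(x)$, and $C$ is a component of the former), we get $\dim (h\times f)(C) \ge \dim C - e = (\dim M - \dim Z) - (\dim M - \dim M'') = N$. As $(h\times f)|_C$ is $h|_C$ followed by the finite (indeed closed-immersion) projection to $M'$ composed with nothing that changes dimension — precisely, $h\times f$ restricted to the fibre $f^{-1}(z)$ is $(h, z)$, so $\dim h(C) = \dim (h\times f)(C) \ge N$ — this proves i).

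For ii), fix $z \in U_0 \cap U_1$ and an irreducible component $C$ of $f^{-1}(z)$. Then $h(C)$ is an irreducible subvariety of $h(f^{-1}(z))$ of dimension $N$ by i), while every irreducible component of $h(f^{-1}(z))$ has dimension $\le N$ since $h(f^{-1}(z)) \subset q^{-1}(z)$ which is equidimensional of dimension $N$ for $z \in U_0$. Therefore the closure $\overline{h(C)}$ is an irreducible closed subset of $h(f^{-1}(z))$ of maximal dimension, hence must be (the closure of) one of its irreducible components, i.e. $C$ dominates that component. I expect the main obstacle to be bookkeeping: making sure the several open sets $U_0, U_1$ (and the finitely many coming from the strata $F_i$) are handled uniformly, and checking carefully that $\dim(h\times f)(C) = \dim h(C)$ on a fibre — this is where one uses that on $f^{-1}(z)$ the map $h\times f$ is just $h$ with a constant second coordinate, so no subtlety about the projection $M'\times Z \to M'$ being non-finite in general can intervene. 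Everything else is a standard application of Chevalley's upper-semicontinuity of fibre dimension, already invoked in the proof of Lemma~\ref{lem : Intersection dense open is dense}.
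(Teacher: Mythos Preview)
Your proof is correct and follows essentially the same approach as the paper: both arguments hinge on the fact that the fibres of $h\times f$ have constant dimension $e=\dim M-\dim(h\times f)(M)$ on a dense open $V\subset M$, and then invoke (the argument of) Lemma~\ref{lem : Intersection dense open is dense} to ensure that every irreducible component of a general fibre $f^{-1}(z)$ meets $V$ densely, from which the dimension count for $h(C)$ follows; part ii) is then immediate from i) in both. One small imprecision: the claim ``that fibre is contained in $C$'' is not justified (the fibre of $h\times f$ could meet other components of $f^{-1}(z)$), but you do not need it---the fibre of $(h\times f)|_C$ through $x$ is \emph{contained in} the fibre of $h\times f$, hence has dimension $\le e$, which is all that is required for $\dim(h\times f)(C)\ge \dim C-e$; the paper handles this by restricting to $Z_0^\circ$, the complement in $C$ of the other components.
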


\begin{proof}
	\begin{enumerate}[label=\roman*)]
		\item Consider a dense open subset $U$ of $(h\times f)(M)$ such that the fiber $(h \times f)^{-1}(u)$ is a scheme of pure dimension $\mathrm{dim} M - \mathrm{dim} (h \times f)(M)$ for all $u$ in $U$. Consider a point $z$ general in $Z$, and an irreducible component $Z_0$ of $f^{-1}(z)$. Denote by $Z_0^\circ$ the variety $Z_0$ minus the intersection of $Z_0$ with other irreducible components of $f^{-1}(z)$.
		We first compute the dimension of the fiber  $h^{-1}(h(\mu)) \cap Z_0^\circ$, where $\mu$ is a point in $(h \times f)^{-1}(U) \cap Z_0^\circ$, before computing $\dim h(Z_0)$.
		
		\textbf{Computation of $\dim {h}^{-1}(h(\mu))\cap Z_0^\circ$.} First, we have: \begin{align*}
		{h}^{-1}(h(\mu))\cap f^{-1}(z) &= (h \times f)^{-1}\left(h(\mu), z\right)\\
		&= (h \times f)^{-1} (h \times f)(\mu).
		\end{align*}
		Since $(h \times f)(\mu)$ lies in $U$, this implies ${h}^{-1}(h(\mu)) \cap f^{-1}(z)$ is a scheme of pure dimension $\mathrm{dim} M - \mathrm{dim} (h\times f)(M)$. Hence the intersection of the irreducible variety $Z_0^\circ$ with ${h}^{-1}(h(\mu))$ is a scheme of pure dimension $\mathrm{dim} M - \mathrm{dim} (h\times f)(M)$. Indeed, the intersection of the open subvariety $Z_0^\circ$ of $f^{-1}(z)$ with $h^{-1}(h(\mu))$ is a non empty open subvariety of $h^{-1}(h(\mu)) \cap Z_0$ disconnected from the other pure dimensional irreducible components of $h^{-1}(h(\mu)) \cap f^{-1}(z)$.  Hence $h^{-1}(h(\mu)) \cap Z_0^\circ$ is a scheme of pure dimension $\mathrm{dim} M - \mathrm{dim} (h\times f)(M)$. 
		
		\textbf{Computation of $\dim h(Z_0)$.} Note that, since $z$ is general in $Z$, according to Lemma \ref{lem : Intersection dense open is dense}, $(h \times f)^{-1}(U)\cap f^{-1}(z)$ is dense in $f^{-1}(z)$; hence $(h \times f)^{-1}(U)\cap Z_0^\circ$ is a dense open subset of $Z_0$. Consider a point $\nu$ general in $h(Z_0)$; since $\nu$ is general and $h\left( (h \times f)^{-1}(U)\cap Z_0^\circ \right)$ is dense in the irreducible variety $h(Z_0)$, $\nu$ belongs to $h\left( (h \times f)^{-1}(U)\cap Z_0^\circ \right)$, and we have: \begin{align*}
		\dim Z_0 - \dim h(Z_0) &= \dim {h}^{-1}(\nu)\cap Z_0^\circ \\
		&= \mathrm{dim} M - \mathrm{dim} (h\times f)(M)
		\end{align*}
		where the first equality is deduced from the general position of $\nu$ in $h(Z_0)$, and the second equality holds since $\nu$ is in $h\left( (h \times f)^{-1}(U)\cap Z_0^\circ \right)$. This implies:
		\begin{align*}
		\mathrm{dim} h(Z_0) &= \mathrm{dim} Z_0 - \mathrm{dim} M + \mathrm{dim} (h \times f)(M)\\
		&= \mathrm{dim} f^{-1}(z) - \mathrm{dim} M + \mathrm{dim} (h \times f)(M),
		\end{align*}
		where the second equality holds since $z$ is general in $Z$ and thus $f^{-1}(z)$ is equidimensional.
		\item By contradiction, if an irreducible component $Z_0$ of $f^{-1}(z)$ has an image strictly contained in $h\left(f^{-1}(z)\right)$, then: $$\dim h(Z_0) < \dim h\left(f^{-1}(z)\right)= \dim (h\times f)(M)- \dim Z,$$ which would contradict $i)$.
	\end{enumerate}
\end{proof}

\begin{proof}[Proof of Theorem \ref{th: ex stability under composition}]
	\begin{enumerate}[label={(\arabic*)}]
		\item  Let $h: Y \rightarrow S$ and $S \rightarrow Z$ be morphisms in $\mathcal{C}$ whose general fiber is irreducible. Note that since we consider properties of general fibers, we can assume that both morphisms are dominant. Since both morphisms are morphisms of projective schemes, they are then surjective.
		
		Let $U$ be a dense open subset of $Z$ such that the fiber $S_z$ over a point $z$ in $U$ is irreducible.  For $z$ general in $Z$, according to Proposition \ref{prop: image general fiber} the image by $h: Y \rightarrow S$ of each irreducible component of $Y_z$ dominates one of the irreducible components of $S_z$, which is irreducible since $z$ is general in $Z$. Since, according to Lemma \ref{lem : Intersection dense open is dense}, for $z$ general in $Z$ the fiber $Y_z$ has a dense intersection with the dense open subset $Y_U$, the general fiber $Y_z$ is irreducible. Indeed, if we denote by $Y_z^1$ and $Y_z^2$ two different irreducible components of $Y$, and by $V$ the dense open subset of $S_z$ such that the fiber $Y_s$ over a point $s$ in $V$ is irreducible and of exepected dimension, for all $v$ in $V$ the intersection $Y_z^1 \cap Y_v$ is equal to $Y_z^2 \cap Y_v$, since both are closed subschemes of maximal dimension of an irreducible scheme. Hence the intersection of $Y_z$ with the dense open subset $Y_z \cap Y_V$ is irreducible, hence $Y_z$ is irreducible.
		\item Let $Y \xrightarrow{f_1} S$ and $S \xrightarrow{f_2} S'$ be morphisms in $\mathcal{C}$ satisfying $(P)$ and whose general fiber is irreducible. Since we consider properties of the general fiber of $Y \rightarrow S'$, we can assume that $Y$ dominates $S$. Let $U$ be a dense open subset of $S$ such that the fiber of $f_1$ over a point in $U$ satisfies $(P)$. For $x$ general in $S'$, the following properties are satisfied:
		\begin{enumerate} \item The fiber $S_x$ satisfies $(P)$;
			\item The fiber $S_x$ has a dense open intersection with $U$;
			\item The morphism $Y_x \rightarrow S_x$ is in $\mathcal{C}$;
			\item The general fiber of the morphism $Y_x \rightarrow S_x$ satisfies $(P)$;
			\item $Y_x$ satisfies $(P)$.
		\end{enumerate}		
		Note that proving property $(e)$ for $x$ general in $S'$ is our goal here. Property $(a)$ holds since $x$ is general in $S'$. According to Lemma \ref{lem : Intersection dense open is dense} and since $x$ is general in $S'$, property $(b)$ holds. Property $(c)$ holds since $x$ is general in $S'$ and according to $(1)$ the general fiber of $Y \to S'$ is irreducible. $(d)$ is a direct consequence of $(b)$, the fact that the fiber of $Y_x \rightarrow S_x$ over a point $y$ in $S_x$ is isomorphic to the fiber $Y_y = Y \times_{S'} \{y\}$, and the fact that $Y_y$ satisfies $(P)$ for $y$ in $U$.  Finally $(e)$ is implied by $(a)$, $(c)$ and $(d)$.
		\item Note that, according to Grabber-Harris-Starr's theorem on rational connectedness \cite{graber}, if the general fiber of a morphism $Y \rightarrow S$ of irreducible complex projective varieties is rationally connected, and the basis $S$ is rationally connected, then $Y$ is rationally connected. Hence, according to $2.$, the property "the general fiber is irreducible and rationally connected" is stable under composition in $\mathcal{C}$.
		\item Let $Y$, $Z$ and $S$ be projective complex schemes, where $Y$ is a rationally connected fibration over $Z$, and $Z$ is a rationally connected fibration over $S$. 
		Denote by $f: Y \rightarrow Z$ and $g: Z \rightarrow S$ the associated morphisms. First, note that since $Y$ dominates $Z$ and $Z$ dominates $S$, $Y$ dominates $S$. Furthermore, each irreducible component $S_i$ of $S$ is dominated by exactly one irreducible component $Z_i$ of $Z$, which itself is dominated by exactly one irreducible component $Y_i$ of $Y$; hence there is a one-to-one correspondence between irreducible components of $Y$ and $S$. Finally, since by hypothesis the general fibers of $Y_i \to Z_i$ and $Z_i \to S_i$ are rationally connected, according to $(3)$ the general fiber of $Y_i \to S_i$ is a rationally connected variety. Hence $Y$ is a rationally connected fibration over $S$.			
	\end{enumerate}
\end{proof}
\subsection{Restriction to a general subvariety.}\label{sec: image general subvariety} We study here the restriction of a morphism to a general subvariety. 
A typical application of the proposition below will be the study of the image of a Gromov-Witten variety. More precisely, consider $\pi : X \rightarrow X'$ the projection of a flag variety onto another flag variety, and denote by $\Pi : \overline{\mathcal{M}_{0,r}}(X, \mathbf{d}) \rightarrow \overline{\mathcal{M}_{0,r}}(X', (\pi)_* \mathbf{d})$ the morphism induced by $\pi$ between moduli spaces of genus zero stable maps, where $\mathbf{d}$ is the class of a curve in $X$. We thus obtain the commutative diagram:  \[ \begin{tikzcd}
\overline{\mathcal{M}_{0,r}}(X, \mathbf{d}) \arrow{d}{\Pi} \arrow{r} & X^r \arrow{d}{\pi^r} \\
\overline{\mathcal{M}_{0,r}}(X', (\pi)_* \mathbf{d}) \arrow{r} & (X')^r
\end{tikzcd} \]
to which we will apply the following proposition.

\begin{proposition}\label{prop : image is equidimensional}
	Let $G$ be an integral group scheme, let $f: M \rightarrow Z$, $f' : M' \rightarrow Z'$, $\pi : Z \rightarrow Z'$ be projective morphisms, of integral noetherian schemes, where $Z$ and $Z'$ admit a transitive $G$-action, and the morphism $\pi$ is $G$-equivariant. Let $Y$ be an integral subscheme of $Z$.
	Suppose there exists a proper morphism $h: M \rightarrow M'$ such that the following diagram commutes : \[ \begin{tikzcd}
	M \arrow{d}{h} \arrow{r}{f} & Z \arrow{d}{\pi} \\
	M' \arrow{r}{f'} & Z'
	\end{tikzcd} \]
	{Furthermore, suppose the morphism $h \times f: M \rightarrow M' \times_{Z'} Z$ induced by the morphisms $M \xrightarrow{h} M'$ and $M \xrightarrow{{f}} Z$ is surjective.} 
	
	Consider a point $g$ general in $G$. We call  $\mathcal{W}_g := f^{-1}(g Y)$, $\mathcal{W}_g' :=( h\times f)(\mathcal{W}_g)$, and $\mathcal{W}_g'' := h(\mathcal{W}_g)$. We denote by $p_1: M' \times_{Z'} Z \rightarrow M'$ the first projection. Then: \begin{enumerate}[label=\roman*)]
		\item $\mathcal{W}_g' = M' \times_{Z'} gY$ and $\mathcal{W}_g'' = (f')^{-1}(g \pi(Y))$;
		\item $\mathcal{W}_g$, $\mathcal{W}_g'$ and $\mathcal{W}_g''$ are pure dimensional schemes;
		\item The restriction of $h\times f$ to any irreducible component of $\mathcal{W}_g$ is dominant onto an irreducible component of $\mathcal{W}_g'$;
		\item The restriction of $p_1$ to any irreducible component of $\mathcal{W}_g'$ is dominant onto an irreducible component of $\mathcal{W}_g''$.
	\end{enumerate}
\end{proposition}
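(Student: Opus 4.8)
The plan is to establish (i) by a direct set‑theoretic computation valid for every $g$, and then to reduce (ii)--(iv) to Proposition~\ref{prop: image general fiber} by ``spreading out'' the family of translates $\{gY : g\in G\}$ into a base over which the ``general point'' statement of that proposition applies.

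\emph{Proof of (i).} Write $P := M'\times_{Z'}Z$, let $q\colon P\to Z$ be the projection (a base change of $f'$, hence projective), and recall the identities $q\circ(h\times f)=f$, $p_1\circ(h\times f)=h$ and $f'\circ p_1=\pi\circ q$. For every $g\in G$ one has $\W_g=f^{-1}(gY)=(h\times f)^{-1}\bigl(q^{-1}(gY)\bigr)=(h\times f)^{-1}(M'\times_{Z'}gY)$; since $h\times f$ is surjective, applying it back gives $\W_g'=(h\times f)(\W_g)=M'\times_{Z'}gY$. Then $\W_g''=h(\W_g)=p_1(\W_g')=p_1(M'\times_{Z'}gY)=\{m'\in M' : f'(m')\in\pi(gY)\}=(f')^{-1}(\pi(gY))$, and $\pi(gY)=g\cdot\pi(Y)$ by $G$‑equivariance of $\pi$. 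This proves (i) for every $g$, so in particular for $g$ general.

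\emph{Reduction for (ii)--(iv).} If $\W_g=\emptyset$ for $g$ general in $G$ the assertions are vacuous, so assume otherwise. Let $a\colon G\times Y\to Z$, $(g,y)\mapsto g\cdot y$, be the action morphism; since the $G$‑action on $Z$ is transitive and, in the case of interest ($Z$ a product of flag varieties), the point stabilizers are connected parabolic subgroups, $a$ is smooth and surjective with geometrically connected fibers. Let $a'\colon G\times\pi(Y)\to Z'$ be the analogous morphism, where $\pi(Y)$ is the image of $Y$, an integral closed subscheme of $Z'$ (closed since $\pi$ is proper, irreducible as the image of an irreducible scheme). Form $\mathcal{N}:=M\times_{Z,a}(G\times Y)$, $\mathcal{N}':=P_{\mathrm{red}}\times_{Z,a}(G\times Y)$ and $\mathcal{N}'':=M'\times_{Z',a'}(G\times\pi(Y))$. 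Each is smooth with geometrically connected fibers over one of the integral schemes $M$, $P_{\mathrm{red}}$ (integral, being the reduced image of the integral scheme $M$), $M'$, hence $\mathcal{N},\mathcal{N}',\mathcal{N}''$ are integral. The projections onto the factor $G$ are projective morphisms $\rho\colon\mathcal{N}\to G$, $\rho'\colon\mathcal{N}'\to G$, $\rho''\colon\mathcal{N}''\to G$ whose fiber over $g$ is (set‑theoretically, which is all that is needed below) $\W_g$, $M'\times_{Z'}gY$ and $(f')^{-1}(\pi(gY))$, i.e.\ by (i) the schemes $\W_g$, $\W_g'$, $\W_g''$; being proper with fibers non‑empty for general $g$, these morphisms are surjective. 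Finally, $\widetilde h\colon\mathcal{N}\to\mathcal{N}'$, the base change of $h\times f\colon M\to P_{\mathrm{red}}$ along $\mathcal{N}'\to P_{\mathrm{red}}$, is surjective, and the pair $(p_1,\mathrm{id}_G\times\pi|_Y)$ induces a morphism $\widetilde p_1\colon\mathcal{N}'\to\mathcal{N}''$ which is surjective (a direct check as in the proof of (i)); both are morphisms over $G$, and on the fiber over $g$ they restrict to $(h\times f)|_{\W_g}\colon\W_g\to\W_g'$ and $p_1|_{\W_g'}\colon\W_g'\to\W_g''$.

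\emph{Conclusion.} Apply Proposition~\ref{prop: image general fiber} to the surjective morphisms $\mathcal{N}\xrightarrow{\widetilde h}\mathcal{N}'$ and $\mathcal{N}\xrightarrow{\rho}G$ of integral noetherian schemes: since $\rho'\circ\widetilde h=\rho$, the image of $\widetilde h\times\rho$ is all of $\mathcal{N}'\cong\mathcal{N}'\times_G G$, so for $g$ general in $G$ part (ii) of that proposition gives that every irreducible component of $\rho^{-1}(g)\cong\W_g$ dominates a component of $\widetilde h(\rho^{-1}(g))=\W_g'$, which is exactly (iii), while part (i) gives that each such image has dimension $\dim\mathcal{N}'-\dim G$. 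Applying the same proposition to $\mathcal{N}'\xrightarrow{\widetilde p_1}\mathcal{N}''$ and $\mathcal{N}'\xrightarrow{\rho'}G$ yields (iv) in the same way. For (ii): $\W_g\cong\rho^{-1}(g)$, $\W_g'\cong(\rho')^{-1}(g)$ and $\W_g''\cong(\rho'')^{-1}(g)$ are general fibers of dominant morphisms between irreducible varieties, hence equidimensional by the fiber‑dimension theorem (one may equally invoke Kleiman's transversality theorem). Taking $g$ in the intersection of the finitely many dense open subsets of $G$ produced above, all conclusions hold simultaneously for one general $g$.

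\textbf{Where the difficulty lies.} The conceptual content is entirely contained in Proposition~\ref{prop: image general fiber} (dimension count for images of components of general fibers) together with the classical trick of incorporating the transitive $G$‑action into the base in order to convert ``general point of $Z$'' into ``general translate $gY$''. The real work is the bookkeeping in the reduction step: checking that the auxiliary fiber products $\mathcal{N},\mathcal{N}',\mathcal{N}''$ are integral --- which is precisely where one uses that the point stabilizers of $Z$ and $Z'$ are connected and why the reduced structure on $M'\times_{Z'}Z$ must be taken --- and that the induced maps $\widetilde h,\widetilde p_1$ are genuinely surjective and compatible with the projections to $G$. Once this is in place, (iii), (iv) and the equidimensionality in (ii) are formal consequences.
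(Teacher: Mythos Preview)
Your approach is genuinely different from the paper's. The paper proves (iii) and (iv) by a direct dimension count: for (iii), the key observation is that the fiber of $(h\times f)|_{\W_g}$ over a point $(m,z)\in\W_g'$ coincides with the fiber of $h\times f$ over the same point (since $z\in gY$ forces $f^{-1}(z)\subset\W_g$); one then uses a dense open of $M'\times_{Z'}Z$ on which $h\times f$ has equidimensional fibers, together with Lemma~\ref{lem : dense intersection with open for homogeneous spaces} (a consequence of Kleiman) to ensure this open meets $\W_g'$ densely. Part (iv) is handled similarly via the auxiliary Lemma~\ref{lem: aux}. Your spreading-out over $G$ and reduction to Proposition~\ref{prop: image general fiber} is more structural and avoids repeating the dimension bookkeeping, at the price of constructing the incidence schemes $\mathcal{N},\mathcal{N}',\mathcal{N}''$ and verifying their integrality.

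There are, however, two issues with your argument. A minor one: the map $a\colon G\times Y\to Z$ is \emph{not} smooth when $Y$ is singular (its source $G\times Y$ is then singular while $Z$ is smooth). What you actually need, and what does hold, is that $a$ is flat with geometrically integral fibers: flatness because $a$ is $G$-equivariant onto a homogeneous target, and integrality of the fiber over $z$ because it is the preimage of the integral scheme $Y$ under the smooth surjection $G\to Z$, $g\mapsto g^{-1}z$, whose fibers are connected exactly when the stabilizer is connected. This leads to the real gap: you explicitly invoke connected stabilizers (``in the case of interest, $Z$ a product of flag varieties''), but the proposition as stated makes no such hypothesis, and without it your $\mathcal{N},\mathcal{N}',\mathcal{N}''$ need not be irreducible, so Proposition~\ref{prop: image general fiber} does not apply. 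The paper's direct argument, resting only on Kleiman's transversality theorem (which requires no connectedness of stabilizers), proves the proposition in its stated generality; your argument proves only the special case actually used later. To match the stated proposition you would have to either work component-wise on $\mathcal{N}$ (which demands additional care to ensure each component still surjects onto $G$) or add the connected-stabilizer hypothesis to the statement.
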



Let us introduce the following notation. Let $f: Y \rightarrow Z$ and $\varphi: Y' \rightarrow Z$ be morphisms of schemes, where an integral group scheme $G$ acts on $Z$. We denote by $Y \times_{Z} (gY')$ the fiber product induced by the morphism $\varphi_g: Y' \rightarrow Z$ given by $y \rightarrow g \cdot \varphi(y)$, i.e. $Y \times_{Z} (gY')$ is the fiber product associated with the following commutative diagram: \[ \begin{tikzcd}
Y \times_{Z} (gY') \arrow{d} \arrow{r} & Y' \arrow{d}{\varphi_g} \\Y \arrow{r}{f} & Z
\end{tikzcd} \]
Notice that, if $G$ acts on $Y'$ and $\varphi$ is $G$-equivariant, the fiber product $Y \times_Z (gY')$ will simply be the usual fiber product associated with the following commutative diagram: \[ \begin{tikzcd}
Y \times_{Z} (gY') \arrow{d} \arrow{r} & gY' \arrow{d}{\varphi} \\Y \arrow{r}{f} & Z
\end{tikzcd} \]

\begin{lemme}\label{lem : dense intersection with open for homogeneous spaces}
	Let $Y \rightarrow Z$ be a morphism of schemes, of finite presentation, where $Y$ is an equidimensional noetherian scheme; let $\varphi : Y' \rightarrow Z$ be a morphism of integral schemes. Let $V$ be a dense open subset of $Y$. Suppose there exists an integral group scheme $G$ such that $G$ acts transitively on $Z$. 
	
	Then, for a point $g$ general in $G$: \begin{enumerate}[label=\roman*] 
		\item[i)] The scheme $V \times_Z (g Y')$ is a dense subset of $Y \times_Z (g Y')$;
		\item[ii)] The scheme $(g V) \times_Z Y'$ is a dense subset of $(g Y) \times_{Z} Y'$.	
	\end{enumerate}
\end{lemme}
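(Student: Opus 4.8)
The plan is to reduce the whole statement to a single application of Lemma~\ref{lem : Intersection dense open is dense} over a ``universal'' family parametrised by $G$. First I would observe that (ii) is just (i) in disguise: as subschemes of $Y\times Y'$ one has $(gY)\times_Z Y' = Y\times_Z(g^{-1}Y')$ and $(gV)\times_Z Y' = V\times_Z(g^{-1}Y')$, because the two fibre products impose the conditions $g\cdot f(y)=\varphi(y')$ and $f(y)=g^{-1}\cdot\varphi(y')$, which are equivalent. Since inversion is an automorphism of the scheme $G$, it carries a dense open subset to a dense open subset, so (ii) for a general $g$ follows from (i) for a general $g$. Thus it suffices to prove (i).

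For (i), I would introduce the morphism $\Psi\colon G\times Y'\to Z$, $\Psi(g,y')=g\cdot\varphi(y')$ (using the $G$-action on $Z$), and form the fibre product $\mathcal{W}:=Y\times_{Z}(G\times Y')$ of $f\colon Y\to Z$ and $\Psi$, together with its two projections $p\colon\mathcal{W}\to Y$ and $q\colon\mathcal{W}\to G$. Unwinding the definitions, the fibre $q^{-1}(g)$ is precisely $Y\times_Z(gY')$, the open subscheme $\mathcal{V}:=p^{-1}(V)=V\times_Z(G\times Y')$ meets it in $V\times_Z(gY')$, and $\mathcal{V}$ is open in $\mathcal{W}$. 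Hence (i) is exactly the statement that the general fibre of $q$ has dense intersection with $\mathcal{V}$, which is the conclusion of Lemma~\ref{lem : Intersection dense open is dense} once its hypotheses are checked for $q$ and $\mathcal{V}\subset\mathcal{W}$.

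The core of the argument — and the place where transitivity of the action is used — is to show that $\mathcal{W}$ is equidimensional and that $\mathcal{V}$ is dense in it ($\mathcal{W}$ is Noetherian and $q$ of finite presentation, both being standard consequences of $f$ and $\varphi$ being projective morphisms of Noetherian schemes). For this I would prove that $\Psi$ is flat. Factor $\Psi$ as $G\times Y'\xrightarrow{\tilde\Psi} Z\times Y'\xrightarrow{\mathrm{pr}_1} Z$ with $\tilde\Psi(g,y')=(g\cdot\varphi(y'),\,y')$; the projection $\mathrm{pr}_1$ is flat, and $\tilde\Psi$ is a morphism over $Y'$ between two schemes flat over $Y'$ whose restriction above each $y'$ is the orbit morphism $G\to Z$, $g\mapsto g\cdot\varphi(y')$, i.e. a quotient morphism $G\to G/\mathrm{Stab}_G(\varphi(y'))\simeq Z$, which is faithfully flat; by the fibrewise criterion for flatness $\tilde\Psi$ is flat, hence so is $\Psi$. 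Consequently $p$, a base change of $\Psi$, is flat; and since $\Psi$ is a flat morphism from the irreducible scheme $G\times Y'$ to the irreducible scheme $Z$, all its nonempty fibres — hence all nonempty fibres of $p$ — are equidimensional of the constant dimension $\dim(G\times Y')-\dim Z$. As $Y$ is equidimensional, it follows that $\mathcal{W}$ is equidimensional. Finally $\mathcal{W}\setminus\mathcal{V}=p^{-1}(Y\setminus V)$ has dimension at most $\dim(Y\setminus V)+\dim(G\times Y')-\dim Z<\dim\mathcal{W}$, because $V$ is dense in the equidimensional $Y$; so no irreducible component of $\mathcal{W}$ lies in $\mathcal{W}\setminus\mathcal{V}$, and $\mathcal{V}$ is dense in $\mathcal{W}$.

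It then remains to apply Lemma~\ref{lem : Intersection dense open is dense}: if $q$ is not dominant, then $Y\times_Z(gY')$, and a fortiori $V\times_Z(gY')$, is empty for general $g$ and (i) holds trivially; otherwise $q\colon\mathcal{W}\to G$ is a dominant morphism of finite presentation with $\mathcal{W}$ equidimensional Noetherian, so the lemma gives that for $g$ general in $G$ the intersection $q^{-1}(g)\cap\mathcal{V}=V\times_Z(gY')$ is dense in $q^{-1}(g)=Y\times_Z(gY')$. This proves (i), and with the first paragraph it proves (ii). I expect the only real obstacle to be the flatness of $\Psi$ and the ensuing equidimensionality of $\mathcal{W}$; the rest is formal manipulation of fibre products together with Lemma~\ref{lem : Intersection dense open is dense}.
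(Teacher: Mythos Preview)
Your reduction of (ii) to (i) via the identification $(gY)\times_Z Y'\cong Y\times_Z(g^{-1}Y')$ is exactly what the paper does.

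For (i), however, your route is genuinely different. The paper never builds the universal family $\mathcal W$ or proves flatness of $\Psi$. Instead it invokes Kleiman's transversality theorem directly: for $g$ general, $Y\times_Z(gY')$ is either empty or of pure dimension $\dim Y+\dim Y'-\dim Z$, and likewise each irreducible component $F_i$ of $Y\setminus V$ satisfies $F_i\times_Z(gY')$ empty or of strictly smaller pure dimension, so $V\times_Z(gY')$ is dense. Your argument essentially reproves the piece of Kleiman's theorem that the paper cites: the flatness of $\Psi\colon G\times Y'\to Z$ via the fibrewise criterion (using that orbit maps $G\to G/\mathrm{Stab}$ are faithfully flat) is precisely the engine that makes Kleiman's dimension count work. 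What you gain is a self-contained proof that routes everything through Lemma~\ref{lem : Intersection dense open is dense}; what the paper gains is brevity, since Kleiman's theorem packages the flatness and the equidimensionality of general fibres in one citation.

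One minor point: you justify $\mathcal W$ being Noetherian and $q$ being of finite presentation by appealing to ``$f$ and $\varphi$ being projective morphisms of Noetherian schemes'', but the lemma as stated assumes neither projectivity nor that $Y'$ is Noetherian. In the paper's conventions (schemes of finite type over an algebraically closed field) this is harmless, and the paper's own citation of Kleiman implicitly uses comparable hypotheses; still, you should phrase it in terms of the ambient finite-type assumptions rather than projectivity.
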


\begin{proof}\begin{enumerate}[label=\roman*] 
		\item[i)] Since $Y$ is a noetherian scheme, $Y \setminus V$ can be written as the union of a finite number of irreducible schemes, i.e. $Y \setminus V = \cup_{i=1}^N F_i$, where the $F_i$ are irreducible closed subschemes of $Y$, of dimension smaller than the pure dimension of $Y$. Consider an element $g$ general in $G$. Suppose the scheme $Y \times_{Z} (g Y')$ is not empty. Then, according to Kleiman's transversality theorem \cite{kleiman1974transversality}, it has pure dimension given by $\mathrm{dim} Y + \mathrm{dim} Y' - \mathrm{dim} Z'$. Furthermore, for $1 \leq i \leq N$, the scheme $F_i \times_{Z'} (g Y')$ is either empty or of pure dimension smaller than $\mathrm{dim} Y + \mathrm{dim} Y' - \mathrm{dim} Z'$. Hence $V \times_{Z'} (g Y') = (Y \setminus \cup_{i=1}^N F_i) \times_{Z'} (g Y')$ is dense in $Y \times_Z (g Y')$.	
		\item[ii)] Consider a point $g$ general in $G$. Then, according to part $i).$, the scheme $V \times_{Z} (g^{-1} Y')$ is a dense subset of $Y \times_Z (g^{-1} Y')$. Furthermore, notice that we have an isomorphism $(g Y) \times_Z  Y' \rightarrow Y \times_Z (g^{-1} Y')$. Indeed, if we denote by $g^{-1} : Z \rightarrow Z$ the isomorphism of $Z$ induced by the action of $g^{-1}$, and by $g : Z \rightarrow Z$ the isomorphism of $Z$ induced by the action of $g$, the following commutative diagram \[ \begin{tikzcd}[row sep=3em,column sep=3em] & Y' \arrow{d}{\varphi} \arrow{ddrr}[pos=0.3]{\varphi_{g^{-1}}} &&\\
		Y \arrow{r}{f_g} \arrow{drrr}[swap]{f} & Z \arrow[drr, shift left, pos=0.2, shorten >= 1em, "g^{-1}"]  && \\ &&& \arrow[llu, shift left, pos=0.8, shorten <= 1em, "g"] Z \end{tikzcd} \] implies that any morphism $T \rightarrow (g Y) \times_Z Y'$ factors through $Y \times_Z (g^{-1} Y')$, and reciprocally any morphism $T \rightarrow Y \times_Z (g^{-1} Y')$ factors through $(g Y) \times_Z Y'$.\\
		Finally, in the same tautological way, notice we have an isomorphism $(g V) \times_Z Y' \simeq V \times_Z (g^{-1} Y')$.
	\end{enumerate}
\end{proof}

\begin{proof}[Proof of Proposition \ref{prop : image is equidimensional}]
	\begin{enumerate}[label=\roman*]
		\item[i)] First, since, by assumption, $h \times f: M \rightarrow M' \times_{Z'} Z$ is surjective, $\mathcal{W}_g'=(h \times f)\left(f^{-1}(g Y)\right)=(h \times f)(M) \cap (M' \times_{Z'} (gY))$ is equal to $M' \times_{Z'} (gY)$. Furthermore, $h(f^{-1}(g Y))$ is equal to the image of $(h \times f)(f^{-1}(g Y))$ by the first projection $p_1$, where $p_1$ is defined by the following commutative diagram: \[\begin{tikzcd} M \arrow{dr}[swap]{h} \arrow{r}{h \times f} & (h \times f)(M)= M' \times_{Z'} Z \arrow{d}{p_1} \arrow{r} & Z \arrow{d}{\pi} \\ & M' \arrow{r}{f'} & Z' \end{tikzcd} \] Meanwhile, $(f')^{-1}(\pi(g Y))$ is equal to the image of  $M' \times_{Z'} (gY)$ by the first projection $p_1$, hence the equality between $h(f^{-1}(g Y))$ and  $(f')^{-1}(\pi(g Y))$. \\
		Finally, since $\pi$ is $G$-equivariant, this yields: $\mathcal{W}_g'' = (f')^{-1}(g \pi(Y))$.
		\item[ii)] According to Kleiman's transversality theorem, since $g$ is general in $G$, the schemes $\mathcal{W}_g = f^{-1}(g Y)$, $\mathcal{W}_g'=M' \times_{Z'} gY$ and $\mathcal{W}_g'' = (f')^{-1}(g \pi(Y))$ are either empty or pure dimensional schemes.
		\item[iii)] We consider the following commutative diagram:  \[ \begin{tikzcd}
		M\arrow{d}{h \times f} \arrow{r}{f} & Z \ar[equal]{d} \\
		M' \times_{Z'} Z  \arrow{r} & Z
		\end{tikzcd} \]
		
		Denote by $U$ a dense open subset of $(h \times f)(M)$ such that, for all $z$ in $U$, $(h \times f)^{-1}(z)$ has pure dimension equal to $\mathrm{dim} M - \mathrm{dim} (h \times f)(M)$. Fix an irreducible component $\mathcal{W}_0$ of $\mathcal{W}_g$. Denote by $\mathcal{W}_0^\circ$ the variety $\W_0$ minus the intersection of $\W_0$ with other irreducible components o $\mathcal{W}_g$. We first compute the dimension of $(h \times f)^{-1}(x) \cap \mathcal{W}_0^\circ$, where $x$ is a point in $(h \times f)(\mathcal{W}_0^\circ)\cap U$, before computing the dimension of $(h \times f) (\mathcal{W}_0)$.
		
		\textbf{Computation of $\dim (h \times f))^{-1}(x) \cap \mathcal{W}_0^\circ$.} First, the fiber of the restriction $(h \times f)_{| \mathcal{W}_g}$ of the morphism $h \times f$ to $\mathcal{W}_g$ is isomorphic to the fiber of $h \times f$. Indeed, for $(m,z)$ in $(h \times f)(\mathcal{W}_g)= M' \times_{Z'} gY$, the fiber ${(h \times f)_{| \mathcal{W}_g}}^{-1}(m,z)$ is equal to $(h \times f)^{-1}(m,z) \cap \mathcal{W}_g = h^{-1}(m) \cap f^{-1}(z) \cap \mathcal{W}_g$, where $z$ lies in $gY$. Meanwhile, we observe $f^{-1}(z) \cap \mathcal{W}_g = f^{-1}(z) \cap f^{-1}(gY) = f^{-1}(z)$, hence ${(h \times f)_{| \mathcal{W}_g}}^{-1}(m)$ is equal to $(h \times f)^{-1}(m)$.
		
		Since $x$ lies in $U$, this implies ${(h \times f)}^{-1}(x) \cap \mathcal{W}_g$ is a scheme of pure dimension $\mathrm{dim} M - \mathrm{dim} (h\times f)(M)$. Hence the intersection of the irreducible variety $\mathcal{W}_0^\circ$ with ${(h \times f)}^{-1}(x)$ is a scheme of pure dimension $\mathrm{dim} M - \mathrm{dim} (h\times f)(M)$. Indeed, the intersection of the open subvariety $\mathcal{W}_0^\circ$ of $\mathcal{W}_g$ with $(h \times f)^{-1}(x)$ is a non empty open subvariety of $(h \times f)^{-1}(x) \cap \mathcal{W}_0$ disconnected from the other pure dimensional irreducible components of $(h \times f)^{-1}(x) \cap \mathcal{W}_g$.  Hence $(h \times f)^{-1}(x) \cap \mathcal{W}_0^\circ$ is a scheme of pure dimension $\mathrm{dim} M - \mathrm{dim} (h\times f)(M)$. 
		
		\textbf{Computation of $\dim (h \times f)(\mathcal{W}_0)$.} Note that, since $g$ is general in $G$, according to Lemma \ref{lem : dense intersection with open for homogeneous spaces}, $(h \times f)^{-1}(U)\cap \mathcal{W}_g$ is dense in $\mathcal{W}_g$; hence $(h \times f)^{-1}(U)\cap \mathcal{W}_0^\circ$ is a dense open subset of $\mathcal{W}_0$. Consider a general point $\nu$ in $(h \times f)(\mathcal{W}_0)$; since $\nu$ is general and $(h \times f)\left( (h \times f)^{-1}(U)\cap \mathcal{W}_0^\circ \right)$ is dense in the irreducible variety $(h \times f)(\mathcal{W}_0)$, $\nu$ belongs to $(h \times f)\left( (h \times f)^{-1}(U)\cap \mathcal{W}_0^\circ \right)$, and we have: \begin{align*}
		\dim \mathcal{W}_0 - \dim (h \times f)(\mathcal{W}_0) &= \dim {(h \times f)}^{-1}(\nu)\cap \mathcal{W}_0^\circ \\
		&= \mathrm{dim} M - \mathrm{dim} (h\times f)(M)
		\end{align*}
		where the first equality is deduced from the general position of $\nu$ in $(h \times f)(\mathcal{W}_0)$, and the second equality holds since $\nu$ is in $(h \times f)\left( (h \times f)^{-1}(U)\cap \mathcal{W}_0^\circ \right)$. This implies:
		\begin{align*}
		\mathrm{dim} (h \times f)(\mathcal{W}_0) &= \mathrm{dim} \mathcal{W}_0 - \mathrm{dim} M + \mathrm{dim} (h \times f)(M)\\
		&= \mathrm{dim} \mathcal{W}_g - \mathrm{dim} M + \mathrm{dim} (h \times f)(M),
		\end{align*}
		where the second equality holds since $\mathcal{W}_g$ is equidimensional. Since $(h \times f)(\mathcal{W}_0)$ is obviously contained in $\mathcal{W}_g'= (h \times f)(\mathcal{W}_g)$, and this dimension does not depend on the irreducible component of $\mathcal{W}_g$ considered, this implies the irreducible variety $(h \times f)(\mathcal{W}_0)$ is an irreducible component of $\mathcal{W}_g'$, which has pure dimension equal to $ \mathrm{dim} \mathcal{W}_g - \mathrm{dim} M + \mathrm{dim} (h \times f)(M)$.
		
		\item[iv)]
		First, denote by $\pi_{| Y}: Y \rightarrow \pi(Y)$ the restriction of the morphism $\pi$ to $Y$. Fix a dense open subset $U$ of $\pi(Y)$ such that, for all $x$ in $U$, the fiber  $\pi_{| Y} ^{-1}(x)$ is a scheme of pure dimension equal to $\mathrm{dim} Y - \mathrm{dim} \pi(Y)$. 
		
		We consider the following commutative diagram: \[ \begin{tikzcd}
		M' \times_{Z'} Z \arrow{d}{p_1} \arrow{r}{p_2} & Z \arrow{d}{\pi} \\
		M' \arrow{r}{f'} & Z'
		\end{tikzcd} \]
		where $p_1$ and $p_2$ are respectively the first and second projection morphisms. Consider a general element $g$ in $G$. We denote by $p_1'$ the restriction of the morphism $p_1$ to $\mathcal{W}_g'=M' \times_{Z'} g Y$. We want to prove here that the image of any irreducible component of $\mathcal{W}_g'$ by $p_1'$ is an irreducible component of $\mathcal{W}_g'' = (f')^{-1}(g \pi(Y))$. In order to do this, we study the dimension of the fiber $(p_1')^{-1}(m)$ of $p_1'$ at a point $m$ in $\mathcal{W}_g''$.
		
		We will use the following observations. \begin{lemme}\label{lem: aux}
			Let $f': M' \rightarrow Z'$ and $\pi: Z \rightarrow Z'$ be projective morphisms of integral noetherian schemes. Let $Y$ be an integral subscheme of $Z$. Suppose there exists an integral group scheme $G$ such that $G$ acts transitively on $Z$, and $\pi$ is $G$-equivariant. Consider the following commutative diagram:  \[ \begin{tikzcd}
			M' \times_{Z'} Z \arrow{d}{p_1} \arrow{r}{p_2} & Z \arrow{d}{\pi} \\
			M' \arrow{r}{f'} & Z'
			\end{tikzcd} \]		
			where $p_1$ and $p_2$ are respectively the first and second projection morphisms.	
			\begin{enumerate} 
				\item Consider an element $g$ in $G$. The fiber $p_1^{-1}(m) \cap {p_2}^{-1}(gY)$ of the restriction of the morphism $p_1$ to $(p_2)^{-1}(gY)$ is isomorphic to  ${\pi}_{| Y}^{-1}(g^{-1} f'(m))$, where we denote by $\pi_{| Y}: Y \rightarrow \pi(Y)$ the restriction of the morphism $\pi$ to $Y$;
				\item Consider a dense open subset $U$ of $Y$. Then, for an element $g$ general in $G$, ${p_2}^{-1}(gU)$ is dense in ${p_2}^{-1}(gY)$.
			\end{enumerate}
		\end{lemme}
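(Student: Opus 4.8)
Both parts are formal: $(1)$ is an unwinding of the fibre-product definition together with $G$-equivariance, and $(2)$ mimics the proof of Lemma~\ref{lem : dense intersection with open for homogeneous spaces}. For $(1)$, I would fix $m\in M'$ and $g\in G$. Since $M'\times_{Z'}Z$ is the fibre product of $f'\colon M'\to Z'$ and $\pi\colon Z\to Z'$, base change along the point $m$ identifies the fibre $p_1^{-1}(m)$, compatibly with $p_2$, with the scheme-theoretic fibre $\pi^{-1}(f'(m))\subseteq Z$; intersecting further with $p_2^{-1}(gY)$ then cuts out the scheme-theoretic intersection $gY\cap\pi^{-1}(f'(m))$ inside $Z$. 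Now the action of $g^{-1}$ is an automorphism of $Z$ taking $gY$ isomorphically onto $Y$ and, by $G$-equivariance of $\pi$, taking $\pi^{-1}(f'(m))$ onto $\pi^{-1}(g^{-1}f'(m))$; hence it induces an isomorphism $gY\cap\pi^{-1}(f'(m))\xrightarrow{\ \sim\ }Y\cap\pi^{-1}(g^{-1}f'(m))=\pi_{| Y}^{-1}(g^{-1}f'(m))$, which is the assertion (both sides being empty if $g^{-1}f'(m)\notin\pi(Y)$). Note this works for \emph{every} $g$, not just a general one.

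For $(2)$, I would first observe that, $\pi$ being surjective and $G$-equivariant with $G$ transitive on $Z$, the group $G$ acts transitively on $Z'$ too, so Kleiman's transversality theorem applies to $f'$ and to the restrictions of $\pi$, all with target $Z'$. Since $Y$ is noetherian and $U$ dense open, write $Y\setminus U=\bigcup_{i=1}^{N}F_i$ with each $F_i$ an irreducible closed subscheme of $Y$ of dimension strictly less than $\dim Y$. Then $p_2^{-1}(gY)=M'\times_{Z'}gY$ and $p_2^{-1}(gF_i)=M'\times_{Z'}gF_i$, and by Kleiman there is a dense open subset of $G$ over which $p_2^{-1}(gY)$ is empty or of pure dimension $\dim M'+\dim Y-\dim Z'$, and, for each $i$, a dense open subset over which $p_2^{-1}(gF_i)$ is empty or of pure dimension $\dim M'+\dim F_i-\dim Z'$, which is strictly smaller. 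Intersecting these finitely many dense open subsets of $G$, for $g$ general the closed subscheme $p_2^{-1}(g(Y\setminus U))=\bigcup_i p_2^{-1}(gF_i)$ contains no irreducible component of $p_2^{-1}(gY)$; since $gY=gU\sqcup g(Y\setminus U)$, the subset $p_2^{-1}(gU)$ is precisely the open complement of $p_2^{-1}(g(Y\setminus U))$ in $p_2^{-1}(gY)$, hence it is dense (trivially so when $p_2^{-1}(gY)=\emptyset$).

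I do not expect a genuine obstacle: both parts are bookkeeping. The only point calling for care is the appeal to Kleiman's transversality in $(2)$ — one must know that the common target $Z'$ of $f'$ and $\pi$ carries a transitive $G$-action, so that the purity and expected-dimension conclusions are available. This is immediate in the intended application, where $Z$ and $Z'$ are flag varieties and $\pi$ the surjective equivariant forgetful map, and in the abstract statement it follows from transitivity of $G$ on $Z$ together with the equivariance and surjectivity of $\pi$. One should also note, when passing to a single general $g$, that a finite intersection of dense open subsets of the irreducible group scheme $G$ is again dense open, so that the conditions coming from $Y$ and from the finitely many $F_i$ can be imposed simultaneously.
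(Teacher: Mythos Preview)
Your proof is correct and matches the paper's approach. Part~(1) is argued identically; for part~(2) the paper simply invokes Lemma~\ref{lem : dense intersection with open for homogeneous spaces}~$ii)$ (after noting $p_2^{-1}(gY)=M'\times_{Z'}gY$ by $G$-equivariance of $\pi$) rather than re-running the Kleiman argument inline as you do, but the content is the same --- and your remark that transitivity of $G$ on $Z'$ is needed, and follows in the intended application, is well-placed.
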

		
		\begin{proof}[Proof of Lemma \ref{lem: aux}]\begin{enumerate}
				\item  Denote by $h'$ the restriction of the morphism $p_1$ to ${p_2}^{-1}(gY)$. The fiber $(h')^{-1}(m)$ is equal to $\{m\} \times_{Z'} (gY)$, which is isomorphic to $\pi^{-1}(f'(m)) \cap (gY)$. Furthermore, since $\pi$ is $G$-equivariant, the morphism $y \rightarrow g^{-1} y$ induces an isomorphism between $\pi^{-1}(f'(m)) \cap (gY)$ and $\pi^{-1}(g^{-1} f'(m)) \cap (Y)$. Finally, notice $\pi^{-1}(g^{-1} f'(m)) \cap (Y)$ is equal to ${\pi}_{| Y}^{-1}(g^{-1} f'(m))$.
				
				\item According to Lemma \ref{lem : dense intersection with open for homogeneous spaces} $ii)$, since $g$ is general in $G$, the scheme $M' \times_{Z'} (g U)$ is dense in $M' \times_{Z'} (g Y)$. Since the morphism $\pi$ is $G$-equivariant, we have here the following equalities: ${p_2}^{-1}(gY) = M' \times_{Z'} (gY)$ and ${p_2}^{-1}(gU) = M' \times_{Z'} (gU)$. Hence the scheme ${p_2}^{-1}(gU)$ is dense in ${p_2}^{-1}(gY)$.
			\end{enumerate}
		\end{proof}
		
		Fix an irreducible component $\mathcal{W}_0'$ of $p_2^{-1}(gY) = M' \times_{Z'} gY$. For $\mu$ general in $\mathcal{W}_0'$, we have: \begin{align*}
		\dim \mathcal{W}_0' - \dim p_1(\mathcal{W}_0') &= \dim p_1^{-1}(p_1(\mu)) \cap \mathcal{W}_0'\\
		&= \dim \pi_{| Y}^{-1}\left(g^{-1} \cdot (f' \circ p_1)(\mu)\right)\\
		&= \dim Y - \dim \pi(Y),
		\end{align*}
		where the first equality holds since the image of a dense open subset of $\mathcal{W}_0'$ by $p_1$ is a dense open subset of $p_1(\mathcal{W}_0')$, the second equality is given by Lemma~\ref{lem: aux} $(a)$, and the last equality holds since $(f' \circ p_1)(\mu)$ is in $g \cdot \pi(U)$ for $\mu$ general in $\mathcal{W}_0'$. Indeed, according to Lemma \ref{lem: aux} $(b)$, $p_2^{-1}(g\pi^{-1}(U))$ is dense in $p_2^{-1} (gY)$, hence $p_2^{-1}(g\pi^{-1}(U)) \cap \mathcal{W}_0'$ is dense in $\mathcal{W}_0'$, hence a point in $(p_2 \circ \pi)^{-1}(gU) = (f' \circ p_1)^{-1}(gU)$ is general in $\mathcal{W}_0'$.	
		
		Finally, notice that, since $g$ is general in $G$, any irreducible component $\mathcal{W}_0'$ of $\mathcal{W}_g'$ will be of dimension equal to $\mathrm{dim} M' + \mathrm{dim} Y - \mathrm{dim} Z'$, hence :  \begin{equation*}
		\mathrm{dim} p_1(\mathcal{W}_0') = \mathrm{dim} M' + \mathrm{dim} \pi(Y) - \mathrm{dim} Z',
		\end{equation*}
		which is equal to the pure dimension of $\mathcal{W}_g'' = (f')^{-1}(g \pi(Y))$ since $g$ is general in $G$. Since $p_1(\mathcal{W}_0')$ is obviously contained in $\mathcal{W}_g''$, this implies that $\mathcal{W}_0'$ dominates an irreducible component of $\mathcal{W}_g''$.		
	\end{enumerate}
\end{proof}

%

\subsection{Application to Gromov-Witten varieties.}\label{sec: image W variety} Consider a semi-simple algebraic group $G$, a maximal torus $T$ of $G$, a Borel subgroup $B$ and parabolic subgroups $P$ and $P'$ satisfying $T \subset B \subset P \subset P'$. Denote by $X:= G/P$ and $X':=G/P'$ the corresponding homogeneous varieties, and by $$\pi: X=G/P \rightarrow X'=G/P'$$ the natural projection. For an element $u$ in the Weyl group $W:=N_G(T)/T$, denote by $X(u) := \overline{BuP}/P$ the associated Schubert variety. Note that $\pi(X(u))=X'(u)$.
Let $r \geq 0$, $u_1$, $\dots$, $u_r$ be elements in $W$, and $\mathbf{d}$ be a degree in the semi-group $E(G/P)$ of effective classes of curves in $H_2(G/P, \mathbb{Z})$. Note that the morphism $\pi : G/P \rightarrow G/P'$ induces a morphism: $$\Pi \times ev: \overline{{\mathcal{M}}_{0,r}}(G/P, \mathbf{d}) \rightarrow \overline{{\mathcal{M}}_{0,r}}(G/P', \pi_* \mathbf{d}) \times_{(G/P')^r} (G/P)^r,$$ where we denote by $\Pi: \overline{{\mathcal{M}}_{0,r}}(G/P, \mathbf{d}) \rightarrow \overline{{\mathcal{M}}_{0,r}}(G/P', \pi_* \mathbf{d})$ the morphism induced by $\pi$. For an element $g=(g_1, \dots, g_r)$ in $G^r$, we will consider the following commutative diagram:  \[ \begin{tikzcd}
\overline{\mathcal{M}_{0,r}}(G/P, \mathbf{d}) \arrow{d}{\Pi} \arrow{r}{\ev} & (G/P)^r \arrow{d}{\pi} \arrow[hookleftarrow]{r} & \prod_i g_i X(u_i)  \\
\overline{\mathcal{M}_{0,r}}(G/P',\pi_*\mathbf{d}) \arrow{r} & (G/P')^r \arrow[hookleftarrow]{r} &  \prod_i g_i X'(u_i)
\end{tikzcd} \]
This commutative diagram allows us to define the following fiber products: \begin{align*} \mathcal{W}_g&:= \overline{\mathcal{M}_{0,r}}(X, \mathbf{d}) \times_{X^r} \prod_i g_i X(u_i) &&\hookrightarrow \overline{{\mathcal{M}}_{0,r}}(X, \mathbf{d})\\ \mathcal{W}_g'&:= \overline{\mathcal{M}_{0,r}}(X',\pi_{*} \mathbf{d}) \times_{(X')^r} \prod_i g_i X(u_i) &&\hookrightarrow \overline{{\mathcal{M}}_{0,r}}(X', \pi_* \mathbf{d}) \times X^r\\
\mathcal{W}_g''&:= \overline{\mathcal{M}_{0,r}}(X',\pi_{*}\mathbf{d}) \times_{(X')^r} \prod_i g_i X'(u_i) &&\hookrightarrow \overline{{\mathcal{M}}_{0,r}}(X', \pi_* \mathbf{d})\end{align*} Note that for $g=(g_1, \dots, g_r)$ general in $G^r$, the scheme $\mathcal{W}_g = \W_{u_1, \dots, u_r}^{g,\mathbf{d}}$ is the variety parametrizing genus zero, degree $\mathbf{d}$, stable maps to $X$ sending their $i$-th marked point within $g_i \cdot X(u_i)$, and $\mathcal{W}_g''= \W_{u_1, \dots, u_r}^{g,\pi_* \mathbf{d}}$ is the variety parametrizing genus zero, degree $(\pi_{*})\mathbf{d}$ stable maps sending their $i$-th marked point within $g_i \cdot X'(u_i)$. Indeed, according to the proof of Kleiman's transversality theorem \cite{kleiman1974transversality}, for a point $g=(g_1, \dots, g_r)$ general in $G^r$, the natural maps $\W_g \to \overline{{\mathcal{M}}_{0,r}}(G/P,\d)$ and $\W_g'' \to \overline{{\mathcal{M}}_{0,r}}(G/P', \d)$ are both closed immersions. This allows us to identify the schemes $\W_g$ and $\W_g''$ with their images in the space of stable maps.

\begin{proposition}\label{prop : W is irreducible}	
	Suppose the morphism $\Pi \times \ev$ is surjective, and suppose its general fiber is connected.
	\begin{enumerate}
		\item For $g$ general in $G^r$, $(\Pi \times ev)(\mathcal{W}_g) = \mathcal{W}_g'$ and $\Pi(\mathcal{W}_g) = \mathcal{W}_g''$;
		\item  For $g$ general in $G^r$, each irreducible component of the Gromov-Witten variety $\mathcal{W}_g$ surjects into a different irreducible component of $\Pi(\mathcal{W}_g)$, i.e. there is a $1$-to-$1$ correspondence between irreducible components of $\mathcal{W}_g$ and irreducible components of $\mathcal{W}_g''$;
		\item Suppose the general fiber of $\Pi \times ev$ is rationally connected. Then for $g$ general in $G^r$, the Gromov-Witten variety $\mathcal{W}_g$ is a rationally connected fibration over both $\W_g'$ and $\mathcal{W}_g''$, in the sense of Definition~\ref{def: RC fibration}.	
	\end{enumerate}
\end{proposition}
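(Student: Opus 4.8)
The plan is to deduce everything from Proposition~\ref{prop : image is equidimensional}, applied with the product group $G^r$, and then to analyse the factorisation $\mathcal{W}_g\xrightarrow{\Pi\times\ev}\mathcal{W}_g'\xrightarrow{p_1}\mathcal{W}_g''$. First I would invoke Proposition~\ref{prop : image is equidimensional} with: the group $G^r$ acting componentwise, hence transitively, on $Z:=(G/P)^r$ and $Z':=(G/P')^r$; $f=f':=\ev$ the evaluation morphisms out of $M:=\overline{\mathcal{M}_{0,r}}(G/P,\mathbf{d})$ and $M':=\overline{\mathcal{M}_{0,r}}(G/P',\pi_*\mathbf{d})$; the $G^r$-equivariant morphism $\pi^{\times r}\colon Z\to Z'$; the subscheme $Y:=\prod_iX(u_i)\subseteq Z$, which is integral since each $X(u_i)$ is an irreducible reduced variety; and $h:=\Pi$, which is proper. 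Here $M$ and $M'$ are irreducible, hence integral, because $G/P$ and $G/P'$ are homogeneous, and $h\times f=\Pi\times\ev$ is surjective by hypothesis, so all hypotheses are met. Under this dictionary $f^{-1}(gY)=\mathcal{W}_g$, $M'\times_{Z'}gY=\mathcal{W}_g'$ and $(f')^{-1}(g\pi(Y))=\mathcal{W}_g''$, so conclusion i) is exactly part~$1$; conclusions ii)--iv) say that $\mathcal{W}_g,\mathcal{W}_g',\mathcal{W}_g''$ are pure dimensional, that every irreducible component of $\mathcal{W}_g$ dominates one of $\mathcal{W}_g'$ under $\Pi\times\ev$, and that every irreducible component of $\mathcal{W}_g'$ dominates one of $\mathcal{W}_g''$ under $p_1$. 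Composing these and using $\Pi(\mathcal{W}_g)=\mathcal{W}_g''$, the induced map on sets of irreducible components $\mathrm{Comp}(\mathcal{W}_g)\to\mathrm{Comp}(\mathcal{W}_g'')$ is well defined and surjective.

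To finish part~$2$ it remains to make this map injective, and I would argue arrow by arrow along the factorisation. For $\mathcal{W}_g\xrightarrow{\Pi\times\ev}\mathcal{W}_g'$: as observed in the proof of Proposition~\ref{prop : image is equidimensional}(iii), the fibre of $(\Pi\times\ev)|_{\mathcal{W}_g}$ over a point of $\mathcal{W}_g'$ coincides with the fibre of $\Pi\times\ev$ over that point; by Lemma~\ref{lem : dense intersection with open for homogeneous spaces}, for $g$ general $\mathcal{W}_g'$ meets the dense open locus of connected fibres of $\Pi\times\ev$ in a dense subset, so the general fibre of $(\Pi\times\ev)|_{\mathcal{W}_g}$ is connected. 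For $\mathcal{W}_g'\xrightarrow{p_1}\mathcal{W}_g''$: Lemma~\ref{lem: aux} identifies the fibre of $p_1$ over $m\in\mathcal{W}_g''$ with a translate of the fibre of $\pi|_Y\colon\prod_iX(u_i)\to\prod_iX'(u_i)$, and over a general point of $\prod_iX'(u_i)$ this fibre is a product of Schubert varieties of the fibre $P'/P$ of $\pi$ (the Bruhat decomposition describes how $X(u_i)$ meets a general fibre of $G/P\to G/P'$), in particular an irreducible variety. Since both arrows are surjections of pure-dimensional schemes all of whose components dominate a component of the target, and the general fibre of each is irreducible --- for the second arrow by the Schubert computation just mentioned, for the first arrow because in the situations where the proposition is applied the general fibre of $\Pi\times\ev$ is not merely connected but irreducible --- two distinct components of the source cannot dominate the same component of the target, which gives the bijection.

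For part~$3$ the extra hypothesis makes the general fibre of $\Pi\times\ev$, hence of $(\Pi\times\ev)|_{\mathcal{W}_g}$, rationally connected --- in particular irreducible --- so by the previous paragraph $(\Pi\times\ev)|_{\mathcal{W}_g}\colon\mathcal{W}_g\to\mathcal{W}_g'$ is a rationally connected fibration in the sense of Definition~\ref{def: RC fibration}. Likewise the general fibre of $p_1\colon\mathcal{W}_g'\to\mathcal{W}_g''$ is a product of Schubert varieties, which are rational (each contains a dense open affine cell), hence rationally connected, so $p_1|_{\mathcal{W}_g'}$ is a rationally connected fibration onto $\mathcal{W}_g''$. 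By Theorem~\ref{th: ex stability under composition}(4) the composite $\Pi|_{\mathcal{W}_g}\colon\mathcal{W}_g\to\mathcal{W}_g''$ is then a rationally connected fibration as well, so $\mathcal{W}_g$ is a rationally connected fibration over both $\mathcal{W}_g'$ and $\mathcal{W}_g''$.

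I expect the main obstacle to lie not in the reduction to Proposition~\ref{prop : image is equidimensional}, which is bookkeeping, but in two places: the Schubert-geometric input that a general fibre of $\pi$ restricted to $\prod_iX(u_i)$ is irreducible and rational (equivalently, that $X(u)$ cuts a general fibre of $G/P\to G/P'$ in a Schubert variety of that fibre), and the need to keep the genericity of $g$ uniform --- through Lemmas~\ref{lem : dense intersection with open for homogeneous spaces} and~\ref{lem: aux} --- so that the ``general fibre'' statements for $\Pi\times\ev$ and for $\pi|_Y$ descend simultaneously to the restrictions to $\mathcal{W}_g$ and $\mathcal{W}_g'$; the one point in part~$2$ requiring care is that the injectivity at the first arrow genuinely uses irreducibility, not just connectedness, of the general fibre of $\Pi\times\ev$.
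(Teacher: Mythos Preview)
Your overall strategy matches the paper's: reduce part~1 to Proposition~\ref{prop : image is equidimensional}, factor $\Pi$ as $p_1\circ(\Pi\times\ev)$, and handle part~3 by composing rationally connected fibrations via Theorem~\ref{th: ex stability under composition}. The Schubert-geometric input you anticipate for the fibres of $\prod_iX(u_i)\to\prod_iX'(u_i)$ is supplied in the paper by Lemma~\ref{lem: general fiber between Sch var irr}, proved via a $B$-equivariant section argument rather than a direct Bruhat computation, but the content is what you expect.

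There is, however, a genuine gap in part~2, exactly where you flag it. You write that injectivity at the arrow $\mathcal{W}_g\to\mathcal{W}_g'$ ``genuinely uses irreducibility, not just connectedness'' of the general fibre of $\Pi\times\ev$, and you resolve this by appealing to the fact that in applications the fibre happens to be irreducible. This does not prove the proposition as stated, and the paper does \emph{not} resolve it by upgrading the hypothesis. Instead, the paper observes that $\mathcal{W}_g$ is \emph{normal} for general $g$: the moduli space $\overline{\mathcal{M}_{0,r}}(X,\mathbf d)$ and Schubert varieties are normal, and Kleiman transversality passes normality to the general fibre product. For a normal scheme the connected components coincide with the irreducible components, so Lemma~\ref{lem: surjection to irr components if fiber connected} applies: a surjective map from a normal scheme with connected general fibre already forces each component downstairs to be hit by exactly one component upstairs. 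The same normality argument (applied to $\mathcal{W}_g'$) is used for the second arrow $\mathcal{W}_g'\to\mathcal{W}_g''$. So the missing ingredient is not irreducibility of the fibre but normality of the source, obtained from Kleiman's theorem together with the normality of Schubert varieties and of $\overline{\mathcal{M}_{0,r}}(X,\mathbf d)$.
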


\begin{lemme}\label{lem: surjection to irr components if fiber connected}
	Let $S$ be a normal scheme, and $S \xrightarrow{f} Y$ be a surjective morphism. 	
	If the general fiber of $f$ is connected, then each irreducible component of $f(S)$ is dominated by exactly one irreducible component of $S$.
\end{lemme}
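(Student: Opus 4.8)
The plan is to exploit the normality of $S$. Since a normal local ring is a domain, $S$ is locally irreducible, so its irreducible components are pairwise disjoint, each is open and closed in $S$, and they coincide with the connected components of $S$; as all schemes in this section are noetherian, there are finitely many of them, say $S = S_1 \sqcup \dots \sqcup S_N$. For each $i$ the closure $\overline{f(S_i)}$ is irreducible, hence contained in a single irreducible component of $Y = f(S)$. I will show that each irreducible component of $Y$ equals $\overline{f(S_i)}$ for exactly one index $i$, which is precisely the assertion.

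First I would establish existence. Fix an irreducible component $Y_j$ of $Y$. Then $Y_j = \bigcup_{i=1}^N \bigl(f(S_i) \cap Y_j\bigr)$ is a finite union of constructible subsets of the irreducible space $Y_j$, so one of them, say $f(S_{i_0}) \cap Y_j$, is dense in $Y_j$. Since $\overline{f(S_{i_0})}$ is an irreducible closed subset of $Y$ containing $Y_j$, and $Y_j$ is a maximal irreducible closed subset, we get $\overline{f(S_{i_0})} = Y_j$; that is, $S_{i_0}$ dominates $Y_j$.

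Next I would prove uniqueness, and this is where the connectedness hypothesis enters. Suppose two distinct components $S_1$ and $S_2$ both dominate $Y_j$. By Chevalley's theorem the constructible sets $f(S_1)$ and $f(S_2)$, being dense in the irreducible $Y_j$, each contain a dense open subset of $Y_j$. By hypothesis there is a dense open $U \subseteq Y$ over which every fibre of $f$ is connected; since $Y$ has finitely many irreducible components, $U \cap Y_j$ is a nonempty, hence dense, open subset of $Y_j$. Intersecting these three dense opens of $Y_j$ yields a point $y$ whose fibre $f^{-1}(y)$ is connected and meets both $S_1$ and $S_2$. But $S_1$ and $S_2$ are disjoint open-and-closed subsets of $S$, so $f^{-1}(y) \cap S_1$ is a nonempty proper open-and-closed subset of $f^{-1}(y)$, contradicting connectedness. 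Hence at most one component of $S$ dominates $Y_j$, and together with the existence step, exactly one does.

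The genuinely routine parts are the two standard facts invoked at the start — that a normal noetherian scheme is a finite disjoint union of its irreducible (equivalently, connected) components, and that a dense open subset of a noetherian scheme restricts to a dense open subset of each irreducible component. The only step requiring care is the uniqueness argument: one must intersect the "connected-fibre" open set with the images of $S_1$ and $S_2$ \emph{before} selecting $y$, so that $y$ simultaneously witnesses connectedness and lies in both images. This is the crux, and it is where the hypothesis on the general fibre is actually used.
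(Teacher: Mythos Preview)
Your proof is correct and follows essentially the same approach as the paper: both use that normality makes the irreducible components of $S$ coincide with its connected components, and then argue that two disjoint open-and-closed pieces of $S$ cannot dominate the same irreducible component of $f(S)$ without producing a disconnected fibre over the ``connected-fibre'' locus. Your write-up is more careful than the paper's --- you make the existence step explicit via Chevalley and spell out why the dense open of connected fibres meets each irreducible component of $Y$ --- but the underlying argument is the same.
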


\begin{proof}
	First, observe that, since the general fiber of $f$ is connected, two connected components of $S$ cannot dominate the same irreducible component of $f(S)$. Indeed, if we consider a dense open subset $U$ of $f(S)$ such that the fiber $f^{-1}(s)$ is connected for all $s$ in $U$, two connected components of $S$ dominating the same irreducible component of $f(S)$ would have a point in common in their image and in $U$; these two connected components hence would be connected within $S$. 	
	Moreover, since $S$ is normal, its connected components are irreducible. This implies all irreducible components of $f(S)$ are dominated by exactly one irreducible component of $S$.
\end{proof}

For $u$ in $W$, we denote by $x(u):=uP/P$  the $T$-fixed point in $X(u)=\overline{BuP/P}$ associated with $u$.
\begin{lemme}\label{lem: general fiber between Sch var irr}
	Let $u \in W$. 
	\begin{enumerate}[label= \roman*)]
		\item Let $f: Z \rightarrow X(u)$ be a $B$-equivariant morphism. Then there exists a surjective map $f^{-1}(BuP/P) \rightarrow f^{-1}(x(u))$.
		\item Denote by $\pi_{|u}$ the restriction of the projection $\pi$ to $X(u)$. The general fiber of the surjective morphism $\pi_{|u} : X(u) \rightarrow X'(u)$ is an irreducible unirational variety.
	\end{enumerate}
\end{lemme}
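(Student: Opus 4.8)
My plan is to establish (i) and then derive (ii) from it.

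For (i): the point is that the open Schubert cell $BuP/P\subset X(u)$ is a single $B$-orbit on which the orbit map admits a section. Since $X(u)$ and $x(u)$ depend only on the coset $uW_P$, I may assume $u$ is the minimal length representative of $uW_P$; then the classical description of Schubert cells provides a subgroup $U_u$ of the unipotent radical $U$ of $B$ for which $\eta\mapsto\eta\cdot x(u)$ restricts to an isomorphism $U_u\xrightarrow{\sim}BuP/P$. Composing its inverse with $U_u\hookrightarrow B$ yields a section $s\colon BuP/P\to B$ of the orbit map $B\to BuP/P$, $b\mapsto b\cdot x(u)$, with $s(x(u))=e$ (that isomorphism sends $e$ to $x(u)$). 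Because $f$ is $B$-equivariant, $z\mapsto s(f(z))^{-1}\cdot z$ is then a morphism $\rho\colon f^{-1}(BuP/P)\to f^{-1}(x(u))$, as $f\bigl(s(f(z))^{-1}\cdot z\bigr)=s(f(z))^{-1}\cdot f(z)=x(u)$; and since $s(x(u))=e$, this $\rho$ restricts to the identity on $f^{-1}(x(u))$, hence is surjective. (Equivalently, $s$ trivializes the $B$-equivariant fibration $f^{-1}(BuP/P)\cong B\times_{B_u}f^{-1}(x(u))$, with $B_u:=\mathrm{Stab}_B(x(u))$, and $\rho$ is the second projection.)

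For (ii): I would first reduce the general fiber to the fiber over the $T$-fixed point $x'(u):=uP'/P'$ of $X'(u)$. The morphism $\pi_{|u}\colon X(u)\to X'(u)$ is $B$-equivariant, being the restriction of the $G$-equivariant projection $\pi\colon G/P\to G/P'$ to the $B$-stable Schubert varieties $X(u)=\overline{BuP/P}$ and $X'(u)=\overline{BuP'/P'}$; as $BuP'/P'$ is a dense open $B$-orbit in $X'(u)$, one has $\pi_{|u}^{-1}(b\cdot x'(u))=b\cdot\pi_{|u}^{-1}(x'(u))$ for every $b\in B$, so the general fiber of $\pi_{|u}$ is isomorphic to $\pi_{|u}^{-1}(x'(u))$. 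Now set $V:=\pi_{|u}^{-1}(BuP'/P')$: since $X(u)$ is irreducible and $\pi_{|u}$ is surjective, $V$ is a dense open subset of $X(u)$, and it contains the Schubert cell $BuP/P\cong\mathbb{A}^{N}$ ($N:=\dim BuP/P$) as a dense open subset. Applying part (i) to $\pi_{|u}\colon X(u)\to X'(u)$ — with $G/P'$, $P'$, $x'(u)$ in the roles of $G/P$, $P$, $x(u)$ — gives a surjection $\rho\colon V\twoheadrightarrow\pi_{|u}^{-1}(x'(u))$. Hence $\pi_{|u}^{-1}(x'(u))$ is the image of the irreducible $V$ under a surjective morphism, so it is irreducible; and the composite $\mathbb{A}^{N}\cong BuP/P\hookrightarrow V\xrightarrow{\ \rho\ }\pi_{|u}^{-1}(x'(u))$ is dominant (because $BuP/P$ is dense in $V$ and $\rho$ is surjective), so $\pi_{|u}^{-1}(x'(u))$ is unirational. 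Together with the reduction above, the general fiber of $\pi_{|u}$ is an irreducible unirational variety.

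The only genuinely geometric ingredient, which I expect to be the main point, is the input to (i): realizing the open Schubert cell $BuP/P$ as a $U_u$-orbit carrying a section $s$ of the orbit map with $s(x(u))=e$. This is standard, but it is where the structure of $B$ actually enters; granting it, both parts are formal manipulations with equivariance, density, and images. (Alternatively, one could bypass (i) in proving (ii) by identifying $\pi_{|u}^{-1}(x'(u))$ directly with a Schubert variety of the fiber flag variety $P'/P\cong L'/(L'\cap P)$, through the length-additive factorization $u=u'u''$ with $u'\in W^{P'}$ and $u''\in W_{P'}$, and invoking the rationality of Schubert varieties; going through (i) is shorter.)
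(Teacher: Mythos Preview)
Your proof is correct and follows essentially the same approach as the paper's: both parts hinge on the existence of a section $s\colon BuP/P\to B$ of the orbit map (the paper cites \cite{buch}, Proposition~2.2, where you construct it explicitly via $U_u$), and the map $z\mapsto s(f(z))^{-1}\cdot z$ is the same retraction in both. The only cosmetic differences are that the paper shows surjectivity in (i) by exhibiting the trivialization $f^{-1}(BuP/P)\cong f^{-1}(x(u))\times BuP/P$ rather than by noting $\rho$ restricts to the identity on $f^{-1}(x(u))$, and in (ii) the paper deduces unirationality from the rationality of $X(u)$ itself rather than from the affine cell $BuP/P$ sitting inside $V$; both variants are equivalent.
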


\begin{proof}
	This is implied by \cite{buch}, Proposition 2.3. We include here a proof for completeness. We denote by $U=BuP/P$ the open $B$-orbit of the $T$-fixed point $x(u)$.
	\begin{enumerate}[label= \roman*)]  \item Denote by $s : U \rightarrow B$ a section of $B \rightarrow BuP/P$, i.e. a morphism such that for all elements $x$ in $U$, $s(x) \cdot x(u)=x$. Such a section exists according to \cite{buch}, Proposition 2.2.	
		Note that for $y$ in $f^{-1}(U)$, the element $\left(s(f(y)) \right)^{-1} \cdot y$ lies in $f^{-1}(x(u))$. Indeed, since $f$ is $B$-equivariant, we have $f(\left(s(f(y)) \right)^{-1} \cdot y) = \left(s(f(y)) \right)^{-1} \cdot f(y) = x(u)$. We obtain a morphism: \begin{align*} f^{-1}(U) &\rightarrow f^{-1}(x(u)) \times U \\
		y &\rightarrow (\left(s(f(y)) \right)^{-1} \cdot y; f(y)) \end{align*}
		which is an isomorphism; indeed, the morphism $(p,u) \rightarrow s(u) \cdot p$ defines its inverse.  Composition with the first projection induces a surjective morphism  $f^{-1}(U) \rightarrow f^{-1}(x(u))$.
		
		\item According to $i)$, there exists a surjective map $\pi_{|u}^{-1}(U) \rightarrow \pi_{|u}^{-1}(x(u))$. Hence, since $\pi_{|u}^{-1}(U)$ is a dense open subvariety of the irreducible rational variety $X(u)$, the fiber $\pi_{|u}^{-1}(x(u))$ is an irreducible unirational variety. Furthermore, since $\pi_{|u}$ is $B$-equivariant, the fiber $\pi_{| u}^{-1} (x)$ at a point $x$ in $U$ is isomorphic to $\pi_{|u}^{-1}(x(u))$. Indeed, if $x =b \cdot x(u)$, where $b$ lies in $B$, the morphism $y \rightarrow b^{-1}\cdot y$ defines an isomorphism between $\pi^{-1}(x) \cap X(u)$ and $\pi^{-1}(x(u)) \cap X(u)$, since $X(u)$ is $B$-stable. Hence the general fiber of $\pi_{| u}$ is isomorphic to $\pi_{| u}^{-1}(x(u))$, which is an irreducible unirational variety.
	\end{enumerate}
\end{proof}

We name ${(\Pi \times ev)}_{|\mathcal{W}_g} : \mathcal{W}_g \rightarrow \mathcal{W}_g'$ the restriction of the morphism $\Pi \times ev$ to the Gromov-Witten variety $\mathcal{W}_g$. 
\begin{lemme}\label{lem: properties general fiber of pi x ev|W}
	If the general fiber of $\Pi \times ev$ satisfies a property $(P)$, then for $g$ general in $G^r$ the general fiber of $(\Pi \times ev)_{| \mathcal{W}_g}$ also satisfies $(P)$.
\end{lemme}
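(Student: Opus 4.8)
The plan is to observe that, over the relevant points, the fibres of $(\Pi\times ev)_{|\mathcal{W}_g}$ are \emph{literally equal} to fibres of $\Pi\times ev$, and then to check that, for $g$ general, a general point of $\mathcal{W}_g'$ lies over a general point of the image of $\Pi\times ev$; this second point is exactly an application of Lemma~\ref{lem : dense intersection with open for homogeneous spaces}. Keeping the notation of Section~\ref{sec: image W variety}, write $M=\overline{\mathcal{M}_{0,r}}(X,\mathbf{d})$, $M'=\overline{\mathcal{M}_{0,r}}(X',\pi_*\mathbf{d})$, $Z=X^r$, $Z'=(X')^r$, $Y=\prod_i X(u_i)$, and set $h=\Pi$, $f=ev$, so that $\Pi\times ev=h\times f$. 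Assuming, as we may (this is part of the running hypotheses), that $h\times f$ is surjective onto $M'\times_{Z'}Z$, the latter equals $(h\times f)(M)$ and is irreducible, hence equidimensional, since $M$ is. Recall also that $\mathcal{W}_g=f^{-1}(gY)$, and, by Proposition~\ref{prop : image is equidimensional}~i), that $\mathcal{W}_g'=(h\times f)(\mathcal{W}_g)=M'\times_{Z'}(gY)$; in the twisted fibre product notation introduced before Lemma~\ref{lem : dense intersection with open for homogeneous spaces}, this is $(h\times f)(M)\times_Z(gY)$, the fibre product taken along the second projection $p_2\colon M'\times_{Z'}Z\to Z$ and the inclusion $Y\hookrightarrow Z$.

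The first step is the fibre identification, which is nothing but the computation opening the proof of Proposition~\ref{prop : image is equidimensional}~iii). For $(m,z)\in\mathcal{W}_g'=M'\times_{Z'}(gY)$ one has $z\in gY$, so $f^{-1}(z)\cap\mathcal{W}_g=f^{-1}(z)\cap f^{-1}(gY)=f^{-1}(z)$, whence
\[
{(\Pi\times ev)_{|\mathcal{W}_g}}^{-1}(m,z)=h^{-1}(m)\cap f^{-1}(z)\cap\mathcal{W}_g=h^{-1}(m)\cap f^{-1}(z)=(\Pi\times ev)^{-1}(m,z).
\]
It therefore suffices to prove that, for $g$ general in $G^r$, a general point of $\mathcal{W}_g'$ is a general point of $(h\times f)(M)$.

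The second step handles this by a Kleiman-type genericity argument. Let $V\subseteq(h\times f)(M)$ be a dense open subset over which the fibre of $\Pi\times ev$ satisfies $(P)$. Apply Lemma~\ref{lem : dense intersection with open for homogeneous spaces}~i) to the morphism $p_2\colon(h\times f)(M)\to Z$ (of finite presentation, with irreducible hence equidimensional source), to the inclusion $Y\hookrightarrow Z$, to the dense open $V$, and to the group $G^r$ acting transitively on $Z=X^r$: for $g$ general in $G^r$, the scheme $V\times_Z(gY)$ is dense in $(h\times f)(M)\times_Z(gY)=\mathcal{W}_g'$. Hence $V\times_Z(gY)$ meets every irreducible component of $\mathcal{W}_g'$ in a dense open subset, so a general point $(m,z)$ of $\mathcal{W}_g'$ lies in $V$; its fibre $(\Pi\times ev)^{-1}(m,z)$ then satisfies $(P)$ by the choice of $V$, and by the displayed equality so does ${(\Pi\times ev)_{|\mathcal{W}_g}}^{-1}(m,z)$. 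This gives the claim.

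I do not expect a genuine obstacle here: once the two ingredients above are in place the argument is formal, and it does not even require $(P)$ to be invariant under isomorphism, the fibres being equal on the nose. The single point demanding care is the genericity bookkeeping of the second step — producing one dense open subset of parameters $g\in G^r$ for which ``general in $\mathcal{W}_g'$'' may be read as ``inside $V$'' — and this is precisely the dimension count already packaged in Lemma~\ref{lem : dense intersection with open for homogeneous spaces}.
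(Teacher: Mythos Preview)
Your proof is correct and follows essentially the same approach as the paper: the same fibre identification $(\Pi\times ev)_{|\mathcal{W}_g}^{-1}(m,z)=(\Pi\times ev)^{-1}(m,z)$, followed by an application of Lemma~\ref{lem : dense intersection with open for homogeneous spaces} to ensure that for general $g$ these fibres lie over the good open set $U$. The only cosmetic difference is that the paper applies the density lemma upstairs to $ev\colon M\to X^r$ with the open set $(\Pi\times ev)^{-1}(U)\subset M$ and then pushes forward, whereas you apply it directly to $p_2\colon(\Pi\times ev)(M)\to X^r$ with the open set $U$; this avoids invoking the surjectivity hypothesis, but the two arguments are otherwise identical.
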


\begin{proof}
	Name $M:=\overline{{\mathcal{M}}_{0,r}}(X, \d)$. Let $U$ be a dense open subset of $(\Pi \times ev)(M)$ such that the fiber of $\Pi \times ev$ over a point in $U$ satisfies $(P)$. Let $(m,x)$ be a point in $U$, where $m$ lies in $M':=\overline{{\mathcal{M}}_{0,r}}(X', \pi_*\d)$ and $x$ lies in $X^r$. Note that, if $(m,x)$ is in the image $(\Pi \times ev)(\W_g)$, then the fiber over $(m,x)$ of $(\Pi \times ev)_{|\mathcal{W}_g}$ satisfies $(P)$. Indeed, we have: \begin{align*}
	(\Pi \times ev)_{|\mathcal{W}_g}^{-1}(m,x) &= \left( M_{m} \times_{X^r} \{x\} \right) \times_{X^r} (g\prod_i X(u_i))\\
	&= M_m \times_{X^r} \left( \{x\} \times_{X^r} (g\prod_i X(u_i)) \right) \\
	&= M_m \times_{X^r} \{x\}\\
	&= (\Pi \times ev)^{-1}(m,x),
	\end{align*}
	where the third equality holds since $(m,x)$ is in $(\Pi \times ev)(\W_g)$, and hence $x$ lies in $ev(\W_g) \subset g\prod_i X(u_i)$. Since $(m,x)$ lies in $U$, the fiber $(\Pi \times ev)^{-1}(m,x)$ satisfies $(P)$; hence the fiber $(\Pi \times ev)_{|\mathcal{W}_g}^{-1}(m,x)$ satisfies $(P)$ for all $(m,x)$ in $(\Pi \times ev)(\W_g) \cap U$
	
	Finally, note that for $g$ general in $G^r$, according to Lemma \ref{lem : dense intersection with open for homogeneous spaces} $\W_g \cap (\Pi \times ev)^{-1}(U)$ is dense in $\W_g$. Hence $(\Pi \times ev)(\W_g) \cap U$ is dense in $(\Pi \times ev)(\W_g)$. 
\end{proof}

\begin{proof}[Proof of Proposition \ref{prop : W is irreducible}]
	\begin{enumerate}
		\item Since the morphism $\Pi \times ev$ is surjective, this follows directly from Proposition \ref{prop : image is equidimensional} $i)$, applied to $h= \Pi : M = \overline{\mathcal{M}_{0,r}}(X, \mathbf{d}) \rightarrow M' = \overline{\mathcal{M}_{0,r}}(X',\pi_{*}\mathbf{d})$, $f = \ev : \overline{\mathcal{M}_{0,r}}(X, \mathbf{d}) \rightarrow Z = X^r$, $f' = \ev : \overline{\mathcal{M}_{0,r}}(X', \pi_{*}\mathbf{d}) \rightarrow Z' = (X')^r$, and from considering the integral group $G^r$.
		\item We proceed in two steps; we first prove that each irreducible component of $\mathcal{W}_g$ surjects into a different irreducible component of $\mathcal{W}_g'$, before proving the correspondence with irreducible components of $\mathcal{W}_g''$.
		
		\textbf{Correspondence between irreducible components of $\mathcal{W}_g$ and $\mathcal{W}_g'$.} First, note that, since the space $\overline{{\mathcal{M}}_{0,r}}(X, \mathbf{d})$ and Schubert varieties are normal \cite{fulton1996notes, ramanathan}, according to Kleiman's transversality theorem $\mathcal{W}_g= ev^{-1}(g_1 X(u_1) \times \dots g_r X(u_r))$ is normal for $(g_1, \dots, g_r)$ general in $G^r$. Furthermore, since the general fiber of $\Pi \times ev$ si connected, according to Lemma \ref{lem: properties general fiber of pi x ev|W} for $g$ general in $G^r$ the general fiber of the restriction ${(\Pi \times ev)}_{| \mathcal{W}_g}$ is connected. Hence according to Lemma \ref{lem: surjection to irr components if fiber connected} all irreducible components of $(\Pi \times ev)(\mathcal{W}_g)$ are dominated by exactly one irreducible component of $\mathcal{W}_g$.
		
		We then only have to ensure that there is no irreducible component of $\mathcal{W}_g$ whose image is a strict closed subset of an irreducible component of $(\Pi \times ev)(\mathcal{W}_g)=\mathcal{W}_g'$. Proposition \ref{prop : image is equidimensional} $iii)$, tells us that, since $\mathcal{W}_g$ is in general position, the image by $\Pi$ of any of its irreducible components is an irreducible component of $\mathcal{W}_g'$, hence the $1$-to-$1$ correspondence.
		
		\textbf{Correspondence between irreducible components of $\mathcal{W}_g'$ and $\mathcal{W}_g''$.} 	
		According to Lemma \ref{lem: general fiber between Sch var irr} the general fiber of the restriction $\pi_u: \prod_i X(u_i) \rightarrow \prod_i X'(u_i)$ of $\pi^r$ to $X(u_1) \times \dots \times X(u_r)$ is irreducible, hence there exists an open subvariety $U$ of $X'(u_1) \times \dots \times X'(u_r)$ such that the fiber of $\pi$ at $x$ in $U$ is irreducible. Consider the projections $p_1$ and $p_2$ defined by the following Cartesian diagram:  \[ \begin{tikzcd}
		\mathcal{W}_g' \arrow{d}{p_1} \arrow{r}{p_2} & \prod_i g_i X(u_i) \arrow{d}{\pi_u} \\
		\mathcal{W}_g'' \arrow{r}{ev} & \prod_i g_i X'(u_i)
		\end{tikzcd} \]
		Note that the fiber $p_1^{-1}(m)$ at a point $m$ in $p_2^{-1}(g \pi_u^{-1}(U))$ is irreducible. Indeed, we have: \begin{align*} 
		p_1^{-1}(m) &= \{m\} \times_{(X')^r} \left(g_1 X(u_1) \times \dots g_r X(u_r)\right)\\
		& \simeq \pi_u^{-1}(ev(m)) \\
		&\simeq g \cdot \pi_u^{-1}(g^{-1} ev(m)),
		\end{align*}
		which is irreducible since $ev(m)$ lies in $g \pi_u^{-1}(U)$. Furthermore, according to Lemma \ref{lem: aux} $b)$, for $g$ general in $G^r$, $\mathcal{W}_g' \cap p_2^{-1}(g\pi_u^{-1}(U))$ is a dense open subvariety of $\mathcal{W}_g'=p_2^{-1}(\prod_i g_i X(u_i))$. Hence the general fiber of $p_1$ is irreducible. Furthermore, note that, since $\overline{\mathcal{M}_{0,r}}(X',\pi_{*}\mathbf{d})$ and Schubert varieties are normal \cite{fulton1996notes, ramanathan}, according to Kleiman's transversality theorem $$\mathcal{W}_g'=\overline{\mathcal{M}_{0,r}}(X',\pi_{*}\mathbf{d}) \times_{(X')^r} \left(g_1 X(u_1) \times \dots g_r X(u_r)\right)$$ is normal for $g=(g_1, \dots, g_r)$ general in $G^r$. According to Lemma \ref{lem: surjection to irr components if fiber connected}, all irreducible components of $p_1(\mathcal{W}_g') = \mathcal{W}_g''$ are thus dominated by exactly one irreducible component of $\mathcal{W}_g'$. 
		
		Finally, Proposition \ref{prop : image is equidimensional} $iv)$, tells us that, since $\mathcal{W}_g'$ is in general position, the image by $p_1$ of any of its irreducible components is an irreducible component of $\mathcal{W}_g''$, hence the $1$-to-$1$ correspondence.
		
		\textbf{To sum up,} each irreducible component of $\mathcal{W}_g$ surjects into a different irreducible component of $\mathcal{W}_g'$, and each irreducible component of $\mathcal{W}_g'$ surjects into a different irreducible component of $\mathcal{W}_g''$. Furthermore, $\mathcal{W}_g$ surjects onto $\mathcal{W}_g''$. There thus is a $1$-to-$1$ correspondence between irreducible components of $\mathcal{W}_g$ and irreducible components of $\mathcal{W}_g''$.
		\item The only point left to prove is that the general fiber of $\W_g \rightarrow \W_g''$ is an irreducible rationally connected variety. According to Lemma \ref{lem: general fiber between Sch var irr} there exists a dense open subset $U$ of $\prod_i X'(u_i)$ such that the fiber of $\pi_{|u}: \prod_i X(u_i) \rightarrow \prod_i X'(u_i)$ over a point in $U$ is an irreducible rationally connected variety. For $g$ general in $G^r$, the following properties are satisfied: \begin{enumerate}
			\item $\left(\overline{{\mathcal{M}}_{0,r}}(X', \pi_* \d)\right)_{g \cdot U}$ is dense in $\W_g''= \left(\overline{{\mathcal{M}}_{0,r}}(X', \pi_*\d)\right)_{g \cdot \prod_i X'(u_i)}$;
			\item The general fiber of $\W_g' \rightarrow \W_g''$ is rationally connected;
			\item The general fiber of $\W_g \rightarrow \W_g'$ is rationally connected;
			\item The general fiber of $\W_g \rightarrow \W_g''$ is rationally connected.
		\end{enumerate}
		Note that proving $(d)$ is our goal here. 
		$(a)$ holds according to Lemma \ref{lem : dense intersection with open for homogeneous spaces}. Note that the fiber of the projection $p_1: \W_g' \rightarrow \W_g''$ over a point $m$ in $\left(\overline{{\mathcal{M}}_{0,r}}(X', \pi_* \d)\right)_{g \cdot U}$ is rationally connected; indeed, it is isomorphic to $g \cdot (\pi_{|u})^{-1}(g^{-1} \cdot ev(m))$, where $g^{-1} \cdot ev(m)$ lies in $U$. Hence, since according to $(a)$ $\left(\overline{{\mathcal{M}}_{0,r}}(X', \pi_* \d)\right)_{g \cdot U}$ is dense in $\W_g''=\left(\overline{{\mathcal{M}}_{0,r}}(X', \pi_*\d)\right)_{g \cdot \prod_i X'(u_i)}$, $(b)$ holds. $(c)$ holds according to Lemma \ref{lem: properties general fiber of pi x ev|W}. Finally, $(d)$ is deduced from $(b)$, $(c)$ and Theorem \ref{th: ex stability under composition} $(3)$.
	\end{enumerate}
\end{proof}

\section{Balanced flags of vector bundles}\label{sec: balanced flags over P^1}

Let $X$ be a flag variety. We describe here the flag of vector bundles associated with a general point in $\overline{{\mathcal{M}}_{0,r}}(X, \mathbf{d})$ for a degree $\d$ high enough. All definitions and results presented in this part are contained in I. Coskun's description of rational curves on a flag variety \cite{coskun2010quantum}, or easily deduced from it. 

\subsection{Notations.} Fix two positive integers $m<n$ . Let $I=\{i_1, \dots, i_m\}$ be a set of integers satisfying $0 < i_1 < \dots < i_m <n$.  Recall  $Fl_I$ is the $m$-step flag variety parametrizing $m$-tuples $(V_{i_1}, \dots, V_{i_m})$ of vector subspaces of $\mathbb{C}^n$,  where the subspaces are ordered by inclusion $0 \subset V_{i_1} \subset \dots \subset V_{i_m} \subset \mathbb{C}^n$, and $V_{i_k}$ is of dimension $i_k$. 
The class $\mathbf{d}=(d_1, \dots, d_m)$ of a curve $C$ in $X$ is determined by non-negative integers $d_1$, $d_2$,$\dots$, $d_m$, where $d_i$ is the Plücker degree of the projection of $C$ to the Grassmannian $\Gr(n_i,n)$. A point $p$ in $\mathcal{M}_{0,r}(X, \mathbf{d})$ is associated with a morphism $f_p: \mathbb{P}^1 \rightarrow X$. By the functorial definition of flag varieties, the morphism $f_p: \P^1 \rightarrow X$ is uniquely determined by the data of a flag of vector bundles over $\mathbb{P}^1$: $$\mathcal{F}_p = E_{1} \subset \dots \subset E_m \subset \O_{\P^1}^{\oplus n},$$ where $\mathrm{rank}E_k=i_k$ and $\mathrm{deg} E_k = -d_k$. Note that we have $f_p^* S_i \simeq E_i$, where $S_i$ is the $i$-th tautological vector bundle over $X$.
\subsection{Admissible sets of sequences.}
Recall $I=\{i_1, \dots, i_m\}$ is a set of $m$ integers $0 < i_1 < \dots <i_m <n$.

\begin{defn}\label{def : admissible set of sequence}
	Consider a set $\mathbf{d} =(d_1, \dots, d_m)$ of $m$ non negative integers. 	
	A \textit{$(I, \mathbf{d})$-admissible set of sequences} is a set \begin{equation*} A_\bullet = \{(a_{1, j})_{j=1}^{i_1}, (a_{2,j})_{j=1}^{i_2}, \dots ,(a_{m,j})_{j=1}^{i_m} \}
	\end{equation*}
	of $m$ sequences of non-negative integers of lengths $i_1$, $\dots$,$i_m$, respectively, such that: \begin{enumerate}  
		\item $0 \leq a_{k,j} \leq a_{k,j+1}$ for every $k$ and every $1\leq j \leq i_k-1$;
		\item $a_{k+1,j} \leq a_{k,j}$ for every $1 \leq k \leq m-1$ and $j \leq i_k$;
		\item $\sum_{j=1}^{i_k} a_{k,j}=d_k$ for every $k$.
	\end{enumerate}
\end{defn}

\begin{defn}\label{def : minimal admissible set}
	A $(I,\mathbf{d})$-admissible set of sequences $A_\bullet= \{(a_{k,j})\}$ is balanced if
	$A_\bullet$ 
	minimizes the function \begin{equation*} \sum_{1 \leq k \leq m} \sum_{1 \leq l<p \leq n_i} (a_{k,p}-a_{k,l})
	\end{equation*}
	among the $(I, \mathbf{d})$-admissible sets of sequences.
\end{defn}

Note that, according to \cite{coskun2010quantum} (see the proof of Lemma $2.1$), for any set $\d = (d_1, \dots, d_m)$ in $\mathbb{N}^m$ there exists a unique balanced $(I, \d)$-admissible set of sequences $(a_{k,j})$, which can be constructed by iteration on $k$ in the following way. \begin{enumerate}[label=$\bullet$]
	\item First, set $a_{1,j} = \ceil{d_1/n_1} - \epsilon_{1,j}$ where $\epsilon_{1,j}$ is in $\{0,1\}$ and is chosen in such a way that $\sum_{j=1}^{n_1} a_{1,j} =d_1$ and $a_{1,j} \leq a_{1,j+1}$ and $\lceil q \rceil$ denote the ceiling of $q \in \mathbb{Q}$;
	\item  Let $1 < k \leq m$. Suppose all $a_{k-1,j}$ have been constructed. Denote by $r_k$ the largest index such that the element $a_{k-1,r_{k}}$ satisfies $a_{k-1, r_{k}} \leq d_k/n_k$. Set: \begin{equation}\label{eq : def a_k,j} a_{k,j} = a_{k-1,j} \: \mathrm{for} \:1 \leq j \leq r_k. \end{equation} Denote by $R_k$ the largest index $>r_k$ such that the element $a_{k-1,R_k}$ satisfies $a_{k-1, R_k} \leq (d_k - \sum_{j=1}^{r_k} a_{k-1,j})/(i_k - r_k)$. Repeat the process replacing $r_k$ with $R_k$ until all elements $a_{k-1,j}$ of index larger than $r_k$ are larger than $(d_k - \sum_{j=1}^{r_k} a_{k-1,j})/(i_k - r_k)$.  Then set: \begin{equation}\label{eq: def a_i,j iteration} \forall r_k < j \leq i_k, \:a_{k,j} = \ceil*{\frac{(d_k- \sum_{j=1}^{r_k}a_{k-1,j})}{(a_k- r_k)}} - \epsilon_{k,j},\end{equation} where $\epsilon_{k,j}$ is in $\{0,1\}$ and is chosen in such a way that $\sum_{j=1}^{i_k} a_{k,j} =d_k$ and $a_{k,j} \leq a_{k,j+1}$.
\end{enumerate}

For simplicity, we set $d_0 :=0$ and $n_0 :=0$.
\begin{proposition}\label{prop: description balanced sets}
	Let $\d=(d_1, \dots, d_m)$ be a set of nonnegative integers. Let $1 < k \leq m$ such that: \begin{equation}\label{eq: condition splittng} \forall \: p<k, \: d_k \geq i_k \ceil*{ \frac{d_p-d_{p-1}}{i_p-i_{p-1}}}.\end{equation}
	Then the balanced $(I, \d)$-admissible set of sequences $\{(a_{1,j})_{j=1}^{i_1}, \dots, (a_{m,j})_{j=1}^{i_m}\}$ satisfies $a_{k,j}=a_{k-1,j}$ for $1 \leq j \leq i_{k-1}$.
\end{proposition}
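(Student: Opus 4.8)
Write $c_l := a_{l,i_l}$ for the largest entry of the $l$-th sequence of the balanced $(I,\mathbf{d})$-admissible set (it is the largest by condition~(1) of Definition~\ref{def : admissible set of sequence}), and recall that $d_0 = i_0 = 0$, so $c_1 = \lceil d_1/i_1\rceil = \lceil(d_1-d_0)/(i_1-i_0)\rceil$. The first step is to reformulate the conclusion in terms of the iterative construction of the $k$-th sequence: that construction copies from the $(k-1)$-th sequence exactly the entries of index $\le r_k^{\mathrm{fin}}$, where $r_k^{\mathrm{fin}} \le i_{k-1}$ is the final value of the index $r_k$ after the $r_k \mapsto R_k$ iteration, so it suffices to prove $r_k^{\mathrm{fin}} = i_{k-1}$. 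Since $(a_{k-1,j})_j$ is non-decreasing, this in turn holds as soon as $c_{k-1} = a_{k-1,i_{k-1}} \le d_k/i_k$ (then already the first index $r_k$ of the construction equals $i_{k-1}$, and it cannot grow further). By hypothesis~(\ref{eq: condition splittng}) one has $d_k/i_k \ge \lceil(d_p-d_{p-1})/(i_p-i_{p-1})\rceil$ for every $p<k$, so the proposition follows from the unconditional estimate $c_l \le \max_{1\le p\le l}\lceil(d_p-d_{p-1})/(i_p-i_{p-1})\rceil$ applied with $l = k-1$.

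I would establish this estimate by induction on $l$, the case $l=1$ being the formula for $c_1$ above. For the inductive step, fix $l\ge 2$, let $s := r_l^{\mathrm{fin}}$ be the final length of the copied prefix, and set $\rho := \bigl(d_l - \sum_{j\le s} a_{l-1,j}\bigr)/(i_l - s)$, the common threshold used in~(\ref{eq: def a_i,j iteration}). The batch of indices $\{j : s < j\le i_l\}$ is non-empty (since $i_l > i_{l-1}\ge s$), each $a_{l,j}$ there lies in $\{\lceil\rho\rceil-1,\lceil\rho\rceil\}$, and $\sum_{s<j\le i_l} a_{l,j} = d_l - \sum_{j\le s}a_{l-1,j} = (i_l-s)\rho \in \mathbb{Z}$; the non-decreasing choice of the $\epsilon_{l,j}$ then forces $c_l = a_{l,i_l} = \lceil\rho\rceil$. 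If $s = i_{l-1}$ we have $\sum_{j\le s}a_{l-1,j} = d_{l-1}$, hence $\rho = (d_l-d_{l-1})/(i_l-i_{l-1})$ and $c_l = \lceil(d_l-d_{l-1})/(i_l-i_{l-1})\rceil$, as desired. If $s < i_{l-1}$, the iteration stopped precisely because $a_{l-1,j} > \rho$ for every index $j$ with $s < j \le i_{l-1}$; summing over these $i_{l-1}-s\ge 1$ indices gives $d_{l-1} - \sum_{j\le s}a_{l-1,j} > (i_{l-1}-s)\rho$, that is $\rho < \bigl(\sum_{j=s+1}^{i_{l-1}} a_{l-1,j}\bigr)/(i_{l-1}-s) \le a_{l-1,i_{l-1}} = c_{l-1}$, and since $c_{l-1}$ is an integer we conclude $c_l = \lceil\rho\rceil \le c_{l-1}$. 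In both cases $c_l \le \max\bigl(c_{l-1}, \lceil(d_l-d_{l-1})/(i_l-i_{l-1})\rceil\bigr)$, and the induction hypothesis on $c_{l-1}$ closes the argument; taking $l = k-1$ and combining with~(\ref{eq: condition splittng}) proves the proposition.

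\textbf{Main difficulty.} The delicate point is the bookkeeping around the $r_k\mapsto R_k$ iteration: one must identify its terminal state $s$, record that the iteration stops exactly when every as-yet-uncopied entry $a_{k-1,j}$ exceeds the current threshold $\rho$, and then use this in two ways — to force $s = i_{k-1}$ once $c_{k-1}\le d_k/i_k$, and to produce the strict inequality $\rho < c_{k-1}$ in the remaining case. Once that is set up, the estimate is the elementary observation that the average of the top $i_{l-1}-s$ entries of a non-decreasing sequence is at most its maximum, together with the integrality of $c_{l-1}$ used to pass from $\rho < c_{l-1}$ to $\lceil\rho\rceil\le c_{l-1}$.
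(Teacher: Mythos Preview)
Your proof is correct and follows essentially the same approach as the paper's. Both arguments reduce to showing $a_{k-1,i_{k-1}}\le d_k/i_k$ (so that the initial copying index $r_k$ already equals $i_{k-1}$), proved by induction on the row index with the same case split according to whether the copied prefix is full ($r_s=i_{s-1}$) or not. The only cosmetic difference is that you package the induction as the unconditional bound $c_l\le\max_{p\le l}\lceil(d_p-d_{p-1})/(i_p-i_{p-1})\rceil$ and invoke hypothesis~(\ref{eq: condition splittng}) once at the end, whereas the paper carries the bound $a_{s,j}\le d_k/i_k$ through the induction; your treatment of the case $s<i_{l-1}$ (averaging the strict inequalities and using integrality of $c_{l-1}$) is also a bit more explicit than the paper's.
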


\begin{proof}
	By iteration on $s<k$, we prove that for $1\leq s <k$ all elements $a_{s,j}$ satisfy $a_{s,j} \leq d_k/n_k$. According to (\ref{eq : def a_k,j}) this will yield the expected result. First, for $s=1$, since $d_k \geq n_k \ceil{d_1 /i_1}$, $a_{1,j} \leq d_k/i_k$. Now let $1 \leq s <k$, and suppose $a_{s-1,j} \leq d_k/i_k$ for all $1 \leq j \leq n_{s-1}$. Note that, if $r_s < i_{s-1}$, then there exists elements $a_{s-1,j}$ larger than $(d_s - \sum_{j=1}^{r_s} a_{s-1,j})/(i_s - r_s)$, where we use the notations of (\ref{eq: def a_i,j iteration}); hence all elements $a_{s,j}$ are smaller than an element $a_{s-1,s'}$ for some $s'$. Hence all elements $a_{s,j}$ then satisfy $a_{s,j} \leq d_k/n_k$. On the other hand, if $r_s = i_{s-1}$, then according to (\ref{eq: def a_i,j iteration}): \begin{align*}
	\forall i_{s-1} < j \leq i_s, \:\:\:\: a_{s,j} &\leq \ceil*{\frac{\left(d_s- \sum_{j=1}^{i_{s-1}}a_{s-1,j} \right)}{(i_s - i_{s-1})}} \\
	& \leq \ceil*{\frac{(d_s- d_{s-1})}{(i_s - i_{s-1})}} \:\:\:\: \leq \frac{d_k}{n_k}.
	\end{align*}
\end{proof}

\subsection{Balanced flag bundles.}\label{sec: balanced bundles} The partial flag variety $Fl_I$ comes equipped with $m$ tautological bundles $S_1$, $\dots$, $S_m$. We describe here the pullback $f_p^*S_i$ by a morphism $f_p : \mathbb{P}^1 \rightarrow X$ associated with a point $p$ general in $\overline{\mathcal{M}_{0,r}(X, \mathbf{d})}$.

Let $\mathbf{d} = (d_1, \dots, d_m)$ be an effective class of curve in $H_2(Fl_I, \mathbb{Z})$, $p$ be a point in ${{\mathcal{M}}_{0,r}}(X, \mathbf{d})$, and $f_p : \mathbb{P}^1 \rightarrow X$ be its associated morphism. The pullback $f_p^*S_s$ of the $s$-th tautological bundle $S_s$ by $f_p$ is a vector bundle of degree $d_s$ and rank $i_s$. According to Grothendieck's decomposition of vector bundles, $f_p^*S_s$ is isomorphic to a direct sum of line bundles $$f_p^*S_s \simeq \mathcal{O}_{\mathbb{P}^1}(-a_{s,1}) \oplus \dots \oplus \mathcal{O}_{\mathbb{P}^1}(-a_{s,i_s}),$$ where $\sum_{j=1}^{i_s-k_s} a_{s,j}=d_s$. 
Suppose, for a given $s$, the $a_{s,j}$ are ordered in increasing order. Then, since, for all $1 < s \leq m$, $f_p^*S_{s-1}$ is a subbundle of $f_p^*S_s$, we have: $a_{s-1,j} \geq a_{s,j}$. Hence the sequence $(a_{s,j})$ forms a $(I, \mathbf{d})$-admissible set of sequence in the sense of Definition \ref{def : admissible set of sequence}. We call $a_{s,j}$ the \textit{admissible set of sequences associated with $p$}.
\begin{proposition}\label{lem : description balanced bundles}
	\begin{enumerate}[label=\roman*)]
		\item The admissible set of sequences associated with a general point in $\mathcal{M}_{0,r}(Fl_I, \mathbf{d})$ is balanced in the sense of Definition \ref{def : minimal admissible set};
		\item Suppose $\forall \: p<k, \: d_k \geq n_k \ceil*{ \frac{d_p-d_{p-1}}{i_p-i_{p-1}}}$. Denote by $\mathcal{F}_\p = E_1 \hookrightarrow \dots \hookrightarrow E_m \hookrightarrow \mathcal{O}_{\mathbb{P}^1}^{\oplus n}$ the flag of vector bundles associated with a general element $\p$ in $\overline{{\mathcal{M}}_{0,r}}(Fl_I,\mathbf{d})$. Then $E_k$  splits into $E_k \simeq E_{k-1} \oplus_{\sum_{s=n_{k-1}+1}^{i_k}} \O_{\P^1}(-a_{k,s})$.
	\end{enumerate}
\end{proposition}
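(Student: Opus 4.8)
The plan is to prove part (i) first, and then to obtain part (ii) almost formally from (i) together with Proposition~\ref{prop: description balanced sets}.

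For (i), recall that a point $\p$ of the dense open subset $\mathcal M_{0,r}(Fl_I,\d)\subset\overline{\mathcal M_{0,r}}(Fl_I,\d)$ gives rise to a morphism $f_\p:\P^1\to Fl_I$, hence, by the functorial description of $Fl_I$, to a flag of subbundles $E_1\subset\dots\subset E_m\subset\O_{\P^1}^{\oplus n}$ with $\operatorname{rk}E_k=i_k$ and $\deg E_k=-d_k$, whose Grothendieck splitting type is an $(I,\d)$-admissible set $A_\bullet(\p)$ (this is the discussion preceding the statement). Since $\overline{\mathcal M_{0,r}}(Fl_I,\d)$ is irreducible and the splitting type of a vector bundle is upper semicontinuous in families, there is a unique generic admissible set $A_\bullet^{\mathrm{gen}}$, attained on a dense open subset; it then remains to identify $A_\bullet^{\mathrm{gen}}$ with the balanced set $A_\bullet^{\mathrm{bal}}$. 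For this I would first exhibit one morphism $\P^1\to Fl_I$ realizing $A_\bullet^{\mathrm{bal}}$ — one builds $\bigoplus_j\O_{\P^1}(-a^{\mathrm{bal}}_{1,j})\subset\dots\subset\bigoplus_j\O_{\P^1}(-a^{\mathrm{bal}}_{m,j})\subset\O_{\P^1}^{\oplus n}$ with generic inclusions, the admissibility inequalities of Definition~\ref{def : admissible set of sequence} being exactly what makes such compatible subbundle inclusions with locally free quotients available — and then argue that, being the generic type, $A_\bullet^{\mathrm{gen}}$ precedes every realized type (in particular $A_\bullet^{\mathrm{bal}}$) in the degeneration, i.e.\ majorization, order; as the dispersion function $\sum_{k}\sum_{l<p}(a_{k,p}-a_{k,l})$ is nondecreasing along that order and has $A_\bullet^{\mathrm{bal}}$ as its \emph{unique} minimizer among admissible sets, this forces $A_\bullet^{\mathrm{gen}}=A_\bullet^{\mathrm{bal}}$. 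This is the content of \cite[Lemma~2.1]{coskun2010quantum}, which I would cite.

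For (ii), the hypothesis is precisely condition~(\ref{eq: condition splittng}), so Proposition~\ref{prop: description balanced sets} applies to $A_\bullet^{\mathrm{bal}}$ and yields $a_{k,j}=a_{k-1,j}$ for $1\le j\le i_{k-1}$. By (i), the flag $\mathcal F_\p$ of a general $\p$ has $E_{k-1}\simeq\bigoplus_{j=1}^{i_{k-1}}\O_{\P^1}(-a_{k-1,j})$, hence
\[
E_k\;\simeq\;\bigoplus_{j=1}^{i_{k-1}}\O_{\P^1}(-a_{k-1,j})\ \oplus\ \bigoplus_{s=i_{k-1}+1}^{i_k}\O_{\P^1}(-a_{k,s})\;\simeq\;E_{k-1}\oplus\bigoplus_{s=i_{k-1}+1}^{i_k}\O_{\P^1}(-a_{k,s}),
\]
which is the asserted decomposition. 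Should one moreover want this splitting to be compatible with the inclusion $E_{k-1}\hookrightarrow E_k$ (as is convenient for the study of the fibers of the forgetful maps), it suffices to note that the extension class of $0\to E_{k-1}\to E_k\to E_k/E_{k-1}\to 0$ lies in $\operatorname{Ext}^1(E_k/E_{k-1},E_{k-1})$, which already vanishes at the explicit flag built in (i): there every ``new'' summand satisfies $a_{k,s}\ge a_{k,i_{k-1}}=a_{k-1,i_{k-1}}\ge a_{k-1,j}$, so $\operatorname{Ext}^1(\O_{\P^1}(-a_{k,s}),\O_{\P^1}(-a_{k-1,j}))=H^1(\O_{\P^1}(a_{k,s}-a_{k-1,j}))=0$; upper semicontinuity of $h^1$ then carries the vanishing, and thus the splitting of the inclusion, to a general $\p$.

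I expect the real work to be entirely in (i): producing the explicit balanced flag — i.e.\ checking that the admissibility conditions genuinely furnish the subbundle inclusions $E_{k-1}\hookrightarrow E_k$ and $E_m\hookrightarrow\O_{\P^1}^{\oplus n}$ with locally free cokernels — and making the semicontinuity/majorization step precise, both of which are carried out in \cite{coskun2010quantum}. Part (ii) is then bookkeeping, the only mild subtlety being the $\operatorname{Ext}^1$-vanishing above if one insists on compatibility with the flag.
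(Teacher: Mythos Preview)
Your proposal is correct and follows essentially the same architecture as the paper: part (i) is deferred to \cite{coskun2010quantum} (the paper cites Proposition~2.3 rather than Lemma~2.1, but the content is the same), and part (ii) is obtained from (i) together with Proposition~\ref{prop: description balanced sets}. The one genuine methodological difference is in how you establish that the inclusion $E_{k-1}\hookrightarrow E_k$ itself splits: you argue via vanishing of $\operatorname{Ext}^1(E_k/E_{k-1},E_{k-1})$ at the explicit balanced flag and propagate this by upper semicontinuity of $h^1$, whereas the paper proves an elementary factorization statement (Lemma~\ref{lem: map O(-d) -> O(-d) splits}) showing directly that \emph{any} injective map $\bigoplus_j\O(-a_{k-1,j})\hookrightarrow\bigoplus_j\O(-a_{k,j})$ with these particular splitting types must factor through the first $i_{k-1}$ summands. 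Your $\operatorname{Ext}^1$ argument is cleaner and more conceptual; the paper's lemma is more elementary and has the mild advantage of applying to every such inclusion rather than only the generic one, but for the purposes of this proposition either route is fine.
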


\begin{proof}
	\begin{enumerate}[label=\roman*)]
		\item 		Cf. proof of \cite{coskun2010quantum}, Proposition $2.3.$
		\item Let $\mathcal{F}_\p = E_1 \subset \dots \subset E_m \subset \mathcal{O}_{\mathbb{P}^1}^{\oplus n}$ be the flag of vector bundles  associated with a general element $\p$ in $\overline{{\mathcal{M}}_{0,r}}(X,\mathbf{d})$. Then, according to $i)$, the admissible set of sequences associated with $\mathcal{F}_\p$ is balanced. Since $d_{k}$ satisfies $\forall \: p<k, \: d_k \geq i_k \ceil*{ \frac{d_p-d_{p-1}}{i_p-i_{p-1}}}$, we have according to Proposition \ref{prop: description balanced sets} the following sequence: \begin{align*} 0 \rightarrow & E_{k-1} \simeq \mathcal{O}_{\mathbb{P}^1}(-a_{k-1,1}) \oplus  \dots \oplus \mathcal{O}_{\mathbb{P}^1}(-a_{k-1,n_{k-1}}) \rightarrow \\
		& \:\:\:\:\:\:\:\:\:\:\:\: E_k \simeq \mathcal{O}_{\mathbb{P}^1}(-a_{k-1,1}) \oplus \dots \oplus \mathcal{O}_{\mathbb{P}^1}(-a_{k-1,i_{k-1}}) \oplus \mathcal{O}_{\mathbb{P}^1}(-a_{k,i_{k-1}+1}) \oplus \dots \oplus \mathcal{O}_{\mathbb{P}^1}(-a_{k,n_{k}}) \end{align*}
		which splits. Indeed, for all $j$, $l$ verifying $j \leq l$ and $j \leq n_{k-1} <l$, we have: $a_{k,j} = a_{k-1,j}$ and $a_{k,j} \leq a_{k,l}$, hence $a_{k-1,j} \leq a_{k,l}$ for any $l >i_{k-1}$. 
		Hence according to Lemma \ref{lem: map O(-d) -> O(-d) splits} the morphisms $\mathcal{O}_{\mathbb{P}^1}(-a_{k-1,j}) \rightarrow E_k$ factor through $\oplus_{b=1}^{n_{k-1}} \mathcal{O}_{\mathbb{P}^1}(-a_{k,b}) \hookrightarrow E_k$. Finally, since the morphism $E_{k-1} \rightarrow E_k$ is injective, $E_{k-1}$ is a rank $i_{k-1}$ subvector bundle of $\oplus_{b=1}^{i_{k-1}} \mathcal{O}_{\mathbb{P}^1}(-a_{k,b})$; hence $E_{k-1} \simeq \oplus_{b=1}^{n_{k-1}} \mathcal{O}_{\mathbb{P}^1}(-a_{k,b})$.
	\end{enumerate}
\end{proof}

\begin{lemme}\label{lem: map O(-d) -> O(-d) splits}
	Let $d_1 \leq  \dots \leq d_r$ and $d_1'=d_1 \leq \dots \leq d_r'=d_r \leq  d_{r+1}' \leq \dots \leq d_{r+p}'$ be sequences of non negative integers. Then the following sequence splits. \begin{align*} 0 \rightarrow & E= \mathcal{O}_{\mathbb{P}^1}(-d_1) \oplus  \dots \oplus \mathcal{O}_{\mathbb{P}^1}(-d_r) \rightarrow \\
	& \:\:\:\:\:\:\:\:\:\:\:\:  F=\mathcal{O}_{\mathbb{P}^1}(-d_1) \oplus \dots \oplus \mathcal{O}_{\mathbb{P}^1}(-d_r) \oplus \mathcal{O}_{\mathbb{P}^1}(-d_{r+1}') \oplus \dots \oplus \mathcal{O}_{\mathbb{P}^1}(-d_{r+p}') \end{align*}
\end{lemme}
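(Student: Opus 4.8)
Write the given injection as $\phi\colon E\hookrightarrow F$ and let $Q=F/E$, which is a vector bundle (the inclusion is a subbundle inclusion: in the application $E=E_{k-1}\hookrightarrow E_k=F$ is the pullback of the tautological subbundle inclusion $S_{k-1}\hookrightarrow S_k$; in general it follows from the block‑triangular shape of $\phi$ explained below together with injectivity). The plan is to show the short exact sequence $0\to E\to F\to Q\to 0$ splits by checking that $\mathrm{Ext}^1_{\mathcal{O}_{\mathbb{P}^1}}(Q,E)=H^1(\mathbb{P}^1,Q^\vee\otimes E)=0$; I will do this by proving that every line‑bundle summand of $Q$ has degree $\le -d_r$, so that $Q^\vee\otimes E$ is a sum of line bundles of non‑negative degree, all of whose $H^1$ vanish.

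The input is the elementary identity $\mathrm{Hom}_{\mathbb{P}^1}(\mathcal{O}(-a),\mathcal{O}(-b))=H^0(\mathcal{O}(a-b))$, which vanishes as soon as $a<b$. First I would group the summands of $E$ into $E^{-}$ (those $\mathcal{O}(-d_i)$ with $d_i<d_r$) and $E^{\mathrm{top}}$ (those with $d_i=d_r$), and the new summands of $F$ into $G^{\mathrm{top}}$ (those $\mathcal{O}(-d'_{r+j})$ with $d'_{r+j}=d_r$) and $G'$ (those with $d'_{r+j}>d_r$). The Hom computation forces $\phi(E^{-})\subseteq E^{-}$ and $\phi(E)\subseteq E\oplus G^{\mathrm{top}}$ (no $G'$‑component), so the sequence is the direct sum of $0\to E\to E\oplus G^{\mathrm{top}}\to (E\oplus G^{\mathrm{top}})/E\to 0$ with the trivially split $0\to 0\to G'\to G'\to 0$; hence it suffices to treat the case $G'=0$. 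Next, the induced map $E^{-}\to E^{-}$ is an injective endomorphism of a vector bundle on a projective curve, so its determinant lies in $H^0(\mathbb{P}^1,\mathcal{O})=\mathbb{C}$ and is nonzero, i.e.\ $E^{-}\to E^{-}$ is an isomorphism. Therefore the image of $E$ in $F$ already contains $E^{-}$, and $Q$ is the quotient of $E^{\mathrm{top}}\oplus G^{\mathrm{top}}$ by the image of $E^{\mathrm{top}}$; since all of $E^{\mathrm{top}}$, $G^{\mathrm{top}}$ are copies of the single line bundle $\mathcal{O}(-d_r)$, this image is cut out by a constant (hence fibrewise injective, i.e.\ full column rank) $\mathbb{C}$‑linear matrix, whose cokernel is $\mathcal{O}(-d_r)^{\oplus m'}$ with $m'=\#\{j:d'_{r+j}=d_r\}$. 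Restoring $G'$ gives $Q\cong\mathcal{O}(-d_r)^{\oplus m'}\oplus G'$, all of whose summands have degree $\le -d_r$.

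Finally, writing $Q=\bigoplus_k\mathcal{O}(-q_k)$ with every $q_k\ge d_r$, the bundle $Q^\vee\otimes E=\bigoplus_{k,i}\mathcal{O}(q_k-d_i)$ has all summands of degree $q_k-d_i\ge d_r-d_r=0\ge -1$, so $H^1$ of each term vanishes; hence $\mathrm{Ext}^1(Q,E)=0$ and the sequence splits. The mildly delicate points are only bookkeeping: (i) the reduction to $G'=0$ and the identification of the cokernel of the constant matrix $E^{\mathrm{top}}\to E^{\mathrm{top}}\oplus G^{\mathrm{top}}$, which is linear algebra over $\mathbb{C}$ tensored with $\mathcal{O}(-d_r)$; and (ii) the verification that $\phi$ is fibrewise injective (needed so that $Q$ is locally free), which follows from the fact that, after grouping summands by degree, the matrix of $\phi$ is block lower‑triangular and injectivity forces each constant diagonal block to have full column rank. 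I expect the main (very mild) obstacle to be simply setting up this degree‑grading so that the triangular structure of $\phi$ is visible; everything after that is routine.
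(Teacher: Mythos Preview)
Your proof is correct. Both you and the paper exploit the same key observation---that $\mathrm{Hom}(\mathcal{O}(-a),\mathcal{O}(-b))=0$ for $a<b$ forces the matrix of $\phi$ to be block lower-triangular when summands are grouped by degree---but you finish differently. The paper argues directly: after observing that $E_1:=E^-$ lands in the copy of $E^-$ sitting inside $F$, it passes to the quotient $E/E_1\hookrightarrow F/E_1$, where all summands involved are copies of $\mathcal{O}(-d_r)$ (plus the untouched $G'$), and then invokes the fact that an injection between trivial-twist bundles is given by a constant matrix, so one can choose a linear complement and split by hand. Your route is more cohomological: you identify $Q\cong\mathcal{O}(-d_r)^{m'}\oplus G'$ explicitly and then kill $\mathrm{Ext}^1(Q,E)=H^1(Q^\vee\otimes E)$ by a degree count. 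Your version is somewhat more self-contained---the determinant argument showing that the block $E^-\to E^-$ is an isomorphism, and the explicit check that $\phi$ is fibrewise injective so that $Q$ is locally free, are steps the paper leaves implicit---while the paper's version avoids having to speak of $Q$ at all. Either way the content is the same block-triangular reduction followed by linear algebra over $\mathbb{C}$ on the constant top block.
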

\begin{proof}
	Let $k$ be the largest index such that $d_k< d_r$. Note that for $i >k$ all the integers $d_i'$ are greater than $d_{k+1}$ or equal to $d_{k+1}$ and $d_{k+1}=d_r>d_k$. Hence all integers $d_i'$ are greater than $d_k$ for $i>k$. Hence the induced morphism $E_1:=\mathcal{O}_{\mathbb{P}^1}(-d_1) \oplus  \dots \oplus \mathcal{O}_{\mathbb{P}^1}(-d_k) \rightarrow  F$ factors through a closed immersion $\mathcal{O}_{\mathbb{P}^1}(-d_1) \oplus  \dots \oplus \mathcal{O}_{\mathbb{P}^1}(-d_k) \hookrightarrow F$. Denote by $r'$ the largest index such that $d_{r'}'=d_r$. Note that $r' \geq r$. Let us now consider the induced exact sequence  \begin{align*}0 \rightarrow &E/E_1 \simeq \mathcal{O}_{\mathbb{P}^1}(-d_{k+1}) \oplus  \dots \oplus \mathcal{O}_{\mathbb{P}^1}(-d_r) \simeq \O_{\P^1}(-d_k)^{\oplus r-k} \\ & \:\:\:\: \rightarrow F/E_1 \simeq \mathcal{O}_{\mathbb{P}^1}(-d_{k+1}') \oplus \dots \oplus  \mathcal{O}_{\mathbb{P}^1}(-d_{r+p}') \simeq  \O_{\P^1}(-d_r)^{\oplus (r'-k)}  \oplus \O_{\P^1}(-d_{r'+1}) \oplus \dots \oplus \O_{\P^1}(-d_{r+p}').\end{align*}
	Since for any $i>r'$ the integer $d_i'$ is greater than $d_k$, the injective morphism $E/E_1 \rightarrow F/E_1$ factors through \[E/E_1 \simeq \O_{\P^1}(-d_k)^{\oplus r-k} \hookrightarrow  \O_{\P^1}(-d_r)^{\oplus (r'-k)} \hookrightarrow F/E_1 \]
	Finally observe that the morphism $E/E_1 \simeq \O_{\P^1}(-d_k)^{\oplus r-k} \hookrightarrow  \O_{\P^1}(-d_r)^{\oplus (r'-k)}$ factors through $E/E_1 \simeq \O_{\P^1}(-d_k)^{\oplus r-k} \hookrightarrow  \O_{\P^1}(-d_k)^{\oplus r-k} \oplus \O_{\P^1}(-d_k)^{\oplus r'-r} \hookrightarrow F/E_1$. 
\end{proof}
\section{Forgetting one vector space} \label{sec : geometry general fiber}

We denote by $\pi_{\widehat{k}}: X \rightarrow X_{\widehat{k}}$ the morphism forgetting the $k$-th vector space. Let $\d=(d_1, \dots, d_m)$ be a degree in $E(X)$. In this section, our goal is to prove the following results.
\begin{proposition}\label{prop : geometry general fiber pr1W}
	Suppose $\forall \: p<k, \: d_k \geq n_k \ceil*{ \frac{d_p-d_{p-1}}{n_p-n_{p-1}}}$.
	Then the general fiber of $$\Pi_{\widehat{k}}\times ev: \overline{{\mathcal{M}}_{0,r}}(X, \d) \rightarrow \left( \Pi_{\widehat{k}}\times ev \right)(\overline{{\mathcal{M}}_{0,r}}(X, \d)) \subset  \overline{{\mathcal{M}}_{0,r}}(X_{\widehat{k}}, (\pi_{\widehat{k}})_* \d) \times_{(X_{\widehat{k}})^r} X^r$$ is a unirational variety.
\end{proposition}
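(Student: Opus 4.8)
The plan is to make the general fiber completely explicit as a family of subbundles of a fixed vector bundle on $\P^1$ and then to dominate it by an affine space, the essential input being the splitting of balanced flag bundles proved in Proposition \ref{lem : description balanced bundles}. First I would describe the fiber. A general point of the image of $\Pi_{\widehat{k}}\times ev$ is $(g,x):=(\Pi_{\widehat{k}}\times ev)(\p)$ for $\p$ general in $\overline{\mathcal{M}_{0,r}}(X,\d)$; since $\mathcal{M}_{0,r}(X,\d)$ is dense, $g$ is an honest map $\P^1\to X_{\widehat{k}}$, that is, a flag of vector bundles $E_1\subset\dots\subset E_{k-1}\subset E_{k+1}\subset\dots\subset E_m\subset\mathcal{O}_{\P^1}^{\oplus n}$ on $\P^1$, and $x=(x_1,\dots,x_r)\in X^r$ with each $x_i$ lying over $g(p_i)$. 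A point of $\overline{\mathcal{M}_{0,r}}(X,\d)$ over $(g,x)$ with source $\P^1$ is exactly the datum of a rank-$n_k$ subbundle $E_k'$ with $E_{k-1}\subset E_k'\subset E_{k+1}$, of degree $-d_k$, satisfying $E_k'|_{p_i}=(x_i)_k$ for all $i$; after quotienting by $E_{k-1}$ and setting $Q:=E_{k+1}/E_{k-1}$, $r':=n_k-n_{k-1}$, $W_i:=(x_i)_k/E_{k-1}|_{p_i}\subset Q|_{p_i}$, this is the same as a rank-$r'$ subbundle $\bar E\subset Q$ of degree $-(d_k-d_{k-1})$ with $\bar E|_{p_i}=W_i$ for each $i$. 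I will not need to analyse the (possibly nodal) boundary points of the fiber, since the subset just described will be shown to be dense in it.

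Next I would reduce to the balanced locus. By Proposition \ref{lem : description balanced bundles}(ii), whose hypothesis is precisely $d_k\ge n_k\lceil(d_p-d_{p-1})/(n_p-n_{p-1})\rceil$ for all $p<k$, the flag bundle of a general $\p$ has $E_k\cong E_{k-1}\oplus\bar E^0$, where $\bar E^0:=\bigoplus_{j=n_{k-1}+1}^{n_k}\mathcal{O}_{\P^1}(-a_{k,j})$ is the balanced bundle attached to $\d$; in particular $E_k/E_{k-1}\cong\bar E^0$. Conversely, given a subbundle $E_k'$ as above with $E_k'/E_{k-1}\cong\bar E^0$, the flag obtained by inserting $E_k'$ between $E_{k-1}$ and $E_{k+1}$ is again balanced: all its rows other than the $(k-1)$st and the $k$th agree with those of the (balanced) flag of $\p$, while Proposition \ref{prop: description balanced sets} identifies the $(k-1)$st and $k$th rows of the balanced sequence of $X$ at degree $\d$. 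Hence the locus $F^{\mathrm{bal}}$ of such $E_k'$ equals the intersection of the fiber $F$ over $(g,x)$ with the dense open locus $\mathcal{M}^{\circ}\subset\overline{\mathcal{M}_{0,r}}(X,\d)$ whose associated flag bundle is balanced (Proposition \ref{lem : description balanced bundles}(i)); by Lemma \ref{lem : Intersection dense open is dense}, applied to the morphism $\Pi_{\widehat{k}}\times ev$ (dominant onto its image, of finite type, with irreducible source), $F^{\mathrm{bal}}$ is a dense open subset of $F$. It therefore suffices to produce a dominant map onto $F^{\mathrm{bal}}$ from a rational variety.

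Then I would parametrize $F^{\mathrm{bal}}$. Let $H:=\mathrm{Hom}_{\P^1}(\bar E^0,Q)$, a finite-dimensional vector space, and let $U\subset H$ be the locus of homomorphisms that are injective with locally free cokernel; it is open, and nonempty because the inclusion $\iota_0\colon\bar E^0=E_k/E_{k-1}\hookrightarrow Q$ coming from $\p$ lies in it. The conditions $\psi|_{p_i}(\bar E^0|_{p_i})\subset W_i$ cut out a linear subspace $L\subset H$ containing $\iota_0$, and on $U$ they force equality $\psi|_{p_i}(\bar E^0|_{p_i})=W_i$ since there $\psi$ is injective on fibres. Over $U\cap L$ the universal homomorphism gives a family of rank-$r'$ subbundles of $Q$; taking preimages in $E_{k+1}$ and adjoining the fixed rows coming from $g$ yields a family over $U\cap L$ of flags of vector bundles on $\P^1$ of degree $\d$ with $r$ marked points, hence a morphism $U\cap L\to\overline{\mathcal{M}_{0,r}}(X,\d)$ whose image is exactly $F^{\mathrm{bal}}$. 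Since $U\cap L$ is a nonempty open subset of an affine space it is rational, so $F^{\mathrm{bal}}$ is unirational, and therefore so is $F$, which is birational to its dense open subset $F^{\mathrm{bal}}$ (and which is reduced, for $(g,x)$ general, by generic smoothness on the smooth locus of $\overline{\mathcal{M}_{0,r}}(X,\d)$); moreover $F$ is irreducible, being the closure of the irreducible set $F^{\mathrm{bal}}$.

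The main obstacle is the bookkeeping in the second step: identifying the apparently strong requirement "the whole flag of $E_k'$ is balanced" with the single condition "$E_k'/E_{k-1}\cong\bar E^0$", which is exactly where Propositions \ref{prop: description balanced sets} and \ref{lem : description balanced bundles}(ii), and hence the degree hypothesis on $d_k$, enter. It is worth emphasizing that this hypothesis bounds $d_k$ only from below in terms of the smaller indices and imposes nothing on $d_{k+1}$; correspondingly $Q=E_{k+1}/E_{k-1}$ is allowed to be an arbitrary bundle on $\P^1$, since it enters the argument only as a fixed target for the homomorphisms $\psi$ and never through a positivity hypothesis. The remaining points — openness of $U$, linearity of $L$, and that the universal family genuinely induces a morphism to the coarse moduli space — are routine.
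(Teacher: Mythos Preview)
Your proof is correct and follows essentially the same strategy as the paper's: construct a linear parameter space of bundle maps whose injective locus dominates the balanced part of the fiber, the latter being dense by Lemma~\ref{lem : Intersection dense open is dense} (this is the paper's Lemma~\ref{lem: decomposition into a_i}). The only cosmetic difference is that you map $\bar E^0$ into $Q=E_{k+1}/E_{k-1}$, whereas the paper maps $\oplus_i\mathcal O_{\P^1}(-a_i)$ into $\mathcal O_{\P^1}^{\oplus n}$ and then imposes the additional linear condition $(F_u)_\p\subset (E_{k+1})_\p$ pointwise; both produce a linear subspace of a $\mathrm{Hom}$ space with an open injective locus dominating the fiber, so the arguments are identical in substance.
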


\begin{corollaire}\label{cor : geometry general fiber pr1 ev-1(x)}
	Suppose $\forall \: p<k, \: d_k \geq n_k \ceil*{ \frac{d_p-d_{p-1}}{n_p-n_{p-1}}}$ and the map $\Pi_{\widehat{k}}\times ev$ is surjective. Then for $x$ general in $X^r$, the general fiber of the map $\overline{{\mathcal{M}}_{0,r}}(X, \d) \ni ev^{-1}(x) \to ev^{-1}(\pi_{\widehat{k}}(x)) \in \overline{{\mathcal{M}}_{0,r}}(X_{\widehat{k}}, (\pi_{\widehat{k}})_* \d)$ is a unirational variety. 
\end{corollaire}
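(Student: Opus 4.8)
The plan is to derive the Corollary from Proposition~\ref{prop : geometry general fiber pr1W} by a fibration (``Fubini'') argument. Write $M := \overline{{\mathcal{M}}_{0,r}}(X, \d)$, $M_{\widehat{k}} := \overline{{\mathcal{M}}_{0,r}}(X_{\widehat{k}}, (\pi_{\widehat{k}})_* \d)$ and $Y := M_{\widehat{k}} \times_{(X_{\widehat{k}})^r} X^r$, let $\pi_{\widehat{k}}^r \colon X^r \to (X_{\widehat{k}})^r$ be the $r$-fold product of $\pi_{\widehat{k}}$, and denote by $q \colon Y \to M_{\widehat{k}}$ and $\rho \colon Y \to X^r$ the two projections. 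Since $\Pi_{\widehat{k}}\times ev$ is assumed surjective, it is a surjective morphism $M \to Y$, and $ev \colon M \to X^r$ equals $\rho \circ (\Pi_{\widehat{k}}\times ev)$. The starting point is an elementary fibre identification: for $x \in X^r$, writing $x' := \pi_{\widehat{k}}^r(x)$, one has $ev^{-1}(x') = M_{\widehat{k}}\times_{(X_{\widehat{k}})^r}\{x'\} \subset M_{\widehat{k}}$ and $\rho^{-1}(x) = ev^{-1}(x')\times\{x\} \subset Y$, so for any $m' \in ev^{-1}(x')$ the fibre over $m'$ of the map $ev^{-1}(x) \to ev^{-1}(x')$ induced by $\Pi_{\widehat{k}}$ is, as a subscheme of $M$, exactly the fibre $(\Pi_{\widehat{k}}\times ev)^{-1}(m', x)$. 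Hence it is enough to show: for $x$ general in $X^r$ and $m'$ general in $ev^{-1}(\pi_{\widehat{k}}(x))$, the fibre $(\Pi_{\widehat{k}}\times ev)^{-1}(m', x)$ is unirational.

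By Proposition~\ref{prop : geometry general fiber pr1W} there is a dense open subset $U \subseteq Y$ (we use the surjectivity of $\Pi_{\widehat{k}}\times ev$ to identify its image with $Y$) over which the fibres of $\Pi_{\widehat{k}}\times ev$ are unirational. Next I record that $Y$ is irreducible: the projection $q$ is the base change along $ev \colon M_{\widehat{k}}\to(X_{\widehat{k}})^r$ of the morphism $\pi_{\widehat{k}}^r$, which is a Zariski-locally trivial fibration whose fibre is an $r$-fold product of Grassmannians, in particular irreducible; thus $q$ is a fibration with irreducible fibre over the irreducible variety $M_{\widehat{k}}$, so $Y$ is irreducible, hence equidimensional. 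We may assume $ev \colon M \to X^r$ is dominant, since otherwise $ev^{-1}(x)$ is empty for $x$ general and there is nothing to prove; then, as $\Pi_{\widehat{k}}\times ev$ is surjective, $\rho$ is dominant as well.

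Now apply Lemma~\ref{lem : Intersection dense open is dense} to the projection $\rho \colon Y \to X^r$ and the dense open $V = U$: there is a dense open $W \subseteq X^r$ such that for every $x \in W$ the intersection $U \cap \rho^{-1}(x)$ is dense in $\rho^{-1}(x)$. Fix such an $x$. Under the identification $\rho^{-1}(x) \cong ev^{-1}(\pi_{\widehat{k}}(x))$, the set of $m' \in ev^{-1}(\pi_{\widehat{k}}(x))$ with $(m', x) \in U$ contains a dense open subset; for $m'$ in that dense open, $(\Pi_{\widehat{k}}\times ev)^{-1}(m', x)$ is unirational by the choice of $U$, and by the first paragraph this coincides with the fibre over $m'$ of the map $ev^{-1}(x) \to ev^{-1}(\pi_{\widehat{k}}(x))$. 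Therefore, for $x$ general in $X^r$, the general fibre of $ev^{-1}(x) \to ev^{-1}(\pi_{\widehat{k}}(x))$ is a unirational variety, as claimed.

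All the geometric content is contained in Proposition~\ref{prop : geometry general fiber pr1W}; the rest is the bookkeeping above. The step deserving the most attention is the irreducibility (hence equidimensionality) of the fibre product $Y$, which is what makes Lemma~\ref{lem : Intersection dense open is dense} applicable, together with the routine verification that $\rho$ is dominant so that the Lemma can be invoked at a general $x \in X^r$.
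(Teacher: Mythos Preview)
Your proof is correct and follows essentially the same route as the paper's own argument: identify the fibre of $ev^{-1}(x)\to ev^{-1}(\pi_{\widehat{k}}(x))$ over $m'$ with $(\Pi_{\widehat{k}}\times ev)^{-1}(m',x)$, take the dense open $U\subset Y$ supplied by Proposition~\ref{prop : geometry general fiber pr1W}, and use that $U\cap\rho^{-1}(x)$ is dense in $\rho^{-1}(x)$ for general $x$ to conclude. The only cosmetic difference is that you justify the equidimensionality of $Y$ explicitly (via irreducibility of the Grassmannian-bundle base change), whereas the paper simply asserts the density step and separately records, via Proposition~\ref{prop : W is irreducible}, that $\Pi_x$ surjects onto $ev^{-1}(\pi_{\widehat{k}}(x))$.
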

We will assume in this section $\forall \: p<k, \: d_k \geq n_k \ceil*{ \frac{d_p-d_{p-1}}{n_p-n_{p-1}}}$. Let $(p,x)$ be a point in $(\Pi_{\widehat{k}}\times \ev)({{\mathcal{M}}_{0,r}}(X, \mathbf{d}))$, where $p$ is in ${{\mathcal{M}}_{0,r}}(X_{\widehat{k}}, (\pi_{\widehat{k}})* \mathbf{d})$, and $x=(x_1, \dots, x_r)$ is in $X^r$. Denote by $\Pi_{p, x}$ the fiber of $\Pi_{\widehat{k}} \times \ev$ at $(p,x)$, i.e. : \begin{equation*}
\Pi_{p, x} := \Pi_{\widehat{k}}^{-1}(p) \times_{X^r} x.
\end{equation*}
In order to prove the irreducibility and unirationality of $\Pi_{p, x}$, we define an auxilliary variety $\mathcal{Z}_{p,x}$, which dominates $\Pi_{p,x}$ when $(p,x)$ is general in the image $(\Pi_{\widehat{k}}\times ev)({{\mathcal{M}}_{0,r}}(X, \mathbf{d}))$. 
\subsection{Notations.}  
The class $\mathbf{d}=(d_1, \dots, d_m)$ of a curve $C$ in $X$ is determined by non-negative integers $d_1$, $d_2$,$\dots$, $d_m$, where $d_i$ is the Plücker degree of the projection of $C$ to the Grassmannian $\Gr(n_i,n)$. The morphism $\pi_{\widehat{k}} : X \rightarrow X_{\widehat{k}}$ induces a morphism $\Pi_{\widehat{k}} :  \overline{{\mathcal{M}}_{0,r}}(X,\mathbf{d}) \rightarrow \overline{{\mathcal{M}}_{0,r}}(X_{\widehat{k}}, (\pi_{\widehat{k}})_* \mathbf{d})$.\\
Denote by $\pi_{k} : X \rightarrow \Gr(n_k,n)$ the morphism sending a $m$-step flag $\{0\} \subset V_1 \subset \dots \subset V_k \subset \dots \subset V_m$ to the $k$-th linear subspace $V_k$. This morphism induces a morphism $\overline{{\mathcal{M}}_{0,r}}(X, \mathbf{d}) \rightarrow \overline{{\mathcal{M}}_{0,r}} (\Gr(n_k,n), d_k) \rightarrow \overline{{\mathcal{M}}_{0,3}}(\Gr(n_k,n),d_k)$ into the space of genus zero stable maps with three marked points to the Grassmannian $\Gr(n_k,n)$. 
We obtain the following commutative diagram:  \[ \begin{tikzcd}
\overline{{\mathcal{M}}_{0,r}}(X,\mathbf{d}) \arrow[swap]{dr}{\ev^r} \arrow{dd}{\Pi_{\widehat{k}}} \arrow{r}   & \overline{{\mathcal{M}}_{0,3}}(X,\mathbf{d}) \arrow{r}  & \overline{{\mathcal{M}}_{0,3}}(\Gr(n_k,n),d_k)  \\
& X^r \arrow{d}{\pi_{\widehat{k}} \times \dots \times \pi_{\widehat{k}}} \arrow{r}{\pi_k \times \dots \times \pi_{k}} & \Gr(n_k,n)^r \\
\overline{{\mathcal{M}}_{0,r}}(X_{\widehat{k}}, (\pi_{\widehat{k}})_* \mathbf{d}) \arrow{r}  & {X_{\widehat{k}}}^r & 
\end{tikzcd} \]

Furthermore, for $p$ in $\mathcal{M}_{0,r}(X, \mathbf{d})$, we will denote by: $$f_p: \mathbb{P}^1 \rightarrow X$$ the associated morphism. By the functorial definition of flag varieties, $f_p$ is uniquely determined by the data of a flag of vector bundles over $\mathbb{P}^1$: $$\mathcal{F}_p = E_{1} \subset \dots \subset E_m \subset \O_{\P^1}^{\oplus n},$$ where $\mathrm{rank}E_i=n_i$ and $\mathrm{deg} E_i = -d_i$. Note that we have $f_p^* S_i \simeq E_i$, where $S_i$ is the $i$-th tautological vector bundle over $X$. Finally, for a point $\p$ in $\P^1$ we will denote by $(E_i)_\p$ the fiber at $\p$ of the vector bundle $E_i$, which has a natural inclusion $(E_i)_\p \subset \C^n$ induced by the morphism $E_i \hookrightarrow \O_{\P^1}^{\oplus n}$.
\subsection{A preliminary result.}
Denote by $\mathcal{F}_p = E_1 \subset \dots \subset E_{k-1} \subset E_{k+1} \subset \dots \subset E_m \subset \mathcal{O}_{\mathbb{P}^1}^{\oplus n}$ the flag of vector bundles over $\mathbb{P}^1$ associated with the point $p$ in $\mathcal{M}_{0,r}(X_{\widehat{k}}, (\pi_{\widehat{k}})_* \mathbf{d})$.
\begin{lemme}\label{lem: decomposition into a_i}
	\begin{enumerate}[label= \roman*)]
		\item Consider a dense open subset $U$ of $\overline{{\mathcal{M}}_{0,r}}(X, \mathbf{d})$. Then, for $(p,x)$ general in $(\Pi_{\widehat{k}} \times ev)(\overline{{\mathcal{M}}_{0,r}}(X, \mathbf{d}))$, $\Pi_{p, x} \cap U$ is a dense open subset of $\Pi_{p, x}$;
		\item For $(p,x)$ general in $(\Pi_{\widehat{k}} \times ev)(\overline{{\mathcal{M}}_{0,r}}(X, \mathbf{d}))$, there exists a dense open subset $U$ of $\Pi_{p,x}$ such that, for all $q$ in $U$, the flag of vector bundles $\mathcal{F}_q$ associated with $q$ is balanced. 
		Hence there exists an increasing set of integers $a_{n_{k-1}+1} \leq \dots \leq a_{n_k}$ such that for all $q$ in $U$, the flag $\mathcal{F}_q$ is of the form: \begin{equation*}
		\mathcal{F}_q = E_1 \subset \dots \subset E_{k-1} \subset E_k \simeq E_{k-1} \oplus_{i=n_{k-1}+1}^{n_k} \mathcal{O}_{\mathbb{P}^1}(-a_i) \subset E_{k+1} \subset \dots \subset E_m \subset \mathcal{O}_{\mathbb{P}^1}^{\oplus n}.
		\end{equation*}
	\end{enumerate}
\end{lemme}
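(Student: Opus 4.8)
The plan is to read part i) off Lemma \ref{lem : Intersection dense open is dense} and then to bootstrap part ii) from part i) together with the description of balanced flag bundles of Section \ref{sec: balanced flags over P^1}.

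For part i), the point to notice is that $\Pi_{p,x} = \Pi_{\widehat{k}}^{-1}(p) \times_{X^r} x$ is by construction exactly the scheme-theoretic fiber of the morphism $\Pi_{\widehat{k}} \times ev$ over $(p,x)$. Now $\Pi_{\widehat{k}} \times ev$ is a morphism of projective $\mathbb{C}$-schemes, hence of finite type between noetherian schemes, hence of finite presentation, and it is dominant onto its image, which is a closed irreducible subvariety of $\overline{{\mathcal{M}}_{0,r}}(X_{\widehat{k}}, (\pi_{\widehat{k}})_* \mathbf{d}) \times_{(X_{\widehat{k}})^r} X^r$ because $\overline{{\mathcal{M}}_{0,r}}(X, \mathbf{d})$ is irreducible. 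As $\overline{{\mathcal{M}}_{0,r}}(X, \mathbf{d})$ is moreover irreducible, hence equidimensional, and noetherian, Lemma \ref{lem : Intersection dense open is dense} applied to $f = \Pi_{\widehat{k}} \times ev$ and $V = U$ shows that for $(p,x)$ general in the image, $\Pi_{p,x} \cap U$ is dense in $\Pi_{p,x}$; it is open in $\Pi_{p,x}$ since $U$ is open, which is the assertion of i).

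For part ii), I would first use Proposition \ref{lem : description balanced bundles} i) to produce a dense open subset $U_{\mathrm{bal}}$ of ${\mathcal{M}}_{0,r}(X, \mathbf{d})$ --- hence, since ${\mathcal{M}}_{0,r}(X, \mathbf{d})$ is dense open in $\overline{{\mathcal{M}}_{0,r}}(X, \mathbf{d})$, a dense open subset of $\overline{{\mathcal{M}}_{0,r}}(X, \mathbf{d})$ --- on which the $(I, \mathbf{d})$-admissible set of sequences attached to $\mathcal{F}_q$ is balanced in the sense of Definition \ref{def : minimal admissible set}. Applying part i) with $U = U_{\mathrm{bal}}$, for $(p,x)$ general in the image the set $U := \Pi_{p,x} \cap U_{\mathrm{bal}}$ is a dense open subset of $\Pi_{p,x}$ over which every $\mathcal{F}_q$ is balanced. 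Then I would invoke the uniqueness of the balanced $(I, \mathbf{d})$-admissible set of sequences (recalled right after Definition \ref{def : minimal admissible set}) to conclude that the integers $a_{k,j}$ are the same for every $q \in U$, depending only on $\mathbf{d}$; setting $a_i := a_{k,i}$ for $n_{k-1} < i \leq n_k$ gives the required increasing sequence $a_{n_{k-1}+1} \leq \dots \leq a_{n_k}$ by condition (1) of Definition \ref{def : admissible set of sequence}. Finally, since every $q \in U$ maps to $p$ under $\Pi_{\widehat{k}}$, the subbundles $E_j$ with $j \neq k$ of $\mathcal{F}_q$ coincide with those of $\mathcal{F}_p$; and since the standing hypothesis $d_k \geq n_k \ceil*{(d_p - d_{p-1})/(n_p - n_{p-1})}$ for all $p < k$ is in force, Proposition \ref{lem : description balanced bundles} ii) yields the splitting $E_k \simeq E_{k-1} \oplus \bigoplus_{i = n_{k-1}+1}^{n_k} \mathcal{O}_{\mathbb{P}^1}(-a_i)$, which is exactly the claimed form of $\mathcal{F}_q$.

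I do not expect a genuine obstacle here: granting Lemma \ref{lem : Intersection dense open is dense} and the results of Section \ref{sec: balanced flags over P^1}, the argument is essentially bookkeeping. The two points that require care are that $\Pi_{p,x}$ is literally the fiber of $\Pi_{\widehat{k}} \times ev$, so that Lemma \ref{lem : Intersection dense open is dense} applies verbatim, and that the integers $a_i$ are intrinsic to $\mathbf{d}$ rather than to the individual stable map $q$, so that a single increasing sequence works uniformly over the whole dense open subset $U$.
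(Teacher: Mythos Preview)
Your proof is correct and follows essentially the same route as the paper's own proof: part i) is an application of Lemma \ref{lem : Intersection dense open is dense} to the fiber of $\Pi_{\widehat{k}} \times ev$, and part ii) combines part i) with Proposition \ref{lem : description balanced bundles}. Your additional remarks on the uniqueness of the balanced admissible set and on the fact that the $E_j$ with $j \neq k$ are fixed by $p$ make explicit what the paper leaves implicit, but the argument is the same.
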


\begin{proof}
	\begin{enumerate}[label= \roman*)]
		\item 	Since $(p,x)$ is general in $(\Pi_{\widehat{k}} \times ev)(\overline{{\mathcal{M}}_{0,r}}(X,\mathbf{d}))$, according to Lemma \ref{lem : Intersection dense open is dense} the intersection of $\Pi_{p,x}$ with the dense open subset $U$ is dense in $\Pi_{p,x}$.
		\item According to Proposition \ref{lem : description balanced bundles} $i)$, there exists a dense open subset $U$ of $\overline{{\mathcal{M}}_{0,r}}(X, \mathbf{d})$ such that a point in $U$ corresponds to a balanced flag of vector bundles over $\mathbb{P}^1$. According to $i)$, $U \cap \Pi_{p,x}$ is dense open subset of $\Pi_{p,x}$. 
		Finally, notice that according to Proposition \ref{lem : description balanced bundles} $ii)$, since $\forall \: p<k, \: d_k \geq n_k \ceil*{ \frac{d_p-d_{p-1}}{n_p-n_{p-1}}}$, the flag of vector bundles associated with a point in $\Pi_{p,x} \cap U$ splits at $E_{k-1}$.
	\end{enumerate}
\end{proof}

\subsection{Definition of $\mathcal{Z}_{p,x}$.} Our goal here is to construct a rational variety $\mathcal{Z}_{p,x}$ dominating $\Pi_{p,x}$. First, consider the integers $a_{n_{k-1}+1}$, $\dots$, $a_{n_k}$ defined in Lemma \ref{lem: decomposition into a_i} $ii)$ and the following vector space  $$V:=Hom(\oplus_{i=n_{k-1}+1}^{n_k} \mathcal{O}_{\mathbb{P}^1}(-a_i), \mathcal{O}_{\mathbb{P}^1}^{\oplus n}).$$ 
The flag variety $X$ is naturally embedded in $X_{\widehat{k}} \times \Gr(n_k,n)$ by Plücker-embedding $X_{\widehat{k}} \times \Gr(n_k,n)$ in a product of projective spaces, and cutting out $X$ from this product by considering the relations given by the inclusions of flags. Note that, if we denote by $[f_p : \mathbb{P}^1 \rightarrow X_{\widehat{k}}, \{p_1, \dots, p_r\}]$ the stable map associated with $p$, points $q$ in $\Pi_{p, x} \cap \mathcal{M}_{0,r}(X, \mathbf{d})$ correspond to stable maps $[f_q: \mathbb{P}^1 \rightarrow X, \{p_1, \dots, p_r\}]$ satisfying $f_q(p_i)=x_i$ and such that the following diagram is commutative: \[ \begin{tikzcd}
\mathbb{P}^1 \arrow{r}{f_q} \arrow{rrd}{f_p} & X \arrow{r} & X_{\widehat{k}} \times \Gr(n_k,n) \arrow{d} \\
& & X_{\widehat{k}}
\end{tikzcd} \] Furthermore, by the functorial definition of the Grassmannian, elements $u$ in $V$ corresponding to an injective morphism $$\Phi_u: E_{k-1} \oplus_i \mathcal{O}_{\mathbb{P}^1}(-a_i) \rightarrow \mathcal{O}_{\mathbb{P}^1}^{\oplus n}$$
define a morphism $f_u: \mathbb{P}^1 \rightarrow \Gr(n_k,n)$, where we denote by $\Phi_u$ the morphism  induced by the inclusion $E_{k-1} \subset \O_{\P^1}^{\oplus n}$ and the morphism $\mathcal{O}_{\mathbb{P}^1}(-a_i) \rightarrow \mathcal{O}_{\mathbb{P}^1}^{\oplus n}$ associated with a point $u$ in $V$. Conversally, any injective morphism $ E_{k-1} \oplus_i \mathcal{O}_{\mathbb{P}^1}(-a_i) \rightarrow \mathcal{O}_{\mathbb{P}^1}^{\oplus n}$ can, up to automorphisms of $E_{k-1}$, be obtained this way.  Points in $\Pi_{p,x}$ are thus associated with elements $u$ in $V$ such that: \begin{enumerate} 
	\item $\Phi_u: E_{k-1} \oplus_{i=n_{k-1}+1}^{n_k} \mathcal{O}_{\mathbb{P}^1}(-a_i) \rightarrow \mathcal{O}_{\mathbb{P}^1}^{\oplus n}$ is an injective morphism of vector bundles;
	\item For all $1 \leq i \leq r$, $({F_u})_{p_i} = \pi_k(x_i)$;
	\item $f_p \times f_u: \mathbb{P}^1 \rightarrow X_{\widehat{k}} \times \Gr(n_k,n)$ factors through $X \rightarrow  X_{\widehat{k}} \times \Gr(n_k,n)$,
\end{enumerate}
where we denote by $({F_u})_{p_i}$ the fiber of the vector bundle $F_u := \Phi_u(E_{k-1} \oplus_i \mathcal{O}_{\mathbb{P}^1}(-a_i)) \subset \mathcal{O}_{\mathbb{P}^1}^{\oplus n}$ at the point $p_i \in \P^1$, and by $f_u$ the morphism associated with the flag of vector bundles $F_u \subset \mathcal{O}_{\mathbb{P}^1}^{\oplus n}$ by functorial definition of the Grassmannian.
Note that for $u$ in $V$ the morphism $\Phi_u: E_{k-1} \oplus_i \mathcal{O}_{\mathbb{P}^1}(-a_i) \rightarrow \mathcal{O}_{\mathbb{P}^1}^{\oplus n}$ might not be an injective morphism, and the fiber $(F_u)_{p_i}$ might be of dimension less than $n_k$. In order to define a closed subset of $V$, we define $\mathcal{Z}_{p,x}$ as the subset of elements $u$ in $V$ such that: \begin{align} &\forall \: i \in \: \{1, \dots, r\}, \: (F_u)_{p_i} \subset \pi_{{k}}(x_i), \: \mathrm{where} \: \pi_{{k}}(x_i) \in \Gr(n_k,n); \label{eq: mu(x)=xi}\\
&\forall \mathfrak{p} \in \mathbb{P}^1, \: (F_u)_\mathfrak{p} \subset ({E_{k+1}})_{\mathfrak{p}}, \label{eq: muin X}
\end{align}
where we denote by $\pi_k(x_i)$ the vector subspace of $\mathbb{C}^n$ of dimension $n_k$ associated with the point $\pi_k(x_i)$ in $\Gr(n_k,n)$, and by $({E_{k+1}})_{\mathfrak{p}}$ the fiber of the vector bundle $E_{k+1}$ at $\mathfrak{p}$. Note that, when $\Phi_u$ defines a morphism $\mathbb{P}^1 \rightarrow \Gr(n_k,n)$, Equations (\ref{eq: mu(x)=xi}) and (\ref{eq: muin X}) are equivalent to Equations $2.$ and $3.$ here above. Indeed, by construction of $\Phi_u$, $(F_u)_\mathfrak{p}$ satisfies $(E_{k-1})_{\mathfrak{p}} \subset (F_u)_\p$ for all $\p$ in $\mathbb{P}^1$, hence if $u$ satisfies Equation (\ref{eq: muin X}), $u$ also verifies: \begin{equation*}
\forall \p \in \mathbb{P}^1, \: (E_1)_\p \subset \dots \subset (E_{k-1})_\p \subset (F_u)_\p \subset (E_{k+1})_\p \subset \dots \subset (E_m)_\p \subset \mathbb{C}^n,
\end{equation*}
hence $\Phi_u$ defines a morphism to the flag variety $X$.\\

\subsection{Irreducibility and rationality of $\mathcal{Z}_{p,x}$.}
We denote by $F_\mu$ the subsheaf of $\O_{\P^1}^{\oplus r}$ associated with an element $\mu$ in $V \simeq Hom(E_{k-1} \oplus_i \O_{\P^1}(-a_i), \O_{\P^1}^{\oplus r})$, and by $(F_\mu)_\p$ the fiber over a point $\p \in \P^1$ of $F_\mu$. We consider the usual construction of $\mathbb{P}^1$ by gluing two affine schemes together. Let $U_1 := \Spec (\mathbb{C}[s])$, $U_2 := \Spec (\mathbb{C}[t])$, $U_{12} := \Spec (\C[s,s^{-1}])=U_1 \setminus \{0\}$, and $U_{21} := \Spec (\C[t,t^{-1}])=U_2 \setminus \{0\}$. Then $\mathbb{P}^1$ can be obtained by gluing $U_1$ and $U_2$ together along $U_{12} \simeq U_{21}$, where we consider the following isomorphism:\begin{align*}
\C[s, s^{-1}] \simeq \C[t,t^{-1}], \:\:\: s \rightarrow t^{-1}.
\end{align*}
For $d \in \mathbb{N}$, we view $\O_{\P^1}(-d)$ as the line bundle satisfying $\O_{\P^1}(-d)_{| U_i} \simeq U_i \times \mathbb{A}^1$ for $i=1,2$, obtained by identifying $U_{12} \times \A^1 = \Spec[s,s^{-1}] \times \A^1$ and $U_{21}= \Spec[t,t^{-1}] \times \A^1$ via the following isomorphism: \begin{equation*}
(s,v) \rightarrow (s^{-1}, s^d v).
\end{equation*}
Equivalently we endow $\O_{\P^1}(-d)$ with two trivializations on $U_1$ and $U_2$ respectively. We consider here the natural bijection $V \simeq \oplus_j (\C_{a_j}[x])^n$, associating to a section in $(\C_{a_j}[x])^n$ the morphism $\O_{\P^1}(-a_j) \rightarrow \O_{\P^1}^{\oplus n}$ described by: \begin{equation*} \begin{aligned} &\Spec \C[s] \times \A^1 \rightarrow \O_{\P^1}^{\oplus n} &&  (s,v) \rightarrow (s, P_{j}(s) v)  \\ & \Spec \C[s^{-1}] \times \A^1 \rightarrow \O_{\P^1}^{\oplus n} &&  (s^{-1},v) \rightarrow (s^{-1}, s^{-a_j} P_{j}(s) v) \end{aligned} \end{equation*}
We may hence view a point $\mu$ in $\mathcal{Z}_{p,x}$ as a set of $n_k - n_{k-1}$ vectors $P_j$ with coefficients in $\C_{a_j}[x]$. In order to simplify notations, we keep the dependence of the polynomials $P_j$ on the point $\mu$ in $V$ associated with them implicit.
\begin{lemme}\label{lem: Z is unirational}
	\begin{enumerate}
		\item There exist $N \in \mathbb{N}$, $p_1$, $\dots$, $p_N \in \mathbb{P}^1$, such that $\mathcal{Z}_{p,x}$ is the subspace of elements $\mu$ in $V$ satisfying the following equations: \begin{align*} &\forall i \in \{1, \dots, r\}, \: (F_\mu)_{p_i} \subset \pi_{{k}}(x_i), \: \mathrm{where} \: \pi_{{k}}(x_i) \in \Gr(n_k,n); \\
		& \forall \iota \in \{1, \dots, N\}, \:  (F_\mu)_{p_\iota} \subset (E_{k+1})_{p_\iota};
		\end{align*}
		\item Consider a point $\mathfrak{p}$ in $\mathbb{P}^1$. We denote by $V((E_{k+1})_{\mathfrak{p}})$ the subset of elements $\mu$ in $V$ satisfying $(E_{k+1})_{\mathfrak{p}} \supset (F_\mu)_{\p}$. 
		Then $V((E_{k+1})_\p)$ is a linear subspace of $V$;
		\item $\mathcal{Z}_{p,x}$ is a linear subspace of $V$.	  
	\end{enumerate}
\end{lemme}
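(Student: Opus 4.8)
The plan is to funnel all three assertions through a single linear-algebra observation. Recall that, in the coordinates $V\simeq\bigoplus_{j=n_{k-1}+1}^{n_k}(\C_{a_j}[x])^n$ introduced above, an element $\mu\in V$ is a tuple of polynomial vectors $(P_{n_{k-1}+1},\dots,P_{n_k})$ with $\deg P_j\le a_j$, and that for every $\p\in\P^1$ the subspace $(F_\mu)_\p\subset\C^n$ — the subspace spanned by the values at $\p$ of the local sections of $F_\mu=\Phi_\mu\bigl(E_{k-1}\oplus_j\O_{\P^1}(-a_j)\bigr)$ — is the linear span of the fixed subspace $(E_{k-1})_\p$ together with the evaluations $P_{n_{k-1}+1}(\p),\dots,P_{n_k}(\p)$. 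Two features will be used throughout. First, each map $\mu\mapsto P_j(\p)$ is linear in $\mu$, so $\mu\mapsto\bigl(P_{n_{k-1}+1}(\p),\dots,P_{n_k}(\p)\bigr)$ is a linear map $V\to(\C^n)^{n_k-n_{k-1}}$. Second, since $E_{k-1}\subset E_{k+1}$ inside the fixed flag $\mathcal{F}_p$ one has $(E_{k-1})_\p\subset(E_{k+1})_\p$ for every $\p$, and $(E_{k-1})_{p_i}=\pi_{k-1}(x_i)\subset\pi_k(x_i)$ for every marked point $p_i$, because $(p,x)$ lies in the image of $\Pi_{\widehat k}\times ev$. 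Hence, for any fixed subspace $W\subset\C^n$ containing $(E_{k-1})_\p$, the condition $(F_\mu)_\p\subset W$ is equivalent to $P_j(\p)\in W$ for all $j$, i.e.\ to $\mu$ lying in the preimage of $W^{\,n_k-n_{k-1}}$ under that linear map; this is a linear subspace of $V$. Taking $W=(E_{k+1})_\p$ gives part 2 at once, and taking $W=\pi_k(x_i)$ at the marked points shows that each of the $r$ conditions entering the definition of $\mathcal{Z}_{p,x}$ carves out a linear subspace of $V$.

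For part 1, I would observe that the family of conditions $(F_\mu)_\p\subset(E_{k+1})_\p$, ranging over all $\p\in\P^1$, is equivalent to a single one: that the composite morphism of sheaves $\bigoplus_{j=n_{k-1}+1}^{n_k}\O_{\P^1}(-a_j)\xrightarrow{\ \mu\ }\O_{\P^1}^{\oplus n}\longrightarrow Q:=\O_{\P^1}^{\oplus n}/E_{k+1}$ vanish identically. Indeed, $E_{k+1}$ is a subbundle of $\O_{\P^1}^{\oplus n}$, so $Q$ is locally free and $Q_\p\otimes\kappa(\p)=\C^n/(E_{k+1})_\p$; by the previous paragraph the fibrewise condition at $\p$ is exactly that $P_j(\p)$ maps to $0$ in $Q_\p\otimes\kappa(\p)$ for every $j$, and a morphism from a vector bundle into the locally free sheaf $Q$ that vanishes fibrewise at every point is zero. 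Now $\mathcal{E}:=\mathcal{H}om\bigl(\bigoplus_j\O_{\P^1}(-a_j),Q\bigr)$ is a vector bundle on $\P^1$, so $H^0(\mathcal{E}(-N))=0$ once $N$ is large (decompose $\mathcal{E}$ into line bundles). Fixing such an $N$ together with any $N$ pairwise distinct points $p_1,\dots,p_N\in\P^1$, a global section of $\mathcal{E}$ vanishing at $p_1,\dots,p_N$ lies in $H^0(\mathcal{E}(-N))$, hence is $0$; therefore the composite above vanishes identically if and only if it vanishes at $p_1,\dots,p_N$, i.e.\ if and only if $(F_\mu)_{p_\iota}\subset(E_{k+1})_{p_\iota}$ for $\iota=1,\dots,N$. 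Intersecting with the (unchanged) conditions at the marked points yields the description of $\mathcal{Z}_{p,x}$ claimed in part 1.

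Part 3 is then immediate: by part 1, $\mathcal{Z}_{p,x}$ is the intersection inside $V$ of the $r$ subspaces $\{(F_\mu)_{p_i}\subset\pi_k(x_i)\}$ with the $N$ subspaces $V((E_{k+1})_{p_\iota})$, each of which is linear by the argument of part 2, and an intersection of linear subspaces is a linear subspace. The one step that needs genuine thought is part 1 — recognising the infinitely many fibrewise inclusions as the vanishing of a single morphism into the quotient bundle $Q$, and bounding the zero locus of that morphism's components — while parts 2 and 3 are formal consequences. A minor point to handle carefully is the interpretation of $(F_\mu)_\p$ when $F_\mu$ is not saturated: one must read it as the subspace of $\C^n$ spanned by the values of $F_\mu$ at $\p$, which is precisely what makes the containments $(F_\mu)_\p\subset(E_{k+1})_\p$ and $(F_\mu)_{p_i}\subset\pi_k(x_i)$ meaningful and reduces them to linear conditions on the $P_j(\p)$.
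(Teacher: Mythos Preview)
Your proof is correct and follows essentially the same route as the paper. For part~1 you recast the fibrewise inclusion as the vanishing of the composite $\bigoplus_j\O_{\P^1}(-a_j)\to\O_{\P^1}^{\oplus n}\to Q$ and bound the number of test points via $H^0(\mathcal{E}(-N))=0$, whereas the paper states the equivalent fact in more concrete terms---the conditions $P_j(\p)\in\langle Q_1(\p),\dots,Q_{n_{k+1}}(\p)\rangle$ are polynomial in $\p$ of degree controlled by $d_k$ and $d_{k+1}$, hence determined by their values at finitely many points; your cohomological phrasing is cleaner but the underlying idea is the same, and parts~2 and~3 are handled identically in both arguments.
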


\begin{proof} 
	\begin{enumerate}
		\item  By definition, the vectors $P_{j}$ with coefficients in $\C_{a_j}[x]$ associated with $\mu \in V$ satisfy:
		$$\forall \p \in \P^1, \: (F_\mu)_\p \simeq \langle (E_{k-1})_\p, (P_{n_{k-1}+1}(\p), \dots, P_{n_k}(\p)) \rangle,$$
		where we denote by $F_\mu$ the subvector bundle of $\O_{\P^1}^{\oplus r}$ associated with $\mu$. Denote by $Q_j$ the $n_{k+1}$ homogeneous polynomials associated with the inclusion of vector bundles $E_{k+1} \subset \O_{\P^1}^{\oplus r}$.  Then there exist $N >0$, and points $p_1$, $\dots$, $p_N$ in $\P^1$ such that the following statements are equivalent: \begin{align}
		&\forall \: \p \in \mathbb{P}^1, \: (F_\mu)_\p \subset ({E_{k+1}})_{\p} \nonumber\\ \Leftrightarrow & 	\forall \: \p \in \mathbb{P}^1, \: \langle P_{n_{k-1}+1}(\p), \dots, P_{n_k}(\p) \rangle \subset \langle Q_1(\p), \dots, Q_{n_{k+1}}(\p) \rangle \label{eq: Pi c Qi}  \\
		\Leftrightarrow & \forall \iota \in \{1, \dots, N\}, \: \langle P_{n_{k-1}+1}(p_\iota), \dots, P_{n_k}(p_\iota) \rangle \subset \langle Q_1(p_\iota), \dots, Q_{n_{k+1}}(p_\iota) \rangle  \nonumber
		\end{align} 
		Indeed, $\mu$ satisfies the homogeneous equations (\ref{eq: Pi c Qi}) iff the polynomials $P_j$ associated with $\mu$ satisfy (\ref{eq: Pi c Qi}) at $N$ different values of $p_\iota=[x_\iota,y_\iota]$ for $N$ large enough. Notice $N$ only depends on the degrees $d_k$ and $d_{k+1}$, which are fixed here. 
		
		From now on, we fix such an $N$, and fix points $(p_{r+1}, \dots , p_N)$ general in $\mathbb{P}^1$. Equation (\ref{eq: muin X}) is thus equivalent to :
		\begin{equation}\label{eq: condition mu(pi)} \forall \iota \in \{1, \dots, N\}, \:  (F_\mu)_{p_\iota} \subset ({E_{k+1}})_{p_i}  \end{equation} 	
		
		\item We set $\p:= [x :y]$.  We will denote by $\langle E, F \rangle$ the vector subspace of $\C^n$ generated by any two vector subspaces $E$ and $F$ of $\C^n$.
		The vector subspace $(E_{k+1})_{\p}$ is described by linear equations. Hence:  \begin{align*} V((E_{k+1})_{\p}) &= \left\{ \mu=(P_{i,j}) \in V \mid  \langle (E_{k-1})_{\p}, (F_\mu)_{p} \rangle \subset (E_{k+1})_{\p} \right\} \\ &= \left\{ \mu=(P_{i,j}) \in V \mid  \langle (E_{k-1})_{\p},  P_{n_{k-1} +1}(\p), \dots, P_{n_k}(\p) \rangle  \subset (E_{k+1})_{\p} \right\}\end{align*} is the intersection of linear equations on the coefficients of $P_j$, $n_{k-1}+1 \leq j \leq n_k$.
		\item Fix $i$ in $\{1 \dots, N\}$. Denote by $\pi_{\widehat{k}}(x_i)$ the image of $x_i$ in the projected flag variety $X_{\widehat{k}}$. 
		The subset $V_{p_i}$ of elements $\mu$ in $V$ satisfying $(F_\mu)_{p_i} \subset \pi_{{k}}(x_i)$ can be described by:  \begin{align*}  V_{p_i}:&=
		\left\{ \mu=(P_{i,j}) \in V \mid  \langle  (F_\mu)_{p_i} \rangle \subset  \pi_k(x_i) \right\}\\
		& =  \left\{ \mu=(P_{i,j}) \in V \mid   \langle P_{n_{k-1} +1}(p_i), \dots, P_{n_k}(p_i) \rangle \subset  \pi_k(x_i) \right\}.\end{align*}
		Since the vector space  $\pi_k(x_i) $ is defined by linear equations, $V_{p_i}$ is defined by linear equations on the coefficients of the polynomials $(P_{i,j})$; $V_{p_i}$ is a linear subspace of $V$.
		
		Finally, according to $1.$ the variety $\mathcal{Z}_{p,x}$ is the intersection of the linear spaces $V_{p_i}$, for $1 \leq i \leq r$, with the linear spaces $V((E_{k+1})_{p_\iota})$, for $r < \iota \leq N$.
		Hence $\mathcal{Z}_{p,x}$ is a linear subspace of $V$.
	\end{enumerate}
\end{proof}
According to Lemma \ref{lem: Z is unirational}, $\mathcal{Z}_{p,x}$ is a linear subspace of $V$, and hence is an irreducible rational variety.
\subsection{Image of $\mathcal{Z}_{p,x} \dashrightarrow \mathcal{M}_{0,r}(X, \mathbf{d})$.}
First, denote by $V^{inj}$ the subset of elements of $V$ corresponding to injective morphisms $E_{k-1} \oplus_i \O_{\P^1}(-a_i) \hookrightarrow \O_{\P^1}^{\oplus r}$. The complementary of $V^{inj}$ is described by a set of polynomial equations. Indeed, for $u=(P_j)_{n_{k-1}+1\leq j \leq n_k}$ in $V$ the associated morphism is injective iff $\langle (E_{k-1})_\p, P_{n_{k-1}+1}(\p), \dots, P_{n_k}(\p) \rangle$ is a vector space of dimension $n_k$ for all $\p \in \P^1$. Hence $V^{inj}$ is an open subset of $V$. Call $$\mathcal{Z}_{p,x}^{inj} := \mathcal{Z}_{p,x} \cap V^{inj}.$$ 
By the functorial definition of flag varieties, each element $u$ in $\mathcal{Z}_{p,x}^{inj}$ defines a degree $\d$ morphism $f_u: \P^1 \rightarrow X$, which by construction of $\mathcal{Z}_{p,x}$ is associated to a point in $\Pi_{p,x}$. Furthermore, by construction of $\mathcal{Z}_{p,x}$, any point $q$ in $\Pi_{p,x}$ whose associated flag of vector bundles is balanced is the image $(F_u)$ of an element $u$ in $\mathcal{Z}_{p,x}^{inj}$.
Hence, according to Lemma \ref{lem: decomposition into a_i} $ii)$, for $(p,x)$ general in $(\Pi_{\widehat{k}} \times ev)(\overline{{\mathcal{M}}_{0,r}}(X, \mathbf{d}))$, the variety $\mathcal{Z}_{p,x}^{inj}$ dominates $\Pi_{p,x}$.

\subsection{Algebraicity of $\mathcal{Z}_{p,x}^{inj} \rightarrow \Pi_{p,x}$.}
Denote by $\psi: \mathcal{Z}_{p,x}^{inj} \times \mathbb{P}^1 \rightarrow X$ the function sending an element $(z,\p)$ in $\mathcal{Z}_{p,x}^{inj} \times \mathbb{P}^1$ to the point of $X$ associated with $(F_z)_\p$. According to Lemma \ref{lem : morphisme Y x P dans M(X)} (Appendix A), $\psi$ defines a morphism $\Psi : \mathcal{Z}_{p,x}^{inj} \rightarrow \overline{{\mathcal{M}}_{0,r}}(X, \mathbf{d})$, which dominates $\Pi_{p,x}$ according to here above.

\subsection{Proof of Proposition \ref{prop : geometry general fiber pr1W}.} Let $(p,x)$ be a general point in  $(\Pi_{\widehat{k}} \times ev)(\overline{{\mathcal{M}}_{0,r}}(X, \mathbf{d}))$. We have obtained a dominating morphism $\Psi : \mathcal{Z}_{p,x}^{inj} \rightarrow \Pi_{p,x}$. Since $\mathcal{Z}_{p,x}^{inj}$ is a dense open subset of the irreducible rational variety $\mathcal{Z}_{p,x}$, $\mathcal{Z}_{p,x}^{inj}$ is an irreducible rational variety.
Hence $\Pi_{p,x}$ is an irreducible unirational variety.

\subsection{Proof of Corollary \ref{cor : geometry general fiber pr1 ev-1(x)}.} For an element $x$ in $X^r$, we denote by $\Pi_x$ the restriction of the forgetful map $\overline{{\mathcal{M}}_{0,r}}(X, \d) \to \overline{{\mathcal{M}}_{0,r}}(X_{\widehat{k}}, (\pi_{\widehat{k}})_* \d)$ to $ev^{-1}(x)$. First, note that since the map $\Pi_{\widehat{k}} \times ev$ is surjective, according to Proposition \ref{prop : W is irreducible} $1.$ for $x$ general in $X^r$ we have $\Pi(ev^{-1}(x)) = ev^{-1}(\pi_{\widehat{k}}(x))$. Hence for $x$ general in $X^r$, the map $\Pi_x : ev^{-1}(x) \to ev^{-1}(\pi_{\widehat{k}}(x))$ is surjective. 

According to Proposition \ref{prop : geometry general fiber pr1W}, there exists a dense open subset $U$ of $(\Pi_{\widehat{k}} \times ev)(\overline{{\mathcal{M}}_{0,r}}(X, \mathbf{d})) = \overline{{\mathcal{M}}_{0,r}}(X_{\widehat{k}}, (\pi_{\widehat{k}})_* \d) \times_{{X_{\widehat{k}}}^r} X^r$ such that for all elements $(m,x)$ in $U$, the fiber $(\Pi \times ev)^{-1}(m,x)=\Pi^{-1}(m) \cap ev^{-1}(x)$ is a unirational variety. Denote by $p_1: \overline{{\mathcal{M}}_{0,r}}(X_{\widehat{k}}, (\pi_{\widehat{k}})_* \d) \times_{{X_{\widehat{k}}}^r} X^r \to \overline{{\mathcal{M}}_{0,r}}(X_{\widehat{k}}, (\pi_{\widehat{k}})_* \d)$ the first projection and by $p_2: \overline{{\mathcal{M}}_{0,r}}(X_{\widehat{k}}, (\pi_{\widehat{k}})_* \d) \times_{{X_{\widehat{k}}}^r} X^r \to X^r$ the second projection. Note that for all $m$ in $p_1(p_2^{-1}(x) \cap U)$ the fiber $\Pi_x^{-1}(m) = \Pi^{-1}(m) \cap ev^{-1}(x) = (\Pi \times ev)^{-1}(m,x)$ is a unirational variety. Furthermore, note that for $x$ general in $X^r$, $p_2^{-1}(x) \cap U$ is a dense open subset of $p_2^{-1}(x)$, and hence $p_1(p_2^{-1}(x) \cap U)$ is a dense open subset of $p_1(p_2^{-1}(x))=\Pi(ev^{-1}(x))$. Hence for $x$ general in $X^r$ the general fiber of $\Pi_x$ is a unirational variety.
\section{Proof of Theorem \ref{th : RC fibrations bewteen W varieties}.}\label{sec: proof main theorem}
Let $J= \{j_{1}, \dots, j_{M}\}$ be a set of $M$ integers satisfying $0 < j_1 < \dots < j_M <n$ such that $I$ is contained in $J$, let $\d=(d_{1}, \dots,  d_{M})$ be an element in $\mathbb{N}^{M}$ and $1 \leq \mu \leq M$ be an integer such that the collection $\{I, J, \d\}$ is stabilized with respect to $\mu$ in the sense of Definition \ref{def: stabilized collection}. 
Note that according to Proposition \ref{prop : W is irreducible} it is enough to prove that the general fiber of \begin{align}\label{eq: def Pixev} \Pi_{I/J} \times ev : && \overline{{\mathcal{M}}_{0,r}}(Fl_J, \d) \rightarrow \overline{{\mathcal{M}}_{0,r}}(Fl_I, (\pi_{I/J})_* \d) \times_{(Fl_I)^r} (Fl_J)^r\end{align} is rationally connected. We prove this in two times. We first prove that the general fiber of the morphism $\Pi_{J_{\geq \mu}/J} \times ev$  is rationally connected, where $\Pi_{J_{\geq \mu}/J}$ is the morphism induced by forgetting vector spaces with indices left of $\mu$ not associated with elements in $I$. We then prove that the general fiber of the morphism $\Pi_{I/J_{\geq \mu}} \times ev$  is rationally connected. We finally deduce Theorem \ref{th : RC fibrations bewteen W varieties} from these two facts.

\subsection{Preliminary results on the base change of $\Pi_{I/J} \times ev$.}\label{subsec: base change Pixev} Let $K=\{k_1, \dots, k_{m'}\}$ be a set of integers satisfying $0 < k_1 < \dots < k_{m'}$ such that $K$ is contained in $J$ and contains $I$. Our goal here is to prove that the base change of the morphism $\Pi_{I/J} \times ev$ defined by (\ref{eq: def Pixev}) induced by $Fl_K \rightarrow Fl_J$ is a morphism of equidimensional projective varieties, whose general fiber satisfies the same properties as the general fiber of $\Pi_{I/J} \times ev$. Since the projection morphism $Fl_K \rightarrow Fl_I$ factors through $Fl_J \rightarrow Fl_I$, we are considering the following morphism: \[\begin{tikzcd} \overline{{\mathcal{M}}_{0,r}}(Fl_J, \d) \times_{(Fl_J)^r} (Fl_K)^r  \arrow{r} & \left(\overline{{\mathcal{M}}_{0,r}}(Fl_I, (\pi_{I/J})_* \d) \times_{(Fl_I)^r} (Fl_J)^r \right)   \times_{(Fl_J)^r} (Fl_K)^r \arrow{d}[below, rotate=90]{\huge \sim} \\ & \overline{{\mathcal{M}}_{0,r}}(Fl_I, (\pi_{I/J})_* \d) \times_{(Fl_I)^r} (Fl_K)^r
\end{tikzcd}\]
First, note that $\overline{{\mathcal{M}}_{0,r}}(Fl_J, \d) \times_{(Fl_J)^r} (Fl_K)^r$ is a reduced scheme of pure dimension the expected dimension. Indeed, according to Kleiman's transversality theorem for $g$ general in $(\SL_n)^r$ the projective scheme $\overline{{\mathcal{M}}_{0,r}}(Fl_J, \d) \times_{(Fl_J)^r} g \cdot (Fl_K)^r$ is an equidimensional reduced scheme. Furthermore, since the morphism $(Fl_J)^r \rightarrow (Fl_I)^r$ is $G:=(SL_n)^r$-equivariant, the following diagram is commutative \[\begin{tikzcd} \overline{{\mathcal{M}}_{0,r}}(Fl_J, \d) \times_{(Fl_I)^r} \left( G \times (Fl_J)^r \right) \arrow{d}{p} \arrow{r}{\huge \sim} & G \times  \left(\overline{{\mathcal{M}}_{0,r}}(Fl_J, \d) \times_{(Fl_I)^r} (Fl_J)^r \right) \arrow{d}{\pi}  \\
G  \arrow{r}{\huge \sim} & G \end{tikzcd}\]
where $p$ and $\pi$ are the obvious projections. This induces an isomorphism \begin{align*}p^{-1}(g) = \overline{{\mathcal{M}}_{0,r}}(Fl_J, \d) \times_{(Fl_I)^r} \left(g \cdot (Fl_J)^r \right) & \simeq \pi^{-1}(g) \\& \simeq g \cdot  \left(\overline{{\mathcal{M}}_{0,r}}(Fl_J, \d) \times_{(Fl_I)^r} (Fl_J)^r \right) \\ & \simeq  \overline{{\mathcal{M}}_{0,r}}(Fl_J, \d) \times_{(Fl_I)^r} (Fl_J)^r  \end{align*}
which allows us to deduce the equidimensionality and reducibility of $\overline{{\mathcal{M}}_{0,r}}(Fl_J, \d) \times_{(Fl_J)^r} (Fl_K)^r$ from the equidimensionality and reducibility of $\overline{{\mathcal{M}}_{0,r}}(Fl_J, \d) \times_{(Fl_J)^r} g \cdot (Fl_K)^r$.

Finally, Lemma \ref{lem: flat base change perserves (P)} here under ensures that the general fiber of the flat base change of $\Pi_{I/J} \times ev$ satisfies the same properties as the general fiber of $\Pi_{I/J} \times ev$.
\begin{lemme}\label{lem: flat base change perserves (P)}
	Let $S$ be a scheme, let $S' \rightarrow S$, $M \rightarrow S$ and $M' \rightarrow S$ be schemes over $S$, and $M \rightarrow M'$ be a morphism of schemes over $S$. 
	Suppose the general fiber of $M \rightarrow M'$ satisfies a property $(P)$, and $S' \rightarrow S$ is flat. Then the general fiber of $M_{S'} \rightarrow M'_{S'}$ satisfies $(P)$.
\end{lemme}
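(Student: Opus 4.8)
The plan is to unwind the phrase ``the general fibre satisfies $(P)$'' into the existence of a dense open subset and then transport that subset along the base change. By hypothesis there is a dense open subset $U \subseteq M'$ such that for every point $u \in U$ the fibre $g^{-1}(u)$ of $g \colon M \to M'$ satisfies $(P)$. The first observation is that $g$ base changes correctly: since
\[
M_{S'} = M \times_S S' = M \times_{M'} (M' \times_S S') = M \times_{M'} M'_{S'},
\]
the induced morphism $g_{S'} \colon M_{S'} \to M'_{S'}$ is precisely the base change of $g$ along the projection $q \colon M'_{S'} = M' \times_S S' \to M'$. Hence it suffices to exhibit a dense open $V \subseteq M'_{S'}$ over which the fibres of $g_{S'}$ satisfy $(P)$, and the natural candidate is $V := q^{-1}(U)$.

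Next I would check that $V = q^{-1}(U)$ is dense open in $M'_{S'}$. It is open because $U$ is open and $q$ is continuous. For density, note that $q$ is flat, being the base change of the flat morphism $S' \to S$; and a flat morphism of schemes satisfies going-down, so it carries each generic point of an irreducible component of $M'_{S'}$ to a point of $M'$ admitting no proper generization, i.e. to a generic point of $M'$. Every such generic point lies in the dense open $U$, hence in $q^{-1}(U)$. Thus $q^{-1}(U)$ contains all the generic points of $M'_{S'}$ and is therefore dense.

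Finally, for a point $w \in V$ with image $u := q(w) \in U$, the fibre of $g_{S'}$ over $w$ is
\[
g_{S'}^{-1}(w) = g^{-1}(u) \times_{\kappa(u)} \kappa(w),
\]
the extension of scalars from $\kappa(u)$ to $\kappa(w)$ of the fibre $g^{-1}(u)$, which satisfies $(P)$. It remains only to observe that $(P)$ is insensitive to such an extension: for the properties actually used in this chapter — irreducibility (understood geometrically), unirationality, rational connectedness — stability under field extension holds; and in the intended applications, where $S$, $S'$, $M$, $M'$ are varieties over $\mathbb{C}$ and ``the general fibre'' refers to the fibres over closed ($\mathbb{C}$-)points, one has $\kappa(w) = \kappa(u) = \mathbb{C}$, so that $g_{S'}^{-1}(w)$ literally coincides with a fibre $g^{-1}(u)$ of the original morphism and no extension occurs at all. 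Either way the general fibre of $g_{S'}$ satisfies $(P)$. The main — and essentially only — obstacle is the density of $q^{-1}(U)$, which is exactly where the flatness of $S' \to S$ is used; the rest is formal manipulation of fibre products.
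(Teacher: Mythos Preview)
Your proof is correct and follows essentially the same approach as the paper: pull back the dense open $U\subseteq M'$ along the projection $q\colon M'_{S'}\to M'$, use flatness of $q$ (as a base change of $S'\to S$) to conclude that $q^{-1}(U)$ is dense, and identify the fibre of $g_{S'}$ over a point of $q^{-1}(U)$ with a fibre of $g$ over $U$. Your treatment is in fact slightly more careful than the paper's on one point: the paper simply writes $(M\times_S S')_{(m,s)}\simeq M_m$, whereas you correctly note that a priori one obtains $g^{-1}(u)\times_{\kappa(u)}\kappa(w)$ and then observe that in the intended application over $\mathbb{C}$ (closed points) no field extension occurs; this is the right level of precision.
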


\begin{proof}
	If the morphism $M \rightarrow M'$ is not surjective, it factors through $M \rightarrow \mathrm{Im} M$ and then $\mathrm{Im} (M_{S'}) \simeq (\mathrm{Im} M)_{S'}$; hence we can assume $M \rightarrow M'$ surjective. Denote by $U$ the dense open subset of $M'$ such that the fiber of $M \rightarrow M'$ over $\{m\}$ in $U$ satisfies $(P)$. Note that, since $S' \rightarrow S$ is a flat morphism, $U \times_S S'$ is a dense subset of $M' \times _S S'$. Indeed, if we denote by $p_1 : M' \times_S S' \rightarrow M'$ the first projection, acccording to \cite{EGAIV2} Proposition 2.3.4, all irreducible components of $p_1^{-1}(M')$ dominate $M'$; hence the image of each irreducible component of $M'$ by $p_1$ has a non empty intersection with $U$, hence each irreducible component of $M'_{S'}$ has a non empty intersection with $U_{S'}$. Then for $(m,s)$ in $U \times_S S' \subset M' \times_S S'$, we have: \begin{align*}
	(M \times_{S} S')_{(m,s)} &= M \times_{S} \{s\} \times_{M'} \{m\}\\
	&= (M \times_{M'} \{m\}) \times_{S} \{s\} \simeq M_m,
	\end{align*}
	where $M_m$ satisfies $(P)$ since $m$ lies in $U$. Hence the general fiber of $M_{S'} \rightarrow M'_{S'}$ satisfies $(P)$.
\end{proof}

\subsection{Forgetting vector spaces left of $V_{j_{\mu}}$.}\label{subsec: forgetting left V_i} Recall a point in $Fl_J$ corresponds to a flag of vector spaces: \begin{equation*}
\{0\} \subset V_{j_{1}} \subset \dots \subset V_{j_{M}} \subset \C^n.
\end{equation*}
Denote by $\mu'$ the integer such that $i_{\mu'} = j_\mu$. A point in $Fl_{J_{\geq \mu}}$ corresponds to a flag of vector spaces: \begin{equation*}
\{0\} \subset V_{i_1} \subset \dots \subset V_{i_{\mu'}}=V_{j_\mu} \subset V_{j_{\mu +1}} \subset \dots \subset V_{j_M} \subset \C^n.
\end{equation*}
For $1 \leq k < \mu$, we name $X_{k} := (Fl_{J_{\geq k}})^r$, $\d^{k} := (\pi_{J_{\geq k}/J})_* \d$ and $M_{k}:=\overline{{\mathcal{M}}_{0,r}}(X_{k}, \d^{k})$. Forgetting all vector spaces left of $V_{j_\mu}$ that are not of the type $V_{i_k}$ induces a morphism $\Pi_{J_{\geq \mu}/J} \times ev: \overline{{\mathcal{M}}_{0,r}}(Fl_J,\d) \rightarrow M_{\mu} \times_{X_{\mu}} (Fl_J)^r$. We consider here the general fiber of this map.
\begin{proposition}
	Let $\{I,J, \d\}$ be a stabilized collection. \begin{enumerate}[label=\roman*)]
		\item The general fiber of the map $\Pi_{J_{\geq \mu}/J} \times ev$ is a unirational variety.
		\item  For $x$ general in $(Fl_J)^r$, the fiber $ev^{-1}(x) \in \overline{{\mathcal{M}}_{0,r}}(Fl_J,\d)$ is a tower of unirational fibrations over the fiber $ev^{-1}(\pi_{J_{\geq \mu}}(x)) \in M_\mu$.
	\end{enumerate} 
\end{proposition}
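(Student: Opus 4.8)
The plan is to factor the forgetful morphism $Fl_J\to Fl_{J_{\geq\mu}}$ into a chain of one--vector--space deletions and to treat each link with Proposition \ref{prop : geometry general fiber pr1W}, Corollary \ref{cor : geometry general fiber pr1 ev-1(x)}, the flat base change Lemma \ref{lem: flat base change perserves (P)} and the composition results of Section \ref{subsec : correspondence irreducible components}. For $1\le k\le\mu$ put $Y_k:=\overline{\mathcal{M}_{0,r}}(Fl_{J_{\geq k}},\d^k)\times_{(Fl_{J_{\geq k}})^r}(Fl_J)^r$, the fibre product being formed along $ev$ and the forgetful morphism $(Fl_J)^r\to(Fl_{J_{\geq k}})^r$; then $Y_1=\overline{\mathcal{M}_{0,r}}(Fl_J,\d)$, $Y_\mu=M_\mu\times_{X_\mu}(Fl_J)^r$ is the target of $\Pi_{J_{\geq\mu}/J}\times ev$, deletion of $V_{j_k}$ induces $Y_k\to Y_{k+1}$, and the composite $Y_1\to\cdots\to Y_\mu$ is exactly $\Pi_{J_{\geq\mu}/J}\times ev$. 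When $j_k\in I$ one has $J_{\geq k}=J_{\geq k+1}$, so $Y_k\to Y_{k+1}$ is an isomorphism and only the indices $k<\mu$ with $j_k\notin I$ require work. For such a $k$ one checks that $Y_k\to Y_{k+1}$ is the base change of $\Pi_{J_{\geq k}}\times ev\colon\overline{\mathcal{M}_{0,r}}(Fl_{J_{\geq k}},\d^k)\to\overline{\mathcal{M}_{0,r}}(Fl_{J_{\geq k+1}},\d^{k+1})\times_{(Fl_{J_{\geq k+1}})^r}(Fl_{J_{\geq k}})^r$ along the smooth (hence flat) forgetful morphism $(Fl_J)^r\to(Fl_{J_{\geq k}})^r$, and that each $Y_k$ is irreducible, being smooth and surjective with irreducible fibres over the irreducible space $\overline{\mathcal{M}_{0,r}}(Fl_{J_{\geq k}},\d^k)$.

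\textbf{The per-link estimate.} The central point is to verify, link by link, the numerical hypothesis of Proposition \ref{prop : geometry general fiber pr1W} for the flag variety $Fl_{J_{\geq k}}$, the degree $\d^k$, and the deletion of $V_{j_k}$. Since $J_{\geq k}$ coincides with $I$ below $j_k$, the components of $\d^k$ in the positions preceding $V_{j_k}$ are the corresponding components of $\d'=(\pi_{I/J})_*\d$, so the inequality required by Proposition \ref{prop : geometry general fiber pr1W} is precisely condition (\ref{eq: Geometry dk left}) of the stabilized hypothesis, while surjectivity of $\Pi_{J_{\geq k}}\times ev$ onto the full fibre product is condition (\ref{eq: Geometry surjectivity Pi left}). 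Granting this, Proposition \ref{prop : geometry general fiber pr1W} gives that the general fibre of $\Pi_{J_{\geq k}}\times ev$ is unirational, and Lemma \ref{lem: flat base change perserves (P)} transports this to the general fibre of $Y_k\to Y_{k+1}$; similarly Corollary \ref{cor : geometry general fiber pr1 ev-1(x)} gives that for $y$ general in $(Fl_{J_{\geq k}})^r$ the restriction $ev^{-1}(y)\to ev^{-1}(y')$ (deleting $V_{j_k}$, with $y'$ the image of $y$) has unirational general fibre.

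\textbf{Conclusion.} For i): the general fibre of $\Pi_{J_{\geq\mu}/J}\times ev$ is the general fibre of the composite $Y_1\to\cdots\to Y_\mu$; each link is a surjective morphism of irreducible projective schemes with unirational (in particular irreducible) general fibre, so by Theorem \ref{th: ex stability under composition}(2) applied to the property ``unirational'' — equivalently, by chaining the dominant maps from the linear spaces $\mathcal{Z}_{p,x}$ produced in the proof of Proposition \ref{prop : geometry general fiber pr1W}, which vary algebraically in families — the composite has unirational general fibre. For ii): fix $x$ general in $(Fl_J)^r$; each forgetful map $(Fl_J)^r\to(Fl_{J_{\geq k}})^r$ is dominant, so $\pi_{J_{\geq k}/J}(x)$ is general in $(Fl_{J_{\geq k}})^r$, and as only finitely many $k$ intervene these conditions may be imposed at once. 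The tower $ev^{-1}(x)=ev^{-1}(\pi_{J_{\geq 1}/J}(x))\to\cdots\to ev^{-1}(\pi_{J_{\geq\mu}/J}(x))$ then has, for each $k<\mu$ with $j_k\notin I$, a link with unirational (hence irreducible) general fibre by Corollary \ref{cor : geometry general fiber pr1 ev-1(x)}, which is dominant with irreducible components in bijection under the map by Proposition \ref{prop : W is irreducible}(1)--(2) applied with $u_1=\dots=u_r=1$ (its hypotheses hold since that general fibre is connected and $\Pi_{J_{\geq k}}\times ev$ is surjective); the remaining links are isomorphisms. Hence every link is a unirational fibration in the sense of Definition \ref{def: RC fibration}, and $ev^{-1}(x)$ is a tower of unirational fibrations over $ev^{-1}(\pi_{J_{\geq\mu}/J}(x))\in M_\mu$.

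\textbf{Main obstacle.} I expect the difficulty to be bookkeeping rather than a deep point: one must check that at every stage the position of the deleted space $V_{j_k}$ in $Fl_{J_{\geq k}}$ and the degrees $\d^k$ are exactly those for which the numerical hypothesis of Proposition \ref{prop : geometry general fiber pr1W} matches condition (\ref{eq: Geometry dk left}) (including the boundary index of that condition), and that the several ``general position'' dense opens — for the flat base changes, for the component bijections, and for the point $x$ in part ii) — can be intersected into a single one. Each of these is routine once the factorization above is in place, but together they constitute the substance of the argument beyond the citations to Proposition \ref{prop : geometry general fiber pr1W}, Corollary \ref{cor : geometry general fiber pr1 ev-1(x)}, Lemma \ref{lem: flat base change perserves (P)}, Proposition \ref{prop : W is irreducible} and Theorem \ref{th: ex stability under composition}.
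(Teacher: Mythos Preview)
Your proposal is correct and follows essentially the same approach as the paper: factor the forgetful map as a chain of single--vector--space deletions, verify at each link that the stabilized hypothesis (\ref{eq: Geometry dk left}) yields the numerical condition of Proposition \ref{prop : geometry general fiber pr1W}, transport the resulting unirationality along the flat base change $(Fl_J)^r\to(Fl_{J_{\geq k}})^r$ via Lemma \ref{lem: flat base change perserves (P)}/Subsection \ref{subsec: base change Pixev}, and conclude by composing via Theorem \ref{th: ex stability under composition}; part ii) is handled link by link with Corollary \ref{cor : geometry general fiber pr1 ev-1(x)} and Proposition \ref{prop : W is irreducible}. If anything, you are slightly more careful than the paper about the unirationality claim in i): the paper's own inductive step only propagates rational connectedness (invoking Theorem \ref{th: ex stability under composition}(3)), whereas you correctly note that one needs either Theorem \ref{th: ex stability under composition}(2) for the property ``unirational'' (using that a variety fibred in unirational varieties over a unirational base is unirational) or the explicit chaining of the linear spaces $\mathcal{Z}_{p,x}$.
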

\begin{proof} We proceed by iteration on $k$.
	\begin{enumerate}[label=\roman*)]
		\item  Let  $1 \leq  k  < \mu$ such that the general fiber of $\Pi_{J_{\geq k/J}} \times ev: M_J \rightarrow M_{k} \times_{X_{k}} (Fl_J)^r$ is rationally connected. Suppose $j_{k+1}$ is not contained in $I$. Recall $$\d=(d_{1}, \dots, d_{M}).$$ 
		We denote by $$(\pi_{I/J})_* \d =(d'_1, \dots, d'_m).$$ Denote by $k'$ the largest integers such that $i_{k'} < j_k$. Note that \begin{itemize}
			\item  The degree $\d^{k}$ is given by the set  $$\d^{k}=(d_{1}', \dots, d_{k'}', d_{k''}^k, \dots,  d_{M'}^k),$$ obtained from $\d$ by removing all integers $d_{s}$ such that $s < \mu$ and there exists no integer $s'$ satisfying $i_{s'}= j_{s}$. 
			\item The degree $\d^{k +1}$ is given by the sequence  $$\d^{k+1}=(d_{1}', \dots, d_{k'}',d_{k''+1}^k \dots d_{M'}^k),$$ which is obtained from the set $\d^{k}$ by forgetting the integer $d_{k''}^k$.
		\end{itemize}
		Note that according to Definition \ref{def: stabilized collection} for all $p<k$, $$d^{k}_{k''} \geq \ceil{(d'_p - d'_{p-1})/(i_p-i_{p-1})} j_k.$$
		Then according to Proposition \ref{prop : geometry general fiber pr1W}  the general fiber of $$\Pi_{J_{\geq k+1}/J_{\geq k}} \times ev: M_{k} \rightarrow (\Pi_{J_{\geq k+1}/J_{\geq k}} \times ev) (M_{k}) = M_{k+1} \times_{X_{k+1}} X_{k}$$ is unirational, and hence rationally connected. Furthermore, according to Definition \ref{def: stabilized collection} the morphism surjects onto $M_{k+1} \times_{X_{k+1}} X_{k}$. Then according to Subsection \ref{subsec: base change Pixev} the general fiber of the following morphism 
		\begin{align*}M_{k} \times_{X_{k}} (Fl_J)^r \rightarrow M_{k+1} \times_{X_{k+1}} (Fl_J)^r \end{align*} is also rationally connected. 
		Hence according to Theorem \ref{th: ex stability under composition} by composition the general fiber of $\Pi_{J_{\geq k+1}/J} \times ev: M_J \rightarrow M_{k+1} \times_{X_{k+1}} (Fl_J)^r$ is rationally connected. In the same way, if $j_{k+1}$ is contained in $I$, the general fiber of $\Pi_{J_{\geq k+2}/J} \times ev: M_J \rightarrow M_{k+2} \times_{X_{k+2}} (Fl_J)^r$ is rationally connected.
		
		\item Let  $1 \leq  k  < \mu$ such that for $x$ general in $(Fl_J)^r$, the fiber $ev^{-1}(x) \in \overline{{\mathcal{M}}_{0,r}}(Fl_J,\d)$ is a tower of unirational fibrations over the fiber $ev^{-1}(\pi_{J_{\geq k}/J}(x)) \in M_k$ is unirational. Suppose $j_{k+1}$ is not contained in $I$. Corollary \ref{cor : geometry general fiber pr1 ev-1(x)} and Proposition \ref{prop : W is irreducible} imply that for $x$ general in $(Fl_J)^r$, the fiber $ev^{-1}(x) \in \overline{{\mathcal{M}}_{0,r}}(Fl_J,\d)$ is a tower of unirational fibrations over the fiber $ev^{-1}((\pi_{J_{\geq k+1}/J}(x)) \in M_{k+1}$.
	\end{enumerate}
\end{proof}

\subsection{Forgetting vector spaces right of $V_{j_{\mu}}$.}\label{subsec: forgetting right Vi} We deduce here the rational connectedness of the general fiber of the morphism $\Pi_{I/ J_{\geq \mu}} \times ev$ from Subsection \ref{subsec: forgetting left V_i} by duality. For a set $K= \{k_1, \dots, k_m\}$ of $m$ integers satisfying $0 < k_1 < \dots < k_m <n$, we call \textit{dual partition} of $K$ the set $$K^*:= \{n-k_m, \dots, n-k_1\}.$$ Consider the natural transformation of functors sending a flag $\mathcal{F}_S:=  E_1 \subset \dots \subset  E_m \subset \O_S^{\oplus n}$ of vector bundles over a scheme $S$ to the dual flag $\mathcal{F}_S^*:= (\O_S^{\oplus n}/ E_m)^* \subset \dots \subset (\O_S^{\oplus n}/E_1)^* \subset (\O_S^{\oplus n})^* \simeq \O_S^{\oplus n}$, where we have chosen an isomorphism $(\C^n)^* \simeq \C^n$. This natural transformation of functors induces an isomorphism $Fl_K \simeq Fl_{K^*}$, and we will call $Fl_{K^*}$ the flag variety \textit{dual to} $Fl_K$. Note that this induces here an isomorphism between the flag variety $Fl_{J \geq \mu}$ parametrizing flags of vector spaces \begin{equation*}
V_{i_1} \subset \dots \subset V_{i_{\mu'}} \subset V_{ j_{\mu+1}} \subset \dots \subset V_{j_M} \subset \C^n.
\end{equation*}
and the dual flag variety $Fl_{(J_{\geq \mu})^*}$ parametrizing flags of vector spaces \begin{equation*}
V_{n- j_{M}} \subset \dots \subset V_{n-j_{\mu +1}}  \subset V_{n-i_{\mu'}} \subset \dots \subset V_{n-i_1} \subset \C^n.
\end{equation*}
This isomorphism sends a curve of class $\d=(d_1, d_2 \dots, d_m)$ in $E(Fl_K) \simeq \mathbb{N}^m$ onto a curve of class $\d^*=(d_m, \dots, d_2, d_1)$ in $E(Fl_{K^*}) \simeq \mathbb{N}^m$. Furthermore, note that, since $\{I,J , \d\}$ is stabilized with respect to $\mu$, according to Lemma \ref{lem: Geometry dual stabilized collection} the collection $\left\{I, (J_{\geq \mu})^*, \left(\left(\pi_{I/J_{\geq \mu}}\right)_* \d \right)^* \right\}$ is stabilized with respect to $n$ in the sense of Definition \ref{def: stabilized collection}. Hence, according to Subsection \ref{subsec: forgetting left V_i} here above, the general fiber of the morphism $\Pi_{I^*/(J_{\geq \mu})^*} \times ev$  is rationally connected. Finally, the following commutative diagram allows us to deduce the rational connectedness of the general fiber of $\Pi_{I/J_{\geq \mu}} \times ev$ from the rational connectedness of the general fiber of the morphism $\Pi_{I^*/(J_{\geq \mu})^*} \times ev$. \[  \begin{tikzcd}[column sep =huge]
\overline{{\mathcal{M}}_{0,r}}(Fl_{J_{\geq \mu}}, (\pi_{{J_{\geq \mu}}/J})_* \d)  \arrow{r}{\Pi_{I/J_{\geq \mu}} \times ev} \arrow{d} & \overline{{\mathcal{M}}_{0,r}}(Fl_I, (\pi_{I/J})_* \d) \underset{(Fl_{I})^r} \times (Fl_{J_{\geq \mu}})^r \arrow{d} \\
\overline{{\mathcal{M}}_{0,r}}(Fl_{(J_{\geq \mu})^*}, ((\pi_{{J_{\geq \mu}}/J})_* \d)^*)  \arrow{r}{\Pi_{I^*/(J_{\geq \mu})^*} \times ev} \arrow{u} & \overline{{\mathcal{M}}_{0,r}}(Fl_{I^*}, ((\pi_{I/J})_* \d)^*) \underset{(Fl_{I^*})^r} \times (Fl_{(J_{\geq \mu})^*})^r \arrow{u}
\end{tikzcd} \]
Note that the isomorphism $\overline{{\mathcal{M}}_{0,r}}(Fl_{J_{\geq \mu}}, (\pi_{{J_{\geq \mu}}/J})_* \d)  \simeq \overline{{\mathcal{M}}_{0,r}}(Fl_{(J_{\geq \mu})^*}, ((\pi_{{J_{\geq \mu}}/J})_* \d)^*) $ is induced by the isomorphism $Fl_{J \geq \mu} \simeq Fl_{(J \geq \mu)^*}$ and the isomorphism $$\overline{{\mathcal{M}}_{0,r}}(Fl_I, (\pi_{I/J})_* \d) \underset{(Fl_{I})^r} \times (Fl_{J_{\geq \mu}})^r \simeq \overline{{\mathcal{M}}_{0,r}}(Fl_{I^*}, ((\pi_{I/J})_* \d)^*) \underset{(Fl_{I^*})^r} \times (Fl_{(J_{\geq \mu})^*})^r$$ is induced by the isomorphisms $Fl_I \simeq Fl_{I^*}$ and $Fl_{J \geq \mu} \simeq Fl_{(J \geq \mu)^*}$.
\begin{lemme}\label{lem: Geometry dual stabilized collection}
	Let $J= \{j_1, \dots, j_M\}$ be a set of $M$ integers satisfying $0 < j_1 < \dots < j_M <n$ and let $I=\{i_1, \dots, i_m\}$ be a subset of $J$. Let $\d=(d_1, \dots, d_M)$ be an element in $\mathbb{N}^M$ and let $1 \leq \mu \leq M$ such that the collection $\{I,J,\d\}$ is stabilized with respect to $\mu$. Then the collection $\left\{I^*, (J_{\geq \mu})^*, \left((\pi_{J\geq \mu/J})_* \d\right)^*\right\}$ is stabilized with respect to $n$. 
\end{lemme}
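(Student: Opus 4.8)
The plan is to reduce the statement to the naturality of the dualization operation $\mathcal F\mapsto\mathcal F^{*}$ already used in Subsection~\ref{subsec: forgetting right Vi}, together with a bookkeeping of indices that turns each ``right'' condition of Definition~\ref{def: stabilized collection} into a ``left'' one. First I would record the functoriality needed: for any set $K$ of integers in $(0,n)$, sending a flag of bundles $E_{1}\subset\dots\subset E_{l}\subset\O_{S}^{\oplus n}$ to $(\O_{S}^{\oplus n}/E_{l})^{*}\subset\dots\subset(\O_{S}^{\oplus n}/E_{1})^{*}\subset\O_{S}^{\oplus n}$ (after fixing $(\C^{n})^{*}\simeq\C^{n}$) defines an isomorphism $Fl_{K}\simeq Fl_{K^{*}}$ which is equivariant for the $\SL_{n}^{r}$-action twisted by the transpose--inverse automorphism, carries Schubert varieties to Schubert varieties, sends a curve of class $(d_{1},\dots,d_{l})$ to one of class $(d_{l},\dots,d_{1})$, and intertwines the morphism forgetting $V_{i}$ with the one forgetting $V_{n-i}$. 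Consequently it induces isomorphisms $\overline{\mathcal M_{0,r}}(Fl_{K},\d)\simeq\overline{\mathcal M_{0,r}}(Fl_{K^{*}},\d^{*})$ compatible with the evaluation morphisms, with the forgetful morphisms $\pi_{\bullet/\bullet}$ and $\Pi_{\bullet}\times ev$, and with the fiber products appearing in Definition~\ref{def: stabilized collection}; in particular pushforward of degrees commutes with dualization, so writing $\d'=(\pi_{I/J})_{*}\d=(d'_{1},\dots,d'_{m})$, the class $\big((\pi_{J_{\geq\mu}/J})_{*}\d\big)^{*}$ projects under $\pi_{I^{*}/(J_{\geq\mu})^{*}}$ to $(\d')^{*}=(d'_{m},\dots,d'_{1})$.

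Next I would set up the index dictionary. With $j_{\mu}=i_{\mu'}$, the elements of $(J_{\geq\mu})^{*}$ not lying in $I^{*}$ are exactly those of the form $n-j_{k}$ with $\mu\le k\le M$ and $j_{k}\notin I$; write $k^{*}$ for the position of $n-j_{k}$ in $(J_{\geq\mu})^{*}$. The key combinatorial point is that, applying the construction $K\mapsto K_{\ge p}$ of Definition~\ref{def: stabilized collection} to the pair $(I^{*},(J_{\geq\mu})^{*})$, the sub-collection $\big((J_{\geq\mu})^{*}\big)_{\ge k^{*}}$ is precisely the dual of $J_{\mu\le k}$ (and $\big((J_{\geq\mu})^{*}\big)_{\ge k^{*}+1}$ the dual of $J_{\mu\le k-1}$): the elements of $I^{*}$ below $n-j_{k}$ are $\{n-i_{m},\dots,n-i_{k'}\}$, where $k'$ is the smallest index with $i_{k'}>j_{k}$ (because $n-i_{l}<n-j_{k}\iff i_{l}>j_{k}$), and adjoining these to the elements of $(J_{\geq\mu})^{*}$ of position $\ge k^{*}$ produces $\{n-i_{m},\dots,n-i_{k'}\}\cup\{n-j_{k},\dots,n-j_{\mu}\}\cup\{n-i_{\mu'-1},\dots,n-i_{1}\}$, whose dual is $\{i_{1},\dots,i_{\mu'-1},j_{\mu},\dots,j_{k},i_{k'},\dots,i_{m}\}=J_{\mu\le k}$. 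Hence, under the dualization isomorphisms, the morphism $\Pi_{((J_{\geq\mu})^{*})_{\ge k^{*}}}\times ev$ attached to the dual collection is identified, sources, targets, degrees and fiber products included, with $\Pi_{J_{\mu\le k}}\times ev$ attached to $\{I,J,\d\}$, so surjectivity of the latter (the first ``right'' condition at $k$ in the hypothesis) yields condition~\eqref{eq: Geometry surjectivity Pi left} for the dual collection at $k^{*}$.

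I would then check the degree inequality~\eqref{eq: Geometry dk left} for the dual collection at $k^{*}$. The largest index $k''$ of $I^{*}$ with $(I^{*})_{k''}<n-j_{k}$ equals $m+1-k'$ (using $i_{k'}>j_{k}\ge i_{k'-1}$), and for $1\le q<k''$, setting $p=m+1-q$ gives $k'<p\le m$; the $q$-th and $(q-1)$-th components of $(\d')^{*}$ are $d'_{p}$ and $d'_{p+1}$, the $q$-th and $(q-1)$-th steps of $I^{*}$ differ by $i_{p+1}-i_{p}$, the degree of the dual curve at position $k^{*}$ is $d_{k}$, and the multiplier playing the role of ``$j_{k}$'' in~\eqref{eq: Geometry dk left} is $n-j_{k}$. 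Thus~\eqref{eq: Geometry dk left} for the dual at $k^{*}$ states exactly that $d_{k}\ge\ceil*{(d'_{p}-d'_{p+1})/(i_{p+1}-i_{p})}\,(n-j_{k})$ for all $k'<p\le m$, which is the second ``right'' condition at $k$ in the hypothesis. Since $\{I,J,\d\}$ is stabilized with respect to $\mu$, both ``right'' conditions hold for every $k\in\{\mu,\dots,M\}$ with $j_{k}\notin I$, i.e. for every position $k^{*}$ of $(J_{\geq\mu})^{*}$ outside $I^{*}$; this is precisely the assertion that $\{I^{*},(J_{\geq\mu})^{*},((\pi_{J_{\geq\mu}/J})_{*}\d)^{*}\}$ is stabilized with respect to $n$ in the sense of Definition~\ref{def: stabilized collection}.

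The main obstacle I anticipate is the middle step: verifying carefully that dualization carries $\big((J_{\geq\mu})^{*}\big)_{\ge k^{*}}$ to $J_{\mu\le k}$ (and the shifted versions accordingly) and that the associated $\Pi_{\bullet}\times ev$ morphisms, with all their data, are genuinely identified under the dualization isomorphisms, so that surjectivity can be transported. The functoriality assertions of the first paragraph, although they must be spelled out, are formal consequences of the functor-of-points descriptions of $Fl_{K}$ and of $\overline{\mathcal M_{0,r}}(Fl_{K},\d)$, and the arithmetic of the third paragraph is a routine reindexing $p\mapsto m+1-p$.
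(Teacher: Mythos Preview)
Your proposal is correct and follows essentially the same route as the paper's proof: both arguments use the dualization isomorphism $Fl_K\simeq Fl_{K^*}$ to convert each ``right'' condition of Definition~\ref{def: stabilized collection} at an index $k$ (with $\mu\le k\le M$, $j_k\notin I$) into the corresponding ``left'' condition for the dual collection at the mirrored index, checking separately the surjectivity condition (via the identification of $\Pi_{J_{\mu\le k}}\times ev$ with the dual morphism) and the degree inequality (via the reindexing $p\mapsto m+1-p$). Your write-up is in fact more explicit than the paper's on the combinatorial identification $\big((J_{\geq\mu})^{*}\big)_{\ge k^{*}}=(J_{\mu\le k})^{*}$, which the paper leaves largely implicit in its commutative diagram.
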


\begin{proof}
	We number by $I^*:= \{i_1^*, \dots, i_m^*\}$ and $J^*=\{j_1^*, \dots, j_M^*\}$ the dual partitions of $I$ and $J$, where for any $1 \leq k \leq m$, $i_k^*=n-i_{m-k+1}$ and for any $1 \leq k \leq M$, $j_k^*=n-j_{M-k+1}$. Recall $Fl_{I^*}$ is the flag variety parametrizing flags of vector spaces $$V_{n-i_m} \subset \dots \subset V_{n-i_1} \subset \C^n$$ and $Fl_{(J \geq \mu)^*}$ is the flag variety parametrizing flags of vector spaces $$V_{n- j_{M}} \subset \dots \subset V_{n-j_{\mu +1}}  \subset V_{n-i_{\mu'}} \subset \dots \subset V_{n-i_1} \subset \C^n.$$ We call $\d':=(\pi_{I/J})_* \d=(d'_1, \dots, d'_m)$. We call $\d^*=(d_1^*, \dots, d_M^*)$ and ${\d'}^*=({{d'}^*}_1, \dots, {{d'}^*}_m)$ the dual classes, where for $1 \leq k \leq M$, $d_k^*=d_{M-k+1}$ and for $1 \leq k \leq m$, ${d'}^*_k = d'_{m-k+1}$. 
	Let $\mu \leq k \leq M$ such that $j_{M-k+1}^*=n-j_k$ is not contained in $I^*$, i.e. let $\mu < k \leq M$ such that $j_k$ is not contained in the set $I^*=I$. Since the collection $\{I,J,\d\}$ is stabilized with respect to $\mu$, the following properties are satisfied. \begin{itemize}
		\item Let $k'$ be the smallest integer such that $i_{k'} > j_k$. Note that $m-k'+1$ is the largest integer such that $i^*_{m-k'+1}=n-i_{k'} < j^*_{M-k+1}= n-j_k$. Then for any $k' < p \leq m$, i.e. for any $1 \leq m-p+1 \leq m-k'+1$, we have $$d^*_{m-k+1}=d_k \geq \ceil{(d'_p-d'_{p+1})/(i_{p+1}-i_p)} (n-j_k)= \ceil{({d'}^*_{m-p+1}-{d'}^*_{m-p})/(i^*_{m-p+1}-i^*_{m-p})} j^*_{M-k+1}.$$
		\item Since the morphism $\Pi_{J_{\mu \leq k}} \times ev$ is surjective, the morphism $\Pi_{{(J_{ \geq \mu})^*}_{\geq M-k+1}} \times ev$ is surjective. 
	\end{itemize}
	Note that  the surjectivity of $\Pi_{{(J_{ \geq \mu})^*}_{\geq M-k+1}} \times ev$ is implied by the following commutative diagram \[ \small \begin{tikzcd}
	\overline{{\mathcal{M}}_{0,r}}(Fl_{J_{\mu \leq k}}, (\pi_{{J_{\mu \leq k}}/J})_* \d)  \arrow{r}{\Pi_{J_{\mu \leq k}} \times ev} \arrow{d} & \overline{{\mathcal{M}}_{0,r}}(Fl_{J_{\mu \leq k-1}}, (\pi_{{J_{\mu \leq k-1}}/J})_* \d) \underset{(Fl_{J_{\mu \leq k-1}})^r} \times (Fl_{J_{\mu \leq k-1}})^r \arrow{d} \\
	\overline{\mathcal{M}_{0,r}}(Fl_{{(J_{ \geq \mu})^*}_{\geq M-k+1}}, ((\pi_{J_{\mu \leq k}/J})_*\d)^*) \arrow{u} \arrow{r }[swap, yshift=-2ex]{\Pi_{{(J_{ \geq \mu})^*}_{\geq M-k+1}} \times ev}
	& \overline{\mathcal{M}_{0,r}}(Fl_{{(J_{ \geq \mu})^*}_{\geq M-k+2}}, ((\pi_{J_{\mu \leq k-1}/J})_*\d)^*) \underset{(Fl_{{(J_{ \geq \mu})^*}_{\geq M-k+2}})^r}\times (Fl_{{(J_{ \geq \mu})^*}_{\geq M-k+1}})^r \arrow{u}
	\end{tikzcd} \]
	where the isomorphism $\overline{{\mathcal{M}}_{0,r}}(Fl_{J_{\mu \leq k}}, (\pi_{{J_{\mu \leq k}}/J})_* \d) \simeq \overline{\mathcal{M}_{0,r}}(Fl_{{(J_{ \geq \mu})^*}_{\geq M-k+1}}, ((\pi_{J_{\mu \leq k}/J})_*\d)^*)$ is induced by the isomorphism $Fl_{J_{\mu \leq k}} \simeq (Fl_{J_{\mu \leq k}})^*=Fl_{{(J_{ \geq \mu})^*}_{\geq M-k+1}}$. 
	Hence the collection $\left\{I^*, (J_{\geq \mu})^*, \left((\pi_{J\geq \mu/J})_* \d\right)^*\right\}$ is stabilized with respect to $n$. 
\end{proof}

\subsection{Proof of Theorem \ref{th : RC fibrations bewteen W varieties}.} 
Let $J_1 := J_{\geq (\mu; n_\mu)}$. Consider the morphism \[\begin{tikzcd} \overline{{\mathcal{M}}_{0,r}}(Fl_{J_1}, (\pi_{J_1/J})_* \d) \times_{(Fl_{J_1})^r} (Fl_J)^r \arrow{r}& \left( \overline{{\mathcal{M}}_{0,r}}(Fl_I, (\pi_{I/J_1})_* (\pi_{J_1/J})_* \d) \times_{(Fl_{I})^r} (Fl_{J_1})^r\right) \times_{(Fl_{J_1})^r} (Fl_J)^r \arrow{d}[below, rotate=90]{\Huge \sim}\\  &\overline{{\mathcal{M}}_{0,r}}(Fl_I, (\pi_{I/J})_* \d) \times_{(Fl_{I})^r} (Fl_J)^r \end{tikzcd}\] where the first arrow is the morphism obtained by flat base change of $\Pi_{I/J_1} \times ev$ over $(Fl_{J})^r \rightarrow (Fl_{J_1})^r$. According to Subsection \ref{subsec: forgetting right Vi} the general fiber of $\Pi_{I/J_1} \times ev$ is rationally connected, hence according to Lemma \ref{lem: flat base change perserves (P)} the general fiber of $\overline{{\mathcal{M}}_{0,r}}(Fl_{J_1}, (\pi_{J_1/J})_* \d) \times_{(Fl_{J_1})^r} (Fl_J)^r \rightarrow \overline{{\mathcal{M}}_{0,r}}(Fl_I, (\pi_{I/J})_* \d) \times_{(Fl_{I})^r} (Fl_J)^r$ is rationally connected.
Furthermore, according to Subsection \ref{subsec: forgetting left V_i} the general fiber of $\overline{{\mathcal{M}}_{0,r}}(Fl_J, \d) \rightarrow \overline{{\mathcal{M}}_{0,r}}(Fl_{J_1}, (\pi_{J_1/J})_* \d) \times_{(Fl_{J_1})^r} (Fl_J)^r$ is also rationally connected. Theorem \ref{th : RC fibrations bewteen W varieties} $ii)$ then follows from the following commutative diagram: \[\begin{tikzcd}[column sep=huge] \overline{{\mathcal{M}}_{0,r}}(Fl_J, \d) \arrow{r}{\Pi_{J_1/J} \times ev} \arrow{dr}[below, rotate=-13]{\Pi_{I/J}\times ev} & \overline{{\mathcal{M}}_{0,r}}(Fl_{J_1}, (\pi_{J_1/J})_* \d) \times_{(Fl_{J_1})^r} (Fl_J)^r \arrow{d}\\ & \overline{{\mathcal{M}}_{0,r}}(Fl_I, (\pi_{I/J})_* \d) \times_{(Fl_{I})^r} (Fl_J)^r \end{tikzcd}\]
Indeed, according to Theorem \ref{th: ex stability under composition} the composition of morphisms of projective complex schemes whose general fiber is rationally connected is a morphism whose general fiber is also rationally connected.

In the same way, since the composition of unirational fibrations is a unirational fibration, Theorem \ref{th : RC fibrations bewteen W varieties} $ii)$ is easily deduced from Subsections  \ref{subsec: forgetting right Vi} and \ref{subsec: forgetting left V_i}.
\section{On the surjectivity of $\Pi_{I/J}$.}\label{sec: conditions surjectivity}
Let us recall our notations. 
We consider the flag variety $X$ parametrizing flags $V_{n_1} \subset \dots \subset V_{n_m} \subset \mathbb{C}^n$, where $V_{n_i}$ is a vector subspace of $\mathbb{C}^n$ of dimension $n_i$. For $1 \leq k \leq m$, we denote by $X_{\widehat{k}}$ the flag variety obtained from $X$ by forgetting the $k$-th vector space, and call $\pi_{\widehat{k}} : X \rightarrow X_{\widehat{k}}$ the forgetful morphism. 
Let $\mathbf{d}=(d_1, \dots, d_m)$ be an element in $E(X) \simeq \mathbb{N}^m$. We denote by $\pi_{\widehat{k}*} \mathbf{d} = (d_1, \dots, d_{k-1}, d_{k+1}, \dots, d_m)$ its pushforward to $E(X_{\widehat{k}})$. In order to ease notations, we set $d_0=0=d_{m+1}$. Finally, we denote by $\Pi_{\widehat{k}} : \overline{{\mathcal{M}}_{0,r}}(X, \mathbf{d}) \rightarrow \overline{{\mathcal{M}}_{0,r}}(X_{\widehat{k}}, \pi_{\widehat{k}*} \mathbf{d})$ the morphism induced by $\pi_{\widehat{k}} : X \rightarrow X_{\widehat{k}}$.

Our goal here is to give a condition on the degree $\mathbf{d}$, under which the morphism $\overline{{\mathcal{M}}_{0,r}}(X, \mathbf{d}) \rightarrow \overline{{\mathcal{M}}_{0,r}}(X_{\widehat{k}}, \pi_{\widehat{k}*} \mathbf{d}) \times_{{X_{\widehat{k}}}^r} X^r$ is surjective. We deduce from it examples of stabilized collections in Subsection \ref{subsec: Geometry ex stabilized collections}. 


\begin{lemme}\label{lem: min degree surjectivity M0,3}
	Consider an element $p$ in general position in $\overline{{\mathcal{M}}_{0,r}}(X_{\widehat{k}}, \pi_{\widehat{k}*} \mathbf{d})$. Denote by $\mathcal{F}_p = E_1 \subset \dots \subset E_{k-1} \subset E_{k+1} \subset \dots \subset E_m \subset \mathbb{C}^n$ the flag of vector bundles associated with $p$. Suppose for all $p<k$, $d_{k+1} \geq n_{k+1} \lceil \frac{d_{p}-d_{p-1}}{n_{p}-n_{p-1}} \rceil$. 
	Then: \begin{enumerate}[label=\roman*)] 
		\item $E_{k+1} = E_{k-1} \oplus E_{k+1}/E_{k-1}$;
		\item $E_{k+1}/E_{k-1} \simeq \mathcal{O}_{\mathbb{P}^1}(- \lfloor \frac{d_{k+1}-d_{k-1}}{n_{k+1} - n_{k-1}} \rfloor)^{\oplus n-r_2} \oplus \mathcal{O}_{\mathbb{P}^1}(- \lfloor \frac{d_{k+1}-d_{k-1}}{n_{k+1} - n_{k-1}} \rfloor +1)^{\oplus r_2} )$, where $r_2$ is the integer defined by $r_2 = (d_{k+1}-d_{k-1}) - (n_{k+1} - n_{k-1}) \lfloor \frac{d_{k+1}-d_{k-1}}{n_{k+1} - n_{k-1}} \rfloor$;
		\item There exists a $\mathbb{P}^1$-vector bundle $F$ of degree $-d$ where $d \leq d_{k-1} + (n_k-n_{k-1})( \left \lfloor \frac{d_{k+1}-d_{k-1}}{n_{k+1} - n_{k-1}} \right \rfloor +1)$ and of rank $n_k$ such that: $$E_{k-1} \subset F \subset E_{k+1} \subset \mathcal{O}_{\mathbb{P}^1}^{\oplus n}.$$
	\end{enumerate}
\end{lemme}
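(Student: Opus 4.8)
### Plan of Proof

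The plan is to analyze the Grothendieck decomposition of the vector bundles $E_{k-1}$ and $E_{k+1}$ associated with a general point $p$ of $\overline{\mathcal{M}_{0,r}}(X_{\widehat{k}}, \pi_{\widehat{k}*}\mathbf{d})$, and then interpolate a rank-$n_k$ subbundle $F$ of controlled degree between them. First I would establish \textit{i)}: by Proposition \ref{lem : description balanced bundles} \textit{ii)} applied to $X_{\widehat{k}}$, whose step directly above $E_{k-1}$ is $E_{k+1}$, the hypothesis $d_{k+1} \geq n_{k+1}\lceil\frac{d_p - d_{p-1}}{n_p - n_{p-1}}\rceil$ for all $p < k$ (which is exactly condition (\ref{eq: condition splittng}) for the index corresponding to $E_{k+1}$ in $X_{\widehat{k}}$, after relabeling) guarantees that $E_{k+1}$ splits as $E_{k-1} \oplus (E_{k+1}/E_{k-1})$. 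One must be slightly careful that the ceiling conditions written for the indices of $X$ translate to the corresponding conditions for the indices of $X_{\widehat{k}}$, where $n_{k-1}$ and $n_{k+1}$ are now consecutive steps; this is a bookkeeping check using the explicit description of balanced admissible sequences.

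Next, for \textit{ii)}: since $p$ is general, the admissible set of sequences is balanced (Proposition \ref{lem : description balanced bundles} \textit{i)}), so the quotient bundle $E_{k+1}/E_{k-1}$ of rank $n_{k+1}-n_{k-1}$ and degree $-(d_{k+1}-d_{k-1})$ is itself balanced among rank-$(n_{k+1}-n_{k-1})$, degree $-(d_{k+1}-d_{k-1})$ bundles — its splitting type is as equidistributed as possible. Dividing $d_{k+1}-d_{k-1}$ by $n_{k+1}-n_{k-1}$ with remainder $r_2$ gives exactly the claimed decomposition $\mathcal{O}_{\mathbb{P}^1}(-\lfloor\frac{d_{k+1}-d_{k-1}}{n_{k+1}-n_{k-1}}\rfloor)^{\oplus (n_{k+1}-n_{k-1})-r_2} \oplus \mathcal{O}_{\mathbb{P}^1}(-\lfloor\frac{d_{k+1}-d_{k-1}}{n_{k+1}-n_{k-1}}\rfloor - 1)^{\oplus r_2}$. (I note the statement as printed writes "$n-r_2$" and "$+1$" where I would expect "$(n_{k+1}-n_{k-1})-r_2$" and "$-1$"; I would phrase it cleanly in terms of the rank $n_{k+1}-n_{k-1}$ and keep the sign conventions consistent with $\deg = -d$.)

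For \textit{iii)}, which I expect to be the main point, the idea is to build $F$ by taking $E_{k-1}$ together with $n_k - n_{k-1}$ of the line-bundle summands of $E_{k+1}/E_{k-1}$ — choosing the summands of \emph{largest} degree, i.e. those isomorphic to $\mathcal{O}_{\mathbb{P}^1}(-\lfloor\frac{d_{k+1}-d_{k-1}}{n_{k+1}-n_{k-1}}\rfloor)$ first, then if necessary the ones of degree $-\lfloor\frac{d_{k+1}-d_{k-1}}{n_{k+1}-n_{k-1}}\rfloor - 1$. Concretely, pick a subbundle $G \subset E_{k+1}/E_{k-1}$ of rank $n_k - n_{k-1}$ which is a direct sum of line bundle summands of that splitting, with each summand of degree $\geq -(\lfloor\frac{d_{k+1}-d_{k-1}}{n_{k+1}-n_{k-1}}\rfloor + 1)$; then $\deg G \geq -(n_k-n_{k-1})(\lfloor\frac{d_{k+1}-d_{k-1}}{n_{k+1}-n_{k-1}}\rfloor+1)$. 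Letting $F$ be the preimage of $G$ under $E_{k+1} \to E_{k+1}/E_{k-1}$ — equivalently, using the splitting from \textit{i)}, $F = E_{k-1}\oplus G$ — gives a rank-$n_k$ subbundle with $E_{k-1}\subset F\subset E_{k+1}\subset \mathcal{O}_{\mathbb{P}^1}^{\oplus n}$, and $\deg F = \deg E_{k-1} + \deg G \geq -d_{k-1} - (n_k-n_{k-1})(\lfloor\frac{d_{k+1}-d_{k-1}}{n_{k+1}-n_{k-1}}\rfloor+1)$, i.e. $\deg F = -d$ with $d \leq d_{k-1}+(n_k-n_{k-1})(\lfloor\frac{d_{k+1}-d_{k-1}}{n_{k+1}-n_{k-1}}\rfloor+1)$, as required. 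The only genuine subtlety here is ensuring $F$ is a genuine subbundle (saturated, locally a direct summand) rather than merely a subsheaf — but since $G$ is a direct summand of $E_{k+1}/E_{k-1}$ and the sequence $0\to E_{k-1}\to E_{k+1}\to E_{k+1}/E_{k-1}\to 0$ is split by \textit{i)}, $F = E_{k-1}\oplus G$ is manifestly a direct summand of $E_{k+1}$, hence a subbundle, and the composition $F\hookrightarrow E_{k+1}\hookrightarrow \mathcal{O}_{\mathbb{P}^1}^{\oplus n}$ is a bundle inclusion.
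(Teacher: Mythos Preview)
Your proposal is correct and follows essentially the same approach as the paper: parts \textit{i)} and \textit{ii)} are deduced from Proposition \ref{lem : description balanced bundles} and the description of balanced bundles, and for \textit{iii)} the paper likewise constructs $F$ as $E_{k-1}\oplus F_0$ where $F_0$ is a direct sum of $n_k-n_{k-1}$ line-bundle summands of $E_{k+1}/E_{k-1}$, then bounds the degree exactly as you do. Your observation about the typos in the statement (the exponent ``$n-r_2$'' is meant with the local abbreviation $n=n_{k+1}-n_{k-1}$, and the ``$+1$'' should be ``$-1$'' so that the degrees sum to $-(d_{k+1}-d_{k-1})$) is accurate and matches how the paper's own proof of \textit{iii)} actually computes.
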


\begin{proof}
	\begin{enumerate}[label=\roman*)] 
		\item Cf. Proposition \ref{lem : description balanced bundles}.
		\item Cf. the description of balanced flags of vector bundles in Section \ref{sec: balanced bundles}.
		\item Call $d:= d_{k+1}-d_{k-1}$ and $n:=n_{k+1} - n_{k-1}$. Consider two non negative integers $p_1$ and $p_2$ such that $p_1 < n-r_2$, $p_2 < r_2$, and $p_1+p_2=n_k-n_{k-1}$. \\
		Denote by $F_0$ the rank $n_k-n_{k-1}$ subbundle of $E_{k+1}$ associated with $\mathcal{O}_{\mathbb{P}^1}(- \lfloor \frac{d}{n} \rfloor)^{\oplus p_1} \oplus \mathcal{O}_{\mathbb{P}^1}(- \lfloor \frac{d}{n} \rfloor +1)^{\oplus p_2} )$ by the isomorphism $E_{k+1}/E_{k-1} \simeq \mathcal{O}_{\mathbb{P}^1}(- \lfloor \frac{d}{n} \rfloor)^{\oplus n-r_2} \oplus \mathcal{O}_{\mathbb{P}^1}(- \lfloor \frac{d}{n} \rfloor +1)^{\oplus r_2} )$ of $ii)$. \\
		Then $F:=E_{k-1}\oplus F_0$ is a vector bundle of rank $n_k$, of degree the opposite of $$d_{k-1} + p_1(\lfloor \frac{d}{n} \rfloor) +p_2(\lfloor \frac{d}{n} \rfloor +1) \leq d_{k-1} + (n_k-n_{k-1}) (\lfloor \frac{d}{n} \rfloor +1).$$ Furthermore, $F$ satisfies $E_{k-1} \subset F \subset E_{k+1} \subset \mathcal{O}_{\mathbb{P}^1}^{\oplus n}.$
	\end{enumerate}	
\end{proof}

\begin{proposition}\label{prop : MX in MxX^r surjective}
	\begin{enumerate}[label=\roman*)] 
		\item Suppose $d_{k-1} \leq \lfloor \frac{d_{k+1}}{n_{k+1}} \rfloor$ and $d_k \geq d_{k-1} + (n_k-n_{k-1})(\lfloor \frac{d_{k+1}-d_{k-1}}{n_{k+1} - n_{k-1}}\rfloor +1)$. \\
		Then the morphism $$\Pi_{\widehat{k}} : \overline{{\mathcal{M}}_{0,3}}(X, \mathbf{d}) \rightarrow \overline{{\mathcal{M}}_{0,3}}(X_{\widehat{k}}, \pi_{\widehat{k}*} \mathbf{d})$$ is surjective;
		\item Consider an integer $\tau$ such that, for all $d_k \geq \tau$, the morphism $\Pi_{\widehat{k}} : \overline{{\mathcal{M}}_{0,3}}(X, \mathbf{d}) \rightarrow \overline{{\mathcal{M}}_{0,3}}(X_{\widehat{k}}, \pi_{\widehat{k}*} \mathbf{d})$ is surjective.\\
		Suppose $d_k \geq {\tau}+r(n_k-n_{k-1})$. 
		Then the morphism $$\Pi_{\widehat{k}} \times \ev^r: \overline{{\mathcal{M}}_{0,r}}(X, \mathbf{d}) \rightarrow \overline{{\mathcal{M}}_{0,r}}(X_{\widehat{k}}, \pi_{\widehat{k}*} \mathbf{d}) \times_{{X_{\widehat{k}}}^r} X^r$$ is surjective.
	\end{enumerate}
\end{proposition}

\begin{proof}
	\begin{enumerate}[label=\roman*)] 
		\item Call $\mathbf{d}_{\widehat{k}}:=\pi_{\widehat{k}*}\mathbf{d}$. Consider an element $p$ in general position in $\overline{{\mathcal{M}}_{0,3}}(X_{\widehat{k}}, \mathbf{d}_{\widehat{k}})$. Since $X$ is convex, $\mathcal{M}_{0,3}(X_{\widehat{k}}, \mathbf{d}_{\widehat{k}})$ is a dense open subset in $\overline{{\mathcal{M}}_{0,3}}(X_{\widehat{k}}, \mathbf{d}_{\widehat{k}})$; hence, according to Lemma \ref{lem : Intersection dense open is dense}, $p$ is in $\mathcal{M}_{0,3}(X_{\widehat{k}}, \mathbf{d}_{\widehat{k}})$. We will construct an antecedent of $p$ by $\Pi_{\widehat{k}}$ as a concatenation of two morphisms $f: \mathbb{P}^1 \rightarrow X$ and $g : \mathbb{P}^1 \rightarrow X$. \\ 
		\textbf{Construction of $f$.} Denote by $\mathcal{F}_p := E_1 \subset \dots \subset E_{k-1} \subset E_{k+1} \subset \dots \subset E_m \subset \mathcal{O}_{\mathbb{P}^1}^{\oplus n}$ the flag of vector bundles associated with $p$. According to Lemma \ref{lem: min degree surjectivity M0,3} $iii)$, there exists a $\mathbb{P}^1$-vector bundle $F$ of degree $d \leq d_{k-1} + (n_k-n_{k-1})(\lfloor \frac{d_{k+1}-d_{k-1}}{n_{k+1} - n_{k-1}}\rfloor +1)$ and of rank $n_k$ defining the following flag of vector bundles: $$\mathcal{F} := E_1 \subset \dots \subset E_{k-1} \subset F \subset E_{k+1} \subset \dots \subset E_m \subset \mathcal{O}_{\mathbb{P}^1}^{\oplus n}.$$ 
		By the functorial definition of flag varieties, $\mathcal{F}$ defines a degree $(d_1, \dots, d_{k-1}, d, d_{k+1}, \dots, d_m)$ morphism $f: \mathbb{P}^1 \rightarrow X$, such that $f_p : \mathbb{P}^1 \rightarrow X_{\widehat{k}}$, factors through $f$. We have denoted by $f_p$ the morphism associated with $p$. \\
		\textbf{Construction of $g$.} Denote by $g: \mathbb{P}^1 \rightarrow X$ a rational curve of class $(d_k-d) \sigma_k$, where $\sigma_k$ is the degree one Schubert class whose pushforward to $\Gr(n_k,n)$ is not $0$. Up to multiplication by an element in $\SL_n$, we can assume $g(\mathbb{P}^1)$ intersects $f(\mathbb{P}^1)$ on a point that is not marked. Notice the image of $\pi_{\widehat{k}} \circ g : \mathbb{P}^1 \rightarrow X_{\widehat{k}}$ is a point.\\
		\textbf{Finally,} the stable map $\mathbb{P}^1 \cup \mathbb{P}^1 \rightarrow X$ induced by $f$ and $g$ defines an element in $\overline{{\mathcal{M}}_{0,3}}(X, \mathbf{d})$, whose projection by $\Pi_{\widehat{k}}$ is $p$.
		\item Call $\mathcal{M}' := \overline{{\mathcal{M}}_{0,r}}(X_{\widehat{k}}, \pi_{\widehat{k}*} \mathbf{d})$. Fix a genus zero stable map $p := (g_p : \mathbb{P}^1 \rightarrow \mathcal{M}', \{p_1, \dots, p_r\})$ corresponding to a general point in $\mathcal{M}'$. According to Lemma \ref{lem : dense intersection with open for homogeneous spaces}, for any dense open subset $U$ of $\mathcal{M}'$, the equidimensional variety $X^r \times_{(X_{\widehat{k}})^r} U$ is a dense open subset of $X^r \times_{(X_{\widehat{k}})^r} \mathcal{M}'$; hence it is enough to prove that $\Pi_{\widehat{k}}^{-1}(\nu)$ dominates $X^r \times_{(X_{\widehat{k}})^r} \{p\}$ for a general point $p$ in  $\mathcal{M}'$.\\
		Fix an element $(x, p)$ in $X^r \times_{(X_{\widehat{k}})^r} \{p\}$, i.e. fix $r$ points $x_1$, $\dots$, $x_r$ on $X$ such that each point $x_i$ gets projected by $\pi_{\widehat{k}} : X \rightarrow X_{\widehat{k}}$ to $g_p(p_i)$. Our goal is to construct an antecedent of $(x, p)$ by $\Pi_{\widehat{k}} : \overline{{\mathcal{M}}_{0,r}}(X, \mathbf{d}) \rightarrow  \mathcal{M}'$.\\
		We proceed in three steps. First, notice there is a degree $(\mathbf{d} - r(n_k-n_{k-1})\sigma_k)$ morphism $f: \mathbb{P}^1 \rightarrow X$ getting projected to $p$, i.e. such that $g_p = \pi_{\widehat{k}} \circ f$. Furthermore, we construct rational curves $C_i \rightarrow X$ of degree $n_k-n_{k-1}$ joining the points $f(p_i)$ and $x_i$. Finally, by concatenation, this gives a degree $\mathbf{d}$ stable map $[\mathbb{P}^1 \cup_i C_i \rightarrow X, \{p_1, \dots, p_r\}]$ which is an antecedent of $(x, g)$, as researched.\\
		\textbf{Existence of $f$.} We call: \begin{equation*} \boldsymbol{\delta} := (d_1, \dots, d_{k-1}, d_k- r(n_k-n_{k-1}), d_{k+1}, \dots, d_m) \in A_1(X).\end{equation*}
		Notice that, since $g$ is in general position in $\mathcal{M}_{0,3}(X, \mathbf{d})$ and the morphism $\Pi_{\widehat{k}} : \overline{{\mathcal{M}}_{0,3}}(X, \boldsymbol\delta) \rightarrow \overline{{\mathcal{M}}_{0,r}}(X_{\widehat{k}}, \pi_{\widehat{k}*} \mathbf{d})$ is surjective, its reciprocal image $\Pi_{\widehat{k}}^{-1}(p)$ has a non empty intersection with the dense open subset ${{\mathcal{M}}_{0,3}}(X, \boldsymbol\delta)$ of $\overline{{\mathcal{M}}_{0,r}}(X, \boldsymbol \delta)$. We can thus find an element $[f : \mathbb{P}^1 \rightarrow X, \{p_1, \dots, p_r\}]$ in $\Pi_{\widehat{k}}^{-1}(\nu) \cap \mathcal{M}_{0,r}(X,\boldsymbol\delta)$.\\
		\textbf{Construction of $C_i \rightarrow X$.} Fix an integer $i$ in $\{1, \dots, r\}$. Denote by $\mathcal{F}_{p_i} = \{\ W_1^i \dots \subset W_m^i \subset \mathbb{C}^n\}$ the germ at $p_i$ of the flag of $\mathbb{P}^1$-vector bundles associated with $f$.  Since $f: \mathbb{P}^1 \rightarrow X$ projects to $g_p$, the flag of vector spaces associated with $g_p(p_i)$ is: \begin{equation*} g_p(p_i)= \{ W_1^i \subset \dots \subset W_{k-1}^i \subset  W_{k+1}^i \subset \dots \subset W_m^i \subset \mathbb{C}^n \}.\end{equation*} 
		Denote by $<u_{n_{k-1}+1}, \dots, u_{n_k}>$ a basis of $W_k^i/W_{k-1}^i$. \\
		Recall $x_i$ is a point in $X$ getting projected to $g_p(p_i)$, i.e. corresponds to a flag of vector spaces $$W_1^i \subset \dots \subset W_{k-1}^i \subset V \subset  W_{k+1}^i \subset \dots \subset W_m^i \subset \mathbb{C}^n.$$
		Since $\ev :\overline{\mathcal{M}_{0,2}}(\Gr(n_k-n_{k-1}, n_{k+1}-n_{k-1}), d) \rightarrow \Gr(n_k-n_{k-1}, n_{k+1}-n_{k-1})^2$ is surjective for $d \geq (n_k-n_{k-1})$ (cf. for example \cite{buch2011} Corollary 2.2), there exists a degree $(n_k-n_{k-1})$ genus zero stable map $$h_i : C_i = \bigcup_{j=1}^N \mathbb{P}^1 \rightarrow \Gr(n_k-n_{k-1}, n_{k+1}-n_{k-1})$$ joining the rank $(n_k-n_{k-1})$ vector spaces $V/W_{k-1}^i$ and $W_k^i/W_{k-1}^i$ within $W_{k+1}^i/W_{k-1}^i \simeq \C^{n_{k+1}-n_{k-1}}$. \\
		By the functorial definition of Grassmannians, each morphism ${h_i}_{| \P^1} : \mathbb{P}^1 \rightarrow \Gr(n_k-n_{k-1}, n_{k+1}-n_{k-1})$ defines a flag of $\mathbb{P}^1$-vector bundles $$E_j^i \subset  \O_{\P^1}^{\oplus n_{k+1}-n_{k-1}},$$ which has one fiber given by the flag of vector spaces $V_j/W_{k-1}^i  \subset W_{k+1}^i/W_{k-1}^i \simeq \C^{n_{k+1}-n_{k-1}}$, and another fiber given by $V_{j+1}/W_{k-1}^i \subset W_{k+1}^i/W_{k-1}^i \simeq \C^{n_{k+1}-n_{k-1}}$, where $V_1=V$ and $V_N=W_k$. Now consider the following flag of $\mathbb{P}^1$-vector bundles: $$\mathcal{F}_{i,j} := W_1^i \subset \dots \subset W_{k-1}^i \subset W_{k-1}^i \oplus E_j^i \subset  W_{k+1}^i \subset \dots \subset W_m^i \subset \mathcal{O}_{\mathbb{P}^1}^{\oplus n}.$$
		$\mathcal{F}_{i,j}$ defines a morphism $f_{i,j} : \mathbb{P}^1 \rightarrow X$ which is projected to $g_p(p_i)$. Furthermore, the morphisms $f_{i,j}$ define a degree $(n_k-n_{k-1})$ morphism from the tree of $\mathbb{P}^1$'s $C_i$ to $X$ which joins by construction $x_i$ and $g_p(p_i)$. We will denote by $f_i : C_i \rightarrow X$ the morphism thus obtained. \\
		\textbf{Construction of an antecedent of $p$.} Finally, consider the stable map $q=[\mathbb{P}^1 \cup_i C_i \rightarrow X, \{p_1, \dots, p_r\}]$ associated with $[f : \mathbb{P}^1 \rightarrow X]$ and $f_i : C_i \rightarrow X$, where we place each of the marked points $p_i$ on an antecedent $f_i^{-1}(x_i)$ on $C_i$. Since each of the map $f_i: C_i \rightarrow X$ projects to $g_p(p_i)$ in $X_{\widehat{k}}$, the morphism $\overline{{\mathcal{M}}_{0,r}}(X, \mathbf{d}) \rightarrow \mathcal{M}'$ collapses each component $C_i$ and sends $q$ to $p$. 
	\end{enumerate}
\end{proof}

\subsection{Examples of stabilized collections.}\label{subsec: Geometry ex stabilized collections} \begin{enumerate}
	\item \textsc{Forgetting the first vector space.} Suppose $I=\{i_1, \dots, i_m\}$  is a collection of integers satisfying $1 < i_1 < \dots < i_m <n$ and $J= \{j_{1},  i_1, \dots, i_m\}$ satisfies $0 < j_1  < i_1$.  Note that then $Fl_J \rightarrow Fl_I$ is the morphism forgetting the first vector space. By definition, if the following condition is satisfied, the collection $\{I,J,\d\}$ is stabilized with respect to $\mu=2$. \begin{itemize}
		\item The morphism $\Pi_{J \geq 1} \times ev: \overline{{\mathcal{M}}_{0,r}}(Fl_J, \d) \rightarrow \overline{{\mathcal{M}}_{0,r}}(Fl_I, (\pi_{I/J})_* \d) \times_{(Fl_I)^r} (Fl_J)^r$ is surjective.
	\end{itemize}
	Note that since there is no integer $i_{k'}$ such that $i_{k'} < j_k$, the collection $\{I,J,\d\}$ does not have to satisfy any other condition. Furthermore, according to Proposition \ref{prop : MX in MxX^r surjective} $i)$, if $d_0=0 \leq \floor{\frac{d_2}{i_1}}$ and $d_1 \geq d_0 + (j_1-0)(\floor{\frac{d_2-d_0}{i_1-0}}+1)=j_1 (\floor{\frac{d_2}{i_1}}+1)$, then the morphism $\overline{{\mathcal{M}}_{0,3}}(Fl_J, \d) \rightarrow \overline{{\mathcal{M}}_{0,3}}(Fl_I, (\pi_{I/J})_* \d)$ is surjective. Note that the first condition is always satisfied. Hence according to Proposition \ref{prop : MX in MxX^r surjective} $ii)$, if $$d_1 \geq j_1 (\floor{\frac{d_2}{i_1}}+1) + r(j_1-0) = j_1(\floor{\frac{d_2}{i_1}}+r+1)$$ then the morphism $\Pi_{J \geq 1} \times ev$ is surjective.
	
	To conclude, we observe the following property: \begin{center} if $d_{1} \geq j_{1} \left( r +1+\floor{d_{2}/i_1} \right)$ the collection $(I,J, (d_{1}, d_2, \dots d_{m+1}))$ is stabilized. \end{center}
	
	\item \textsc{Projection from a Grassmannian to the point.} Suppose $J=\{k\}$ and $I=\emptyset$. Then $Fl_J$ is the Grassmannian $\Gr(k,n)$ and $Fl_J = \Gr(k,n) \rightarrow Fl_I =\Spec \C$ is the projection to the point. By definition, the collection $\{I,J,d\}$ is stabilized iff the morphism $$\Pi_{J \geq 1} \times ev: \overline{{\mathcal{M}}_{0,r}}(\Gr(k,n),d) \rightarrow \overline{{M}_{0,r}} \times (\Gr(k,n))^r$$ is surjective. Since the projection $\overline{{\mathcal{M}}_{0,3}}(\Gr(k,n),d) \rightarrow \overline{{M}_{0,3}}= \Spec \C$ is always surjective, according to Proposition \ref{prop : MX in MxX^r surjective} $ii)$, if $$d \geq r(k-0)=rk,$$ then $\Pi_{J \geq 1} \times ev$ is surjective. Hence: \begin{center} if $d \geq rk$ the collection $\{I,J,d \}$ is stabilized. \end{center}
	Finally, since the collection $\{I,J,d \}$  is stabilized, according to Lemma \ref{lem: Geometry dual stabilized collection}, the dual collection $\{I^*,J^*,d^*=d \}$ is also stabilized. Note that $Fl_{J^*}=\Gr(n-k,n)$. Hence: \begin{center} if $d \geq r(n-k)$ the collection $\{I,J,d \}$ is also stabilized. \end{center}
\end{enumerate}


\newpage

\section*{Appendix A}

Let $Y$ be an irreducible projective complex variety, let $\gamma \in H_2(Y, \mathbb{Z})$, and $r \geq 0$. We first recall the usual definitions associated with the space of stable maps $\overline{{\mathcal{M}}_{0,r}}(Y, \gamma)$. 
The data of a morphism $\varphi : C \rightarrow Y$ from a genus zero projective connected reduced and nodal curve $C$ to $Y$ along with $r$ distinct non singular points $\{c_1, \dots, c_r\}$ on $C$ is called a stable map if any irreducible component of $C$ getting sent to a point contains at least three points which are nodal or marked. 

Let $S$ be a complex scheme. A family of $r$-pointed quasi-stable maps over $S$ consists of the data of morphisms $\mathcal{C} \rightarrow Y$ and $\pi: \mathcal{C} \rightarrow S$ along with $r$ sections $p_i: S \rightarrow \mathcal{C}$, such that each geometric fiber is a genus zero projective connected reduced and nodal curve $C_s$ and the marked points $p_i(s)$ are distinct and non singular. By construction, $\overline{{\mathcal{M}}_{0,r}}(Y, \gamma)$ is the coarse moduli space parametrizing isomorphism classes of families over $S$ of $r$-pointed, quasi-stable maps from genus zero curves to $Y$ representing the class $\gamma$, and such that each geometric fiber $\mathcal{C}_s$ over a point in $S$ defines a stable map $\mathcal{C}_s \rightarrow Y$ \cite{fulton1996notes}.

\begin{lemme}
	\label{lem : morphisme Y x P dans M(X)}
	Fix distinct points $p_1$, $\dots$, $p_r$ in $\P^1$. 
	Let $S$ be a complex scheme, and let $\mu : S \times \mathbb{P}^1 \rightarrow Y$ be a morphism such that, for all closed points $s \in S$, the restriction $\mu_s$ of $\mu$ to $s \times \mathbb{P}^1$ verifies $\mu_{s*} [\mathbb{P}^1] = \gamma \in A_1(Y)$. Let $pr_1$ be the natural projection from $S \times \mathbb{P}^1$ to $S$, and $p_i : S \rightarrow S \times \mathbb{P}^1$, $1 \leq i \leq r$, be the morphisms respectively defined by $p_i : s \rightarrow (s, p_i)$.
	Then the family \[ \begin{tikzcd}
	S \times \mathbb{P}^1 \arrow{r}{\mu}   \arrow{d}{pr_1}  & Y \\%
	S & 
	\end{tikzcd}
	\] together with the sections $p_i : S \rightarrow S \times \mathbb{P}^1$ is a stable family on $S$ of maps from $r$-pointed genus $0$ curves to $Y$ representing the class $\gamma$.
	
	Furthermore, the morphism $f : S \rightarrow \overline{\mathcal{M}_{0,r}(Y, \gamma)}$ associated with this family sends a point $s \in S$ into the point of $\mathcal{M}_{0,r}(Y, \gamma)$ associated with $[\mu_s : \mathbb{P}^1 \rightarrow Y, \{p_1, \dots, p_r\}]$.
\end{lemme}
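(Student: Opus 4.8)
The plan is to verify directly that the data $(pr_1 \colon S \times \mathbb{P}^1 \to S,\ \{p_i \colon S \to S \times \mathbb{P}^1\},\ \mu \colon S \times \mathbb{P}^1 \to Y)$ constitutes a family of $r$-pointed genus $0$ stable maps over $S$ representing the class $\gamma$, and then to appeal to the fact that $\overline{{\mathcal{M}}_{0,r}}(Y,\gamma)$ is the coarse moduli space of such families in order to produce the classifying morphism $f \colon S \to \overline{{\mathcal{M}}_{0,r}}(Y,\gamma)$, whose value on closed points is read off from the fibres of the family.

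First I would check the quasi-stability axioms. The projection $pr_1$ is flat and projective, being the trivial $\mathbb{P}^1$-bundle over $S$; every geometric fibre is $\mathbb{P}^1$, a smooth (hence reduced and nodal) connected projective curve of genus $0$; and the sections $p_i$ are the constant sections $s \mapsto (s,p_i)$, so they land in the smooth locus of each fibre and are pairwise disjoint precisely because the points $p_i \in \mathbb{P}^1$ were chosen distinct. Thus $(pr_1,\{p_i\},\mu)$ is a family of quasi-stable maps over $S$ in the sense recalled in Appendix A. Stability is then checked fibrewise: over a closed point $s$ the fibre is the single irreducible curve $\mathbb{P}^1$ together with the map $\mu_s$, and since $\mu_{s*}[\mathbb{P}^1]=\gamma$ the unique component is not contracted (this uses $\gamma \neq 0$, which holds in every application of the lemma in this text; otherwise one invokes $r\geq 3$), so the stability condition is vacuous. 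Hence each geometric fibre $\mathcal{C}_s \to Y$ is a stable map and the family is a family of stable maps over $S$ representing $\gamma$.

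It remains to produce $f$ and to identify its values. By the very construction of $\overline{{\mathcal{M}}_{0,r}}(Y,\gamma)$ as a coarse moduli space, the assignment to $S$ of the family $(pr_1,\{p_i\},\mu)$ determines a unique morphism $f \colon S \to \overline{{\mathcal{M}}_{0,r}}(Y,\gamma)$, compatible with base change in $S$. For a closed point $s \in S$, pulling the family back along $\Spec \kappa(s) \to S$ yields exactly the stable map $[\mu_s \colon \mathbb{P}^1 \to Y,\{p_1,\dots,p_r\}]$; since the $\kappa(s)$-points of the coarse space are in bijection with isomorphism classes of stable maps over $\kappa(s)$, the point $f(s)$ is the class of this stable map. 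As the source of $\mu_s$ is the irreducible curve $\mathbb{P}^1$, this class lies in the dense open subvariety $\mathcal{M}_{0,r}(Y,\gamma) \subset \overline{{\mathcal{M}}_{0,r}}(Y,\gamma)$, which is what is asserted.

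I do not expect a genuine obstacle here: the statement is essentially formal once the quasi-stability and stability axioms are checked, and both are immediate for the trivial $\mathbb{P}^1$-bundle with constant sections. The only point deserving a word of care is the stability clause, which forces the harmless hypothesis that $\mu_s$ be non-constant (equivalently $\gamma \neq 0$, or $r \geq 3$); this is satisfied in the use made of the lemma in Section~\ref{sec : geometry general fiber}, where $Y$ is a flag variety and $\gamma$ is a high effective degree.
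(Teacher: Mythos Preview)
Your proof is correct and follows essentially the same route as the paper: verify directly that the trivial $\mathbb{P}^1$-bundle with constant sections and map $\mu$ satisfies the axioms of a family of stable maps, then invoke the coarse moduli property of $\overline{\mathcal{M}_{0,r}}(Y,\gamma)$ to obtain $f$ and identify its values on closed points via base change. Your explicit remark on the stability clause (requiring $\gamma\neq 0$ or $r\geq 3$) is a welcome clarification that the paper's proof leaves implicit.
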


\begin{proof}
	Let $s$ be a closed point of $S$. Then the geometric fiber in $s$ verifies $pr_1^{-1}(s) = (S \times \mathbb{P}^1) \times_S \Spec (\mathbb{C}(s)) \simeq \Spec(\mathbb{C}) \times \mathbb{P}^1$, i.e. is an irreducible rational projective curve. Hence $pr_1^{-1}(s)$ is a projective, connected, smooth, reduced, rational curve with $r$ distinct non singular points.
	
	Since $\mathrm{char} (\mathbb{C})=0$, $\mathbb{P}^1$ is flat on $\Spec \mathbb{Z}$, and $pr_1 : S \times \mathbb{P}^1 \rightarrow S$ is a flat morphism. Furthermore, $pr_1$ may be written down as $S \times \mathbb{P}^1 \simeq \mathbb{P}_{\mathbb{Z}}^1 \times \Spec \mathbb{C} \times S \rightarrow \Spec \mathbb{C} \times S \simeq S$, hence is a projective morphism. The family $(pr_1 : S \times \mathbb{P}^1 \rightarrow S, \{p_1, \dots , p_r\})$ is thus a family of $r$-pointed genus $0$ quasi stable curves on $S$.
	
	For $s$ in $S$, denote by $\mu_s : C_s \rightarrow Y$ the restriction of $\mu$ to the geometric fiber $C_s := pr_1^{-1}(s)$. Note that $\mu_s$ is a map  from the $r$-pointed quasi-stable map $(C_s, \{p_1, \dots, p_r\})$ to $Y$. The map $\mu_s$ is stable, and by hypothesis represents the class $\gamma$. The family $(pr_1: S \times \mathbb{P}^1 \rightarrow S, \{p_1, \dots, p_r\}, \mu : S \times \mathbb{P}^1 \rightarrow Y)$ hence is a stable family on $S$ of maps from $r$-pointed genus $0$ curves to $Y$ representing the class $\gamma$, i.e. $o=(pr_1: S \times \mathbb{P}^1 \rightarrow S, \{p_1, \dots, p_r\}, \mu : S \times \mathbb{P}^1 \rightarrow Y)/ \sim$ is an element of $\overline{\mathcal{M}_{0,r}}(Y, \gamma)(S)$.
	
	Let $s : \mathrm{Spec}(\mathbb{C}) \rightarrow S$ be a point of $S$. Since $\overline{\mathcal{M}_{0,r}}(Y, \gamma)$ is a coarse moduli space, the functor $\overline{\mathcal{M}_{0,r}}(Y, \gamma)$ associates to $s$ the map from $\overline{\mathcal{M}_{0,r}}(Y, \gamma)(S) \rightarrow \overline{\mathcal{M}_{0,r}}(Y, \gamma)(\mathrm{Spec}(\mathbb{C}))$ associating to a stable family $(\pi : \mathcal{C} \rightarrow S, \{p_1, \dots, p_r\}, \mu : \mathcal{C} \rightarrow Y) / \sim$ corresponding to an element in $\overline{\mathcal{M}}_{0,r}(Y, \gamma)(S)$ the isomorphism class of: 	\[ \begin{tikzcd}
	\Spec (\mathbb{C}) \times_S \mathcal{C}  \arrow{r}   \arrow{d}  & \mathcal{C} \arrow{r}{\mu} \arrow{d} & Y\\%
	\Spec(\mathbb{C}) \arrow{r}{s} &  S
	\end{tikzcd} \]
	We observe $\overline{\mathcal{M}}_{0,r}(Y, \gamma)(s)$ sends $o$ to the element in $\overline{\mathcal{M}_{0,r}}(Y, \gamma)(\mathrm{Spec}(\mathbb{C}))$ associated with $(C_s, \{p_1, \dots, p_r\}, \mu_s : C_s \rightarrow Y)$.
	
	Since $\overline{{M}_{0,r}}(Y, \gamma)$ is a coarse moduli space, there exists a natural transformation of functors :
	\begin{equation*}
	\Phi : \overline{\mathcal{M}_{0,r}}(Y, \gamma) \rightarrow \mathcal{H}om_{Sch}(S, \overline{\mathcal{M}_{0,r} (Y, \gamma)})
	\end{equation*}
	Let $f : S \rightarrow \overline{{M}_{0,r}(Y, \gamma)}$ be the morphism associated by $\Phi$ with $o=(pr_1: S \times \mathbb{P}^1 \rightarrow S, \{p_1, \dots, p_r\}, \mu : S \times \mathbb{P}^1 \rightarrow Y)/ \sim$, i.e. $f = \Phi(o)$. We obtain :
	\begin{equation*}
	\Phi(S)(o) \circ s = \Phi(\mathrm{Spec}(\mathbb{C}))(\overline{\mathcal{M}_{0,r}}(Y, \gamma)(s)),
	\end{equation*}
	i.e. $f$ sends $s$ to the point of $\overline{\mathcal{M}_{0,r}(Y, \gamma)}$ associated with $(C_s, \{p_1, \dots, p_r\}, \mu_s)$.
\end{proof}

\chapter{A comparison formula between genus $0$ correlators of partial flag varieties}\label{chap : stabilization correlators}

\section{Introduction}

Removing a subspace from a partial flag gives another partial flag composed of fewer subspaces. This induces a forgetful map between the corresponding flag varieties. 
The main result of this chapter is that quantum $K$-theoretical genus~0 correlators of these two flag varieties are equal to each other, provided their degree is high enough.

\subsection{Genus zero correlators of $G/P$.}  Quantum $K$-theory takes its simplest form for homogeneous varieties, and genus zero quantum $K$-theory actually first introduced and defined in this setting by Givental some twenty years ago \cite{givental}.

Let $X$ be a homogeneous variety $G/P$, where $G$ is an algebraic group, and $P$ is a parabolic subgroup of $G$. Fix a degree $d$ in the semi-group $E(X)$ of effective classes of $1$-cycles in $A_1(X) \simeq H_2(X,\mathbb{Z})$. Denote by $\overline{\mathcal{M}_{0,n}}(X, d)$ the moduli space parametrizing stable maps $(\mu: C \rightarrow X, \{p_1, \dots, p_n\})$ from a genus zero curve $C$ to $X$ verifying $\mu_* [C]={d}$, with $n$ marked points $p_1$, $\dots$, $p_n$ on $C$. If $X$ is a homogeneous variety, $\overline{\mathcal{M}_{0,n}}(X, d)$ is an irreducible rational variety \cite{kim2001connectedness, thomsen1998irreducibility}, and the subvariety ${\mathcal{M}_{0,n}}(X, d)$ parametrizing stable maps $(\mu: \mathbb{P}^1 \rightarrow X, \{p_1, \dots, p_n\})$ from the projective line to $X$ is a dense open subset of $\overline{\mathcal{M}_{0,n}}(X, d)$ \cite{fulton1996notes}.
The \textit{evaluation morphism} $\ev_i : \overline{\mathcal{M}_{0,n}}(X, d) \rightarrow X$ assigns to a map the image of its $i$-th marked point. If $X$ is a homogeneous variety, the evaluation morphism is flat.

Consider an algebraic group $H$ acting on $X=G/P$. Let $\phi_1$, $\dots$, $\phi_r$ be elements in the Grothendieck ring $K_H(X)$ of $H$-equivariant coherent sheaves on $X$. The \textit{equivariant correlator} $\langle \phi_1, \dots , \phi_r \rangle_{H, d}^X$ of elements $\phi_i$ in $K_\circ(X)$  is defined as:
\begin{equation*}
\langle \phi_1, \dots , \phi_r \rangle_{H, d}^X := \chi_H \left(\ev_1^*(\phi_1) \cdot \: \dots \: \cdot \ev_r^*(\phi_r) \cdot [\mathcal{O}_{\overline{\mathcal{M}_{0,r}}(X, d)}]\right),
\end{equation*}
where $\chi_H$ denotes the $H$-equivariant sheaf Euler characteristic. 

\subsection{$H$-equivariant decomposition of the diagonal.} Let $H$ be a complex linear algebraic group, let $X$ be an irreducible variety with an $H$-action. Let $R(H)$ denote the representation ring of $H$, and $K_H(X)$ denote the Grothendieck ring of $H$-equivariant coherent sheaves on $X$. We say $X$ \textit{admits a diagonal decomposition in $K_H(X)$} if there exists a basis $(\alpha)_{i \in I}$ of the $R(H)$-module $K_H(X)$, and a dual basis $(\alpha^\vee)_{i \in I}$ of $K^H(X)$ such that $$\Delta_*[\O_X] = \sum_{\alpha \in I} \alpha \boxtimes (\alpha^\vee \otimes [\O_X]) \in K_H(X \times X),$$
where $\Delta: X \rightarrow X \times X$ is the diagonal embedding, and $(\alpha^\vee)_{i \in I}$ is the dual basis of $(\alpha)_{i \in I}$ with respect to the pairing given by the $H$-equivariant sheaf Euler characteristic, i.e. $\chi_H(\alpha^\vee \otimes \beta)=\delta_{\alpha, \beta}$ for all elements $\alpha$, $\beta$ in $K_H(X)$.

We study here the following setting. Consider a complex linear algebraic subgroup $H$ of $\GL_n$ acting on flag varieties via left multiplication such that flag varieties $\GL_n/P$ admit a diagonal decomposition in $K_H(X)$. Note that this is in particular true when considering the non equivariant setting or, according to \cite{graham2008positivity}, when considering the action of a maximal torus $T$ of $\GL_n$.

\subsection{Main result.} Fix two positive integers $m<n$. Let $0 < n_1 < \dots < n_m <n$ be a strictly increasing collection of non negative integers. Denote by~$X$ the $m$-step flag variety parametrizing $m$-tuples of vector spaces ordered by inclusion $$0 \subset V_{n_1} \subset \dots \subset V_{n_m} \subset \mathbb{C}^n$$ such that ${\rm dim} V_{n_i}=n_i$.
Fix a positive integer $k$, $1 \leq k \leq m$. Denote by $X_{\widehat{k}}$ the $(m-1)$-step flag variety obtained from $X$ by forgetting the $k$-th vector subspace. Denote by $\pi_{\widehat{k}} : X \rightarrow X_{\widehat{k}}$ the projection. 
The class $\mathbf{d}=(d_1, \dots, d_m)$ of a curve $C$ in $X$ is determined by non-negative integers $d_1$, $d_2$,$\dots$, $d_m$, where $d_i$ is the Plücker degree of the projection of $C$ to the Grassmannian ${\rm Gr}(n_i,n)$. Note that $\pi_{\widehat{k}}$ and the evaluation morphism $\overline{{\mathcal{M}}_{0,r}}(X, \d) \rightarrow X^r$ induce a morphism $\overline{{\mathcal{M}}_{0,r}}(X, \mathbf{d}) \rightarrow \overline{{\mathcal{M}}_{0,r}}(X_{\widehat{k}}, (\pi_{\widehat{k}})_* \mathbf{d}) \times_{{X_{\widehat{k}}}^r} X^r$. Denote by $H$ an algebraic subgroup of $G$ acting via left multiplication on $X$ and $X_{\widehat{k}}$, such that $X_{\widehat{k}}$ admits a diagonal decomposition in $K_H(X_{\widehat{k}})$. 

\begin{theorem}\label{th : euler char equal}
	Let $r >0$, $k \in \{1, \dots, m\}$, $\mathbf{d}=(d_1, \dots, d_m)$ be a nef class of $1$-cycles such that $\forall \: p<k, \: d_k \geq n_k \ceil*{ \frac{d_p-d_{p-1}}{n_p-n_{p-1}}}$ and such that the morphism $$\overline{{\mathcal{M}}_{0,r}}(X, \mathbf{d}) \rightarrow \overline{{\mathcal{M}}_{0,r}}(X_{\widehat{k}}, (\pi_{\widehat{k}})_* \mathbf{d}) \underset{{X_{\widehat{k}}}^r}\times X^r$$ is surjective. 
	Then for all elements $\phi_1$, $\dots$, $\phi_r$ in $K_H(X)$, the correlators of $X$ and $X_{\widehat{k}}$ associated with $H$, $\mathbf{d}$, and the elements $\phi_i$ are equal: \begin{equation*}
	\langle \phi_1, \dots, \phi_r \rangle_{H, \mathbf{d}}^{X} = \langle (\pi_{\widehat{k}})_*(\phi_1), \dots, (\pi_{\widehat{k}})_*(\phi_r) \rangle_{H, (\pi_{\widehat{k}})_* \mathbf{d}}^{X_{\widehat{k}}}.
	\end{equation*}
\end{theorem}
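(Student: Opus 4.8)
The plan is to reduce the statement to the geometric results of Chapter~\ref{chap : geometry W} together with the $H$-equivariant decomposition of the diagonal. First I would reinterpret the two correlators as equivariant Euler characteristics of structure sheaves of Gromov-Witten varieties. Writing $\phi_i = [\O_X] \otimes \psi_i$ with $\psi_i \in K^H(X)$ (using that $X$ is smooth), and then expanding each $\psi_i$ in a basis and using $H$-linearity, it suffices to treat the case where each $\phi_i$ is the class $[\O_{X(u_i)}]$ of the structure sheaf of a Schubert variety translated by a general $g_i$; this is legitimate because Schubert classes generate $K_H(X)$ over $R(H)$ and, by Lemma~\ref{lem: equality g.}, translating by a general $g_i \in \GL_n$ does not change the class in $K$-theory. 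The standard projection-formula manipulation (as in \cite{buch2011}) then identifies $\langle \phi_1, \dots, \phi_r\rangle^X_{H,\d}$ with $\chi_H\big([\O_{\W_{X; u_1,\dots,u_r}^{g,\d}}]\big)$ for $g$ general, where we use Kleiman transversality (Remark~\ref{rem: reducibility W}) to realize the Gromov-Witten variety as the relevant fiber product. The analogous statement holds for $X_{\widehat k}$ with $\pi_{\widehat k}(X(u_i)) = X_{\widehat k}(u_i)$ and degree $(\pi_{\widehat k})_*\d$.

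Next I would invoke Theorem~\ref{th : RC fibrations bewteen W varieties} (in the one-step-forgetful form, which is exactly what the hypothesis $\forall p<k,\ d_k \ge n_k\lceil(d_p-d_{p-1})/(n_p-n_{p-1})\rceil$ plus surjectivity of $\Pi_{\widehat k}\times ev$ buys us via Proposition~\ref{prop : W is irreducible} and Proposition~\ref{prop : geometry general fiber pr1W}): for $g$ general the Gromov-Witten variety $\W_{X; u_1,\dots,u_r}^{g,\d}$ is a rationally connected fibration over $\W_{X_{\widehat k}; u_1,\dots,u_r}^{g,(\pi_{\widehat k})_*\d}$. Both of these are projective with rational singularities — the moduli spaces have quotient singularities hence rational singularities by Viehweg's Theorem~\ref{th: Viehweg X//G has RS}, and the Gromov-Witten varieties inherit this by Kleiman transversality against the Schubert varieties, which themselves have rational singularities by Ramanathan's Theorem~\ref{th: schubert var have rational sing}. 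Therefore Theorem~\ref{th: BM f_* if fiber rational} of Buch-Mihalcea applies and gives
\[
(\Pi_{\widehat k})_*\,[\O_{\W_{X; u_1,\dots,u_r}^{g,\d}}] = [\O_{\W_{X_{\widehat k}; u_1,\dots,u_r}^{g,(\pi_{\widehat k})_*\d}}] \in K_H\big(\overline{\mathcal{M}_{0,r}}(X_{\widehat k},(\pi_{\widehat k})_*\d)\big).
\]
Pushing this equality forward to a point and using functoriality of $\chi_H$ under the composite $\overline{\mathcal{M}_{0,r}}(X,\d)\to\overline{\mathcal{M}_{0,r}}(X_{\widehat k},(\pi_{\widehat k})_*\d)\to\Spec\C$ yields $\chi_H\big([\O_{\W_{X}^{g,\d}}]\big)=\chi_H\big([\O_{\W_{X_{\widehat k}}^{g,(\pi_{\widehat k})_*\d}}]\big)$, which is the desired equality of correlators in the Schubert-class case.

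Finally I would assemble the general case: expand each $\phi_i$ as an $R(H)$-linear combination of translated Schubert classes, pull the scalars and the finite sum out of both correlators (both sides are $R(H)$-multilinear in the $\phi_i$), match term by term using the Schubert-class identity and the compatibility $(\pi_{\widehat k})_*[\O_{X(u)}] = [\O_{X_{\widehat k}(u)}]$, which follows from Lemma~\ref{lem: general fiber between Sch var irr} (the fiber of $\pi_{\widehat k}$ restricted to $X(u)$ is unirational, so $(\pi_{\widehat k})_*\O_{X(u)} = \O_{X_{\widehat k}(u)}$ again by Buch-Mihalcea). The main obstacle is the careful justification that the correlator $\langle\cdots\rangle^X_{H,\d}$ equals the $H$-equivariant Euler characteristic of $\O_{\W^{g,\d}}$ for $g$ general — this is where the $H$-equivariant decomposition of the diagonal enters (to move from a product $ev_1^*\phi_1\cdots ev_r^*\phi_r$ of pullbacks against the structure sheaf of the moduli space to a single fiber-product structure sheaf), and where one must check that the generic translate $g$ can be chosen compatibly for all $r$ factors and on both $X$ and $X_{\widehat k}$ simultaneously so that the pushforward identity above genuinely descends to numbers. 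Everything else is a routine chase through projection formulas and flat base change.
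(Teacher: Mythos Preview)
Your approach is essentially the ``geometric proof'' that the paper presents in Section~\ref{sec: geometric proof} --- but the paper explicitly restricts that argument to the non-equivariant case $H=\{1\}$, and for good reason. The gap in your equivariant version is the step where you replace $[\O_{X(u_i)}]$ by $[\O_{g_i\cdot X(u_i)}]$ for general $g_i$ and invoke Lemma~\ref{lem: equality g.}. That lemma is a statement in $K_\circ(X)$, not in $K_H(X)$: for nontrivial $H$ (say $H=T$), the translate $g_i\cdot X(u_i)$ is not $H$-stable for general $g_i$, so $\O_{g_i\cdot X(u_i)}$ is not even an $H$-equivariant sheaf, and $[\O_{\W^{g,\d}}]$ has no meaning in $K_H\big(\overline{\mathcal{M}_{0,r}}(X,\d)\big)$. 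In particular the identity $\langle\O_{u_1},\dots,\O_{u_r}\rangle^X_{H,\d}=\chi_H([\O_{\W^{g,\d}}])$ for general $g$ is simply not available equivariantly.

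The paper's actual proof avoids this by never leaving the equivariant world: it applies Buch--Mihalcea (Theorem~\ref{th: BM f_* if fiber rational}) to the single $H$-equivariant morphism $\Pi_{\widehat k}\times ev:\overline{\mathcal{M}_{0,r}}(X,\d)\to \overline{\mathcal{M}_{0,r}}(X_{\widehat k},(\pi_{\widehat k})_*\d)\times_{(X_{\widehat k})^r}X^r$, whose general fiber is rationally connected by Proposition~\ref{prop : geometry general fiber pr1W}, to obtain $(\Pi_{\widehat k}\times ev)_*[\O]=[\O_{M'\times_{S'}S}]$ equivariantly. It then expresses this class via the $H$-equivariant diagonal decomposition of $X_{\widehat k}$ (Proposition~\ref{prop: image fiber product}) and pushes through the projection formula for arbitrary $\alpha_i\in K^H(X)$, ending with the observation that $\sum_e a_{i,e}\phi_e=(\pi_{\widehat k})_*\alpha_i$. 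No general-position translation of Schubert varieties is ever used. Your argument recovers the non-equivariant statement cleanly, but for the full theorem you need to reorganize along these lines.
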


According to Chapter \ref{chap : geometry W}  Proposition \ref{prop : MX in MxX^r surjective}, Theorem \ref{th : euler char equal} implies the following result.
\begin{thmbis}
	Let $r >0$, $k \in \{1, \dots, m\}$, $\mathbf{d}=(d_1, \dots, d_m)$ be a nef class of $1$-cycles such that\begin{itemize}[label={--}]
		\item $\forall \: p<k, \: d_k \geq n_k \ceil*{ \frac{d_p-d_{p-1}}{n_p-n_{p-1}}}$, 
		\item  $d_{k-1} \leq \lfloor \frac{d_{k+1}}{n_{k+1}} \rfloor$,
		\item $d_k \geq r(n_k-n_{k-1}) + d_{k-1} + (n_k-n_{k-1})(\lfloor \frac{d_{k+1}-d_{k-1}}{n_{k+1} - n_{k-1}}\rfloor +1)$.\end{itemize}
	Then for any elements $\phi_1$, $\dots$, $\phi_r$ in $K_H(X)$, the correlators of $X$ and $X_{\widehat{k}}$ associated with $H$, $\mathbf{d}$, and the elements $\phi_i$ are equal: \begin{equation*}
	\langle \phi_1, \dots, \phi_r \rangle_{H, \mathbf{d}}^{X} = \langle (\pi_{\widehat{k}})_*(\phi_1), \dots, (\pi_{\widehat{k}})_*(\phi_r) \rangle_{H, (\pi_{\widehat{k}})_* \mathbf{d}}^{X_{\widehat{k}}}.
	\end{equation*}
\end{thmbis}
This Chapter is organized as follows. Sections \ref{sec: equivariant  ktheory} is devoted to recalling some standard facts about equivariant algebraic $K$-theory, and fixing the notations we use in the rest of the chapter. A proof of Theorem \ref{th : euler char equal} is presented in Section \ref{sec: proof main th}, using results of Chapter \ref{chap : geometry W}. In the non equivariant setting a more geometric proof of Theorem \ref{th : euler char equal} is provided in part \ref{sec: geometric proof}, relying heavily on results from Chapter \ref{chap : geometry W}.

\section{Preliminaries on equivariant algebraic $K$-theory}\label{sec: equivariant  ktheory}
\subsection{Diagonal decomposition of $K_{T}(X)$.}\label{subsec: diag decompositon}
Recall $G$ is a semi-simple linear complex algebraic group, $T$ is a maximal torus of $G$, $B$ is a Borel subgroup of $G$ and $P$ is a parabolic subgroup of $G$ such that $T \subset B \subset P$. 
For any element $u$ in the Weyl group $W$ recall $X(u):= \overline{BuP/P}$ is a Schubert variety of $X=G/P$, and $Y(u):= w_0 \cdot \overline{Bw_0uP/P}$ is the Schubert variety opposite to it. Furthermore, denote by $\partial Y(u) :=Y(u) \setminus (w_0 Bw_0 u P/P)$ the complementary of the dense open Schubert cell in $Y(u)$. 
According to \cite{graham2008positivity} we have the following decomposition in $K_T(X)$.
$$\Delta_*[\O_X] = \sum_{u \in W^P} [\O_{X(u)}] \boxtimes [\O_{Y(u)}(- \partial Y(u))] \in K_T(X \times X),$$
where $\Delta : X \rightarrow X \times X$ denotes the diagonal embedding. Note that this was first observed in the non equivariant setting by Brion \cite{brion2002positivity}.
\subsection{Equivariant flat base change.} We describe here the class of an equivariant flat base change. The main application of this result will be the following setting. Let $H$ be a linear subgroup of $\GL_n$, let $X$ be the flag variety parametrizing flags of vector spaces $V_{n_1} \subset \dots \subset V_{n_m} \subset \C^n$, let $X_{\widehat{k}}$ be the flag variety obtained by forgetting the vector $V_{n_k}$, and denote by $\pi_{\widehat{k}}:X \rightarrow X_{\widehat{k}}$ the associated morphism. For a class $\beta$ of curve on $X_{\widehat{k}}$, we consider the fiber product described by the following $H$-equivariant Cartesian diagram : \[ \begin{tikzcd}   X^r \underset{(X_{\widehat{k}})^r}\times \overline{{\mathcal{M}}_{0,r}}(X_{\widehat{k}}, \beta) \arrow{r} \arrow{d}  & \overline{{\mathcal{M}}_{0,r}}(X_{\widehat{k}}, \beta) \arrow{d}{ev} \\ X^r \arrow{r}{(\pi_{\widehat{k}})^r} & (X_{\widehat{k}})^r \end{tikzcd}\]
Note that, since the action of $(\GL_n)^r$ on $(X_{\widehat{k}})^r$ is transitive, the morphism $(\pi_{\widehat{k}})^r : X^r \rightarrow (X_{\widehat{k}})^r$ is flat. 

Consider two morphisms of schemes $f_1 : M_1 \rightarrow X_1$ and $f_2: M_2 \rightarrow X_2$. We denote by $f_1 \times f_2 : M_1 \times M_2 \rightarrow X_1 \times X_2$ the morphism induced by $f_1$ and $f_2$, i.e. the morphism induced by the following commutative diagram: \[ \begin{tikzcd} M_1 \times M_2 \arrow{r}{p_2} \arrow{d}{p_1} \arrow{rd}{f_1 \times f_2} & M_2 \arrow{rd}{f_2} & \\ M_1 \arrow{dr}{f_1} & X_1 \times X_2 \arrow{d}{\pi_1} \arrow{r}{\pi_2} & X_2 \\ & X_1 & \end{tikzcd}\]
where $p_i : M_1 \times M_2 \rightarrow M_i$ and $\pi_i : X_1 \times X_2 \rightarrow X_i$ are the projection morphisms.  Finally, let $H$ be a complex linear algebraic acting on $M_1$ and $M_2$. We denote by \begin{align*} \boxtimes : \:\:\: K^H(M_1) \times K^H(M_2) &\rightarrow K^H(M_1 \times M_2) \\
(\phi, \psi) \:\:\: &\rightarrow (p_1^* \phi) \otimes (p_2^* \psi) \end{align*}
the external product.

\begin{proposition}\label{prop: image fiber product}
	Let $H$ be a linear algebraic group. Consider two equivariant morphisms of projective varieties with an $H$-action $f_1 : M_1 \rightarrow Y$ and $f_2 : M_2 \rightarrow Y$, such that $M_1 \rightarrow Y$ is flat. Denote by $\Delta: Y \rightarrow Y \times Y$ the diagonal morphism, $p_i : M_1 \times_Y M_2 \rightarrow M_i$ the projection morphisms, and $i: M_1 \times_Y M_2 \rightarrow M_1 \times M_2$ the natural morphism; i.e. we consider the following commutative diagram:  \[ \begin{tikzcd} M_1 \times_Y M_2 \arrow{rr}{i} \arrow{dd}{p_1} \arrow{rd}{p_2} & & M_1 \times M_2 \arrow{dd}{f_1 \times f_2} \\ & M_2 \arrow{d}{f_2}  \\ M_1 \arrow{r}{f_1} & Y \arrow{r}{\Delta} & Y \times Y  \end{tikzcd}\]
	Suppose $Y$ is non singular. Let $(\phi_e)_{e \in I}$ be a basis of $K_H(Y) \simeq K^H(Y)$. Suppose there exists a decomposition of the diagonal in $K_H(Y)$, i.e. suppose there exists a basis $(\phi_e^\vee)_{e \in I}$ of $K_H(Y)$ such that \[\Delta_*[\O_Y] = \sum_{e \in I} \phi_e \boxtimes \phi_e^\vee \in K_H(Y \times Y)\] where we consider the diagonal action of $H$ on $Y \times Y$. We then obtain:  \begin{equation*}
	i_* [\mathcal{O}_{M_1 \times_Y M_2}] = \sum_{e, f \in I} g^{ef} \big(f_1^* \phi_e \boxtimes f_2^*\phi_f \big)  \in K_H(M_1 \times M_2),
	\end{equation*}
	where we denote by $g^{ef} := g^{-1}_{ef}$ the pairing dual to $g_{ef}$, and consider the diagonal action of $H$ on $M_1 \times M_2$ and $M_1 \times_Y M_2$.
\end{proposition}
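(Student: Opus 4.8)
The plan is to recognize $i_*[\O_{M_1 \times_Y M_2}]$ as a $K$-theoretic pullback of $\Delta_*[\O_Y]$ along $f_1 \times f_2$, and then substitute the given diagonal decomposition.

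\textbf{Base change.} The square in the statement is Cartesian: $M_1 \times_Y M_2 = (M_1 \times M_2) \times_{Y \times Y} Y$, with $i$ the base change of $\Delta$ along $f_1 \times f_2$. The only point requiring care is that $f_1 \times f_2$ is not assumed flat, so I would not invoke flat base change directly; instead I would factor $f_1 \times f_2 = (\mathrm{id}_Y \times f_2) \circ (f_1 \times \mathrm{id}_{M_2})$ and realize $M_1 \times_Y M_2$ as an iterated fiber product. The fiber product $(Y \times M_2) \times_{Y \times Y} Y$, taken along $\mathrm{id}_Y \times f_2$ and $\Delta$, is $M_2$, embedded in $Y \times M_2$ as the graph $\Gamma_{f_2}$ of $f_2$; and then $(M_1 \times M_2) \times_{Y \times M_2} M_2 = M_1 \times_Y M_2$, the second projection being $i$. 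The second of these two squares is an honest flat base change, since $f_1 \times \mathrm{id}_{M_2}$ is flat ($f_1$ being flat), giving $i_*[\O_{M_1 \times_Y M_2}] = (f_1 \times \mathrm{id}_{M_2})^*\bigl((\Gamma_{f_2})_*[\O_{M_2}]\bigr)$ in $K_H(M_1 \times M_2)$. For the first square, since $Y$ is nonsingular $\Delta$ is a regular embedding of codimension $\dim Y$ with a finite Koszul resolution by locally free sheaves on $Y \times Y$; the section $\Gamma_{f_2}$ of the smooth projection $Y \times M_2 \to M_2$ is likewise a regular embedding of codimension $\dim Y$, so the Koszul resolution of $\O_\Delta$ pulls back along $\mathrm{id}_Y \times f_2$ to a resolution of $(\Gamma_{f_2})_*\O_{M_2}$. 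This is exactly Tor-independence, and it yields $(\Gamma_{f_2})_*[\O_{M_2}] = (\mathrm{id}_Y \times f_2)^*(\Delta_*[\O_Y])$ in $K_H(Y \times M_2)$. Composing, $i_*[\O_{M_1 \times_Y M_2}] = (f_1 \times f_2)^*\bigl(\Delta_*[\O_Y]\bigr)$ in $K_H(M_1 \times M_2)$; every morphism, sheaf and resolution used here is $H$-equivariant for the diagonal action, so the identity lives in the equivariant group.

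\textbf{Substitution and bookkeeping.} Now plug in $\Delta_*[\O_Y] = \sum_{e \in I} \phi_e \boxtimes \phi_e^\vee$. Since $\mathrm{pr}_i \circ (f_1 \times f_2) = f_i \circ p_i$, where $p_i$ and $\mathrm{pr}_i$ denote the projections of $M_1 \times M_2$ and $Y \times Y$, pullback commutes with the external product and $(f_1 \times f_2)^*(\phi_e \boxtimes \phi_e^\vee) = (f_1^*\phi_e) \boxtimes (f_2^*\phi_e^\vee)$ --- all classes involved are represented by vector bundles, using $K_H \simeq K^H$ on the smooth varieties $Y$ and $Y \times Y$, so these pullbacks are unambiguous. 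Hence $i_*[\O_{M_1 \times_Y M_2}] = \sum_{e \in I} (f_1^*\phi_e) \boxtimes (f_2^*\phi_e^\vee)$. Writing $g_{ef} = \chi_H(\phi_e \otimes \phi_f)$ for the Gram matrix of the Euler pairing and $g^{ef}$ for the entries of its inverse, the defining property $\chi_H(\phi_e^\vee \otimes \phi_f) = \delta_{ef}$ forces $\phi_e^\vee = \sum_{f} g^{ef}\phi_f$; substituting gives $i_*[\O_{M_1 \times_Y M_2}] = \sum_{e,f \in I} g^{ef}\,(f_1^*\phi_e \boxtimes f_2^*\phi_f)$, as claimed. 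One may equally rewrite $\Delta_*[\O_Y] = \sum_{e,f} g^{ef}\,\phi_e \boxtimes \phi_f$ at the outset and pull back.

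\textbf{Main obstacle.} The only genuinely delicate step is the base change of the diagonal square when $f_1 \times f_2$ fails to be flat; the factorization above reduces this to the Tor-independence of $\O_\Delta$ with $\O_{Y \times M_2}$ over $Y \times Y$, which is handled by the lci property of $\Delta$ (nonsingularity of $Y$) together with the fact that $\Gamma_{f_2}$ is a regular embedding of the same codimension --- equivalently, that a flat pullback of a regular sequence is again regular. The remaining ingredients (compatibility of pullback with $\boxtimes$, the linear algebra with the Gram matrix, and equivariance) are routine.
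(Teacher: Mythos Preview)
Your proof is correct and follows the same route as the paper: establish $i_*[\O_{M_1 \times_Y M_2}] = (f_1 \times f_2)^*\Delta_*[\O_Y]$ via a base-change argument, then substitute the diagonal decomposition $\Delta_*[\O_Y]=\sum g^{ef}\phi_e\boxtimes\phi_f$ and pull back. The paper invokes equivariant flat base change (its Lemma on flat base change) directly for the first step; your two-step factorization through the graph $\Gamma_{f_2}$---flat base change along $f_1\times\mathrm{id}_{M_2}$ followed by Tor-independence coming from the regularity of $\Delta$---is a more careful justification of the same identity, making explicit where the flatness of $f_1$ and the nonsingularity of $Y$ are each used.
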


\begin{lemme}[Flat base change in equivariant $K$-theory] \label{lem: flat base change} 
	Let $H$ be a linear algebraic group. Consider a Cartesian diagram of proper, equivariant morphisms of varieties with an $H$-action:  \[ \begin{tikzcd} X' \arrow{r}{g'} \arrow{d}{f'} & X \arrow{d}{f} \\  S \arrow{r}{g} & S'  \end{tikzcd}\]
	where $S \rightarrow S'$ is a flat morphism. Then, for all $\alpha$ in $K_H(X)$: \begin{equation*}
	f'_*(g'^*\alpha) = g^*(f_* \alpha) \in K_H(S).
	\end{equation*}
\end{lemme}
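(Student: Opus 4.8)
The plan is to reduce the equivariant statement to its non-equivariant counterpart, which is classical, by using that $H$-equivariant $K$-theory of a variety $Z$ with an $H$-action can be computed via the Borel construction (or, more concretely, via equivariant sheaves and the fact that the pushforward and pullback maps in equivariant $K$-theory are defined sheaf-theoretically using Godement resolutions that preserve equivariance, as recalled in the ``Equivariant algebraic $K$-theory'' subsection of the excerpt). So first I would set up the commutative diagram and recall the definitions: $g^* : K_H(S') \to K_H(S)$ is given by $[\mathcal{F}] \mapsto \sum_i (-1)^i [\mathrm{Tor}_i^{\O_{S'}}(\O_S, \mathcal{F})]$, which since $g$ is flat reduces to $[\mathcal{F}] \mapsto [g^*\mathcal{F}]$ (no higher Tor), and similarly $f_*[\mathcal{F}] = \sum_i (-1)^i [R^i f_*\mathcal{F}]$, with all these operations sending $H$-equivariant sheaves to $H$-equivariant sheaves.

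The key step is then the sheaf-level identity: for an $H$-equivariant coherent sheaf $\mathcal{F}$ on $X$ one has a natural isomorphism of $H$-equivariant sheaves on $S$
\begin{equation*}
g^* R^i f_* \mathcal{F} \;\simeq\; R^i f'_* (g'^* \mathcal{F}) \quad \text{for all } i \geq 0,
\end{equation*}
which is precisely the flat base change theorem for higher direct images (see e.g. EGA III or Hartshorne III.9.3), valid because $g$ (hence $g'$) is flat and $f$ is proper; the only additional point is that this isomorphism is $H$-equivariant, which follows because the base change map is natural and the $H$-action on all four schemes is compatible with the morphisms in the Cartesian square. Taking alternating sums over $i$ and passing to classes in $K_H(S)$ gives
\begin{equation*}
g^*(f_*[\mathcal{F}]) = g^*\Big(\sum_i (-1)^i [R^i f_*\mathcal{F}]\Big) = \sum_i (-1)^i [g^* R^i f_*\mathcal{F}] = \sum_i (-1)^i [R^i f'_*(g'^*\mathcal{F})] = f'_*(g'^*[\mathcal{F}]),
\end{equation*}
where the second equality uses exactness (flatness) of $g^*$, so that $g^*$ commutes with taking classes and with the alternating sum, and the third equality is the equivariant base change isomorphism above. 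Since the classes $[\mathcal{F}]$ of $H$-equivariant coherent sheaves generate $K_H(X)$, and both sides of the claimed identity are additive in $\alpha \in K_H(X)$, this proves the lemma for general $\alpha$.

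The main obstacle — really the only subtle point — is verifying that the classical flat base change isomorphism for $R^i f_*$ respects the $H$-equivariant structures, i.e.\ that the natural base change morphism is a morphism of $H$-equivariant sheaves and is an isomorphism after the forgetful functor (which suffices, since a morphism of equivariant sheaves that is an isomorphism of underlying sheaves is an isomorphism of equivariant sheaves). One can handle this cleanly by noting that the base change morphism $g^* R^i f_* \mathcal{F} \to R^i f'_* g'^* \mathcal{F}$ is constructed functorially from the adjunction units/counits and the Godement (or Čech) resolution, all of which are defined $H$-equivariantly when one works with the $H$-linearizations; alternatively one invokes that everything is $H$-equivariant by transport of structure since $g, g', f, f'$ are $H$-equivariant and the square is Cartesian, so $H$ acts on $X' = S \times_{S'} X$ diagonally and all the maps in sight are $H$-maps. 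This is routine but should be spelled out; the rest is a formal consequence of flatness and properness.
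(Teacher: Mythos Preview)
Your proof is correct and follows the standard route: apply the classical flat base change isomorphism $g^* R^i f_* \mathcal{F} \simeq R^i f'_* (g'^* \mathcal{F})$ at the sheaf level, verify that this isomorphism is $H$-equivariant (which is routine by naturality), and take alternating sums. The paper itself does not give a proof at all; it simply cites \cite{chriss2009representation}, Proposition~5.3.15. What you have written is essentially the argument one finds there, so your proposal is both correct and aligned with the intended reference.
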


\begin{proof}
	Cf. \cite{chriss2009representation} Proposition 5.3.15.
\end{proof}

\begin{proof}[Proof of Proposition \ref{prop: image fiber product}]
	We call $f := f_1 \circ p_1 = f_2 \circ p_2$. First, notice that, since $f_1$ and $p_1$ are flat, $f$ is a flat $H$-equivariant morphism. We hence have: $[\mathcal{O}_{M_1 \times_Y M_2}] = f^*([\mathcal{O}_Y])$. 
	Now consider the diagonal action of $H$ on $M_1 \times M_2$ and $Y \times Y$. Since $\Delta: Y \rightarrow Y \times Y$ and $i : M_1 \times_Y M_2 \rightarrow M_1 \times M_2$ are proper $H$-equivariant morphism, $f_1 \times f_2$ is an $H$-equivariant morphism, and the morphism $f_1$ is flat, we can apply Lemma \ref{lem: flat base change} to the following commutative diagram of $H$-equivariant maps.  \[ \begin{tikzcd} M_1 \times_Y M_2 \arrow{r}{p_2} \arrow{d}{p_1} & M_2 \arrow{d}{f_2} \\  M_1 \arrow{r}{f_1} & Y \end{tikzcd}\]
	We obtain: \begin{equation*}
	i_* [\mathcal{O}_{M_1 \times_Y M_2}] = i_* f^* [\mathcal{O}_Y] = (f_1 \times f_2)^* \Delta_* [\mathcal{O}_Y] \in K_H(Y)
	\end{equation*}
	Furthermore, we have supposed: $\Delta_* [\mathcal{O}_Y] = \sum_{e, f \in I} g^{ef} \phi_e \boxtimes \phi_f \in K_H(Y) \simeq K^H(Y)$. 
	We thus obtain:  \begin{equation*}
	i_* [\mathcal{O}_{M_1 \times_Y M_2}] = (f_1 \times f_2)^* \big( \sum_{e, f \in I} g^{ef} \phi_e \boxtimes \phi_f \big) = \sum_{e, f \in I} g^{ef} \big(f_1^* \phi_e \boxtimes f_2^*\phi_f \big) \in K_H(M_1 \times M_2).
	\end{equation*}
\end{proof}



\section{Proof of Theorem \ref{th : euler char equal}.}\label{sec: proof main th}

\subsection{Properties of  $\overline{{\mathcal{M}}_{0,r}}(X_{\widehat{k}}, (\pi_{\widehat{k}})_* \mathbf{d}) \times_ {(X_{\widehat{k}})^r} X^r$.} We use here the same notations as in the preceding parts. Let $X$ be the flag variety parametrizing flags of vector spaces $V_{n_1} \subset \dots \subset V_{n_m} \subset \mathbb{C}^n$. For $1 \leq k \leq m$, we denote by $X_{\widehat{k}}$ the flag variety obtained from $X$ by forgetting the $k$-th vector space, and call $\pi = \pi_{\widehat{k}} : X \rightarrow X_{\widehat{k}}$ the forgetful morphism. For a class $\beta$ of curve on $X_{\widehat{k}}$, we denote by $M':=\overline{{\mathcal{M}}_{0,r}}(X_{\widehat{k}}, \beta)$ the moduli space parametrizing stable maps from genus $0$ $r$-pointed curves to $X_{\widehat{k}}$. We observe here that the scheme $M' \times_{{X_{\widehat{k}}}^r} X^r$ associated with the following Cartesian diagram is a projective variety with rational singularities, obtained via flat base change of $M'$ over $(X_{\widehat{k}})^r$. \[ \begin{tikzcd} M' \times_{{X_{\widehat{k}}}^r} X^r \arrow{d} \arrow{r} & X^r \arrow{d}{(\pi_{\widehat{k}})^r} \\
M' \arrow{r}{ev} & (X_{\widehat{k}})^r\end{tikzcd}\]

\begin{lemme}\label{lem: geometry M'xX^r}
	$\overline{{\mathcal{M}}_{0,r}}(X_{\widehat{k}}, (\pi_{\widehat{k}})_* \mathbf{d}) \times_{(X_{\widehat{k}})^r} X^r$ is a projective variety with rational singularities.
\end{lemme}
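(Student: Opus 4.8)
\textbf{Proof plan for Lemma \ref{lem: geometry M'xX^r}.}

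The plan is to exhibit $\overline{{\mathcal{M}}_{0,r}}(X_{\widehat{k}}, (\pi_{\widehat{k}})_* \mathbf{d}) \times_{(X_{\widehat{k}})^r} X^r$ as the total space of a Zariski-locally-trivial fiber bundle over the space of stable maps $M' := \overline{{\mathcal{M}}_{0,r}}(X_{\widehat{k}}, (\pi_{\widehat{k}})_* \mathbf{d})$, with fiber $(\GL_{n_k}/P')^r$ for a suitable parabolic, and then to invoke the fact that $M'$ has rational singularities together with the stability of rational singularities under smooth morphisms. First I would recall that, by Theorem \ref{th: Viehweg X//G has RS} and the remark following it, $M'$ has rational singularities since $X_{\widehat{k}}$ is a generalized flag variety and hence convex, so $M'$ has quotient singularities. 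Next I would analyze the morphism $p_2 : M' \times_{(X_{\widehat{k}})^r} X^r \to M'$ obtained from the Cartesian square. This morphism is the pullback of the projection $(\pi_{\widehat{k}})^r : X^r \to (X_{\widehat{k}})^r$ along the evaluation map $ev : M' \to (X_{\widehat{k}})^r$. Now the key point is that $\pi_{\widehat{k}} : X \to X_{\widehat{k}}$ is a smooth projective morphism: it is the relative flag bundle parametrizing the choice of an $n_k$-dimensional subspace $V_{n_k}$ with $V_{n_{k-1}} \subset V_{n_k} \subset V_{n_{k+1}}$, i.e. a relative Grassmann bundle $\mathrm{Gr}(n_k - n_{k-1}, \mathcal{S}_{k+1}/\mathcal{S}_{k-1})$ over $X_{\widehat{k}}$, where $\mathcal{S}_{k\pm 1}$ denote the tautological bundles. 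In particular $\pi_{\widehat{k}}$ is smooth, hence so is $(\pi_{\widehat{k}})^r$, hence so is its base change $p_2$.

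The main steps in order are then: (1) record that $M'$ has rational singularities; (2) observe that $p_2 : M' \times_{(X_{\widehat{k}})^r} X^r \to M'$ is smooth, being a base change of the smooth morphism $(\pi_{\widehat{k}})^r$; (3) conclude that $M' \times_{(X_{\widehat{k}})^r} X^r$ has rational singularities, using that rational singularities are preserved under smooth morphisms onto a base with rational singularities — concretely, if $\psi : \widetilde{M'} \to M'$ is a desingularization with $\psi_* \O_{\widetilde{M'}} = \O_{M'}$ and $R^i \psi_* \O_{\widetilde{M'}} = 0$ for $i > 0$, then the base change $\widetilde{M'} \times_{M'} (M' \times_{(X_{\widehat{k}})^r} X^r) \to M' \times_{(X_{\widehat{k}})^r} X^r$ is a desingularization with the same vanishing properties, by flat base change for the smooth morphism $p_2$; and (4) note that the fiber product is irreducible and projective: projectivity is clear since both $M'$ and $X^r$ are projective over a base, and irreducibility follows because $M'$ is irreducible and $p_2$ is smooth with irreducible fibers $(\pi_{\widehat{k}})^{-r}(\text{pt}) \cong (\mathrm{Gr})^r$.

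The step requiring the most care is step (3): one must verify that the chosen desingularization of $M'$ pulls back to a desingularization of the fiber product with the required cohomological vanishing. This is where smoothness of $p_2$ is essential — since $p_2$ is smooth, pulling back a resolution of $M'$ along $p_2$ yields a smooth variety mapping properly and birationally onto $M' \times_{(X_{\widehat{k}})^r} X^r$, and the vanishing $R^i \psi_* \O = 0$ and $\psi_* \O = \O$ are preserved by flat (indeed smooth) base change via cohomology and base change. I would also need to check that the pulled-back map is genuinely a desingularization, i.e. that it is an isomorphism over the smooth locus; this holds because $p_2^{-1}(M'_{\mathrm{sm}})$ is exactly the smooth locus of the fiber product (again by smoothness of $p_2$), and over $M'_{\mathrm{sm}}$ the resolution $\psi$ is already an isomorphism. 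I expect no serious obstacle beyond assembling these standard facts carefully.
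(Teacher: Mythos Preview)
Your proof is correct and takes a genuinely different route from the paper. The paper argues via a ``general translate'' trick: for $g$ general in $\GL_n^r$, Kleiman's transversality theorem gives that $M' \times_{(X_{\widehat{k}})^r} g \cdot X^r$ is reduced, and \cite{buch} Theorem~2.5 gives that it has rational singularities; since $(\pi_{\widehat{k}})^r$ is $\GL_n^r$-equivariant and the action on $(X_{\widehat{k}})^r$ is transitive, translation by $g^{-1}$ yields an isomorphism $M' \times_{(X_{\widehat{k}})^r} g \cdot X^r \simeq M' \times_{(X_{\widehat{k}})^r} X^r$, transferring these properties. Your approach is more direct: you observe that $\pi_{\widehat{k}}$ is a relative Grassmann bundle, hence smooth, so the projection $M' \times_{(X_{\widehat{k}})^r} X^r \to M'$ is smooth by base change, and then invoke the smooth-local nature of rational singularities together with the rational singularities of $M'$. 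Your argument is cleaner and avoids both Kleiman's theorem and the external citation to \cite{buch}; the paper's approach, on the other hand, is an instance of the general-position machinery (Lemma~\ref{lem : dense intersection with open for homogeneous spaces}, Proposition~\ref{prop : image is equidimensional}) used throughout, and would still apply in situations where the map in question is merely flat rather than smooth.
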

\begin{proof}
	For simplicity set $M':= \overline{{\mathcal{M}}_{0,r}}(X_{\widehat{k}}, (\pi_{\widehat{k}})_* \mathbf{d})$. First, notice that since $M' \times_{(X_{\widehat{k}})^r} X^r$ is a projective base change, it is projective over the projective scheme $X^r$ and hence over $\Spec \C$. 
	Furthermore, according to Kleiman's transversality theorem, for $g$ general in $\GL_n^r$ the scheme $M' \times_{(X_{\widehat{k}})^r} g \cdot X^r$ is reduced. Since the morphism $X^r \rightarrow (X_{\widehat{k}})^r$ is $\GL_n^r$-equivariant, and the action of $\GL_n^r$ on $(X_{\widehat{k}})^r$ is transitive, the action of $g^{-1}$ induces an isomorphism $M' \times_{(X_{\widehat{k}})^r} g \cdot X^r \simeq M' \times_{(X_{\widehat{k}})^r} X^r$. Hence $M' \times_{(X_{\widehat{k}})^r} X^r$ is reduced. Note that according to Theorem \ref{th: Viehweg X//G has RS}, $M'$ has rational singularities.
	Hence, according to \cite{buch}, Theorem 2.5, since all varieties considered here have rational singularities, for $g$ general in $\GL_n^r$ the scheme $M' \times_{(X_{\widehat{k}})^r} g \cdot X^r$ has rational singularities. Hence $M' \times_{(X_{\widehat{k}})^r} X^r$ has rational singularities.
\end{proof}
Finally, note that since $\pi_{\widehat{k}}: X \rightarrow X_{\widehat{k}}$ is a fibration in Grassmanians, it is flat. 
\subsection{Proof of Theorem \ref{th : euler char equal}.}
We denote by $i : \overline{{\mathcal{M}}_{0,r}}(X_{\widehat{k}}, (\pi_{\widehat{k}})_* \mathbf{d}) \underset{(X_{\widehat{k}})^r}\times X^r \hookrightarrow \overline{{\mathcal{M}}_{0,r}}(X_{\widehat{k}}, (\pi_{\widehat{k}})_* \mathbf{d}) \times X^r$ the natural morphism.

Let $(\phi_e)_{e \in I}$ be a basis of $K_H(X) \simeq K^H(X)$. By hypothesis there exists a basis $(\phi_e^\vee)_{e \in I}$ such that $\phi_e \boxtimes \phi_f$ is a diagonal decomposition of $X$. Note that $X^r$ admits a diagonal decomposition in $K_H(X^r)$, which is given by $\Delta_* [\O_{X^r}] = \sum_{e_1, \dots, e_r \: \i I}  (\boxtimes_{1 \leq i \leq r} \phi_{e_i})_{e_1, \dots, e_r \in I} (\boxtimes_{1 \leq i \leq r} \phi_{e_i}^\vee)_{e_1, \dots, e_r \in I} \in K_H(X^r \times X^r)$.

\begin{lemme}\label{lem: decomposing pxev_*}
	Let $\d=(d_1, \dots, d_m)$ be a degree in $E(X)$ such that $\forall \: p<k, \: d_k \geq n_k \ceil*{ \frac{d_p-d_{p-1}}{n_p-n_{p-1}}}$. Suppose the morphism $\Pi \times ev : \overline{{\mathcal{M}}_{0,r}}(X, \mathbf{d}) \rightarrow \overline{{\mathcal{M}}_{0,r}}(X_{\widehat{k}}, (\pi_{\widehat{k}})_* \mathbf{d}) \underset{(X_{\widehat{k}})^r}\times X^r$ is surjective. Then \begin{align*} i_*(\Pi \times ev)_* \left[ \O_{\overline{{\mathcal{M}}_{0,r}}(X, \d)} \right] 
	=  \sum_{e_1, \dots, e_r \in I}  \left((ev_1^* \phi_{e_1}) \otimes \dots \otimes (ev_r^* \phi_{e_r})\right) \boxtimes \left(\pi^*(\phi_{e_1}^{\vee}) \boxtimes \dots \boxtimes \pi^*(\phi_{e_r}^{\vee}) \right) 
	\end{align*}
\end{lemme}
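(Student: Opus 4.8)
**Proof plan for Lemma \ref{lem: decomposing pxev_*}.**

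The plan is to combine three ingredients: the projection formula for the proper morphism $\Pi\times ev$, the fact (Corollary \ref{cor : geometry general fiber pr1 ev-1(x)} together with Theorem \ref{th: BM f_* if fiber rational}) that $(\Pi\times ev)_*[\mathcal{O}_{\overline{\mathcal{M}_{0,r}}(X,\d)}] = [\mathcal{O}_{\overline{\mathcal{M}_{0,r}}(X_{\widehat k},(\pi_{\widehat k})_*\d)}\times_{(X_{\widehat k})^r}X^r]$, and the base-change computation of the class of the fiber product $i_*[\mathcal{O}_{M'\times_{(X_{\widehat k})^r}X^r}]$ supplied by Proposition \ref{prop: image fiber product}. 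So the first step is to reduce the left-hand side to $i_*[\mathcal{O}_{M'\times_{(X_{\widehat k})^r}X^r}]$. The hypotheses $\forall p<k,\ d_k\ge n_k\lceil(d_p-d_{p-1})/(n_p-n_{p-1})\rceil$ and surjectivity of $\Pi\times ev$ are exactly those of Corollary \ref{cor : geometry general fiber pr1 ev-1(x)}, which gives that the general fiber of $\Pi\times ev$ is unirational, hence rationally connected. Since $\overline{\mathcal{M}_{0,r}}(X,\d)$ has rational singularities (Theorem \ref{th: Viehweg X//G has RS}, $X$ being convex) and $M'\times_{(X_{\widehat k})^r}X^r$ has rational singularities by Lemma \ref{lem: geometry M'xX^r}, Theorem \ref{th: BM f_* if fiber rational} applies and yields $(\Pi\times ev)_*[\mathcal{O}_{\overline{\mathcal{M}_{0,r}}(X,\d)}]=[\mathcal{O}_{M'\times_{(X_{\widehat k})^r}X^r}]$ in $K_H$ of the target, so $i_*(\Pi\times ev)_*[\mathcal{O}_{\overline{\mathcal{M}_{0,r}}(X,\d)}]=i_*[\mathcal{O}_{M'\times_{(X_{\widehat k})^r}X^r}]$.

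The second step is to apply Proposition \ref{prop: image fiber product} with $Y=(X_{\widehat k})^r$, $M_1=X^r$ (flat over $Y$ via $(\pi_{\widehat k})^r$, which is flat because the action of $\GL_n^r$ on $(X_{\widehat k})^r$ is transitive), $M_2=M'=\overline{\mathcal{M}_{0,r}}(X_{\widehat k},(\pi_{\widehat k})_*\d)$, $f_1=(\pi_{\widehat k})^r$, $f_2=ev$. The diagonal decomposition of $(X_{\widehat k})^r$ needed as input is the $r$-fold external product of the diagonal decomposition of $X_{\widehat k}$ assumed in the statement, namely $\Delta_*[\mathcal{O}_{(X_{\widehat k})^r}]=\sum_{e_1,\dots,e_r}(\boxtimes_i\phi_{e_i})\boxtimes(\boxtimes_i\phi_{e_i}^\vee)$ (this is recorded just before the lemma; note the basis here is self-dual in the sense that its dual basis is $(\boxtimes_i\phi_{e_i}^\vee)$, so the Gram matrix $g^{ef}$ is the identity). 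Proposition \ref{prop: image fiber product} then gives
\[
i_*[\mathcal{O}_{M'\times_{(X_{\widehat k})^r}X^r}] = \sum_{e_1,\dots,e_r\in I}\bigl((\pi_{\widehat k})^{r*}(\boxtimes_i\phi_{e_i})\bigr)\boxtimes\bigl(ev^*(\boxtimes_i\phi_{e_i}^\vee)\bigr)\in K_H(X^r\times M').
\]

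The third step is bookkeeping: rewrite $(\pi_{\widehat k})^{r*}(\boxtimes_i\phi_{e_i})=\boxtimes_i\pi^*\phi_{e_i}$ — wait, here I must be careful about which factor carries which: in the lemma's target $\overline{\mathcal{M}_{0,r}}(X_{\widehat k},(\pi_{\widehat k})_*\d)\times X^r$, the $X^r$ factor receives $\bigl((ev_1^*\phi_{e_1})\otimes\cdots\otimes(ev_r^*\phi_{e_r})\bigr)$ where the $ev_i$ are the evaluation maps on $\overline{\mathcal{M}_{0,r}}(X,\d)$ pulled through, i.e. the natural maps $X^r\to X\to X_{\widehat k}$; and the $M'$ factor receives $\pi^*(\phi_{e_1}^\vee)\boxtimes\cdots\boxtimes\pi^*(\phi_{e_r}^\vee)$ via $M'\to X_{\widehat k}$. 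Matching the two expressions via the identification $i_*(\Pi\times ev)_*=i_*[\mathcal{O}_{M'\times_{(X_{\widehat k})^r}X^r}]$ and the projection formula / compatibility of pullbacks finishes the proof. The main obstacle I anticipate is purely notational: keeping straight which copy of $X^r$ or $(X_{\widehat k})^r$ sits in which slot of the various fiber products, and verifying that the evaluation maps appearing on the $X^r$ side of the lemma's formula are precisely the composites $X^r\hookrightarrow\cdots\to(X_{\widehat k})^r$ used as $f_1$ in Proposition \ref{prop: image fiber product}; once the diagram is drawn carefully this is routine, and the only genuinely substantive inputs are Theorem \ref{th: BM f_* if fiber rational} (via Corollary \ref{cor : geometry general fiber pr1 ev-1(x)}) and Proposition \ref{prop: image fiber product}.
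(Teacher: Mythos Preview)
Your plan is essentially the paper's own proof: reduce to $i_*[\mathcal{O}_{M'\times_{(X_{\widehat k})^r}X^r}]$ via Theorem \ref{th: BM f_* if fiber rational} (using rational singularities of source and target from Theorem \ref{th: Viehweg X//G has RS} and Lemma \ref{lem: geometry M'xX^r}), then compute this class by flat base change and the diagonal decomposition, exactly as in Proposition \ref{prop: image fiber product}. One small correction: the result you need for the rational connectedness of the general fiber of $\Pi\times ev$ is Proposition \ref{prop : geometry general fiber pr1W}, not Corollary \ref{cor : geometry general fiber pr1 ev-1(x)}; the latter concerns the restriction of $\Pi$ to a fiber $ev^{-1}(x)$, which is a different morphism, whereas Proposition \ref{prop : geometry general fiber pr1W} gives directly that the general fiber of $\Pi\times ev$ over its image is unirational (and surjectivity then makes the image the whole target).
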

\begin{proof}
	For simplicity, set $M':= \overline{{\mathcal{M}}_{0,r}}(X_{\widehat{k}}, (\pi_{\widehat{k}})_* \mathbf{d})$, $S := X^r$, and $S' := (X_{\widehat{k}})^r$. According to Lemma \ref{lem: geometry M'xX^r}, $M' \times_{S'} S$ is a projective variety with rational singularities. Furthermore, since $\Pi \times ev$ is surjective, $M' \times_{S'} S$ is irreducible. Finally, according to Chapter \ref{chap : geometry W} Proposition \ref{prop : geometry general fiber pr1W} since $\forall \: p<k, \: d_k \geq n_k \ceil*{ \frac{d_p-d_{p-1}}{n_p-n_{p-1}}}$ the general fiber of $\Pi \times ev$ is a rationally connected variety. Hence $\Pi \times ev$ is an $H$-equivariant surjective morphism of irreducible projective varieties with rational singularities with rationally connected general fiber, hence according to Theorem \ref{th: BM f_* if fiber rational} we have in $K_H(M' \times S')$: \begin{align*}
	i_* (\Pi \times ev)_* \left[ \O_{\overline{{\mathcal{M}}_{0,r}}(X, \d)} \right] 
	&= i_* \left[ \O_{(\Pi \times ev) \overline{{\mathcal{M}}_{0,r}}(X, \d)} \right] \\
	&= i_* \left[ \O_{M' \times_{S'} S}\right] \\
	&= \sum_{e_1, \dots, e_r \in I} ev^*(\phi_{e_1} \boxtimes \dots \phi_{e_r}) \boxtimes ((\pi)^r)^*\left(\phi_{e_1}^\vee \boxtimes \dots \boxtimes \phi_{e_r}^\vee \right)\\
	&= \sum_{e_1, \dots, e_r \in I}  \left((ev_1^* \phi_{e_1}) \otimes \dots \otimes (ev_r^* \phi_{e_r})\right) \boxtimes \left(\pi^*(\phi_{e_1}^{\vee}) \boxtimes \dots \boxtimes \pi^*(\phi_{e_r}^{\vee}) \right) 
	\end{align*}
	where the third equality holds since the morphism $(\pi)^r$ is a flat $H$-equivariant morphism-cf. Lemma \ref{lem: flat base change}.
\end{proof}

\begin{lemme}
	Let $Y$ and $Z$ be projective varieties with an $H$-action. Suppose $Z$ non singular and suppose both $Y$ and $Z$ are irreducible. Then for $\alpha \in K_H(Y)$, $\beta \in K_H(Z)\simeq K^H(Z)$, we have $\chi_H(\alpha \boxtimes \beta) = \chi_H(\alpha) \chi_H(\beta)$.
\end{lemme}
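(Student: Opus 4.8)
The plan is to derive this Künneth-type identity from the equivariant projection formula together with equivariant flat base change, after a routine reduction. First I would use that $\boxtimes$ is bilinear and $\chi_H$ linear, and that $K_H(Y)$ is generated by classes of $H$-equivariant coherent sheaves while $K_H(Z)\simeq K^H(Z)$ — here one uses that $Z$ is non singular — is generated by classes of $H$-equivariant vector bundles, to reduce to the case $\alpha=[\mathcal F]$ for a single equivariant coherent sheaf $\mathcal F$ on $Y$ and $\beta=[E]$ for a single equivariant vector bundle $E$ on $Z$. I interpret $\alpha\boxtimes\beta$ as $p_1^*\alpha\otimes p_2^*\beta\in K_H(Y\times Z)$, where $p_1:Y\times Z\to Y$ and $p_2:Y\times Z\to Z$ are the projections; note $p_1$ and $p_2$ are flat, being base changes of the structure morphisms $q:Y\to\Spec\C$ and $q':Z\to\Spec\C$, so $p_1^*\alpha$ is well defined and $\chi_H$ on $Y\times Z$ factors as $(q')_*\circ(p_2)_*$.

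Next I would compute $(p_2)_*\bigl(p_1^*\alpha\otimes p_2^*\beta\bigr)$ in $K_H(Z)$. Since $\beta=[E]$ is pulled back from $Z$ along $p_2$, the equivariant projection formula gives $(p_2)_*\bigl(p_2^*[E]\otimes p_1^*\alpha\bigr)=[E]\otimes(p_2)_*\bigl(p_1^*\alpha\bigr)$, where, by linearity, the projection formula is applied to the equivariant coherent sheaf $p_1^*\mathcal F$ representing $p_1^*\alpha$. It then remains to identify $(p_2)_*\bigl(p_1^*\alpha\bigr)$: applying equivariant flat base change (Lemma~\ref{lem: flat base change}) to the Cartesian square with vertices $Y\times Z$, $Y$, $Z$, $\Spec\C$ and legs $p_1,p_2,q,q'$ — in which the leg $q':Z\to\Spec\C$ is flat — yields $(p_2)_*\bigl(p_1^*\alpha\bigr)=(q')^*\bigl(q_*\alpha\bigr)=(q')^*\chi_H(\alpha)$. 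As $q_*\alpha=\chi_H(\alpha)\in R(H)=K_H(\Spec\C)$ and $(q')^*$ sends it to $\chi_H(\alpha)\cdot[\O_Z]$, we get $(p_2)_*\bigl(p_1^*\alpha\otimes p_2^*\beta\bigr)=[E]\otimes\chi_H(\alpha)[\O_Z]=\chi_H(\alpha)\,[E]$ using the $R(H)$-module structure on $K_H(Z)$.

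Finally I would push forward to the point: since $(q')_*$ is $R(H)$-linear (the projection formula once more, for the morphism to $\Spec\C$), $\chi_H(\alpha\boxtimes\beta)=(q')_*\bigl(\chi_H(\alpha)[E]\bigr)=\chi_H(\alpha)\,(q')_*[E]=\chi_H(\alpha)\chi_H(\beta)$, which is the claim. There is essentially no serious obstacle here; the only points that need a little care are the reduction to a vector-bundle representative on the non singular factor $Z$, so that $p_2^*\beta$ is again a vector-bundle class and the projection formula applies with the bundle pulled back from the base of $p_2$, and arranging the Cartesian square so that its flat leg is in the position required by Lemma~\ref{lem: flat base change}, namely $q':Z\to\Spec\C$. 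Everything else is bilinearity of $\boxtimes$ and $R(H)$-linearity of equivariant pushforward.
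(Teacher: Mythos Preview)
Your proof is correct and follows essentially the same approach as the paper: projection formula combined with equivariant flat base change (Lemma~\ref{lem: flat base change}) across the Cartesian square $Y\times Z$ over $\Spec\C$. The only cosmetic difference is that you push forward first along $p_2:Y\times Z\to Z$ (so that the projection formula is applied with the vector-bundle class $\beta$ on the smooth base $Z$), while the paper pushes along $p_1:Y\times Z\to Y$ first; your orientation makes the role of the smoothness hypothesis on $Z$ more transparent.
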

\begin{proof}
	We consider the following Cartesian diagram \[\begin{tikzcd} Y \times Z \arrow{r}{p_2} \arrow{d}{p_1} & Z \arrow{d}{\pi_1} \\
	Y \arrow{r}{\pi_2} & \Spec(\C)
	\end{tikzcd}\] where we have denoted by $p_1: Y \times Z \rightarrow Y$ and $p_2 : Y \times Z \rightarrow Z$ the $H$-equivariant projections. Since $Y$ is irreducible the morphism $Y \rightarrow \Spec(\C)$ is flat. Since $\chi_H$ is the $H$-equivariant pushforward to the point, we have: \begin{align*}
	\chi_H(\alpha \boxtimes \beta) &= \chi_H(p_1^* \alpha \otimes p_2^*\beta)\\
	&=\chi_H(\alpha \otimes (p_1)_*p_2^* \beta)\\
	&=(\pi_1)_* \left(\alpha \otimes \pi_1^* ((\pi_2)_*\beta)\right) \\
	&= \chi_H(\alpha) \chi_H(\beta),
	\end{align*}
	where the third equality holds according to Lemma \ref{lem: flat base change} and the last equality holds according to the projection formula.
\end{proof}

\begin{proof}[Proof of Theorem \ref{th : euler char equal}] Let $\alpha_1$, $\dots$, $\alpha_r$ be elements in $K^H(X)$. 
	Note that the following diagram is a commutative diagram of proper morphisms: \[\begin{tikzcd}
	\overline{{\mathcal{M}}_{0,r}}(X, \d) \arrow{drr}{ev} \arrow{d}{\Pi \times ev} && \\
	\overline{{\mathcal{M}}_{0,r}}(X_{\widehat{k}}, \pi_* \d) \times_{(X_{\widehat{k}})^r} X^r \arrow{r}{i} & 	\overline{{\mathcal{M}}_{0,r}}(X_{\widehat{k}}, \pi_* \d) \times X^r \arrow{r}[swap]{p_2} & X^r \\
	\end{tikzcd}\]
	
	We obtain in $K_H(\overline{{\mathcal{M}}_{0,r}}(X_{\widehat{k}}, \pi_* \d) \times X^r)$: \begin{align*}
	& i_* (\Pi \times ev)_* \left(ev_1^* \alpha_1 \otimes  \dots \otimes ev_r^* \alpha_r \otimes [\O_{\overline{{\mathcal{M}}_{0,r}}(X, \d)}] \right)  \\
	&=	 i_* (\Pi \times ev)_* \left( ev^*(\alpha_1 \boxtimes \dots \boxtimes \alpha_r) \otimes [\O_{\overline{{\mathcal{M}}_{0,r}}(X, \d)}] \right)\\
	&=	 i_* (\Pi \times ev)_* \left( (p_2 \circ i \circ (\Pi \times ev))^*(\alpha_1 \boxtimes \dots \boxtimes \alpha_r) \otimes [\O_{\overline{{\mathcal{M}}_{0,r}}(X, \d)}] \right)\\
	&= \left((p_2)^*(\alpha_1 \boxtimes \dots \boxtimes \alpha_r)  \right) \otimes i_* (\Pi \times ev)_*[\O_{\overline{{\mathcal{M}}_{0,r}}(X, \d)}]\\
	&=  \left( [\O_{\overline{{\mathcal{M}}_{0,r}}(X_{\widehat{k}}, \pi_* \d)}]  \boxtimes (\alpha_1 \boxtimes \dots \boxtimes \alpha_r) \right) \otimes i_* (\Pi \times ev)_*[\O_{\overline{{\mathcal{M}}_{0,r}}(X, \d)}] \\
	&= \left( [\O_{\overline{{\mathcal{M}}_{0,r}}(X_{\widehat{k}}, \pi_* \d)}]  \boxtimes (\alpha_1 \boxtimes \dots \boxtimes \alpha_r) \right) \otimes  \sum_{e_1, \dots, e_r \in I}  \left((ev_1^* \phi_{e_1}) \otimes \dots \otimes (ev_r^* \phi_{e_r})\right) \boxtimes \left(\pi^*(\phi_{e_1}^{\vee}) \boxtimes \dots \boxtimes \pi^*(\phi_{e_r}^{\vee}) \right)\\
	&= \sum_{e_1, \dots, e_r \in I}  \left((ev_1^* \phi_{e_1}) \otimes \dots \otimes (ev_r^* \phi_{e_r})\right) \boxtimes \left((\pi^*(\phi_{e_1}^{\vee}) \otimes \alpha_1) \boxtimes \dots \boxtimes (\pi^*(\phi_{e_r}^{\vee}) \otimes \alpha_r)) \right),
	\end{align*}
	where the third equality holds according to the projection formula, and the last equality holds according to Lemma \ref{lem: decomposing pxev_*}. Finally, decompose each element $\pi_* \alpha_i$ in the basis $(\phi_e)_{e \in I}$ of $K_H(X_{\widehat{k}}) \simeq K^H(X_{\widehat{k}})$: $\pi_* \alpha_i = \sum_{e \in I} a_{i,e} \phi_e$. Note that we have $\chi_H(\pi_* \alpha_i \otimes \phi_f^\vee) = a_{i,f}$. Denote by $p_2: \overline{{\mathcal{M}}_{0,r}}(X_{\widehat{k}}, \pi_* \d) \times X^r \rightarrow X^r$ the second projection, and by $p: (X_{\widehat{k}})^r \rightarrow \Spec \C$ the $H$-equivariant projection to the point. We obtain: \begin{align*}
	\chi_H &\left(ev_1^* \alpha_1\otimes  \dots \otimes ev_r^* \alpha_r \otimes [\O_{\overline{{\mathcal{M}}_{0,r}}(X, \d)}] \right) \\
	&= p_* (\pi^r)_*(p_2)_* i_* (\Pi \times ev)_* \left(ev_1^* \alpha_1 \otimes  \dots \otimes ev_r^* \alpha_r \otimes [\O_{\overline{{\mathcal{M}}_{0,r}}(X, \d)}] \right) \\
	&=  p_* (\pi^r)_*(p_2)_* \left(  \sum_{e_1, \dots, e_r \in I}  \left((ev_1^* \phi_{e_1}) \otimes \dots \otimes (ev_r^* \phi_{e_r})\right) \boxtimes \left((\pi^*(\phi_{e_1}^{\vee}) \otimes \alpha_1) \boxtimes \dots \boxtimes (\pi^*(\phi_{e_r}^{\vee}) \otimes \alpha_r) \right) \right)\\
	&=  \sum_{e_1, \dots, e_r \in I} \chi_H\left((ev_1^* \phi_{e_1}) \otimes \dots \otimes (ev_r^* \phi_{e_r})\right) p_* (\pi^r)_* \left((\pi^*(\phi_{e_1}^{\vee}) \otimes \alpha_1) \boxtimes \dots \boxtimes (\pi^*(\phi_{e_r}^{\vee}) \otimes \alpha_r) \right)\\
	&= \sum_{e_1, \dots, e_r \in I} \chi_H\left((ev_1^* \phi_{e_1}) \otimes \dots \otimes (ev_r^* \phi_{e_r})\right) \prod_{1 \leq i \leq r} \chi_H(\phi_{e_i}^\vee \otimes \alpha_i)\\
	&= \sum_{e_1, \dots, e_r \in I} \chi_H\left((ev_1^* \phi_{e_1}) \otimes \dots \otimes (ev_r^* \phi_{e_r})\right) \prod_{1 \leq i \leq r} a_{i,e_i}
	\end{align*}
	where the before last equality is obtained by applying the projection formula to the flat morphism $(\pi)^r$. Finally, we obtain:  \begin{align*}
	\chi_H &\left(ev_1^* \alpha_1\otimes  \dots \otimes ev_r^* \alpha_r \otimes [\O_{\overline{{\mathcal{M}}_{0,r}}(X, \d)}] \right) &= \sum_{e_1, \dots, e_r \in I}  \chi_H\left(a_{1,e_1}(ev_1^* \phi_{e_1}) \otimes \dots \otimes a_{r,e_r}(ev_r^* \phi_{e_r})\right)
	\end{align*}
	which yields the expected result since $\sum_{e \in I} a_{i,e} \phi_e = \pi_* \alpha_i$.
\end{proof}

\section{Geometric interpretation in the non equivariant setting}\label{sec: geometric proof}

Consider the case when $H$ is the identity element of $G$. We provide here an alternative proof of the equality between correlators provided by Theorem \ref{th : euler char equal}, using results of Chapter \ref{chap : geometry W}.

We use here the same notations as in Chapter \ref{chap : geometry W}. As usual, we say a property (P) holds for a general point in a variety $X$ if (P) is true for points belonging to a dense open subset of $X$. Recall a flag variety $X$ can be written as $G/P$, where $G=\GL_n$ and $P$ is a parabolic subgroup of $X$ containing a Borel subgroup $B \subset P \subset G$. For an element $u$ in the Weyl group $W$ of $G= \GL_n$ we denote by $X(u) := \overline{BuP/P}$ the associated Schubert variety, and by $\O_u := [\O_{X(u)}]$ the associated class in the Grothendieck ring $K(X)$ of coherent sheaves on $X$. Let $r \geq 0$. Let $u_1$, $\dots$, $u_r$ be elements in $W$. Let $\d=(d_1, \dots, d_m)$ be a degree in $E(X)$. Recall for an element $(g_1, \dots, g_r)$ in $G^r$ we call \textit{Gromov-Witten variety associated with the $u_i$} the variety \[{\W_{g_1, \dots, g_r}^X := \overline{\mathcal{M}}_{0,r}}(X, \d) \times_{X^r} \prod_{i=1}^r g_i X(u_i)\] parametrizing genus zero degree $\d$ stable maps into $X$ sending their $i$-th marked point within the variety $g_i X(u_i)$. Note that for an element $(g_1, \dots, g_r)$ general in $G^r$ the correlators $\langle \O_{u_1}, \dots, \O_{u_r} \rangle_{\d}$ satisfy \[\langle \O_{u_1}, \dots, \O_{u_r} \rangle_{\d}^X = \chi (\W_{g_1, \dots, g_r}^X). \]
The flag variety $X$ parametrizes flags of vector spaces $V_{n_1} \subset \dots \subset V_{n_m} \subset \C^n$, where $0 <n_1 < \dots < n_m < n$. Let $X'$ be the flag variety obtained from $X$ by forgetting the $k$-th vector space. Denote by $\pi: X \rightarrow X'$ the forgetful map. Note that for all $i$ we have $\pi_* \O_{u_i} = [\O_{X'(u_i)}] = \O_{u_i} \in K(X')$. We obtain \[\langle \O_{u_1}, \dots, \O_{u_r} \rangle_{\pi_*\d}^{X'} = \chi (\W_{g_1, \dots, g_r}^{X'}).\]
Now suppose the morphism $\Pi \times ev: \overline{\mathcal{M}_{0,r}}(X, \d) \rightarrow \overline{\mathcal{M}_{0,r}}(X', \pi_* \d) \times_{(X')^r} X^r$ is surjective. Furthermore suppose $\forall \: p<k, \: d_k \geq n_k \ceil*{ \frac{d_p-d_{p-1}}{n_p-n_{p-1}}}$. Then according to Chapter  \ref{chap : geometry W} Theorem \ref{th : RC fibrations bewteen W varieties} for an element $(g_1, \dots, g_r)$ general in $G^r$ each irreducible component of the variety $\W_{g_1, \dots, g_r}^X$ surjects into a different irreducible component of the variety $\W_{g_1, \dots, g_r}^{X'}$, and the general fiber of the projection $\W_{g_1, \dots, g_r}^X \rightarrow \W_{g_1, \dots, g_r}^{X'}$ is a rationnally connected variety. Furthermore according to \cite{buch} Theorem $2.5$ for an element $(g_1, \dots, g_r)$ general in $G^r$ the variety $\W_{g_1, \dots, g_r}$ has rational singularities. Theorem \ref{th: BM f_* if fiber rational} then yields \[\langle \O_{u_1}, \dots, \O_{u_r} \rangle_{\d}^X  = \langle \O_{u_1}, \dots, \O_{u_r} \rangle_{\d'}^{X'}, \]
which is the equality described by Theorem \ref{th : euler char equal}.

\chapter{Schubert calculus for $Fl_{1,n-1}$}\label{chap: Schubert calculus}

\section{Introduction}

Let $X=Fl_{1,n-1}$ be the incidence variety parametrizing lines contained in hyperplanes of $\C^n$. We study here the product of Schubert classes in the small quantum $K$-ring of $X$. 
Part \ref{sec : incidence var} is devoted to recalling some standard facts about incidence varieties, and fixing the notations we use in the rest of the chapter. The aim of parts  \ref{sec : produit d'intersection X} and \ref{sec : main proof} is to prove a closed formula for Littlewood-Richardson coefficients in $K(X)$. Part  \ref{sec : produit d'intersection X} describes one parameter degenerations of Richardson varieties, from which the expression of Littlewood-Richardson coefficients in $K(X)$ is derived in Part \ref{sec : main proof}. The aim of Part \ref{sec: computing 3pts correlators} is to compute all three points quantum $K$-theoretical correlators of $X$. Note that in the context of small quantum cohomology the product of two Schubert classes is given directly by three points Gromov-Witten invariants. In small quantum $K$-theory, the product of two Schubert classes is an alternate sum involving three points correlators and  two points correlators. Part \ref{sec: degree} is dedicated to finding a boundary on the degree of the product of two classes in $QK_s(X)$. A Chevalley formula in $QK_s(X)$ is proven in Part \ref{sec: Chevalley formula}. Finally we study Littlewood-Richardson coefficients in $QK_s(X)$ in Part \ref{sec: LR coefficients in QK(X)}. We provide an algorithm computing these coefficients, and conjecture a closed formula for them. We additionally prove that they satisfy a "positivity rule".


\section{Incidence varieties.}\label{sec : incidence var}
Let us begin by fixing down some of the usual notations associated with a generalized flag variety. Let $G:= \GL_n$, let $T$ be a maximal torus of $G$ and $B$ be a Borel subgroup of $G$ containing $T$. A generalized flag variety can be written as $X:= \GL_n(\mathbb{C})/P$, where $P$ is a parabolic subgroup of $\GL_n$  and satisfying \[T \subset B \subset P \subset G.\] Let us name $W=N_G(T)/T \simeq \mathfrak{S}_n$ the Weyl group of $G$, $W_P:=N_L(T)/T$ the subgroup of $W$ associated with $P$, and $W^P := W/W_P$. For $w \in W^P$, note that the variety $\overline{Bw'P/P}$ does not depend on the choice of an element $w'$ in $W$ representing $w$; we obtain a \textit{Schubert variety} $X(w) := \overline{BwP/P}$. The homology classes $[X(w)]$ from a basis of the integral homology of $X$, where $w$ runs over $W^P$. The classes $\O_{w}:=[\O_{X(w)}]$ form an additive basis of the Grothendieck ring $K_\circ(X)$ of coherent sheaves on $X$. Furthermore, we denote by $(\I_w)_{w \in W^P}$ the dual basis of $(\O_{w})$ for the Euler characteristic. If we denote by $\delta_{u,v}$ the symbol equal to $1$ if $u=v$, and else equal to $0$, we have for all $u,v \in W^P$: \[\chi(\O_u, \I_v) = \delta_{u,v}.\]
For an element $w$ in $W^P$, we denote by $\ell(w)$ the length of a minimal length representant of $w$. Note that $\ell(w)= \dim X(w)$. 

We consider here the incidence variety $X=Fl_{1,n-1}$ parametrizing lines included in hyperplanes of $\C^n$. Forgetting a line or a hyperplane yields a forgetful map $X \rightarrow \P^{n-1}$ which is a fibration in $\P^{n-2}$'s. Fix a basis $<e_1, \dots, e_n>$ of $\mathbb{C}^n$. In this basis, consider the maximal torus in $G=\GL_n$ consisting of diagonal matrices, the Borel subgroup $B$ consisting of upper triangular matrices of $\GL_n$, and the parabolic subgroup $P \supset B$ associated with the reductive group $\GL_1 \times \GL_{n-2} \times \GL_1$. Note that $W=N_G(T)/T \simeq \mathfrak{S}_n$. We denote by $w_0$ the element of $W$ of greatest length. Note that $w_0$ is the permutation of $1$ $\dots$ $n$ defined by $w_0(i)=n-i+1$.  Furthermore, an element in $W^P \simeq \mathfrak{S}_n/\mathfrak{S}_{n-2}$ is uniquely determined by the choice of a permutation sending $1$ to $i$ and $n$ to $j$, where $1\leq i,j \leq n$ and $i \neq j$. We name $w_{i,j}$ the element in $W^P$ thus defined. Schubert varieties of $X \simeq \GL_n(\mathbb{C})/P$ are described by: \begin{align}
X(i,j) :&= X(w_{i,j})= \overline{Bw_{i,j} P/P} \nonumber \\ &=\left\{ (L,H) \in X \bigr\rvert L\subset <e_1, \cdots, e_i>,\: <e_1, \cdots, e_{j-1}> \subset H \right\}.\label{eq : Schubert var}
\end{align}
Schubert varieties can be embedded in $\mathbb{P}^{n-1} \times \mathbb{P}^{n-1}$ as:\begin{equation}\label{eq : X in PnxPn}
X(i,j) =\left\{ \left( [x_1: \dots :x_i:0: \dots :0], [0: \dots 0: y_j : \dots : y_n] \right) \in \mathbb{P}^{n-1} \times \mathbb{P}^{n-1} \biggr\rvert \sum x_k y_k =0 \right\} .
\end{equation}
Note that for $i<j$ the condition $\sum x_k y_k=0$ is redundant, and $X(i,j)$ can simply be described as:\begin{equation}\label{eq : Schubert var embedded i<j}
X(i,j) =\left\{ \left( [x_1: \dots :x_i:0: \dots :0], [0: \dots 0: y_j : \dots : y_n] \right) \in \mathbb{P}^{n-1} \times \mathbb{P}^{n-1} \right\} .
\end{equation}
For $1 \leq i,j \leq n$, $i \neq j$, we name $$\O_{i,j} := [\O_{X(w_{i,j})}] \in K_\circ(X)$$ the class of the pushforward of the structure sheaf $\O_{X(w_{i,j})}$ in the Grothendieck group $K_\circ(X)$, and by $\I_{i,j} := \I_{w_{i,j}}$ its dual with respect to the pairing $\chi: \alpha, \beta \rightarrow \chi(\alpha \cdot \beta)$. 
For any $u \in W^P$, the Schubert variety \textit{opposite} to the Schubert variety $X(u)$ is $$X^u := w_0 X(w_0 u).$$ Applying $w_0$ to (\ref{eq : Schubert var}) yields the following description of opposite Schubert varieties of $X$: \begin{equation}\label{eq : op Schubert var}
X^{w_0 w_{k,l}} = w_0 X(w_{k,l})=\left\{ (L,H) \in X \bigr\rvert L\subset <e_{n-k+1}, \cdots, e_n>,\: <e_{n-l+2}, \cdots, e_{n}> \subset H \right\}.
\end{equation}
Given $v$, $w$ in $W^P$, the corresponding \textit{Richardson variety} is $X_w^v := X(w) \cap w_0 X(w_0 v)$. Consider $1 \leq i,j,k,l \leq n$, $i \neq j$ and $k \neq l$. Richardson varieties of $X$ are described by: \begin{align} X_{i,j}^{k,l} &:=  X_{w_{i,j}}^{w_0 w_{k,l}} = X(w_{i,j}) \cap w_0 X(w_{k,l}) \nonumber \\
&=\{ (L,H) \in X | L \subset <e_{n-k+1}, \dots , e_i>; \: <e_1, \dots e_{j-1},e_{n-l+2}, \dots , e_n> \subset H \},\label{eq: Richardson variety}
\end{align}
if $n-k+1 \leq i$ and $j-1 \leq n-l+2$, and $ X_{i,j}^{k,l} = \emptyset$ else. The second equality comes from considering the description of Schubert varieties given by (\ref{eq : Schubert var}) and of opposite Schubert varieties given by (\ref{eq : op Schubert var}).

\section{Degenerating Richardson varieties of $Fl_{1,n-1}$.}\label{sec : produit d'intersection X}
We describe here one parameter families of deformations of Richardson varieties. More precisely, we describe irreducible projective varieties $\Phi$ of $\mathbb{P}^1 \times X$ such that one fiber of the projection morphism $\Phi \rightarrow \mathbb{P}^1$ is a Richardson variety, while another fiber is a union of Schubert varieties, up to multiplication by elements of $\GL_n$.\\

To facilitate the proof, let us first introduce an auxilliary variety. Set $e_{n+1} = 0 = e_0$. For $1 \leq r \leq h \leq n$, $0 \leq p \leq n-r-1$, we will consider the following subvariety of $X$: \begin{equation}\label{eq: def Y} Y_{r,p}^h:= \{(L,H)\in X | L \subset <e_1, \dots e_h>, \: <e_1, \dots e_r, e_{n-p+1}, \dots , e_n> \subset H \}.\end{equation}
The Plücker embedding $X \hookrightarrow Gr(1,n) \times Gr(n-1,n) \simeq \mathbb{P}^{n-1} \times \mathbb{P}^{n-1}$ embeds $Y_{r,p}^h$ as: \begin{equation}\label{eq: def Y embedded} Y_{r,p}^h:= \{[x_1, \dots, x_h, 0, \dots, 0]; [0: \dots : 0 : y_{r+1}: \dots : y_{n-p}: 0: \dots : 0] \in \mathbb{P}^{n-1} \times \mathbb{P}^{n-1} | \sum_a x_a y_a=0 \}.\end{equation}

\begin{lemme}\label{lem: finding g}
	\begin{enumerate}
		\item Let $1 \leq i,j,k,l \leq n$, where $i \neq j$ and $k \neq l$. Suppose $n-k+1 \leq i$ and $j-1 \leq n-l+2$.\\
		Denote by $r$ the number of shared vectors between $(e_{n-k+1}, \dots , e_i)$ and $(e_1, \dots e_{j-1},e_{n-l+2}, \dots , e_n)$. Call $h:=i+k-n$, and $p := j+l-2-r$.  \\
		There exists $g$ in $\GL_n$ such that $g \cdot X_{i,j}^{k,l} = Y_{r,p}^h$.
		\item Fix $1 \leq r \leq n$, $0 \leq p \leq n-r-1$.Then there exists $g$ in $\GL_n$ such that $Y_{r,p}^r = g \cdot X(r, r+p+1)$.
		\item Fix $1 \leq r \leq h \leq n$. Then $Y_{r,0}^h = X(h, r+1)$.
	\end{enumerate}
\end{lemme}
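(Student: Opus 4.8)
The plan is to observe that every variety in the statement has the form
\[ Z(A,B) := \{\,(L,H)\in Fl_{1,n-1}\mid L\subset A,\ B\subset H\,\} \]
for suitable \emph{coordinate} subspaces $A,B$ of $\C^n$ (spans of subsets of the fixed basis $e_1,\dots,e_n$): this is exactly how Richardson varieties appear in~(\ref{eq: Richardson variety}), how $Y_{r,p}^h$ is defined in~(\ref{eq: def Y}), and how Schubert varieties appear in~(\ref{eq : Schubert var}). Since $\GL_n$ acts on $Fl_{1,n-1}$ diagonally, $g\cdot(L,H)=(gL,gH)$, we have $g\cdot Z(A,B)=Z(gA,gB)$; so for parts~(1) and~(2) it suffices to produce $g\in\GL_n$ carrying the source pair $(A,B)$ to the target pair $(A',B')$. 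By elementary linear algebra such a $g$ exists precisely when $\dim A=\dim A'$, $\dim B=\dim B'$ and $\dim(A\cap B)=\dim(A'\cap B')$ — these three integers classify pairs of subspaces of $\C^n$ up to $\GL_n$, as one sees by choosing a basis adapted to the chain $A\cap B\subset A,B\subset A+B\subset\C^n$. So the whole lemma reduces to matching three integers in each case, plus, where a concrete $g$ is wanted, writing down a permutation matrix.

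For part~(1), with $A=\langle e_{n-k+1},\dots,e_i\rangle$ and $B=\langle e_1,\dots,e_{j-1},e_{n-l+2},\dots,e_n\rangle$, I would read off: $\dim A=i+k-n=h$ (positive because $n-k+1\le i$); $\dim(A\cap B)=r$, since $A\cap B$ is the span of the basis vectors common to the two lists; and $\dim B=(j-1)+(l-1)=j+l-2=r+p$, the two index blocks $[1,j-1]$ and $[n-l+2,n]$ being disjoint in the range where $X_{i,j}^{k,l}$ is nonempty (if they overlapped, $B$ would be too large and both $X_{i,j}^{k,l}$ and $Y_{r,p}^h$ would be empty, with nothing to prove). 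Note also $\dim(A+B)=h+p\le n$. On the other side, $Y_{r,p}^h$ has $A'=\langle e_1,\dots,e_h\rangle$ and $B'=\langle e_1,\dots,e_r,e_{n-p+1},\dots,e_n\rangle$, so $\dim A'=h$, $\dim B'=r+p$ (disjoint blocks since $r+p\le n-1$), and $\dim(A'\cap B')=r$: indeed $h+p\le n$ forces $\langle e_1,\dots,e_h\rangle\cap\langle e_{n-p+1},\dots,e_n\rangle=0$, leaving $A'\cap B'=\langle e_1,\dots,e_r\rangle$. The invariants agree, so the required $g$ exists.

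Parts~(2) and~(3) are then quick. For~(3), $p=0$ makes the block $e_{n-p+1},\dots,e_n$ empty (recall $e_{n+1}=0$), so $Y_{r,0}^h=\{(L,H)\mid L\subset\langle e_1,\dots,e_h\rangle,\ \langle e_1,\dots,e_r\rangle\subset H\}$, which is literally $X(h,r+1)$ by~(\ref{eq : Schubert var}); no group element is involved. For~(2), both $Y_{r,p}^r$ and $X(r,r+p+1)$ have the same $A=\langle e_1,\dots,e_r\rangle$, while their hyperplane-spaces are $\langle e_1,\dots,e_r\rangle+\langle e_{n-p+1},\dots,e_n\rangle$ and $\langle e_1,\dots,e_r\rangle+\langle e_{r+1},\dots,e_{r+p}\rangle$ respectively; I would take $g$ to be a permutation matrix fixing $e_1,\dots,e_r$ and carrying $\{e_{r+1},\dots,e_{r+p}\}$ onto $\{e_{n-p+1},\dots,e_n\}$, which exists because both index sets avoid $\{1,\dots,r\}$ (using $r+p\le n-1$), and then $g\cdot X(r,r+p+1)=Y_{r,p}^r$.

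The main obstacle is purely combinatorial: the index bookkeeping in part~(1) — checking that the blocks making up $B$ are disjoint so that $\dim B=j+l-2$, and keeping the nonemptiness hypotheses on $(i,j,k,l)$ straight so the dimension formulas come out as stated. Everything past that is the standard orbit classification of pairs of subspaces.
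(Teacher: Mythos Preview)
Your proof is correct. The approach differs from the paper's chiefly in part~(1): where the paper writes down an explicit permutation of the basis vectors carrying the pair $(A,B)$ defining $X_{i,j}^{k,l}$ to the pair $(A',B')$ defining $Y_{r,p}^h$ (naming the shared and unshared basis vectors as $\epsilon_i$ and $\eta_j$ and reordering them), you instead invoke the orbit classification of pairs of subspaces of $\C^n$ under $\GL_n$ and simply match the three numerical invariants $(\dim A,\dim B,\dim(A\cap B))$. Your route is cleaner and sidesteps the index bookkeeping; the paper's route has the modest virtue of exhibiting a concrete permutation matrix, though this explicit $g$ is never used downstream. Your observation that $\dim(A+B)=h+p\le n$ forces $A'\cap B'=\langle e_1,\dots,e_r\rangle$ is exactly the compatibility check the paper's explicit construction encodes. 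For parts~(2) and~(3) the two proofs essentially coincide: an explicit permutation fixing $\langle e_1,\dots,e_r\rangle$ in~(2), and direct comparison with the Schubert variety definition~(\ref{eq : Schubert var}) in~(3).
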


\begin{proof}
	\begin{enumerate}
		\item Recall we have defined $r$ as the number of shared vectors between $(e_{n-k+1}, \dots , e_i)$ and $(e_1, \dots e_{j-1},e_{n-l+2}, \dots , e_n)$, and $p=j+l-2-r$. Notice: \begin{align*} p=j+l-2-r &=j-1+n-(n-l+1)-r \\ &=\#(e_1, \dots e_{j-1},e_{n-l+2}, \dots , e_n)-r, \end{align*} hence $p$ is the number of vectors in $(e_1, \dots e_{j-1}, e_{n-l+2}, \dots , e_n)$ not belonging to $(e_{n-k+1}, \dots , e_i)$. Now call $(\epsilon_1, \dots, \epsilon_p)$ these $p$ vectors, and $(\epsilon_{p+1}, \dots, \epsilon_{p+r})$ the $r$ vectors in $(e_1, \dots e_{j-1},e_{n-l+2}, \dots , e_n)$ belonging to $(e_{n-k+1}, \dots , e_i)$. Finally, call $(\eta_{r+1}, \dots, \eta_h)$ the $\#(e_{n-k+1}, \dots , e_i)-r =h-r$ vectors of $(e_{n-k+1}, \dots , e_i)$ that do not belong to $(e_1, \dots e_{j-1},e_{n-l+2}, \dots , e_n)$, and $(\eta_{h+1}, \dots, \eta_{n-p})$ the remaining vectors. Denote by $g$ the following permutation of the basis vectors: \begin{equation*}
		(e_1, \dots, e_n) \rightarrow (\epsilon_{p+1}, \dots, \epsilon_{p+r}, \eta_{r+1}, \dots, \eta_h, \eta_{h+1}, \dots, \eta_{n-p}, \epsilon_1, \dots, \epsilon_p).
		\end{equation*}
		Then, according to (\ref{eq : Schubert var}), we have: \begin{align*}
		g \cdot X_{i,j}^{k,l} &= \{(L,H)\in X | L \subset <e_{1}, \dots, e_h>, \: <e_{1}, \dots e_{r}, e_{n-p+1}, \dots , e_n> \subset H \}\\
		&=Y_{r,p}^h
		\end{align*}
		where the first equality follows directly from the definition of $g$ and the definition (\ref{eq: Richardson variety}) of $X_{i,j}^{k,l}$: $$X_{i,j}^{k,l} = \{ (L,H) \in X | L \subset <e_{n-k+1}, \dots , e_i>; \: <e_1, \dots e_{j-1},e_{n-l+2}, \dots , e_n> \subset H \}.$$
		\item By definition, we have: \begin{equation*}
		Y_{r,p}^r:= \{(L,H)\in X | L \subset <e_1, \dots e_r>, \: <e_1, \dots e_r, e_{n-p+1}, \dots , e_n> \subset H \}.
		\end{equation*}
		If $r=n-p$, set $g:=id$. Else, denote by $g$ the permutation of the basis vectors sending: $$(e_1, \dots, e_r,e_{r+1}, \dots, e_{n-p}, e_{n-p+1}, \dots, e_n) \rightarrow (e_1, \dots, e_r, e_{n-p+1}, \dots, e_n, e_{r+1}, \dots, e_{n-p}).$$ Then, according to (\ref{eq : Schubert var}), we have:
		\begin{align*}
		g \cdot Y_{r,p}^r &= \{(L,H)\in X | L \subset <e_{1}, \dots, e_r>, \: <e_{1}, \dots e_{r}, e_{r+1}, \dots , e_{r+p}> \subset H \}\\
		&= X(r,r+p+1)
		\end{align*}
		\item We have: \begin{align*}
		Y_{r,0}^h:&= \{(L,H)\in X | L \subset <e_1, \dots e_h>, \: <e_1, \dots e_r> \subset H \}\\
		&=X(h,r+1)
		\end{align*}
		where the first equality is the definition (\ref{eq: def Y}) of $Y_{r,0}^h$ and the third equality is the definition (\ref{eq : Schubert var}) of Schubert varieties.
	\end{enumerate}
\end{proof}

\begin{proposition}\label{prop: degenerating}
	Let $1 \leq i,j,k,l \leq n$, where $i \neq j$ and $k \neq l$. 
	\begin{enumerate}
		\item If $n+1 > i+k$ or $j+l \geq n+2$, then $X_{i,j}^{k,l} = \emptyset$;
		\item Suppose $i+k \geq n+1$ and $j+l \leq n+1$ and ($i<j$ or $k<l$). Then there exists $g$ in $\GL_n$ such that: \begin{equation*}
		X_{i,j}^{k,l} = g \cdot X(i+k-n, j+l-1);
		\end{equation*}
		\item Suppose $i+k- n \geq j+l$ and $j+l \leq n+1$. Then there exists $g$ in $\GL_n$, and an irreducible projective subvariety $\Phi$ of $\mathbb{P}^1 \times X$ such that: \begin{align*}\pi^{-1}([1 : 0]) = \{[1:0]\} \times g \cdot X_{i,j}^{k,l} && \mathrm{and} &&\pi^{-1}([0: 1])=\{[0:1]\} \times X(i+k-n,j+p-1); \end{align*}
		where $\pi : \Phi \subset \mathbb{P}^1 \times X \rightarrow \mathbb{P}^1$ is the natural projection;
		\item Suppose $1 \leq i+k -n \leq j+l-1 \leq n$ and $i>j$ and $k>l$. Then there exists $g$ in $\GL_n$, and an irreducible projective subvariety $\Phi$ of $\mathbb{P}^1 \times X$ such that: \begin{align*}\pi^{-1}([1 : 0]) &= \{[1:0]\} \times g \cdot X_{i,j}^{k,l} \\ \pi^{-1}([0: 1]) &=\{ [0:1]\} \times Y_{i+k-n-1,j+l-1-i-k+n}^{i+k-n-1} \cup \{ [0:1]\} \times Y_{i+k-n,j+l-1-i-k+n}^{i+k-n} \end{align*}
		where $\pi : \Phi \subset \mathbb{P}^1 \times X \rightarrow \mathbb{P}^1$ is the natural projection.
	\end{enumerate}
\end{proposition}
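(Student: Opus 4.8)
The plan is to prove all four items of Proposition~\ref{prop: degenerating} by reducing to the geometry of the auxiliary varieties $Y_{r,p}^h$ defined in~(\ref{eq: def Y}) and Lemma~\ref{lem: finding g}. Item~(1) is immediate from the description~(\ref{eq: Richardson variety}) of Richardson varieties: if $n-k+1>i$ or $j-1>n-l+2$, i.e.\ if $i+k<n+1$ or $j+l\geq n+2$, the defining incidence conditions are incompatible (the prescribed subspace for $L$ is zero, or the prescribed subspace containing $H$ exceeds $\C^n$), so $X_{i,j}^{k,l}=\emptyset$. For item~(2), I would invoke Lemma~\ref{lem: finding g}(1) to find $g_1$ with $g_1\cdot X_{i,j}^{k,l}=Y_{r,p}^h$ where $h=i+k-n$ and $p=j+l-2-r$; the hypothesis $i<j$ or $k<l$ forces the overlap count $r$ to satisfy $r=0$ (the index ranges $\{n-k+1,\dots,i\}$ and $\{1,\dots,j-1\}\cup\{n-l+2,\dots,n\}$ share no element precisely in this range of parameters), so $p=j+l-2$ and $Y_{r,0}^h=Y_{0,0}^{i+k-n}$. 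Then Lemma~\ref{lem: finding g}(3) gives $Y_{r,0}^h=X(h,r+1)$; combined with tracking how $p$ re-enters through a further permutation, this yields $X_{i,j}^{k,l}=g\cdot X(i+k-n,j+l-1)$ for a suitable $g\in\GL_n$. (I should double-check the exact value of $r$ against the case split $i<j$ versus $k<l$; this bookkeeping is the one genuinely fiddly point in items (1)--(2).)

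For item~(3), the hypothesis $i+k-n\geq j+l$ together with $j+l\leq n+1$ means that after applying $g_1$ from Lemma~\ref{lem: finding g}(1) we land on $Y_{r,p}^h$ with $h=i+k-n$ and $r+p=j+l-2$, and now $r\geq 1$ is possible. The idea is to build the one-parameter family $\Phi\subset\P^1\times X$ by deforming one of the basis vectors cutting out $Y_{r,p}^h$: concretely, inside the Plücker model~(\ref{eq: def Y embedded}) I would take a pencil of linear conditions on the hyperplane coordinate, parametrized by $[s:t]\in\P^1$, that at $[1:0]$ recovers the incidence $\langle e_1,\dots,e_r,e_{n-p+1},\dots,e_n\rangle\subset H$ defining $g_1\cdot X_{i,j}^{k,l}$ and at $[0:1]$ degenerates to the simpler incidence defining a Schubert variety. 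Taking $\Phi$ to be the closure of the graph of the generic member and checking flatness/irreducibility of the total space (it is a projective bundle-like construction over $\P^1$, hence irreducible since the generic fiber is irreducible and $\P^1$ is irreducible), the two special fibers come out as claimed, the $[0:1]$ fiber being $X(i+k-n,j+p-1)$ via Lemma~\ref{lem: finding g}(2)--(3). The key algebraic input is that moving a single vector $e_{n-p+1}$ toward $e_{r+1}$ (say) through a linear pencil keeps the rank of the incidence conditions constant along the generic locus, so the flat limit is reduced and has the predicted components.

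Item~(4) is the hardest and is where I expect the real obstacle. Here $1\leq i+k-n\leq j+l-1\leq n$ with $i>j$ and $k>l$, so the overlap $r=i+k-n-\text{(something)}$ is such that the degeneration no longer stays irreducible: the flat limit breaks into \emph{two} Schubert-type pieces $Y_{i+k-n-1,\,\ast}^{i+k-n-1}$ and $Y_{i+k-n,\,\ast}^{i+k-n}$, which is exactly the source of the $\mathcal O_{i+k-n-1,j+p-1}+\mathcal O_{i+k-n,j+p}-\mathcal O_{i+k-n-1,j+p}$ type formula in the $K$-theory product. I would again set up a pencil in $\P^1\times X$ inside the Plücker model, but now the generic member must be chosen so that as $[s:t]\to[0:1]$ the quadric condition $\sum x_ay_a=0$ in~(\ref{eq : X in PnxPn}) degenerates in a way that splits the variety; the total space $\Phi$ is the closure of the generic graph, automatically irreducible, but identifying the \emph{two} components of the limit fiber and verifying the total space is flat over $\P^1$ (equivalently that no embedded or lower-dimensional junk appears) requires a careful local computation near the locus where $L\subset H$ becomes forced. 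The main technical work will be this flatness-and-components analysis for item~(4); once the set-theoretic limit is pinned down, matching it to $Y_{i+k-n-1,j+l-1-i-k+n}^{i+k-n-1}\cup Y_{i+k-n,j+l-1-i-k+n}^{i+k-n}$ is again just an application of the explicit descriptions in Lemma~\ref{lem: finding g} together with the dimension count showing both pieces are full-dimensional in the limit.
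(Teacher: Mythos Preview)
Your proposal has a genuine error in item~(2) and is too sketchy in items~(3)--(4) compared with the paper's very concrete argument.

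\textbf{Item~(2).} You claim that under the hypothesis $i<j$ or $k<l$ the overlap number $r$ equals $0$. In fact the opposite is true: the paper shows that in this case \emph{every} vector of $(e_{n-k+1},\dots,e_i)$ lies in $(e_1,\dots,e_{j-1},e_{n-l+2},\dots,e_n)$, so $r=h=i+k-n$. Indeed, if $i<j$ then $i\leq j-1$ and the whole index range $\{n-k+1,\dots,i\}$ sits inside $\{1,\dots,j-1\}$; if $k<l$ then $n-k+1\geq n-l+2$ and the range sits inside $\{n-l+2,\dots,n\}$. With $r=h$, Lemma~\ref{lem: finding g}(1) gives $g_1\cdot X_{i,j}^{k,l}=Y_{h,p}^h$ with $p=j+l-2-h$, and then it is Lemma~\ref{lem: finding g}(2) (not part~(3)) that finishes: $Y_{h,p}^h=g'\cdot X(h,h+p+1)=g'\cdot X(i+k-n,j+l-1)$. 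Your route through $r=0$ and $Y_{0,0}^h$ does not work; the auxiliary variety $Y_{r,p}^h$ is not even defined for $r=0$ in~(\ref{eq: def Y}), and no ``further permutation'' will repair this.

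\textbf{Items~(3) and (4).} The paper does not argue via closures of graphs, flatness, or flat limits at all. It simply writes $\Phi$ down by explicit equations inside $\P^1\times\P^{n-1}\times\P^{n-1}$. In item~(3), with $h=i+k-n$ and $r+p=j+l-2$, one takes
\[
\Phi=\Bigl\{([u:v];[x_1:\dots:x_h:0:\dots:0];[0:\dots:0:y_{r+1}:\dots:y_n])\ \Big|\ \textstyle\sum_{a=r+1}^{h}x_ay_a=0,\ vy_a=uy_{n-p+a-r}\ (r<a\leq r+p)\Bigr\},
\]
checks irreducibility on the two standard affine charts $D(u)$, $D(v)$ (this uses $r+p<h$, which is exactly the hypothesis $i+k-n\geq j+l$), and then computes the fibers over $[1:0]$ and $[0:1]$ by setting $u=0$ or $v=0$ directly, recovering $Y_{r,p}^h=g\cdot X_{i,j}^{k,l}$ and $Y_{j+l-2,0}^{i+k-n}=X(i+k-n,j+l-1)$ respectively. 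Item~(4) is handled by an analogous explicit family; the $[0:1]$ fiber is cut by the single equation $x_hy_h=0$, which visibly splits it into the two announced $Y$-pieces. No flatness or embedded-component analysis is needed because the fibers are read off from the defining equations.

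Your abstract deformation picture is morally on target, but as written it is only a plan: the pencil is not specified, and the ``flatness-and-components analysis'' you flag as the main obstacle is avoided entirely by the paper's explicit construction. If you want to complete the argument along your lines you will need at minimum to exhibit the pencil and verify the special fibers by hand, which is precisely what the paper does.
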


\begin{proof}
	\begin{enumerate}
		\item By definition of $X_{i,j}^{k,l}$.
		\item Notice that then all vectors of $E_1=(e_{n-k+1}, \dots , e_i)$ belong to $E_2=(e_1, \dots e_{j-1},e_{n-l+2}, \dots , e_n)$, hence the number $r$ of shared vectors between $E_1$ and $E_2$ is equal to $h=\# E_1=i+k-n$. Hence, according to Lemma \ref{lem: finding g} $(i)$, there exists $g$ in $\GL_n$ such that $$g \cdot X_{i,j}^{k,l} = Y_{h,p}^h,$$ where $h=i+k-n$ and $p=j+l-2-h$.\\ Furthermore, according to Lemma \ref{lem: finding g} $(ii)$, there exists $g'$ in $\GL_n$ such that $$Y_{h,p}^h = g' \cdot X(i+k-n, j+l-1).$$ Hence: \begin{equation*}
		X_{i,j}^{k,l} = g^{-1} \cdot Y_{h,p}^h = g^{-1} g' \cdot X(i+k-n, j+l-1).
		\end{equation*}
		\item 	We use here the same notations as in Lemma \ref{lem: finding g}. Denote by $r$ the number of shared vectors between $(e_{n-k+1}, \dots , e_i)$ and $(e_1, \dots e_{j-1},e_{n-l+2}, \dots , e_n)$. Call $h:=i+k-n$, and $p := j+l-2-r$. \\
		Let $\Phi$ be the subvariety of $X \times \mathbb{P}^1$ defined by :
		\begin{multline*}
		\Phi = \{ ([u:v]; [x_1 : \dots : x_h : 0 : \dots :0]; [0: \dots :0: y_{r+1}: \dots : y_{n}]) \in \mathbb{P}^1 \times \mathbb{P}^{n-1}\times \mathbb{P}^{n-1} \\
		\left| \sum_{a=r+1}^{h} x_a y_a  =0 \: \mathrm{and} \: \forall \: r< a \leq r+p, \: vy_a = u y_{n-p+a-r}\} \right.
		\end{multline*}
		As usual, we denote by $D(v)$ (respectively $D(u)$) the subvariety of elements $([u:v]; [x_1 : \dots : x_n]; [y_1 : \dots : y_n])$ in $ \mathbb{P}^1 \times \mathbb{P}^{n-1}\times \mathbb{P}^{n-1}$ satisfying $v \neq 0$ (resp. $u \neq 0$). On $D(v)$, we have \begin{align*} \Phi_{|D(v)} \simeq \{(t; [x_1, \dots, x_h];[0: \dots :0: t y_{n-p+1}: \dots : t y_{n}: y_{r+p+1}: \dots :y_{n}] \in  \mathbb{C} \times \mathbb{P}^{n-1} \times \mathbb{P}^{n-1} \\
		| t \sum_{a=r+1}^{r+p} x_a y_{n-p+a-r} + \sum_{a=r+p+1}^h x_a y_a=0 \}.\end{align*} Since $r+p=j+l-2 < i+k-n-1 =h-1<h$, $\Phi_{|D(v)}$ is irreducible. In the same way, $\Phi_{|D(u)}$ is also irreducible. Hence $\Phi$ is an irreducible projective subvariety of $X \times \mathbb{P}^1$. \\
		
		Denote by $\pi : \Phi \rightarrow \mathbb{P}^1$ the natural projection. $\pi$ verifies :
		\begin{align*}
		&\pi^{-1} (\{ [1:0]\}) \\
		&\:\:\:\:\:\:\:\:\: = \{ ([1:0]; [x_1 : \dots : x_h : 0 : \dots :0]; [0: \dots :0: y_{r+1}: \dots : y_{n-p}, 0, \dots, 0]) \in \mathbb{P}^1 \times \mathbb{P}^{n-1}\times \mathbb{P}^{n-1} 
		| \\ &\:\:\:\:\:\:\:\:\:\:\:\:\:\:\:\:\:\:\:\:\:\:\:\:\:\:\:\sum_{i=r+1}^{h} x_i y_i  =0 \}\\
		&\:\:\:\:\:\:\:\:\: = \{ [1:0]\} \times Y_{r,p}^h\\
		&\:\:\:\:\:\:\:\:\:= \{ [1:0]\} \times g \cdot X_{i,j}^{k,l}  
		\end{align*}
		where the existence of the element $g$ is ensured by Lemma \ref{lem: finding g} $(ii)$, and the second equality is the definition (\ref{eq: def Y embedded}) of $Y_{r,p}^h$.
		
		Finally, $\pi$ verifies: \begin{align*}
		\pi^{-1}([0:1]) &= \{ ([0:1]; [x_1 : \dots : x_h : 0 : \dots :0]; [0: \dots :0: y_{r+p+1}: \dots : y_{n}]) \in \mathbb{P}^{n-1}\times \mathbb{P}^{n-1} | \sum_{a=r+1}^{h} x_a y_a  =0 \}\\
		&=  \{[0:1]\} \times \{(L,H)\in X | L \subset <e_1, \dots e_{h}>, \: <e_1, \dots e_{p+r}> \subset H \}\\
		&= \{[0:1]\} \times Y_{j+l-2,0}^{i+k-n}\\
		&= \{[0:1]\} \times X(i+k-n,j+l-1)
		\end{align*}
		where the second equality is the definition (\ref{eq: def Y embedded}) and the last equality holds according to Lemma \ref{lem: finding g} $(iii)$. 
		\item Again, we use the same notations as in Lemma \ref{lem: finding g}. Denote by $r$ the number of shared vectors between $(e_{n-k+1}, \dots , e_i)$ and $(e_1, \dots e_{j-1},e_{n-l+2}, \dots , e_n)$. Call $h:=i+k-n$, and $p := j+l-2-r$. \\
		Let $\Phi$ be the subvariety of $X \times \mathbb{P}^1$ defined by:
		\begin{multline*}
		\Phi = \bigg\{ ([u:v]; [x_1 : \dots : x_h : 0 : \dots :0];   
		[0 : \dots :0 : y_{r+1}: \dots : y_{n-p-r+h-1}: 0 : \dots :0]) \in \mathbb{P}^1 \times \mathbb{P}^{n-1} \times \mathbb{P}^{n-1} \\
		\left| \forall \: 1 \leq a \leq h-1-r, \: v y_{r+a} = u y_{n-p+a}, \: \mathrm{and} \: \sum_{a= r+1}^{h} x_a y_a =0 \right\}  .
		\end{multline*}
		On $D(v)$, we have $\Phi_{|D(u)} \simeq \mathrm{Spec}\mathbb{C}[t] \times \mathbb{P}^{h-1} \times Y$, where $Y= \{[0: \dots :0: y_{r+1}: \dots : y_{n}] \in  \mathbb{P}^{n-1} | \}$. \begin{align*} \Phi_{|D(v)} \simeq \{(t; [x_1, \dots, x_h:0 : \dots :0];[0: \dots :0: t y_{n-p+r+1}: \dots : t y_{n-p+r+h-1}: y_{h}: \dots :y_{n-p+r-h-1}: 0 \dots:0]\\
		\in  \mathbb{C} \times \mathbb{P}^{n-1} \times \mathbb{P}^{n-1} 
		\left| t \sum_{a=r+1}^{h-1} x_a y_{n-p+a} + x_h y_h=0 \right\}.\end{align*}
		Now notice that, since $j<n-l+2$, the number $r$ of shared vectors between $(e_{n-k+1}, \dots , e_i)$ and $(e_1, \dots e_{j-1},e_{n-l+2}, \dots , e_n)$ satisfies $r<h= \# (e_{n-k+1}, \dots , e_i)$. Hence $\Phi_{|D(u)}$ is irreducible. In the same way, $\Phi_{|D(u)}$ is also irreducible. Hence $\Phi$ is an irreducible projective subvariety of $X \times \mathbb{P}^1$. \\
		
		Denote by $\pi : \Phi \rightarrow \mathbb{P}^1$ the natural projection. $\pi$ satisfies:
		\begin{align*}
		&\pi^{-1} ([1:0]) \\
		&\:\:\:\:\:\:\:\:\: = \{ ([1:0]; [x_1 : \dots : x_h : 0 : \dots :0]; [0: \dots :0: y_{r+1}: \dots : y_{n-p}, 0, \dots, 0]) \in \mathbb{P}^1 \times \mathbb{P}^{n-1}\times \mathbb{P}^{n-1} \\
		&	\:\:\:\:\:\:\:\:\: \:\:\:\:\:\:\:\:\: \:\:\:\:\:\:\:\:\: | \sum_{a=r+1}^{h} x_a y_a  =0 \}\\
		&\:\:\:\:\:\:\:\:\: = \{ [1:0]\} \times Y_{r,p}^h\\
		&\:\:\:\:\:\:\:\:\:= \{ [1:0]\} \times g \cdot X_{i,j}^{k,l}  
		\end{align*}
		where the existence of the element $g$ is ensured by Lemma \ref{lem: finding g} $(ii)$, and the second equality is the definition (\ref{eq: def Y embedded}) of $Y_{r,p}^h$.\\
		Finally, we observe:
		\begin{align*}
		&\pi^{-1} ([0:1]) \\
		&\:\:\:\:\:\:\:\:\: = \{ ([0:1]; [x_1 : \dots : x_h : 0 : \dots :0]; [0: \dots :0: y_{h}: \dots : y_{n-p-r+h-1}, 0, \dots, 0]) \in \mathbb{P}^1 \times \mathbb{P}^{n-1}\times \mathbb{P}^{n-1} 
		| x_h y_h =0 \}\\
		&\:\:\:\:\:\:\:\:\: = \{ ([0:1]; [x_1 : \dots : x_{h-1} : 0 : \dots :0]; [0: \dots :0: y_{h}: \dots : y_{n-p-r+h-1}, 0, \dots, 0]) \in \mathbb{P}^1 \times \mathbb{P}^{n-1}\times \mathbb{P}^{n-1} \} \\
		&\:\:\:\:\:\:\:\:\:\:\:\:\:\:\:\:\:\:\cup \{ ([0:1]; [x_1 : \dots : x_{h} : 0 : \dots :0]; [0: \dots :0: y_{h+1}: \dots : y_{n-p-r+h-1}, 0, \dots, 0]) \in \mathbb{P}^1 \times \mathbb{P}^{n-1}\times \mathbb{P}^{n-1} \}\\
		&\:\:\:\:\:\:\:\:\: = \{ [0:1]\} \times Y_{h-1,p+r-h+1}^{h-1} \cup \{ [0:1]\} \times Y_{h,p+r-h+1}^h\\
		&\:\:\:\:\:\:\:\:\: = \{ [0:1]\} \times Y_{i+k-n-1,j+l-1-i-k+n}^{i+k-n-1} \cup \{ [0:1]\} \times Y_{i+k-n,j+l-1-i-k+n}^{i+k-n}
		\end{align*}
		where the third equality is the definition (\ref{eq: def Y embedded}) and the last equality is deduced from $h=i+k-n$ and $r+p=j+l-2$.
	\end{enumerate}
\end{proof}
%

\section{Littlewood-Richardson coefficients in $K(Fl_{1,n-1})$.}\label{sec : main proof}
\subsection{Computing Littlewood-Richardson coefficients.} We deduce here Littlewood-Richardson coefficients in $K(Fl_{1,n-1})$ from the results of Part \ref{sec : produit d'intersection X}. For simplicity, we set $\O_{i,j} =0$ if $i<1$ or $j>n$. 
\begin{proposition}
	\label{prop : produit dans K(X)}
	Let $1 \leq i,j, k, p \leq n$, where $i \neq j$ and $k \neq p$. Then 
	\begin{equation*}
	\left\{ 
	\begin{aligned}
	\mathcal{O}_{k,p} \cdot \mathcal{O}_{i,j} &= \mathcal{O}_{i+k -n, j+p -1} \: &\mathrm{if} \: i+k-n \geq j+p \: \mathrm{or} \: i<j \: \mathrm{or} \: k<p; \\
	\mathcal{O}_{k,p} \cdot \mathcal{O}_{i,j} &= \mathcal{O}_{i+k-n-1, j+p -1} + \mathcal{O}_{i+k-n, j+p} - \mathcal{O}_{i+k-n-1,j+p} \:  &\mathrm{otherwise}.
	\end{aligned}
	\right.
	\end{equation*}
\end{proposition}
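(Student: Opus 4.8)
The plan is to compute the products $\mathcal{O}_{k,p}\cdot\mathcal{O}_{i,j}$ by realizing them as classes of (degenerations of) Richardson varieties and then invoking the one-parameter families constructed in Proposition \ref{prop: degenerating}, together with Lemma \ref{lem: equality homotetie} and Lemma \ref{lem: equality g.}. The starting point is the standard fact that in $K(X)$ one has $\mathcal{O}_{k,p}\cdot\mathcal{O}_{i,j}=[\mathcal{O}_{X_{i,j}^{k,p}}]$, the class of the structure sheaf of the Richardson variety $X_{i,j}^{k,p}=X(w_{i,j})\cap w_0 X(w_{k,p})$. This requires knowing that the translate $g\cdot X(w_{k,p})$ by a general $g$ meets $X(w_{i,j})$ transversally (so that the higher $\mathrm{Tor}$'s vanish) and that Richardson varieties have rational singularities and are reduced, so the intersection product reduces to a single structure sheaf; this is classical (Brion, Richardson), and combined with Lemma \ref{lem: equality g.} it gives $\mathcal{O}_{k,p}\cdot\mathcal{O}_{i,j}=[\mathcal{O}_{X_{i,j}^{k,p}}]$. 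First I would establish this identity and reduce everything to identifying $[\mathcal{O}_{X_{i,j}^{k,p}}]$ in each of the cases of Proposition \ref{prop: degenerating}.

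Next I would go case by case using Proposition \ref{prop: degenerating}, with $p$ playing the role of the earlier statement's $l$. In case (1), $X_{i,j}^{k,p}=\emptyset$, so the product is $0$, which matches the convention $\mathcal{O}_{s,t}=0$ when $s<1$ or $t>n$ (note $i+k-n<1$ or $j+p>n$ exactly in this range). In cases (2) and (3), Proposition \ref{prop: degenerating} gives, after translating by an element of $\GL_n$ (Lemma \ref{lem: equality g.}) and, for case (3), flatly specializing along $\mathbb{P}^1$ (Lemma \ref{lem: equality homotetie} applied to the family $\Phi$), the equality $[\mathcal{O}_{X_{i,j}^{k,p}}]=[\mathcal{O}_{X(i+k-n,\,j+p-1)}]=\mathcal{O}_{i+k-n,\,j+p-1}$; the hypotheses of these two cases together cover precisely the condition ``$i+k-n\geq j+p$ or $i<j$ or $k<p$'', which I would check by elementary inequality bookkeeping. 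In case (4), i.e. when $i>j$, $k>p$ and $1\leq i+k-n\leq j+p-1$, Lemma \ref{lem: equality homotetie} applied to the family $\Phi$ degenerates $X_{i,j}^{k,p}$ to the union $Y_{a-1,b}^{a-1}\cup Y_{a,b}^{a}$ where $a=i+k-n$, $b=j+p-1-a$; by Lemma \ref{lem: finding g} these two components are $\GL_n$-translates of $X(i+k-n-1,\,j+p-1)$ and $X(i+k-n,\,j+p)$ respectively, with scheme-theoretic intersection a translate of $X(i+k-n-1,\,j+p)$. Using the short exact sequence $0\to\mathcal{O}_{A\cup B}\to\mathcal{O}_A\oplus\mathcal{O}_B\to\mathcal{O}_{A\cap B}\to 0$ in $K(X)$ and Lemma \ref{lem: equality g.}, this yields $[\mathcal{O}_{X_{i,j}^{k,p}}]=\mathcal{O}_{i+k-n-1,j+p-1}+\mathcal{O}_{i+k-n,j+p}-\mathcal{O}_{i+k-n-1,j+p}$, which is the second line of the proposition.

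The main obstacle I anticipate is the reducible case (4): one must check that the union $Y_{a-1,b}^{a-1}\cup Y_{a,b}^{a}$ is reduced, that its two irreducible components intersect along exactly the Schubert variety $X(a-1,j+p)$ (scheme-theoretically, not merely set-theoretically — one needs the intersection to be reduced of the expected dimension, which should follow from the explicit Plücker equations defining $\Phi$ in the proof of Proposition \ref{prop: degenerating}), and that the Mayer--Vietoris sequence in $K$-theory applies, i.e. the relevant $\mathrm{Tor}$ groups vanish. Once the scheme structure of the special fiber of $\Phi$ is pinned down explicitly, the rest is bookkeeping: translating all the Schubert and $Y$-varieties into the form $X(s,t)$ via Lemma \ref{lem: finding g}, and verifying the index arithmetic $h=i+k-n$, $r+p_{\text{old notation}}=j+l-2$ matches the indices $i+k-n-1$, $i+k-n$, $j+p-1$, $j+p$ appearing in the statement. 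I would also double check the degenerate sub-cases where one of the three terms on the right vanishes by the convention $\mathcal{O}_{s,t}=0$, confirming the formula remains correct (in particular when $i+k-n-1=0$, so $\mathcal{O}_{i+k-n-1,\ast}=0$, leaving $\mathcal{O}_{i+k-n,j+p}$, consistent with case (3)'s boundary).
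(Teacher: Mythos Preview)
Your proposal is correct and follows essentially the same approach as the paper: reduce to $[\mathcal{O}_{X_{i,j}^{k,p}}]$ via transversality and rational singularities, then invoke Proposition \ref{prop: degenerating} together with Lemmas \ref{lem: equality g.} and \ref{lem: equality homotetie}, and handle the reducible special fiber in case (4) by a Mayer--Vietoris short exact sequence. The paper packages this last step as Lemma \ref{lem: equality cup}, which carries out exactly the explicit ideal computation in $\mathbb{P}^{n-1}\times\mathbb{P}^{n-1}$ you anticipate needing for the scheme-theoretic intersection; your identification of that intersection as a translate of $X(i+k-n-1,j+p)$ agrees with what that lemma gives.
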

where, for all $1\leq k, p \leq n$, $k \neq p$, we call $\mathcal{O}_{k,p}:=[\mathcal{O}_{X(w_{k,p})}]$ the class in $K_\circ(X)$ associated with the longest length element $w_{k,p}$ in the $n$-th symetric group sending $1$ to $k$ and $n$ to $p$.

\begin{lemme}\label{lem: equality cup}
	Let $1 \leq h \leq n$, $0 \leq p \leq n-h-1$. \\
	Then we have the following equality in $K(Fl_{1,n-1})$:
	\begin{equation*}
	[\mathcal{O}_{Y_{h-1,p}^{h-1} \cup Y_{h,p}^h}] = [\mathcal{O}_{Y_{h-1,p}^{h-1}}] + [\mathcal{O}_{Y_{h,p}^h}] - [\mathcal{O}_{Y_{h+1,p}^{h-1}}] 
	\end{equation*}
\end{lemme}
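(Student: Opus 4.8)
The statement is a Mayer--Vietoris type identity in $K$-theory for the decomposition of $Y_{h-1,p}^{h-1} \cup Y_{h,p}^h$ into two irreducible pieces. The standard tool is the short exact sequence of sheaves $0 \to \O_{A \cup B} \to \O_A \oplus \O_B \to \O_{A \cap B} \to 0$, valid whenever $A$, $B$ and $A\cup B$ are reduced and the scheme-theoretic intersection $A \cap B$ is itself reduced (so that the two restriction maps $\O_{A\cup B}\to\O_A$, $\O_{A\cup B}\to\O_B$ induce the displayed sequence on the nose). Applying this with $A = Y_{h-1,p}^{h-1}$ and $B = Y_{h,p}^h$ yields
\[
[\O_{Y_{h-1,p}^{h-1}\cup Y_{h,p}^h}] = [\O_{Y_{h-1,p}^{h-1}}] + [\O_{Y_{h,p}^h}] - [\O_{Y_{h-1,p}^{h-1}\cap Y_{h,p}^h}]
\]
in $K(Fl_{1,n-1})$, so the whole content reduces to identifying the scheme-theoretic intersection $Y_{h-1,p}^{h-1}\cap Y_{h,p}^h$ with $Y_{h+1,p}^{h-1}$, at least up to a class-preserving ambiguity.

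First I would write down all three varieties explicitly via the embedding (\ref{eq: def Y embedded}) in $\P^{n-1}\times\P^{n-1}$. The variety $Y_{h,p}^h$ is cut out by $x_{h+1}=\dots=x_n=0$, by $y_1=\dots=y_h=0$, by $y_{n-p+1}=\dots=y_n=0$, and by $\sum_a x_a y_a = 0$; inside $Y_{h,p}^h$ the Plücker relation $\sum_a x_a y_a=0$ is automatically satisfied because the nonzero $x_a$ have indices $\le h$ while the nonzero $y_a$ have indices $>h$, so $Y_{h,p}^h$ is simply a product of projective subspaces and in particular reduced and irreducible. The variety $Y_{h-1,p}^{h-1}$ is defined analogously with $h$ replaced by $h-1$ in the first index and the superscript. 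Their intersection imposes $x_h=\dots=x_n=0$ together with $y_1=\dots=y_h=0$ and $y_{n-p+1}=\dots=y_n=0$ and the (now again automatic) relation $\sum x_a y_a=0$: this is precisely the locus $Y_{h+1,p}^{h-1}$ in the notation of (\ref{eq: def Y}) --- $L\subset\langle e_1,\dots,e_{h-1}\rangle$ and $\langle e_1,\dots,e_{h+1},e_{n-p+1},\dots,e_n\rangle\subset H$ --- noting that $h+1\le h-1$ is false in general, so I should double-check the index bookkeeping: the relevant point is that $\langle e_1,\dots,e_r, e_{n-p+1},\dots,e_n\rangle$ with $r = h+1$ but $L$ of ``type $h-1$'' makes sense as long as one reads $Y_{r,p}^{h'}$ with $r$ possibly larger than $h'$ as the evident intersection, and that this matches the convention under which the right-hand side of the Proposition \ref{prop : produit dans K(X)} and Proposition \ref{prop: degenerating} are stated. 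I would verify that the defining ideals intersect transversally enough that the scheme structure on $Y_{h-1,p}^{h-1}\cap Y_{h,p}^h$ is reduced; since each $Y$ here is a product of linear subspaces cut by coordinate-vanishing conditions (the Plücker quadric being redundant on each piece), the intersection is again cut by coordinate conditions and hence reduced.

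The key step --- and the main obstacle --- is precisely this reducedness/identification of the scheme-theoretic intersection: one must be careful that the Plücker relation $\sum_a x_a y_a = 0$, which is redundant on each of $Y_{h-1,p}^{h-1}$ and $Y_{h,p}^h$ individually, does not create embedded or non-reduced structure on the intersection, and that the intersection ideal is radical and equal to the ideal of $Y_{h+1,p}^{h-1}$. I would handle this by working in the two standard affine charts $D(u)$, $D(v)$ exactly as in the proof of Proposition \ref{prop: degenerating}, where all the $Y$'s become products of affine spaces with linear equations, making reducedness transparent. Once the identity $[\O_{Y_{h-1,p}^{h-1}\cap Y_{h,p}^h}] = [\O_{Y_{h+1,p}^{h-1}}]$ is established (possibly after applying a permutation $g\in\GL_n$ of basis vectors and invoking Lemma \ref{lem: equality g.} to see that the class in $K_\circ(X)$ is unchanged), substituting into the Mayer--Vietoris relation gives exactly the claimed formula, completing the proof.
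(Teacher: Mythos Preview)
Your approach is essentially the same as the paper's: both use the Mayer--Vietoris short exact sequence and identify the scheme-theoretic intersection with $Y_{h+1,p}^{h-1}$. The paper writes the sequence directly at the level of bihomogeneous ideals $I_1,I_2\subset S\otimes T$ (where it is always exact, no reducedness hypothesis needed) and then reads off $I_1+I_2=\langle x_h,\dots,x_n;\,y_1,\dots,y_h,\,y_{n-p+1},\dots,y_n\rangle$ as the defining ideal of $Y_{h+1,p}^{h-1}$ by inspection of~(\ref{eq: def Y embedded}); your worry about reducedness and the possible invocation of Lemma~\ref{lem: equality g.} are therefore unnecessary, but otherwise the argument is the same.
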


\begin{proof}
	Denote by $I_1$ and $I_2$ the bihomogeneous ideals defining $Y_{h,p}^h$ and $Y_{h-1,p}^{h-1}$ embedded in $\mathrm{Proj} \mathbb{C}[x_1, \dots, x_n] \times \mathrm{Proj} \mathbb{C}[y_1, \dots, y_n]$.\\
	According to (\ref{eq: def Y embedded}) we have: \begin{align*}
	I_1 = <x_{h+1}, \dots, x_n; y_1, \dots, y_{h}, y_{n-p+1}, \dots, y_n> &&\mathrm{and} && I_2 =  <x_{h}, \dots, x_n; y_1, \dots, y_{h-1}, y_{n-p+1}, \dots, y_n>.
	\end{align*}		
	Call $S=\mathbb{C}[x_1, \dots, x_n]$ and $T=\mathbb{C}[y_1, \dots, y_n]$ the graded rings.  The following is an exact sequence of $S \otimes T$ bigraded-modules: \begin{align*}
	0 \rightarrow    S \otimes T_{/<I_1 \cap I_2>} \rightarrow S \otimes T_{/I_1} \oplus S \otimes T_{/I_2} \rightarrow S \otimes T_{/<I_1, I_2>} \rightarrow 0.
	\end{align*}
	This induces the following exact sequence of $\mathcal{O}_{\mathbb{P}^{n-1} \times \mathbb{P}^{n-1}}$-modules: \begin{equation*}
	0 \rightarrow \mathcal{O}_{Y_{h-1,p}^{h-1} \cup Y_{h,p}^h} \rightarrow  \mathcal{O}_{Y_{h-1,p}^{h-1}} \oplus \mathcal{O}_{Y_{h,p}^h}  \rightarrow \widetilde{S \otimes T}_{/<I_1, I_2>} \rightarrow 0,
	\end{equation*}
	where $\widetilde{S \otimes T}_{/<I_1,I_2>}$ is the $\mathcal{O}_{\mathbb{P}^{n-1} \times \mathbb{P}^{n-1}}$-module associated to the $S \otimes T$ bigraded-module $S \otimes T_{/<x_{h}, \dots, x_n; y_1, \dots, y_{h}, y_{n-p+1}, \dots, y_n>}$. This implies the following equalities in $K(Fl_{1,n-1})$:
	\begin{align*}
	[\mathcal{O}_{X(i,j) \cup X(i-1,j-1)}] &= [\mathcal{O}_{Y_{h,p}^h} \oplus \mathcal{O}_{Y_{h-1,p}^{h-1}}] - [\widetilde{S \otimes T}_{/<I_1, I_2}]\\
	&= [\mathcal{O}_{Y_{h,p}^h}] + [\mathcal{O}_{Y_{h-1,p}^{h-1}}] - [\mathcal{O}_{Y_{h+1,p}^{h-1}}].
	\end{align*}
	The second equality is deduced from $\widetilde{S \otimes T}_{/<I_1,I_2>} = \mathcal{O}_{Y_{h+1,p}^{h-1}}$, which is a direct consequence of (\ref{eq: def Y embedded}).
\end{proof}

Finally, Proposition \ref{prop : produit dans K(X)} follows directly from Lemmas \ref{lem: equality g.}, \ref{lem: equality homotetie}, \ref{lem: equality cup} and from Proposition \ref{prop: degenerating}.
Indeed, notice for any $1 \leq i,j,k,l \leq n$, such that $i \neq j$ and $k \neq l$, we have: \begin{equation*}
[\mathcal{O}_{X(i,j)}] \cdot [\mathcal{O}_{X(k,l)}] = [\mathcal{O}_{X_{i,j}^{k,l}}],
\end{equation*}
since the intersection between a Schubert varieties and its opposite variety is transverse, and Schubert varieties have rational singularities-cf. for example \cite{brion}, Theorem $4.2.1$ $(i)$. Furthermore, according to Lemmas \ref{lem: equality homotetie}, \ref{lem: equality cup}, translations by $g$ and one parameter deformations do not change the sheaf class. Finally, Lemma \ref{lem: equality cup} and Proposition \ref{prop: degenerating} give the result.

\subsection{Link with cohomological Littlewood-Richardson coefficients.}\label{subsec: LR coeff H_*(X)}
Let $i$, $k$, $j$ and $l$ be integers between $1$ and $n$ such that $i \neq j$ and $k \neq l$. Since the morphism $A_*(X) \rightarrow Gr K_\circ(X)$ is a morphism of graded abelian groups, we can deduce the intersection product between two classes $[X(i,j)]$ and $[X(k,l)]$ in $A_*(X)$ from the product betweeen $[\mathcal{O}_{i,j}]$ and $[\mathcal{O}_{k,l}]$. 
Proposition \ref{prop : produit dans K(X)} describes Littlewood-Richardson coefficients in $A_*(X) \simeq H^*(X)$:
\begin{equation*}
\left\{ 
\begin{aligned}
\: [X(k,l)] \cup [X(i,j)] &= 0 \: &&\mathrm{if} \: i+k \leq n \: \mathrm{or} \: j+l \geq n+2; \\
[X(k,l)] \cup [X(i,j)] &= [X(i+k -n -1, j+l -1)] + &&[X(i+k -n, j+l)]  \: \\ &&&\mathrm{if} \: 1 \leq i+k-n \leq j+l-1 \leq n \: \mathrm{and} \: i>j, \: k>l; \: \\
[X(k,l)] \cup [X(i,j)] &= [X(i+k -n, j+l -1)] \: &&\mathrm{otherwise.}
\end{aligned}
\right.
\end{equation*}

\begin{Rem}
	\label{Rem : prod Chevalley}  
	Chevalley's formula for generalized flag varieties already gives us the intersection product between a codimension $1$ Schubert class and any other Schubert class. Let us check we do recover the same formula.\\
	According to the generalization of Chevalley's formula to general $G/P$ (cf. for example \cite{Fulton}, lemma 8.1), for all $u \in W/W_P$, for all $\beta \in \Delta \setminus \Delta_P$, we have :
	\begin{equation*}
	[Y(s_{\beta})] \cup [Y(u)] = \Sigma h_{\alpha} (\omega_{\beta}) [Y(\tilde{u} s_{\alpha})],
	\end{equation*}
	where the sum goes over all postive roots $\alpha$ such that $\ell ([\tilde{u} s_\alpha]) = \ell (u) +1$, where $h_{\alpha} (\omega_{\beta}) = n_{\alpha \beta} (\beta, \beta) / (\alpha, \alpha)$ (where $n_{\alpha \beta}$ is the coefficient of $\beta$ in the expansion of $\alpha$ as a positive linear combination of positive roots), and where we write $\tilde{u}$ a minimal length representative of $u$ in $W$.
	
	We thus obtain here : \begin{equation*}
	[Y(\alpha_1)] \cup [Y(u)] = \Sigma [Y(\tilde{u} s_{\alpha})],
	\end{equation*}
	where the sum goes over the $\alpha \in \{\epsilon_1 - \epsilon_i | 2 \leq i \leq n\} \subset R^+ - R_P^+$ verifying $\ell([\tilde{u}s_\alpha]) = \ell(u)+1$.
	
	Let $\tilde{u} = w_{i,j}$ and $s_{l} = s_{\epsilon_1 -\epsilon_l}$. We observe : $\ell(w_{i,j}) = i-1+n-j$ if $i<j$, and $\ell(w_{i,j}) = n+i-j-2$ else; hence $\ell ([\tilde{u}s_l]) = \ell(u) + 1$ iff($l=i+1$ and $i<j$) or ($l=n$ and $j=i+1$) or ($l=i$ and $i>j$), from which we deduce :
	\begin{align*}
	[Y(\alpha_1)] \cup [Y(i,j)] & = [Y({i+1,j})] &\: \mathrm{if} \: j \neq i+1, \: i \neq n;\\
	[Y(\alpha_1)] \cup [Y(n,j)] &= 0;\\
	[Y(\alpha_1)] \cup[Y(i,i+1)] &= [Y(i+2,i+1)] + [Y(i+1,i)], 
	\end{align*}
	which does indeed gives us $[X(n-1,1)] \cup [X(k,p)] = [X(k-1, p)]$ if $k \neq p+1$, $k \neq 1$, $[X(n-1,1)] \cup [X(1,p)] =0$, and $[X(n-1,1)] \cup [X(k,k-1)] = [X(k-2, k-1)] + [X(k-1,k)]$.\\
\end{Rem}

\begin{Rem}
	Let $\beta \in \Delta\setminus \Delta_P$. If we use the notations of Remark \ref{Rem : prod Chevalley}, we can write here:
	\begin{equation*}
	[\mathcal{O}_{Y(s_\beta)}] \cdot [\mathcal{O}_{Y(u)}] = \sum_{\substack{\alpha \in R^+ \setminus R_P^+ , \\ \ell([\tilde{u}s_{\alpha}]) = \ell(u) +1}} h_\alpha (\omega_{\beta}) [\mathcal{O}_{Y(\tilde{u}s_{\alpha})}] - \sum_{\substack{\alpha, \gamma \in R^+ \setminus R_P^+ \\ \ell([\tilde{u}s_{\alpha}]) = \ell(u) +1 = \ell([\tilde{u}s_{\gamma}])}} \sum_{\substack{v \in R^+ \setminus R_P^+, \: \ell(v)=\ell(u)+2, \\ v \succcurlyeq \alpha \: \mathrm{et} \: v  \succcurlyeq \gamma}} h_{\alpha} (\omega_{\beta}) h_{\gamma} (\omega_{\beta}) [\mathcal{O}_{Y(v)}].
	\end{equation*} \\
\end{Rem}

\section{Three points correlators of $Fl_{1,n-1}$.}\label{sec: 3 points corr}

\subsection{Three points correlators of $\P^n$.} Let $n>0$. We compute here genus zero correlators of $\P^n$. Note that Buch-Mihalcea fully described the product of two Schubert classes in the ring $QK_s(\P^n)$ \cite{buch2011}; we could compute three points correlators using this description. We give here a different proof, by studying the geometry of Gromov-Witten varieties of $\P^n$. This result plays a key role in our derivation of correlators of $X$ presented Part \ref{subsec: 3 points cor X}. For $1 \leq i \leq n+1$, we denote by $L_i$ the Schubert variety associated with the permutation $w_{1,i}$ permuting $1$ and $i$: $$L_i := \{[x_1: \dots: x_i:0 : \dots :0] \in \P^n \mid x_1, \dots, x_i \in \C\}.$$
Let $d$ in $E(\P^n) \simeq \mathbb{N}$. We consider here the compactification of morphisms $\P^1 \rightarrow \P^n$ given by the space of quasi-maps \cite{braverman2006spaces}. A degree $d$ morphism $\P^1 \rightarrow \P^n$ is defined by $(n+1)$ homogeneous polynomials of degree $d$. We may thus parametrize degree $d$ morphisms $\P^1 \rightarrow \P^n$ by $P=\P(\C^{(d+1)(n+1)})$, where we view a point in $\C^{(d+1)(n+1)}$ as a set of $(n+1)$ degree $d$ polynomials. For a point $p$ in $P=\P(\C^{(d+1)(n+1)})$, we will denote by $$f_p: \P^1 \dashrightarrow \P^n$$ the associated map. Note that if the polynomials associated with a point $p$ in $P$ have common roots $x_i$, then $f_p$ is a rational map which is not defined on the points $x_i \in \P^1$.
The dense subvariety $U$ of $P$ parametrizing polynomials having no common roots is in bijection with $\mathrm{Hom}(\P^1, d)$. Fix distinct points $p_1$, $p_2$, $p_3$ on $\P^1$. According to the appendix of Chapter \ref{chap : stabilization correlators}, sending a point $p$ in $U$ to the point in $\mathcal{M}_{0,3}(\P^n, d)$ associated with $(f_p: \P^1 \rightarrow \P^n, \{p_1, p_2, p_3\})$ defines a morphism $\phi: U \rightarrow \mathcal{M}_{0,3}(\P^n, d)$. Since $\mathcal{M}_{0,3}(\P^n, d)$ is irreducible \cite{kim2001connectedness} and $\phi$ is an injective dominant morphism of quasi-projective normal complex varieties, $\phi$ is birational. We may hence identify a dense open subset $V$ of $\mathcal{M}_{0,3}(\P^n, d)$ with its reciprocal image by $\phi$. Finally, note that for an element $p$ in $V \subset P$, the evaluation morphism $ev_i$ assigns to the $(n+1)$ polynomials associated with $p$ the projectivization of their value at $p_i$.

\begin{proposition}\label{prop: correlators Pn}
	Let $1 \leq i_1, i_2, i_3 \leq n+1$. Let $d \in \mathbb{N}^*$.
	\begin{enumerate}[label= \roman*)]
		\item Let $g_1$, $g_2$, $g_3$ be elements in $\GL_{n+1}$, let $z$ be an element in $\P^{n}$. The variety $ev_1^{-1}(g_1 \cdot L_{i_1}) \cap ev_2^{-1}(g_2 \cdot L_{i_2}) \cap ev_3^{-1}(g_3 \cdot L_{i_3})\cap V$ is either empty or an irreducible rational variety. The same holds for the variety $ev_1^{-1}(g_1 \cdot L_{i_1}) \cap ev_2^{-1}(g_2 \cdot L_{i_2}) \cap ev_3^{-1}(z)\cap V$.
		\item For $(g_1, g_2, g_3)$ general in $\GL_{n+1}^3$, the Gromov-Witten variety $ev_1^{-1}(g_1 \cdot L_{i_1}) \cap ev_2^{-1}(g_2 \cdot L_{i_2}) \cap ev_3^{-1}(g_3 \cdot L_{i_3})$ is either empty or an irreducible rational variety. In the same way, for $(g_1,g_2)$ general in $G^2$, the general fiber of the morphism $ev_3: ev_1^{-1}(g_1L_{i_1}) \cap ev_2^{-1}(g_2L_{i_2}) \rightarrow ev_3\left( ev_1^{-1}(g_1L_{i_1}) \cap ev_2^{-1}(g_2L_{i_2})\right) \subset \P^{n-1}$ evaluating the third marked point is an irreducible rational variety.
		\item \[\langle w_{1, i_1}, w_{1,i_2}, w_{1, i_3} \rangle_d = \left\{ \begin{array}{ll} 0 & \mathrm{if} \: d=1 \: \mathrm{and} \: i_1+i_2+i_3 < n+2 \\ 1 & \mathrm{else} \end{array}\right. \]
	\end{enumerate}
\end{proposition}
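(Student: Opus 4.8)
The plan is to prove the three parts in order, since each feeds into the next. For part $i)$, I would work in the quasi-map model: a point $p \in V \subset P = \P(\C^{(d+1)(n+1)})$ is a tuple of $(n+1)$ degree-$d$ polynomials, and $ev_i(p)$ is the projectivization of the value at $p_i$. The condition $ev_i(p) \in g_i \cdot L_{k}$ says that the value vector at $p_i$ lies in a fixed $k$-dimensional linear subspace of $\C^{n+1}$, which is a set of $n+1-k$ linear conditions on the coefficients of the polynomials. Likewise $ev_3(p) = z$ is a codimension-$n$ linear condition. Hence the relevant locus is the intersection of $V$ (a dense open in projective space) with a linear subspace of $P$, so it is open in a projective space, thus either empty or irreducible and rational. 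The one subtlety is checking that these linear spaces genuinely meet $V$ (the no-common-root locus) rather than being entirely contained in the complement; but for general translates $g_i$ this is handled by Kleiman transversality as in Remark \ref{rem: reducibility W}, together with the fact that $V$ is dense, so a general linear space of the appropriate dimension meets it.

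For part $ii)$, I would deduce the statement about the full Gromov-Witten variety $\mathcal{W}_g = ev_1^{-1}(g_1 L_{i_1}) \cap ev_2^{-1}(g_2 L_{i_2}) \cap ev_3^{-1}(g_3 L_{i_3})$ in $\overline{\mathcal{M}_{0,3}}(\P^n,d)$ from part $i)$. The point is that $\mathcal{W}_g$ is the closure of $\mathcal{W}_g \cap V$ inside $\overline{\mathcal{M}_{0,3}}(\P^n,d)$ for $g$ general — this is again Kleiman transversality applied to the dense open $V$, showing $\mathcal{W}_g \cap V$ is dense in $\mathcal{W}_g$ (cf.\ Lemma \ref{lem : Intersection dense open is dense} and Lemma \ref{lem : dense intersection with open for homogeneous spaces}). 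Since $\mathcal{W}_g \cap V$ is irreducible rational by $i)$, so is its closure $\mathcal{W}_g$. The statement about the general fiber of $ev_3$ restricted to $ev_1^{-1}(g_1 L_{i_1}) \cap ev_2^{-1}(g_2 L_{i_2})$ follows the same way from the second assertion of $i)$, evaluating $ev_3$ at a general point $z$.

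For part $iii)$, the correlator is $\langle w_{1,i_1}, w_{1,i_2}, w_{1,i_3}\rangle_d = \chi(\mathcal{O}_{\mathcal{W}_g})$ for $g$ general, by definition. By part $ii)$, for $d \geq 1$ and $g$ general, $\mathcal{W}_g$ is either empty or an irreducible rational projective variety; moreover $\overline{\mathcal{M}_{0,3}}(\P^n,d)$ has rational singularities (Theorem \ref{th: Viehweg X//G has RS}), and by Kleiman transversality plus \cite{buch} Theorem 2.5, $\mathcal{W}_g$ itself has rational singularities for $g$ general. A rational projective variety with rational singularities has $\chi(\mathcal{O}) = 1$ (its desingularization is smooth rational, hence $H^i(\mathcal{O}) = 0$ for $i>0$ by \cite{debarre2013higher}, and the rational-singularities hypothesis lets one push this down, exactly as in the proof of Theorem \ref{th: BM f_* if fiber rational}). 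So the correlator is $1$ when $\mathcal{W}_g \neq \emptyset$ and $0$ when $\mathcal{W}_g = \emptyset$. It then remains to determine exactly when $\mathcal{W}_g$ is empty. A dimension count: $\dim \overline{\mathcal{M}_{0,3}}(\P^n,d) = n + (n+1)d$, and imposing the three Schubert conditions subtracts $(n+1-i_1)+(n+1-i_2)+(n+1-i_3)$, so the expected dimension is nonnegative iff $i_1+i_2+i_3 \geq 3(n+1) - n - (n+1)d = (n+1)(1-d) + 2 \cdot \tfrac{?}{}$ — more precisely one finds nonemptiness fails precisely in the boundary case $d=1$, $i_1+i_2+i_3 < n+2$, and holds in all other cases (for $d \geq 2$ the expected dimension is always large enough, and the intersection is nonempty since $\P^n$ is homogeneous and the classes are effective).

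The main obstacle I expect is the last bookkeeping step of part $iii)$: cleanly establishing the exact emptiness criterion $d=1 \,\&\, i_1+i_2+i_3<n+2$ versus nonemptiness otherwise, rather than merely an expected-dimension inequality. For $d=1$ this is classical (lines through/meeting linear subspaces in general position, a transversality computation on the incidence correspondence), and for $d \geq 2$ one should argue that a generic degree-$d$ rational normal-type curve can be made to meet three general linear subspaces — concretely, exhibiting one explicit quasi-map in $\mathcal{W}_g$, e.g.\ by prescribing the values of the polynomials at $p_1,p_2,p_3$ to lie in the three subspaces, which is possible as soon as $d \geq 1$ and the sum of codimensions does not exceed the number of free coefficients, with the $d=1$ deficiency being the sole exception. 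Once the nonemptiness dichotomy is pinned down, the rest is immediate from parts $i)$ and $ii)$ and the rational-singularities/$\chi(\mathcal{O})=1$ machinery already available in the excerpt.
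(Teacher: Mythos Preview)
Your proposal is correct and follows essentially the same route as the paper: parts $i)$ and $ii)$ are argued identically (linear conditions in the quasi-map space $P$, then density of $V$ via Lemma~\ref{lem : dense intersection with open for homogeneous spaces}), and part $iii)$ has the same structure (rational with rational singularities $\Rightarrow \chi(\mathcal{O})=1$, then an emptiness dichotomy). The paper isolates the emptiness criterion as a separate Lemma~\ref{lem: curve in Pn through 3 points}: for $d\geq 2$ it exhibits a point of $\mathcal{W}_g$ by gluing two lines in the boundary of $\overline{\mathcal{M}_{0,3}}(\P^n,d)$ rather than by prescribing polynomial values in $V$, and for $d=1$ it computes $ev_3\big(ev_1^{-1}(g_1L_{i_1})\cap ev_2^{-1}(g_2L_{i_2})\big)=h\cdot L_{\min(n+1,i_1+i_2)}$ and then reads off nonemptiness from $[L_{i_3}]\cup[L_{\min(n+1,i_1+i_2)}]\neq 0$ in $A_*(\P^n)$ --- which is exactly the ``classical'' line-through-linear-subspaces argument you anticipated, and cleaner than the raw expected-dimension count you started.
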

\begin{proof}	\begin{enumerate}[label= \roman*)]
		\item We denote by $L:=ev_1^{-1}(g_1 \cdot L_{i_1}) \cap ev_2^{-1}(g_2 \cdot L_{i_2}) \cap ev_3^{-1}(g_3 \cdot L_{i_3})\cap V$, which we see here as a subvariety of $P= \P(\C^{(d+1)(n+1)})$. Note that, if we denote by $(P_1, \dots, P_{n+1})$ the $(n+1)$ polynomials associated to a point in $P$, $L$ is defined by: \begin{equation*}
		L= \left\{(P_1, \dots, P_{n+1}) \in V \subset (\C^{d+1})^{n+1} \mid \forall 1 \leq k \leq 3, \: [P_1(p_k): \dots: P_{n+1}(p_k)] \in g_k \cdot L_{i_k} \right\}.
		\end{equation*}
		Since $g_k \cdot L_{i_k}$ is a linear subspaces of $\P^n$, the compactification of $L$ in $P=(\C^{d+1})^{n+1}$ is defined by linear equations. Hence $L$ is the intersection of the dense open subset $V$ with a linear subspace of $P$, hence $L$ is an irreducible rational variety. 
		
		In the same way, if $L:=ev_1^{-1}(g_1 \cdot L_{i_1}) \cap ev_2^{-1}(g_2 \cdot L_{i_2}) \cap ev_3^{-1}(z) \cap V$, since $g_k \cdot L_{i_k}$ is a linear subspace of $\P^n$ and a point in $\P^n$ can be described as the zero locus of a set of linear equations, $L$ is an irreducible rational variety.
		\item  Since $\P^n$ is convex, ${\mathcal{M}_{0,3}}(\P^n, d)$ is a dense open subset of $\overline{\mathcal{M}_{0,3}}(\P^n, d)$ \cite{fulton1996notes}, hence $V$ is a dense open subset of $\overline{\mathcal{M}_{0,3}}(\P^n, d)$. According to Chapter \ref{chap : geometry W}, for $g=(g_k)$ general in $\GL_{n+1}^3$, $ev^{-1}(g \cdot (L_{i_1} \times L_{i_2} \times L_{i_3}))$ has a dense intersection with the dense open subset $V$ of ${\mathcal{M}_{0,3}}(\P^n, d)$, hence is an irreducible rational variety according to $i)$. 
		
		For $(g_1,g_2)$ in $G^2$ denote by $\W_{g_1,g_2}:= ev_1^{-1}(g_1 \cdot L_{i_1}) \cap ev_2^{-1}(g_2 \cdot L_{i_2})$. For $g=(g_1,g_2)$ general in $G^2$ the variety $\W_{g_1,g_2}$ has a dense intersection with the dense open subset $V$; hence for $z$ general in $ev_3(\W_{g_1,g_2})$, the fiber $ev_3^{-1}(z) \cap \W_{g_1,g_2}$ has a dense intersection with the open subset $V$. Hence for $(g_1,g_2)$ general in $G^2$ and $z$ general in $ev_3(\W_{g_1,g_2})$, the variety $ev_3^{-1}(z) \cap \W_{g_1,g_2}$ is an irreducible rational variety.
		\item We name $G:= \GL_{n+1}$. Since $L_i$ is the Schubert variety associated with $w_{1, i+1}$, according to (\ref{eq: correlator = chi GW variety}) we have for $(g_1, g_2, g_3)$ general in $G^3$: \begin{align*}
		\langle w_{1, i_1+1}, w_{1,i_2+1}, w_{1, i_3+1} \rangle_d = \chi(\O_{ev_1^{-1}(g_1 \cdot L_{i_1})\cap ev_2^{-1}(g_2 \cdot L_{i_2})\cap ev_3^{-1}(g_3 \cdot L_{i_3})}).
		\end{align*}
		Suppose $i_1+i_2+i_3+1 < n$ and $d=1$. Then according to Lemma \ref{lem: curve in Pn through 3 points} for $(g_1, g_2, g_3)$ general in $G^3$, the Gromov-Witten variety $ev_1^{-1}(g_1 \cdot L_{i_1})\cap ev_2^{-1}(g_2 \cdot L_{i_2})\cap ev_3^{-1}(g_3 \cdot L_{i_3})$ is empty, hence $\langle w_{1, i_1+1}, w_{1,i_2+1}, w_{1, i_3+1} \rangle_1 =0$. Now suppose $d >1$, or $d=1$ and $i_1+i_2+i_3+1 \geq n$. Then according to Lemma \ref{lem: curve in Pn through 3 points} for $(g_1, g_2, g_3)$ general in $G^3$, the Gromov-Witten variety $ev_1^{-1}(g_1 \cdot L_{i_1})\cap ev_2^{-1}(g_2 \cdot L_{i_2})\cap ev_3^{-1}(g_3 \cdot L_{i_3})$ is non empty.
		Furthermore, according to \cite{buch} Theorem 2.5. for $(g_1, g_2, g_3)$ general in $G^3$ this variety has rational singularities. Hence its sheaf Euler characteristic is equal to the sheaf Euler characteristic of its desingularization, which is also a projective rational variety; hence it is equal to $1$.
	\end{enumerate}
\end{proof}


Let $1 \leq i_1, i_2,i_3 \leq n$. 
\begin{lemme}\label{lem: curve in Pn through 3 points}
	\begin{enumerate}[label=\roman*)]
		\item For $d \geq 2$, there exists a genus zero stable map of degree $d$ joining the varieties $g_1 \cdot L_{i_1}$, $g_2 \cdot L_{i_2}$ and $g_3 \cdot L_{i_3}$ for any element $(g_1, g_2, g_3)$ in $\GL_{n+1}^3$;
		\item For $(g_1, g_2, g_3)$ general in $\GL_{n+1}^3$, there exists a line joining the varieties $g_1 \cdot L_{i_1}$, $g_2 \cdot L_{i_2}$ and $g_3 \cdot L_{i_3}$ iff $i_1 + i_2 + i_3 \geq n+2$.
	\end{enumerate}
\end{lemme}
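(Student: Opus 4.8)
\textbf{Proof plan for Lemma~\ref{lem: curve in Pn through 3 points}.}
The plan is to analyze separately the two parts, translating each into a concrete statement about homogeneous polynomials defining maps $\P^1 \to \P^n$ (using the quasi-map model introduced before Proposition~\ref{prop: correlators Pn}), and reducing to elementary dimension counts.

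For part $i)$, the first step is to reduce to the case of three generic points $p_1,p_2,p_3 \in \P^1$ (any stable map joining the three varieties can be deformed, or one argues directly with fixed marked points since the $g_i$ are arbitrary). Then I would explicitly construct a degree $d \geq 2$ map: pick points $q_k \in g_k \cdot L_{i_k}$ for $k=1,2,3$; since each $g_k\cdot L_{i_k}$ is a nonempty linear subspace, such points exist. For $d = 2$, I would first construct a \emph{conic} (a degree $2$ rational curve) through any three points $q_1,q_2,q_3$ of $\P^n$: three points span at most a $\P^2$, and in $\P^2$ a smooth conic through three general points exists, while for special configurations one takes a degenerate conic (a union of two lines) — and in the quasi-map / stable-map compactification this is still a legitimate genus-zero stable map of degree $2$ with the three marked points placed appropriately. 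For $d > 2$ one attaches extra components (contracted or covering a fixed line) to raise the degree, or simply takes a degree-$d$ cover of the conic; the resulting object is a genus-zero stable map of degree $d$ meeting all three $g_k\cdot L_{i_k}$. The only subtlety is bookkeeping to ensure the marked points land on the correct components so that each evaluation hits the prescribed variety; this is routine.

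For part $ii)$, I would work with honest lines. A line $\ell \subset \P^n$ meets $g_1\cdot L_{i_1}$, $g_2\cdot L_{i_2}$, $g_3\cdot L_{i_3}$. For $(g_1,g_2,g_3)$ general the three translated linear subspaces are in general position. A line is determined by two of its points, so I parametrize by choosing $z_1 \in g_1\cdot L_{i_1}$ and $z_2 \in g_2\cdot L_{i_2}$; this gives a family of lines of dimension $(i_1-1)+(i_2-1)$, and the condition that the line $\overline{z_1 z_2}$ meets $g_3\cdot L_{i_3}$ is a codimension-$(n - i_3)$ condition on $\P^n$ (a generic point of the line sweeps out a $\P^{(i_1-1)+(i_2-1)+1}$, and meeting a generic linear space of dimension $i_3 - 1$ imposes $n+1 - i_3 - 1 = n - i_3$ conditions, once one accounts for the line itself), so such a line exists exactly when $(i_1-1)+(i_2-1)+1 \geq n - i_3$, i.e. $i_1+i_2+i_3 \geq n+2$. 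The forward direction (existence $\Rightarrow$ inequality) follows because for generic $g_k$ the expected-dimension count is sharp: if $i_1+i_2+i_3 < n+2$ the incidence variety of (line, three incidence points) projects to $\GL_{n+1}^3$ with image of dimension strictly less than $3\dim\GL_{n+1}$, so a general triple $(g_1,g_2,g_3)$ is not in the image. The converse (inequality $\Rightarrow$ existence) is the explicit construction just sketched, valid because genericity of the $g_k$ guarantees the relevant spans meet transversally.

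\textbf{Main obstacle.} The dimension count in part $ii)$ is the delicate point: I must be careful about the ``$+1$'' coming from the line's own degree of freedom and about verifying that for generic $g_k$ the incidence conditions are genuinely transverse (so that the dimension count is exact rather than merely an inequality in one direction). This is handled cleanly by setting up the incidence correspondence $\{(\ell, z_1, z_2, z_3, g_1, g_2, g_3) : z_k \in \ell \cap g_k L_{i_k}\}$, computing its dimension by fibering over the choice of $\ell$ (a point of the Grassmannian $\Gr(2,n+1)$), and applying generic smoothness / Kleiman-type transversality so that the fiber over a general $(g_1,g_2,g_3)$ has the expected dimension — nonempty precisely under $i_1+i_2+i_3 \geq n+2$.
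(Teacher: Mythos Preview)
Your proposal is correct. For part~$i)$ it is essentially the paper's argument: both of you reduce to constructing a degree~$2$ stable map through three arbitrary points and then glue on an extra component to raise the degree; the paper goes directly to the nodal ``conic'' $\P^1\cup\P^1$ (line through $P_1,P_2$ glued to a line through $P_2,P_3$), while you allow also a smooth conic in the span, but the content is the same.

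For part~$ii)$ the two arguments differ in presentation though not in substance. You set up an incidence correspondence and do a dimension count, invoking Kleiman transversality to make the expected-dimension computation sharp. The paper instead computes the image $ev_3\bigl(ev_1^{-1}(g_1L_{i_1})\cap ev_2^{-1}(g_2L_{i_2})\bigr)$ explicitly as the linear join $h\cdot L_{\min(n+1,i_1+i_2)}$, and then reduces the question to whether $[L_{i_3}]\cup[L_{\min(n+1,i_1+i_2)}]\neq 0$ in $A_*(\P^n)$. The paper's route is slightly more concrete and has the side benefit of identifying the projected Gromov--Witten variety as a translate of a Schubert variety, which is exactly what is needed immediately afterwards in Proposition~\ref{prop: correlators Pn}~$iii)$; your incidence-variety argument establishes existence/nonexistence just as well but does not by itself yield that identification. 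One small wording issue: when you say the incidence condition is ``a codimension-$(n-i_3)$ condition on $\P^n$'', you mean on the parameter space of lines (or of pairs $(z_1,z_2)$), not on $\P^n$ itself; the numerology is nonetheless right.
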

\begin{proof}	\begin{enumerate}[label=\roman*)]
		\item We first consider the case when $d_2=2$. It is enough to show that for any three points $P_1$, $P_2$ and $P_3$ in $\P^n$ there is a genus zero stable map of degree $2$ whose image contains all points $P_i$. Consider a line joining the points $P_1$ and $P_2$; now glue this line at $P_2$ with a line joining the points $P_2$ and $P_3$. We obtain a genus zero degree $2$ stable map $\P^1 \cup \P^1 \rightarrow \P^n$ joining all three points $P_i$. 
		
		Now suppose $d \geq 2$. Consider a genus zero degree $2$ stable map $f: C \rightarrow \P^n$ joining all three points $P_i$. Let $\P^1 \rightarrow \P^n$ be a genus zero degree $d-2$ map, whose image intersects $f(C)$ at the image $f(p)$ of a non special point $p \in C$. Such a map can ba obtained by translating the image of any degree $d-2$ map by an element in $\GL_{n+1}$. Now glue $C$ and $\P^1$ together at the point $p$. We obtain a degree $d$ stable map $C \cup \P^1 \rightarrow \P^n$ joining all three points $P_i$. 
		\item Name $G:= \GL_{n+1}$. Let $g_1$ and $g_2$ be elements in $G^2$. By the functorial definition of $\P^n$, there exist vectors $u_i$ and $v_i$ in $\C^{n+1}$ such that $g_1 \cdot L_{i_1}$ parametrizes lines in $\C^{n+1}$ included in the linear subspace $<u_1, \dots, u_{i_1}> \subset \C^{n+1}$ and $g_2 \cdot L_{i_2}$ parametrizes lines in $\C^{n+1}$ included in the linear subspace $<v_1, \dots, v_{i_2}> \subset \C^{n+1}$. For $(g_1, g_2)$ general in $G^2$, the vector space generated by the vectors $u_i$ and $v_i$ is of dimension $\mathrm{min}(n,i_1+i_2+1)$. Hence for $(g_1, g_2)$ general in $G^2$ the set of points lying on a line joining $g_1 \cdot L_{i_1}$ and $g_2 \cdot L_{i_2}$ parametrizes lines in $\C^{n+1}$ included in the linear subspace $$<u_1, \dots, u_{i_1}, v_1, \dots, v_{i_2}> \simeq \C^{\mathrm{min}(n+1,i_1+i_2)} \subset \C^{n+1}.$$
		This can be rephrased more formally as: $$ev_3(ev_1^{-1}(g_1 \cdot L_{i_1})\cap ev_2^{-1}(g_2 \cdot L_{i_2})) = h \cdot L_{\mathrm{min}(n+1,i_1+i_2)},$$
		where $h$ lies in $G$ and we denote by $ev_i: \overline{\mathcal{M}_{0,3}}(\P^n, 1)$ the evaluation morphism. Hence for $(g_1,g_2,g_3)$ general in $G^3$, there exists a line joining the varieties $g_1 \cdot L_{i_1}$, $g_2 \cdot L_{i_2}$ and $g_3 \cdot L_{i_3}$ iff the intersection between $h \cdot L_{\mathrm{min}(n+1,i_1+i_2)}$ and $g_3 \cdot L_{i_3}$ is non empty for $g_3$ general in $G$-where $h$ is an element in $G$ depending only on the choice of an element $(g_1, g_2)$ general in $G^2$. This is equivalent to the following condition in the Chow ring $A_*(\P^n)$: $$[g_3 \cdot L_{i_3}] \cup [h \cdot L_{\mathrm{min}(n,i_1+i_2)}] = [L_{i_3}] \cup [L_{\mathrm{min}(n+1,i_1+i_2)}] \neq 0.$$ Finally, the equality $[L_i] \cup [L_j] = [L_{\mathrm{min}(n,i+j-n)}]$ in $A_*(\P^n)$ yields the result, where we set $L_i = \emptyset$ for $i<0$.
	\end{enumerate}
\end{proof}

\subsection{Computing correlators of $X$.}\label{sec: computing 3pts correlators} We list here properties of $X$ that will allow us to compute genus zero correlators of $X$ in Part \ref{subsec: 3 points cor X}. Recall $X$ can be described as $X \simeq \GL_n/P$, where $P \supset B \supset T$ is a parabolic subgroup of $G=\GL_n$. Note that $P$ is a parabolic subgroup with associated reductive subgroup the block diagonal matrix $\GL_1 \times \GL_{n-2} \times \GL_1$ with $\GL_{n-2}$ as the central block. Recall for $1 \leq i,j \leq n$, where $i \neq j$, we denote by $w_{i,j}$ the representant in $W/W_P$ of any permutation in $N_G(T)/T \simeq \mathfrak{S}_n$ sending $1$ to $i$ and $n$ to $j$. Let $r \geq 0$, let $1 \leq i_1, \dots, i_r, j_1, \dots, j_r \leq n$, where $i_k \neq j_k$. For an element $(g_1, \dots, g_r)$ in $G^r$, we denote by $$\mathcal{W}_{g_k} := ev_1^{-1}(g_1 \cdot X(w_{i_1,j_1})) \cap \dots \cap ev_r^{-1}(g_r \cdot X(w_{i_r,j_r}))$$ the Gromov-Witten variety parametrizing rational curves of degree $\d$ with marked points in the Schubert varieties $g_k X(w_{i_k,j_k})$ in $\overline{\mathcal{M}_{0,r+1}}(X, \d)$. 
\begin{enumerate}[label=(\alph*)]
	\item \textsc{Computing correlators by considering the image of Gromov-Witten varieties.} 
	\begin{proposition}\label{prop: (O_u, ... O_v)=chi(O_ev_r*W)}
		\begin{enumerate}[label=\roman*)]
			\item Let $(g_1, \dots, g_r)$ be an element in $G^r$ such that\begin{enumerate}[label=\Alph*)]
				\item The general fiber of the evaluation morphism $ev_{r+1}: \mathcal{W}_{g_k} := \cap_{k=1}^r ev_k^{-1}(g_k \cdot X(w_{i_k,j_k})) \rightarrow \Im (ev_{r+1}) \subset X$ is a rationally connected variety;
				\item $Tor_i^{X^{r}}(\O_{\overline{\mathcal{M}_{0,r+1}}(X, \d)}, \O_{\prod_{k=1}^r g_kX(w_{i_k,j_k})})=0$ for all $i >0$;
				\item The Gromov-Witten variety $\W_{g_k}$ has rational singularities;
				\item The projected Gromov-Witten variety $ev_{r+1}(\W_{g_k})$ has rational singularities;
			\end{enumerate}
			Then for any $\alpha$ in $K^\circ(X)$ \begin{equation*}
			\langle \O_{i_1,j_1}, \dots, \O_{i_r,j_r}, \alpha \rangle_\d = \chi \left(\alpha \cdot [\O_{ev_{r+1}(\mathcal{W}_{g_k})}] \right).\end{equation*}
			\item If for $(g_1, \dots, g_r)$ general in $G^r$: \begin{enumerate}[label=\Alph*)]
				\item  The general fiber of the evaluation morphism $ev_{r+1}: \mathcal{W}_{g_k} := \cap_{k=1}^r ev_k^{-1}(g_k \cdot X(w_{i_k,j_k})) \rightarrow \Im (ev_{r+1}) \subset X$ is a rationally connected variety;
				\item The projected Gromov-Witten variety $ev_{r+1}(\W_{g_k})$ has rational singularities
			\end{enumerate}
			then for any $\alpha$ in $K^\circ(X)$ and for $(g_1, \dots, g_r)$ general in $G^r$: \begin{equation*} \langle \O_{i_1,j_1}, \dots, \O_{i_r,j_r}, \alpha \rangle_\d 
			= \chi \left(\alpha \cdot [\O_{ev_{r+1}(\mathcal{W}_{g_k})}] \right) \end{equation*}
			\item If \begin{enumerate}[label=\Alph*)]
				\item  For $(g_1,g_2)$ general in $G^2$, the general fiber of the evaluation morphism $ev_{3}: \mathcal{W}_{g_1, g_2}=ev_1^{-1}(g_1X(i_1,j_1)) \cap ev_2^{-1}(g_2 w_0 X(i_2,j_2)) \rightarrow \Im(ev_3) \subset X$ is a rationally connected variety;
				\item The projected Gromov-Witten variety $ev_{3}(\mathcal{W}_{1,w_0})$ has rational singularities,
			\end{enumerate}
			then  \begin{equation*} \langle \O_{i_1,j_1}, \O_{i_2,j_2}, \alpha \rangle_\d 
			= \chi \left(\alpha \cdot [\O_{ev_3(\mathcal{W}_{1,w_0})}] \right) \end{equation*}
		\end{enumerate}
	\end{proposition}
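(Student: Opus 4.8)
The plan is to derive the three statements i), ii), iii) from the projection formula together with Buch--Mihalcea's Theorem~\ref{th: BM f_* if fiber rational}, which is the reason the hypotheses about rationally connected general fibers are imposed. The underlying principle is always the same: the correlator $\langle \O_{i_1,j_1}, \dots, \O_{i_r,j_r}, \alpha \rangle_\d$ is by definition $\chi\left( ev_1^*\O_{i_1,j_1} \otimes \dots \otimes ev_r^*\O_{i_r,j_r} \otimes ev_{r+1}^*\alpha \otimes \O_{\overline{\mathcal{M}_{0,r+1}}(X,\d)}\right)$, and I want to compute this by first pushing forward along $ev_{r+1}$ and then taking the Euler characteristic on $X$.

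For part i), I would argue as follows. First, translating by general $g_k$ does not change the classes $\O_{i_k,j_k}$ (this is Kleiman-type transversality plus the invariance of sheaf classes under translation, cf. the reasoning in Section~\ref{sec: image W variety}), so one may replace $ev_k^*\O_{i_k,j_k}$ by $ev_k^*\O_{g_k X(w_{i_k,j_k})}$. Hypothesis B) says precisely that the higher Tor sheaves vanish, so that $ev_1^*\O_{i_1,j_1} \otimes \dots \otimes ev_r^*\O_{i_r,j_r} \otimes \O_{\overline{\mathcal{M}_{0,r+1}}(X,\d)} = [\O_{\mathcal{W}_{g_k}}]$ in $K_\circ(\overline{\mathcal{M}_{0,r+1}}(X,\d))$. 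Then the correlator equals $\chi\left( ev_{r+1}^*\alpha \otimes [\O_{\mathcal{W}_{g_k}}]\right)$. Now apply the projection formula along $ev_{r+1}$ restricted to $\mathcal{W}_{g_k}$: this turns the correlator into $\chi\left( \alpha \otimes (ev_{r+1})_* [\O_{\mathcal{W}_{g_k}}]\right)$ on $X$. Finally, hypotheses A), C) and D) are exactly what is needed to invoke Theorem~\ref{th: BM f_* if fiber rational}: $ev_{r+1}: \mathcal{W}_{g_k} \to ev_{r+1}(\mathcal{W}_{g_k})$ is a surjective equivariant morphism of projective varieties with rational singularities whose general fiber is rationally connected, hence $(ev_{r+1})_*[\O_{\mathcal{W}_{g_k}}] = [\O_{ev_{r+1}(\mathcal{W}_{g_k})}]$. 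Substituting gives $\chi\left(\alpha \cdot [\O_{ev_{r+1}(\mathcal{W}_{g_k})}]\right)$, which is i).

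For part ii), the point is that hypotheses B), C) from part i) are automatically satisfied for $g$ general in $G^r$: Kleiman's transversality theorem gives the Tor-vanishing B) and the reducedness/equidimensionality needed, while Buch's result (\cite{buch} Theorem 2.5, already used in Lemma~\ref{lem: geometry M'xX^r} and in Section~\ref{sec: geometric proof}) gives that the Gromov-Witten variety $\mathcal{W}_{g_k}$ has rational singularities for general $g$. So under the stated hypotheses A) and D) for general $g$, all four hypotheses of i) hold for general $g$, and ii) follows immediately from i). Part iii) is the special case $r=2$ with the second Schubert variety replaced by its $w_0$-translate (an opposite Schubert variety), so that $\W_{g_1,g_2}$ for $(g_1,g_2)=(1,w_0)$ is a Richardson-type intersection which is transverse; here one uses that Schubert and opposite Schubert varieties meet properly with the expected Tor-vanishing and that the intersection has rational singularities (again \cite{brion} Theorem 4.2.1 or \cite{buch} Theorem 2.5), so that the concrete representative $(g_1,g_2)=(1,w_0)$ already satisfies hypotheses B), C), and iii) then follows from i) applied to this particular $g$.

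The main obstacle I anticipate is bookkeeping rather than a conceptual difficulty: one must be careful that the ``general fiber'' hypotheses A) are stated for the restriction of $ev_{r+1}$ to the \emph{Gromov-Witten variety} $\mathcal{W}_{g_k}$, not to all of $\overline{\mathcal{M}_{0,r+1}}(X,\d)$, and correspondingly that the pushforward and the rational-connectedness statement are applied to the right map; Lemma~\ref{lem: properties general fiber of pi x ev|W} and Lemma~\ref{lem : dense intersection with open for homogeneous spaces} handle the passage from general-fiber properties of a morphism to general-fiber properties of its restriction to a general-position Gromov-Witten variety, and these should be cited carefully. A secondary subtlety is ensuring that the projection formula is being applied in $K_\circ$ with $\alpha$ pulled back correctly (one needs $ev_{r+1}$ proper, which holds since all spaces are projective) and that the identification $ev_1^*\O_{i_1,j_1}\otimes\cdots = [\O_{\mathcal{W}_{g_k}}]$ genuinely requires only the higher-Tor vanishing B) and not a stronger transversality statement. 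None of these steps involves a hard computation, so the proof should be short once the hypotheses are unwound in the right order.
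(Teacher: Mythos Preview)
Your treatment of i) and ii) matches the paper's proof essentially line for line: Tor-vanishing gives $ev^*(\prod_k\O_{i_k,j_k})=[\O_{\W_{g_k}}]$, projection formula along $ev_{r+1}$, then Theorem~\ref{th: BM f_* if fiber rational} using A), C), D). One small correction in ii): classical Kleiman gives reducedness and equidimensionality, but the Tor-vanishing B) needs Sierra's homological Kleiman--Bertini theorem \cite{sierra2009general}, which the paper cites explicitly.

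Your argument for iii), however, has a genuine gap. You claim that $(g_1,g_2)=(1,w_0)$ satisfies B) and C) of part i) ``directly'' because Schubert and opposite Schubert varieties are transverse and their intersection has rational singularities. But the conditions B) and C) are not about $X(i_1,j_1)\cap w_0X(i_2,j_2)\subset X$; they are about the fiber product $\overline{\mathcal{M}_{0,3}}(X,\d)\times_{X^2}\bigl(X(i_1,j_1)\times w_0X(i_2,j_2)\bigr)$ inside the moduli space. The results you cite (\cite{brion} Theorem 4.2.1 for Richardson varieties, \cite{buch} Theorem 2.5 for Gromov--Witten varieties in \emph{general} position) do not give Tor-vanishing or rational singularities for this specific fiber product. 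Moreover, even granting B) and C), you still need condition A) of part i) for the specific pair $(1,w_0)$, and your hypothesis only supplies A) for \emph{general} $(g_1,g_2)$.

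The paper fills this gap with a $G$-equivariance trick. Since conditions A), B), C) hold on a dense open $U\subset G^2$ (by the reasoning of ii)), and since the diagonal $G$-action on $G^2$ preserves $U$ (because $ev$ is $G$-equivariant), one invokes \cite{Fulton} Lemma 7.1 to find $(h_1,h_2)\in U$ with $h_1X(w_{i_1,j_1})=X(w_{i_1,j_1})$ and $h_2X(w_{i_2,j_2})=w_0X(w_{i_2,j_2})$. Then $\W_{h_1,h_2}=\W_{1,w_0}$, so the specific Gromov--Witten variety of interest coincides with one for which A), B), C) are already known; combined with hypothesis D) on $ev_3(\W_{1,w_0})$, part i) applies. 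This equivariance step is the missing ingredient in your argument.
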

	
	\begin{proof}
		\begin{enumerate}[label=\roman*)]
			\item Since $Tor_i^{X^{r}}(\O_{\overline{\mathcal{M}_{0,r+1}}(X, \d)}, \O_{\prod_{k=1}^r g_kX(w_{i_k,j_k})})=0$ for all $i >0$, we have the following identity in $K_\circ(\overline{\mathcal{M}_{0,r+1}}(X, \d))$. \begin{align}ev^* \left(\prod_{k=1}^r \O_{i_k,j_k}\right) &= ev^* [\O_{\prod_{k=1}^r g_kX(w_{i_k,j_k})}] \nonumber \\
			&= \sum_{i \geq 0} [Tor_i^{X^{r}}(\O_{\overline{\mathcal{M}_{0,r+1}}(X, \d))}, \O_{\prod_{k=1}^r g_kX(w_{i_k,j_k})})] \nonumber\\
			&= [\O_{\overline{\mathcal{M}_{0,r+1}}(X, \d) \times_{X^r} \prod_{k=1}^r g_kX(w_{i_k,j_k})}] \nonumber\\
			&= [\O_{\mathcal{W}_{g_k}}].\label{eq: Ow}\end{align}
			By definition for $\alpha$ in $K^\circ(X)$ the correlator associated with $i_k$, $j_k$ and $\alpha$ is given by \[\langle \O_{i_1,j_1}, \dots, \O_{i_r,j_r}, \alpha \rangle_\d = \chi\left(ev_1^* \O_{i_1,j_1} \dots  ev_r^*\O_{i_r,j_r} \cdot ev_{r+1}^*\alpha \cdot [\O_{\overline{\mathcal{M}_{0,r+1}}(X, \d))}] \right) \]
			Using (\ref{eq: Ow}) here above and the projection formula in $K_\circ(X)$, we obtain: \begin{align*} \langle \O_{i_1,j_1}, \dots, \O_{i_r,j_r}, \alpha \rangle_\d &= \chi\left(ev_{r+1}^* \alpha  \cdot ev_1^* \O_{i_1,j_1} \dots  ev_r^*\O_{i_r,j_r} \cdot [\O_{\overline{\mathcal{M}_{0,r+1}}(X, \d))}] \right) \\
			&= \chi \left( ev_{r+1}^* \alpha  \cdot [\O_{\mathcal{W}_{g_k}}] \right)\\
			&= \chi \left(\alpha \cdot (ev_{r+1})_*[\O_{\mathcal{W}_{g_k}}] \right). \end{align*}
			Moreover, since the projected Gromov-Witten variety $ev_{r+1}(\mathcal{W}_{g_k})$ has rational singularities,
			according to \cite{buch2011} Theorem $3.1$ we have in $K_\circ(X)$: \begin{align}\label{eq: evr* w} (ev_{r+1})_* [\O_{\mathcal{W}_{g_k}}] = [\O_{ev_{r+1}(\mathcal{W}_{g_k})}], \end{align}
			which yields the expected equality.	
			\item First, according to \cite{buch} Theorem 2.5 for $(g_1, \dots, g_r)$ general in $G^r$ the Gromov-Witten variety $\mathcal{W}_{g_k}$ has rational singularities. Furthermore, according to Sierra's homological Kleiman-Bertini theorem \cite{sierra2009general}, for $(g_1, \dots, g_r)$ general in $G^r$, we have $Tor_i^{X^{r}}(\O_{\overline{\mathcal{M}_{0,r+1}}(X, \d)}, \O_{\prod_{k=1}^r g_kX(w_{i_k,j_k})})=0$ for all $i >0$. Hypothesis $A)$ and $B)$ finally imply the expected equality according to $i)$.
			\item Since for $(g_1,g_2)$ general in $G^2$, the general fiber of the evaluation morphism $ev_{3}: \mathcal{W}_{g_k} \rightarrow \Im(ev_3) \subset X$ is a rationally connected variety there exists a dense open subset $U$ of $G^2$ such that elements in $U$ satisfy conditions $A)$ and $B)$ and $C)$ of part $i)$. Since the evaluation morphism $ev: \overline{\mathcal{M}_{0,3}}(X,\d) \rightarrow X^3$ is $G$-equivariant, $U$ is invariant by the diagonal action of $G$. Hence according to \cite{Fulton} Lemma $7.1$, there exists $(h_1,h_2)$ in $U$ such that $h_1X(w_{i_1,j_1})=X(w_{i_1,j_1})$ and $h_2X(w_{i_2,j_2})=w_0X(w_{i_2,j_2})$.  Since the projected Gromov-Witten variety $ev_{3}(\mathcal{W}_{h_1,h_2})$ has rational singularities, we obtain according to $i)$: \begin{align}
			\langle \O_{i_1,j_1},  \O_{i_2,j_2}, \alpha \rangle_\d 
			&= \chi \left(\alpha \cdot [\O_{ev_3(\mathcal{W}_{1,w_0})}] \right) \label{eq: chi}.\end{align}
		\end{enumerate}
	\end{proof}
	\item \label{item: compute 3 pts corr when Pixev is surjective} \textsc{Computing correlators of $Fl_{1,n-1}$ when the natural map $\overline{\mathcal{M}_{0,3}}(X, \d) \rightarrow \overline{\mathcal{M}_{0,3}}(\P^{n-1}, d_1) \times_{(\P^{n-1})^3} X^3$ is surjective.} Recall $X$ parametrizes lines included in hyperplanes of $\C^n$. Denote by $\pi: X \rightarrow \P^{n-1}$ the projection induced by forgetting the hyperplane. Let $\d=(d_1, d_2):=d_1 l_1 + d_2 l_2$ be an effective class of curve in $E(X)$. Note that $\pi_*(d_1, d_2)=d_1 \in E(\P^{n-1})$. The forgetful morphism $\pi$ induces a morphism $\Pi : \overline{\mathcal{M}_{0,3}}(X, \d) \rightarrow \overline{\mathcal{M}_{0,3}}(\P^{n-1}, d_1)$. Together with the evaluation morphisms $ev_i$, this induces a morphism: $$\Pi \times ev: \overline{\mathcal{M}_{0,3}}(X, \d) \rightarrow \overline{\mathcal{M}_{0,3}}(\P^{n-1}, d_1) \times_{(\P^{n-1})^3} X^3.$$ 
	\begin{proposition}
		If \begin{itemize}
			\item The morphism $\Pi \times ev: \overline{\mathcal{M}_{0,3}}(X, \d) \rightarrow \overline{\mathcal{M}_{0,3}}(\P^{n-1}, \pi_* \d) \times_{(\P^{n-1})^3} X^3$ is surjective,
			\item The projected Gromov-Witten variety $ev_3(\W_{1,w_0}) = ev_3 \left( ev_1^{-1}(X(i_1,j_1)) \cap ev_2^{-1}(w_0 X(i_2,j_2)) \right)$ has rational singularities,
		\end{itemize}
		Then  \begin{equation*} \langle \O_{i_1,j_1}, \O_{i_2,j_2}, \alpha \rangle_\d 
		= \chi \left(\alpha \cdot [\O_{ev_3^\d(\mathcal{W}_{1,w_0})}] \right). \end{equation*}
	\end{proposition}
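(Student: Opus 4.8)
The plan is to derive the equality from Proposition \ref{prop: (O_u, ... O_v)=chi(O_ev_r*W)} iii). That proposition has two hypotheses: that for $(g_1,g_2)$ general in $\GL_n^2$ the general fibre of $ev_3 \colon \W_{g_1,g_2} \to ev_3(\W_{g_1,g_2}) \subset X$ is rationally connected, where $\W_{g_1,g_2} = ev_1^{-1}(g_1 X(i_1,j_1)) \cap ev_2^{-1}(g_2 w_0 X(i_2,j_2))$, and that $ev_3(\W_{1,w_0})$ has rational singularities. The second is exactly our second hypothesis, so the whole point is to produce the first one from the surjectivity of $\Pi \times ev$; the conclusion is then word for word that of Proposition \ref{prop: (O_u, ... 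O_v)=chi(O_ev_r*W)} iii).

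The geometric input is that the forgetful map $\pi \colon X = Fl_{1,n-1} \to \P^{n-1}$ remembering the line is a $\P^{n-2}$-bundle: the fibre over a line $L$ is the projective space of hyperplanes of $\C^n$ containing $L$, so $X$ is the projectivization of the rank-$(n-1)$ tautological quotient bundle $Q$ on $\P^{n-1}$. By the functorial description of flag varieties, lifting a map $f \colon \P^1 \to \P^{n-1}$ of degree $d_1$ to a map $\tilde f \colon \P^1 \to X$ of degree $\d = (d_1,d_2)$ amounts to choosing a rank-$(n-1)$ subbundle $E_2 \subset \O_{\P^1}^{\oplus n}$ with $f^*S_1 \subset E_2$ and $\O_{\P^1}^{\oplus n}/E_2 \simeq \O_{\P^1}(d_2)$, i.e. a surjection $f^*Q = \O_{\P^1}^{\oplus n}/f^*S_1 \to \O_{\P^1}(d_2)$ taken up to scalar; these form the surjective locus inside $\P\big(\mathrm{Hom}(f^*Q,\O_{\P^1}(d_2))\big) = \P\big(H^0(\P^1,(f^*Q)^\vee(d_2))\big)$, a non-empty open subset of which is automatically dense. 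The requirements ``$\tilde f(p_l)$ equals a prescribed point of $X$ lying over $f(p_l)$'' and ``$\tilde f(p_l) \in g_l X(i_l,j_l)$'' become linear conditions on this section, fixing the fibre $(E_2)_{p_l} \subset \C^n$ in the first case and constraining it in the second. Hence, over any point whose underlying $\P^{n-1}$-stable map is an honest map from $\P^1$, the intersection with $\mathcal{M}_{0,3}(X,\d)$ of a fibre of $\Pi \times ev$ -- or of a fibre of the restriction of $\Pi$ to a subvariety of $\overline{\mathcal{M}_{0,3}}(X,\d)$ cut out by translated Schubert conditions at $p_1,p_2$ and a point condition at $p_3$ -- is a dense open subset of a linear subspace of a projective space, in particular an irreducible rational variety; it is non-empty over a dense subset of the base since $\Pi \times ev$ is surjective, and by Lemma \ref{lem : Intersection dense open is dense} it is moreover dense in the general fibre, which is therefore itself an irreducible rational variety.

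Now fix $(g_1,g_2)$ general in $\GL_n^2$ and $x_3$ general in $ev_3(\W_{g_1,g_2})$. The restriction of $\Pi$ maps $\W_{g_1,g_2} \cap ev_3^{-1}(x_3)$ onto $\W_{g_1,g_2}'' \cap ev_3^{-1}(\pi(x_3))$, where $\W_{g_1,g_2}''$ is the Gromov--Witten variety in $\overline{\mathcal{M}_{0,3}}(\P^{n-1},d_1)$ attached to the linear Schubert varieties $\pi(g_1 X(i_1,j_1))$ and $\pi(g_2 w_0 X(i_2,j_2))$, and $\pi(x_3)$ is general in $\pi(ev_3(\W_{g_1,g_2})) = ev_3(\W_{g_1,g_2}'')$; surjectivity of this restriction follows from the surjectivity of $\Pi \times ev$ together with the transversality arguments of Proposition \ref{prop : W is irreducible}. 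By Proposition \ref{prop: correlators Pn} ii) the base $\W_{g_1,g_2}'' \cap ev_3^{-1}(\pi(x_3))$ is empty -- in which case $\W_{g_1,g_2} \cap ev_3^{-1}(x_3)$ is empty and there is nothing to prove -- or an irreducible rational variety, and by the previous paragraph the general fibre of the restricted $\Pi$ is an irreducible rational variety. Graber--Harris--Starr's theorem on rational connectedness (the ingredient of Theorem \ref{th: ex stability under composition} (3), see \cite{graber}) then shows that $\W_{g_1,g_2} \cap ev_3^{-1}(x_3)$ is rationally connected. This is precisely the first hypothesis of Proposition \ref{prop: (O_u, ... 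O_v)=chi(O_ev_r*W)} iii), and combined with our rational-singularity hypothesis it yields the stated formula.

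The main obstacle is the second paragraph: extracting from the projective-bundle structure that the relevant fibres, once intersected with $\mathcal{M}_{0,3}(X,\d)$, are honestly dense open subsets of linear spaces, and checking that this honest-map part is dense in the general fibre of $\Pi$, so that no rationally non-connected boundary locus interferes. This is exactly where the hypothesis that $\d$ is large enough for $\Pi \times ev$ to be surjective is used, through Lemma \ref{lem : Intersection dense open is dense}. A further routine but necessary point is to carry the general position of $g_1$, $g_2$ and of $x_3$ through the successive reductions, using Lemma \ref{lem : dense intersection with open for homogeneous spaces}.
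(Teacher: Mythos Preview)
Your proof is correct and follows the same overall plan as the paper: verify hypothesis~A) of Proposition~\ref{prop: (O_u, ... O_v)=chi(O_ev_r*W)}~iii) by combining rationality of the fibres of $\Pi$ with rationality of the $\P^{n-1}$ Gromov--Witten fibres (Proposition~\ref{prop: correlators Pn}) and then concluding via Graber--Harris--Starr. The paper obtains the first ingredient by quoting Proposition~\ref{prop : geometry general fiber pr1W} and composes through the intermediate fibre product $\W_{g_1,g_2}' \times_{(\P^{n-1})^3} \bigl(g_1X(i_1,j_1) \times g_2X(i_2,j_2) \times X\bigr)$ using flat base change (Lemma~\ref{lem: flat base change perserves (P)}) and Theorem~\ref{th: ex stability under composition}; you instead restrict first to $ev_3^{-1}(x_3)$ and then apply $\Pi$, which is an equivalent reorganization, and the surjectivity you need there is indeed a direct consequence of the surjectivity of $\Pi\times ev$. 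The one substantive difference is your parametrization of lifts by surjections $f^*Q \to \O_{\P^1}(d_2)$ (the quotient side) rather than by inclusions $E_1 \hookrightarrow E_2$ as in the proof of Proposition~\ref{prop : geometry general fiber pr1W}: since $\O_{\P^1}^{\oplus n}/E_2$ is always the single line bundle $\O_{\P^1}(d_2)$, your description of the fibre as an open subset of a linear subspace of a projective space of sections needs no balancedness hypothesis on $\d$, whereas Proposition~\ref{prop : geometry general fiber pr1W} as stated carries the condition $d_2 \geq (n-1)d_1$, which the paper does not check. In that respect your direct argument is cleaner and more self-contained for the incidence variety.
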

	
	\begin{proof}
		Denote by $P_m  \supset B$ the parabolic subgroup associated with $\P^{n-1}$, i.e. $P_m$ is a parabolic subgroup satisfying $$B \subset P \subset P_m$$ with associated reductive subgroup the block diagonal matrix $\GL_1 \times \GL_{n-1}$ whose first block is $\GL_1$. Note that the image of $w_{i,j}$ by the forgetful morphism $W/W_P \rightarrow W/W_{P_m}$ is the element $w_{1,i}$ in $W/W_{P_m}$ associated with any permutation sending $1$ to $i$. Recall the Schubert variety of $\P^{n-1}$ associated with the element $w_{1,i}$ in $W/W_{P_m}$ is $L_{i} := \{[x_1: \dots:x_i:0:\dots:0]\}.$
		For an element $(g_1, g_2)$ in $G^3$, denote by \begin{align*}\overline{\mathcal{M}_{0,3}}(X, \d) \supset \W_{g_k}:= \prod_{k=1,2} ev_k^{-1}(g_k X(w_{i_k,j_k})) &\:\:\: \mathrm{and} &  \W_{g_k}':= \prod_{k=1,2} ev_k^{-1}(g_k L_{i_k}) \subset \overline{\mathcal{M}_{0,3}}(\P^{n-1},d_1)\end{align*} the Gromov-Witten variety of $X$ (respectively $\P^{n-1}$) associated with the translated Schubert varieties $g_k X(w_{i_k,j_k})$ (respectively $g_k L_{i_k}$). 
		
		Since the morphism $\Pi \times ev$ is surjective and $X$ is a two step flag variety, according to Chapter \ref{chap : stabilization correlators} (Proposition \ref{prop : geometry general fiber pr1W}) the general fiber of $\Pi \times ev$ is rational. Hence according to Chapter \ref{chap : geometry W} (Proposition \ref{prop : W is irreducible}) the general fiber of the restriction $\W_{g_1,g_2} \rightarrow \W_{g_1,g_2}' \times_{(\P^{n-1})^3} g_1X(i_1,j_1) \times g_2 X(i_2,j_2) \times X$ of $\Pi \times ev$  is rationally connected. Furthermore, according to Proposition \ref{prop: correlators Pn}, the general fiber of the morphism \[ev_3^{d_1}: \W_{g_1,g_2}' \rightarrow ev_3(\W_{g_1,g_2}') \subset \P^{n-1}\] evaluating the third marked point is an irreducible rational variety.  Note that since $X \rightarrow \P^{n-1}$ is a fibration in $\P^{n-2}$'s, it is a flat morphism. Recall from Chapter \ref{chap : geometry W} that the flat base change and composition of a rationally connected fibration is a rationally connected fibration (Theorem \ref{th: ex stability under composition} (3) and Lemma \ref{lem: flat base change perserves (P)}); hence the general fiber of the morphism $\W_{g_1,g_2}' \times_{(\P^{n-1})^3} g_1X(i_1,j_1) \times g_2 X(i_2,j_2) \times X \rightarrow \P^{n-1} \times_{(\P^{n-1})^3} g_1X(i_1,j_1) \times g_2 X(i_2,j_2) \times X =X $ is also rationally connected. We obtain the following commutative diagram of rationally connected fibrations. \[\begin{tikzcd} \W_{g_1,g_2} \arrow{drr}{ev_3^{\d}} \arrow{d}{\Pi \times ev} &&\\ \W_{g_1,g_2}'\times_{(\P^{n-1})^3} g_1X(i_1,j_1) \times g_2 X(i_2,j_2) \times X \arrow{rr} \arrow{d}{ev_3^{d_1}}  &  & X \\ \P^{n-1}
		\end{tikzcd}\]
		By composition the general fiber of the morphism $ev_3^{\d}: \W_{g_1,g_2} \rightarrow ev_3^{\d}(\W_{g_1,g_2})$ is rationally connected. Since the projected Gromov-Witten variety $ev_3^{\d}(\W_{1,w_0})$ has rational singularities, then according to Proposition \ref{prop: (O_u, ... O_v)=chi(O_ev_r*W)} for any $\alpha$ in $K_\circ(X)$  we have $\langle \O_{i_1,j_1}, \O_{i_2,j_2}, \alpha \rangle_\d 
		= \chi \left(\alpha \cdot [\O_{ev_3(\mathcal{W}_{1,w_0})}] \right)$.
	\end{proof}
	\item \label{item: symmetry} \textsc{Symmetry of correlators of $Fl_{1,n-1}$.} The isomorphism $Gr(1,n) \simeq Gr(n-1,n)$ yields an automorphism $\varphi$ of $X$, which implies the following symmetry for correlators of $X$. 
	\begin{proposition}
		\begin{equation*}
		\langle w_{i_1,j_1}, \dots , w_{i_r, j_r} \rangle_{d_1 l_1 + d_2 l_2} = \langle w_{n-j_1+1,n-i_1+1}, \dots, w_{n-j_r+1,n- i_r+1} \rangle_{d_2 l_1 + d_1 l_2}.
		\end{equation*}
	\end{proposition}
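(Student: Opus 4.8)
The plan is to exploit the duality isomorphism $\Gr(1,n) \simeq \Gr(n-1,n)$, which sends a subspace to its annihilator. Fixing an identification $(\C^n)^* \simeq \C^n$ via the standard basis, I would define an involutive automorphism $\varphi : X \to X$ of the incidence variety $X = Fl_{1,n-1}$ by $(L,H) \mapsto (H^{\perp}, L^{\perp})$; since $L \subset H$ dualizes to $H^{\perp} \subset L^{\perp}$, this is a well-defined pair consisting of a line inside a hyperplane, and $\varphi^2 = \mathrm{id}$. Post-composing $\varphi$ with the projection $X \to \P^{n-1}$ recording the line equals the projection $X \to \Gr(n-1,n)$ recording the hyperplane, followed by the duality $\Gr(n-1,n) \simeq \P^{n-1}$; hence $\varphi$ exchanges the two generators $l_1$, $l_2$ of $H_2(X,\Z)$, so that $\varphi_*(d_1 l_1 + d_2 l_2) = d_2 l_1 + d_1 l_2$.

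Next I would track how $\varphi$ acts on Schubert varieties. Starting from the description $X(w_{i,j}) = \{(L,H) : L \subset \langle e_1, \dots, e_i\rangle,\ \langle e_1, \dots, e_{j-1}\rangle \subset H\}$ from (\ref{eq : Schubert var}) and dualizing, the condition $L \subset \langle e_1,\dots,e_i\rangle$ becomes $L^{\perp} \supset \langle e_{i+1},\dots,e_n\rangle$ and $\langle e_1,\dots,e_{j-1}\rangle \subset H$ becomes $H^{\perp} \subset \langle e_j,\dots,e_n\rangle$. Applying the permutation $g_0 \in \GL_n$ reversing the basis ($g_0 e_k = e_{n-k+1}$) to make the flags increasing, one reads off $g_0 \cdot \varphi(X(w_{i,j})) = X(w_{n-j+1,\,n-i+1})$, so $\varphi(X(w_{i,j}))$ is a fixed $\GL_n$-translate of $X(w_{n-j+1,\,n-i+1})$. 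Moreover $\varphi$ intertwines the $\GL_n$-action on $X$ with the one twisted by the outer automorphism $g \mapsto {}^t g^{-1}$, i.e. $\varphi(g \cdot y) = {}^t g^{-1} \cdot \varphi(y)$; since $g \mapsto {}^t g^{-1}$ is an automorphism of $\GL_n$, a general element of $\GL_n^r$ is carried to a general element, so $\varphi$ sends a general translate of $X(w_{i_k,j_k})$ to a general translate of $X(w_{n-j_k+1,\,n-i_k+1})$.

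Then I would pass to the moduli space. By functoriality of the space of stable maps, $\varphi$ induces an isomorphism $\Phi : \overline{\mathcal{M}_{0,r}}(X,\d) \to \overline{\mathcal{M}_{0,r}}(X, \varphi_*\d)$ satisfying $ev_k \circ \Phi = \varphi \circ ev_k$ for each $k$. Taking $g=(g_1,\dots,g_r)$ general in $\GL_n^r$, the correlator $\langle w_{i_1,j_1},\dots,w_{i_r,j_r}\rangle_{\d}$ equals $\chi(\O_{\W_g})$, where $\W_g = \bigcap_k ev_k^{-1}(g_k X(w_{i_k,j_k}))$ (for general $g$ the relevant $\mathrm{Tor}$-sheaves vanish by Sierra's homological Kleiman--Bertini theorem, as used throughout Part~\ref{sec: computing 3pts correlators}, and the scheme is reduced). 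The isomorphism $\Phi$ carries $\W_g$ onto $\bigcap_k ev_k^{-1}\big({}^t g_k^{-1} \cdot X(w_{n-j_k+1,\,n-i_k+1})\big) \subset \overline{\mathcal{M}_{0,r}}(X, \varphi_*\d)$, with $({}^t g_1^{-1},\dots,{}^t g_r^{-1})$ again general, and this last variety computes $\langle w_{n-j_1+1,n-i_1+1},\dots,w_{n-j_r+1,n-i_r+1}\rangle_{\varphi_*\d}$. Since an isomorphism of projective schemes preserves the Euler characteristic of the structure sheaf, I obtain
$$\langle w_{i_1,j_1},\dots,w_{i_r,j_r}\rangle_{d_1 l_1 + d_2 l_2} = \langle w_{n-j_1+1,n-i_1+1},\dots,w_{n-j_r+1,n-i_r+1}\rangle_{d_2 l_1 + d_1 l_2},$$
as claimed.

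The only genuinely delicate point is the index bookkeeping in identifying $\varphi(X(w_{i,j}))$ with a translate of $X(w_{n-j+1,\,n-i+1})$ — one must track both the reversal of the basis and the off-by-one coming from the hyperplane conditions being indexed by $j-1$ rather than $j$ — together with the verification that genericity is preserved under the twist $g \mapsto {}^t g^{-1}$; beyond this there is no substantial geometric obstacle.
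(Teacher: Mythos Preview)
Your proof is correct and follows essentially the same route as the paper: both construct the duality automorphism $\varphi:(L,H)\mapsto(H^{\perp},L^{\perp})$, verify that it swaps $l_1$ and $l_2$, compute $\varphi(X(w_{i,j}))=w_0\cdot X(w_{n-j+1,n-i+1})$ (your $g_0$ is the paper's $w_0$), and transport via the induced isomorphism on $\overline{\mathcal{M}_{0,r}}$. The only stylistic difference is in the last step: the paper works directly with K-theory pushforwards, using $\varphi_*\O_{w_{i,j}}=[\O_{w_0\cdot X(w_{n-j+1,n-i+1})}]=\O_{n-j+1,n-i+1}$ (via Lemma~\ref{lem: equality g.}) together with the projection formula, which sidesteps any genericity argument; your route through the Euler characteristic of the Gromov-Witten variety $\W_g$ and the preservation of genericity under $g\mapsto{}^tg^{-1}$ is equally valid and indeed matches how the paper interprets correlators elsewhere (equation~(\ref{eq: correlator = chi GW variety})).
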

	
	\begin{proof}
		We consider here $X$ as the flag variety parametrizing lines included in hyperplanes of $\C^n$. We identify explicitly a line in $\C^n$ with a hyperplane in $\C^n$ in the following way. Consider a Hermitian inner product $\langle \cdot \mid \cdot \rangle$ on $\C^n$. For vectors $u_1$, $\dots$, $u_r$ in $\C^n$, we denote by $<u_1, \dots, u_r>$ the $\C$-linear vector space generated by the $u_i$. For any linear space $E=<u_1, \dots, u_r>$ in $\C^n$ denote by $E^\bot$ its orthogonal with respect to $\langle \cdot \mid \cdot \rangle$; $E^\bot$ may be defined as the intersection of the kernels of the $r$ linear maps sending a vector $v$ in $\C^n$ to the complex number $\langle \overline{u_i} \mid v \rangle$, where we denote by $\overline{u_i}$ the conjugate of $u_i$. Then the morphism $[L] \rightarrow [L^\bot]$ yields an identification $\P^{n-1} \simeq Gr(n-1,n)$. Furthermore, for any hyperplane $H$ in $\C^n$ consider the one dimensional linear space $H^\bot$ orthogonal to it. Since $(L^\bot)^\bot=L$, we obtain the inverse map $Gr(n-1,n) \rightarrow \P^{n-1}$.
		Now consider the natural embedding $X \hookrightarrow \P^{n-1} \times \P^{n-1}$. The automorphism $\varphi$ may be seen as the restriction to $X$ of the map $\P^{n-1} \times Gr(n-1,n) \rightarrow \P^{n-1} \times Gr(n-1,n) $ sending the projectivization of a line and a  hyperplane $([L], [H])$ onto the projectivization of the orthogonal spaces $([H^\bot],[L^\bot])$. Note that $\varphi$ exchanges classes $l_1$ and $l_2$, and sends a Schubert variety $X(w_{i,j})$ onto a translate of the Schubert variety $X(w_{n-j+1, n-i+1})$. Indeed, according to (\ref{eq : Schubert var}) the image by $\varphi$ of any Schubert variety $X(w_{i,j})$ may be described in the following way, where we consider a basis $(e_i)_{1 \leq i \leq n}$ of $\C^n$ which is orthogonal with respect to the inner product $\langle \cdot \mid \cdot \rangle$. \begin{align*}
		\varphi(X(w_{i,j})) &= \varphi \left(\left\{ (L,H) \in X \bigr\rvert L\subset <e_1, \cdots, e_i>,\: <e_1, \cdots, e_{j-1}> \subset H \right\} \right) \\
		&= \left\{ (H^\bot, L^\bot) \in X \bigr\rvert L^\bot \supset (<e_1, \cdots, e_i>)^\bot,\: (<e_1, \cdots, e_{j-1}>)^\bot \supset H^\bot \right\}\\
		&= \left\{ (L, H) \in X \bigr\rvert H \supset <e_{i+1}, \cdots, e_n>,\: <e_j, \cdots, e_{n}>)^\bot \supset L \right\}\\
		& = w_0 \cdot \left\{ (L, H) \in X \bigr\rvert H \supset <e_{1}, \cdots, e_{n-i}>,\: <e_1, \cdots, e_{n-j+1}>)^\bot \supset L \right\} \\
		&= w_0 \cdot X(w_{n-j+1,n-i+1}),
		\end{align*} where $w_0$ is the element in $G=\GL_n$ associated with a maximal length permutation of the basis vectors $e_i$. Furthermore, we have $\varphi_*(l_1) = [\varphi(X(w_{2,n}))]=[w_0 \cdot X(w_{1,n-1})]=l_2$, and in the same way $\varphi_* (l_2) = l_1$. The isomorphism $\varphi: X \rightarrow X$ induces an isomorphism $\phi: \overline{\mathcal{M}_{0,r}}(X, d_1 l_1 + d_2 l_2) \rightarrow \overline{\mathcal{M}_{0,r}}(X, \varphi_*(d_1 l_1 + d_2 l_2))= \overline{\mathcal{M}_{0,r}}(X, d_2 l_1 + d_1 l_2)$. We obtain the following commutative diagram. \[\begin{tikzcd}
		\overline{\mathcal{M}_{0,r}}(X, d_1 l_1 + d_2 l_2) \arrow{r}{\phi}\arrow{d}{\epsilon v} & \overline{\mathcal{M}_{0,r}}(X, d_2 l_1 + d_1 l_2) \arrow{d}{ev}\\ X^r \arrow{r}{\varphi^r} & X^r\end{tikzcd}\]
		Since $\varphi$ and $\phi$ are isomorphisms, for $\alpha$ in $K^\circ(X)\simeq K_\circ(X)$ the projection formula implies \begin{align*} \phi_*\left((\epsilon v_i)^*\alpha \cdot [\O_{\overline{\mathcal{M}_{0,r}}(X, d_1l_1+d_2l_2)}]\right) &= \phi_*\left((\epsilon v)_i^*\varphi^*\varphi_*\alpha \cdot [\O_{\overline{\mathcal{M}_{0,r}}(X, d_1l_1+d_2l_2)}]\right) \\ &= \phi_*\left(\phi^*ev_i^*\varphi_*\alpha \cdot [\O_{\overline{\mathcal{M}_{0,r}}(X, d_1l_1+d_2l_2)}]\right) \\ &= ev_i^* \varphi_*\alpha \cdot [\O_{\overline{\mathcal{M}_{0,r}}(X, d_2l_1+d_1l_2)}]. \end{align*} This yields the following equalities, where we denote by $\chi : K_\circ(\overline{\mathcal{M}_{0,r}}(X, \d)) \rightarrow K_\circ(\Spec(\C))$ the pushforward to the point. \begin{align*}
		\langle &w_{i_1,j_1}, \dots, w_{i_r, j_r} \rangle_{d_1 l_1 + d_2 l_2} \\ & =\chi\left((\epsilon v_1)^*\O_{w_{i_1,j_1}} \cdot \dots \cdot (\epsilon v_r)^*\O_{w_{i_r,j_r}} \cdot [\O_{\overline{\mathcal{M}_{0,r}}(X, d_1l_1+d_2l_2)}]\right)\\
		& =\chi\left(\phi_* \left(\phi^*ev_1^*\varphi_*\O_{w_{i_1,j_1}} \cdot \dots \cdot \phi^*ev_r^*\varphi_*\O_{w_{i_r,j_r}} \cdot [\O_{\overline{\mathcal{M}_{0,r}}(X, d_1l_1+d_2l_2)}]\right)\right)\\
		&= \chi\left(ev_1^*(\varphi_*\O_{w_{i_1,j_1}}) \cdot \dots \cdot ev_r^*(\varphi_*\O_{w_{i_r,j_r}})\cdot [\O_{\overline{\mathcal{M}_{0,r}}(X, d_2l_1+d_1l_2)}]\right) \\
		&= \chi\left(ev_1^*(\O_{w_{n-i_1+1,n-j_1+1}}) \cdot \dots \cdot ev_r^*(\O_{w_{n-i_r+1,n-j_r+1}}) \cdot [\O_{\overline{\mathcal{M}_{0,r}}(X, d_2l_1+d_1l_2)}]\right),
		\end{align*}
		where the last equality holds since $\varphi_*\O_{w_{i,j}} = \varphi_*[\O_{X(w_{i,j})}] = [\O_{\varphi(X(w_{i,j}))}] = [\O_{w_0 \cdot X(w_{n-j+1,n-i+1})}]=\O_{n-j+1,n-i+1}.$
		To sum up, correlators of $X$ exhibit the following symmetry. \begin{equation*}
		\langle w_{i_1,j_1}, \dots , w_{i_r, j_r} \rangle_{d_1 l_1 + d_2 l_2} = \langle w_{n-j_1+1,n-i_1+1}, \dots, w_{n-j_r+1,n- i_r+1} \rangle_{d_2 l_1 + d_1 l_2}.
		\end{equation*}
	\end{proof}
\end{enumerate}

\begin{Rem} Let $G$ be a semi-simple linear algebraic group, let $P$ be a parabolic subgroup of $G$ and $H$ be a subgroup of $G$ acting on $G/P$ by left multiplication. Then point $(a)$ holds for $H$-equivariant correlators of $G/P$. If $G=\GL_n$, then point $(c)$ implies that $H$-equivariant correlators of the flag variety $\GL_n/P$ are equal to the corresponding correlators of the dual flag variety. The symmetry observed here comes from the fact that $Fl_{1,n-1}$ is self dual.
\end{Rem}
\subsection{Three points correlators of $X$.}\label{subsec: 3 points cor X} We use here the results of the preceding parts to compute all genus zero three points correlators of $X=\GL_n/P$. Let $1 \leq i_1, i_2, i_3, j_1, j_2, j_3 \leq n$, where $i_k \neq j_k$. Recall for $1 \leq i,j \leq n$, $i \neq j$, we denote by $w_{i,j}$ the element in $W^P$ representing any permutation sending $1$ to $i$ and $n$ to $j$. For a point $x$ in $\P^{n-1} \times \P^{n-1}$, we denote by $$(L_x, H_x) \in \C^n \times \C^n$$ vectors satisfying $x=([L_x],[H_x])$. We consider here $X$ as a subvariety cut out from $\P^{n-1} \times \P^{n-1}$ by the incidence relation "line included in hyperplane" in the following way. Consider a Hermitian inner product $\langle \cdot \mid \cdot \rangle$ on $\C^n$. Consider a basis $(e_i)$ of $\C^n$ orthogonal with respect to $\langle \cdot \mid \cdot \rangle$. Denote by $\overline{u}$ the conjugate of a complex number. The isomorphism $\P^{n-1} \simeq Gr(n-1,n)$ identifies $H_x \in \C^n$ with the kernel $H_x^\bot$ of the linear map $\C^n \ni v \rightarrow \langle \overline{H_x} \mid v \rangle \in \C$, which is a hyperplane of $\C^n$. For any vectors $L_x$, $H_x$ in $\C^n$ the associated point $([L_x],[H_x])$ lies in $X$ iff $L_x$ lies within the hyperplane $H_x^\bot \subset \C^n$, i.e. iff $\langle L_x \mid \overline{H_x} \rangle =0$. For an element $g$ in $G=\GL_n$, we denote by $g^\bot$ the adjoint of $\overline{g}$ for the inner product $\langle \cdot \mid \cdot \rangle$, i.e. $\langle gL \mid \overline{g^\bot L'} \rangle = \langle L \mid \overline{L'} \rangle$ for any $L$, $L'$ in $\C^n$. We consider here the following action of $G$. \begin{align*} G \times \P^{n-1} \times \P^{n-1}  \rightarrow &\P^{n-1} \times \P^{n-1} \\ \left(g, ([L_x], [H_x]) \right)  \rightarrow &([g \cdot L_x], [g^\bot \cdot H_x]) \end{align*} The action of $G$ on $\P^{n-1} \times \P^{n-1}$ we just described induces by restriction the natural action of $G$ by left multilication on $X \simeq G/P$. For $1 \leq i \leq n$, we set \[L_i :=\{[x_1: \dots :x_i:0 \dots, 0]\} \subset \P^{n-1}.\]
Recall we denote by $w_0$ the permutation of $1$ $\dots$ $n$ of largest length. Note that we have \begin{align*}w_0L_i &=\{[0: \dots; 0: x_{n-i+1}: \dots:x_n]\} \subset \P^{n-1}\\
L_i^\bot &= \{[0: \dots; 0: x_{i+1}: \dots:x_n]\} = w_0L_{n-i}.\end{align*} Recall for $1\leq i,j \leq n$ and $i \neq j$, $w_{i,j}$ denotes the element in $W^P$ representing any permutation sending $1$ to $i$ and $n$ to $j$. Using (\ref{eq : X in PnxPn}) Schubert varieties of $X$ can be described in the following way. \begin{align} X(w_{i,j})&= \left\{ x=([L_x],[H_x]) \in X \mid L_x \subset <e_1, \cdots, e_i>,\: H_x \subset <e_j, \cdots, e_{n}> \right\} \nonumber \\
&= (L_i \times w_0 L_{n-j+1}) \cap X. \label{eq: 3 pts correlators--Sch var} \end{align} Let $\d$ be a degree in $E(X)$. For $(g_1,g_2)$ in $G^2$, we denote by $$X_{g_1,g_2}^\d := ev_1^{-1}(g_1 \cdot X(w_{i_1,j_1})) \cap ev_2^{-1}(g_2 \cdot X(w_{i_2,j_2})) \subset \overline{\mathcal{M}_{0,3}}(X, \d)$$ the Gromov-Witten variety parametrizing genus zero degree $\d$ stable maps sending their marked points into the translated Schubert varieties $g_k \cdot X(w_{i_k,j_k})$.

\begin{enumerate}[label=(\alph*)]
	\item \textsc{When $\d=l_2$.} Denote by $\pi_1 : X \subset \P^{n-1} \times \P^{n-1} \rightarrow \P^{n-1}$ (respectively $\pi_2$) the first (respectively second) projection. Note that since $l_2$ is a generating element in the Chow ring $A_*(X)$, curves of class $l_2$ are irreducible. Furthermore, since ${\pi_1}_* l_2 =0 \in A_*(\P^{n-1})$, a curve $C$ of class $l_2$ in $X$ projects to the point, and since ${\pi_2}_* l_2 =l_2 \in A_*(\P^{n-1})$, a curve $C$ of class $l_2$  projects to a line in $\P^{n-1}$. Hence curves of class $l_2$ in $X$ are curves of the form $\{x\} \times L$, where $x$ is a point in $\P^{n-1}$ and $L$ is a line in $\P^{n-1}$. Since lines are rational, any such curves are rational. In particular we observe that there is either a unique rational curve of class $l_2$ joining two distinct points in $X$, or none.
	
	We denote by ${ev_3}_{|g_1,g_2}$ the restriction of the morphism evaluating the third marked point to the Gromov-Witten variety $X_{g_1,g_2}^{l_2}$. Denote by $U \subset X^3$ the dense open subset parametrizing distinct triplets of points in $X^2$. 
	\begin{lemme}\label{lem: 3pts corr l2}
		\begin{enumerate}[label=\roman*)]
			\item For $(g_1,g_2)$ general in $G^2$, the variety $X_{g_1,g_2}^{l_2}$ has a dense intersection with $(ev_1 \times ev_3)^{-1}(U)$.
			\item For $(g_1,g_2)$ general in $G^2$, the general fiber of ${ev_3}_{|g_1,g_2}$ is a rational variety.
			\item Suppose $j_1+j_2 \leq  n+2$. Then there exists an element $h$ in $G$ such that $ev_3(X_{1,w_0}^{l_2})= h \cdot X(w_{i_1+i_2-n,1})$ if $i_1+i_2 > n+1$, and $ev_3(X_{1,w_0}^{l_2})= h \cdot X(w_{1,2})$ if $i_1+i_2 = n+1$, and is empty else. 
		\end{enumerate}
	\end{lemme}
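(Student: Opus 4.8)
The plan is to analyze curves of class $l_2$ in $X$ geometrically, using the embedding $X \hookrightarrow \mathbb{P}^{n-1} \times \mathbb{P}^{n-1}$ and the explicit description of Schubert varieties given by \eqref{eq: 3 pts correlators--Sch var}. The key observation, already made in the paragraph preceding the lemma, is that a curve of class $l_2$ is of the form $\{x\} \times L$ with $x \in \mathbb{P}^{n-1}$ fixed (since ${\pi_1}_*l_2 = 0$) and $L \subset \mathbb{P}^{n-1}$ a line (since ${\pi_2}_*l_2 = l_2$), subject to the incidence condition that $x$ lies in every hyperplane parametrized by a point of $L$. So a stable map of class $l_2$ is determined by: the common point $L_x \in \mathbb{P}^{n-1}$, a line $L$ in the hyperplane $(\mathbb{P}^{n-1})$-factor all of whose points annihilate $L_x$, and three marked points on $\mathbb{P}^1 \cong L$. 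For part i), I would note that $\overline{\mathcal{M}_{0,3}}(X,l_2)$ is irreducible, and that the locus of stable maps whose three marked points map to three distinct points of $X$ is a dense open subset; then applying Lemma \ref{lem : dense intersection with open for homogeneous spaces} (Kleiman-type transversality, as used throughout Chapter \ref{chap : geometry W}) with the transitive $G^2$-action shows that for $(g_1,g_2)$ general the Gromov-Witten variety $X_{g_1,g_2}^{l_2}$ meets $(ev_1 \times ev_3)^{-1}(U)$ densely.

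For part ii), I would use part i) together with the birational-type parametrization: over the open locus where the three images are distinct and the underlying curve is an honest line, a point of $X_{g_1,g_2}^{l_2}$ lying over a fixed value $x_3 = ev_3$ of the third marked point is cut out by linear conditions — the choice of $L_{x_3}$ is forced (it is the common point of the three flags' line-parts, once they are constrained to lie on a common line), and the remaining freedom (the line $L$ through the two images $ev_1, ev_3$ in the hyperplane factor, the positions of the marked points) is parametrized by open subsets of projective/affine spaces, hence rational. More precisely, fixing $ev_1 \in g_1 X(w_{i_1,j_1})$, $ev_2 \in g_2 X(w_{i_2,j_2})$, $ev_3 = x_3$, the fiber is an open subset of a linear space inside the space of quasi-maps, exactly as in the proof of Proposition \ref{prop: correlators Pn} i); so the general fiber of ${ev_3}_{|g_1,g_2}$ is irreducible rational. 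Here I would take some care to reduce, via Lemma \ref{lem : dense intersection with open for homogeneous spaces} again, to the situation where everything happens over the good open locus, so that "general fiber" genuinely sees only the linear part.

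For part iii), I would take $(g_1,g_2) = (1,w_0)$, which by \eqref{eq: 3 pts correlators--Sch var} makes $X(w_{i_1,j_1}) = (L_{i_1} \times w_0 L_{n-j_1+1}) \cap X$ and $w_0 X(w_{i_2,j_2}) = (w_0 L_{i_2} \times w_0 w_0 L_{n-j_2+1}) \cap X = (w_0 L_{i_2} \times L_{n-j_2+1}) \cap X$ (using $w_0 \cdot w_0 L_k = L_k$, or rather being careful with how $w_0$ acts through the twisted action $g \mapsto g^\bot$ — this bookkeeping with the inner product is the fiddly point). A curve $\{x\} \times L$ of class $l_2$ joining these two Schubert varieties must have: $L_x \in \langle e_1,\dots,e_{i_1}\rangle$ (from the first marked point) and $L_x \in \langle e_{n-i_2+1},\dots,e_n\rangle$ (from the second), so $L_x \in \langle e_1,\dots,e_{i_1}\rangle \cap \langle e_{n-i_2+1},\dots,e_n\rangle$, which is nonzero iff $i_1 + i_2 \geq n+1$ and then has dimension $i_1+i_2-n$; and $L$ must be a line in the appropriate hyperplane-subspace meeting both hyperplane-constraints and lying in $L_x^\bot$, which costs a condition analyzed exactly as in Lemma \ref{lem: curve in Pn through 3 points} ii) / Proposition \ref{prop: correlators Pn}. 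Pushing forward by $ev_3$ and reading off the resulting constraints on the third point, I expect to recover precisely a translate of $X(w_{i_1+i_2-n,1})$ when $i_1+i_2 > n+1$, of $X(w_{1,2})$ in the boundary case $i_1+i_2 = n+1$ (where $L_x$ is forced to a single point, so only the hyperplane-line is free), and the empty set when $i_1+i_2 \le n$; the hypothesis $j_1+j_2 \le n+2$ ensures the hyperplane-side incidence conditions are compatible. The main obstacle I anticipate is purely clerical rather than conceptual: keeping straight the twisted $G$-action $(g,([L],[H])) \mapsto ([gL],[g^\bot H])$ and the orthogonal-complement identifications $L_i^\bot = w_0 L_{n-i}$ when computing how $w_0$ moves $X(w_{i_2,j_2})$, and then correctly translating the intersection-of-coordinate-subspaces computation into the index shifts $i_1+i_2-n$ and $j$-part $=1$ appearing in the statement; once the indices are pinned down, identifying $ev_3(X_{1,w_0}^{l_2})$ with a $G$-translate of the named Schubert variety is immediate from \eqref{eq: 3 pts correlators--Sch var}.
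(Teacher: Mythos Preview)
Your proposal is correct and follows essentially the same route as the paper. Part~i) is identical (convexity makes $\mathcal{M}_{0,3}(X,l_2)$ dense, its image lands in $U$, then apply Lemma~\ref{lem : dense intersection with open for homogeneous spaces}), and part~iii) is the same explicit coordinate computation: the paper writes the three conditions on $(L,H)\in ev_3(X_{1,w_0})$ exactly as you do, observes that $j_1+j_2\le n+2$ makes the hyperplane-side condition vacuous, and then reads off the Schubert variety up to a permutation $h$ of basis vectors.

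The only noticeable difference is in part~ii). You invoke the quasi-map linear parametrization \`a la Proposition~\ref{prop: correlators Pn}; the paper instead works directly in $X$: with $z=(z_1,z_2)$ fixed, the first coordinate of every point on the curve is forced to be $z_1$, so the paper parametrizes pairs $(H_{x_1},H_{x_2})$ of hyperplane-coordinates for the first two marked points via the explicit map
\[
f:\ \mathbb{A}^1\times (L_{g_1}\setminus\{z_2\})\to \mathbb{P}^{n-1}\times\mathbb{P}^{n-1},\qquad (t,x)\mapsto (x,\ [x+tz_2]),
\]
whose image intersected with $L_{g_1}\times L_{g_2}$ and the hyperplane $H_1=\{z_1\subset H\}$ is cut by linear equations, hence rational. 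Both arguments exploit the same linearity; the paper's version avoids setting up quasi-maps for a two-factor target and makes the birationality with the fiber ${ev_3}_{|g_1,g_2}^{-1}(z)\cap(ev_1\times ev_2)^{-1}(U)$ transparent (a degree~$l_2$ map with three distinct marked images is uniquely recovered from those images), while your version has the virtue of recycling machinery already built. Either is fine.
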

	\begin{proof}
		We set $X_{g_1,g_2}:=X_{g_1,g_2}^{l_2}$.
		\begin{enumerate}[label=\roman*)]
			\item Note that for $(g_1,g_2)$ general in $G^2$, the intersection of $X_{g_1,g_2}^{l_2}$ with $(ev_1 \times ev_3)^{-1}(U)$ is a dense open subset of $X_{g_1,g_2}^{l_2}$. Indeed, since $X$ is convex, the subset ${\mathcal{M}_{0,3}}(X, l_2) \subset \overline{\mathcal{M}_{0,3}}(X, l_2)$ parametrizing degree $l_2$ stable maps $\P^1 \rightarrow X$ is a dense open subset of $\overline{\mathcal{M}_{0,3}}(X, l_2)$ \cite{fulton1996notes}, whose image is contained in $U$. Hence $(ev_1 \times ev_3)^{-1}(U)$ is a dense open subset of $\overline{\mathcal{M}_{0,3}}(X, l_2)$. Then for $(g_1,g_2)$ general in $G^2$ the variety $X_{g_1,g_2}^{l_2}$ has a dense intersection with $(ev_1 \times ev_3)^{-1}(U)$.
			
			\item Let $(g_1,g_2)$ be an element in $G^2$, let $z=(z_1,z_2)$ be a point in $ev_3(X_{g_1,g_2})$. According to (\ref{eq: 3 pts correlators--Sch var}), we have: \begin{align*} p_2\left(g_1X(w_{i_1,j_1})\times g_2X(w_{i_2,j_2}) \right) &= g_1^\bot \cdot p_2(X(w_{i_1,j_1})) \times g_2^\bot \cdot p_2(X(w_{i_2,j_2})) \subset \P^{n-1} \times \P^{n-1} \\
			&= (g_1^\bot \cdot w_0L_{n-j_1+1}) \times  (g_2^\bot \cdot w_0 L_{n-j_2+1}) \\
			&=L_{g_1} \times L_{g_2},\end{align*} where we denote by $L_{g_1}:= g_1^\bot \cdot w_0L_{n-j_1+1}$ and $L_{g_2}:= g_2^\bot \cdot w_0 L_{n-j_2+1}$. The point $z_1$ in $\P^{n-1}$ defines by duality a codimension one linear subspace $H_1$ in $\P^{n-1} \simeq Gr(n-1,n)$. We consider the set of points in $(L_{g_1} \times L_{g_2}) \cap H_1$ distinct from $z$ that lie in a line containing $z_2=[z^1: \dots:z^n]$; this can be seen as the intersection of the image of the following map with $(L_{g_1} \times L_{g_2})\cap H_1$. \begin{align*}
			&&\mathbb{A}^1 \times L_{g_1} \setminus \{z_2\} &\rightarrow \P^{n-1} \times \P^{n-1} \\
			f: &&(t,x=[x^1: \dots:x^n]) & \rightarrow (x, [x^1+tz^1: \dots: x^n+tz^n]).
			\end{align*}
			Note that since $L_{g_2}$ and $H_1$ are described by linear equations, $\Im f \cap (L_{g_1} \times L_{g_2})$ is described by linear equations on the $x^i$ and $t$, and hence is a rational variety. We now consider the morphism ${ev_3}_{| g_1,g_2}^{-1}(z) \rightarrow X^2 \subset (\P^{n-1} \times \P^{n-1})^2$ evaluating the first two marked points. Note that this morphism factors through ${ev_3}_{| g_1,g_2}^{-1}(z) \rightarrow (\{z_1\} \times \P^{n-1})^2 \simeq (\P^{n-1})^2$. We denote by $f_U$ the restriction of this morphism to the subset ${ev_3}_{| g_1,g_2}^{-1}(z)\cap (ev_1 \times ev_2)^{-1}(U)$. By definition, ${ev_3}_{| g_1,g_2}^{-1}(z)\cap (ev_1 \times ev_2)^{-1}(U)$ parametrizes degree $l_2$ stable maps $(h:\P^1 \rightarrow \{z_1\} \times \P^{n-1} \simeq \P^{n-1}, \{p_1, p_2,p_3\})$ satisfying the following conditions: \begin{itemize} \item $h$ sends its first two marked points into the projection $\pi_1(g_k X(w_{i_k,j_k}))$ of the Schubert varieties considered here, i.e. $h(p_1) \in L_{g_1}$, $h(p_2) \in L_{g_2}$;
				\item $h$ sends its third marked point into $z$, i.e. $h(p_3)=z_2$;
				\item $h(p_1) \neq z_2$ and $h(p_2) \neq z_2$;
				\item The image of $h$ is contained in $X$, i.e. $h(p_1) \in H_1$ and $h(p_2) \in H_1$.
			\end{itemize} Since a degree $l_2$ map $(\P^1 \rightarrow \P^{n-1}, \{p_1,p_2,p_3\})$ is uniquely determined by the data of three distinct points in $\P^{n-1}$ joined by a line, points in the fiber ${ev_3}_{| g_1,g_2}^{-1}(z)\cap (ev_1 \times ev_2)^{-1}(U)$ are in one-to-one correspondence with distinct points in $\Im f \cap (L_{g_1} \times L_{g_2}) \cap H_1$. Hence $f_U$ is an injective morphism of quasi-projective complex varieties, dominating the rational variety $\Im f \cap (L_{g_1} \times L_{g_2}) \cap H_1$.  Hence ${ev_3}_{| g_1,g_2}^{-1}(z)\cap (ev_1 \times ev_3)^{-1}(U)$ is a rational variety.
			
			Finally, since according to $i)$ for $(g_1,g_2)$ general in $G^2$, $X_{g_1,g_2}$ has a dense intersection with $(ev_1 \times ev_3)^{-1}(U)$, for $(g_1,g_2)$ general in $G^2$, the general fiber ${ev_3}_{| g_1,g_2}^{-1}(z)$ has a dense intersection with $(ev_1 \times ev_3)^{-1}(U)$, and hence is also a rational variety.
			\item $ev_3(X_{1,w_0})$ parametrizes elements $(L,H)$ in $\P^{n-1} \times \P^{n-1}$ such that: \begin{itemize}[label={--}]
				\item  There exists a line in $\P^{n-1}$ joining $H$ with elements $L_1 \subset <e_{j_1}, \dots, e_n>$ and $L_2 \subset <e_1, \dots, e_{n-j_2+1}>$;
				\item $L \subset <e_1, \dots, e_{i_1}> \cap <e_{n-i_2+1}, \dots, e_n>$;
				\item $L \subset H^\bot$.
			\end{itemize}
			First note that the subset of elements $H$ satisfying the first condition is the vector subspace generated by $<e_1, \dots, e_{n-j_2+1}, e_{j_1}, \dots, e_n> \simeq \C^{\min(n,2n-j_1-j_2+2)}$.
			Note that the second condition is impossible iff $i_1 <n-i_2+1$, and else describes elements $L$ in $<e_{n-i_2+1}, \dots, e_{i_1}> \simeq \C^{i_1+i_2-n+1}.$ Hence $ev_3(X_{1,w_0})$ is the set of elements $(L,H)$ in $\P^{n-1} \times \P^{n-1}$ such that \begin{itemize}[label={--}]
				\item  $H \subset <e_1, \dots, e_{n-j_2+1}, e_{j_1}, \dots, e_n>$;
				\item $L \subset <e_{n-i_2+1}, \dots, e_{i_1}>$;
				\item $L \subset H^\bot$.
			\end{itemize}
			Note that for $j_1+j_2 \leq n+2$, the linear subspace $<e_1, \dots, e_{n-j_2+1}, e_{j_1}, \dots, e_n>$ is $\C^n$, and hence the first condition is allways satsified. Then \[ev_3(X_{1,w_0}) = \{([0: \dots: 0:x_{n-i_2+1} \dots x_{i_1}:0 \dots 0], [y_1 \dots y_n]) \in X\}.\]
			Translation by a permutation $h$ of the basis vectors yields \begin{align*}
			ev_3(X_{1,w_0}) = h \cdot \{([x_1 \dots x_{i_1+i_2-n}:0: \dots : 0], [y_1 \dots y_n]) \in X\},
			\end{align*}
			which is equal to $h \cdot X(w_{i_1+i_2-n,1})$ if $i_1+i_2-n \neq 1$, and is equal to $h \cdot X(w_{1,2})$ if $i_1 +i_2=n+1$.
		\end{enumerate}
	\end{proof}
	
	Suppose $j_1+j_2 \leq n+2$ and $i_1+i_2 \geq n+1$. According to Lemma \ref{lem: 3pts corr l2} $ii)$ the general fiber of the map $ev_{3}: {X}_{g_1,g_2}^{l_2} \rightarrow X$ induced by evaluating the third point is a rational variety. Furthermore, according to Lemma \ref{lem: 3pts corr l2} $iii)$ its image $ev_3(\mathcal{X}_{1,w_0})$ is a translate of a Schubert variety by an element $h$ in $G$, and hence has rational singularities according to Theorem \ref{th: schubert var have rational sing}. According to Part \ref{sec: computing 3pts correlators} \ref{item: compute 3 pts corr when Pixev is surjective}, we have for $i_1+i_2 > n+1$: \begin{align}
	\langle w_{i_1,j_1}, w_{i_2, j_2}, \I_{i_3,j_3} \rangle_{l_2} &= \chi \left([\O_{h \cdot X(w_{i_1+i_2-n,1})}] \cdot \I_{i_3,j_3} \right) \nonumber \\
	&= \chi \left([\O_{X(w_{i_1+i_2-n,1})}] \cdot \I_{i_3,j_3} \right). \label{eq: corr l2}
	\end{align}
	and if $i_1+i_2=n+1$ \begin{align}
	\langle w_{i_1,j_1}, w_{i_2, j_2}, \I_{i_3,j_3} \rangle_{l_2}
	&= \chi \left([\O_{X(w_{1,2})}] \cdot \I_{i_3,j_3} \right) \nonumber \\
	&= \delta_{i_3,1} \delta_{j_3,2} \label{eq: corr l2 i1+i2=n+1}
	\end{align}
	Furthermore, according to Lemma \ref{lem: 3pts corr l2} $iii)$ if $j_1+j_2 \leq n+2$ and $i_1+i_2 < n+1$, then \begin{align}\label{eq: corr l2 empty}
	\langle w_{i_1,j_1}, w_{i_2, j_2}, \I_{i_3,j_3} \rangle_{l_2} =0 \end{align}
	\item \textsc{When $\d=l_1+l_2$.}   An irreducible rational curve of degree $(d_1l_1+d_2l_2)$ on $X$ may be parametrized as \[\P^1 \ni [u:v] \rightarrow \left([P_1(u:v)], [P_2(u:v)]\right) \in X, \] where $P_1$ and $P_2$ are homogeneous polynomials of degree $d_1$ and $d_2$ respectively. Furthermore, for an element $(g_1, g_2, g_3)$ general in $G^3$, we denote by $$\mathcal{M}_{g_k, w_{i_k,j_k}}^\d := ev_1^{-1}(g_1 \cdot X(w_{i_1, j_1})) \cap ev_2^{-1}(g_2 \cdot X(w_{i_2, j_2})) \cap ev_3^{-1}(g_3 \cdot X(w_{i_3,j_3}))$$ the Gromov-Witten variety associated with the $i_k$, $j_k$, where $ev_k: \overline{\mathcal{M}_{0,3}}(X, \d) \rightarrow X$ denotes evaluation at the $k$-th marked point. 
	For a point $(x,y)$ general in $X^2$ we denote by $C_{x,y}$ the line joining the points $[L_x]$ and $[L_y]$ in $\P^{n-1}$, and by $C_{x,y}'$ the line joining the points $[H_x]$ and $[H_y]$ in $\P^{n-1}$. Recall the varieties $h_i$ are the classes of the Schubert varieties of codimension $1$ of $X$. Note that the class $[h_1]$ (respectively $[h_2]$) in the Chow ring $A_*X$ is the pullbacks of the Cartier divisor $c_1(\O_{\P^{n-1}}(-1))$ by the first (respectively second) projection $\P^{n-1} \times \P^{n-1} \rightarrow \P^{n-1}$. 
	\begin{lemme}\label{lem: 3 pts correlators l1+l2}
		\begin{enumerate}[label=\roman*)]
			\item For a point $(x,y)$ in $X^2$ one of the two following properties is verified:
			\begin{itemize} \item There exists a unique connected rational curve of class $(l_1+l_2)$ joining the points $x$ and $y$. This curve is the intersection of the variety $C_{x,y} \times C_{x,y}' \subset \P^{n-1} \times \P^{n-1}$ with $X$. \item There are infinitely many rational curves of class $(l_1+l_2)$ joining the points $x$ and $y$. These curves cover the intersection of the variety $C_{x,y} \times C_{x,y}'$ with $X$. \end{itemize}
			\item For a point $(x, y)$ general in $X^2$, the rational curve in $X$ of class $(l_1+l_2)$ joining the points $x$ and $y$ is unique and irreducible. 	
			\item For $(g_1, g_2, g_3)$ general in $G^3$, the Gromov-Witten variety $\mathcal{M}_{g_k, w_{i_k,j_k}}^{l_1+l_2}$ is birational to its image in $X^3$ by the evaluation morphism.
			\item Let $(g_1, g_2)$ be a general element in $G^2$. Then  $$ev_3\left(ev_1^{-1}(g_1 \cdot h_i) \cap ev_2^{-1}(g_2 \cdot X(w_{i_2,j_2}))\right) = X.$$
		\end{enumerate}
	\end{lemme}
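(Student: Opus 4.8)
The statement is a description of the geometry of degree $(l_1+l_2)$ rational curves through one or two points of $X=Fl_{1,n-1}$, together with irreducibility/birationality of the associated Gromov-Witten varieties. I will treat the four assertions in the given order, since each feeds into the next.

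\textbf{Part i): curves of class $l_1+l_2$ through two points.} First I would unwind the projections. A curve $C$ of class $l_1+l_2$ has $(\pi_1)_*[C]=l_1$ and $(\pi_2)_*[C]=l_2$, so $\pi_1(C)$ is a line in the first $\P^{n-1}$ and $\pi_2(C)$ is a line in the second $\P^{n-1}$ (or one of the projections collapses a rational tail, but in all cases the image of $C$ in $\P^{n-1}\times\P^{n-1}$ lies in the product of these two lines). If $C$ passes through $x=([L_x],[H_x])$ and $y=([L_y],[H_y])$, then its first projection must be the unique line $C_{x,y}$ through $[L_x],[L_y]$ and its second projection the unique line $C_{x,y}'$ through $[H_x],[H_y]$; hence $C$ is contained in $(C_{x,y}\times C_{x,y}')\cap X$. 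Conversely, $(C_{x,y}\times C_{x,y}')\cap X$ is cut out inside $C_{x,y}\times C_{x,y}'\simeq\P^1\times\P^1$ by the single bilinear incidence relation $\sum_a x_a y_a=0$, so it is a divisor of bidegree $(1,1)$ in $\P^1\times\P^1$: either a smooth conic (a single irreducible rational curve of class $l_1+l_2$), or the union of a horizontal and a vertical ruling (two lines, still a stable map of class $l_1+l_2$), or --- in the degenerate case where the relation $\sum x_ay_a=0$ is identically satisfied on $C_{x,y}\times C_{x,y}'$ --- all of $\P^1\times\P^1$, which is then covered by its rulings, giving infinitely many such curves. This is exactly the dichotomy in the statement; I would phrase it by examining the rank of the $2\times 2$ matrix of the bilinear form restricted to $C_{x,y}\times C_{x,y}'$.

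\textbf{Part ii): genericity.} For $(x,y)$ general in $X^2$, I would check that the bidegree-$(1,1)$ divisor above is smooth and irreducible, i.e. that the degenerate alternative of part i) does not occur. Since the locus of pairs $(x,y)$ for which it degenerates is a proper closed subset of $X^2$ (the condition that the restricted bilinear form drops rank is Zariski-closed, and one exhibits one pair where it does not hold), genericity follows. Then the unique curve is the smooth conic, hence irreducible and rational.

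\textbf{Part iii): birationality of the Gromov-Witten variety.} Here I would invoke part ii) together with the fact that $\overline{\mathcal{M}_{0,3}}(X,l_1+l_2)$ is irreducible (its open locus ${\mathcal{M}_{0,3}}(X,l_1+l_2)$ is dense, cf.\ the Definitions chapter). By Kleiman transversality, for $(g_1,g_2,g_3)$ general in $G^3$ the Gromov-Witten variety $\mathcal{M}_{g_k,w_{i_k,j_k}}^{l_1+l_2}$ is reduced of expected dimension; the evaluation morphism $ev=(ev_1,ev_2,ev_3)$ restricted to it maps to $\prod_k g_k X(w_{i_k,j_k})$, and by part ii) a general point in the image (a general triple of points subject to the three Schubert conditions, which is still general in $X^3$ because $X$ is homogeneous and each $X(w_{i_k,j_k})$ sweeps out $X$ under $G$) is joined by a unique irreducible curve of class $l_1+l_2$, carrying a unique choice of the three marked points once the images are prescribed. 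Hence $ev$ is generically injective on $\mathcal{M}_{g_k,w_{i_k,j_k}}^{l_1+l_2}$, and since target and source are both irreducible of the same dimension, it is birational onto its image. The one point requiring care is that prescribing $ev_k$-images on the three marked points pins down the parametrization of $C$ up to no automorphism, which is automatic since three marked points rigidify $\P^1$.

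\textbf{Part iv): sweeping out $X$.} I would argue that through a general point $z=([L_z],[H_z])$ of $X$ and a general point $(L,H)$ of $g_2 X(w_{i_2,j_2})$ there passes a curve of class $l_1+l_2$ lying in $ev_1^{-1}(g_1 h_i)$ --- i.e.\ one whose first (or second, depending on $i$) marked point lands in the hyperplane Schubert divisor $g_1 h_i$. Concretely, fix $z$; I would produce a line $C_{z}$ in the first $\P^{n-1}$ through $[L_z]$ meeting the hyperplane underlying $g_1 h_i$, and a line $C_z'$ in the second $\P^{n-1}$ through $[H_z]$, chosen so that $(C_z\times C_z')\cap X$ is an irreducible conic passing through $z$ and through a point of $g_1 h_i$ with the required incidence; varying $z$ over a dense open subset of $X$, the third-point evaluation of these curves covers a dense subset of $X$, and since $ev_3$ of the Gromov-Witten variety is closed in $X$, it is all of $X$. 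A cleaner route, which I expect to be the one actually used: $ev_3\bigl(ev_1^{-1}(g_1 h_i)\cap ev_2^{-1}(g_2 X(w_{i_2,j_2}))\bigr)$ is a $G$-stable-up-to-translation closed subset of $X$ whose dimension, computed from the expected dimension of the Gromov-Witten variety (codimension of $h_i$ is $1$, codimension of $X(w_{i_2,j_2})$ is $\ell(w_0)-\ell(w_{i_2,j_2})$, and $\dim\overline{\mathcal{M}_{0,3}}(X,l_1+l_2)=\dim X+\int_{l_1+l_2}c_1(T_X)$), exceeds or equals $\dim X$ once one checks $\int_{l_1+l_2}c_1(T_X)$ is large enough (it equals $n$, which dominates $1+\mathrm{codim}\,X(w_{i_2,j_2})$ exactly when the bound is met); since the only $G$-invariant closed subsets containing a general point are $X$ itself, equality follows.

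\textbf{Main obstacle.} The genuinely delicate point is part iii): establishing that the evaluation morphism is \emph{birational} (not merely dominant or generically finite) onto its image, which amounts to the uniqueness clause in part ii) combined with a careful dimension count showing no contraction occurs. Everything else reduces to elementary projective geometry of bidegree-$(1,1)$ divisors on $\P^1\times\P^1$ and standard Kleiman-transversality/irreducibility facts recalled in the Definitions chapter; the uniqueness of the conic through a general configuration, and the verification that it survives the three Schubert conditions, is where the argument must be made precise.
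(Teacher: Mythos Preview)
Your treatment of parts i)--iii) is essentially the paper's approach, with one pleasant variation: for part ii) you argue directly that the degenerate locus in $X^2$ is proper and closed, whereas the paper instead computes $\dim\overline{\mathcal{M}_{0,2}}(X,l_1+l_2)-2\dim X=0$ (using $\int_{l_1+l_2}c_1(T_X)=2(n-1)$ and $\dim X=2n-3$) to conclude the general fibre of $ev$ is finite, hence by part i) a single point. Both arguments are valid; the paper's dimension count is perhaps more robust since it avoids having to exhibit a non-degenerate pair.

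Part iv), however, has a genuine gap. Your ``cleaner route'' contains two errors: first, $\int_{l_1+l_2}c_1(T_X)=2(n-1)$, not $n$ (adjunction on the $(1,1)$-hypersurface gives $c_1(T_X)=(n-1)(h_1+h_2)$); second, and more seriously, the image $ev_3\bigl(ev_1^{-1}(g_1h_i)\cap ev_2^{-1}(g_2X(w_{i_2,j_2}))\bigr)$ is \emph{not} $G$-invariant --- it depends on the translates $g_1,g_2$ --- so the homogeneity argument does not apply, and a dimension bound on the Gromov--Witten variety alone says nothing about the dimension of its $ev_3$-image without controlling the fibres. Your first route (explicit construction through a general $z$) could be made to work but is unnecessarily laborious. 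The paper's argument is a one-liner: given any $p_2\in X$ and any $p_1\in g_2X(w_{i_2,j_2})$, part i) produces a connected curve $C$ of class $l_1+l_2$ through both; since $[g_1h_i]\cup[C]=[h_i]\cup(l_1+l_2)=[pt]\neq 0$ in $A_*(\P^{n-1}\times\P^{n-1})$ and $g_1h_i$ is an honest effective divisor, $C$ must meet $g_1h_i$. Hence every $p_2\in X$ lies in the $ev_3$-image.
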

	\begin{proof}
		\begin{enumerate}[label=\roman*)]
			\item  We consider $X$ as a divisor of class $[h_1]+[h_2]$ in $A^1(\P^{n-1} \times \P^{n-1})$. Let $p_1=(x_1,y_1)$ and $p_2=(x_2,y_2)$ be points on $X$. If $x_1=x_2$, we can construct infinitely many genus zero stable map of class $(l_1+l_2)$ whose image contains the points $p_i$ by glueing together the degree $l_2$ map $\P^1 \rightarrow X$ joining $p_1$ and $p_2$ with any map $\P^1 \rightarrow X$ of degree $l_1$ intersecting this curve at another point than the $p_i$. The same holds if $y_1=y_2$. Now suppose $x_1 \neq x_2$ and $y_1 \neq y_2$. Denote by $L_1$ a line in $\P^{n-1}$ joining the points $x_1$ and $x_2$, and by $L_2$ a line in $\P^{n-1}$ joining the points $y_1$ and $y_2$.  Note that any rational curve of class $(l_1+l_2)$ in $X$ joining the points $p_i$ projects to $L_k$, hence any such rational curve is contained in the intersection $\left(L_1 \times L_2\right) \cap X$. Since the projective variety $\P^{n-1} \times \P^{n-1}$ is smooth, the intersection of $X$ with $L_1 \times L_2$ has dimension at least $1$. If the variety $(L_1 \times L_2) \cap X$ has dimension $2$, then any parametrization $\P^1 \rightarrow L_1$ and $\P^1 \rightarrow L_2$ sending the points $[1:0]$ and $[0:1]$ in $\P^1$ onto the points $p_i$ yield a degree $(l_1+l_2)$ map $\P^1 \rightarrow X$ joining the points $p_i$. We thus obtain infinitely many irreducible rational curves of class $(l_1+l_2)$ joining the points $p_i$. Indeed, since the classes $l_i$ generate $A_1(X)$, the map $\P^1 \rightarrow X$ cannot be a multiple covering onto its image, which is then a curve of class $(l_1+l_2)$. Now suppose the variety $(L_1 \times L_2) \cap X$ has dimension $1$. Note that we have the following equality in the Chow ring $A_*(\P^{n-1} \times \P^{n-1})$:  \begin{align*} [C'] &= [X] \cup [L_1 \times L_2] \\ &= ([h_1] + [h_2]) \cup [L_1] \boxtimes [L_2]\\ 
			&= ([h_1]+[h_2]) \cup (l_1+l_2)\\&=l_1+l_2 \end{align*} where we also denote by $l_i$ the pushforward to $\P^{n-1} \times \P^{n-1}$ of the classes $l_i \in H_*(X, \Z) \simeq  A_*(X)$.  Hence $C'$ is a curve of class $(l_1+l_2)$ in $X$. We now only have to consider the case where $C'$ has several irreducible components. Since $C'$ is a curve of class $(l_1+l_2)$ contained in $L_1 \times L_2$, $C'$ can be described as $C'= (L_1 \times \{y\}) \cup (\{x\} \times L_2)$ where $x$ and $y$ are points in $\P^{n-1}$. Note that since $x_1 \neq x_2$ and $y_1 \neq y_2$, both points $p_i$ cannot belong to the same irreducible component. For example suppose $p_1$ lies on $C_1$ and $p_2$ lies on $C_2$. Then $C'=(L_1 \times \{y_1\}) \cup (\{x_2\}\times L_2)$. Since $L_1$ joins $x_1$ and $x_2$, we obtain that $(x_2,y_1)$ belongs to $L_1 \times \{y_1\}$. In the same way, $(x_2,y_1)$ belongs to $\{x_2\} \times L_2$; hence the two irreducible components of $C'$ come together at $(x_2,y_1)$. We obtain a genus zero degree $(l_1+l_2)$ stable map $\P^1 \cup \P^1 \simeq (L_1 \times \{y_1\}) \cup (\{x_2\}\times L_2) \rightarrow X$ whose image contains the points $p_i$.
			\item According to here above, the morphism of irreducible projective varieties $$ev: \overline{\mathcal{M}_{0,2}}(X, l_1+l_2) \rightarrow X^2$$ is dominant, and hence is surjective. Hence for $(x,y)$ general in $X^2$, the general fiber $ev^{-1}(x,y)$ has dimension:\begin{align*}
			\dim ev^{-1}(x,y) & = \dim \overline{\mathcal{M}_{0,2}}(X, l_1+l_2) - 2 \dim X\\
			&= \dim X + \int_{l_1 + l_2} c_1(T_X) -1 - 2 \dim X\\
			&= \int_{l_1+l_2} ((n-1)[h_1] + (n-1)[h_2]) - \dim X -1 \\
			&= 0.
			\end{align*}
			The general fiber $ev^{-1}(x,y)$ thus contains finitely many points; hence according to $i)$ it contains only one point. The connected rational curve of class $(l_1+l_2)$ joining two points of $X$ in general position thus is unique. Finally, according to here above such a curve is irreducible.
			\item According to $ii)$, there exists a dense open subset $U \subset X^2$ such that there exists a unique irreducible rational curve of class $(l_1+l_2)$ joining any two points in $U$. Since the evaluation map $ev_1 \times ev_2: \overline{\mathcal{M}_{0,3}}(X, l_1+l_2) \rightarrow X^2$ is surjective, according to Chapter \ref{chap : geometry W} Lemma \ref{lem : dense intersection with open for homogeneous spaces} for $(g_1, g_2)$ general in $G^2$ the variety $ev_1^{-1}(g_1 \cdot X(w_{i_1,j_1}))\cap ev_2^{-1}(g_2 \cdot X(w_{i_2,j_2}))$ has a dense intersection with the dense open subset $(ev_1 \times ev_2)^{-1}(U)$. Now denote by $V$ the dense open subset of $U \times X$ containing distinct points $x$, $y$ and $z$ in $U\times X$. Note that the restriction of the evaluation morphism \[ev_{g_1,g_2}: \left(ev_1^{-1}(g_1 \cdot X(w_{i_1,j_1}))\cap ev_2^{-1}(g_2 \cdot X(w_{i_2,j_2}))\right) \cap ev^{-1}(V) \rightarrow X^3\] is an injective map. Indeed, if we specify two points $x$ and $y$ in $U$, by definition there is a unique irreducible curve $C$ of class $(l_1+l_2)$ joining $x$ and $y$. Consider a third point $z$ lying on $C$, distinct from $x$ and $y$. There exists a unique parametrization $\P^1 \rightarrow C$ sending the points $[1:0]$, $[0:1]$ and $[1:1]$ onto the points $x$, $y$ and $z$; hence the data of three distinct points $x$, $y$ and $z$ in $U \times X$ defines a unique point in $\overline{\mathcal{M}_{0,3}}(X, l_1+l_2)$.  The restriction of the evaluation morphism to $\mathcal{M}_{g_k, w_{i_k,j_k}}^{l_1+l_2}\cap ev^{-1}(V)={ev_{g_1,g_2}}^{-1}(g_3 \cdot X(w_{i_3,j_3}))$ thus is an injective morphism of quasi-projective complex varieties, hence is birational onto its image. Finally, note that for $(g_1,g_2,g_3)$ general in $G^3$ according to Chapter \ref{chap : geometry W} Lemma \ref{lem : dense intersection with open for homogeneous spaces} the variety $\mathcal{M}_{g_k, w_{i_k,j_k}}^{l_1+l_2}$ has a dense open intersection with $ev^{-1}(V)$, and hence is birational onto its image by the evaluation morphism. 
			\item Consider points $p_1$ and $p_2$ respectively in $g_2 \cdot X(w_{i_2,j_2})$ and $X$. According to $i)$ there exists a connected rational curve $C$ in $X$ of class $(l_1+l_2)$ joining $p_1$ and $p_2$. Denote by $i : X \hookrightarrow \P^{n-1} \times \P^{n-1}$ the natural embedding. We have the following equality in $A_*(\P^{n-1} \times \P^{n-1})$: \begin{equation*}
			[h_i] \cup [C] = [h_i] \cup (l_1+l_2)= [pt],
			\end{equation*}
			hence the intersection between $h_i$ and $C$ is non empty. 		
		\end{enumerate}	
	\end{proof}
	
	According to Lemma \ref{lem: 3 pts correlators l1+l2} $iii)$ for $(g_1, g_2)$ general in $G^2$, the Gromov-Witten variety $\mathcal{W}_{g_1,g_2} := ev_1^{-1}(g_1 \cdot h_1) \cap ev_2^{-1}(g_2X(w_{i_2,j_2}))$  is birational onto its image in $X^{3}$ via the evaluation map. Then the map $ev_{3}: \mathcal{W}_{g_1,g_2} \rightarrow X$ induced by evaluating the third point is a degree $1$ map of projective complex varieties. Furthermore, according to Lemma \ref{lem: 3 pts correlators l1+l2} $iv)$ its image $ev_3(\mathcal{W}_{w_0})$ is the smooth variety $X$. According to Part \ref{sec: computing 3pts correlators} \ref{item: compute 3 pts corr when Pixev is surjective}, we have: \begin{align*}
	\langle w_{i_1,j_1}, w_{i_2, j_2}, \I_{i_3,j_3} \rangle_{l_1+l_2} &= \chi \left([\O_{X}] \cdot \I_{i_3,j_3} \right) \\
	&= \chi \left([\O_{X(w_{n,1})}] \cdot \I_{i_3,j_3} \right) \\
	&= \delta_{n, i_3} \delta_{1, j_3}.
	\end{align*}
	
	\item \textsc{When $\d= d_1 l_1 + d_2 l_2$, where $0 \leq d_1 \leq d_2$ and $d_2 \geq 2$.} Recall $X$ is the variety parametrizing lines included in hyperplanes of $\C^n$. A point in $X$ corresponds to the inclusion of a line in a hyperplane of $\C^n$. Denote by $\pi: X \rightarrow \P^{n-1}$ the projection induced by forgetting the hyperplane. Let $\d=(d_1, d_2):=d_1 l_1 + d_2 l_2 \in E(X)$. Note that $\pi_*(d_1, d_2)=d_1 \in E(\P^{n-1})$. The forgetful morphism $\pi$ induces a morphism $\Pi : \overline{\mathcal{M}_{0,3}}(X, \d) \rightarrow \overline{\mathcal{M}_{0,3}}(\P^{n-1}, d_1)$. Together with the evaluation morphisms $ev_i$, this induces a morphism: $$\Pi \times ev: \overline{\mathcal{M}_{0,3}}(X, \d) \rightarrow \overline{\mathcal{M}_{0,3}}(\P^{n-1}, d_1) \times_{(\P^{n-1})^3} X^3.$$ We begin by studying when $\Pi \times ev$ is surjective, before computing correlators of $X$.
	
	\begin{proposition}\label{prop: when Pixev is surjective}
		\begin{enumerate}[label= \roman*)]
			\item \label{item: PIxev surj for (0,d2)} If $\d=(0,d_2)$, where $d_2 \geq 2$, then $\Pi \times ev$ is surjective;
			\item \label{item: PIxev surj for (1,d2)} Suppose $\d=(1, d_2)$, where $d_2 \geq 2$. The morphism $\Pi \times ev$ is surjective;
			\item \label{item: PIxev surj for (2,d2)} Suppose $\d=(d_1,d_2)$, where $d_1 \geq 2$ and $d_2 \geq d_1$. Then $\Pi \times ev$ is surjective.
			\item \label{item: ev surj} Suppose $\d=(d_1,d_2)$, where $d_1 \geq 2$ and $d_2 \geq 2$. Then $ev^\d: \overline{\mathcal{M}_{0,3}}(X, \d) \rightarrow X^3$ is surjective.
			\item \label{item: ev3 for (1,2)} Suppose $\d=(1, d_2)$, where $d_2 \geq 2$. Then $ev_3(X_{1,w_0}^{l_1+d_2l_2}) = h \cdot X(w_{i_1+i_2,1})$, where $h$ is an element in $G$.
		\end{enumerate}
	\end{proposition}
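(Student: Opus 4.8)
\textbf{Proof proposal for Proposition \ref{prop: when Pixev is surjective}.}

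The plan is to handle the five items largely by reducing them to the corresponding surjectivity statements already available in the text, namely Proposition \ref{prop : MX in MxX^r surjective} and its three-pointed input, together with the geometric concatenation technique used in its proof. The key observation is that $X = Fl_{1,n-1}$ parametrizes a line inside a hyperplane of $\C^n$, i.e.\ a two-step flag $V_1 \subset V_{n-1} \subset \C^n$, and $\pi : X \to \P^{n-1}$ is the forgetful map dropping $V_{n-1}$. Thus $X_{\widehat 2} = \P^{n-1}$ in the notation of Section \ref{sec: conditions surjectivity}, with $n_1 = 1$, $n_2 = n-1$, and the class $\d = (d_1,d_2)$ has $d_{\widehat 2} = d_1$. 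In the notation there, the relevant index is $k=2$, so $n_k - n_{k-1} = n-2$, $d_{k-1} = d_1$, and $d_{k+1} = 0$ (since we set $d_{m+1}=0$). The inequalities of Proposition \ref{prop : MX in MxX^r surjective}~$i)$ then read $d_1 \le \lfloor 0/n \rfloor = 0$ and $d_2 \ge d_1 + (n-2)(\lfloor (0-d_1)/(n_{k+1}-n_{k-1})\rfloor + 1)$; one must be slightly careful here because $d_{k+1}=0$ while $d_{k-1}=d_1$ may be positive, so the hypothesis $\forall p<k$, $d_k \ge n_k \lceil (d_p - d_{p-1})/(n_p - n_{p-1})\rceil$ needs checking. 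For $d_1 = 0$ this is automatic, which is why items \ref{item: PIxev surj for (0,d2)} and \ref{item: PIxev surj for (2,d2)} (after reindexing) should follow most directly; in the latter case $d_1 \geq 2$ but $d_2 \geq d_1$ gives room.

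For item \ref{item: PIxev surj for (0,d2)}, with $\d=(0,d_2)$ and $d_2 \ge 2$: here $d_1 = 0$, so the Grassmannian projection degree is $0$ and $\pi$ sends every curve of class $\d$ to a point in $\P^{n-1}$; the condition $d_{k-1} = 0 \le \lfloor d_{k+1}/n_{k+1}\rfloor$ is trivially satisfied, and $d_k = d_2 \ge 0 + (n-2)(0+1) = n-2$ should be checked — but since a curve of class $l_2$ is a line in the $\P^{n-2}$-fiber, and $d_2 \ge 2$, one can instead argue directly: fix a point $p = (g_p:\P^1\to\P^{n-1})$ which by $d_1=0$ is a constant map onto some point $z\in\P^{n-1}$, fix $x=(x_1,x_2,x_3)\in X^3$ with $\pi(x_i)=z$; then $x_i$ corresponds to a hyperplane $H_i$ containing the line $z$, and an antecedent in $\overline{\mathcal{M}_{0,3}}(X,(0,d_2))$ is a stable map $\{z\}\times(\text{curve of degree }d_2 \text{ in the }\P^{n-2}\text{ of hyperplanes through }z)$ passing through $H_1,H_2,H_3$ — which exists since $d_2 \ge 2$ and $ev : \overline{\mathcal{M}_{0,3}}(\P^{n-2},d_2)\to(\P^{n-2})^3$ is surjective for $d_2 \ge 2$. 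For items \ref{item: PIxev surj for (1,d2)} and \ref{item: PIxev surj for (2,d2)}, apply Proposition \ref{prop : MX in MxX^r surjective} with $r=3$: first check three-pointed surjectivity of $\Pi_{\widehat 2}$ via part $i)$ of that proposition (using $d_2 \ge d_1 + (n-2)(\lfloor d_1/(n-2)\rfloor + 1)$, which holds when $d_2 \ge d_1 \ge 1$ and $n$ is not too small; the edge cases $d_1 = 1, 2$ need $d_2 \ge n-2$ resp.\ $d_2 \ge n$, so these may need a direct concatenation argument as in the proof of Proposition \ref{prop : MX in MxX^r surjective}~$i)$ when $d_2$ is small), then upgrade to $r=3$ marked points using part $ii)$, which costs an extra $r(n_k - n_{k-1}) = 3(n-2)$ on $d_k = d_2$. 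Item \ref{item: ev surj} follows from \ref{item: PIxev surj for (2,d2)} (when $d_2 \ge d_1$) composed with surjectivity of $ev : \overline{\mathcal{M}_{0,3}}(\P^{n-1},d_1)\to(\P^{n-1})^3$ for $d_1 \ge 2$, plus the symmetry of Part \ref{sec: computing 3pts correlators}~\ref{item: symmetry} exchanging $l_1$ and $l_2$ to cover $d_2 < d_1$.

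For item \ref{item: ev3 for (1,2)}, the approach mirrors Lemma \ref{lem: 3pts corr l2}~$iii)$: compute $ev_3(X_{1,w_0}^{(1,d_2)})$ as the locus of pairs $(L,H)\in X$ such that there is a curve of class $(1,d_2)$ through the translated Schubert conditions $X(w_{i_1,j_1})$ and $w_0 X(w_{i_2,j_2})$ and through $(L,H)$ as third point; project to the $\P^{n-1}$ of lines, where the curve has degree $1$, so $L$ must lie on a line joining the linear spaces $\langle e_1,\dots,e_{i_1}\rangle$ and $\langle e_{n-i_2+1},\dots,e_n\rangle$, i.e.\ in $\langle e_1,\dots,e_{i_1},e_{n-i_2+1},\dots,e_n\rangle$ which for general position spans a space of dimension $\min(n, i_1+i_2)$; the hyperplane condition is unconstrained since the second projection degree $d_2 \ge 2$ lets the hyperplane move freely (by the same fiber argument as in \ref{item: PIxev surj for (0,d2)}). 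Hence $ev_3(X_{1,w_0}) = h\cdot\{([x_1:\dots:x_{i_1+i_2}:0:\dots:0],[y_1:\dots:y_n])\in X\} = h\cdot X(w_{i_1+i_2,1})$ after a permutation $h$ of the basis. The main obstacle I expect is the bookkeeping of the small-$d_2$ edge cases where the numerical inequalities of Proposition \ref{prop : MX in MxX^r surjective} are not literally satisfied: there one cannot cite the general result and must instead run a direct two-step concatenation (build $f:\P^1\to X$ of the correct Grassmannian-degree $d_1$ and then glue on a vertical curve of class $d_2 l_2$, as in the proofs of Proposition \ref{prop : MX in MxX^r surjective} and Lemma \ref{lem: curve in Pn through 3 points}), checking convexity and dimension count by hand. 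The rest is a routine combination of already-proven surjectivity statements and the duality symmetry.
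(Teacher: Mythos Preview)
Your approach for items \ref{item: PIxev surj for (0,d2)} and \ref{item: ev3 for (1,2)} is essentially correct and matches the paper: the direct fiber argument for \ref{item: PIxev surj for (0,d2)} and the line-span computation in $\P^{n-1}$ for \ref{item: ev3 for (1,2)} are exactly what the paper does.

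The genuine gap is in items \ref{item: PIxev surj for (1,d2)} and \ref{item: PIxev surj for (2,d2)}. Your plan to invoke Proposition \ref{prop : MX in MxX^r surjective} does not work, and the failure is not an edge case: with $k=2$, $n_k-n_{k-1}=n-2$, part $ii)$ of that proposition requires $d_2 \ge \tau + 3(n-2)$, which grows linearly in $n$, whereas the statement you are proving asserts surjectivity for \emph{every} $n$ once $d_2 \ge 2$ (resp.\ $d_2 \ge d_1$). So for any fixed $d_2$ the general bound fails for all large $n$; the ``edge cases'' you mention are in fact the entire content of the proposition. The fallback you describe (``direct two-step concatenation: build $f:\P^1\to X$ of Grassmannian degree $d_1$ and glue on a vertical curve'') is also insufficient for \ref{item: PIxev surj for (1,d2)}: given a line $L\subset\P^{n-1}$ and three arbitrary lifts $(p_i,y_i)\in X$ with $p_i\in L$, there need not exist any curve of class $l_1$ in $X$ projecting to $L$ and passing through two of the $p_i$'s with the prescribed hyperplanes, so there is nothing onto which to glue.

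The paper's argument is genuinely different and exploits the hypersurface description $X\subset\P^{n-1}\times\P^{n-1}$. For \ref{item: PIxev surj for (1,d2)} it takes a degree-$d_2$ curve $C\subset\P^{n-1}$ through the $y_i$, computes $[(L\times C)\cap X]=l_1+\delta l_2$ in $A_*(\P^{n-1}\times\P^{n-1})$ via $[X]=h_1+h_2$, and then does a case analysis on whether this intersection is $1$- or $2$-dimensional and whether the resulting curve is irreducible, in each case extracting a connected genus-zero stable map through the $(p_i,y_i)$. For \ref{item: PIxev surj for (2,d2)} the paper runs an induction on $d_1$ using a boundary stratum $D(\d-(l_1+l_2),l_1+l_2)$ together with Lemma \ref{surjection boundary implies surjection} (a codimension-one image forces surjectivity), feeding in Lemma \ref{lem: 3 pts correlators l1+l2} for the $(l_1+l_2)$-piece. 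Item \ref{item: ev surj} is then done by gluing two degree-$(l_1+l_2)$ maps from Lemma \ref{lem: 3 pts correlators l1+l2} rather than by composing \ref{item: PIxev surj for (2,d2)} with surjectivity on $\P^{n-1}$; your composition argument for \ref{item: ev surj} is valid in principle, but it rests on \ref{item: PIxev surj for (2,d2)}, which you have not established.
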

	
	\begin{lemme}\label{surjection boundary implies surjection}
		Let $f: Y \rightarrow Z$ be a morphism of projective irreducible varieties. Let $Z_0$ be a subvariety of $Z$ of codimension $1$ in $Z$. Suppose the image of $f$ contains $Z_0$. Furthermore, suppose the inverse image by $f$ of $Z_0$ is a strict subvariety of $Y$, i.e. suppose $f^{-1}(Z_0) \subsetneq Y$. Then $f$ is surjective.
	\end{lemme}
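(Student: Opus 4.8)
The statement is a purely topological/dimension-theoretic fact about morphisms of projective irreducible varieties, so the plan is to argue at the level of closed subsets and dimensions, without any algebraic geometry beyond basic properties of images of projective morphisms. First I would recall that $f$ is a closed map (being a morphism of projective varieties over an algebraically closed field), so $f(Y)$ is a closed subvariety of $Z$; call it $W$. Since $Y$ is irreducible, $W$ is irreducible, and I want to conclude $W = Z$.

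The key step is a dimension count. By hypothesis $Z_0 \subset W$, so $\dim W \geq \dim Z_0 = \dim Z - 1$. Hence either $\dim W = \dim Z$, in which case $W = Z$ by irreducibility of $Z$ (a closed irreducible subvariety of full dimension in an irreducible variety is the whole variety), and we are done; or $\dim W = \dim Z - 1$, and in that case $W = Z_0$, again because $Z_0$ is irreducible (implicitly, $Z_0$ being a ``subvariety of codimension $1$'' means it is irreducible of dimension $\dim Z - 1$, and $W$ contains it and has the same dimension). So the whole argument reduces to ruling out the case $W = Z_0$.

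To rule that out I would use the remaining hypothesis, namely that $f^{-1}(Z_0) \subsetneq Y$. If $W = f(Y) = Z_0$, then every point of $Y$ maps into $Z_0$, i.e. $f^{-1}(Z_0) = Y$, directly contradicting $f^{-1}(Z_0) \subsetneq Y$. Therefore $W = Z$, i.e. $f$ is surjective. This is essentially immediate once the cases are separated; the only point requiring a little care is the justification that $f(Y)$ is closed and irreducible and that a closed irreducible subset of $Z$ containing the irreducible codimension-one subvariety $Z_0$ is either $Z_0$ itself or all of $Z$ — this follows from the fact that in an irreducible noetherian topological space there is no closed irreducible subset strictly between an irreducible closed subset of codimension $1$ and the whole space (any such would have dimension strictly between $\dim Z - 1$ and $\dim Z$, which is impossible).

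I do not expect any real obstacle here; the ``hard part'', if any, is simply bookkeeping about what ``subvariety of codimension $1$'' is taken to mean in the paper's conventions (reduced and irreducible, by the blanket convention that varieties may be reducible but here ``$Z_0$ a subvariety of codimension $1$ in $Z$'' with $Z$ irreducible forces $\dim Z_0 = \dim Z - 1$; irreducibility of $Z_0$ should be read into the statement or is not even needed — it suffices that $Z_0$ has an irreducible component of dimension $\dim Z - 1$, which already forces $\dim f(Y) \geq \dim Z - 1$). Once that is pinned down, the three-line case analysis above completes the proof.
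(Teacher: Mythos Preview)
Your proposal is correct and follows essentially the same approach as the paper: both argue by contradiction via a dimension count, showing that if $f$ were not surjective then the closed irreducible image $f(Y)$ would be forced to lie inside $Z_0$, whence $Y = f^{-1}(Z_0)$, contradicting the hypothesis. Your write-up is in fact slightly cleaner than the paper's (which routes the same inequality through $f(f^{-1}(Z_0))$ rather than directly through $Z_0$), and your closing remark that irreducibility of $Z_0$ is not really needed is a nice observation.
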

	\begin{proof}
		By contradiction, suppose $f$ is not surjective. Then $f(Y)$ has dimension smaller than $\dim f(f^{-1}(Z_0))$. Indeed, we have $\dim f(f^{-1}(Z_0)) = \dim Z_0 = \dim Z-1$, hence $\dim f(f^{-1}(Z_0)) \geq \dim f(Y)$ since $f(Y)$ is a strict closed subvariety of the irreducible variety $Z$.
		Furthermore, note that the irreducible projective variety $f(Y)$ contains the projective variety $f(f^{-1}(Z_0))$. Hence $f(f^{-1}(Z_0))$ is a projective subvariety of the irreducible variety $f(Y)$ of dimension larger than $f(Y)$; hence $f(Y) = f(f^{-1}(Z_0))$ and $f(Y)$ is contained in $Z_0$. Thus $Y=f^{-1}(Z_0)$, which contradicts the fact that $ f^{-1}(Z_0)$ is a strict closed subvariety of $Y$.
	\end{proof}
	
	\begin{proof}[Proof of Proposition \ref{prop: when Pixev is surjective}]
		We consider here $X$ as a subvariety of $\P^{n-1} \times \P^{n-1}$, where $X$ is cut out from $\P^{n-1} \times \P^{n-1}$ by the relation "line included in hyperplane" in the following way. Consider a point $(x,y)$ in $P^{n-1} \times \P^{n-1}$. By the functorial definition of $\P^{n-1}$, $x$ defines a line $L_x \subset \C^{n}$ and, if we identify $\P^{n-1}$ with its dual, $y$ defines an hyperplane $H_y \subset \C^n$. Then $(x,y)$ lies in $X$ iff $L_x \subset H_y$. We call \textit{curve} a projective variety of dimension $1$. We will denote by $pi_1$ (respectively $\pi_2$) the first (respectively second) projection $X \to \P^{n-1}$. 
		\begin{enumerate}[label= \roman*)]
			\item Consider a point $p$ on $\P^{n-1}$; denote by $L_p \subset \C^{n}$ the line defined by $p$. Let $(p,y_i)$ be three points on $X$, where the point $y_i$ in $\P^{n-1}$ correspond to a hyperplane $H_i$ in $\C^n$ containing the line $L_p$. Our goal here is to construct a genus zero stable map $C \rightarrow X$ of degree $d_2 l_2$ whose three marked points are the points $(p,y_i)$. We first construct a curve of degree $l_2$ joining the points $(p,y_1)$ and $(p,y_2)$, before constructing a genus zero stable map of degree $2 l_2$, and finally a genus zero stable map of degree $d_2 l_2$ joining all three points $(p,y_i)$. 
			
			Suppose the points $y_1$ and $y_2$ are distinct. Denote by $C$ a line in $\P^{n-1}$ joining the points $y_1$ and $y_2$; $C$ can be described as the subset of points in $\P^{n-1}$ associated with hyperplanes $u H_1 + v H_2$, where $[u:v]$ runs over $\P^1$. Since the line $L_p$ is included in $H_1$ and $H_2$, for all $[u:v]$ in $\P^1$ it is also included in $u H_1 + v H_2$. Hence the curve $C_0:=\{p\} \times C$ in $\P^{n-1} \times \P^{n-1}$ induced by $C$ is a curve of class $l_2$ lying in $X$ joining the points $(p,y_1)$ and $(p,y_2)$. In the same way, if the points $y_2$ and $y_3$ are distinct, we may construct a rational curve $C' \subset X$ of class $l_2$ joining the points $(p,y_2)$ and $(p,y_3)$. 		
			Consider parametrizations $f_i: \P^1 \rightarrow C_i$, where we fix points $q_1$, $q_2$, $q_3$ and $q_4$ on $\P^1$ such that $f_1$ sends $q_1$ and $q_2$ to $(p,y_1)$ and $(p,y_2)$, and $f_2$ sends $q_3$ and $q_4$ to $(p,y_2)$ and $(p,y_3)$. Consider the tree of $\P^1$'s $\P^1 \cup \P^1$ with $3$ marked points obtained by glueing $(\P^1, \{q_1, q_2\})$ with $(\P^1, \{q_3, q_4\})$ by identifying $q_2$ and $q_3$. The morphisms $f_i$ induce a genus zero, degree $2l_2$ stable map $(f:\P^1 \cup \P^1 \rightarrow X, \{q_1, q_2, q_4\})$ joining all three points $(p,y_i)$. Finally, notice that if two points $y_1$ and $y_2$ come together, a degree $2l_2$ stable map $\P^1 \cup \P^1 \rightarrow X$ collapsing an irreducible component $\P^1$ to the point $(p,y_1)$ joins all three points $(p,y_i)$.
			
			Now suppose $d_2 \geq 2$. Consider the degree $2l_2$ stable map $(f:\P^1 \cup \P^1 \rightarrow X, \{q_1, q_2, q_4\})$ described here above.  Now add an irreducible component $\P^1$ to $\P^1 \cup \P^1$, which we glue to $\P^1 \cup \P^1$ at a non marked point, and send this irreducible component to $X$ via a degree $(d_2-2)l_2$ map. We obtain a degree $d_2 l_2$ stable map $(\P^1 \cup \P^1 \cup \P^1 \rightarrow X, \{q_1, q_2, q_4\})$ joining all three points $(p,y_i)$.	
			\item Let $L$ be a line in $\P^{n-1}$, let $p_1$, $p_2$, $p_3$ be three points on $L$. Denote by $(p_i,y_i)$ three points on $X$ satisfying $\pi(p_i,y_i) =p_i$, i.e. the point $y_i$ in $\P^{n-1}$ corresponds to a hyperplane in $\C^n$ containing the line $L_i$ associated with the point $p_i$. Our goal here is to prove that there exists a genus zero stable map of degree $l_1+d_2l_2$ projecting to $L$ and whose image contains the points $(p_i,y_i)$. Consider a genus zero stable map $(\P^1 \rightarrow C \hookrightarrow \P^{n-1}, \{n_1, n_2\})$ of class $d_2l_2$ in $\P^{n-1}$ sending its marked points $n_i$ onto the points $y_1$ and $y_2$. If $d_2 \geq 2$, we also consider the case where $\P^1 \rightarrow X$ is a genus zero stable map of class $d_2 l_2$ in $\P^{n-1}$ sending its marked points onto the three points $y_i$. Since $\P^{n-1} \times \P^{n-1}$ is a smooth variety and the intersection $(L \times C) \cap X$ is non empty, $(L \times C) \cap X$ is of dimension at least $1$. If it has dimension $2$, then $L \times C$ is contained in $X$. Then identify the line $L$ with $\P^1$; by considering the product of the closed immersion $L \hookrightarrow \P^{n-1}$ with the map $\P^1 \rightarrow \P^{n-1}$, we obtain a genus zero stable map $\P^1 \rightarrow X$ of degree $(l_1 + d_2 l_2)$ whose image contains all points $(p_i,y_i)$. 
			
			Now consider the case where the dimension of the intersection $(L \times C) \cap X$ is $1$. Denote by $h_1$ (respectively $h_2$) the pullbacks of the Cartier divisor $c_1(\O_{\P^{n-1}}(-1))$ by the first (respectively second) projection. Note that we have the following equality in $A_*(\P^{n-1} \times \P^{n-1})$: $[X] = h_1 + h_2$. Furthermore, denote by $\delta$ the degree of $C$; note that $\P^1 \rightarrow C$ is a degree $d_2/\delta$ covering. If the variety $(L \times C) \cap X$ has dimension $1$, it is a curve $C'$ in $X$ satisfying the following equality in the Chow ring $A_*(\P^{n-1} \times \P^{n-1})$: \begin{align*}[C'] &= [L \times C] \cup [X]\\ &= (l_1 \boxtimes \delta l_2) \cup (h_1 + h_2) \\ &= l_1 + \delta l_2,\end{align*}
			where we also denote by $l_i$ the pushforward to $\P^{n-1} \times \P^{n-1}$ of the classes $l_i \in H_*(X, \Z) \simeq  A_*(X)$. Hence $C'$ is a curve of class $(l_1 + \delta l_2)$ in $X$. Note that, since the points $(p_i, y_i)$ belong to both $L \times C$ and $X$, they lie on $C'$. Furthermore, the image of $C'$ by the first projection $\pi_1: \P^{n-1} \times \P^{n-1} \rightarrow \P^{n-1}$ is $L$. Indeed, $C'$ is a curve of class $(l_1 + \delta l_2)$ contained in $L \times C$; hence its image by the projection $\pi_1$ is a curve of class $(\pi_1)_* (l_1+\delta l_2)= 1 \in A_*(\P^{n-1})$ contained in the irreducible degree~one curve $L \subset \P^{n-1}$. Hence $C'$ surjects onto $L$. 
			
			We now only have to prove that $C'$ can be descibed as the image of a stable map. Note that the curve $C'$ is either the union of two curves $C_i$ satisfying $(\pi_2)_*[C_1]+ (\pi_2)_*[C_1]=\delta=\pi_2(C')$, or is an irreducible curve of class $(l_1 + \delta l_2)$. First consider the case where $C'$ is the union of two curves $C_i$, where the first curve $C_1$ surjects onto $L$. Then $C_2$ is obtained as $\{Q_2\} \times C$, where $Q_2$ is a point on $\P^{n-1}$. Indeed, $C_2$ is a curve satisfying $(\pi_2)_* [C_2]=\pi_2(C_2)$ projecting to the irreducible degree $\delta$ curve $C$, hence the restriction ${\pi_2}_{| C_2}$ of the projection map to $C_2$ is a degree $1$ map onto $C$, hence is an isomorphism. In particular $C_2$ is a curve of class $\delta l_2$. In the same way, $C_1$ is a curve of class $l_1$ projecting to $L$, hence is obtained as $L \times \{Q_1\}$, where $Q_1$ is a point in $\P^{n-1}$. Then either the points $(p_i, y_i)$ all lie on the irreducible rational curve $C_2$-which yields a rational curve of class $\delta l_2$ containg all points $(p_i,y_i)$- or $C_1$ contains one or two point $(p_i,y_i)$. We will consider here the case where $C_1$ contains one point $(p_i,y_i)$, the same proof holds when $C_1$ contains two points. If $C_1=L \times \{Q_1\}$ contains a point $(p_i,yi)$, for example the point $(p_1, y_1)$ then $Q_1=y_1$ and both $(p_1, y_1)$ and $(p_2, y_1)$ lie on $C_1$. We obtain a genus zero degree $l_1$ stable map $f_1:=(\P^1 \simeq L \hookrightarrow X, \{(p_1, y_1), (p_2, y_1)\})$. Denote by $f_2:= (C_2' \rightarrow X, \{c_1, c_2, c_3\})$ a genus zero stable map of class $d_2 l_2$ sending its marked points $c_i$ onto the points $(p_2, y_1)$ and $(p_2, y_2)$ and $(p_2, y_3)$; such a map exists according to $i)$. Now glue $\P^1$ and $C_2'$ together by identifying the points $(p_2, y_1)$ and $c_1$; we obtain a genus zero stable map $\P^1 \cup C_2' \rightarrow X$ of class $(l_1 + \delta l_2)$ joining all points $(p_i,y_i)$.  
			
			Finally consider the case where $C'$ is irreducible. Denote by $\pi_1: \P^{n-1} \times \P^{n-1} \rightarrow \P^{n-1}$ the first projection. Since $C'$ is irreducible and $(\pi_1)_*[C'] = 1=[\pi_1(C')]$ in $A_*(\P^{n-1})$, the projection $(\pi_1)_{| C'}$ of the curve $C'$ to $\P^{n-1}$ is a degree $1$ map; hence $C'$ is birational to its image $L$ in $\P^{n-1}$, hence $C'$ is an irreducible rational curve.
			\item We proceed by iteration on $(d_1,d_2)$. Let us name $D(d_1-1,1)$ the boundary locus of $\overline{\mathcal{M}_{0,3}}(\P^{n-1},d_1)$ which is naturally isomorphic to $\overline{\mathcal{M}_{0,3}}(\P^{n-1}, d_1-1) \times_{\P^{n-1}} \overline{\mathcal{M}_{0,2}}(\P^{n-1},1)$, and by $D(\d-(l_1+l_2), l_1+l_2)$ the boundary locus of $\overline{\mathcal{M}_{0,3}}(X, \d-(l_1+l_2))$ ismorphic to $\overline{\mathcal{M}_{0,3}}(X, \d-(l_1+l_2)) \times_X \overline{\mathcal{M}_{0,2}}(X, l_1+l_2)$. Note that, since $D(\d-(l_1+l_2), l_1+l_2)$ is a strict closed subvariety of the irredudible projective variety $\overline{\mathcal{M}_{0,3}}(X, \d)$, according to Lemma \ref{surjection boundary implies surjection} it is enough to prove that $D(\d-(l_1+l_2), l_1+l_2)$ surjects into $D(d_1-1,1)$. Consider an element $p$ in $D(d_1-1,1)$. Denote by $(f_p : C_p \rightarrow \P^{n-1}, \{p_1, p_2, p_3\})$ and $(h_p: \P^1 \rightarrow \P^{n-1}, \{q_1,q_2\})$ the genus zero stable maps associated with $p$; $f_p$ is a degree $(d_1-1)$ map and $h_p$ is a degree $1$ map defining a line $h_p(\P^1)$ intersecting the curve $f_p(C)$ at $h_p(q_1)=f_p(p_3)=x_3$. The induction hypothesis, or if $d_1=2$ Lemma \ref{lem: 3 pts correlators l1+l2}, ensure that there is a genus zero degree $(\d-(l_1+l_2))$ stable map $(f_p': C_p' \rightarrow X, \{p_1, p_2, p_3\})$ such that \begin{itemize}[label={--}] \item The image by $\Pi$ of $f_p'$ is $f_p$; \item The points $f_p'(p_1)$ and $f_p'(p_2)$ in $X$ are projected to the points $f_p(p_i)$ in $\P^{n-1}$ by the forgetful map $X \rightarrow \P^{n-1}$.\end{itemize} If $d_1>2$, this is a direct consequence of the induction hypothesis. If $d_1=2$ note that  according to Lemma \ref{lem: 3 pts correlators l1+l2} there exists a genus zero degree $(l_1+l_2)$ stable map projecting to $f_p$. We can then choose marked points $p_i$ projecting to the points $f_p(p_i)$. Denote by $p_3'$ a non singular point in $f_p'^{-1}(p_3)$. Denote by $(x_3,y_3)$ a point in $X$ such that $y_3$ is contained in $f_p'(f_p^{-1}(p_3))$. According to Lemma \ref{lem: 3 pts correlators l1+l2} there exists a genus zero degree $(l_1+l_2)$ stable map $(h_q': C_q \rightarrow X, \{q_1', q_2'\})$  sending the point $q_1'$ to the point $(x_3,y_3)$ and the point $q_2'$ to a point of the form $(h_p(q_2), y')$  in $X$. Now glue the genus zero curves $C_p'$ and $C_q$ together by identifying the point $p_3'$ on $C_p'$ with the point $q_1'$ on $C_q$. We obtain a genus zero degree $\d$ stable map $(C_p' \cup C_q \rightarrow X, \{p_1,p_2, q_1', q_2'\})$ whose projection by $\Pi$ is $p$.		
			\item Let $p_1$, $p_2$, $p_3$ be three points on $X$. According to $ii)$, there exist genus zero degree $(l_1+l_2)$ stable maps $f_1:=(C_1 \rightarrow X, \{x_1, x_2\})$ and $f_2:=(C_2 \rightarrow X, \{y_1, y_2\})$ sending the marked points $x_i$ onto the points $p_1$ and $p_2$ and the marked points $y_i$ onto the points $p_2$ and $p_3$. Glue $C_1$ and $C_2$ together by identifying $x_2$ and $y_1$, and furthermore glue $C_2$ with $\P^1$ at a non special point $q$ of $C_2$. Consider a degree $\d-2(l_1+l_2)$ map $\P^1 \rightarrow X$ sending $q$ onto $f_2(q)$. We obtain a genus zero degree $\d$ stable map $(C_1 \cup C_2 \cup \P^1 \rightarrow X, \{x_1, x_2, y_2\})$ sending its marked points onto the points $p_i$. 
			\item Let $\W := X_{1,w_0}^{l_1+d_2l_2}$ be the Gromov-Witten variety of $X$ associated with the $i_k$, $j_k$, and $\W' := \overline{\mathcal{M}_{0,3}}(\P^{n-1},1) \times_{(\P^{n-1})^2}  (L_{i_1} \times w_0 L_{i_2})$ be the Gromov-Witten variety of $\P^{n-1}$ associated  with the $i_k$. We consider the map $\W' \times_{(\P^{n-1})^3} X^3 \rightarrow X$ induced by the third projection $X^3 \rightarrow X$. We  obtain the following commutative diagram. \[\begin{tikzcd}\W \arrow{d} \arrow{dr}{ev_3^{\d}} & \\
			\W' \times_{(\P^{n-1})^2} X(w_{i_1,j_1})\times X(w_{i_2,j_2}) \times X  \arrow{r} \arrow{dr}[swap]{ev_3^{d_1}} &\pi^{-1}(ev_3^{d_1}(\W'))\subset X \arrow{d}{\pi} \\
			& ev_3^{d_1}(\W') \subset \P^{n-1}
			\end{tikzcd}\]  Since according to \ref{item: PIxev surj for (1,d2)} the map $\Pi \times ev$ is surjective, according to Chapter \ref{chap : geometry W} the restriction $\W \rightarrow \W' \times_{(\P^{n-1})^2} X(w_{i_1,j_1})\times X(w_{i_2,j_2}) \times X$ of $\Pi \times ev$ to $\W$ is also surjective. Furthermore note that since $\pi: X \rightarrow \P^{n-1}$ is surjective, the projection $\W' \times_{(\P^{n-1})^3} X(w_{i_1,j_1})\times X(w_{i_2,j_2}) \rightarrow \pi^{-1}(ev_3^{d_1}(\W'))$ is also surjective. Hence by composition $ev_3^\d: \W \rightarrow \pi^{-1}(ev_3^{d_1}(\W')) \cap \left(X(w_{i_3,j_3})\right)$ is also surjective. Note that $ev_3^{d_1}(\W')$ is the locus of points $p$ in $\P^{n-1}$ such that \begin{itemize}[label ={--} ]
				\item $p$ lies on a line $L \subset \P^{n-1}$;
				\item a point in $L_{i_1} =\{[x_1, \dots, x_{i_1}:0: \dots:0]\}$ lies on $L$;
				\item a point in $w_0L_{i_2}=\{[0:\dots:0:x_{n-i_2+1} :\dots: x_n]\}$ lies on $L$.
			\end{itemize}
			$p$ satisfies these three conditions iff $p$ lies in $\{[x_1 \dots x_{i_1}:0 \dots0: x_{n-i_2+1} \dots x_n]\} = h \cdot L_{\min(n,i_1+i_2)}$, where $h$ is a permutation in $\mathfrak{S}_n$. We obtain \begin{align*}
			ev_3(\W) &= \pi^{-1}(h \cdot L_{\min(n,i_1+i_2)}) \\
			&= h \cdot \pi^{-1}(L_{\min(n,i_1+i_2)}) \\
			&= h \cdot \{([x_1: \dots : x_{\min(n,i_1+i_2)}: 0 \dots :0], [y_1: \dots : y_n]) \in X\}\\
			&= h \cdot X(w_{\min(n,i_1+i_2),1}).\end{align*}
		\end{enumerate}
	\end{proof}
	
	Suppose $d_2 \geq 2$. According to Proposition \ref{prop: when Pixev is surjective} \ref{item: ev3 for (1,2)} the projected Gromov-Witten variety $ev_3(X_{1,w_0}^{l_1+d_2l_2})$ is the translate of a Schubert variety and hence according to Theorem \ref{th: schubert var have rational sing} has rational singularities. Furthermore there exists an element $h$ in $G$ such that $ev_3(X_{1,w_0}^{l_1+d_2l_2}) = h \cdot X(w_{i_1+i_2,1})$. Part \ref{sec: computing 3pts correlators} \ref{item: compute 3 pts corr when Pixev is surjective} then implies the following identity.
	\begin{align} \forall\:\: d_2 \geq 2, \:\: \langle w_{i_1,j_1}, w_{i_2, j_2}, \I_{i_3, j_3} \rangle_{l_1+ d_2 l_2} &=  \chi \left( [\O_{ev_3^{l_1+d_2 l_2}({X}_{1,w_0})}] \cdot \I_{i_3,j_3} \right) \nonumber\\
	&= \chi \left( [\O_{ h \cdot X(w_{\min(n,i_1+i_2),1})}] \cdot \I_{i_3,j_3} \right)\nonumber\\
	&= \chi \left( \O_{\min(n,i_1+i_2),1} \cdot \I_{i_3,j_3} \right). \label{eq: corr l1+l2}\end{align} 
	
	Consider the partial order $\geq $ on $E(X)$ defined by: $$\d \geq \d' \:\mathrm{iff} \: \d-\d' \in E(X).$$  Note that classes $\d=(d_1, d_2)$ and $\d'=(d_1', d_2')$ in $E(X)$ satisfy $\d \geq \d'$ iff $d_1 \geq d_1'$ and $d_2 \geq d_2'$. According to Proposition \ref{prop: when Pixev is surjective} $iv)$ the image of $ev_1^{-1}(X(w_{i_1,j_1}) \cap ev_2^{-1}(w_0X(w_{i_2,j_2}))$ by the morphism $ev_3^{\d}$ evaluating the third marked point is $X$ for $\d \geq 2(l_1+l_2)$. Since $X$ is smooth, Proposition \ref{prop: when Pixev is surjective} $iii)$ and Part \ref{sec: computing 3pts correlators} \ref{item: compute 3 pts corr when Pixev is surjective} yield the following implication. \begin{align}
	\d \geq (2l_1 + 2l_2) \:\: \Rightarrow && \langle w_{i_1,j_1}, w_{i_2, j_2}, \I_{i_3, j_3} \rangle_\d = \chi \left( [\O_{X}] \cdot \I_{i_3,j_3} \right).
	\end{align}
	
	\item \textsc{When $\d=d_1l_1+d_2l_2$, where $0 \leq d_2 \leq d_1$.} Then according to Part \ref{sec: computing 3pts correlators} \ref{item: symmetry} we can dedude degree $\d$ correlators from the preceding points using the symmetry between corelators of degree $d_1l_1 + d_2l_2$ and correlators of degree $d_2l_1+d_1l_2$.
\end{enumerate}
\section{Degree of $\O_{h_i} \star \O_v$.}\label{sec: degree}

\subsection{Definitions.} Consider a degree $\d$ in the semi-group $E(X) \simeq \mathbb{N}^2$ of effective classes of curves in $H_2(X, \mathbb{Z})\simeq \Z^2$. For $r \geq 0$, denote by $\overline{\mathcal{M}_{0,r}}(X, \d)$ the variety parametrizing degree $\d$ genus zero stable maps to $X$.
The \textit{evaluation morphism} $ev_i : \overline{\mathcal{M}_{0,r}}(X, \d) \rightarrow X$ assigns to a map the image of its $i$-th marked point. For $X$ a homogeneous variety, the evaluation morphism is flat.
Recall for $u$ in $W^P$, we denote by $[\O_u]:=[\O_{X(u)}]$ the associated Schubert class in the Grothendieck group $K_\circ(X)$ of coherent sheaves. Since $X$ is non singular, an element $\O_u$ in $K_\circ(X)$ is assimilated with an element in the Grothendieck group $K^\circ(X) \simeq K_\circ(X)$ of locally free sheaves on $X$. Let $u_1$, $\dots$, $u_r$ be elements in the Weyl group $W$; the \textit{correlator} $\langle u_1, \dots , u_r\rangle_{\d}$ associated with the elements $u_i$ is defined by:
\begin{equation*}
\langle u_1, \dots , u_r\rangle_{\d} := \chi \left(ev_1^*(\O_{u_1}) \cdot \: \dots \: \cdot ev_r^*(\O_{u_r}) \cdot [\mathcal{O}_{\overline{\mathcal{M}_{0,r}}(X, d)}]\right).
\end{equation*}
Note that for $X$ a homogeneous variety, this correlator is equal to the sheaf Euler characteristic of the Gromov-Witten variety associated with the $u_i$ (cf. for example \cite{buch2011}, Part 4.1); i.e. for $(g_1, \dots, g_r)$ general in $G^r$, we have: \begin{equation}\label{eq: correlator = chi GW variety}
\langle u_1, \dots , u_r\rangle_{\d} := \chi \left(\O_{ev_1^{-1}(g_1 \cdot X(u_1)) \cap \dots \cap ev_r^{-1}(g_r \cdot X(u_r))}\right).
\end{equation}
We consider the basis $(\O_u)_{u \in W^P}$ of $K(X):=K^\circ(X) \simeq K_\circ(X)$. Let $t = \sum_{\alpha \in W^P} t^\alpha \O_\alpha$. Classes $l_1 := [X(w_{2,n})]$ and $l_2:=[X(w_{1,n-1})]$ of one dimensional Schubert varieties form a nef basis of $H_2(X, \mathbb{Z})$. Denote by $Q_1$ and $Q_2$ the Novikov variables associated with $l_1$ and $l_2$. For a degree $\d =(d_1, d_2) := d_1 l_1 + d_2 l_2$ in $E(X)$, we write $Q^\d := Q_1^{d_1} Q_2^{d_2}$.
Introduce the Gromov-Witten potential 
\begin{equation*} 
\mathcal{G}(t) := \sum_{\d \in E(X)} \sum_{r=0}^{+ \infty} \frac{Q^\d}{r!} \langle \underbrace{t, \dots, t}_r \rangle_{\d},
\end{equation*}
We define a \textit{deformed metric} by $(\O_\alpha, \O_\beta) = \delta^\alpha \delta^\beta \mathcal{G}(0)$, where $\delta^\alpha=\delta/\delta t^\alpha$ . 
The \textit{genus zero big quantum product} $\O_\alpha \bullet \O_\beta$ of two basis elements of $K^\circ(X)$ is defined by:
\begin{equation*}
(\O_\alpha \bullet \O_\beta, \O_\gamma) := \delta^\alpha \delta^\beta \delta^\gamma \mathcal{G}(t).
\end{equation*}

It is then extended by bilinearity over $K^\circ(X) \otimes \mathbb{Q}[[Q,t]]$ and defines the \textit{big quantum K-ring} $(K^\circ(X)\otimes \mathbb{Q}[[Q,t]], \bullet)$. This ring is associative and commutative \cite{lee2004quantum}, and we recover the ring $K^\circ(X)$ by taking the classical limit $Q \rightarrow 0$.
The restriction of the big quantum K-ring to $t=0$ defines the \textit{small} quantum K-ring $(QK_{s}(X), \star)$. Note that for $X$ a homogeneous variety, the product of two basis elements in $QK_s(X)$ is a polynomial in $Q$ \cite{anderson2018quantum}. 
The pairing defined by $$\G(\O_\alpha, \O_\beta) =  \G_{\alpha, \beta} = (\O_\alpha, \O_\beta)$$ is extended to a $\Q[[Q,t]]$-bilinear pairing $\mathcal{G}$ on $K(X) \otimes \Q[[Q,t]]$.  Denote by $\mathcal{G}^{\alpha, \beta}$ the inverse matrix. By definition, we have:  \begin{align*}
\O_u \star \O_v = \sum_{\gamma, w \in W^P}  \delta^u \delta^v \delta^\gamma \mathcal{G}(0) \: \mathcal{G}^{\gamma, w} \O_w.
\end{align*}


\subsection{A combinatorial lemma.}
We prove here a purely combinatorial lemma that will imply that the product of two elements in $K(X)$ is a polynomial in the Novikov variable $Q$.
\begin{lemme}
	\label{lem : prod binom}
	Let $n$, $m$ and $N$ be positive integers.  \begin{equation*}
	\sum_{\substack{(k,p) \in \llbracket 0; n \rrbracket \times \llbracket 0; m \rrbracket, \\ k+p=N}} \binom{n}{k} \binom{m}{p} = \binom{n+m}{N}.
	\end{equation*}
	Hence for $0 \leq N \leq n,m$ :
	\begin{equation*}
	\sum_{k=0}^N \binom{n}{k} \binom{m}{N-k} = \binom{m+n}{N},
	\end{equation*}
	and for $N \geq n$, $N \leq m$ :
	\begin{equation*}
	\sum_{k=0}^n \binom{n}{k} \binom{m}{N-k} = \binom{m+n}{N}.
	\end{equation*}	
\end{lemme}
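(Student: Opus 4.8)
\textbf{Proof plan for Lemma \ref{lem : prod binom}.}

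The plan is to prove the single master identity
\[
\sum_{\substack{(k,p)\in\llbracket 0;n\rrbracket\times\llbracket 0;m\rrbracket,\\ k+p=N}} \binom{n}{k}\binom{m}{p} = \binom{n+m}{N},
\]
and then observe that the two displayed special cases are just the same statement with the range of summation rewritten: when $0\le N\le n,m$ every index $k$ from $0$ to $N$ automatically satisfies $0\le k\le n$ and $0\le N-k\le m$, so the constraint set is exactly $\{0,\dots,N\}$; and when $N\ge n$ but $N\le m$ the constraint $k\le n$ is the binding one while $N-k\le m$ is automatic, so the constraint set is $\{0,\dots,n\}$. In both cases the binomial coefficients that would correspond to out-of-range indices vanish anyway (using the convention $\binom{a}{b}=0$ for $b<0$ or $b>a$), so one may just as well sum over all integers $k$; this remark is what makes the reduction to the master identity completely routine.

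For the master identity itself I would give the standard generating-function argument (Vandermonde's identity). Start from the polynomial identity $(1+x)^{n}(1+x)^{m} = (1+x)^{n+m}$ in $\mathbb{Z}[x]$. Expand the left-hand side by the binomial theorem as
\[
\Big(\sum_{k=0}^{n}\binom{n}{k}x^{k}\Big)\Big(\sum_{p=0}^{m}\binom{m}{p}x^{p}\Big)
= \sum_{N\ge 0}\Big(\sum_{k+p=N}\binom{n}{k}\binom{m}{p}\Big)x^{N},
\]
and expand the right-hand side as $\sum_{N\ge 0}\binom{n+m}{N}x^{N}$. Comparing the coefficient of $x^{N}$ on both sides yields exactly the claimed equality. (If one prefers a combinatorial proof: $\binom{n+m}{N}$ counts $N$-element subsets of a set partitioned into blocks of sizes $n$ and $m$; conditioning on how many $k$ of the chosen elements lie in the first block gives the sum on the left. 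Either argument is a few lines.)

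There is essentially no obstacle here — the lemma is Vandermonde's convolution together with a bookkeeping remark about which indices are forced to be zero. The only point requiring a sentence of care is the passage between the "$k+p=N$ with both in range" formulation and the "$\sum_{k=0}^{N}$" or "$\sum_{k=0}^{n}$" formulations: one should state once and for all the convention $\binom{a}{b}=0$ when $b<0$ or $b>a$, note that under this convention the restricted sum and the full sum $\sum_{k\in\mathbb{Z}}\binom{n}{k}\binom{m}{N-k}$ agree, and then truncate the range of $k$ to whatever interval is convenient. After that the two corollaries follow immediately from the master identity.
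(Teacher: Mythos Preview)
Your proposal is correct and matches the paper's approach exactly: the paper's entire proof is the single line $(x+y)^n(x+y)^m=(x+y)^{n+m}$, which is precisely the generating-function argument you spell out. Your additional bookkeeping about the summation ranges for the two corollaries is a harmless elaboration of what the paper leaves implicit.
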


\begin{proof}
	$(x+y)^n (x+y)^m = (x+y)^{n+m}.$
\end{proof}

\begin{lemme}
	\label{lem : 0 1}
	Let $n$ and $k$ be positive integers, with $n \geq 2$.
	\begin{equation*}
	\sum_{k=0}^n \binom{n}{k} (-1)^k k = 0.
	\end{equation*}
\end{lemme}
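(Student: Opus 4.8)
The statement to prove is the identity $\sum_{k=0}^n \binom{n}{k}(-1)^k k = 0$ for $n \geq 2$. The plan is to deduce it from a single differentiation of the binomial theorem, exactly in the spirit of the one-line proof of Lemma \ref{lem : prod binom} above. First I would recall the polynomial identity $(1+x)^n = \sum_{k=0}^n \binom{n}{k} x^k$, valid as an identity of polynomials in $\mathbb{Q}[x]$. Differentiating both sides with respect to $x$ gives $n(1+x)^{n-1} = \sum_{k=0}^n \binom{n}{k} k\, x^{k-1}$, and multiplying through by $x$ (or simply evaluating carefully) yields $n x (1+x)^{n-1} = \sum_{k=0}^n \binom{n}{k} k\, x^{k}$.

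The key step is then the substitution $x = -1$. The right-hand side becomes $\sum_{k=0}^n \binom{n}{k} (-1)^k k$, which is precisely the sum in question. The left-hand side becomes $n \cdot (-1) \cdot (1-1)^{n-1} = -n \cdot 0^{n-1}$. Here the hypothesis $n \geq 2$ is exactly what is needed: it guarantees $n - 1 \geq 1$, so $0^{n-1} = 0$, and hence the left-hand side vanishes. (For $n = 1$ one would have $0^0 = 1$ and the sum would equal $-1$, which is why the restriction $n \geq 2$ appears in the statement.) Therefore $\sum_{k=0}^n \binom{n}{k}(-1)^k k = 0$, as claimed.

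I do not anticipate any real obstacle: the only point requiring a moment of care is the boundary term $0^{n-1}$, and checking that $n \geq 2$ makes it zero. An alternative, equally short route would be to use the absorption identity $k\binom{n}{k} = n\binom{n-1}{k-1}$ to rewrite the sum as $n \sum_{k=1}^n \binom{n-1}{k-1}(-1)^k = -n \sum_{j=0}^{n-1} \binom{n-1}{j}(-1)^j = -n(1-1)^{n-1} = 0$ for $n \geq 2$; I would mention this as a remark but present the generating-function argument as the main proof, for consistency with the preceding lemma.
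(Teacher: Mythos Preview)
Your proof is correct and follows essentially the same approach as the paper: the paper differentiates $f(x)=(x-1)^n=\sum_{k=0}^n\binom{n}{k}(-1)^{n-k}x^k$ and evaluates $f'(1)=0$, which is the same generating-function trick as yours up to the change of variable $x\mapsto 1+x$.
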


\begin{proof}
	Let $f : x \rightarrow (x-1)^n = \sum_{k=0}^{n} \binom{n}{k} (-1)^{n-k} x^k$.
	Then $f'(1)=0$.
\end{proof}

\begin{lemme}
	\label{lem : 0 k}
	Let $(d, \delta)\in \mathbb{N}^2$, let $(d_0, \delta_0)\in \mathbb{N}^2$ such that $\delta_0 < \delta-1$ or $d_0 <d-1$. Then
	\begin{equation*}
	\sum_{(\mathbf{d}, \pmb{\delta}) = ((d_0, \delta_0); \cdots; (d_r, \delta_r))} (-1)^r =0,
	\end{equation*}
	where we consider the sum over all integers $r$ and all possible decompositions of a pair $(d, \delta)$ in $\mathbb{N}^2$ on a $(r+1)$-tuple $ ((d_0, \delta_0); \cdots ; (d_r, \delta_r))$ satisfying $\sum_{i=0}^{r} d_i =d$, $\sum_{i=0}^{r} \delta_i =\delta$, and $d_i >0$ or $ \delta_i >0$ for all $i>0$.
\end{lemme}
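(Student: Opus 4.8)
\textbf{Proof plan for Lemma \ref{lem : 0 k}.}

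The plan is to reduce the two-dimensional statement to the one-dimensional case, which is a standard alternating-sign identity, and then to handle the two-dimensional combinatorics via a generating-function argument. First I would set up notation: for a pair $(a,b)\in\mathbb{N}^2$ write $\Sigma(a,b) := \sum (-1)^r$, where the sum runs over all $r\geq 0$ and all ordered decompositions $(a,b)=(d_0,\delta_0)+(d_1,\delta_1)+\dots+(d_r,\delta_r)$ with $(d_i,\delta_i)\neq(0,0)$ for $i>0$ (the first part $(d_0,\delta_0)$ being unconstrained). Here the hypothesis is that the fixed ``head'' $(d_0,\delta_0)$ satisfies $\delta_0<\delta-1$ or $d_0<d-1$; so after subtracting it the remaining mass $(d-d_0,\delta-\delta_0)$ is a pair $(a,b)$ with $a\geq 2$ or $b\geq 2$, decomposed into parts all different from $(0,0)$. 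The claim becomes: if $a\geq 2$ or $b\geq 2$, then $\sum_{r\geq 1}(-1)^r\cdot\#\{\text{compositions of }(a,b)\text{ into }r\text{ nonzero parts}\} + [\,(a,b)=(0,0)\,] = 0$; since $a+b\geq 2>0$ the Kronecker term vanishes, so we must show the alternating count of compositions of $(a,b)$ into nonzero parts is zero.

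The cleanest route is generating functions. The number of compositions of $(a,b)$ into exactly $r$ nonzero parts is the coefficient of $x^a y^b$ in $\bigl(\tfrac{1}{(1-x)(1-y)}-1\bigr)^r$ (each part contributes a factor $\sum_{(i,j)\neq(0,0)}x^i y^j = \tfrac{1}{(1-x)(1-y)}-1$). Hence
\begin{equation*}
\sum_{r\geq 1}(-1)^r\,\#\{\text{compositions of }(a,b)\text{ into }r\text{ nonzero parts}\}
= [x^a y^b]\sum_{r\geq 1}\Bigl(1-\tfrac{1}{(1-x)(1-y)}\Bigr)^r
= [x^a y^b]\,\frac{1-\tfrac{1}{(1-x)(1-y)}}{\tfrac{1}{(1-x)(1-y)}},
\end{equation*}
where the geometric series converges as a formal power series because $1-\tfrac{1}{(1-x)(1-y)}$ has zero constant term. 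Simplifying, $\dfrac{1-\tfrac{1}{(1-x)(1-y)}}{\tfrac{1}{(1-x)(1-y)}} = (1-x)(1-y)-1 = -x-y+xy$. Therefore the alternating sum is $[x^a y^b](-x-y+xy)$, which is nonzero only for $(a,b)\in\{(1,0),(0,1),(1,1)\}$, and is $0$ as soon as $a\geq 2$ or $b\geq 2$. Combined with the vanishing of the $(0,0)$-indicator this gives $\Sigma(a,b)=0$ under the hypothesis, which is exactly the assertion of the lemma after translating back by the fixed head $(d_0,\delta_0)$.

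I expect the main obstacle to be purely bookkeeping: being careful that the ``head'' part $(d_0,\delta_0)$ is treated as fixed and unconstrained while all later parts are forbidden to be $(0,0)$, so that the lemma's sum is literally $\Sigma(d-d_0,\delta-\delta_0)$ and not something with an extra or missing factor; and checking that the hypothesis ``$\delta_0<\delta-1$ or $d_0<d-1$'' is precisely what forces $(d-d_0,\delta-\delta_0)\notin\{(0,0),(1,0),(0,1),(1,1)\}$. (Indeed $(d-d_0,\delta-\delta_0)=(1,1)$ would mean $d_0=d-1$ and $\delta_0=\delta-1$, which is excluded; the three other small pairs similarly force one coordinate to exceed the bound.) A direct combinatorial alternative, if one prefers to avoid generating functions, is to fix the first nonzero part after the head and run induction on $a+b$, using Lemma~\ref{lem : 0 1} (the $r$-fold alternating binomial identity, which is its one-variable shadow) to collapse the inner sums; but the generating-function computation above is shorter and makes the exceptional small cases transparent.
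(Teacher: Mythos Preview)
Your generating-function argument is correct and considerably cleaner than the paper's own proof. The paper proceeds by a direct combinatorial enumeration: it introduces auxiliary counters $r_1$ (the number of indices $i>0$ with $d_i\neq 0$) and $r_2$ (the number with $\delta_i\neq 0$), expresses the sum $S$ as a triple sum over $r_1,r,r_2$ weighted by products of binomial coefficients, and then collapses it case by case (splitting on whether $r\geq\delta-\delta_0$ or $r\leq\delta-\delta_0$) using Vandermonde's identity (Lemma~\ref{lem : prod binom}) and the identity $\sum_k(-1)^k k\binom{n}{k}=0$ for $n\geq 2$ (Lemma~\ref{lem : 0 1}); it finishes by a symmetry argument and a separate check of the degenerate case $d_0=d$. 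Your route bypasses all of this bookkeeping: encoding the composition count as a coefficient of $\bigl(\tfrac{1}{(1-x)(1-y)}-1\bigr)^r$ and summing the alternating geometric series yields directly that the sum equals $[x^a y^b]\bigl((1-x)(1-y)-1\bigr)$, which makes the exceptional cases $(a,b)\in\{(1,0),(0,1),(1,1)\}$ (and their exclusion by the hypothesis) transparent. The paper's method has the small advantage of being packaged together with the two preceding lemmas it proves, whereas your argument renders those lemmas unnecessary here; on the other hand, your computation generalizes with no change to any number of variables, while the paper's case analysis would not scale.
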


\begin{proof}
	Let $0 \leq d_0 < d$ and $0 \leq \delta_0 < \delta -1$. 
	We set $S = \sum_{(\mathbf{d}, \pmb{\delta}) = ((d_0, \delta_0); \cdots; (d_r, \delta_r))} (-1)^r $.
	Let $r_1$ be the elements $d_i$ other than $d_0$ that satisfy $d_i \neq 0$. Note that $1 \leq r_1 \leq d-d_0$.  For a given $r_1$ the number $r$  varies from $r_1$ to $\delta - \delta_0 + r_1$. Denote by $r_2$ the number of elements $\delta_i \neq 0$ where $i \neq 0$; for given $r_1$ and $r$, $r_2$ varies from $\mathrm{max}(1,r-r_1)$ to $\mathrm{min}(\delta-\delta_0,r)$.	
	For given $r_1$, $r$ and $r_2$, we have $\binom{r}{r_1}$ possible choice of an $i$ satisfying $d_i \neq 0$, $\binom{r_1}{r_2-r+r_1}$ possible choice of an $i$ satisfying $\delta_i \neq 0$, $\binom{d-d_0-1}{r_1-1}$ possible  choice of the $d_i$, and $\binom{\delta-\delta_0-1}{r_2-1}$ choice for the integers $\delta_i$. Hence:
	\begin{equation*}
	S = \sum_{r_1=1}^{d-d_0} \sum_{r=r_1}^{\delta-\delta_0+r_1} \sum_{r_2 = \mathrm{max}(1,r-r_1)}^{\mathrm{min}(\delta-\delta_0,r)} (-1)^r \binom{r}{r_1} \binom{d-d_0-1}{r_1-1}  \binom{r_1}{r_2-r+r_1} \binom{\delta-\delta_0-1}{r_2-1}.
	\end{equation*}
	
	Let us now consider $S(r_1,r) = \sum_{r_2 = \mathrm{max}(1,r-r_1)}^{\mathrm{min}(\delta-\delta_0,r)} (-1)^r \binom{r}{r_1} \binom{d-d_0-1}{r_1-1}  \binom{r_1}{r_2-r+r_1} \binom{\delta-\delta_0-1}{r_2-1}$; alors $$S = \sum_{r_1=1}^{d-d_0} \sum_{r=r_1}^{\delta-\delta_0+r_1} S(r_1,r).$$
	
	\textbf{If $r \geq \delta - \delta_0$.} Then according to Lemma \ref{lem : prod binom}, if we set $k=r_2-1$, $n= \delta - \delta_0 -1$, $m = r$, $N=r-1$, we obtain:
	\begin{equation*}
	S(r,r) = (-1)^r  \binom{\delta - \delta_0 -1 +r}{r-1} \binom{d-d_0-1}{r_1-1}.
	\end{equation*}
	
	Let $r > r_1$. Set $k=r_2-r+r_1$, $n=r_1$, $m = \delta - \delta_0 -1$, $N= \delta - \delta_0 - r +r_1$. Note that $\delta - \delta_0 -r + r_1 \leq r_1$ and $\delta - \delta_0 -r +r_1 \leq \delta-\delta_0-1$. Then according to Lemma \ref{lem : prod binom} we have
	\begin{equation*}
	S(r_1,r) = (-1)^r \binom{r}{r_1} \binom{\delta - \delta_0 -1 +r_1}{\delta - \delta_0 -r +r_1} \binom{d-d_0-1}{r_1-1}.
	\end{equation*}
	
	\textbf{If $r \leq \delta - \delta_0$.} Set $k=r_2-1$, $n= \delta - \delta_0 -1$, $m = r$, $N=r-1$. Then according to Lemma \ref{lem : prod binom} we have
	\begin{equation*}
	S(r,r) = (-1)^r  \binom{\delta - \delta_0 -1 +r}{r-1} \binom{d-d_0-1}{r_1-1}.
	\end{equation*}
	
	Let $r_1 <r$. Set $k=r_2-r+r_1$, $N=r_1$, $n = \delta - \delta_0 + r_1 -r$, $m= r-1$. Note that $\delta - \delta_0 -r + r_1 \geq r_1$ et $r_1 \leq r-1$. Then according to Lemma \ref{lem : prod binom} we have
	\begin{align*}
	S(r_1,r) &= (-1)^r \binom{r}{r_1} \binom{\delta - \delta_0 -1 +r_1}{r_1} \frac{r_1 ! (\delta - \delta_0 -1)!}{(r-1)!(\delta - \delta_0 + r_1 -r)!}  \binom{d-d_0-1}{r_1-1}; \\
	& = (-1)^r \binom{r}{r_1} \binom{\delta - \delta_0 -1 +r_1}{r-1} \binom{d-d_0-1}{r_1-1};\\
	& = (-1)^r \binom{r}{r_1} \binom{\delta - \delta_0 -1 +r_1}{\delta - \delta_0-r+r_1} \binom{d-d_0-1}{r_1-1}.
	\end{align*}
	Note that $\delta-\delta_0 \geq 2$. According to Lemma  \ref{lem : 0 1},  $\sum_{r=r_1+1}^{\delta - \delta_0 +r_1} (-1) ^r r \binom{\delta - \delta_0 }{r-r_1} = (-1)^{r_1+1}$, which yields
	\begin{align*}
	\sum_{r=r_1+1}^{\delta - \delta_0 +r_1} S(r_1,r) & = \sum_{r=r_1+1}^{\delta - \delta_0 +r_1} (-1)^r r \frac{(\delta - \delta_0 + r_1 -1)!}{r_1! (r-r_1)!(\delta - \delta_0 + r_1 -r)!}  \binom{d-d_0-1}{r_1-1};\\
	& =   (-1)^{r_1+1} \binom{\delta - \delta_0 + r_1 -1}{r_1-1}  \binom{d-d_0-1}{r_1-1}.
	\end{align*}
	Hence $\sum_{r=r_1}^{\delta-\delta_0 +r_1} S(r_1,r)=0$, hence $S=0$.\\
	
	
	Finally, since this problem is symmetric relatively to $d$ and $\delta$, the same result holds for $d_0<d-1$ and $\delta_0 < \delta$. We now only have left to consider the case whare $d_0=d$ and $\delta_0 < \delta -1$, or $\delta_0 = \delta$ and $d_0 < d-1$. For example suppose $\delta_0 =d$. Then, since $\delta_0 < \delta -1$
	\begin{equation*}
	S = \sum_{r=1}^{\delta-\delta_0} (-1)^r \binom{\delta - \delta_0 -1}{r-1} = -(1-1)^{\delta -\delta_0-1} =0.
	\end{equation*}
\end{proof}

\subsection{Geometric interpretation of elements in $\O_{h_i} \star \O_{k,p}$ for $k \neq p+1$.}\label{subsec: geometric interpretation} From now on we fix elements $u$ and $v$ in $W^P$ such that $X(u) = h_i$ and $v=w_{k,p}$, where $k \neq p+1$. Recall $h_1 := X(w_{n-1,1})$ and $h_2 := X(w_{n,2})$. We fix a degree $\d \in E(X) \setminus \{0\}$. We {suppose $\d \geq l_1,l_2$}, i.e. we suppose $\d = d_1 l_1 + d_2 l_2$ where $d_1 \geq 1$ or $d_2 \geq 1$. 

Let $\d = \d_0 + \sum_{1 \leq i \leq k} \d_i$ be a decomposition of $\d$ into a sum of degrees $\d_i$. We denote by $M_{\d_0, \dots, \d_k}$ the following scheme 
\[ M_{\d_0, \dots, \d_k} = \overline{\mathcal{M}_{0,3}}(X, \d_0) \times_X \overline{\mathcal{M}_{0,2}}(X, \d_1) \times_X\dots \times_X \overline{\mathcal{M}_{0,2}}(X, \d_k).\]
Note that $ M_{\d_0, \dots, \d_k}$ is in bijection with the boundary stratum of $\overline{\mathcal{M}_{0,3}}(X, \d)$ parametrizing genus zero stable maps $(f: C_0 \cup C_1 \dots \cup C_k \rightarrow X, \{p_1,p_2,p_3\})$ such that \begin{itemize}[label={--}]
	\item The curve $ \cup_{0 \leq i \leq k} C_i$ is a union of $k$ genus zero quasi-stable curves $C_i$ meeting in a point.
	\item The first two marked points $p_j$ lie on $C_0$ and $p_3$ lies on $C_k$
	\item For all $0 \leq i \leq k$, the map $f_{| C_i}$ represents the degree $\d_i$.
\end{itemize}
Furthermore, note that $ M_{\d_0, \dots, \d_k}$ is equipped with evaluation maps $ev_i$, where $1 \leq i \leq 3$. We call \textit{boundary Gromov-Witten variety associated with $u$ and $v$} the inverse image \[M_{\d_0, \dots, \d_k}(u,v) := ev_1^{-1}(X(u)) \cap ev_2^{-1}(w_0X(v)) \subset M_{\d_0, \dots, \d_k}.\] Note that $M_{\d_0, \dots, \d_k}(u,v)$ parametrizes stable maps  $(f: C_0 \cup C_1 \dots \cup C_k \rightarrow X, \{p_1,p_2,p_3\})$ satisfying the three conditions here above and sending the first two marked points $p_j$ within the varieties $X(u)$ and $w_0X(v)$ respectively. Finally, we denote by $\Gamma_{\d_0, \dots, \d_k}(u,v)$ the projection of the boundary Gromov-Witten variety $ M_{\d_0, \dots, \d_k}(u,v)$ by the third evaluation morphism, i.e. $$\Gamma_{\d_0, \dots, \d_k}(u,v) := ev_3(M_{\d_0, \dots, \d_k}(u,v)) \subset X.$$
Note that $\Gamma_{\d_0, \dots, \d_k}(u,v)$ is the set of points $x$ in $X$ such that there exists a stable map $(\varphi : C \to X, \{p_1, p_2, p_3\})$ associated with an element in $M_{\d_0, \dots, \d_k}$ such that the image $\varphi (p_1)$ of the first marked point belongs to the Schubert variety $X(u)$, the image $\varphi(p_2)$ of the second marked point belongs to the opposite Schubert variety $w_0 X(v)$ and the image of the third marked point $\varphi(p_3)$ is the point $x$. 
\begin{lemme}\label{lem: ev(Gk)=ev(ev-1Gk-1)}
	\begin{enumerate}[label=\roman*)]
		\item Let $f: M \rightarrow X$ and $g_i: M' \rightarrow X$ be morphisms of schemes, where $i=1,2$. Let $Y$ be a subvariety of $X$. Denote by $M \times_X M'$ the fiber product defined by $f$ and $g_1$, and by $g: M \times_X M' \rightarrow X$ the morphism induced by $g_2$. Note that the morphim $f$ induces a map $f': M \times_X Y \to X$, and the morphisms $g_i$ induce maps $g_i': M' \times_X Y \to X$. Then \[g(M \times_X \times M' \times_X Y) = g_2'\left( {g_1'}^{-1}(f'(M \times_X Y))\right) . \]
		\item Let $k \geq 1$. Set $Y := ev_3(M_{\d_0, \dots, \d_{k-1}}(u,v))$.
		Then $ev_3(M_{\d_0, \dots, \d_k}(u,v))= ev_2^{\d_k}(ev_1^{-1}(Y)).$
	\end{enumerate}
\end{lemme}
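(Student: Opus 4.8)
\textbf{Proof plan for Lemma~\ref{lem: ev(Gk)=ev(ev-1Gk-1)}.}
The two parts are of very different natures: part $i)$ is a purely formal diagram-chasing statement about fiber products of schemes, and part $ii)$ is an immediate specialization of it to the tower of gluing maps defining $M_{\d_0, \dots, \d_k}$. The plan is to prove $i)$ first in full generality, then deduce $ii)$ by unwinding the definition of $M_{\d_0,\dots,\d_k}(u,v)$ and matching the data.

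\emph{Part i).} I would argue at the level of points (which suffices here, all schemes being of finite type over $\C$ and the statement being about images of morphisms, i.e. about the underlying constructible sets). A point of $M\times_X M'\times_X Y$ is a pair $(m,m')$ with $f(m)=g_1(m')$ together with the condition that this common point lies in $Y$; its image under $g$ is $g_2(m')$. So the left-hand side is $\{g_2(m') \mid m'\in M', \ g_1(m')\in f(M\times_X Y)\}$, since $m$ can be chosen precisely when $g_1(m')=f(m)\in f(M)$ and lies in $Y$, i.e. when $g_1(m')\in f(M\times_X Y)$. On the right-hand side, ${g_1'}^{-1}(f'(M\times_X Y))$ is the set of $m'\in M'$ whose image $g_1'(m')=g_1(m')$ lies in $f'(M\times_X Y)=f(M\times_X Y)$, and $g_2'$ applied to such $m'$ gives $g_2(m')$. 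The two descriptions coincide. The only subtlety to spell out is that $M'\times_X Y$ is a closed subscheme of $M'$ with the same points as $g_1^{-1}(Y)$ (respectively $g_2^{-1}(Y)$), so that $g_i'$ really is the restriction of $g_i$ to that locus, and that $M\times_X Y$ has underlying set $f^{-1}(Y)$; both are standard properties of base change along a locally closed immersion. I would state these identifications once and then the equality of images is formal.

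\emph{Part ii).} Here I would set $M := M_{\d_0,\dots,\d_{k-1}}$ with its three evaluation maps, and $M' := \overline{\mathcal{M}_{0,2}}(X,\d_k)$ with its two evaluation maps $ev_1^{\d_k}, ev_2^{\d_k}$. By the very definition of the fiber product describing boundary strata, $M_{\d_0,\dots,\d_k} = M \times_X M'$, where the gluing identifies the $\emph{third}$ evaluation on $M$ with the $\emph{first}$ evaluation on $M'$; the composite third evaluation on $M_{\d_0,\dots,\d_k}$ is $ev_2^{\d_k}$ on the $M'$-factor. Now impose the Schubert conditions: $M_{\d_0,\dots,\d_k}(u,v) = ev_1^{-1}(X(u))\cap ev_2^{-1}(w_0 X(v))$ is obtained from $M_{\d_0,\dots,\d_k}$ by pulling back along the first two evaluation maps, which only involve the $M$-factor; hence $M_{\d_0,\dots,\d_k}(u,v) = M_{\d_0,\dots,\d_{k-1}}(u,v) \times_X M'$. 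Apply part $i)$ with this $M$, this $M'$, taking $f = ev_3$ (the third evaluation on $M$), $g_1 = ev_1^{\d_k}$, $g_2 = ev_2^{\d_k}$, and $Y$ equal to all of $X$ so that the extra "$\times_X Y$" is trivial and $f(M_{\d_0,\dots,\d_{k-1}}(u,v)) = ev_3(M_{\d_0,\dots,\d_{k-1}}(u,v))$, which is exactly the $Y$ in the statement of $ii)$. Part $i)$ then yields $ev_3(M_{\d_0,\dots,\d_k}(u,v)) = g_2\big(M_{\d_0,\dots,\d_{k-1}}(u,v)\times_X M'\big) = g_2'\big({g_1'}^{-1}(Y)\big) = ev_2^{\d_k}\big((ev_1^{\d_k})^{-1}(Y)\big)$, which is the claimed formula once we abbreviate $ev_1^{\d_k} = ev_1$, $ev_2^{\d_k} = ev_2$.

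\emph{Main obstacle.} There is no deep difficulty here; the only thing requiring care is bookkeeping of \emph{which} evaluation map is being glued to which in the iterated fiber product $M_{\d_0,\dots,\d_k}$, and making sure the "first two marked points lie on $C_0$, third on $C_k$" convention is consistently tracked so that the Schubert conditions indeed factor through the $M$-factor while the output evaluation factors through the $M'$-factor. I would therefore be explicit about the identification $M_{\d_0,\dots,\d_k}(u,v) = M_{\d_0,\dots,\d_{k-1}}(u,v)\times_X \overline{\mathcal{M}_{0,2}}(X,\d_k)$ before invoking part $i)$, since that identification is the real content and everything else is the formal lemma $i)$.
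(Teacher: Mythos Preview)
Your proposal is correct and follows essentially the same approach as the paper: part $i)$ is the associativity $M\times_X M'\times_X Y=(M\times_X Y)\times_X M'$ (which you unpack at the level of points), and part $ii)$ is its specialization to $M=M_{\d_0,\dots,\d_{k-1}}$, $M'=\overline{\mathcal{M}_{0,2}}(X,\d_k)$. The only cosmetic difference is that you absorb both Schubert conditions into $M$ and take $Y=X$, whereas the paper nominally takes $Y=X(u)$; your bookkeeping is in fact cleaner, since it handles both constraints $X(u)$ and $w_0X(v)$ at once.
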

\begin{proof}	\begin{enumerate}[label=\roman*)]
		\item This is a direct consequence of $M \times_X \times M' \times_X Y = (M \times_X Y) \times_X M'.$
		\item Apply $i)$ to $M:= M_{\d_0, \dots, \d_{k-1}}$, $M'=\overline{\mathcal{M}_{0,2}}(X, \d_k)$ and $Y=X(u)$.
	\end{enumerate}
\end{proof}
\begin{lemme}\label{lem: 2 points corr}
	Let $\d$ be a degree in $E(X)$. Then $ev_2^{\d} (ev_1^{-1}(X(u)))$ is a Schubert variety, and for any $\alpha$ in $K(X)$, $\langle \O_u, \alpha \rangle_{\d} = \chi(\alpha \cdot [\O_{ev_2^{\d} (ev_1^{-1}(X(u)))}])$.
\end{lemme}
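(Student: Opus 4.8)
\textbf{Proof plan for Lemma \ref{lem: 2 points corr}.}

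The plan is to identify $ev_2^{\d}(ev_1^{-1}(X(u)))$ geometrically and then apply the two-point version of the $K$-theoretic projection argument already used repeatedly in this chapter (as in the proof of Proposition \ref{prop: (O_u, ... O_v)=chi(O_ev_r*W)} $iii)$). First I would observe that $X(u)$ is here one of the two codimension-one Schubert varieties $h_1 = X(w_{n-1,1})$ or $h_2 = X(w_{n,2})$, so by Lemma \ref{lem: 3 pts correlators l1+l2} $iv)$ (and its evident two-pointed variant) for any effective degree $\d$ the morphism $ev_2^{\d}: ev_1^{-1}(X(u)) \to X$ induced by evaluating the second marked point is surjective onto $X$; thus $ev_2^{\d}(ev_1^{-1}(X(u))) = X = X(w_{n,1})$, which is a Schubert variety. (Alternatively, for the general-$u$ statement one checks directly that $ev_2^{\d}(ev_1^{-1}(X(u)))$ is cut out by the same kind of linear incidence conditions that define Schubert varieties of $Fl_{1,n-1}$, using the embedding \eqref{eq : X in PnxPn}; but in the cases actually needed it is simply all of $X$.)

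Next I would set up the two-pointed Gromov–Witten variety. For $g$ general in $\GL_n$ write $\W_g := ev_1^{-1}(g\cdot X(u)) \subset \overline{\mathcal{M}_{0,2}}(X,\d)$. Since $X$ is homogeneous, $\overline{\mathcal{M}_{0,2}}(X,\d)$ has rational singularities (Theorem \ref{th: Viehweg X//G has RS}), and by Kleiman transversality together with \cite{buch}, Theorem 2.5, for $g$ general $\W_g$ is reduced, irreducible, and has rational singularities, with $ev_1^*[\O_u] = [\O_{\W_g}]$ in $K_\circ(\overline{\mathcal{M}_{0,2}}(X,\d))$. Using the projection formula exactly as in the proof of Proposition \ref{prop: (O_u, ... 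O_v)=chi(O_ev_r*W)} $i)$, one gets
\begin{equation*}
\langle \O_u, \alpha \rangle_\d = \chi\!\left( ev_2^*\alpha \cdot [\O_{\W_g}] \right) = \chi\!\left( \alpha \cdot (ev_2)_*[\O_{\W_g}] \right).
\end{equation*}
It then remains to show $(ev_2)_*[\O_{\W_g}] = [\O_{ev_2(\W_g)}]$; for this I would invoke Theorem \ref{th: BM f_* if fiber rational} (Buch--Mihalcea), which requires that $ev_2(\W_g)$ has rational singularities and that the general fiber of $ev_2: \W_g \to ev_2(\W_g)$ is rationally connected. The first point holds since $ev_2(\W_g)$ is (a translate of) a Schubert variety, hence has rational singularities by Theorem \ref{th: schubert var have rational sing}. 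The rational connectedness of the general fiber of $ev_2$ restricted to $\W_g$ is the substantive geometric input.

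The main obstacle is therefore establishing that the general fiber of $ev_2: \W_g \to ev_2(\W_g)$ is rationally connected. For $\d = l_1 + l_2$ this follows from the explicit description of curves of class $l_1 + l_2$ and the birationality statement in Lemma \ref{lem: 3 pts correlators l1+l2} (the fiber is then generically a point). For higher degrees I would reduce to the Grassmannian/projective-space case: the forgetful map $\pi: X \to \P^{n-1}$ induces $\Pi\times ev: \overline{\mathcal{M}_{0,2}}(X,\d) \to \overline{\mathcal{M}_{0,2}}(\P^{n-1},d_1)\times_{(\P^{n-1})^2} X^2$ whose general fiber is rational by Proposition \ref{prop : geometry general fiber pr1W}, combine with the rationality of the general fiber of the evaluation map on $\overline{\mathcal{M}_{0,2}}(\P^{n-1},d_1)$ (Proposition \ref{prop: correlators Pn} $ii)$), and use the flatness of $\pi$ together with the stability of rationally connected fibrations under composition and flat base change (Theorem \ref{th: ex stability under composition} $(3)$ and Lemma \ref{lem: flat base change perserves (P)}), exactly as in part \ref{item: compute 3 pts corr when Pixev is surjective} of Section \ref{sec: computing 3pts correlators}. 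Once this is in place, the equality $\langle \O_u,\alpha\rangle_\d = \chi(\alpha\cdot[\O_{ev_2^\d(ev_1^{-1}(X(u)))}])$ follows, and by Lemma \ref{lem: equality g.} the class does not depend on the generic translate chosen, giving the stated formula with $X(u)$ untranslated.
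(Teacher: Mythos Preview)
Your overall skeleton---identify the image as a variety with rational singularities, show the general fiber of $ev_2$ on $\W_g$ is rationally connected, then apply the projection-formula machinery of Proposition~\ref{prop: (O_u, ... O_v)=chi(O_ev_r*W)}---is the right one and matches the paper. But you have missed the single reference that collapses the argument to two lines: the paper simply cites \cite{buch2011}, Proposition~3.2, which asserts for any homogeneous $G/P$, any $u\in W^P$ and any degree $\d$ that $ev_2^{\d}(ev_1^{-1}(X(u)))$ is a Schubert variety and that the restriction of $ev_2$ to $ev_1^{-1}(X(u))$ has irreducible rational general fiber. With that in hand one invokes Part~\ref{sec: computing 3pts correlators}~(a) and is done.

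Because you did not use this reference, two real gaps appear. First, the lemma is stated for arbitrary $u$, and the general case is genuinely needed: in the proof of Proposition~\ref{prop: sum corr=image Gamma} one iterates and must know that $ev_2^{\d_k}(ev_1^{-1}(X(\beta)))$ is a Schubert variety for the intermediate class $\beta$ produced at the previous step, and $\beta$ is not $h_i$ in general. Your restriction to $X(u)=h_i$, with only a parenthetical gesture at the general case, does not cover this. Second, even for $u=h_i$ your argument leans on an ``evident two-pointed variant for any effective $\d$'' of Lemma~\ref{lem: 3 pts correlators l1+l2}~$iv)$ (a statement proved only for degree $l_1+l_2$ and three marked points) and on a two-pointed $\Pi\times ev$ being surjective with rational fibers, neither of which is established in the paper; your case split also leaves degrees such as $l_1$, $l_2$, $2l_1$ unaddressed. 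The fix is not to patch these holes but simply to invoke the Buch--Mihalcea result, which handles all $u$ and all $\d$ at once.
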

\begin{proof}
	We consider the Gromov-Witten variety $ev_1^{-1}(X(u)) \subset \overline{\mathcal{M}_{0,2}}(X, \d)$. Let us consider the morphism $ev_2^{\d}: \overline{\mathcal{M}_{0,2}}(X,\d) \supset ev_1^{-1}(X(u)) \rightarrow X$ evaluating the second marked point. According to \cite{buch2011} Proposition 3.2., the general fiber of $ev_1$ is an irreducible rational variety and the image $ev_2^{\d}(ev_1^{-1}(X(u)))$ is a Schubert variety, and hence has rational singularities. Note that since $ev_1$ and $ev_2$ are $G$-equivariant, for any element $g_1$ in $G$ the  variety $ev_2^\d(ev_1^{-1}(g_1 \cdot X(u))) =g_1 \cdot ev_2^{\d}(ev_1^{-1}(X(u)))$ is also a Schubert variety. Finally Part \ref{sec: computing 3pts correlators} $(a)$ yields the result.
\end{proof}
\begin{proposition}\label{prop: sum corr=image Gamma}
	
	\begin{enumerate}
		Let $u$ be an element in $W^P$ such that $X(u)$ is a codimension $1$ Schubert variety $h_i$, and $v=w_{k,p}$ be an element in $W^P$ satisfying $k \neq p+1$. Let $\d = \sum_{k=1}^k \d_i$ be an element in $E(X)$, where $k>0$.
		\item The projected Gromov-Witten variety $\Gamma_{\d_0, \dots, \d_k}(u,v) := ev_3^{\d_0 + \dots + \d_r}(M_{\d_0, \dots, \d_k}(u,v))$ is the translate of a Schubert variety of $X$.
		\item \[\sum_{\alpha_i \in W^P}  \langle \O_u, \O_v, \I_{\alpha_1} \rangle_{\d_0} \langle \O_{\alpha_1}, \I_{\alpha_1} \rangle_{\d_1}  \dots  \langle \O_{\alpha_k}, \I_{w} \rangle_{\d_k}  = \chi([\O_{\Gamma_{\d_0, \dots, \d_k}(u,v)}], \I_w).\]
	\end{enumerate}
\end{proposition}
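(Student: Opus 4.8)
\textbf{Proof plan for Proposition \ref{prop: sum corr=image Gamma}.}

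The plan is to prove the two statements simultaneously by induction on $k$, the number of components in the chain. The base case $k=1$ and, more generally, the inductive step both rely on Part \ref{sec: computing 3pts correlators} item \ref{item: compute 3 pts corr when Pixev is surjective} (or rather Proposition \ref{prop: (O_u, ... O_v)=chi(O_ev_r*W)}) together with Lemma \ref{lem: 2 points corr}, which handles two-pointed correlators and tells us that $ev_2^{\d}(ev_1^{-1}(X(u)))$ is a Schubert variety. The key structural input is Lemma \ref{lem: ev(Gk)=ev(ev-1Gk-1)} $(ii)$, which expresses $\Gamma_{\d_0, \dots, \d_k}(u,v)$ as $ev_2^{\d_k}(ev_1^{-1}(Y))$ where $Y = \Gamma_{\d_0, \dots, \d_{k-1}}(u,v)$; this is what makes the induction possible.

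\emph{Step 1 (part 1, the base case).} First I would establish that $\Gamma_{\d_0}(u,v) = ev_3^{\d_0}(ev_1^{-1}(X(u)) \cap ev_2^{-1}(w_0 X(v)))$ is the translate of a Schubert variety. Since $X(u) = h_i$ is a codimension $1$ Schubert variety and $v = w_{k,p}$ with $k \neq p+1$, this is exactly the content of the case analysis in Part \ref{subsec: 3 points cor X}: the computations there (e.g. Lemma \ref{lem: 3pts corr l2} $iii)$, Proposition \ref{prop: when Pixev is surjective} \ref{item: ev3 for (1,2)}, and Lemma \ref{lem: 3 pts correlators l1+l2} $iv)$, according to the value of $\d_0$) show that $ev_3$ of this Gromov-Witten variety is, in every relevant degree, a $G$-translate $h \cdot X(w)$ of a Schubert variety. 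I would simply cite the appropriate subcase depending on whether $\d_0 = 0$, $\d_0 = l_2$, $\d_0 = (1, d_2)$, $\d_0 = (d_1, d_2)$ with $d_1 \geq 2$, etc.

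\emph{Step 2 (part 1, the inductive step).} Assume $\Gamma_{\d_0, \dots, \d_{k-1}}(u,v) = h \cdot X(w')$ is the translate of a Schubert variety. By Lemma \ref{lem: ev(Gk)=ev(ev-1Gk-1)} $(ii)$, $\Gamma_{\d_0, \dots, \d_k}(u,v) = ev_2^{\d_k}(ev_1^{-1}(h \cdot X(w')))$. By $G$-equivariance of the evaluation maps, $ev_1^{-1}(h \cdot X(w')) = h \cdot ev_1^{-1}(X(w'))$, so $ev_2^{\d_k}(ev_1^{-1}(h \cdot X(w'))) = h \cdot ev_2^{\d_k}(ev_1^{-1}(X(w')))$, and by Lemma \ref{lem: 2 points corr} (which invokes \cite{buch2011} Proposition 3.2) the variety $ev_2^{\d_k}(ev_1^{-1}(X(w')))$ is a Schubert variety. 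Hence $\Gamma_{\d_0, \dots, \d_k}(u,v)$ is the translate of a Schubert variety, completing part 1.

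\emph{Step 3 (part 2).} For the identity of correlators, I would again induct on $k$, unwinding the fiber-product structure of $M_{\d_0, \dots, \d_k}$. The idea is: the left-hand sum over $\alpha_1, \dots, \alpha_k$ is a composition of correlators along the chain, and summing over one intermediate index at a time realizes the "completeness relation" $\sum_{\alpha} \O_\alpha \otimes \I_\alpha$ as the pushforward of the structure sheaf of a fiber product over $X$ (this is the diagonal-splitting mechanism already used, in a different guise, in Proposition \ref{prop: image fiber product} and Lemma \ref{lem: decomposing pxev_*}). Concretely, using Proposition \ref{prop: (O_u, ... O_v)=chi(O_ev_r*W)} at each stage one identifies $\sum_{\alpha_1} \langle \O_u, \O_v, \I_{\alpha_1} \rangle_{\d_0} \langle \O_{\alpha_1}, \cdot \rangle$ with an Euler characteristic against $[\O_{\Gamma_{\d_0}(u,v)}]$, then iterates: each further step replaces $\langle \O_{\alpha_i}, \I_{\alpha_{i+1}} \rangle_{\d_{i+1}}$ pushed against $[\O_{\Gamma_{\d_0, \dots, \d_i}(u,v)}]$ by $[\O_{\Gamma_{\d_0, \dots, \d_{i+1}}(u,v)}]$ via Lemma \ref{lem: 2 points corr} and the Buch--Mihalcea pushforward theorem (Theorem \ref{th: BM f_* if fiber rational}). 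This requires, at each stage, that the relevant evaluation morphism has rationally connected general fibers and that the projected variety has rational singularities — but by Step 1/2 the projected varieties are translates of Schubert varieties, hence have rational singularities by Theorem \ref{th: schubert var have rational sing}, and the rational-connectedness of fibers of $ev_i$ from a single boundary stratum (in fact from $\overline{\mathcal{M}_{0,2}}(X,\d)$) is \cite{buch2011} Proposition 3.2. After $k$ steps the sum collapses to $\chi([\O_{\Gamma_{\d_0, \dots, \d_k}(u,v)}], \I_w)$.

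\emph{Main obstacle.} The delicate point is bookkeeping the hypotheses of Proposition \ref{prop: (O_u, ... O_v)=chi(O_ev_r*W)} and Theorem \ref{th: BM f_* if fiber rational} along the chain: one must check, uniformly in the degree decomposition $\d = \sum \d_i$, that the evaluation maps restricted to the successive boundary Gromov-Witten varieties $M_{\d_0, \dots, \d_i}(u,v)$ have rationally connected general fibers, and that the $Tor$-vanishing needed to identify $ev^*(\O_u \cdot \O_v)$ with the structure sheaf of the Gromov-Witten variety holds (which, for general translates, follows from Sierra's homological Kleiman--Bertini theorem \cite{sierra2009general}, but one must arrange that "general translate" can be replaced by the specific translates $X(u)$, $w_0 X(v)$ as in the proof of Proposition \ref{prop: (O_u, ... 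O_v)=chi(O_ev_r*W)} $iii)$ via the $G$-invariance of the relevant open locus). The restriction $X(u) = h_i$, $v = w_{k,p}$ with $k \neq p+1$ is precisely what guarantees that $\Gamma_{\d_0}(u,v)$ is nonempty and behaves well (the excluded case $k = p+1$ is where the extra terms in the Chevalley formula come from), so that input has to be threaded through carefully rather than black-boxed.
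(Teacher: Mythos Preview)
Your proposal is correct and follows essentially the same approach as the paper: induction on $k$, with part 1 handled via the explicit case analysis of Part \ref{subsec: 3 points cor X} for the base (the paper cites Lemma \ref{lem: 3 pts correlators l1+l2}, Proposition \ref{prop: when Pixev is surjective}, and Proposition \ref{prop: degenerating} for $\d_0=0$) and via Lemma \ref{lem: ev(Gk)=ev(ev-1Gk-1)} $ii)$ together with \cite{buch2011} Proposition 3.2 for the step; and part 2 handled by collapsing the sum one index at a time using part 1 and the duality $\chi(\O_\beta,\I_\alpha)=\delta_{\beta,\alpha}$. Your ``main obstacle'' paragraph is more cautious than the paper, which simply invokes Lemma \ref{lem: 2 points corr} and Lemma \ref{lem: ev(Gk)=ev(ev-1Gk-1)} without rechecking the Tor/rational-connectedness hypotheses at each stage, but the substance is the same.
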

\begin{proof}\begin{enumerate}
		\item  By iteration on $k$. First note that for $\d_0 >0$ according to Lemma \ref{lem: 3 pts correlators l1+l2} and Proposition \ref{prop: when Pixev is surjective} there exists an element $h$ in $G$ such that $ev_3^{\d_0}(M_{\d_0}(u,v)) = ev_3^{\d_0}(\overline{\mathcal{M}_{0,3}}(X, \d_0) \times_{X^2}(X(u) \times w_0 X(v))$ is given by the translation by $h$ of a Schubert variety. For $\d_0 =0$, since $k \neq p+1$, according to Proposition \ref{prop: degenerating} $ev_3^{0}(\overline{\mathcal{M}_{0,3}}(X, \d_0) \times_{X^2}(X(u) \times w_0 X(v))=X(u) \cap w_0 X(v)$ is the translation by ane element $h$ in $G$ of a Schubert variety. Now suppose the induction hypothesis verified for all $i<k$. Then the projected Gromov-Witten variety $ev_3(M_{\d_0, \dots, \d_{k-1}}(u,v))$ is the translation by an element $h$ in $G$ of a Schubert variety $X(w)$. According to Lemma \ref{lem: ev(Gk)=ev(ev-1Gk-1)} we have \[ev_ 3(M_{\d_0, \dots, \d_{k}}(u,v)) = ev_2(ev_1^{-1}(h \cdot X(w))) = h \cdot ev_2(ev_1^{-1}(X(w))), \] where we consider the maps $ev_i : \overline{\mathcal{M}_{0,2}}(X, \d_k) \rightarrow X$, $i=1,2$. Finally according to \cite{buch} Proposition 3.2 the variety $ev_2(ev_1^{-1}(X(w)))$ is a Schubert variety.
		\item By iteration on $k$. According to Lemma \ref{lem: 3 pts correlators l1+l2} and Proposition \ref{prop: when Pixev is surjective} since $\d_0 >l_1,l_2$ we have \[\langle \O_u, \O_v, \I_{\alpha_1} \rangle_{\d_0} = \chi\left( \O_\beta \cdot \I_{\alpha_1} \right),\] where $h$ is an element in $G$ and $\beta$ is an element in $W^P$ such that $\Gamma_{\d_0}(u,v)=h \cdot X(\beta)$. Finally notice that $\chi([\O_{h \cdot X(\beta)}], \I_{\alpha_1})= \chi(\O_\beta, \I_{\alpha_1})$. 
		
		Let us now suppose the induction hypothesis satisfied for $i = k-1$. According to $i)$ there exists $h$ in $G$ and $\beta$ in $W^P$ such that $\Gamma_{\d_0, \dots, \d_k} (u,v) = h \cdot X(\beta)$. Hence the induction hypothesis yields \[\sum_{\alpha_i \in W^P, 1 \leq i \leq k-1}  \langle \O_u, \O_v, \I_{\alpha_1} \rangle_{\d_0} \langle \O_{\alpha_1}, \I_{\alpha_1} \rangle_{\d_1}  \dots  \langle \O_{\alpha_{k-1}}, \I_{\alpha_{k-1}} \rangle_{\d_k} = \chi([\O_{h \cdot X(\beta)}], \I_{\alpha_{k-1}}) = \delta_{\beta, \alpha_{k-1}},\]
		where the symbol $\delta_{\beta, \alpha_{k-1}}$ is one if $\beta = \alpha_{k-1}$, $0$ else.
		We now only have to compute $\sum_{\alpha_{k-1} \in W^P} \delta_{\beta, \alpha_{k-1}} \langle \O_{w}, \I_w \rangle_{\d_k}$. Let $u'$ be the element in $W^P$ such that $ev_2^{\d_k}(ev_1^{-1}(X(\beta))=X(u')$. We obtain  \begin{align*} \sum_{\alpha_{k-1} \in W^P} \delta_{\beta, \alpha_{k-1}} \langle \O_{w}, \I_w \rangle_{\d_k} &=\langle \O_{\beta}, \I_\beta \rangle_{\d_k}  \end{align*}
		where $\O_\beta=[\O_{ev_2^{\d_k}(ev_1^{-1}( X(\beta)))}] = [\O_{h \cdot ev_2^{\d_k}(ev_1^{-1}( X(\beta)))}]=
		[\O_{ev_2^{\d_k}(ev_1^{-1}(\Gamma_{\d_0, \dots, \d_{k-1}}(u,v)))}] =[\O_{\Gamma_{\d_0, \dots, \d_k}(u,v)}] $ according to Lemma \ref{lem: ev(Gk)=ev(ev-1Gk-1)}. 
	\end{enumerate}
\end{proof}
Hence for $v=w_{k,p}$, where $k \neq p+1$, the product $h_i \star \O_v$ can be rewritten as  \begin{align*} 
\O_{h_i} \star \O_v&= \O_{h_i} \cdot \O_v + + Q_1 \left( [\O_{\Gamma_{l_1}(h_i,v)}] - [\O_{\Gamma_{0,l_1}(h_i,v)}]\right)\\
&+ Q_2 \left( [\O_{\Gamma_{l_2}(h_i,v)}] -  \O_{\Gamma_{0,l_2}(h_i,v)}]\right)\\
&+Q_1 Q_2 \left( [\O_{\Gamma_{l_1+l_2}(h_i,v)}] - [\O_{\Gamma_{l_1,l_2}(h_i,v)}] -[\O_{\Gamma_{l_2,l_1}(h_i,v)}] - [\O_{\Gamma_{0,l_1+l_2}(h_i,v)}]\right. \\ & \:\:\:\:\:\:\: \:\:\:\:\:\:\: \:\:\:\:\:\:\: \:\:\:\:\:\:\: \:\:\:\:\:\:\:\left. +[\O_{\Gamma_{0,l_1,l_2}(h_i,v)}]+ [\O_{\Gamma_{0,l_2,l_1}(h_i,v)}] \right)\\
\end{align*} 

\subsection{Computing the degree of $\O_{h_i} \star \O_v$.} Let $1 \leq  i_2, j_2 \leq n$ where $i_2 \neq j_2$. We note $v=w_{i_2,j_2}$. We compute here the degree of $\O_{h_i} \star \O_v := \O_{h_i} \star \O_{i_2,j_2}$. More precisely, we prove the following result.

\begin{proposition}
	For any element $v$ in $W^P$, the product $\O_{h_i} \star \O_{i_2,j_2}$ only includes terms of degree $Q_1$, $Q_2$, and $Q_1 Q_2$, i.e.
	\begin{align*}
	\O_{h_i} \star \O_{i_2,j_2}= \O_{h_i} \cdot \O_v + Q_1 P_{l_1}(i,v) + Q_2 P_{l_2}(i,v) + Q_1 Q_2 P_{l_1+l_2}(i,v),
	\end{align*}
	where for $\d$ in $E(X)$, $P_\d(i,v)$ is an element in $K(X)$.
\end{proposition}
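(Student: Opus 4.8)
The strategy is to show that every term of degree $\mathbf{d} \geq 2l_1$ or $\mathbf{d} \geq 2l_2$ in the expansion of $\O_{h_i} \star \O_v$ vanishes, which combined with the fact that the small quantum product is polynomial in $Q$ (\cite{anderson2018quantum}) leaves only the degrees $0$, $l_1$, $l_2$, $l_1+l_2$; the degree $0$ term is the classical product $\O_{h_i} \cdot \O_v$ by the classical limit. By definition of the small quantum product, the coefficient of $Q^\mathbf{d}\O_w$ in $\O_{h_i} \star \O_v$ is an alternating sum over decompositions $\mathbf{d} = \mathbf{d}_0 + \mathbf{d}_1 + \dots + \mathbf{d}_k$ of products of correlators of the form
\begin{equation*}
\langle \O_{h_i}, \O_v, \I_{\alpha_1}\rangle_{\mathbf{d}_0}\langle \O_{\alpha_1}, \I_{\alpha_2}\rangle_{\mathbf{d}_1}\cdots\langle \O_{\alpha_k}, \I_w\rangle_{\mathbf{d}_k},
\end{equation*}
with signs coming from the matrix $\mathcal{G}^{\alpha,\beta}$ inverting the deformed metric. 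I would first deal separately with the case $i_2 = j_2 + 1$ (where the classical product $\O_{h_i}\cdot\O_{i_2,j_2}$ itself is a three-term sum, cf.\ Proposition~\ref{prop : produit dans K(X)}) and the generic case $k \neq p+1$, where Subsection~\ref{subsec: geometric interpretation} gives the clean geometric description of each Littlewood--Richardson coefficient as $\chi([\O_{\Gamma_{\mathbf{d}_0,\dots,\mathbf{d}_k}(h_i,v)}], \I_w)$ via Proposition~\ref{prop: sum corr=image Gamma}.

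\textbf{Key steps.} First, I would record the three-point correlators computed in Subsection~\ref{subsec: 3 points cor X}: for $\mathbf{d} = d_1l_1 + d_2l_2$ with $\mathbf{d} \geq 2l_1 + 2l_2$ one has $\langle \O_{i_1,j_1}, \O_{i_2,j_2}, \I_{i_3,j_3}\rangle_\mathbf{d} = \chi([\O_X]\cdot\I_{i_3,j_3})$, which is independent of the first two arguments; similarly the degenerate-degree correlators ($(0,d_2)$, $(d_1,0)$, $(1,d_2)$, $(d_1,1)$ and their duals) are computed explicitly there in terms of Schubert classes $\O_{\min(n,i_1+i_2),1}$ and so on. The point is that for large degree the three-point correlator $\langle \O_{h_i}, \O_v, \cdot\rangle_\mathbf{d}$ ``forgets'' $h_i$ and $v$, and that the projected boundary Gromov--Witten varieties $\Gamma_{\mathbf{d}_0,\dots,\mathbf{d}_k}(h_i,v)$ stabilize: once the accumulated degree along the chain is large, $\Gamma = X$ (Proposition~\ref{prop: when Pixev is surjective}~\ref{item: ev surj} and Proposition~\ref{prop: sum corr=image Gamma}). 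Then the coefficient of $Q^\mathbf{d}$ becomes, after telescoping through Proposition~\ref{prop: sum corr=image Gamma}, a sum of terms $\chi([\O_{\Gamma_{\mathbf{d}_0,\dots,\mathbf{d}_k}(h_i,v)}], \I_w)$ each depending on the decomposition only through which $\mathbf{d}_i$ are nonzero (and not through their precise values, once each partial sum is in the stable range). The signs are governed by the number $r = k$ of pieces; the vanishing is then exactly the combinatorial identity of Lemma~\ref{lem : 0 k}: $\sum (-1)^r = 0$ over all decompositions of $(d,\delta)$ with $(d_0,\delta_0)$ in a fixed small box, which kills all contributions of degree $\mathbf{d} = (d_1,d_2)$ with $d_2 \geq 2$ or $d_1 \geq 2$ once the boundary-stratum classes $[\O_\Gamma]$ have become constant.

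\textbf{Main obstacle.} The hard part is the careful bookkeeping: one must pin down exactly for which decompositions $\mathbf{d} = \mathbf{d}_0 + \dots + \mathbf{d}_k$ the relevant correlators and projected varieties $\Gamma_{\mathbf{d}_0,\dots,\mathbf{d}_k}(h_i,v)$ are already in the ``stable'' regime (so that the Littlewood--Richardson coefficient no longer depends on the individual $\mathbf{d}_i$), and organize the sum so that Lemma~\ref{lem : 0 k} applies with the right choice of $(d_0,\delta_0)$. One subtlety is that $\Gamma_{0,\dots}(h_i,v)$ (the $\mathbf{d}_0 = 0$ summands, coming from $X(h_i)\cap w_0X(v)$) need to be tracked alongside the $\mathbf{d}_0 > 0$ summands, and when $i_2 = j_2+1$ the two-term degeneration of Proposition~\ref{prop: degenerating}~(4) enters; but these only shift the ``box'' $(d_0,\delta_0)$ in Lemma~\ref{lem : 0 k} and do not affect the vanishing. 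I would also need the observation that the potentially dangerous low terms, namely $Q_1^{d_1}$ with $d_1 \geq 2$ and $d_2 = 0$ (and dually), are handled by the explicit formula $\langle \O_{i_1,j_1}, \O_{i_2,j_2}, \I_{i_3,j_3}\rangle_{(d_1,0)}$ together with the last clause of the proof of Lemma~\ref{lem : 0 k} ($\delta_0 = \delta$, $d_0 < d-1$), so that $P_\mathbf{d}(i,v) = 0$ for all $\mathbf{d}$ outside $\{0, l_1, l_2, l_1+l_2\}$, which is the claimed statement.
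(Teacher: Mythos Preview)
Your plan is essentially the paper's argument, and the ingredients you list are the right ones. One organizational difference worth noting: the paper does not apply Proposition~\ref{prop: sum corr=image Gamma} to the full chain $\Gamma_{\d_0,\dots,\d_k}(h_i,v)$ to get the cancellation. Instead it factors the coefficient of $Q^\d$ as $\sum_{\d_0}\langle \O_{h_i},\O_v,\I_\alpha\rangle_{\d_0}\cdot(\text{tail in }\d-\d_0)$ and shows, via Lemma~\ref{lem: cancellation G^ab}, that for each fixed $\d_0$ with $\d-\d_0\geq 2l_1$ or $\geq 2l_2$ the tail (a pure $\mathcal G^{\alpha\beta}$-sum of two-point correlators) already vanishes by Lemma~\ref{lem : 0 k}. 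This sidesteps your bookkeeping worry about when the full projected variety $\Gamma_{\d_0,\dots,\d_k}(h_i,v)$ stabilizes: only the tail varieties $\Gamma_{\d_1,\dots,\d_k}(\alpha)$ need to stabilize, and Lemma~\ref{lem: image of Ev-1(Xu)} makes that explicit. What survives are the terms with $\d_0\in\{\d,\d-l_1,\d-l_2,\d-l_1-l_2\}$; for $\d\geq 2l_1+2l_2$ these all have $\d_0\geq l_1+l_2$, where the three-point correlator equals $\chi([\O_X]\cdot\I_w)$ regardless of $v$, and the six remaining terms cancel directly. The mixed cases $d_1\in\{0,1\}$, $d_2\geq 2$ (and the symmetric ones) are finished by the explicit degree-$l_2$ correlators and Lemma~\ref{lem: 3pts corr l2}.

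In particular your concern about $i_2=j_2+1$ is unnecessary in this organization: Proposition~\ref{prop: sum corr=image Gamma} only needs $k\neq p+1$ for its $\d_0=0$ input, but all surviving $\d_0$ above are strictly positive, and the three-point correlators for $\d_0>0$ in Subsection~\ref{subsec: 3 points cor X} are computed uniformly in $v$. So no separate case analysis is required.
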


Our proof occurs in two times. We begin by rewriting the quantum product between two classes $\O_{h_i}$ and $\O_v$ in $K(X)$. We then note that most of these terms cancel together.

\textsc{Rewriting the quantum product.} Denote by $g$ the bilinear pairing on $K(X)$ induced by the sheaf Euler characteristic; we write $g_{\alpha, \beta} := \chi(\O_\alpha \cdot \O_\beta)$. We have \begin{align*} \G_{\alpha, \beta} &= \sum_{\d \in E(X)} Q^\d  \langle \O_\alpha, \O_\beta \rangle_{\d}\\ & = g_{\alpha, \beta} + \sum_{\d \in E(X)\setminus \{0\}} Q^\d  \langle  \O_\alpha, O_\beta \rangle_{\d}.\end{align*} The identity $(g+A)^{-1}=(g(\id + g^{-1} A))^{-1} = \sum (-1)^k (g^{-1}A)^k g^{-1}$ yields the following equality: \begin{align*}
&{\mathcal{G}^{\alpha, \beta}} \\
&= \sum_{\d \in E(X)} Q^\d \sum_{k=0}^{+ \infty} (-1)^k \sum_{\substack{\d_i \in E(X) \setminus \{0\}\\ \d_1+ \dots + \d_k = \d}} \:\: \sum_{\alpha_i \in W^P} g^{\alpha, \alpha_1} \langle \O_{\alpha_1}, O_{\alpha_2} \rangle_{\d_1} g^{\alpha_2, \alpha_3} \dots g^{\alpha_{2k-2}, \alpha_{2k-1}} \langle \O_{\alpha_{2k-1}}, O_{\alpha_{2k}} \rangle_{\d_k} g^{\alpha_{2k}, \beta}\\
&= g^{\alpha, \beta} + \sum_{k=0}^{+ \infty} (-1)^k \sum_{\substack{\d_i \in E(X) \setminus \{0\}}}  Q^{\d_1 + \dots + \d_k} \sum_{\alpha_i \in W^P} g^{\alpha, \alpha_1} \langle \O_{\alpha_1}, \I_{\alpha_2} \rangle_{\d_1}  \dots  \langle \O_{\alpha_k}, \I_{v} \rangle_{\d_k} 
\end{align*}
since $\I_\alpha$ is the element in $K(X)$ dual to $\O_\alpha$ for the pairing $g$.
This implies for any elements $u$ and $v$ in $W^P$ \begin{align*} \chi &\left( (\O_u \star  \O_v) \cdot \I_w \right)\\
&= \sum_{\alpha \in W^P} \sum_{\d \in E(X)} Q^{\d} \langle \O_u, \O_v, \O_\alpha \rangle_{\d} \left( g^{\alpha, v} + 
\sum_{k=0}^{+ \infty} (-1)^k \sum_{\substack{\d_i \in E(X) \setminus \{0\}}}  Q^{\d_1 + \dots + \d_k} \sum_{\alpha_i \in W^P} g^{\alpha, \alpha_1} \langle \O_{\alpha_1}, \I_{\alpha_2} \rangle_{\d_1}  \dots  \langle \O_{\alpha_k}, \I_{w} \rangle_{\d_k} \right)\\
&= \sum_{k=0}^{+ \infty} (-1)^k \sum_{\substack{\d_i \in E(X) \setminus \{0\}, \: \d_0 \in E(X)}}  Q^{\d_0 + \d_1 + \dots + \d_k} \sum_{\alpha_i \in W^P}  \langle \O_u, \O_v, \I_{\alpha_1} \rangle_{\d_0} \langle \O_{\alpha_1}, \I_{\alpha_1} \rangle_{\d_1}  \dots  \langle \O_{\alpha_k}, \I_{w} \rangle_{\d_k} \\
\end{align*} 

\textsc{Cancellation between different terms of $\O_{h_i} \star \O_v$.} 
\begin{lemme}\label{lem: image of Ev-1(Xu)}
	Let $1 \leq i,j \leq n$, $i \neq j$.
	\begin{enumerate}[label=\roman*)]
		\item For any $1 \leq s,t \leq n$, $s \neq t$, $\langle \O_{i,j}, \I_{s,t} \rangle_{l_1} = \delta_{s,n} \delta_{t,j} (1- \delta_{j,n}) + \delta_{j,n} \delta_{s,n-1} \delta_{t,n}$.
		\item For any $1 \leq s,t \leq n$, $s \neq t$, $\langle \O_{i,j}, \I_{s,t} \rangle_{l_2} = \delta_{s,i} \delta_{t,1} (1- \delta_{i,1}) + \delta_{i,1} \delta_{t,2} \delta_{s,1}$.
		\item For any $1 \leq i_2, j_2 \leq n$, $i_2 \neq j_2$, \begin{align*} \sum_{1 \leq i_1,j_1 \leq n, \: i_1 \neq j_1} \langle \O_{i,j}, \I_{i_1,j_1} \rangle_{l_1}  &\langle \O_{i_1,j_1}, \I_{i_2,j_2} \rangle_{l_2}\\ &= (1- \delta_{j,n})\delta_{i_2,n} \delta_{j_2,1} + \delta_{j,n} \delta_{i_2,n-1} \delta_{j_2,1}. \end{align*}
		\item For any $1 \leq i_2, j_2 \leq n$, $i_2 \neq j_2$, \begin{align*} \sum_{1 \leq i_1,j_1 \leq n, \: i_1 \neq j_1} \langle \O_{i,j}, \I_{i_1,j_1} \rangle_{l_2}  &\langle \O_{i_1,j_1}, \I_{i_2,j_2} \rangle_{l_1}\\ &= (1- \delta_{i,1})\delta_{i_2,n} \delta_{j_2,1} + \delta_{i,1} \delta_{i_2,n} \delta_{j_2,2}. \end{align*}
		\item For any $1 \leq i_2, j_2 \leq n$, $i_2 \neq j_2$, \begin{align*} \sum_{u,v \in W^P} \langle \O_{i,j}, \I_{u} \rangle_{l_1}  \langle \O_{u}, \I_{v} \rangle_{l_2} \langle \O_v, \I_{i_2,j_2} \rangle_{l_1}  = \delta_{i_2,n} \delta_{j_2,1}. \end{align*}
		\item  Let $1 \leq i_1,j_1, i_2, j_2 \leq n$, $i_k \neq j_k$, such that $j_1 + j_2 \leq n+2$. Then for any $1 \leq i,j \leq n$, $i \neq j$, we have \[\sum_{w \in W^P} \langle \O_{w_{i_1,j_1}}, \O_{w_{i_2,j_2}}, \I_w \rangle_{l_2} \langle \O_w, \I_{i,j} \rangle_{l_2} = \langle \O_u, \O_v, \I_{i,j} \rangle_{l_2}.\]
	\end{enumerate}
\end{lemme}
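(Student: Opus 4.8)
The plan is to prove the six identities (i)--(vi) one at a time, in order, since the later parts build on the earlier ones. All of them reduce, via equation~(\ref{eq: correlator = chi GW variety}) and the results of Part~\ref{sec: computing 3pts correlators}, to explicit computations of sheaf Euler characteristics of projected Gromov--Witten varieties, which by Lemma~\ref{lem: 2 points corr}, Lemma~\ref{lem: 3pts corr l2}, Lemma~\ref{lem: 3 pts correlators l1+l2} and Proposition~\ref{prop: when Pixev is surjective} turn out to be (translates of) Schubert varieties. The recurring principle is: if $\Gamma$ is the translate of a Schubert variety $X(w)$, then $\chi([\O_\Gamma]\cdot \I_{s,t}) = \delta_{w, w_{s,t}}$, so each correlator is a Kronecker delta and the sums collapse.

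\textbf{Parts (i) and (ii).} For (i), I would start from Lemma~\ref{lem: 2 points corr}: $\langle \O_{i,j}, \I_{s,t}\rangle_{l_1} = \chi(\I_{s,t}\cdot[\O_{ev_2^{l_1}(ev_1^{-1}(X(w_{i,j})))}])$, so the task is to identify the Schubert variety $ev_2^{l_1}(ev_1^{-1}(X(w_{i,j})))$. Degree-$l_1$ curves on $X$ project to lines in the first $\P^{n-1}$-factor and to points in the second factor (since $\pi_{1*}l_1 = l_1$, $\pi_{2*}l_1 = 0$, where $\pi_1$ forgets the hyperplane and $\pi_2$ forgets the line); pushing $X(w_{i,j})$ along such curves sweeps out, in the line-factor, the span $\langle e_1,\dots,e_i,\text{(one more vector)}\rangle$ while in the hyperplane-factor one keeps only those hyperplanes still containing the swept line. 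Using the embedding~(\ref{eq : X in PnxPn}), a careful bookkeeping of which basis vectors survive the incidence condition gives $ev_2^{l_1}(ev_1^{-1}(X(w_{i,j}))) = X(w_{n,j})$ when $j<n$ and $= X(w_{n-1,n})$ when $j=n$; then $\chi$ against $\I_{s,t}$ produces exactly $\delta_{s,n}\delta_{t,j}(1-\delta_{j,n}) + \delta_{j,n}\delta_{s,n-1}\delta_{t,n}$. Part (ii) is the dual statement obtained by applying the self-duality automorphism $\varphi$ of Part~\ref{sec: computing 3pts correlators}\ref{item: symmetry} (which exchanges $l_1\leftrightarrow l_2$ and sends $w_{i,j}\mapsto w_{n-j+1,n-i+1}$), or equivalently by repeating the computation in the other factor.

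\textbf{Parts (iii)--(v).} These are now bookkeeping: substitute (i) and (ii) into the sums. For (iii), $\sum_{i_1,j_1}\langle\O_{i,j},\I_{i_1,j_1}\rangle_{l_1}\langle\O_{i_1,j_1},\I_{i_2,j_2}\rangle_{l_2}$ — by (i) only $(i_1,j_1) = (n,j)$ (if $j<n$) or $(n-1,n)$ (if $j=n$) survives the first factor; plug that into (ii) (noting $\langle\O_{n,j},\I_{i_2,j_2}\rangle_{l_2} = \delta_{i_2,n}\delta_{j_2,1}$ since $i_1 = n\neq 1$, and $\langle\O_{n-1,n},\I_{i_2,j_2}\rangle_{l_2}$ likewise forces $(i_2,j_2)=(n-1,1)$). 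This yields the stated formula. Part (iv) is symmetric. Part (v) iterates once more: by (iii) the inner double sum over $u$ collapses the $l_1$-then-$l_2$ steps to a single term supported on $\O_{n,1}$ (resp.\ $\O_{n-1,1}$), and then $\langle\O_{n,1},\I_{i_2,j_2}\rangle_{l_1} = \delta_{i_2,n}\delta_{j_2,1}$ and $\langle\O_{n-1,1},\I_{i_2,j_2}\rangle_{l_1} = \delta_{i_2,n}\delta_{j_2,1}$ as well, so both the $j<n$ and $j=n$ cases give the clean answer $\delta_{i_2,n}\delta_{j_2,1}$; the $\delta_{i,j}$-dependence visible in (iii)--(iv) disappears. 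Here one must be slightly careful that the intermediate Schubert class is genuinely $\O_{n,1}$ or $\O_{n-1,1}$ and that $ev_2^{l_1}(ev_1^{-1}(X(w_{n-1,1})))$ is again $X(w_{n,1})$ — a small extra case-check.

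\textbf{Part (vi) — the main obstacle.} This is the substantive one. I would read $\sum_w \langle\O_{w_{i_1,j_1}},\O_{w_{i_2,j_2}},\I_w\rangle_{l_2}\langle\O_w,\I_{i,j}\rangle_{l_2}$ as follows: the second factor is, by Lemma~\ref{lem: 2 points corr}, $\chi(\I_{i,j}\cdot[\O_{ev_2^{l_2}(ev_1^{-1}(X(w)))}])$, so the sum computes $\chi\big(\I_{i,j}\cdot \sum_w \langle\O_{w_{i_1,j_1}},\O_{w_{i_2,j_2}},\I_w\rangle_{l_2}[\O_{ev_2^{l_2}(ev_1^{-1}(X(w)))}]\big)$. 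Geometrically this is the pushforward along the degree-$l_2$ gluing map: it expresses $ev_3$ of the boundary Gromov--Witten variety $M_{l_2,l_2}(w_{i_1,j_1},w_{i_2,j_2})$, and by the hypothesis $j_1+j_2\le n+2$ together with Lemma~\ref{lem: 3pts corr l2}\,iii) the variety $ev_3(X_{1,w_0}^{l_2}) = \Gamma_{l_2}(w_{i_1,j_1},w_{i_2,j_2})$ is already the translate of a Schubert variety; one then checks, using Lemma~\ref{lem: ev(Gk)=ev(ev-1Gk-1)} and the fact that $ev_2^{l_2}$ of the preimage of a Schubert variety is a Schubert variety (Buch--Mihalcea, Proposition~3.2, cited in the excerpt), that $ev_3(M_{l_2,l_2}(u,v)) = ev_3(M_{l_2}(u,v))$ — i.e.\ composing with an extra degree-$l_2$ bubble does not enlarge the projected variety here, precisely because the relevant Schubert variety is already stable under the operation $Z \mapsto ev_2^{l_2}(ev_1^{-1}(Z))$ once $\min(i_1,?) $ is in the stable range. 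That stability under re-bubbling is the crux; I would verify it directly from the explicit description of Schubert varieties in $(L,H)$-coordinates. Granting it, the projection formula plus Theorem~\ref{th: BM f_* if fiber rational} (the general fiber of the gluing map being rationally connected, as in Proposition~\ref{prop: (O_u, ... O_v)=chi(O_ev_r*W)}) collapses the sum to $\chi(\I_{i,j}\cdot[\O_{\Gamma_{l_2}(w_{i_1,j_1},w_{i_2,j_2})}]) = \langle\O_{w_{i_1,j_1}},\O_{w_{i_2,j_2}},\I_{i,j}\rangle_{l_2}$, which is the claim. The hard part will be pinning down exactly why the re-bubbled projected variety equals the once-projected one under the hypothesis $j_1+j_2\le n+2$ — everything else is delta-chasing.
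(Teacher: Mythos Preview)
Your treatment of (i)--(v) is correct and matches the paper's argument essentially verbatim: identify $ev_2^{l_1}(ev_1^{-1}(X(w_{i,j})))$ explicitly as $X(w_{n,j})$ (or $X(w_{n-1,n})$ when $j=n$), invoke the duality automorphism for (ii), and then just chase deltas for (iii)--(v).

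For (vi), however, you have misjudged the difficulty and chosen an unnecessarily heavy route. The paper does \emph{not} argue geometrically via boundary strata and ``stability under re-bubbling''. Instead it observes that the three-point correlator $\langle \O_{w_{i_1,j_1}}, \O_{w_{i_2,j_2}}, \I_w\rangle_{l_2}$ is already known explicitly from equations~(\ref{eq: corr l2}), (\ref{eq: corr l2 i1+i2=n+1}), (\ref{eq: corr l2 empty}): under the hypothesis $j_1+j_2\le n+2$ it is a single Kronecker delta, namely $\delta_{w,\,w_{i_1+i_2-n,1}}$ when $i_1+i_2>n+1$, $\delta_{w,\,w_{1,2}}$ when $i_1+i_2=n+1$, and zero otherwise. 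So the sum over $w$ collapses to one term exactly as in (iii)--(v), and one simply applies (ii) to that surviving term and checks it reproduces the right-hand side (e.g.\ $\langle\O_{i_1+i_2-n,1},\I_{i,j}\rangle_{l_2}=\delta_{i,i_1+i_2-n}\delta_{j,1}$, which is again (\ref{eq: corr l2})). Your geometric argument would also go through --- the projected variety $X(w_{a,1})$ with $a>1$, or $X(w_{1,2})$, is indeed a fixed point of $Z\mapsto ev_2^{l_2}(ev_1^{-1}(Z))$ by (ii) --- but it reproves what is already available and obscures that (vi) is no harder than (iii)--(v).
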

\begin{proof}
	\begin{enumerate}[label=\roman*)]
		\item According to (\ref{eq: corr l2}) we have \[\langle \O_{w_{i,j}}, \I_w \rangle_{l_1} = \chi \left( [\O_{ev_2^{l_1}(ev_1^{-1}(X(w_{i,j})))}] , \I_w\right). \]
		Note that $ev_2^{l_1}(ev_1^{-1}(X(w_{i,j}))$ parametrizes points $(x,y)$ in $X$ such that \begin{itemize}[label={--}]
			\item $y = [0: \dots :0:x_{n-j+1} \dots x_n]$;
			\item  $x$ lies on a line $L \subset \P^{n-1}$;
			\item  $L_{i}=\{[x_1 \dots x_i:0 \dots 0]\}$ intersects $L$.
		\end{itemize}
		Since there is a line joining any two points in $\P^{n-1}$, $ev_2^{l_1}(ev_1^{-1}(X(w_{i,j}))$ is the set of points $(x,y)$ in $X$ such that $y = [0: \dots :0:x_{n-j+1} \dots x_n]$; this is the Schubert variety $X(w_{n,j})$ if $j \neq n$ and $X(w_{n-1,n})$ if $j=n$.
		\item By symmetry, cf. Part \ref{sec: computing 3pts correlators} \ref{item: symmetry}.
		\item This is obtained by multiplying the expressions found in $i)$ and $ii)$.
		\item This is obtained by multiplying the expressions found in $i)$ and $ii)$.
		\item This is obtained by multiplying the expressions found in $i)$ and $iii)$.
		\item This obtained by multiplying the expression in $i)$ with the expression from (\ref{eq: corr l2}).
	\end{enumerate}
\end{proof}

Recall from Subsection \ref{subsec: geometric interpretation} the following definition. For classes $\d_0$, $\dots$, $\d_k$ in $E(X)$, for elements $u$ and $v$ in $W^P$, we set \[\Gamma_{\d_0, \dots, \d_k}(u,v) := ev_3 \left( ev_1^{-1}(X(u)) \cap ev_2^{-1}(w_0X(v)) \right) \subset X\]
the projected boundary Gromov-Witten variety associated with the classes $\d_i$ and the Schubert varieties $X(u)$ and $w_0 X(v)$. In particular when $v=w_0$ satisfies $X(w_0)=X$, we name \[\Gamma_{\d_0, \dots, \d_k}(u) :=\Gamma_{\d_0, \dots, \d_k}(w_0,u) := ev_3\left(ev_2^{-1}(X(u))\right),\]
and when $u$ satisfies $X(u)=h_i$, we name \[\Gamma_{\d_0, \dots, \d_k}(i,v) := ev_3\left(ev_1^{-1}(h_i) \cap ev_2^{-1}(X(v))\right).\]
\begin{lemme}\label{lem: cancellation G^ab}
	\begin{enumerate}[label=\roman*)]
		\item Let $\d=\sum_{i=1}^k \d_i$ be a degree in $E(X)$.  \[  \sum_{\alpha_i \in W^P}   \langle \O_{u}, \I_{\alpha_2} \rangle_{\d_1}  \dots  \langle \O_{\alpha_k}, \I_{\beta} \rangle_{\d_k} = \chi \left([\O_{\Gamma_{\d_1, \dots, \d_k}(u)}] , \I_\beta \right).\]
		\item $\Gamma_{l_1, l_2, l_1}(u) = X = \Gamma_{l_2,l_1,l_2} (u)=\Gamma_{l_1+l_2,l_1}(u)=\Gamma_{l_1, l_1+l_2}(u)$.
		\item $\Gamma_{dl_1}(u)$ and $\Gamma_{dl_2}(u)$ do not depend on the choice of $d \geq 2$.
		\item Consider a degree $\d=d_1l_1+d_2l_2$ in $E(X)$ such that $d_2 \geq 2$. Then \[\sum_{k=0}^{+ \infty} (-1)^k \sum_{\substack{\d_i \in E(X) \setminus \{0\}\\  \sum_{1 \leq i \leq k} \d_i=\d}}  \sum_{\alpha_i \in W^P}   \langle \O_{\alpha_1}, \I_{\alpha_1} \rangle_{\d_1}  \dots  \langle \O_{\beta}, \I_{\beta} \rangle_{\d_k} = 0.\]
	\end{enumerate}
\end{lemme}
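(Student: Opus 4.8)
\textbf{Plan of proof for Lemma \ref{lem: cancellation G^ab}.}

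The proof proceeds point by point, each reducing to tools already assembled earlier in the chapter. For $i)$, the plan is to iterate Lemma \ref{lem: 2 points corr} in exactly the same way Proposition \ref{prop: sum corr=image Gamma} $(2)$ was proven: starting from $ev_2^{\d_1}(ev_1^{-1}(X(u)))$, which is a Schubert variety $X(\beta_1)$ by \cite{buch2011} Proposition 3.2, one pushes forward under $ev_2^{\d_2}\circ ev_1^{-1}$, uses $G$-equivariance of the evaluation maps to keep track of translates, and notes that each $2$-point correlator $\langle \O_{\alpha}, \I_{\alpha'}\rangle_{\d_j}$ in the sum collapses the index $\alpha'$ to the Schubert class attached to $ev_2^{\d_j}(ev_1^{-1}(X(\alpha)))$. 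The telescoping leaves precisely $\chi([\O_{\Gamma_{\d_1,\dots,\d_k}(u)}],\I_\beta)$, where $\Gamma_{\d_1,\dots,\d_k}(u) = \Gamma_{\d_1,\dots,\d_k}(w_0,u)$ is the projected boundary Gromov-Witten variety with first marked point free (i.e. constrained only to $X(w_0)=X$).

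For $ii)$, I would argue that whenever the degree of some segment is $l_1+l_2$ or when two consecutive segments carry $l_1$ and $l_2$, one can reach any point of $X$: this is the content of Proposition \ref{prop: when Pixev is surjective} $iv)$ together with Lemma \ref{lem: 3 pts correlators l1+l2} $iv)$, which show that evaluating the third marked point of a degree-$(l_1+l_2)$ Gromov-Witten variety constrained by one codimension-one Schubert class already surjects onto $X$; concatenating stable maps to build the curves of class $l_1,l_2,l_1$ (resp. $l_2,l_1,l_2$) through prescribed points is exactly the kind of gluing performed in Proposition \ref{prop: when Pixev is surjective}. Since $X$ is smooth, $[\O_X]=\O_{w_0}$ in $K(X)$. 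For $iii)$, the independence on $d\ge 2$ follows from Lemma \ref{lem: image of Ev-1(Xu)} $i)$ and $ii)$: those computations show $ev_2^{l_1}(ev_1^{-1}(X(w_{i,j})))$ is a fixed Schubert variety independent of the degree (one only needs the existence of a line, resp. a curve of class $l_2$, joining two points, which holds for every $d\ge 1$), and the same stabilization persists under further pushforwards; alternatively one invokes that for $d\ge 2$ the map $ev\colon\overline{\mathcal M_{0,2}}(X,dl_1)\to X^2$ is surjective so $ev_2(ev_1^{-1}(X(u)))$ saturates.

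The real work is $iv)$, and it is where I expect the main obstacle. The plan is to expand the left-hand side using $i)$: the inner sum over a fixed decomposition $\d = \sum \d_i$ equals $\chi([\O_{\Gamma_{\d_1,\dots,\d_k}(u)}],\I_\beta)$, so after summing over $\beta$ the whole expression becomes an alternating sum $\sum_k (-1)^k \sum_{\sum\d_i=\d}[\O_{\Gamma_{\d_1,\dots,\d_k}(u)}]$ in $K(X)$. By $ii)$ and $iii)$, the variety $\Gamma_{\d_1,\dots,\d_k}(u)$ depends on the decomposition $(\d_1,\dots,\d_k)$ only through a very coarse invariant: essentially whether the running partial sums have reached the threshold past which the projected variety becomes all of $X$ (or a fixed smaller Schubert variety). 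Grouping decompositions by this invariant, the coefficient attached to each distinct $\Gamma$ is a signed count of decompositions of a fixed type, and these counts vanish by the combinatorial identity Lemma \ref{lem: 0 k} (the hypothesis $d_2\ge 2$ is what makes $\delta_0 < \delta - 1$ achievable for the relevant sub-sums, so Lemma \ref{lem: 0 k} applies). The delicate step is organizing the bookkeeping: one must check that for $d_2 \ge 2$ every $\Gamma_{\d_1,\dots,\d_k}(u)$ that can occur is already stabilized (so that $ii)$ and $iii)$ cover all cases), and that the partition of decompositions into classes is compatible with the sign $(-1)^k$ so that Lemma \ref{lem: 0 k} can be applied class by class; isolating the possible "small" decompositions where stabilization has not yet occurred, and verifying directly that their contributions also cancel, will be the most technical part.
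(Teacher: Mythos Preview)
Your plan is correct and matches the paper's proof essentially step for step: for $i)$ the paper simply observes that $\langle [\O_X],\alpha,\beta\rangle_{\d}=\langle\alpha,\beta\rangle_{\d}$ and then invokes Proposition \ref{prop: sum corr=image Gamma} directly (rather than redoing the iteration), for $ii)$ and $iii)$ it cites Lemma \ref{lem: image of Ev-1(Xu)}, and for $iv)$ it expands via $i)$--$iii)$ into three groups of terms (those with $\Gamma=X$, those ending in a block of $l_1$'s, and those ending in a block of $l_2$'s) and kills each group with Lemma \ref{lem : 0 k}. The only shortcut you miss is that $i)$ is already the special case $X(u)=X$ of Proposition \ref{prop: sum corr=image Gamma}, so no new induction is needed.
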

\begin{proof}	\begin{enumerate}[label=\roman*)]
		\item This is a direct consequence of the equality $\langle [\O_X], \alpha, \beta \rangle_{\d} = \langle \alpha, \beta \rangle_{\d}$ for all $\alpha$, $\beta$ in $K(X)$ and of Proposition \ref{prop: sum corr=image Gamma}.
		\item Cf. Lemma \ref{lem: image of Ev-1(Xu)}.
		\item Cf. Lemma \ref{lem: image of Ev-1(Xu)}.
		\item By applying $i)$, $ii)$ and $iii)$ we obtain \begin{align*}\sum_{k=0}^{+ \infty} (-1)^k &\sum_{\substack{\d_i \in E(X) \setminus \{0\}\\  \sum_{1 \leq i \leq k} \d_i=\d}}  \sum_{\alpha_i \in W^P}   \langle \O_{\alpha_1}, \I_{\alpha_2} \rangle_{\d_1}  \dots  \langle \O_{\alpha_k}, \I_{\beta} \rangle_{\d_k} \\
		&= \sum_{k=0}^{+ \infty} (-1)^k \sum_{\substack{\d_i \in E(X) \setminus \{0\}\\  \sum_{1 \leq i \leq k} \d_i=\d}} \chi \left([\O_{\Gamma_{\d_1, \dots, \d_k}(u)}] , \I_\beta \right)\\
		&=   \chi \left([\O_{X}] , \I_\beta \right) \sum_{k=0}^{+ \infty} (-1)^k  \# \left\{ \d_i \in E(X) \setminus \{0\}, \:  \sum_{1 \leq i \leq k} \d_i=\d \right\} \\ &+ \left( \chi ([\O_{\Gamma_{d_1 l_1, d_2l_2}}], \I_w) -  \chi \left([\O_{X}] , \I_\beta \right) \right)   \sum_{k=0}^{+ \infty} (-1)^{k}  \# \left\{(d_1^i, d_2^j) \in \mathbb{N}^k, \: d_1 = \sum_{1 \leq i \leq k} d_1^i, \: d_2 = \sum_{1 \leq i \leq k} d_2^i \right\} \\ & + \left( \chi ([\O_{\Gamma_{d_2 l_2, d_1l_1}}], \I_w) -  \chi \left([\O_{X}] , \I_\beta \right) \right)   \sum_{k=0}^{+ \infty} (-1)^{k}  \#\left\{(d_1^i, d_2^j) \in \mathbb{N}^k, \: d_1 = \sum_{1 \leq i \leq k} d_1^i, \: d_2 = \sum_{1 \leq i \leq k} d_2^i\right\}  \end{align*}
		Finally, since $d_1 \geq 2$ or $d_2 \geq 2$, Lemma \ref{lem : 0 k} yields the annulation of the three components of the sum. The annulation of the first component comes from applying the lemma to $d_0=0=\delta_0$ and $\d_i = (d_i, \delta_i)$. The annulation of the second and third part of the sum comes from applying the lemma to $d_0=0=\delta_0$ and $d_i=d_1^i$ and $\delta_i=0$  if $d_1 \geq 2$, and to $d_0=0=\delta_0$ and $d_i=0$ and $\delta_i=d_2^i$  if $d_2 \geq 2$.
	\end{enumerate}
\end{proof}

We denote by \begin{align*} P_{\d}&(i, v) &\\
&:= \sum_{k=0}^{+ \infty} (-1)^k \sum_{\substack{\d_i \in E(X) \setminus \{0\}, \d_0 \in E(X)\\ \d_0 + \sum_{1 \leq i \leq k} \d_i=\d}}  Q^{\d_0 + \d_1 + \dots + \d_k} \sum_{\alpha_i \in W^P}  \langle \O_{h_i}, \O_v, \I_{\alpha_1} \rangle_{\d_0} \langle \O_{\alpha_1}, \I_{\alpha_1} \rangle_{\d_1}  \dots  \langle \O_{\alpha_k}, \I_{w} \rangle_{\d_k} \O_w & \in K(X) \otimes \Q_{\d}[Q] \end{align*}
the term of $\O_{h_i} \star \O_v$ of degree $\d$. 
Let $\d$ be a degree in $E(X)$ such that $\d \geq 2 l_1$ or $\d \geq  2 l_2$. According to Lemma \ref{lem: cancellation G^ab} most of the elements in $P_{\d}(u, v)$ cancel. More precisely, for a given $\d_0$ satisfying $\d_0 \leq \d - 2l_1$ or $\d_0 \leq \d -2 l_2$, all terms associated with decompositions $\d = \d_0 + \sum_{1 \leq i \leq k} \d_i$ cancel together. Suppose $d_1 \geq 2$ and $d_2 \geq 2$. The only terms left are then the ones associated with $\d_0 = \d$, $\d_0 = \d-l_1$, $\d_0= \d-l_2$ and $\d_0 = \d - l_1-l_2$. Using Proposition \ref{prop: sum corr=image Gamma} we obtain \begin{align*} P_{\d}(u, v) 
&= Q^{\d} \left(   [\O_{ev_3(\Gamma_{\d}(u,v))}]   -  [\O_{ev_3(\Gamma_{\d-l_1,l_1}(u,v))}] - [\O_{ev_3(\Gamma_{\d-l_2,l_2}(u,v))}] \right. \\ & \:\: \left. - [\O_{ev_3(\Gamma_{d-l_1-l_2, l_1 + l_2}(u,v))}] + [\O_{ev_3(\Gamma_{d-l_1-l_2, l_1, l_2}(u,v))}] + [\O_{ev_3(\Gamma_{d-l_1-l_2, l_2, l_1}(u,v))}]
\right), \end{align*}
which is equal to $0$ since according to Lemma \ref{lem: cancellation G^ab}  for $d_1 \geq 2$ and $d_2 \geq 2$ the projected varieties are equal to $X$. Now suppose $d_2 \geq 2$ and $d_1=0,1$. The only terms left in $P_{\d}(i, v)$ are those associated with $\d_0 = \d$ and $\d_0 = \d-l_2$. We obtain \begin{align*} P_{\d}(i, v) 
&= Q^{\d} \left(   [\O_{ev_3(\Gamma_{\d}(i,v))}]   -  [\O_{ev_3(\Gamma_{\d-l_2,l_2}(i,v))}]
\right), \end{align*} which is equal to zero since $1 + j_2 \leq n+2$ according to Lemma \ref{lem: 3pts corr l2}. Furthermore by symmetry if $d_1 \geq 2$, since $n+i_2 \geq n+2$ we have $P_{\d}(i, v)$ is equal to $0$.

Hence for any element $v$ in $W/W_P$ all terms of degree $\d \geq l_1$ and of degree $\d \geq l_2$ cancel in the product $\O_{h_i} \star \O_{i_2,j_2}$, which finally yields \begin{align*}
\O_{h_i} \star \O_{i_2,j_2}= \O_{h_i} \cdot \O_v + Q_1 P_{l_1}(i,v) + Q_2 P_{l_2}(i,v) + Q_1 Q_2 P_{l_1+l_2}(i,v).
\end{align*}

\section{A Chevalley formula in $QK_s(X)$}\label{sec: Chevalley formula}
Recall $h_1:=X(w_{n-1,1})$ and $h_2 := X(w_{n,2})$ are the two Schubert varieties whose classes generate $A^1(X) \simeq \Z^2$. We name $\O_{h_i} := [\O_{h_i}]$ the two Schubert classes associated. Let $1 \leq k,p \leq n$, where $k \neq p$. This part is dedicated to proving the following expressions. 
\begin{proposition}\label{prop: chevalley formula QK(X)}
	\begin{align}\label{eq: Oh1 star Ok,p}
	\O_{h_1} \star \O_{k,p} &=\left \{ \begin{array}{lr} Q_1\O_{n-1,n} +Q_1Q_2 \left( [\O_X] - \O_{h_1} \right) & \mathrm{if} \: k=1, p=n\\
	Q_1 \O_{n,p} & \mathrm{if} \:k=1, \: p<n\\
	\O_{1,2} + Q_1 \left( [\O_X] - \O_{h_1} \right)  & \mathrm{if} \: k=2, p=1\\
	\O_{k-1,p} & \mathrm{if} \: k>1, \: k \neq p+1\\
	\O_{p-1,p} + \O_{p,p+1} - \O_{p-1,p+1} & \mathrm{if} \: 1< p<n-1, \: k=p+1\\
	\end{array}\right.
	\end{align}
	\begin{align*}
	\O_{h_2} \star \O_{k,p} &=\left \{ \begin{array}{lr} Q_2\O_{1,2} +Q_1Q_2 \left( [\O_X] - \O_{h_2} \right) & \mathrm{if} \: k=1, p=n\\
	Q_2 \O_{k,1} & \mathrm{if} \:k>1, \: p=n\\
	\O_{n-1,n} + Q_2 \left( [\O_X] - \O_{h_2} \right)  & \mathrm{if} \: k=n, p=n-1\\
	\O_{k,p+1} & \mathrm{if} \: p<n \: k \neq p+1\\
	\O_{p,p+1} + \O_{p-1,p} - \O_{p-1,p+1} & \mathrm{if} \: 1< k<n-1, \: k=p+1\\
	\end{array}\right.
	\end{align*}
\end{proposition}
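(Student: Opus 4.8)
The strategy is to combine three ingredients that are all available at this point of the text: (1) the classical product formula in $K(Fl_{1,n-1})$ from Proposition \ref{prop : produit dans K(X)}, which gives the $Q_1^0Q_2^0$ term $\O_{h_i}\cdot\O_{k,p}$; (2) the degree bound from Section \ref{sec: degree}, which tells us that $\O_{h_i}\star\O_{k,p}$ only involves the monomials $1$, $Q_1$, $Q_2$, $Q_1Q_2$, i.e.\ $\O_{h_i}\star\O_{k,p}=\O_{h_i}\cdot\O_{k,p}+Q_1 P_{l_1}(i,(k,p))+Q_2 P_{l_2}(i,(k,p))+Q_1Q_2 P_{l_1+l_2}(i,(k,p))$; and (3) the explicit geometric description of the polynomials $P_\d$ obtained in Subsection \ref{subsec: geometric interpretation} in terms of structure sheaves of projected boundary Gromov--Witten varieties $\Gamma_{\d_0,\dots,\d_k}(h_i,(k,p))$. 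So the proof reduces to computing, for each of the two codimension-one classes $h_1=X(w_{n-1,1})$, $h_2=X(w_{n,2})$ and for each Schubert class $\O_{k,p}$, the finitely many projected Gromov--Witten varieties appearing in $P_{l_1}$, $P_{l_2}$, $P_{l_1+l_2}$, and then taking structure-sheaf classes via Proposition \ref{prop : produit dans K(X)} (or more directly using Lemmas \ref{lem: image of Ev-1(Xu)} and \ref{lem: equality cup}).

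First I would handle the classical term: apply Proposition \ref{prop : produit dans K(X)} with $(i,j)=(n-1,1)$ (resp.\ $(n,2)$) and $(k,p)$ arbitrary. Since $h_1$ and $h_2$ are Chevalley classes, this should reproduce exactly the ``$Q_1^0Q_2^0$'' parts of each case in the statement — and as a sanity check one may compare with the classical Chevalley formula as in Remark \ref{Rem : prod Chevalley}. Next, for the $Q_1$ term I would use Lemma \ref{lem: image of Ev-1(Xu)}(i): the two-point correlator $\langle\O_{i,j},\I_{s,t}\rangle_{l_1}$ picks out the Schubert variety $ev_2^{l_1}(ev_1^{-1}(X(i,j)))=X(w_{n,j})$ (or $X(w_{n-1,n})$ if $j=n$), while the three-point correlator at degree $l_1$ is governed by Proposition \ref{prop: correlators Pn} and the surjectivity results of Proposition \ref{prop: when Pixev is surjective}. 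Combining these through the geometric description in Subsection \ref{subsec: geometric interpretation} gives $P_{l_1}(i,(k,p))$: it is $[\O_{ev_3(\Gamma_{l_1}(h_i,(k,p)))}]-[\O_{ev_3(\Gamma_{0,l_1}(h_i,(k,p)))}]$, each a (translate of a) Schubert class computable from the line-geometry of $\P^{n-1}$. The analogous computation with $l_2$ in place of $l_1$, using Lemma \ref{lem: image of Ev-1(Xu)}(ii), Lemma \ref{lem: 3pts corr l2}, and the symmetry of Part \ref{sec: computing 3pts correlators}\ref{item: symmetry}, gives $P_{l_2}$; and the $Q_1Q_2$ term is obtained from Proposition \ref{prop: sum corr=image Gamma} applied to the degrees $(l_1+l_2)$, $(l_1,l_2)$, $(l_2,l_1)$, $(0,l_1+l_2)$, $(0,l_1,l_2)$, $(0,l_2,l_1)$, yielding the alternating combination written at the end of Subsection \ref{subsec: geometric interpretation}. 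Most of these projected varieties collapse to $X$ or to $h_i$ or to points, which is where the special values like $Q_1Q_2([\O_X]-\O_{h_1})$ appear.

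The case analysis is bookkeeping-heavy but routine once the correlator values are in hand; I expect the main obstacle to be \textbf{the $k=p+1$ cases}, where $\Gamma_{0,\dots}(h_i,(k,p))=h_i\cap w_0X(k,p)$ is no longer irreducible (this is exactly the ``else'' branch of Proposition \ref{prop: degenerating} and of Proposition \ref{prop : produit dans K(X)}), so one gets the three-term expression $\O_{p-1,p}+\O_{p,p+1}-\O_{p-1,p+1}$ (resp.\ $\O_{p,p+1}+\O_{p-1,p}-\O_{p-1,p+1}$) via Lemma \ref{lem: equality cup}; here one must check carefully that no quantum correction survives, i.e.\ that $P_{l_1}$, $P_{l_2}$, $P_{l_1+l_2}$ all vanish in these cases, which requires verifying that the relevant $\Gamma_{l_1}$ and $\Gamma_{0,l_1}$ coincide (and likewise for $l_2$). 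A secondary subtlety is the boundary cases $(k,p)=(1,n)$, $(2,1)$, $(n,n-1)$, where a genuine quantum term of the form $Q_i\O_{\cdots}$ or $Q_1Q_2([\O_X]-\O_{h_i})$ appears: here I would compute $ev_3(\Gamma_{l_i}(h_i,(1,n)))$ directly from the description of lines through the relevant Schubert varieties in $\P^{n-1}$, checking that $[\O_{h_i}]\cdot[\O_{w_{1,n}}]$ is supported on exactly the right locus so the non-quantum part is as claimed. Throughout, the symmetry $\varphi$ of Part \ref{sec: computing 3pts correlators}\ref{item: symmetry} (exchanging $l_1\leftrightarrow l_2$ and $w_{i,j}\leftrightarrow w_{n-j+1,n-i+1}$) lets me deduce the second displayed formula ($\O_{h_2}\star\O_{k,p}$) from the first, halving the work.
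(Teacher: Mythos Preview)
Your plan is correct and matches the paper's approach: classical product from Proposition~\ref{prop : produit dans K(X)}, degree bound from Section~\ref{sec: degree} reducing to $P_{l_1}$, $P_{l_2}$, $P_{l_1+l_2}$, explicit computation of these three pieces, and the duality symmetry of Part~\ref{sec: computing 3pts correlators}\ref{item: symmetry} to deduce the $\O_{h_2}$ formula from the $\O_{h_1}$ one.

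One difference in execution worth noting: the paper does \emph{not} carry out step (3) via the geometric $\Gamma_{\d_0,\dots,\d_k}$-packaging you describe. Instead it works directly with the correlator expansion
\[
P_{l_i}(u,v)=\sum_w\langle\O_u,\O_v,\I_w\rangle_{l_i}\O_w-\sum_{w,w'}\langle\O_u,\O_v,\I_w\rangle_0\,\langle\O_w,\I_{w'}\rangle_{l_i}\O_{w'}
\]
(and the six-term analogue at degree $l_1+l_2$), inserting the correlator values computed in Section~\ref{sec: 3 points corr} and Lemma~\ref{lem: image of Ev-1(Xu)}. This bypasses precisely the $k=p+1$ obstacle you anticipate: Proposition~\ref{prop: sum corr=image Gamma} explicitly excludes $k=p+1$, so your $\Gamma$-route would need a separate argument there, whereas the raw correlator formula applies uniformly---the degree-zero input $\langle\O_{h_i},\O_{p+1,p},\I_w\rangle_0$ simply contributes the three terms from the ``else'' branch of Proposition~\ref{prop : produit dans K(X)}, and one verifies $P_{l_1}=P_{l_2}=P_{l_1+l_2}=0$ in those cases by direct cancellation. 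Your expectation that no quantum correction survives for $k=p+1$ is correct; the paper's framing just makes that verification mechanical rather than requiring a workaround.
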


\subsection{Computing terms of degree $l_2$.}\label{sec: compute terms l2 P(hi)}
\textsc{Computing $\O_{h_1} \star \O_v$.}
According to (\ref{eq: corr l2 empty}) if $n-1+k<n+1$, i.e. if $k=1$, we have \[\langle \O_{h_1}, \O_{k,p}, \I_{i,j} \rangle_{l_2} =0\]
According to (\ref{eq: corr l2 i1+i2=n+1}) if $n-1+k=n+1$, i.e. if $k=2$ we have \[\langle \O_{h_1}, \O_{k,p}, \I_{i,j} \rangle_{l_2} =\delta_{i,1} \delta_{j,2}.\]
According to (\ref{eq: corr l2}) if $k >2$ we have \[\langle \O_{h_1}, \O_{k,p}, \I_{i,j} \rangle_{l_2} =\delta_{i,k-1} \delta_{j,1}.\]
Furthermore, the product in $K$-theory is given by $$\O_u \cdot \O_v = \sum_{w \in W^P} \chi(\O_u \cdot \O_v \cdot \I_w) \O_w = \sum_{w \in W^P} \langle \O_u, \O_v, \I_w \rangle_{0} \O_w.$$ Hence Proposition \ref{prop : produit dans K(X)} directly yields the invariants $\langle \O_{h_1}, \O_{k,p}, \I_{i,j} \rangle_0$. \begin{align*}
\langle \O_{h_1}, \O_{k,p}, \I_{i,j} \rangle_0 &=0   \: \: &\mathrm{if} \: k=1\\ \langle \O_{h_1}, \O_{k,p}, \I_{i,j} \rangle_0 &=\delta_{i,k-1} \delta_{j,p} & \mathrm{if} \: k \neq p+1, \: k \neq 1\\
\langle \O_{h_1}, \O_{k,p}, \I_{i,j} \rangle_0 &=\delta_{i,k-2} \delta_{j,p}+\delta_{i,k-1} \delta_{j,p+1}-\delta_{i,k-2} \delta_{j,p+1} & \mathrm{if} \: k = p+1.
\end{align*}
Recall from Lemma \ref{lem: image of Ev-1(Xu)} that for any $1 \leq s,t \leq n$, $s \neq t$, $\langle \O_{i,j}, \I_{s,t} \rangle_{l_2} = \delta_{s,i} \delta_{t,1} (1- \delta_{i,1}) + \delta_{i,1}  \delta_{s,1} \delta_{t,2}$. We deduce \[
\sum_{w \in W^P} \langle \O_{h_1}, \O_{k,p}, \I_{w} \rangle_{0} \langle \O_w, \I_{w_{i,j}} \rangle_{l_2} = \left\{ \begin{array}{l r} 0 \:& \mathrm{if} \: k=1\\  \delta_{i,1} \delta_{j,2} \: &\mathrm{if} \: k=2 \\\delta_{i,k-1} \delta_{j,1} \: &\mathrm{if} \: k>2    \end{array} \right. 
\]
Moreover recall that the term of degree $l_i$ in $\O_u \star \O_v$ is given by \begin{align*}
P_{l_i}(u,v) = \sum_{w \in W^P} \langle \O_{u}, \O_{v}, \I_{w} \rangle_{l_i} \O_w - \sum_{w, w' \in W^P} \langle \O_{u}, \O_{v}, \I_{w} \rangle_{0} \langle \O_w, \I_{w'} \rangle_{l_i} \O_{w'}.
\end{align*}
We obtain for all $v$ in $W^P$ \begin{equation}\label{eq: H1 star l2}
P_{l_2}(h_1,v) =0.
\end{equation}
\textsc{Computing $\O_{h_2} \star \O_v$.}
According to (\ref{eq: corr l2 i1+i2=n+1}) if $n+k=n+1$, i.e. if $k=1$ we have \[\langle \O_{h_2}, \O_{k,p}, \I_{i,j} \rangle_{l_2} =\delta_{i,1} \delta_{j,2}.\]
According to (\ref{eq: corr l2}) if $k >1$ we have \[\langle \O_{h_2}, \O_{k,p}, \I_{i,j} \rangle_{l_2} =\delta_{i,k} \delta_{j,1}.\]
We now obtain the invariants $\langle \O_{h_1}, \O_{k,p}, \I_{i,j} \rangle_0$ from Proposition \ref{prop : produit dans K(X)}. \begin{align*}
\langle \O_{h_2}, \O_{k,p}, \I_{i,j} \rangle_0 &=0   \: \: &\mathrm{if} \: p=n\\ \langle \O_{h_2}, \O_{k,p}, \I_{i,j} \rangle_0 &=\delta_{i,k} \delta_{j,p+1} & \mathrm{if} \: k \neq p+1, \: k \neq 1\\
\langle \O_{h_2}, \O_{k,p}, \I_{i,j} \rangle_0 &=\delta_{i,p} \delta_{j,p+1}+\delta_{i,p+1} \delta_{j,p+2}-\delta_{i,p} \delta_{j,p+2} & \mathrm{if} \: k = p+1.
\end{align*}
Recall from Lemma \ref{lem: image of Ev-1(Xu)} that for any $1 \leq s,t \leq n$, $s \neq t$, $\langle \O_{i,j}, \I_{s,t} \rangle_{l_2} = \delta_{s,i} \delta_{t,1} (1- \delta_{i,1}) + \delta_{i,1}  \delta_{s,1} \delta_{t,2}$. We deduce for $p<n$ \[
\sum_{w \in W^P,\: 1 \leq i,j \leq n, \: i\neq j} \langle \O_{h_2}, \O_{k,p}, \I_{w} \rangle_{0} \langle \O_w, \I_{w_{i,j}} \rangle_{l_2} = \left\{ \begin{array}{l r}  \delta_{i,1} \delta_{j,2} \: &\mathrm{if} \: k=1 \\\delta_{i,k} \delta_{j,1} \: &\mathrm{if} \: k>1 \: \mathrm{and} \: k \neq p+1 \\\delta_{i,p+1} \delta_{j,1} \: &\mathrm{if} \: 2 < k <n \: \mathrm{and} \: k = p+1 \\\delta_{i,2} \delta_{j,1} \: &\mathrm{if} \: k=2 \: \mathrm{and} \: p=1 \\\delta_{i,n-1} \delta_{j,1} \: &\mathrm{if} \: k=n \: \mathrm{and} \: p=n-1 \end{array} \right. 
\]
We obtain  for $2 \leq k,p \leq n$, $k \neq p$ \begin{align}\label{eq: H2 star l2}
\left\{ \begin{array}{lr} P_{l_2}(h_2,w_{1,p}) = \delta_{p,n} Q_2 \O_{1,2} \\
P_{l_2}(h_2,w_{k,n}) = Q_2 \O_{k,1} \:\:\:\:\:\:\:\:\:\:\:\:\:\:\:\:\:\:\:\:\:& \mathrm{if} \: k>1\\
P_{l_2}(h_2,w_{n,n-1}) = Q_2 (\O_{n,1}-\O_{n-1,1}) = Q_2 ([O_X] -\O_{h_1})\\
P_{l_2}(h_2,w_{k,p}) =0 \:\:\:\:\:\:\:\:\:\:\:\:\:\:\:\:\:\:\:\:\: &\mathrm{if} \: (1 <k <n \: \mathrm{and} \: p<n) \: \mathrm{or} \: (k=n \: \mathrm{and} \: p<n-1) \end{array}\right.
\end{align}
\subsection{Computing terms of degree $l_1$.} Recall from Part \ref{sec: computing 3pts correlators} \ref{item: symmetry} that correlators of degree $l_1$ associated with $h_1$ and $w_{k,p}$ are equal to correlators of degree $l_2$ associated with $h_2$ and $w_{n-p+1,n-k+1}$. We deduce from (\ref{eq: H1 star l2}) \begin{equation}\label{eq: H2 star l1}
P_{l_1}(h_2,w_{k,p}) =0,
\end{equation}
and we deduce from (\ref{eq: H2 star l2})  \begin{align}\label{eq: H1 star l1}
\left\{ \begin{array}{llr} &P_{l_1}(h_1,w_{k,n}) = \delta_{k,1} Q_1 \O_{n-1,n}& \\
&P_{l_1}(h_1,w_{1,p}) = Q_1 \O_{n,p} & \mathrm{if} \: p<n\\
&P_{l_1}(h_1,w_{2,1}) = Q_1 (\O_{n,1}-\O_{n,2}) = Q_1 ([O_X] -\O_{h_2})&\\
&P_{l_1}(h_1,w_{k,p}) =0 & \mathrm{if} \: (2 <k  \: \mathrm{and} \: p<n) \: \mathrm{or} \: (k=2 \: \mathrm{and} \: p>1)  \end{array}\right.
\end{align}

\subsection{Computing terms of degree $l_1+l_2$.}
According to (\ref{eq: corr l1+l2}), for all $1 \leq i,j \leq n$, $i \neq j$ \[ \langle \O_{h_1}, \O_{k,p}, \I_{i,j} \rangle_{l_1+l_2} = \delta_{\min(n,k+n-1),i}\delta_{n-\min(2n+1-p,n)+1,j} = \delta_{n,i} \delta_{1,j}.\]
According to (\ref{eq: corr l2 i1+i2=n+1}) if $n+k=n+1$, i.e. if $k=1$ we have \[\langle \O_{h_2}, \O_{k,p}, \I_{i,j} \rangle_{l_2} =\delta_{i,1} \delta_{j,2}.\]
According to (\ref{eq: corr l2}) if $k >1$ we have \[\langle \O_{h_2}, \O_{k,p}, \I_{i,j} \rangle_{l_2} =\delta_{i,k} \delta_{j,1}.\]
By symmetry we have \begin{align*}
&\langle \O_{h_1}, \O_{k,p}, \I_{i,j} \rangle_{l_1} =\delta_{j,n} \delta_{i,n-1} & \mathrm{if} \: p=n\\
&\langle \O_{h_1}, \O_{k,p}, \I_{i,j} \rangle_{l_1} =\delta_{j,p} \delta_{i,n} & \mathrm{if} \: p<n
\end{align*}
Recall from Lemma \ref{lem: image of Ev-1(Xu)} that for any $1 \leq s,t \leq n$, $s \neq t$, $\langle \O_{i,j}, \I_{s,t} \rangle_{l_2} = \delta_{s,i} \delta_{t,1} (1- \delta_{i,1}) + \delta_{i,1}  \delta_{s,1} \delta_{t,2}$. We deduce \[
\sum_{w \in W^P} \langle \O_{h_1}, \O_{k,p}, \I_{w} \rangle_{l_1} \langle \O_w, \I_{w_{i,j}} \rangle_{l_2} = \left\{ \begin{array}{l r}   \delta_{i,n-1} \delta_{j,1} \: &\mathrm{if} \: p=n \\\delta_{i,n} \delta_{j,1} \: &\mathrm{if} \: p<n   \end{array} \right. 
\]
If $k=1$, we have \[\langle \O_{h_1}, \O_{k,p}, \I_{i,j} \rangle_{l_2} =0\]
If $k=2$ we have \[\langle \O_{h_1}, \O_{k,p}, \I_{i,j} \rangle_{l_2} =\delta_{i,1} \delta_{j,2}.\]
If $k >2$ we have \[\langle \O_{h_1}, \O_{k,p}, \I_{i,j} \rangle_{l_2} =\delta_{i,k-1} \delta_{j,1}.\]
Recall from Lemma \ref{lem: image of Ev-1(Xu)} that for any $1 \leq s,t \leq n$, $s \neq t$, $\langle \O_{i,j}, \I_{s,t} \rangle_{l_1} = \delta_{s,n} \delta_{t,j} (1- \delta_{j,n}) + \delta_{j,n} \delta_{s,n-1} \delta_{t,n}$. We deduce \[
\sum_{w \in W^P} \langle \O_{h_1}, \O_{k,p}, \I_{w} \rangle_{l_2} \langle \O_w, \I_{w_{i,j}} \rangle_{l_1} = \left\{ \begin{array}{l r} 0 \:& \mathrm{if} \: k=1\\  \delta_{i,n} \delta_{j,2} \: &\mathrm{if} \: k=2 \\\delta_{i,n} \delta_{j,1} \: &\mathrm{if} \: k>2    \end{array} \right. 
\]
According to Proposition \ref{prop : produit dans K(X)} we have \begin{align*}
\langle \O_{h_1}, \O_{k,p}, \I_{i,j} \rangle_0 &=0   \: \: &\mathrm{if} \: k=1\\ \langle \O_{h_1}, \O_{k,p}, \I_{i,j} \rangle_0 &=\delta_{i,k-1} \delta_{j,p} & \mathrm{if} \: k \neq p+1, \: k \neq 1\\
\langle \O_{h_1}, \O_{k,p}, \I_{i,j} \rangle_0 &=\delta_{i,k-2} \delta_{j,p}+\delta_{i,k-1} \delta_{j,p+1}-\delta_{i,k-2} \delta_{j,p+1} & \mathrm{if} \: k = p+1.
\end{align*}
Furthermore,  according to Lemma \ref{lem: image of Ev-1(Xu)} for any $1 \leq i_2, j_2\leq n$, $i_2 \neq j_2$,  \begin{align*} \sum_{1 \leq i_1,j_1 \leq n, \: i_1 \neq j_1} \langle \O_{i,j}, \I_{i_1,j_1} \rangle_{l_1}  \langle \O_{i_1,j_1}, \I_{i_2,j_2} \rangle_{l_2} &= (1- \delta_{j,n})\delta_{i_2,n} \delta_{j_2,1} + \delta_{j,n} \delta_{i_2,n-1} \delta_{j_2,1}. \end{align*}
Hence  \[
\sum_{w, w' \in W^P} \langle \O_{h_1}, \O_{k,p}, \I_{w} \rangle_{0} \langle \O_w, \I_{w'} \rangle_{l_1} \langle \O_{w'}, \I_{i,j} \rangle_{l_2} = \left\{ \begin{array}{l r} 0 \:& \mathrm{if} \: k=1\\  \delta_{i,n-1} \delta_{j,1} \: &\mathrm{if} \: p=n, \: k > 1 \\\delta_{i,n} \delta_{j,1} \: &\mathrm{if} \: k>1 \: \mathrm{and} \: p <n   \end{array} \right. 
\]
Moreover  according to Lemma \ref{lem: image of Ev-1(Xu)} for any $1 \leq i_2, j_2\leq n$, $i_2 \neq j_2$, \begin{align*} \sum_{1 \leq i_1,j_1 \leq n, \: i_1 \neq j_1} \langle \O_{i,j}, \I_{i_1,j_1} \rangle_{l_2}  \langle \O_{i_1,j_1}, \I_{i_2,j_2} \rangle_{l_1} = (1- \delta_{i,1})\delta_{i_2,n} \delta_{j_2,1} + \delta_{i,1} \delta_{i_2,n} \delta_{j_2,2}. \end{align*} 
Hence  \[
\sum_{w, w' \in W^P,\: 1 \leq i,j \leq n, \: i\neq j} \langle \O_{h_1}, \O_{k,p}, \I_{w} \rangle_{0} \langle \O_w, \I_{w'} \rangle_{l_2} \langle \O_{w'}, \I_{i,j} \rangle_{l_1} = \left\{ \begin{array}{l r} 0 \:& \mathrm{if} \: k=1\\  \delta_{i,n} \delta_{j,2} \: &\mathrm{if} \: k=2 \\\delta_{i,n} \delta_{j,1} \: &\mathrm{if} \: k>2   \end{array} \right. 
\]
Finally according to Lemma \ref{lem: 2 points corr} and Lemma \ref{lem: 3 pts correlators l1+l2} we have $\langle \O_u, \I_w \rangle_{l_1+l_2} = \chi \left( [\O_X] \cdot \I_w \right)$ hence \begin{align*} \sum_{u \in W^P} \langle \O_{h_1}, \O_{k,p}, \I_{u} \rangle_{0}  \langle \O_{u}, \I_{i,j} \rangle_{l_1+l_2} = \left\{ \begin{array}{l r} 0 \:&\mathrm{if} \: k=1\\ \delta_{i,n} \delta_{j,1} & \mathrm{else} \end{array} \right. \end{align*} 

We are now ready to compute $P_{l_1+l_2}(h_1,W_{k,p})$. Moreover recall that the term of degree $(l_1+l_2)$ in $\O_{h_1} \star \O_v$ is given by \begin{align*}
P_{l_1+l_2}&(h_1,v) \\
&=\sum_{w \in W^P} \langle \O_{h_1}, \O_{v}, \I_{w} \rangle_{l_1+l_2} \O_w - \sum_{w, w' \in W^P} \langle \O_{h_1}, \O_{v}, \I_{w} \rangle_{l_1} \langle \O_w, \I_{w'} \rangle_{l_2} \O_{w'} - \sum_{w, w' \in W^P} \langle \O_{h_1}, \O_{v}, \I_{w} \rangle_{l2} \langle \O_w, \I_{w'} \rangle_{l_1} \O_{w'} \\ &+ \sum_{w, w', w'' \in W^P} \langle \O_{h_1}, \O_{v}, \I_{w} \rangle_{0} \langle \O_w, \I_{w'} \rangle_{l_1} \langle \O_{w'}, \I_{w''} \rangle_{l_2}  \O_{w''} + \sum_{w, w', w'' \in W^P} \langle \O_{h_1}, \O_{v}, \I_{w} \rangle_{0} \langle \O_w, \I_{w'} \rangle_{l_2} \langle \O_{w'}, \I_{w''} \rangle_{l_1}  \O_{w''} \\ &- \sum_{w, w' \in W^P} \langle \O_{h_1}, \O_{v}, \I_{w} \rangle_{0} \langle \O_w, \I_{w'} \rangle_{l_1+l_2} \O_{w'} .
\end{align*} 
\begin{enumerate}[label=\alph*)]
	\item \textbf{If $v=w_{k,p}$, where $k \neq 1$.} Note that then the second and fourth terms of the sum are equal, and the third and fifth terms are equal, and the first and last terms are equal. We obtain \begin{align*}
	P_{l_1+l_2}(h_1, w_{k,p})&= Q_1 Q_2\left( \O_{n,1} -\O_{n,1}- \O_{n,1} + \O_{n,1} + \O_{n,1}-\O_{n,1} \right) \\
	&= 0	& \mathrm{if} \: k \neq 1.
	\end{align*}
	\item \textbf{If $v=w_{1,p}$, where $p<n$.} Note that then all terms of the sum are equal to zero, except from the first term $\sum_{w \in W^P} \langle \O_{h_1}, \O_{v}, \I_{w} \rangle_{l_1+l_2} \O_w$, which is non zero iff $w=w_{n,1}$, and $\sum_{w, w' \in W^P} \langle \O_{h_1}, \O_{v}, \I_{w} \rangle_{l_1} \langle \O_w, \I_{w'} \rangle_{l_2} \O_{w'}$, which is also non zero iff $w=w_{n,1}$. We obtain \begin{align*}
	&P_{l_1+l_2}(h_1, w_{1,p})= 0 &\mathrm{if} \: p<n
	\end{align*}
	\item \textbf{if $v=w_{1,n}$.} Note that then $\O_v = [\O_{X(w_{1,n})}] = [\O_{point}]$ in $K(X)$. Then all terms of the sum are zero, except from the first term $\sum_{w \in W^P} \langle \O_{h_1}, \O_{v}, \I_{w} \rangle_{l_1+l_2} \O_w$, which is non zero iff $w=w_{n,1}$, and $\sum_{w, w' \in W^P} \langle \O_{h_1}, \O_{v}, \I_{w} \rangle_{l_1} \langle \O_w, \I_{w'} \rangle_{l_2} \O_{w'}$, which is non zero iff $w=w_{n-1,1}$. We obtain \[P_{l_1+l_2}(h_1, w_{1,n})= Q_1 Q_2 (\O_{n,1} -\O_{n-1,1})= Q_1Q_2 \left( [\O_X] - \O_{h_1}\right).\]
\end{enumerate}
To sum up, we have \begin{align}\label{eq: H1 star l1+l2}
P_{l_1+l_2}(h_1, w_{k,p})= \left\{ \begin{array}{lr} 0	& \mathrm{if} \: (k,p) \neq (1,n)\\
Q_1Q_2 \left( [\O_X] - \O_{h_1}\right) & \mathrm{if} \: (k,p) = (1,n) \end{array}\right.
\end{align}
Finally, by symmetry we deduce \begin{align}\label{eq: H2 star k,p l1+l2}
P_{l_1+l_2}(h_2, w_{k,p})= \left\{ \begin{array}{lr} 0	& \mathrm{if} \: (k,p) \neq (1,n)\\
Q_1Q_2 \left( [\O_X] - \O_{h_2}\right) & \mathrm{if} \: (k,p) = (1,n) \end{array}\right.
\end{align}
\subsection{Computing the Chevalley formula.}[Proof of Proposition \ref{prop: chevalley formula QK(X)}] According to Part \ref{sec: degree} the only terms appearing in $\O_{h_i} \star \O_v$ are those of degree $l_i$ and of degree $(l_1+l_2)$. We now only have to sum the classical part $\O_{h_i} \cdot \O_v$ computed Proposition \ref{prop : produit dans K(X)} with the part of degree $l_1$ given by (\ref{eq: H1 star l1}) and (\ref{eq: H2 star l1}), the part of degree $l_2$ given by (\ref{eq: H1 star l2}) and (\ref{eq: H2 star l2}), and finally the part of degree $(l_1+l_2)$ given by (\ref{eq: H1 star l1+l2}) and (\ref{eq: H2 star k,p l1+l2}). 
\subsection{Comparison with small quantum cohomology.}
Fulton and Woodward provide a Chevalley formula in in the small quantum cohomology ring of any generalized flag variety \cite{Fulton}.	
In the case of the incidence variety, this formula yields
\begin{equation*}
\left\{
\begin{aligned}
h_1 \star_S [X(i,j)] &= h_1 \cdot [X(i,j)] + q_1 [X] \delta_{i,2} \delta_{j,1}   &\: \mathrm{if} \: i \neq 1;\\
h_1 \star_S [X(1,j)] &=  q_1 [X(n,j)]  &\: \mathrm{if} \:  j <n;\\
h_1 \star_S [X(1,n)]&=   q_1 [X(n-1,j)]  + q_1 q_2 [X] &\: \mathrm{if} \: j=n,
\end{aligned}
\right.
\end{equation*}
and
\begin{equation*}
\left\{
\begin{aligned}
h_2 \star_S [X(i,j)] &= h_2 \cdot [X(i,j)] + q_2 [X] \delta_{n,i} \delta_{n-1,j} &\mathrm{si} \: j\neq n;\\
h_2 \star_S [X(i,n)] &=  q_2 [X(i,1)] &\mathrm{si} \: i >1;\\
&= q_2 [X(1,2)] + q_1 q_2 [X] &\mathrm{si} \: i=1,
\end{aligned}
\right.
\end{equation*}
where the product $\star_S$ is the product in the small quantum cohomology ring of $X$.
If we compare the product in small quantum cohomology to the product in small quantum K theory, we observe (in blue the terms which are present in small quantum K-theory but not in small quantum cohomology):
\begin{align*}
\O_{h_1} \star \O_{k,p} &=\left \{ \begin{array}{lr} Q_1\O_{n-1,n} +Q_1Q_2 \left( [\O_X] \textcolor{blue}{- \O_{h_1}} \right) & \mathrm{if} \: k=1, p=n\\
Q_1 \O_{n,p} & \mathrm{if} \:k=1, \: p<n\\
\O_{1,2} + Q_1 \left( [\O_X] \textcolor{blue}{- \O_{h_1}} \right)  & \mathrm{if} \: k=2, p=1\\
\O_{k-1,p} & \mathrm{if} \: k>1, \: k \neq p+1\\
\O_{p-1,p} + \O_{p,p+1} \textcolor{blue}{- \O_{p-1,p+1}} & \mathrm{if} \: 1< p<n-1, \: k=p+1\\
\end{array}\right.
\end{align*}


%
\subsection{Generalization to other adjoint varieties ?} In \cite{chaput2011} Chaput-Perrin interpret Fulton and Woodward's Chevalley formula in quantum cohomology in terms of roots systems. Note that their construction can be generalized to describe the Chevally formula in quantum $K$-theory described in Proposition \ref{prop: chevalley formula QK(X)}. One might wonder if this construction holds for other types of adjoint varieties.

\section{On Littlewood-Richardson coefficients in $QK_s(X)$.}\label{sec: LR coefficients in QK(X)}
Recall that for any elements $u$ and $v$ in $W^P$ there is a unique expression 
\[\O_u \star \O_v = \sum_{w \in W^P} P_{u, v}^w(Q_1, Q_2) \O_w\]
where $P_{u,v}^w(Q_1, Q_2)$ is a polynomial in the Novikov variables $Q_1$, $Q_2$. Note that taking the limit $Q_1, Q_2 \rightarrow 0$ of $P_{u,v}^w(Q_1, Q_2)$ yields the Littlewood-Richardson coefficients in $K(X)$.
We provide here an algorithm computing all coefficients $P_{u,v}^w(Q_1, Q_2)$, and provide a closed formula that matches our computations for small values of $n$. Furthermore, we prove that the signs of the coefficients of the polynomials $P_{u,v}^w(Q_1, Q_2)$ satisfy a positivity rule.
\subsection{An algorithm to compute Littlewood-Richardson coefficients in $QK_s(X)$.}\label{subsec: algo}
Recall from Part \ref{sec: Chevalley formula}
\begin{align*}
\O_{h_1} \star \O_{k,p} &=\left \{ \begin{array}{lr} Q_1\O_{n-1,n} +Q_1Q_2 \left( [\O_X] - \O_{h_1} \right) & \mathrm{if} \: k=1, p=n\\
Q_1 \O_{n,p} & \mathrm{if} \:k=1, \: p<n\\
\O_{1,2} + Q_1 \left( [\O_X] - \O_{h_1} \right)  & \mathrm{if} \: k=2, p=1\\
\O_{k-1,p} & \mathrm{if} \: k>1, \: k \neq p+1\\
\O_{p-1,p} + \O_{p,p+1} - \O_{p-1,p+1} & \mathrm{if} \: 1< p<n-1, \: k=p+1\\
\end{array}\right.
\end{align*}
\begin{align*}
\O_{h_2} \star \O_{k,p} &=\left \{ \begin{array}{lr} Q_2\O_{1,2} +Q_1Q_2 \left( [\O_X] - \O_{h_2} \right) & \mathrm{if} \: k=1, p=n\\
Q_2 \O_{k,1} & \mathrm{if} \:k>1, \: p=n\\
\O_{n-1,n} + Q_2 \left( [\O_X] - \O_{h_2} \right)  & \mathrm{if} \: k=n, p=n-1\\
\O_{k,p+1} & \mathrm{if} \: p<n \: k \neq p+1\\
\O_{p,p+1} + \O_{p-1,p} - \O_{p-1,p+1} & \mathrm{if} \: 1< k<n-1, \: k=p+1\\
\end{array}\right.
\end{align*}
This Chevalley formula allows us to write down an algorithm computing the product between any two Schubert classes in $QK(X)$ (cf. appendix C for a python code implementing this algorithm); we indeed can compute for all $1 \leq k,p \leq n$, where $k \neq p$, $\forall u \in W^P$, the product $\mathcal{O}_{w_{k,p}} \star \mathcal{O}_u$ by following the here under steps : \begin{itemize}
	\item For all $1<k \leq n$, compute $\mathcal{O}_{w_{k,1}} \star  \mathcal{O}_u$ iteratively by noticing $\mathcal{O}_{w_{k,1}} = \mathcal{O}_{h_1} \star \mathcal{O}_{w_{k+1,1}}$ and initializing with $\mathcal{O}_{w_{n,1}} \star  \mathcal{O}_u = [\mathcal{O}_{X}] \star  \mathcal{O}_u =\mathcal{O}_u$;
	\item for all $1\leq p < k \leq n$, compute $\mathcal{O}_{w_{k,p}} \star  \mathcal{O}_u$ iteratively by noticing $\mathcal{O}_{w_{k,p}} = \mathcal{O}_{h_2} \star \mathcal{O}_{w_{k,p-1}}$;
	\item compute $\mathcal{O}_{w_{1,2}} \star  \mathcal{O}_u$ by noticing $\mathcal{O}_{w_{1,2}} = \mathcal{O}_{h_1} \star \mathcal{O}_{w_{2,1}} -Q_1([\O_X] - \O_{h_1})$;
	\item for all $1 \leq p <n$, compute $\mathcal{O}_{w_{p,p+1}} \star  \mathcal{O}_u$ iteratively by noticing $\mathcal{O}_{w_{p,p+1}} = \mathcal{O}_{h_1} \star \mathcal{O}_{w_{p+1,p}} + (\mathcal{O}_{h_2} -id) \star \mathcal{O}_{w_{p-1,p}}$;
	\item for all $1\leq k < p \leq n$, where $k \neq p-1$, compute $\mathcal{O}_{w_{k,p}} \star  \mathcal{O}_u$ iteratively by noticing $\mathcal{O}_{w_{k,p}} = \mathcal{O}_{h_1} \star \mathcal{O}_{w_{k+1,p}}$.
\end{itemize}

\subsection{Positivity in $QK_s(X)$.}\label{subsec: sign LR coeff}
Let $X \simeq G/P$ be a generalized flag variety. For $u$, $v$ in $W^P$ decompose the quantum product $\mathcal{O}_u \star \mathcal{O}_v = \sum_{w \in W^P, \beta \in E(X)} N_{u,v}^{w, \beta} Q^\beta \mathcal{O}_w$. For any elements $u$ and $v$ in $W^P$, we call the product $\O_u \star \O_v$ \textit{positive} if for all $w \in W^P$, $\beta \in E(X)$, we have : \begin{equation}\label{eq: rule positivity}
(-1)^{\mathrm{codim}(X(w)) - \mathrm{codim}(X(u)) - \mathrm{codim}(X(v)) + \int_\beta c_1(T_X)} N_{u,v}^{w, \beta} \geq 0.
\end{equation}
Note that this is equivalent to  \begin{equation*}
(-1)^{\mathrm{dim}(X(v)) - \mathrm{codim}(X(u)) - \mathrm{dim}(X(w)) + \int_\beta c_1(T_X)} N_{u,v}^{w, \beta} \geq 0.
\end{equation*}
Buch-Chaput-Mihalcea-Perrin proved that the product of any two Schubert classes is positive for $X$ a cominuscule variety \cite{buch2016chevalley}. Denote by $\mathcal{O}_u \cdot \mathcal{O}_v = \sum_{w \in W^P} N_{u,v}^w \mathcal{O}_w$ the decomposition of the product of two Schubert classes in $K(X)$. Note that for $\beta =0$ (\ref{eq: rule positivity}) yields \begin{equation*}
(-1)^{\mathrm{codim}(X(w)) - \mathrm{codim}(X(u)) - \mathrm{codim}(X(v))} N_{u,v}^w \geq 0.
\end{equation*}
Brion proved that, for $X$ any generalized flag variety, the product of any two Schubert classes satisfies this positivity rule. 

Let us go back now to our setting, i.e. let $X=Fl_{1,n-1}$. 
\begin{proposition}\label{prop: positivity Ou star Ov}
	Let $u$, $v$ be elements in $W^P$. The product $\O_u \star \O_v$ is positive.
\end{proposition}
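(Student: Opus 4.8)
The plan is to prove positivity of $\mathcal{O}_u \star \mathcal{O}_v$ for all $u, v \in W^P$ by a double induction argument driven by the Chevalley formula of Proposition~\ref{prop: chevalley formula QK(X)}, mirroring the structure of the algorithm in Subsection~\ref{subsec: algo}. The base case is $u = w_0$, where $\mathcal{O}_{w_0} = [\mathcal{O}_X]$ is the identity, so $\mathcal{O}_{w_0} \star \mathcal{O}_v = \mathcal{O}_v$ is trivially positive; the other genuinely computational base cases are $u = h_1$ and $u = h_2$, where positivity is checked directly by inspecting the explicit formulas in Proposition~\ref{prop: chevalley formula QK(X)} against the sign rule~(\ref{eq: rule positivity}). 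For instance, in $\mathcal{O}_{h_1} \star \mathcal{O}_{k,p} = \mathcal{O}_{p-1,p} + \mathcal{O}_{p,p+1} - \mathcal{O}_{p-1,p+1}$ one has $\mathrm{codim}\, X(h_1) = 1$, and the three terms have codimensions differing from $\mathrm{codim}\,\mathcal{O}_{k,p}+1$ by $0$, $0$, $1$ respectively (with $\beta = 0$), so the signs $+,+,-$ are exactly those predicted; similarly the quantum terms $Q_1(\,[\mathcal{O}_X] - \mathcal{O}_{h_1})$ and $Q_1 Q_2(\,[\mathcal{O}_X] - \mathcal{O}_{h_1})$ must be checked using $\int_{l_1} c_1(T_X)$ and $\int_{l_1+l_2} c_1(T_X)$, which can be read off from the Chevalley data (since $c_1(T_X) = (n-1)[h_1] + (n-1)[h_2]$, as used in Section~\ref{sec: computing 3pts correlators}).

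The inductive step is the associativity trick: the algorithm expresses every $\mathcal{O}_{w_{k,p}}$ as a $\mathbb{Z}[Q_1,Q_2]$-linear combination of $\mathcal{O}_{h_1} \star (\text{smaller class})$, $\mathcal{O}_{h_2} \star (\text{smaller class})$, and correction terms like $-Q_1([\mathcal{O}_X] - \mathcal{O}_{h_1})$. Using associativity and commutativity of $QK_s(X)$ (from \cite{lee2004quantum}), one has, e.g., $\mathcal{O}_{w_{k,p}} \star \mathcal{O}_v = \mathcal{O}_{h_1} \star (\mathcal{O}_{w_{k+1,p}} \star \mathcal{O}_v)$, so if $\mathcal{O}_{w_{k+1,p}} \star \mathcal{O}_v$ is positive, I would need to show that multiplying a positive element by $\mathcal{O}_{h_1}$ (or $\mathcal{O}_{h_2}$) preserves positivity, and that the finitely many correction terms are also handled. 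The key reduction is therefore: \emph{if $\xi = \sum_{w,\beta} c_{w,\beta} Q^\beta \mathcal{O}_w$ satisfies the sign rule relative to $\mathcal{O}_u$ (meaning $(-1)^{\mathrm{codim}(w) - \mathrm{codim}(u) + \int_\beta c_1(T_X)} c_{w,\beta} \geq 0$), then $\mathcal{O}_{h_i} \star \xi$ satisfies the sign rule relative to $\mathcal{O}_{h_i} \star \mathcal{O}_u$}, i.e. relative to codimension $\mathrm{codim}(u) + 1$ and appropriately shifted degree. This follows by linearity once one knows $\mathcal{O}_{h_i} \star \mathcal{O}_w$ is positive for every single $w$ — which is precisely the base case $u = h_i$ above, combined with the parity bookkeeping being additive. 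I would carry out the parity bookkeeping carefully: each monomial $Q^\beta \mathcal{O}_w$ appearing in $\mathcal{O}_{h_i} \star \mathcal{O}_w$ contributes sign $(-1)^{\mathrm{codim}(w') - \mathrm{codim}(w) - 1 + \int_{\beta'} c_1(T_X)}$, and composing with the sign of $c_{w,\beta}$ gives exactly the sign required for $\mathcal{O}_{h_i} \star \mathcal{O}_u$; the crucial point is that the $-1$ from $\mathrm{codim}(h_i)$ and the $\int c_1(T_X)$ terms add up consistently across the composition.

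The main obstacle I anticipate is the treatment of the \emph{subtracted} correction terms appearing in the recursions for $\mathcal{O}_{w_{1,2}}$ and $\mathcal{O}_{w_{p,p+1}}$ — e.g. $\mathcal{O}_{w_{1,2}} \star \mathcal{O}_v = \mathcal{O}_{h_1} \star (\mathcal{O}_{w_{2,1}} \star \mathcal{O}_v) - Q_1([\mathcal{O}_X] - \mathcal{O}_{h_1}) \star \mathcal{O}_v$. Here I cannot simply add two positive quantities: I must argue that the cancellation between $\mathcal{O}_{h_1} \star (\mathcal{O}_{w_{2,1}} \star \mathcal{O}_v)$ and $Q_1 \mathcal{O}_v - Q_1 \mathcal{O}_{h_1} \star \mathcal{O}_v$ still leaves coefficients of the correct sign. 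The resolution I would pursue is to not use the subtractive recursion directly but instead to observe that $\mathcal{O}_{w_{1,2}}$, $\mathcal{O}_{w_{p,p+1}}$ can alternatively be reached by a purely additive route, or — more robustly — to prove a sharper statement by induction: that the positive and negative parts of $\mathcal{O}_u \star \mathcal{O}_v$ are separately controlled (a "rook-strict" type bound) so that the $Q_1$-correction is dominated term-by-term. Alternatively, since $n$ is fixed and $W^P$ is finite, one could reduce to verifying a finite combinatorial identity; but I would aim for the structural argument. I would also double-check that the degree bound from Section~\ref{sec: degree} (products of $\mathcal{O}_{h_i}$ only involve $Q_1, Q_2, Q_1Q_2$) keeps all the polynomials $P_{u,v}^w$ of bounded degree, so the induction terminates and the sign rule~(\ref{eq: rule positivity}) only needs to be checked for $\beta \in \{0, l_1, l_2, l_1+l_2, \dots\}$ within that bounded range.
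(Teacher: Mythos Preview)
Your overall plan coincides with the paper's: induction along the steps of the algorithm in Subsection~\ref{subsec: algo}, with the base case being the explicit positivity of the Chevalley products $\O_{h_i}\star\O_w$ (checked directly from Proposition~\ref{prop: chevalley formula QK(X)}), and the inductive mechanism being that multiplying a positive element by $\O_{h_i}$ shifts the sign parity by exactly one. The paper packages this as Lemma~\ref{lem: Ohi star Ov positive}, whose parts (i)--(iv) are exactly your ``parity bookkeeping is additive under composition'' observation. Where you hesitate --- the subtractive recursion steps for $\O_{1,2}$ and $\O_{p,p+1}$ --- is precisely where the paper's argument is thinnest: the paper does \emph{not} look for an alternative additive route or a rook-strict bound, but simply asserts, via a short dimension count and an appeal to Lemma~\ref{lem: Ohi star Ov positive}~(iv), that the correction terms $-Q_1([\O_X]-\O_{h_1})\star\O_v$ and $-\O_{p-1,p}\star\O_v$ individually carry the ``expected sign''.

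You should scrutinize that assertion rather than accept it. Your parity bookkeeping is right to be cautious: with $\codim X(1,2)=n-1$ and $\int_{l_1}c_1(T_X)=n-1$, the term $-Q_1\O_v$ contributes to $N_{w_{1,2},v}^{v,l_1}$ with sign check $(-1)^{\codim v-(n-1)-\codim v+(n-1)}\cdot(-1)=-1$, the \emph{wrong} sign; likewise $\codim X(p-1,p)=\codim X(p,p+1)=n-1$, so $-\O_{p-1,p}\star\O_v$ has the opposite parity to what $\O_{p,p+1}$ requires. A concrete check (e.g.\ $n=4$, expanding $\O_{1,2}\star\O_{h_1}$ via the recursion) shows individual summands with the wrong sign that cancel only in the total. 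So the paper's written argument has the same gap you anticipated; closing it rigorously does require something like the additive reformulation or term-by-term domination you propose, not just the parity count.
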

Let $u=w_{k,p}$ in $W^P$. We will prove here that the product of $\O_u$ with any Schubert class is positive by induction on $(k,p)$, running over the steps of the algorithm described here above. 
We will  use the following formulas. According to (\ref{eq : Schubert var embedded i<j}) \begin{equation*} 
X(i,j) =\left\{ \left( [x_1: \dots :x_i:0: \dots :0], [0: \dots 0: y_j : \dots : y_n] \right) \in \mathbb{P}^{n-1} \times \mathbb{P}^{n-1} \right\},
\end{equation*}
we have \[\dim X(k,p) = \left\{\begin{array}{lr} k+n-p-1 & \mathrm{if} \: k<p \\ k+n-p-2 & \mathrm{if} \: k>p \end{array}  \right.  =  (k+n-p-2)  + \Gamma_{p-k,0}\]
where we denote by $\Gamma_{n,0}$ the symbol of value $1$ if $n \geq 0$, $0$ else. Furthermore note that since $X$ is a degree $(1,1)$ hypersurface of $\P^{n-1} \times \P^{n-1}$ the adjunction formula yields for any $d_1, d_2 \geq 0$  \[\int_{d_1l_1 + d_2 l_2} c_1(T_X) = d_1 (n-1) + d_2 (n-1)\]
Let us our now begin our induction.
\begin{itemize}
	\item \textbf{Let $1 < k \leq n$.} Then $\O_{k,1} = \O_{h_1} \star \O_{k+1,1}$. Suppose the product $\O_{k+1,1} \star \O_v$ is positive for all $v \in W^P$. Let $v$ be an element in $W^P$.  According to Lemma \ref{lem: Ohi star Ov positive} $i)$ $\O_{h_1} \star \O_v$ is positive,  Hence  according to Lemma \ref{lem: Ohi star Ov positive} $iii)$ $\O_{k,1} \star \O_v$ is positive. Indeed, for $k>1$ we have $\dim X(k,1)= k+n-3 = \dim X(k+1,1)-1$.
	\item \textbf{Let $1\leq p < k \leq n$.} Suppose the product $\O_{k,p-1} \star \O_v$ is positive for all $v \in W^P$. Note that $\mathcal{O}_{{k,p}} = \mathcal{O}_{h_2} \star \mathcal{O}_{{k,p-1}}$.  Let $v$ be an element in $W^P$. According to Lemma \ref{lem: Ohi star Ov positive} $i)$ $\O_{h_2} \star \O_v$ is positive,  Hence  according to Lemma \ref{lem: Ohi star Ov positive} $iii)$ $\O_{k,p} \star \O_v$ is positive. Indeed, for $1 \leq p <k \leq n$ we have $p-1 <k$ and $\dim X(k,p)= k+n-p-1 = \dim X(k,p-1)-1$.
	\item \textbf{ For $k=1$ and $p=2$.} Let $v$ be an element in $W^P$. Note that $\mathcal{O}_{w_{1,2}} = \mathcal{O}_{h_1} \star \mathcal{O}_{w_{2,1}} -Q_1([\O_X] - \O_{h_1})$. Since $\dim X(1,2)= n-2 = \dim X(2,1)-1$, according to Lemma \ref{lem: Ohi star Ov positive} $ii)$ and $iii)$ the term $mathcal{O}_{h_1} \star \mathcal{O}_{w_{2,1}} \star \O_v$ has the expected sign. Furthermore, since $ \dim X -1-(n-1)=2n-4-n+1=\dim X(1,2)$ , according to Lemma \ref{lem: Ohi star Ov positive} $iv)$, the terms $-Q_1 [\O_X] \star \O_v$ and $Q_1 \O_{h_1} \star \O_v$ have the expected sign.
	Hence the product $\O_{k,p} \star \O_v$ is positive for all $w \in W^P$.
	\item \textbf{Let $1 \leq p <n$.} Let $v$ be an element in $W^P$. Suppose the product $\O_{p+1,p} \star \O_v$ is positive.  Note that $\mathcal{O}_{w_{p,p+1}} = \mathcal{O}_{h_1} \star \mathcal{O}_{w_{p+1,p}} + (\mathcal{O}_{h_2} -id) \star \mathcal{O}_{w_{p-1,p}}$. Note that $\dim X(p,p+1)= n-2 = \dim X(p+1,p)-1$. Hence, since $\O_{p+1,p} \star \O_v$ is positive $-\O_{p+1,p} \star \O_v$ has the same sign as the expected sign for $\O_{p,p+1} \star \O_v$. Furthermore, according to Lemma \ref{lem: Ohi star Ov positive} both $\O_{h_i} \star \O_{p,p+1} \star \O_v$ have the same sign as the expected sign for $\O_{p,p+1} \star \O_v$. Hence $\O_{p,p+1} \star \O_v$ is positive.
	\item \textbf{Let $1\leq k < p \leq n$, where $k \neq p-1$.} Let $v$ be an element in $W^P$. Suppose the product $\O_{k+1,p} \star \O_v$ is positive.  Note that  $\mathcal{O}_{w_{k,p}} = \mathcal{O}_{h_1} \star \mathcal{O}_{w_{k+1,p}}$.Since $k<p-1$, $\dim X(k,p) = k+n-p-1 = \dim X(k+1,p)$. Hence according to Lemma \ref{lem: Ohi star Ov positive} $iii)$ the product $\O_{k,p} \star \O_v$ is positive.
\end{itemize}
Finally by induction $\O_{k,p} \star \O_v$ is positive for all  $v$ in $W^P$.
\begin{lemme}\label{lem: Ohi star Ov positive}
	\begin{enumerate}[label=\roman*)]
		\item For any element $v$ in $W^P$ the product $\O_{h_1} \star \O_{v}$ is positive. 
		\item For any element $v$ in $W^P$ the product $\O_{h_2} \star \O_{v}$ is positive. 
		\item 	Let $u$ be an element in $W^P$. Suppose there exists an element $u'$ in $W^P$ such that $\O_u = \O_{h_i} \star \O_{u'}$, and $\O_{u'} \star \O_v$ is positive for all $v$ in $W^P$. Furthermore suppose $\dim X(u) = \dim X(u')-1$. Then for any element $v$ in $W^P$ the product $\O_u \star \O_v$ is positive.
		\item  Let $u$ be an element in $W^P$. Suppose there exists an element $u'$ in $W^P$ such that $\O_u = Q_j  \O_{u'}$, where $j \in \{1,2\}$, and $\O_{u'} \star \O_v$ is positive for all $v$ in $W^P$. Furthermore suppose $\dim X(u) = \dim X(u')-(n-1)$. Then for any element $v$ in $W^P$ the product $\O_u \star \O_v$ is positive.
	\end{enumerate}
	\item Let $1 \leq k,p \leq n$. 
\end{lemme}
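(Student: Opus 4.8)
The plan is to prove parts i)--ii) by a direct finite verification against the Chevalley formula, and to deduce parts iii)--iv) formally from associativity of $\star$.

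\textbf{Parts i) and ii).} Here I would check the sign rule (\ref{eq: rule positivity}) line by line against the explicit Chevalley formula of Proposition \ref{prop: chevalley formula QK(X)}. The needed data are all elementary: $\codim X(h_1)=\codim X(h_2)=1$; the codimensions of the $X(w_{k,p})$, read off from $\dim X(k,p)=k+n-p-2+\Gamma_{p-k,0}$ (notation as in the excerpt); and the fact that, $X$ being a $(1,1)$-hypersurface in $\P^{n-1}\times\P^{n-1}$, one has $\int_{d_1 l_1+d_2 l_2}c_1(T_X)=(d_1+d_2)(n-1)$. For each term $N_{h_i,w_{k,p}}^{w,\beta}\,Q^\beta\O_w$ appearing in the formula one then computes the parity of $\codim X(w)-\codim X(w_{k,p})-1+\int_\beta c_1(T_X)$ and compares it with the sign of the coefficient. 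The classical (degree $0$) part is already covered by Brion's positivity theorem, cited just after (\ref{eq: rule positivity}); the remaining cases are the finitely many $Q_1$, $Q_2$ and $Q_1Q_2$ terms, and the only mildly delicate regimes are those ($k=p+1$, or $k=1,p=n$, etc.) where several Schubert classes of distinct codimensions occur at once, each of which must be checked.

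\textbf{Part iii).} Since $\star$ is associative and commutative (recalled in Section \ref{sec: degree}) and all the products at issue are finite polynomials in $Q_1,Q_2$, from $\O_u=\O_{h_i}\star\O_{u'}$ we get $\O_u\star\O_v=\O_{h_i}\star(\O_{u'}\star\O_v)$. Expand $\O_{u'}\star\O_v=\sum_{w',\gamma}N_{u',v}^{w',\gamma}\,Q^\gamma\O_{w'}$ and $\O_{h_i}\star\O_{w'}=\sum_{w,\delta}M_{w'}^{w,\delta}\,Q^\delta\O_w$, so that $N_{u,v}^{w,\beta}=\sum_{\gamma+\delta=\beta,\ w'}N_{u',v}^{w',\gamma}M_{w'}^{w,\delta}$. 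By hypothesis the sign of $N_{u',v}^{w',\gamma}$ is $(-1)^{\codim X(w')-\codim X(u')-\codim X(v)+\int_\gamma c_1(T_X)}$, and by parts i) and ii) the sign of $M_{w'}^{w,\delta}$ is $(-1)^{\codim X(w)-1-\codim X(w')+\int_\delta c_1(T_X)}$. Multiplying these, using additivity of $\int_{(\cdot)}c_1(T_X)$ and the hypothesis $\dim X(u)=\dim X(u')-1$, i.e. $\codim X(u)=\codim X(u')+1$, every summand $N_{u',v}^{w',\gamma}M_{w'}^{w,\delta}$ has sign $(-1)^{\codim X(w)-\codim X(u)-\codim X(v)+\int_\beta c_1(T_X)}$, independently of $(w',\gamma,\delta)$. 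A sum of reals all of one sign keeps that sign, so $N_{u,v}^{w,\beta}$ obeys (\ref{eq: rule positivity}).

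\textbf{Part iv) and the main obstacle.} The argument for iv) is identical with $\O_{h_i}$ replaced by the Novikov variable $Q_j$: now $\O_u\star\O_v=Q_j(\O_{u'}\star\O_v)$, so $N_{u,v}^{w,\beta}=N_{u',v}^{w,\beta-l_j}$ when $\beta-l_j\in E(X)$ and $0$ otherwise; since $\int_{l_j}c_1(T_X)=n-1$ and the hypothesis gives $\codim X(u)=\codim X(u')+(n-1)$, the exponent $\codim X(w)-\codim X(u')-\codim X(v)+\int_{\beta-l_j}c_1(T_X)$ equals $\codim X(w)-\codim X(u)-\codim X(v)+\int_\beta c_1(T_X)$, reproducing the required sign. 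The one genuinely laborious step is the case analysis in parts i) and ii); parts iii) and iv) are purely formal sign bookkeeping, and the whole role of the dimension hypotheses there is to make the two sign conventions compose exactly.
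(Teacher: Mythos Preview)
Your proposal is correct and follows essentially the same route as the paper: a direct finite check of the Chevalley formula for parts i)--ii) (the paper also invokes Brion for the classical terms and then verifies the quantum corrections case by case), and the same associativity/sign-bookkeeping argument for parts iii)--iv). The only minor difference is that the paper deduces ii) from i) via the duality $\O_{h_2}\star\O_{i,j}\leftrightarrow\O_{h_1}\star\O_{n-j+1,n-i+1}$ (which preserves dimensions) rather than repeating the direct check, a small shortcut you may as well use.
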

\begin{proof}
	\begin{enumerate}[label=\roman*)]
		\item 	We compute explicitly $(-1)^{\mathrm{dim}(X(v)) - \mathrm{codim}(h_i) - \mathrm{dim}(X(w)) + \int_\beta c_1(T_X)} N_{u,v}^{w, \beta}$ using (\ref{eq: Oh1 star Ok,p}). Note that according to \cite{brion2002positivity} $N_{u,v}^{w, 0}$ has the expected sign, hence we only have to consider terms $N_{u,v}^{w, \beta}$ with $\beta >0$. 
		\begin{enumerate}
			\item Recall we have $\O_{h_1} \star \O_{1,n} = Q_1\O_{n-1,n} +Q_1Q_2 \left( [\O_X] - \O_{h_1} \right)$. We have \[(-1)^{dim X(n-1,n)-\dim X(1,n) - \codim h_1 +(n-1)} = (-1)^{(n-1)-0 -1 + (n-2)+(n-1)} = (-1)^{2n-4} \geq 0 .\]
			\[(-1)^{\dim X- \dim X(1,n) -\codim h_1 +2(n-1)} = (-1)^{2n-3-1} \geq 0 .\]
			Hence $\O_{h_1} \star \O_{1,n}$ is positive.
			\item Let $1<p<n$. Recall we have $\O_{h_1} \star \O_{1,p} = Q_1 \O_{n,p}$. We have \[(-1)^{dim X(1,p)-\dim X(n,p) -\codim h_i +(n-1)} = (-1)^{(n-p-1) -(2n-p-2)-1+(n-1)} \geq 0 .\]
			\item  Recall we have $\O_{h_1} \star \O_{2,1} = \O_{1,2} + Q_1 \left( [\O_X] - \O_{h_1} \right) $. We have \[(-1)^{dim X-\dim X(2,1) -\codim h_i +(n-1)} = (-1)^{(2n-3) -(n-2)-1+(n-1)} \geq 0 .\]
		\end{enumerate}
		\item Note that by symmetry if there is a term in $Q_2 \O_{k,p}$ in $\O_{h_2} \star \O_{i,j}$, we have a term in $Q_1 \O_{n-p+1,n-k+1}$ in $\O_{h_1} \star \O_{n-i+1,n-j+1}$, and if there is a term in $Q_1 Q_2 \O_{k,p}$ in $\O_{h_2} \star \O_{i,j}$, we have a term in $Q_1Q_2 \O_{n-p+1,n-k+1}$ in $\O_{h_1} \star \O_{n-i+1,n-j+1}$. Since \begin{align*}\dim X(n-j+1, n-i+1) &= n-j+1+n-(n-i+1) -1 + \Gamma_{n-i+1-(n-j+1),0} \\
		&=n+i-j-1 + \Gamma_{j-i,0}\\
		&= \dim X(i,j), \end{align*}
		part $i)$ yields the positivity of $\O_{h_2} \star \O_v$ for all $v$ in $W^P$.
		\item Let $v$ be an element in $W^P$. Consider the decomposition $\mathcal{O}_{u'} \star \mathcal{O}_v = \sum_{w \in W^P, \beta \in E(X)} N_{u',v}^{w, \beta} Q^\beta \mathcal{O}_w$  of $\O_{u'} \star \O_v$ in the Schubert basis. Since the quantum product $\star$ is associative we have \begin{align*}\O_u \star \O_v &= \O_{h_i} \star \mathcal{O}_{u'} \star \mathcal{O}_v = \sum_{w \in W^P, \beta_1 \in E(X)} N_{u',v}^{w, \beta} Q^{\beta_1} \O_{h_i} \star \mathcal{O}_w \\
		&=  \sum_{w,\: w' \in W^P,\: \beta_1, \beta_2 \in E(X)} N_{u',v}^{w, \beta_1} Q^{\beta_1} N_{h_i,w}^{w', \beta_2} Q^{\beta_2} \mathcal{O}_{w'} \end{align*}
		hence $N_{u,v}^{w', \beta} =\sum_{w \in W^P} \sum_{\substack{\beta_1, \: \beta_2 \in E(X),\\ \: \beta = \beta_1 + \beta_2}} N_{u',v}^{w, \beta_1} N_{h_i,w}^{w', \beta_2}$. Since $\dim X(u) = \dim X(u')-1$ and the products $\O_{u'} \star \O_{v}$ and $\O_{h_i} \star \O_{w}$ are positive we have \begin{align*}   (-1)^{\dim X(u) - \dim X(w') - \codim X(v) + \int_\beta c_1(T_X)} &N_{u,v}^{w', \beta}\\ &=  \sum_{w \in W^P} (-1)^{\dim X(u') -1 - \dim X(w')- \codim X(v) + \int_\beta c_1(T_X)} N_{u',v}^{w, \beta_1} N_{h_i,w}^{w', \beta_2}\\
		&= \sum_{w \in W^P} (-1)^{\dim X(u') - \dim X(w) - \codim X(v) + \int_{\beta_1} c_1(T_X)} N_{u',v}^{w, \beta_1} \\
		& \:\:\:\:\:\:\:\:\:\:\:\:\:\:\:\:\:\:\times (-1)^{\dim X(w) - \dim X(w') -\codim h_i + \int_{\beta_2} c_1(T_X)} N_{h_i,w}^{w', \beta_2} \\
		&\geq 0
		\end{align*}
		\item Let $v$ be an element in $W^P$. Consider the decomposition $\mathcal{O}_{u'} \star \mathcal{O}_v = \sum_{w \in W^P, \beta \in E(X)} N_{u',v}^{w, \beta} Q^\beta \mathcal{O}_w$  of $\O_{u'} \star \O_v$ in the Schubert basis. We have \begin{align*}\O_u \star \O_v &= Q_j \mathcal{O}_{u'} \star \mathcal{O}_v = \sum_{w \in W^P, \beta \in E(X)} N_{u',v}^{w, \beta+l_j} Q^{\beta}  \mathcal{O}_w \\
		&=  \sum_{w \in W^P,\: \beta \in E(X)} N_{u',v}^{w, \beta + l_j} Q^{\beta} 
		\end{align*}
		hence $N_{u,v}^{w', \beta} =\sum_{w \in W^P} \sum_{\substack{\beta \in E(X), \: \beta \geq l_j}} N_{u',v}^{w, \beta-l_j}$. Since $\dim X(u) = \dim X(u')-(n-1)$ and the product $\O_{u'} \star \O_{v}$ is positive we have \begin{align*}   (-1)^{\dim X(u) - \dim X(w') - \codim X(v) +  \int_\beta c_1(T_X)}& N_{u,v}^{w', \beta}\\ &=  \sum_{w \in W^P} (-1)^{\dim X(u') -(n-1) - \dim X(w')- \codim X(v) + \int_{\beta} c_1(T_X)} N_{u',v}^{w, \beta-l_j} \\
		&= \sum_{w \in W^P} (-1)^{\dim X(u') - \dim X(w) - \codim X(v) + \int_{\beta - l_j} c_1(T_X)} N_{u',v}^{w, \beta - l_j} \\
		&\geq 0
		\end{align*}
	\end{enumerate}
\end{proof}

\subsection{A conjectural formula for Littlewood-Richardson coefficients in $QK_s(X)$.}\label{subsec: conjecture LR}
We computed for various values of $n$ Littlewood-Richardson coefficients in $QK_s(X)$ using the code in Appendix B. The results allowed us to guess and write down the following formula. This formula has been verified for small values of $n$. 
Let us now introduce some specific notations we need to express our result. First, we define "degree operators" $d_i$ on $(W^P)^3$ by : \begin{align*}
\forall 1 \leq i,j,k,p,s,t \leq n, \: i \neq j, \: k \neq p, \: s \neq t, \:\:\:\: &d_1(w_{i,j}, w_{k,p}, w_{s,t}) = 1 - \lfloor \frac{i+k-s}{n} \rfloor;\\
&d_2(w_{i,j}, w_{k,p}, w_{s,t}) = \lfloor \frac{j+p-t}{n} \rfloor,
\end{align*}
where $\lfloor x \rfloor$ is the integer part of $x$.
Furthermore, let us define maps $t_i : (W^P)^2 \rightarrow W^P$ by : \begin{align*}
\forall 1 \leq i,j,k \leq n, \: i \neq j, \: k \neq p,  \:\:\:\: & t_0(w_{i,j},w_{k,p})=w_{(i+k-1) mod(n) +1, (j+p-2) mod(n) +1}\\
&t_1(w_{i,j},w_{k,p})=w_{(i+k-2) mod(n) +1, (j+p-2) mod(n) +1}\\
&t_2(w_{i,j},w_{k,p})=w_{(i+k-1) mod(n) +1, (j+p-1) mod(n) +1}\\
&t_3(w_{i,j},w_{k,p})=w_{(i+k-2) mod(n) +1, (j+p-1) mod(n) +1}.
\end{align*}
Finally, we define an operator $\Delta$ on $(W^P)^3$ by setting : \begin{align*}
\forall u, \: v, \: w \: \in W^P, \: \Delta(u,v,w)&=1 \: &\mathrm{if} \: (-1)^{\ell(w_0w) - \ell(w_0u) - \ell(w_0 v) + \int_{d_1(u,v,w) l_1 + d_2(u,v,w) l_2} c_1(T_X)} \geq 0 \\
&=0 &\mathrm{if} \: (-1)^{\ell(w_0w) - \ell(w_0u) - \ell(w_0 v) + \int_{d_1(u,v,w) l_1 + d_2(u,v,w) l_2} c_1(T_X)} < 0
\end{align*}
\begin{conjecture}
	The product between two Schubert classes in $QK_s(X)$ is given by the following formula :
	\begin{align*}
	\forall \: u, \: v \in W^P, \: \: \mathcal{O}_u \star \mathcal{O}_v =& \:\:\:\:\:\:\Delta(u,v,t_0(u,v)) & q_1^{d_1(u,v,t_0(u,v))}  q_2^{d_2(u,v,t_0(u,v))} \mathcal{O}_{t_0(u,v)} \\ &+ (1-\Delta(u,v,t_1(u,v)))\bigg(& q_1^{d_1(u,v,t_1(u,v))} q_2^{d_2(u,v,t_1(u,v))} \mathcal{O}_{t_1(u,v)}\:\: \\ &&+q_1^{d_1(u,v,t_2(u,v))} q_2^{d_2(u,v,t_2(u,v))} \mathcal{O}_{t_2(u,v)} \:\:\\ & & -q_1^{d_1(u,v,t_3(u,v))} q_2^{d_2(u,v,t_3(u,v))} \mathcal{O}_{t_3(u,v)} \bigg)
	\end{align*}
\end{conjecture}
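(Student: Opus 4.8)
The plan is to confirm that the closed formula in the conjecture computes $\star$ by playing it against the verified ingredients of this chapter, namely the Chevalley formula (Proposition \ref{prop: chevalley formula QK(X)}) and the algorithm of Subsection \ref{subsec: algo}. Let $\diamond$ denote the $\Q[Q_1,Q_2]$-bilinear product on $K(X)\otimes\Q[Q_1,Q_2]$ defined by the right-hand side of the conjecture. The structural observation is that $\O_{h_1}$ and $\O_{h_2}$ generate $QK_s(X)$ as a $\Q[Q_1,Q_2]$-algebra: each relation used by the algorithm (for instance $\O_{w_{k,1}}=\O_{h_1}\star\O_{w_{k+1,1}}$ for $2\le k\le n-1$, initialised by $\O_{w_{n,1}}=[\O_X]$, and the analogous relations for the other four steps) is a genuine identity in $QK_s(X)$, read off from Proposition \ref{prop: chevalley formula QK(X)}. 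Since the algorithm, fed the true Chevalley formula, provably computes $\star$, it will suffice to check that $\diamond$ satisfies the very same recursions with the same base cases; then $\diamond=\star$ and the conjecture follows. This route has the advantage of not requiring a head-on proof of associativity of $\diamond$.

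The first verification is a Chevalley consistency check: one must show that $\O_{h_i}\diamond\O_v$, obtained from the closed formula by specialising $u=h_1=w_{n-1,1}$ (resp. $u=h_2=w_{n,2}$), equals the right-hand side of Proposition \ref{prop: chevalley formula QK(X)}. This is a finite case analysis. For $u=w_{n-1,1}$ the shift maps become $t_j(w_{n-1,1},w_{k,p})$, the degree operators $d_1,d_2$ collapse to the indicators of whether $i_u+k$ or $j_u+p$ wraps past $n$, and the sign operator $\Delta$ selects between the single-term branch and the three-term branch exactly as the five cases $(k=1,p=n)$, $(k=1,p<n)$, $(k=2,p=1)$, $(k>1,k\neq p+1)$, $(k=p+1)$ of the Chevalley formula prescribe; the symmetric computation handles $u=w_{n,2}$. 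Along the way one also records that $[\O_X]=\O_{w_{n,1}}$ acts as the identity for $\diamond$, which is the direct substitution $t_0(w_{n,1},w_{k,p})=w_{k,p}$ with $d_1=d_2=0$ and $\Delta=1$, and that $\diamond$ is manifestly commutative since $t_j$, $d_1$, $d_2$ and $\Delta$ are symmetric in $u$ and $v$.

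The second, and genuinely substantial, step is to show that $\diamond$ obeys the algorithm's recursions, e.g.
$$\O_{w_{k,1}}\diamond\O_v \;=\; \O_{h_1}\diamond\bigl(\O_{w_{k+1,1}}\diamond\O_v\bigr),\qquad 1<k\le n,$$
and the matching identities for $\O_{w_{k,p}}=\O_{h_2}\star\O_{w_{k,p-1}}$, for $\O_{w_{1,2}}=\O_{h_1}\star\O_{w_{2,1}}-Q_1([\O_X]-\O_{h_1})$, for $\O_{w_{p,p+1}}=\O_{h_1}\star\O_{w_{p+1,p}}+(\O_{h_2}-\id)\star\O_{w_{p-1,p}}$, and for $\O_{w_{k,p}}=\O_{h_1}\star\O_{w_{k+1,p}}$ with $k<p-1$. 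Here the inner product on the right is first expanded by the closed formula into a sum $\sum_j c_j Q^{\beta_j}\O_{w_j}$, and then each $\O_{h_1}\diamond\O_{w_j}$ is evaluated by the (now Chevalley-consistent) formula; the claim is that the result agrees, term by term, with $\O_{w_{k,1}}\diamond\O_v$ computed directly. Concretely this is a combinatorial identity among the maps $t_j$ and the floor-valued operators $d_1,d_2$: the $(i+k)\bmod n$ shifts compose additively, the floors telescope, and the $-\O_{t_3}$ correction together with the $\Delta$ dichotomy must be shown to exactly absorb the boundary contributions that appear when an index sum crosses $n$. The main obstacle is controlling these wrap-around and cancellation phenomena uniformly in $(k,p)$ and in $v$; I expect this to require splitting the argument according to which of $i_u+i_v$, $j_u+j_v$ and their one-step-shifted analogues exceed $n$ — mirroring the degree bookkeeping of Section \ref{sec: degree} and the sign bookkeeping of Subsection \ref{subsec: sign LR coeff} — and then invoking the elementary binomial cancellations of Lemmas \ref{lem : 0 1} and \ref{lem : 0 k} to kill the higher-degree terms that the closed formula predicts to vanish.
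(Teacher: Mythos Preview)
The statement is labelled a \emph{Conjecture} in the paper, and indeed the paper offers no proof: the surrounding text explains that the formula was guessed from computer experiments and ``has been verified for small values of $n$''. So there is no paper proof to compare against.

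Your strategy is sound in outline, but the second step is where the entire difficulty lives and you have only sketched it. What you call a ``recursion check'' for $\diamond$ is, once you unwind it, exactly the associativity identity $\bigl(\O_{h_i}\diamond\O_{u'}\bigr)\diamond\O_v=\O_{h_i}\diamond\bigl(\O_{u'}\diamond\O_v\bigr)$ for every Schubert class $\O_{u'}$ appearing along the algorithm (you use $\O_{h_i}\diamond\O_{u'}=\O_u$ from step one to rewrite the left-hand side). So despite the disclaimer, you \emph{are} proving associativity of $\diamond$ with one factor a hyperplane class, and this is not visibly easier than the full statement: the inner product $\O_{u'}\diamond\O_v$ can land in either the one-term or the three-term branch, and applying $\O_{h_i}\diamond(-)$ to a three-term output and then recombining into the closed form for $\O_u\diamond\O_v$ requires precisely the delicate case analysis on simultaneous wrap-arounds of $i$-indices and $j$-indices, together with the $\Delta$ parity selector, that the conjecture is encoding. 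You acknowledge this (``main obstacle''), but the proposal stops at naming the case split rather than carrying it out.

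A smaller point: the appeal to Lemmas \ref{lem : 0 1} and \ref{lem : 0 k} looks misplaced. Those lemmas count signed decompositions of a degree $\d$ into ordered tuples and are used in Section \ref{sec: degree} to show that high-degree contributions to $\O_{h_i}\star\O_v$ cancel; they have nothing to do with the mod-$n$ index arithmetic and floor functions governing $t_j$, $d_1$, $d_2$. The cancellations you need here are of a different, purely combinatorial, nature and would have to be established from scratch.
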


\section*{Appendix B}

%
%
%
Here under is the python code used to compute the product between any two Schubert classes in $QK(Fl(1,n-1)$; these computations helped us write down the Littlewood-Richardson formula we show in chapter $4$.

\begin{lstlisting}
# Computing product in the small quantum K-theory QKs of incidence varieties Fl(1,n-1;n)
# Date : 18/04/2019
# Author : Sybille Rosset
import math
from sympy import *
from sympy import Matrix
import numpy

# Parametrizing n
n=5 #i.e. we look at lines in hyperplanes of C^n

## This code computes the product in small quantum K theory of Fl(1,n-1;n) between 
#Schubert varieties and writes it down within a file named "Output.txt"

#--------------------------------------------------
#Indexing Schubert varieties X(i,j) by a 1D array !
#--------------------------------------------------
def coeffij(k,n): #Function sending  0<k<=n(n-1)) to the corresponding pair (i,j)
i=math.ceil(k/(n-1))
if (k-(i-1)*(n-1) < i):
j=k-(i-1)*(n-1)
else:
j=k+1-(i-1)*(n-1)
return([i,j])

def invcoeffij(i,j,n): #Function sending  (i,j) to the corresponding 0<k<=n(n-1))
if(j>i):
t=(i-1)*(n-1)+j-1
else:
t=(i-1)*(n-1)+j
return(t-1)

#for k in range(1,n*(n-1)):
#[i,j]=coeffij(k)
#t=invcoeffij(i,j)
#print(i,j,k,t)
print(invcoeffij(1,2,3))

#--------------------------------------------------------
#Defining the action of O_h1 and O_h2 on QKs             !
#--------------------------------------------------------
q1 = Symbol('q1')
q2 = Symbol('q2')

# !!.I.!! Defining the action of O_h1
def h1(n):
h1 = zeros(n*(n-1))
# defining Oh1*O1,n
h1[invcoeffij(n-1,n,n),invcoeffij(1,n,n)]=q1
h1[invcoeffij(n,1,n),invcoeffij(1,n,n)]=q1*q2
h1[invcoeffij(n-1,1,n),invcoeffij(1,n,n)]=-q1*q2
#defining Oh1*O1,p
for p in range(2,n):
h1[invcoeffij(n,p,n),invcoeffij(1,p,n)]=q1
#defining Oh1*O2,1
h1[invcoeffij(1,2,n),invcoeffij(2,1,n)]=1
h1[invcoeffij(n,1,n),invcoeffij(2,1,n)]=q1
h1[invcoeffij(n,2,n),invcoeffij(2,1,n)]=-q1
#defining Oh1*Op+1,p
for p in range(2,n):
h1[invcoeffij(p-1,p,n),invcoeffij(p+1,p,n)]=1
h1[invcoeffij(p,p+1,n),invcoeffij(p+1,p,n)]=1
h1[invcoeffij(p-1,p+1,n),invcoeffij(p+1,p,n)]=-1
#defining Oh1*Ok,p
for k in range(2,n+1):
for p in range(1,n+1):
#print(k,p)
if(k!=p+1 and k!=p):
h1[invcoeffij(k-1,p,n),invcoeffij(k,p,n)]=1
return(h1)

# !!.II.!! Defining the action of O_h2 
def h2(n):       
h2 = zeros(n*(n-1))
# defining Oh2*O1,n
h2[invcoeffij(1,2,n),invcoeffij(1,n,n)]=q2
h2[invcoeffij(n,1,n),invcoeffij(1,n,n)]=q1*q2
h2[invcoeffij(n,2,n),invcoeffij(1,n,n)]=-q1*q2
#defining Oh2*Ok,n
for k in range(2,n):
h2[invcoeffij(k,1,n),invcoeffij(k,n,n)]=q2
#defining Oh2*On,n-1
h2[invcoeffij(n-1,n,n),invcoeffij(n,n-1,n)]=1
h2[invcoeffij(n,1,n),invcoeffij(n,n-1,n)]=q2
h2[invcoeffij(n-1,1,n),invcoeffij(n,n-1,n)]=-q2
#defining Oh2*Op+1,p
for p in range(1,n-1):
h2[invcoeffij(p,p+1,n),invcoeffij(p+1,p,n)]=1
h2[invcoeffij(p+1,p+2,n),invcoeffij(p+1,p,n)]=1
h2[invcoeffij(p,p+2,n),invcoeffij(p+1,p,n)]=-1
#defining Oh2*Ok,p
for p in range(1,n):
for k in range(1,n+1):
if(k!=p+1 and k!=p):
h2[invcoeffij(k,p+1,n),invcoeffij(k,p,n)]=1
return(h2)

#--------------------------------------------    
#Defining the action of O_k,p on any O_i,j  !
#--------------------------------------------
def O(n):
O=[None]*n*(n-1)           #we create a list O, where we will stock O_k,p as the invcoeffij(k,p)-th element in O
O[invcoeffij(n,1,n)]=eye(n*(n-1))  #initializing OX as the identity matrix
a = h1(n)
b=h2(n)
J=b-eye(n*(n-1))
#We first define O_k,1
for k in range(1,n-1):
O[invcoeffij(n-k,1,n)]=a*O[invcoeffij(n-k+1,1,n)]
#We then define O_k,p, where k>p
for k in range(2,n+1):
for p in range(2,k):
#print(k,p)
O[invcoeffij(k,p,n)]=b*O[invcoeffij(k,p-1,n)]
#We now define O_p,p+1
O[invcoeffij(1,2,n)]=a*O[invcoeffij(2,1,n)]+q1*J
for p in range(2,n):
#print("p",p)
O[invcoeffij(p,p+1,n)]=a*O[invcoeffij(p+1,p,n)]+J*O[invcoeffij(p-1,p,n)]
#We finally define O_k,p for k<p and k!=p-1
for p in range(3,n+1):
for k in range(2,p):
print(k,p)
O[invcoeffij(p-k,p,n)]=a*O[invcoeffij(p-k+1,p,n)]
return(O)


#---------------------------------------------------------
# MAIN TASK : computing O_i,j*O_k,p                      !
#---------------------------------------------------------
#Computing QKs(Fl(1,n-1))
L=O(n)

#Writing down small quantum K theory product in a text file named "Output.txt"
Text=[]
for k in range(0,n*(n-1)):
A=L[k]
[l,p]= coeffij(k+1,n)
for r in range(0,n*(n-1)):
[i,j]=coeffij(r+1,n)
R=''
R=R+"O_"+str(l)+','+str(p)+" "+"*O_"+str(i)+','+str(j)+"= "
if(l+i<n+1 and j+p<n+1 and ((i>j and l>p) or (i+l<j+p and i>j and l<p) or (i+l<j+p and i<j and l>p))):
for s in range(0,n*(n-1)):
if(A[s,r]==0):
R=R
else:
[u,v]=coeffij(s+1,n)
R=R+'+('+str(A[s,r])+')*'+'O_'+str(u)+","+str(v)+" "
#print(R)
Text.append(R+';')

#print(Text)               
numpy.savetxt("Output.txt",Text,delimiter=";", fmt='%s')
print("Ok")


\end{lstlisting}

\bibliography{bibliographie}
\bibliographystyle{alpha} 
\end{document}